\newcommand{\mylabel}[2]{#2\def\@currentlabel{#2}\label{#1}}
\DeclareMathOperator{\id}{Id}
\DeclareMathOperator{\meas}{meas}
\newcommand{\tnorm}[1]{\vert\hspace*{-1pt}\vert\hspace*{-1pt}\vert#1\vert\hspace*{-1pt}\vert\hspace*{-1pt}\vert}
\newcommand{\Linf}{{L^\infty}}
\newcommand{\Winf}{{W^{1,\infty}}}
\newcommand{\WX}{\ensuremath{W_X^{1,\infty}}}
\newcommand{\WY}{\ensuremath{W_Y^{1,\infty}}}
\renewcommand{\H}{\ensuremath{\mathcal{H}}}
\newcommand{\N}{\ensuremath{\mathcal{N}}}
\newcommand{\U}{\ensuremath{\mathcal{U}}}
\newcommand{\X}{\ensuremath{\mathcal{X}}}
\newcommand{\Y}{\ensuremath{\mathcal{Y}}}
\newcommand{\C}{\ensuremath{\mathcal{C}}}
\newcommand{\B}{\ensuremath{\mathcal{B}}}
\newcommand{\V}{\ensuremath{\mathcal{V}}}
\newcommand{\W}{\ensuremath{\mathcal{W}}}
\newcommand{\p}{\ensuremath{\mathfrak{p}}}
\newcommand{\q}{\ensuremath{\mathfrak{q}}}
\newcommand{\dXY}[1]{{#1}_{XY}}
\newcommand{\Real}{\mathbb R}
\newcommand{\norm}[1]{\left\Vert#1\right\Vert}
\newcommand{\Z}{\ensuremath{\mathcal{Z}}}
\newcommand{\E}{\ensuremath{\mathcal{E}}}
\newcommand{\D}{\ensuremath{\mathcal{D}}}
\newcommand{\G}{\ensuremath{\mathcal{G}}}
\newcommand{\F}{\ensuremath{\mathcal{F}}}
\newtheorem{theorem}{Theorem}[section]
\newtheorem{lemma}[theorem]{Lemma}
\newtheorem{definition}[theorem]{Definition}
\newtheorem{corollary}[theorem]{Corollary}
\numberwithin{equation}{section}
\newif\ifcolor
\newenvironment{aalign}% environment name
{ % begin code
	\csname align*\endcsname
}
{ % end code
	\csname endalign*\endcsname
}
\newcommand\numberthis{\addtocounter{equation}{1}\tag{\theequation}}
\begin{document}

\raggedbottom

\title{A Regularized System for the Nonlinear Variational Wave Equation}
    
\author[K. Grunert]{Katrin Grunert}
\address{Department of Mathematical Sciences\\ Norwegian University of Science and Technology\\ NO-7491 Trondheim\\ Norway}
\email{katrin.grunert@ntnu.no}
\urladdr{\url{http://www.ntnu.no/ansatte/katrin.grunert}}

\author[A. Reigstad]{Audun Reigstad}
\address{Department of Mathematical Sciences\\ Norwegian University of Science and Technology\\ NO-7491 Trondheim\\ Norway}
\email{audun.reigstad@ntnu.no}
\urladdr{\url{http://www.ntnu.no/ansatte/audun.reigstad}}

\thanks{Research supported by the grants {\it Waves and Nonlinear Phenomena (WaNP)} and {\it Wave Phenomena and Stability --- a Shocking Combination (WaPheS)} from the Research Council of Norway.}  
% \subjclass[2000]{Primary: 35L70; Secondary: 49K20}
\subjclass[2010]{Primary: 35L51; Secondary: 35B35, 35B65}
\keywords{Nonlinear variational wave equation, regularization, conservative solutions}

\begin{abstract} 
We derive a new generalization of the nonlinear variational wave equation. We prove existence of local, smooth solutions for this system. As a limiting case, we recover the nonlinear variational
wave equation.
\end{abstract}

\maketitle

\tableofcontents

\section{Introduction}
%%%%%%%%%%%%%%%%%%%%%%%%%%%%%
%        Introduction
%%%%%%%%%%%%%%%%%%%%%%%%%%%%%

The nonlinear variational wave equation (NVW) is given by
\begin{equation}
\label{eq:nvw}
	u_{tt}-c(u)(c(u)u_{x})_{x}=0,
\end{equation}
where $u=u(t,x)$, $t\geq 0$ and $x\in\mathbb{R}$, with initial data
\begin{equation}
\label{eq:initdat}
	u|_{t=0}=u_{0} \quad \text{and} \quad u_{t}|_{t=0}=u_{1}.
\end{equation}
It was introduced by Saxton in \cite{Saxt:89}, where it is derived by applying  the variational principle to the functional
\begin{equation*}
	\int_{0}^{\infty}\int_{-\infty}^{\infty}(u_{t}^{2}-c^{2}(u)u_{x}^{2})\,dx\,dt.
\end{equation*}
It is well known that derivatives of solutions of this equation can develop singularities in finite time even for smooth initial data, see e.g. \cite{GlaHunZh:96}. The continuation past singularities is highly nontrivial, and allows for various distinct solutions. The most common way of continuing the solution is to require that the energy is non-increasing, which naturally leads to the two following notions of solutions: Dissipative solutions for which the energy is decreasing in time, see \cite{BreHua16,ZhaZhen:03,ZhaZhen:05a,ZhaZhen:05}, and conservative solutions for which the energy is constant in time. In the latter case a semigroup of solutions has been constructed in \cite{BreZhe:06,HolRay:11}. 

In this paper we modify \eqref{eq:nvw} by adding two transport equations and coupling terms. The resulting system is given by 
\begin{subequations}
	\label{eq:nvwsys}
	\begin{align}
	\label{eq:nvwsys1}
	u_{tt}-c(u)(c(u)u_{x})_{x}&=-\frac{c'(u)}{4}(\rho^{2}+\sigma^{2}),\\
	\label{eq:nvwsys2}
	\rho_{t}-(c(u)\rho)_{x}&=0,\\
	\label{eq:nvwsys3}
	\sigma_{t}+(c(u)\sigma)_{x}&=0,
	\end{align}
\end{subequations}
with initial data
\begin{equation}
\label{eq:nvwInitialData}
	u|_{t=0}=u_{0}, \quad u_{t}|_{t=0}=u_{1}, \quad \rho|_{t=0}=\rho_{0}, \quad \sigma|_{t=0}=\sigma_{0}.
\end{equation}
It is clear that when $\rho=\sigma=0$ we recover \eqref{eq:nvw}. We assume that $c\in C^{2}(\mathbb{R})$ and satisfies
\begin{equation}
\label{eq:cassumption}
	\frac{1}{\kappa}\leq c(u)\leq \kappa
\end{equation}  
for some $\kappa\geq 1$. In addition, we assume that
\begin{equation}
\label{eq:cderassumption}
	\max_{u\in \mathbb{R}}|c'(u)|\leq k_{1} \quad \text{and} \quad \max_{u\in \mathbb{R}}|c''(u)|\leq k_{2}
\end{equation}
for positive constants $k_{1}$ and $k_{2}$. 

We are interested in studying conservative solutions of the initial value problem \eqref{eq:nvwsys}-\eqref{eq:nvwInitialData} for initial data $u_{0}$, $u_{0,x}$, $u_{1}$, $\rho_{0}$, $\sigma_{0}\in L^{2}(\mathbb{R})$. For smooth and bounded solutions such that $u$, $u_{t}$, $u_{x}$, $\rho$ and $\sigma$ vanish at $\pm\infty$ the energy is given by
\begin{equation}
\label{eq:smoothEnergy}
	E(t)=\frac{1}{2}\int_{\mathbb{R}}\Big(u_{t}^{2}+c^{2}(u)u_{x}^{2}+\frac{1}{2}c(u)\rho^{2}+\frac{1}{2}c(u)\sigma^{2}\Big)\,dx,
\end{equation} 
and independent of time. One way to see this is to consider the quantity
\begin{equation*}
	K(t)=\frac{1}{2}\int_{\mathbb{R}}u_{t}^{2}\,dx,
\end{equation*}
which we can think of as the "kinetic energy". We compute $K'(t)$ and find by using \eqref{eq:nvwsys1},
\begin{equation*}
	K'(t)=\int_{\mathbb{R}}
	\Big(c(u)u_{t}(c(u)u_{x})_{x}-\frac{1}{4}c'(u)u_{t}(\rho^{2}+\sigma^{2})\Big)\,dx.
\end{equation*}
For the first term we get, by integration by parts,
\begin{equation*}
	\int_{\mathbb{R}}c(u)u_{t}(c(u)u_{x})_{x}\,dx=-\int_{\mathbb{R}}\big(c^{2}(u)u_{x}u_{xt}+c(u)c'(u)u_{t}u_{x}^{2}\big)\,dx=-\frac{d}{dt}\int_{\mathbb{R}}\frac{1}{2}c^{2}(u)u_{x}^{2}\,dx,
\end{equation*}
and for the second term we obtain from \eqref{eq:nvwsys2} and \eqref{eq:nvwsys3},
\begin{align*}
	\int_{\mathbb{R}}c'(u)u_{t}(\rho^{2}+\sigma^{2})\,dx&=\frac{d}{dt}\int_{\mathbb{R}}c(u)(\rho^{2}+\sigma^{2})\,dx-2\int_{\mathbb{R}}c(u)(\rho\rho_{t}+\sigma\sigma_{t})\,dx\\
	&=\frac{d}{dt}\int_{\mathbb{R}}c(u)(\rho^{2}+\sigma^{2})\,dx-\int_{\mathbb{R}}\Big((c^{2}(u)\rho^{2})_{x}-(c^{2}(u)\sigma^{2})_{x}\Big)\,dx\\
	&=\frac{d}{dt}\int_{\mathbb{R}}c(u)(\rho^{2}+\sigma^{2})\,dx.
\end{align*}
Therefore we get
\begin{equation*}
	K'(t)=-\frac{d}{dt}\bigg(\int_{\mathbb{R}}\frac{1}{2}c^{2}(u)u_{x}^{2}\,dx+\frac{1}{4}\int_{\mathbb{R}}c(u)(\rho^{2}+\sigma^{2})\,dx\bigg),
\end{equation*}
which implies that $E(t)$ is constant. In particular, we have
\begin{equation*}
	E(t)=\frac{1}{2}\int_{\mathbb{R}}\Big(u_{1}^{2}+c^{2}(u_{0})u_{0,x}^{2}+\frac{1}{2}c(u_{0})\rho_{0}^{2}+\frac{1}{2}c(u_{0})\sigma_{0}^{2}\Big)\,dx.
\end{equation*} 

Next, we introduce the functions $R$ and $S$ defined as
\begin{equation}
\label{eq:defRS}
\left\{
\begin{aligned}
	R&=u_{t}+c(u)u_{x}, \\
	S&=u_{t}-c(u)u_{x}.
\end{aligned}
\right.
\end{equation}
Note that $R$ and $S$ are smooth by assumption. Using \eqref{eq:defRS} we can express the energy in \eqref{eq:smoothEnergy} as
\begin{equation}
\label{eq:smoothEnergy2}
	E(t)=\frac{1}{4}\int_{\mathbb{R}}\big(R^{2}+c(u)\rho^{2}+S^{2}+c(u)\sigma^{2}\big)\,dx.
\end{equation} 
As we shall see, we can think of $R^{2}+c(u)\rho^{2}$ and $S^{2}+c(u)\sigma^{2}$ as the left and right traveling part of the energy density, respectively. Indeed, from \eqref{eq:nvwsys1} we have
\begin{equation}
\label{eq:evolRS}
\left\{
\begin{aligned}
	R_{t}-c(u)R_{x}&=\frac{c'(u)}{4c(u)}(R^{2}-S^{2})-\frac{c'(u)}{4}(\rho^{2} +\sigma^{2}),\\
	S_{t}+c(u)S_{x}&=-\frac{c'(u)}{4c(u)}(R^{2}-S^{2})-\frac{c'(u)}{4}(\rho^{2} +\sigma^{2}).
\end{aligned}
\right.
\end{equation}
Multiplying the first equation in \eqref{eq:evolRS} by $R$ and the second by $S$, using
\begin{equation*}
	(c(u)R^{2})_{x}=\frac{c'(u)}{2c(u)}R^{2}(R-S)+c(u)(R^{2})_{x},
\end{equation*}
and
\begin{equation*}
	(c(u)S^{2})_{x}=\frac{c'(u)}{2c(u)}S^{2}(R-S)+c(u)(S^{2})_{x},
\end{equation*}
yields
\begin{equation*}
\left\{
\begin{aligned}
	(R^{2})_{t}-(c(u)R^{2})_{x}&=\frac{c'(u)}{2c(u)}(R^{2}S-RS^{2})-\frac{c'(u)}{2}R(\rho^{2} +\sigma^{2}),\\
	(S^{2})_{t}+(c(u)S^{2})_{x}&=-\frac{c'(u)}{2c(u)}(R^{2}S-RS^{2})-\frac{c'(u)}{2}S(\rho^{2} +\sigma^{2}).
\end{aligned}
\right.
\end{equation*}
Moreover, using \eqref{eq:nvwsys2} and \eqref{eq:nvwsys3} we get
\begin{equation*}
\left\{
\begin{aligned}
	(\rho^{2})_{t}-(c(u)\rho^{2})_{x}&=\frac{c'(u)}{2c(u)}(R-S)\rho^{2},\\
	(\sigma^{2})_{t}+(c(u)\sigma^{2})_{x}&=-\frac{c'(u)}{2c(u)}(R-S)\sigma^{2},
\end{aligned}
\right.
\end{equation*}
which implies
\begin{equation*}
\left\{
\begin{aligned}
	(c(u)\rho^{2})_{t}-(c^{2}(u)\rho^{2})_{x}&=\frac{c'(u)}{2}(R+S)\rho^{2},\\
	(c(u)\sigma^{2})_{t}+(c^{2}(u)\sigma^{2})_{x}&=\frac{c'(u)}{2}(R+S)\sigma^{2}.
\end{aligned}
\right.
\end{equation*}
This leads to
\begin{equation}
\label{eq:evolEnergy}
\left\{
\begin{aligned}
	&(R^{2}+c(u)\rho^{2})_{t}-\Big(c(u)(R^{2}+c(u)\rho^{2})\Big)_{x}\\
	&=\frac{c'(u)}{2c(u)}(R^{2}S-RS^{2})+\frac{c'(u)}{2}(\rho^{2}S-\sigma^{2}R),\\
	&(S^{2}+c(u)\sigma^{2})_{t}+\Big(c(u)(S^{2}+c(u)\sigma^{2})\Big)_{x}\\
	&=-\frac{c'(u)}{2c(u)}(R^{2}S-RS^{2})-\frac{c'(u)}{2}(\rho^{2}S-\sigma^{2}R).
\end{aligned}
\right.
\end{equation}
From \eqref{eq:evolEnergy} we get
\begin{equation}
\label{eq:evolEnergy2}
\left\{
\begin{aligned}
	&\bigg(\frac{1}{c(u)}\big(R^{2}+c(u)\rho^{2}\big)\bigg)_{t}-(R^{2}+c(u)\rho^{2})_{x}\\
	&=-\frac{c'(u)}{2c^{2}(u)}(R^{2}S+RS^{2})-\frac{c'(u)}{2c(u)}(\rho^{2}S+\sigma^{2}R),\\
	&\bigg(\frac{1}{c(u)}\big(S^{2}+c(u)\sigma^{2}\big)\bigg)_{t}+(S^{2}+c(u)\sigma^{2})_{x}\\&=-\frac{c'(u)}{2c^{2}(u)}(R^{2}S+RS^{2})-\frac{c'(u)}{2c(u)}(\rho^{2}S+\sigma^{2}R).
\end{aligned}
\right.
\end{equation}
Combining \eqref{eq:evolEnergy} and \eqref{eq:evolEnergy2}, we finally obtain
\begin{equation}
\label{eq:conslaw}
\left\{
\begin{aligned}
	&\Big(R^{2}+c(u)\rho^{2}+S^{2}+c(u)\sigma^{2}\Big)_{t}\\
	&-\Big(c(u)\big(R^{2}+c(u)\rho^{2}-S^{2}-c(u)\sigma^{2}\big)\Big)_{x}=0,\\
	&\bigg(\frac{1}{c(u)}\big(R^{2}+c(u)\rho^{2}-S^{2}-c(u)\sigma^{2}\big)\bigg)_{t}\\
	&-\Big(R^{2}+c(u)\rho^{2}+S^{2}+c(u)\sigma^{2}\Big)_{x}=0.
\end{aligned}
\right.
\end{equation}
Let
\begin{equation*}
	v=R^{2}+c(u)\rho^{2}+S^{2}+c(u)\sigma^{2} \quad \text{and} \quad w=\frac{1}{c(u)}\big(R^{2}+c(u)\rho^{2}-S^{2}-c(u)\sigma^{2}\big),
\end{equation*}
then \eqref{eq:conslaw} rewrites as
\begin{equation}
\label{eq:conslawsys}
	\begin{pmatrix} v \\ w \end{pmatrix}_{t}-\begin{pmatrix} c^{2}(u)w \\
	v \end{pmatrix}_{x}=0,
\end{equation}
which is a system of conservation laws, see \cite[(2.4)-(2.6)]{BreZhe:06} for the NVW equation.

Conservation of $v$ and $w$ here means that
\begin{equation}
\label{eq:cons1}
	\frac{d}{dt}\int_{x_{1}}^{x_{2}}v(t,x)\,dx=(c^{2}(u)w)(t,x_{2})-(c^{2}(u)w)(t,x_{1})
\end{equation}
and
\begin{equation*}
	\frac{d}{dt}\int_{x_{1}}^{x_{2}}w(t,x)\,dx=v(t,x_{2})-v(t,x_{1}).
\end{equation*}
Note that we have, by \eqref{eq:smoothEnergy2}
\begin{equation*}
	E(t)=\frac{1}{4}\int_{\mathbb{R}}v(t,x)\,dx.
\end{equation*}
So far we assumed that $u$, $u_{t}$, $u_{x}$, $\rho$ and $\sigma$ are smooth and bounded functions that vanish at $\pm\infty$. Under these assumptions we get by letting $x_{1}\rightarrow-\infty$ and $x_{2}\rightarrow+\infty$ in \eqref{eq:cons1},
\begin{equation*}
	\frac{d}{dt}\int_{-\infty}^{\infty}v(t,x)\,dx=0
\end{equation*}
and we recover the condition $E'(t)=0$.

In the view of \eqref{eq:evolEnergy}, we interpret $R^{2}+c(u)\rho^{2}$ and $S^{2}+c(u)\sigma^{2}$ as the left and right traveling part of the energy density, respectively. Moreover, the right-hand sides of the two equations in \eqref{eq:evolEnergy} are equal with opposite sign, which means that the right and the left part can interact with each other. That is, energy can swap back and forth between the two parts, while the total energy remains unchanged because of \eqref{eq:conslaw}.

In contrast to the linear wave equation, solutions to \eqref{eq:nvw}, and hence also to \eqref{eq:nvwsys}, can develop singularities in finite time, even for smooth initial data, see e.g. \cite{GlaHunZh:96}. Here, a singularity means that either $u_{x}$ or $u_{t}$ becomes unbounded pointwise while $u$ remains continuous, and $u(t,\cdot)$,$u_{x}(t,\cdot)$, $u_t(t,\cdot)\in L^{2}(\mathbb{R})$ for all $t\geq 0$. This means that the energy densities $\frac{1}{4}(R^{2}+c(u)\rho^{2})$ and $\frac{1}{4}(S^{2}+c(u)\sigma^{2})$ may become unbounded pointwise. In other words, the energy density measures $\frac{1}{4}(R^{2}+c(u)\rho^{2})\,dx$ and $\frac{1}{4}(S^{2}+c(u)\sigma^{2})\,dx$ can have singular parts, meaning that energy concentrates on sets of measure zero. Thus if we want to obtain a semigroup of solutions of \eqref{eq:nvwsys} we must be able to deal with both singular initial data and singularities turning up at later times. Assume that we have a singularity at time $t=t_{0}$. A central question is: if we want to solve the equation for $t\geq t_{0}$, how do we prescribe initial data at $t=t_{0}$? By computing $\frac{1}{4}(R^{2}+c(u)\rho^{2})(t_{0},\cdot)$ and $\frac{1}{4}(S^{2}+c(u)\sigma^{2})(t_{0},\cdot)$ in $L^2(\mathbb{R})$, we cannot conclude whether or not energy has concentrated. On the other hand, the presence of singularities in the initial data greatly affects the analysis of the equation and this is information we need to have available at the initial time. The solution to this problem is to add to the initial data two positive Radon measures $\mu_{0}$ and $\nu_{0}$, such that the absolutely continuous parts equal the classical energy densities, i.e., $(\mu_{0})_{\text{ac}}=\frac{1}{4}(R_{0}^{2}+c(u_{0})\rho_{0}^{2})\,dx$ and $(\nu_{0})_{\text{ac}}=\frac{1}{4}(S_{0}^{2}+c(u_{0})\sigma_{0}^{2})\,dx$. The singular parts of the measures on the other hand contain information about the concentration of energy.  

Next, we illustrate the formation of a singularity with the following example. We consider a function $f(t,x)$, where $f^{2}$ should be thought of as either $\frac{1}{4}(R^{2}+c(u)\rho^{2})$ or $\frac{1}{4}(S^{2}+c(u)\sigma^{2})$.  
The function $f(t,\cdot)$ belongs to $L^{2}(\mathbb{R})$ for all $t\geq 0$. At $t=0$, $f$ is smooth and bounded for all $x$. At a later time $t=t_{0}>0$, $f$ becomes unbounded at the origin and $f^{2}(t,x)\,dx$ converges weak-star in the sense of measures to the Dirac delta at zero as $t\rightarrow t_{0}$.

Let $t_{0}>0$ and consider the function
\begin{equation*}
	f(t,x)=\Big(\frac{2}{\pi}\Big)^{\frac{1}{4}}\frac{1}{\sqrt{|t-t_{0}|}}e^{-(\frac{x}{t-t_{0}})^{2}},
\end{equation*}
where $t\geq 0$. We have $f(t,\cdot)\in L^{2}(\mathbb{R})$ for all $t\geq 0$ since
\begin{equation}
\label{eq:fL2norm}
	\int_{\mathbb{R}}f^{2}(t,x)\,dx=1.
\end{equation}
Note that $f(t,x)\rightarrow 0$ for $x\neq 0$ and $f(t,0)\rightarrow +\infty$ as $t\rightarrow t_{0}$. Moreover, direct calculations yield
\begin{equation*}
	\lim_{t\rightarrow t_{0}}\int_{\mathbb{R}}\phi(x)f(t,x)\,dx=0 \quad \text{and} \quad
	\lim_{t\rightarrow t_{0}}\int_{\mathbb{R}}\phi(x)f^{2}(t,x)\,dx=\phi(0)
\end{equation*}
for all $\phi\in C^{\infty}_{c}(\mathbb{R})$. In other words, $f(t,\cdot)\overset{*}{\rightharpoonup}0$ and $f^{2}(t,x)\,dx\overset{*}{\rightharpoonup}\delta_{0}$, where $\delta_{0}$ is the Dirac delta at zero. Also note from \eqref{eq:fL2norm} that $f(t,\cdot)$ does not converge to zero in $L^{2}(\mathbb{R})$, and since $f(t,x)\rightarrow 0$ almost everywhere it means that $f(t,\cdot)$ does not converge in $L^{2}(\mathbb{R})$. In fact we have
\begin{equation*}
	\lim_{t\rightarrow t_{0}}\int_{\mathbb{R}}f^{p}(t,x)\,dx=\begin{cases}
	0, & 1\leq p<2,\\
	1, & p=2,\\
	\infty, & 2<p<\infty,
	\end{cases}
\end{equation*} 
and since $f\geq 0$ this implies that $f(t,\cdot)\rightarrow 0$ in $L^{p}(\mathbb{R})$ for $1\leq p<2$.

\section{Equivalent System}
\label{sec:equivsys}
%%%%%%%%%%%%%%%%%%%%%%%%%%%%%
%        Equivalent System
%%%%%%%%%%%%%%%%%%%%%%%%%%%%%

In this section we introduce a change of coordinates based on the method of characteristics. As a motivation for the approach we use for \eqref{eq:nvw} and \eqref{eq:nvwsys} we start out with the linear wave equation.

\subsection{The Linear Wave Equation}

Consider the linear wave equation
\begin{equation}
\label{eq:wave}
	u_{tt}-c^2u_{xx}=0,
\end{equation}
where $c$ is constant. We factorize the wave operator and can write the equation either as
\begin{equation}
\label{eq:waveSplit1}
	\Big[\frac{\partial}{\partial t}-c\frac{\partial}{\partial x}\Big]\Big[\frac{\partial}{\partial t}+c\frac{\partial}{\partial x}\Big]u=0
\end{equation}
or
\begin{equation}
\label{eq:waveSplit2}
	\Big[\frac{\partial}{\partial t}+c\frac{\partial}{\partial x}\Big]\Big[\frac{\partial}{\partial t}-c\frac{\partial}{\partial x}\Big]u=0.
\end{equation}
In both cases we find that the characteristics are given by $x\pm ct=\text{constant}$, and that every solution is of the form
\begin{equation}
\label{eq:waveSol}
	u(t,x)=F(x+ct)+G(x-ct),
\end{equation}
for some functions $F$ and $G$. In other words, the solution consists of a left and right traveling part. 

Let $X(t,x)=x+ct$ and $Y(t,x)=x-ct$. Then the functions $X$ and $Y$ satisfy
\begin{equation}
\label{eq:charEqWave}
	X_{t}-cX_{x}=0 \quad \text{and} \quad Y_{t}+cY_{x}=0.
\end{equation}  
Note that the operators acting on $X$ and $Y$ are the two factors of the wave operator. We consider the mapping from $(t,x)$-plane to $(X,Y)$-plane defined by the above equations. To make sure that the transformation is non-degenerate we compute
\begin{equation*}
	\det\bigg(\begin{bmatrix}
	X_{t} & X_{x}\\
	Y_{t} & Y_{x}
	\end{bmatrix}\bigg)=2c,
\end{equation*}   
which implies that we must have $c\neq 0$. We observe that the characteristics $x+ct=\text{constant}$ and $x-ct=\text{constant}$ are mapped to horizontal and vertical lines in the $(X,Y)$-plane, respectively. Let $U(X,Y)=u(t(X,Y),x(X,Y))$. We compute the derivatives of $u(t,x)=U(X(t,x),Y(t,x))$ and get 
\begin{subequations}
\label{eq:uDer}	
\begin{align}
	u_{t}&=U_{X}X_{t}+U_{Y}Y_{t},\\
	u_{tt}&=U_{XX}X_{t}^{2}+U_{X}X_{tt}+2U_{XY}X_{t}Y_{t}+U_{YY}Y_{t}^{2}+U_{Y}Y_{tt},\\
	u_{x}&=U_{X}X_{x}+U_{Y}Y_{x},\\
	u_{xx}&=U_{XX}X_{x}^{2}+U_{X}X_{xx}+2U_{XY}X_{x}Y_{x}+U_{YY}Y_{x}^{2}+U_{Y}Y_{xx}.
\end{align}
\end{subequations}
Inserting \eqref{eq:uDer} in \eqref{eq:wave} yields
\begin{align*}
	0&=u_{tt}-c^2u_{xx}\\
	&=U_{XX}(X_{t}^{2}-c^{2}X_{x}^{2})+U_{X}(X_{tt}-c^{2}X_{xx})+2U_{XY}(X_{t}Y_{t}-c^{2}X_{x}Y_{x})\\
	&\quad +U_{YY}(Y_{t}^{2}-c^{2}Y_{x}^{2})+U_{Y}(Y_{tt}-c^{2}Y_{xx})\\
	&=-4c^{2}X_{x}Y_{x}U_{XY},
\end{align*}
where we used \eqref{eq:charEqWave}. The functions $X_{x}$ and $Y_{x}$ are nonzero and finite, as $X_{x}=1$ and $Y_{x}=1$. Furthermore, since we assume $c\neq 0$, we get
\begin{equation}
\label{eq:wesys}
	U_{XY}=0.
\end{equation}
In particular, we have $U(X,Y)=F(X)+G(Y)$ and once again we obtain \eqref{eq:waveSol}.

\subsection{The Nonlinear Variational Wave Equation}\label{sec:nvw}

Now we turn to the nonlinear variational wave equation
\begin{equation*}
	u_{tt}-c(u)(c(u)u_{x})_{x}=u_{tt}-c(u)c'(u)u_{x}^{2}-c^{2}(u)u_{xx}=0.
\end{equation*}
We first note that a factorization of the operator like we did in \eqref{eq:waveSplit1} and \eqref{eq:waveSplit2} is not possible because of the function $c(u)$. Instead we look for a factorization of the terms containing the highest order derivatives. We compute
\begin{equation}
\label{eq:nvwSplit1}
	\Big[\frac{\partial}{\partial t}-c(u)\frac{\partial}{\partial x}\Big]\Big[\frac{\partial}{\partial t}+c(u)\frac{\partial}{\partial x}\Big]u=u_{tt}-c(u)(c(u)u_{x})_{x}+c'(u)u_{t}u_{x}=+c'(u)u_{t}u_{x}
\end{equation}
and
\begin{equation}
\label{eq:nvwSplit2}
	\Big[\frac{\partial}{\partial t}+c(u)\frac{\partial}{\partial x}\Big]\Big[\frac{\partial}{\partial t}-c(u)\frac{\partial}{\partial x}\Big]u=u_{tt}-c(u)(c(u)u_{x})_{x}-c'(u)u_{t}u_{x}=-c'(u)u_{t}u_{x}.
\end{equation}
Both these factorizations take care of the higher order derivatives, and we end up with a lower order term on the right-hand side. Note the difference in sign of this term depending on which operator is used first, showing that the operators do not commute. Therefore it is natural to consider both equations corresponding to the two factorizations. Note that \eqref{eq:nvwSplit1} and \eqref{eq:nvwSplit2} are the equations for $R=u_{t}+c(u)u_{x}$ and $S=u_{t}-c(u)u_{x}$ and we see once more that it is convenient to work with these functions. Note that $R$ and $S$ are the directional derivatives of $u$ in the directions $(1,c(u))$ and $(1,-c(u))$, respectively. From \eqref{eq:nvwSplit1} and \eqref{eq:nvwSplit2} we see that the directional derivative of $R$ in the direction $(1,-c(u))$ and the directional derivative of $S$ in the direction $(1,c(u))$ are equal with opposite sign.

In the following we assume that $u$ is sufficiently smooth and bounded.

We consider the characteristics corresponding to the highest order derivatives, i.e., the characteristics corresponding to the two factors $\frac{\partial}{\partial t}-c(u)\frac{\partial}{\partial x}$ and $\frac{\partial}{\partial t}+c(u)\frac{\partial}{\partial x}$ of the nonlinear variational wave operator. More specifically, we consider the functions $X(t,x)$ and $Y(t,x)$ satisfying
\begin{equation}
\label{eq:charXY}
	X_{t}-c(u)X_{x}=0 \quad \text{and} \quad Y_{t}+c(u)Y_{x}=0.
\end{equation}

First, we want to solve the equation for $X(t,x)$ with the method of characteristics. Let $t$ and $x$ be functions of parameters $s$ and $\xi$. We compute
\begin{equation}\label{eq:char1_2}
	\frac{d}{ds}X(t(s,\xi),x(s,\xi))=X_{t}t_{s}+X_{x}x_{s}
\end{equation} 
which is equal to zero if
\begin{equation}
\label{eq:char1_1}
	t_{s}(s,\xi)=1 \quad \text{and} \quad x_{s}(s,\xi)=-c(u(t(s,\xi),x(s,\xi))).
\end{equation}
We assume that $t(0,\xi)=0$ and $x(0,\xi)=\xi$ for all $\xi\in\mathbb{R}$. Then we get
\begin{equation}
\label{eq:char1}
	t(s,\xi)=s \quad \text{and} \quad x_{s}(s,\xi)=-c(u(s,x(s,\xi))).
\end{equation}
We integrate the last equation in \eqref{eq:char1} and get
\begin{equation}
\label{eq:char3}
	x(s,\xi)=\xi-\int_{0}^{s}c(u(r,x(r,\xi)))\,dr.
\end{equation}
Recalling assumption \eqref{eq:cassumption}, we get $-\kappa\leq x_{s}(s,\xi)\leq -\frac{1}{\kappa}$ and
\begin{equation*}
	\xi-\kappa s\leq x(s,\xi)\leq \xi-\frac{1}{\kappa}s
\end{equation*}
for all $s\geq 0$ and $\xi\in\mathbb{R}$. 

For fixed $\xi$ we consider the differential equation in \eqref{eq:char1}. Assuming that $u_{x}(t,\cdot)\in L^\infty(\mathbb{R})$, the right-hand side is Lipschitz continuous with respect to the $x$-argument, i.e.,
\begin{equation*}
	|c(u(s,x_{2}))-c(u(s,x_{1}))|\leq k_{1}\sup_{t\geq 0}||u_{x}(t,\cdot)||_{L^{\infty}(\mathbb{R})}|x_{2}-x_{1}|
\end{equation*}
for all $x_{1}$, $x_{2}\in\mathbb{R}$. Thus, there exists a unique local solution $x(\cdot,\xi)$ with initial data $x(0,\xi)=\xi$. This means that only one characteristic starts from the point given by $t=0$ and $x=\xi$.

Now we can have three scenarios: 

If $0<x_{\xi}(s,\xi)<\infty$ for all $s<s_0$, then the solution $X(s,x(s,\xi))$ is well-defined and there is at least a chance of continuing $X(s,x(s,\xi))$ for $s\geq s_{0}$.

If $x_{\xi}(s_{0},\xi)=0$ at some point $(s_{0}, \xi)$ with $s_0>0$ , characteristics starting from different values of $\xi$ may intersect at $s=s_{0}$ and it is not clear that $X(s,x(s,\xi))$ is well-defined for $s>s_0$. 

If $x_{\xi}(s,\xi)\rightarrow \infty$ as $s$ tends to some point $s_{0}>0$ and some $\xi$, then the solution $x(s,\xi)$ is not defined for $s\geq s_{0}$.

Differentiating the last equation in \eqref{eq:char1} with respect to $\xi$ gives us
\begin{equation*}
	x_{s\xi}(s,\xi)=-c'(u(s,x(s,\xi)))u_{x}(s,x(s,\xi))x_{\xi}(s,\xi),
\end{equation*}
which we integrate to get
\begin{equation}
\label{eq:xXi}
	x_{\xi}(s,\xi)=\exp\bigg\{-\int_{0}^{s}c'(u(r,x(r,\xi)))u_{x}(r,x(r,\xi))\,dr\bigg\}.
\end{equation}
Since $u_{x}$ is bounded we have $0<x_{\xi}(s,\xi)<\infty$ for all $0\leq s<\infty$ and all $\xi$. This is because
\begin{equation*}
	\exp\Big\{-k_{1}s\sup_{t\in[0,s]}||u_{x}(t,\cdot)||_{L^{\infty}(\mathbb{R})}\Big\}\leq 
	x_{\xi}(s,\xi)\leq \exp\Big\{k_{1}s\sup_{t\in[0,s]}||u_{x}(t,\cdot)||_{L^{\infty}(\mathbb{R})}\Big\}.
\end{equation*}
Thus, in the smooth case we do not end up with the two challenging scenarios described above. 

We compute the determinant of the Jacobian corresponding to the map $(s,\xi)\rightarrow (t,x)$ and get 
\begin{equation*}
	\det\bigg(\begin{bmatrix}
	t_{s} & t_{\xi}\\
	x_{s} & x_{\xi}
	\end{bmatrix}\bigg)=t_{s}x_{\xi}-t_{\xi}x_{s}=x_{\xi}.
\end{equation*}
Since $0<x_{\xi}(s,\xi)<\infty$ we have from the inverse function theorem that the Jacobian corresponding to the map $(t,x)\rightarrow (s,\xi)$ satisfies
\begin{equation*}
	\begin{bmatrix}
	s_{t} & s_{x}\\
	\xi_{t} & \xi_{x}
	\end{bmatrix}=\frac{1}{x_{\xi}}\begin{bmatrix}
	x_{\xi} & -t_{\xi}\\
	-x_{s} & t_{s}
	\end{bmatrix}.
\end{equation*}
From \eqref{eq:char1} we get
\begin{equation}
\label{eq:sxiId}
	s_{t}=1, \quad s_{x}=0, \quad \xi_{t}=-\frac{x_{s}}{x_{\xi}}, \quad \xi_{x}=\frac{1}{x_{\xi}},
\end{equation}
so that
\begin{equation*}
	s(t,x)=t
\end{equation*}
and
\begin{equation*}
	\xi_{t}(t,x)=-x_{s}(t,\xi(t,x))\xi_{x}(t,x)=c(u(t,\xi(t,x)))\xi_{x}(t,x).
\end{equation*} 

Furthermore, \eqref{eq:charXY}--\eqref{eq:char1_1} imply that
\begin{equation*}
	X(t(s,\xi),x(s,\xi))=X(0,\xi)=g(\xi),
\end{equation*}
for some strictly increasing function $g\in C^1(\mathbb{R})$.
Differentiation, combined with \eqref{eq:char1_1} and \eqref{eq:sxiId} yields
\begin{equation}
\label{eq:XtXx}
	X_{t}=g'(\xi)\xi_{t}=-g'(\xi)\frac{x_{s}}{x_{\xi}} \quad \text{and} \quad X_{x}=g'(\xi)\xi_{x}=g'(\xi)\frac{1}{x_{\xi}}, 
\end{equation}
which implies $0<X_{t}<\infty$ and $0<X_{x}<\infty$.

Next, we study $Y(t,x)$ with the method of characteristics. We obtain
\begin{equation*}
	\frac{d}{ds}Y(t(s,\xi),x(s,\xi))=0
\end{equation*} 
with the characteristics given by
\begin{equation}
\label{eq:forwardchar}
	t_{s}(s,\xi)=1 \quad \text{and} \quad x_{s}(s,\xi)=c(u(t(s,\xi),x(s,\xi))).
\end{equation}
Assuming that $t(0,\xi)=0$ and $x(0,\xi)=\xi$ for all $\xi\in\mathbb{R}$ we get
\begin{equation}
\label{eq:forwardchar2}
	t(s,\xi)=s \quad \text{and} \quad x_{s}(s,\xi)=c(u(s,x(s,\xi))).
\end{equation}
If $Y(0,\xi)=h(\xi)$ for some strictly increasing function $h\in C^1(\mathbb{R})$, then
\begin{equation*}
	Y(s,x(s,\xi))=h(\xi).
\end{equation*}
As in the computations above we find
\begin{equation}
\label{eq:xXi2}
	x_{\xi}(s,\xi)=\exp\bigg\{\int_{0}^{s}c'(u(r,x(r,\xi)))u_{x}(r,x(r,\xi))\,dr\bigg\},
\end{equation}
and since $u_{x}$ is bounded, $0<x_{\xi}(s,\xi)<\infty$ for all $0\leq s<\infty$ and all $\xi$. We also find that \eqref{eq:sxiId} holds with $x_{\xi}$ as defined in \eqref{eq:xXi2}, and
\begin{equation}
\label{eq:YtYx}
	Y_{t}=h'(\xi)\xi_{t}=-h'(\xi)\frac{x_{s}}{x_{\xi}} \quad \text{and} \quad Y_{x}=h'(\xi)\xi_{x}=h'(\xi)\frac{1}{x_{\xi}},
\end{equation}
so that $-\infty<Y_{t}<0$ and $0<Y_{x}<\infty$. 

Figure \ref{fig:FigLinChar} and \ref{fig:FigNonlinChar} show the characteristics for the linear wave equation and the NVW equation, respectively.

\begin{figure}
	\centerline{\hbox{\includegraphics[width=10cm]{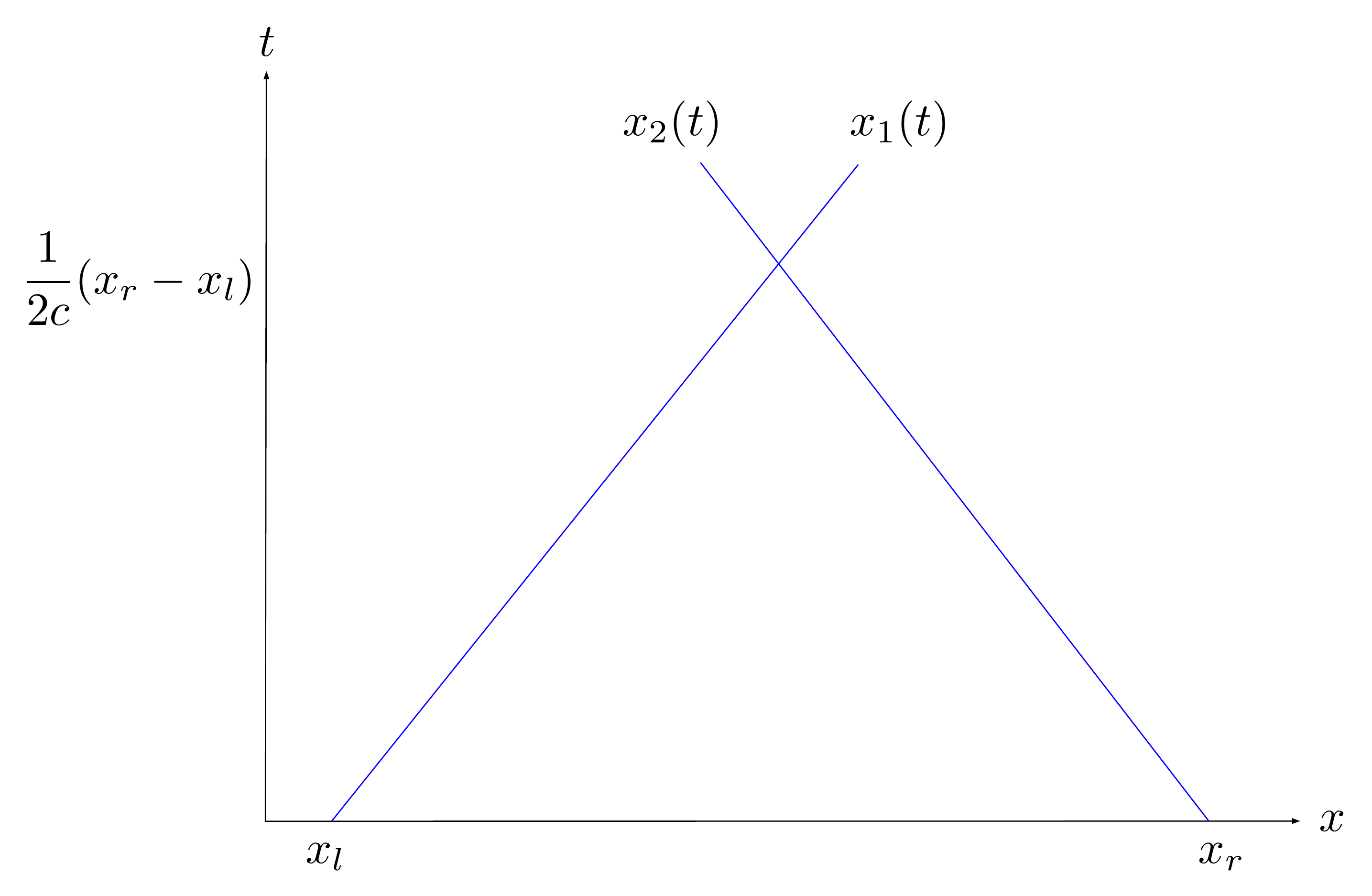}}}
	\caption{Characteristics of the linear wave equation, i.e., $c$ is constant. The forward characteristic $x_{1}(t)=x_{l}+ct$ starting from $x_{l}$, and the backward characteristic $x_{2}(t)=x_{r}-ct$ starting from $x_{r}$, intersect at $t=\frac{1}{2c}(x_{r}-x_{l})$.}
	\label{fig:FigLinChar}
\end{figure}

\begin{figure}
	\centerline{\hbox{\includegraphics[width=10cm]{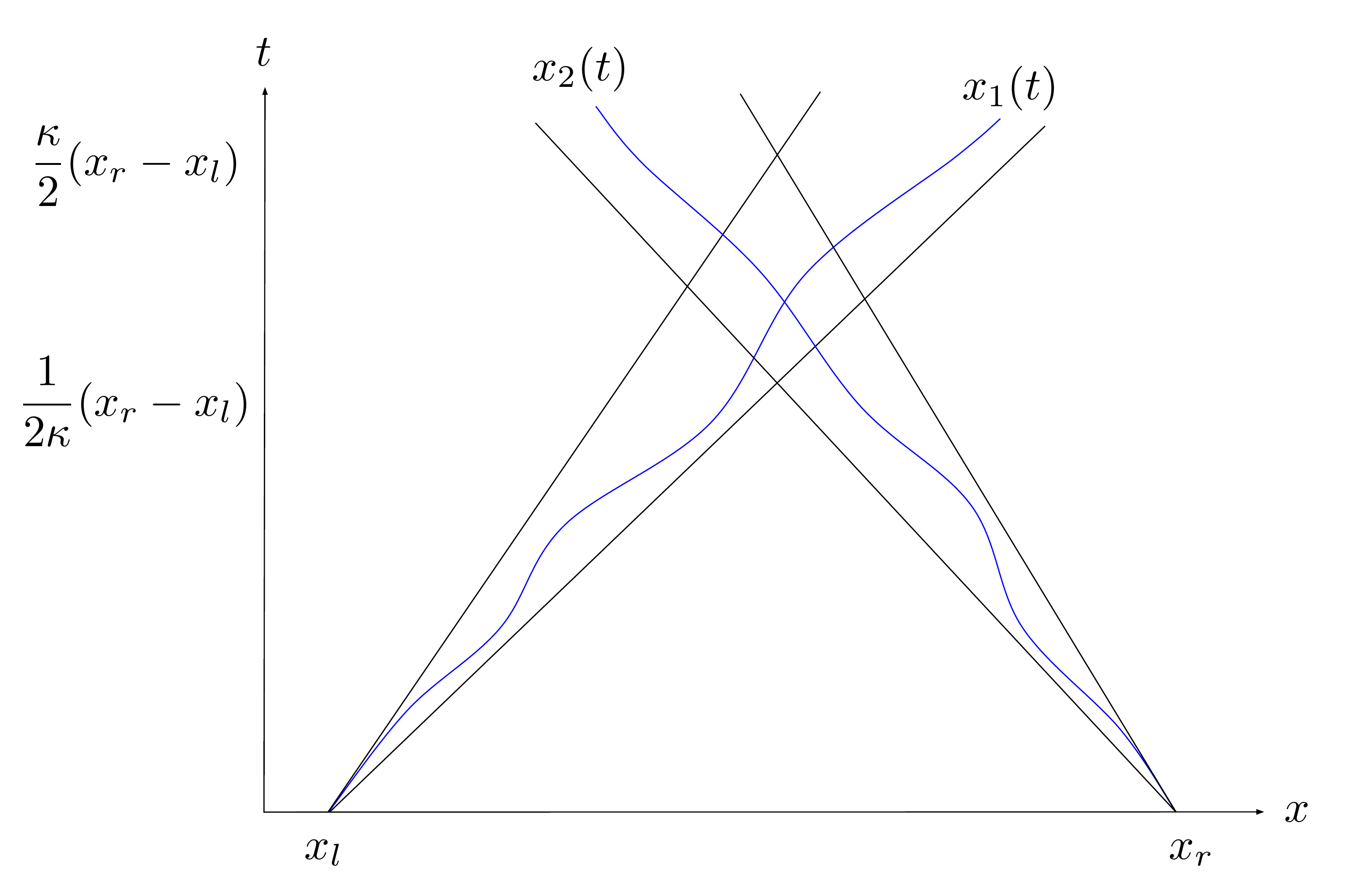}}}
	\caption{Characteristics of the NVW equation. The forward characteristic $x_{1}(t)$ starting from $x_{l}$ is given by $x_{1,t}(t)=c(u(t,x_{1}(t)))$, $x_{1}(0)=x_{l}$, and the backward characteristic $x_{2}(t)$ starting from $x_{r}$ is given by $x_{2,t}(t)=-c(u(t,x_{2}(t)))$, $x_{2}(0)=x_{r}$. Because of \eqref{eq:cassumption}, they intersect at a time $t$ such that $\frac{1}{2\kappa}(x_{r}-x_{l})\leq t\leq \frac{\kappa}{2}(x_{r}-x_{l})$.}
	\label{fig:FigNonlinChar}
\end{figure}

Now we consider the mapping from the $(t,x)$-plane to the $(X,Y)$-plane. The determinant of the Jacobian of this map reads
\begin{equation}
\label{eq:txtoXY}
    d=\det\bigg(\begin{bmatrix}
	X_{t} & X_{x}\\
	Y_{t} & Y_{x}
\end{bmatrix}\bigg)=X_{t}Y_{x}-X_{x}Y_{t}=2c(u)X_{x}Y_{x}=-\frac{2X_{t}Y_{t}}{c(u)}
\end{equation}
and once again we see that we must assume that $c(u)$ is strictly positive and finite. Since $0<X_{x}<\infty$ and $0<Y_{x}<\infty$, we have $0<d<\infty$. The inverse function theorem then implies that the Jacobian corresponding to the map $(X,Y)\rightarrow (t,x)$ satisfies
\begin{equation*}
	\begin{bmatrix}
	t_{X} & t_{Y}\\
	x_{X} & x_{Y}
	\end{bmatrix}=\frac{1}{d}\begin{bmatrix}
	Y_{x} & -X_{x}\\
	-Y_{t} & X_{t}
	\end{bmatrix}.
\end{equation*}
From the above equality many identities can be read off, and we only mention some of them. By using \eqref{eq:charXY} and \eqref{eq:txtoXY}, we obtain
\begin{equation}
\label{eq:NiceId}
	2c(u)t_{X}X_{x}=1, \quad -2c(u)t_{Y}Y_{x}=1, \quad 2x_{X}X_{x}=1, \quad 2x_{Y}Y_{x}=1,
\end{equation}
which imply
\begin{equation}
\label{eq:xXxY}
	x_{X}=c(u)t_{X} \quad \text{and} \quad x_{Y}=-c(u)t_{Y}.
\end{equation}
We observe from \eqref{eq:NiceId} that $t_{X}$, $t_{Y}$, $x_{X}$ and $x_{Y}$ are nonzero and finite.

Let $U(X,Y)=u(t(X,Y),x(X,Y))$. We insert the derivatives of $u(t,x)=\\U(X(t,x),Y(t,x))$ from \eqref{eq:uDer} into \eqref{eq:nvw} and get
\begin{aalign}
\label{eq:nvw2}
	0&=u_{tt}-c(u)(c(u)u_{x})_{x}\\
	&=U_{XX}(X_{t}^{2}-c^{2}(u)X_{x}^{2})+U_{X}(X_{tt}-c^{2}(u)X_{xx})\\
	&\quad+2U_{XY}(X_{t}Y_{t}-c^{2}(u)X_{x}Y_{x})\\
	&\quad +U_{YY}(Y_{t}^{2}-c^{2}(u)Y_{x}^{2})+U_{Y}(Y_{tt}-c^{2}(u)Y_{xx})\\
	&\quad-c(u)c'(u)(U_{X}^{2}X_{x}^{2}+2U_{X}U_{Y}X_{x}Y_{x}+U_{Y}^{2}Y_{x}^{2}).
\end{aalign}
Due to \eqref{eq:charXY} all second order derivatives of $U$ drop out except for the term containing the mixed derivative $U_{XY}$. We compute the remaining terms. From \eqref{eq:uDer} and \eqref{eq:charXY} we have
\begin{equation*}
	R=u_{t}+c(u)u_{x}=U_{X}(X_{t}+c(u)X_{x})+U_{Y}(Y_{t}+c(u)Y_{x})=2c(u)U_{X}X_{x}
\end{equation*}
and
\begin{equation*}
	S=u_{t}-c(u)u_{x}=U_{X}(X_{t}-c(u)X_{x})+U_{Y}(Y_{t}-c(u)Y_{x})=-2c(u)U_{Y}Y_{x},
\end{equation*}
and after using \eqref{eq:NiceId} we get
\begin{equation}
\label{eq:Rid}
	R=c(u)\frac{U_{X}}{x_{X}}
\end{equation}
and
\begin{equation}
\label{eq:Sid}
	S=-c(u)\frac{U_{Y}}{x_{Y}}.
\end{equation}
By differentiating \eqref{eq:charXY} and using \eqref{eq:NiceId} we obtain
\begin{equation*}
	X_{tt}-c^{2}(u)X_{xx}=c'(u)X_{x}R=\frac{c'(u)R}{2x_{X}} \quad \text{and} \quad Y_{tt}-c^{2}(u)Y_{xx}=-c'(u)Y_{x}S=-\frac{c'(u)S}{2x_{Y}}.
\end{equation*}
From \eqref{eq:charXY} and \eqref{eq:NiceId} we have
\begin{equation*}
	X_{t}Y_{t}-c^{2}(u)X_{x}Y_{x}=-2c^{2}(u)X_{x}Y_{x}=-\frac{c^{2}(u)}{2x_{X}x_{Y}}.
\end{equation*}
Thus  \eqref{eq:nvw2} is equivalent to
\begin{equation}
\label{eq:UXY3}
	U_{XY}=\frac{c'(u)}{4c^{3}(u)}(R^{2}x_{X}x_{Y}+S^{2}x_{Y}x_{X})-\frac{c'(u)}{2c(u)}U_{X}U_{Y}.
\end{equation}
Let
\begin{equation}
\label{eq:JDer}
	J_{X}=\frac{1}{2}R^{2}x_{X} \quad \text{and} \quad J_{Y}=\frac{1}{2}S^{2}x_{Y},
\end{equation} 
which we think of as the left and right traveling part of the energy density in the new variables, respectively. Now \eqref{eq:UXY3} yields
\begin{equation*}
	U_{XY}=\frac{c'(u)}{2c^{3}(U)}(J_{X}x_{Y}+J_{Y}x_{X})-\frac{c'(u)}{2c(u)}U_{X}U_{Y}.
\end{equation*}
Using \eqref{eq:Rid}, \eqref{eq:Sid} and \eqref{eq:JDer} we get
\begin{equation}
\label{eq:xJURel}
	2x_{X}J_{X}=c^{2}(U)U_{X}^{2} \quad \text{and} \quad 2x_{Y}J_{Y}=c^{2}(U)U_{Y}^{2}. 
\end{equation}

We find it convenient to introduce the function $K$ defined by
\begin{equation}
\label{eq:KDer}
	K_{X}=\frac{1}{2c(u)}R^{2}x_{X} \quad \text{and} \quad K_{Y}=-\frac{1}{2c(u)}S^{2}x_{Y},
\end{equation}
which satisfies
\begin{equation}
\label{eq:JXKX}
	J_{X}=c(U)K_{X} \quad \text{and} \quad J_{Y}=-c(U)K_{Y}.
\end{equation}
In view of \eqref{eq:evolEnergy2} and \eqref{eq:conslaw} (with $\rho=\sigma=0$) we can think of $K_{X}$ and $K_{Y}$ as the left and right traveling part of the second conserved quantity $\frac{1}{c(u)}(R^{2}-S^{2})$ in the new coordinates, respectively. 

Next, let us derive the equations for $t_{XY}$, $x_{XY}$, $J_{XY}$ and $K_{XY}$.
We have $x_{XY}=x_{YX}$, which by using \eqref{eq:xXxY} is the same as $(c(U)t_{X})_{Y}=(-c(U)t_{Y})_{X}$. This leads to  
\begin{equation*}
	t_{XY}=-\frac{c'(U)}{2c(U)}(U_{Y}t_{X}+U_{X}t_{Y}).
\end{equation*}
We find the equation for $x_{XY}$ by using \eqref{eq:xXxY} in $t_{XY}=t_{YX}$, which yields $(\frac{x_{X}}{c(U)})_{Y}=(-\frac{x_{Y}}{c(U)})_{X}$ and finally
\begin{equation*}
	x_{XY}=\frac{c'(U)}{2c(U)}(U_{Y}x_{X}+U_{X}x_{Y}).
\end{equation*}
Using $J_{XY}=J_{YX}$, $K_{XY}=K_{YX}$ and \eqref{eq:JXKX} we get
\begin{equation*}
	J_{XY}=\frac{c'(U)}{2c(U)}(U_{Y}J_{X}+U_{X}J_{Y})
\end{equation*}
and
\begin{equation*}
	K_{XY}=-\frac{c'(U)}{2c(U)}(U_{Y}K_{X}+U_{X}K_{Y}).
\end{equation*}
Finally we end up with the following system of differential equations
\begin{subequations}\label{eq:nvwgoveq}
\begin{align}
	t_{XY}&=-\frac{c'(U)}{2c(U)}(U_{Y}t_{X}+U_{X}t_{Y}),\\
	x_{XY}&=\frac{c'(U)}{2c(U)}(U_{Y}x_{X}+U_{X}x_{Y}),\\
	U_{XY}&=\frac{c'(u)}{2c^{3}(U)}(J_{X}x_{Y}+J_{Y}x_{X})-\frac{c'(u)}{2c(u)}U_{X}U_{Y},\\
	J_{XY}&=\frac{c'(U)}{2c(U)}(U_{Y}J_{X}+U_{X}J_{Y}),\\
	K_{XY}&=-\frac{c'(U)}{2c(U)}(U_{Y}K_{X}+U_{X}K_{Y}).
\end{align}
\end{subequations}

\subsection{The Regularized System}

We derive a set of equations corresponding to \eqref{eq:nvwsys} in the new variables. We consider characteristics $X(t,x)$ and $Y(t,x)$ given by \eqref{eq:charXY}. We assume that $u$, $R$, $S$, $\rho$, and $\sigma$ are smooth and bounded. As above we get that $t_{X}$, $t_{Y}$, $x_{X}$, and $x_{Y}$ are nonzero and finite. 

Denote $u(t,x)=U(X(t,x),Y(t,x))$. By calculations like those that led to \eqref{eq:UXY3} we find
\begin{equation*}
	U_{XY}=\frac{c'(u)}{4c^{3}(u)}\Big((R^{2}+c(u)\rho^{2})x_{X}x_{Y}+(S^{2}+c(u)\sigma^{2})x_{Y}x_{X}\Big)-\frac{c'(u)}{2c(u)}U_{X}U_{Y}.
\end{equation*}
We introduce
\begin{equation}
\label{eq:JDer2}
	J_{X}=\frac{1}{2}(R^{2}+c(u)\rho^{2})x_{X} \quad \text{and} \quad J_{Y}=\frac{1}{2}(S^{2}+c(u)\sigma^{2})x_{Y}.
\end{equation}
Using \eqref{eq:Rid} and \eqref{eq:Sid} we obtain
\begin{equation*}
	U_{XY}=\frac{c'(u)}{2c^{3}(U)}(J_{X}x_{Y}+J_{Y}x_{X})-\frac{c'(u)}{2c(u)}U_{X}U_{Y}.
\end{equation*}
Accordingly, we define
\begin{equation}
\label{eq:KDer2}
	K_{X}=\frac{1}{2c(u)}(R^{2}+c(u)\rho^{2})x_{X} \quad \text{and} \quad K_{Y}=-\frac{1}{2c(u)}(S^{2}+c(u)\sigma^{2})x_{Y}.
\end{equation}

The derivation of the system of equations is similar to the one in Section~\ref{sec:nvw} for the NVW equation \eqref{eq:nvw}.  In fact, we obtain the same equations as in \eqref{eq:nvwgoveq}. In addition we get two equations corresponding to \eqref{eq:nvwsys2} and \eqref{eq:nvwsys3}. Let $\rho(t,x)=P(X(t,x),Y(t,x))$. By \eqref{eq:nvwsys2} we get
\begin{align*}
	0&=\rho_{t}-(c(u)\rho)_{x}\\
	&=P_{X}(X_{t}-c(u)X_{x})+P_{Y}(Y_{t}-c(u)Y_{x})-c'(u)P(U_{X}X_{x}+U_{Y}Y_{x}).
\end{align*}
From \eqref{eq:charXY} and \eqref{eq:NiceId} we have
\begin{equation*}
	c(U)P_{Y}x_{X}+\frac{c'(U)}{2}P(U_{X}x_{Y}+U_{Y}x_{X})=0
\end{equation*}
and from \eqref{eq:nvwgoveq} we see that this is the same as
\begin{equation*}
	P_{Y}x_{X}+Px_{XY}=(Px_{X})_{Y}=0.
\end{equation*}
We define $p=Px_{X}$, so that 
\begin{equation*}
	p_{Y}=0.
\end{equation*}
Let $\sigma(t,x)=Q(X(t,x),Y(t,x))$. From \eqref{eq:nvwsys3} we have 
\begin{align*}
	0&=\sigma_{t}+(c(u)\sigma)_{x}\\
	&=Q_{X}(X_{t}+c(u)X_{x})+Q_{Y}(Y_{t}+c(u)Y_{x})+c'(u)Q(U_{X}X_{x}+U_{Y}Y_{x}).
\end{align*}
Using \eqref{eq:charXY} and \eqref{eq:NiceId} we get
\begin{equation*}
	c(U)Q_{X}x_{Y}+\frac{c'(U)}{2}Q(U_{X}x_{Y}+U_{Y}x_{X})=0
\end{equation*}
 and by  \eqref{eq:nvwgoveq} we find
\begin{equation*}
	Q_{X}x_{Y}+Qx_{XY}=(Qx_{Y})_{X}=0.
\end{equation*}
We define $q=Qx_{Y}$, so that
\begin{equation*}
	q_{X}=0.
\end{equation*}

By \eqref{eq:Rid}, \eqref{eq:Sid} and \eqref{eq:JDer2} we get
\begin{equation}
\label{eq:xJUpqRel}
	2x_{X}J_{X}=c^{2}(U)U_{X}^{2}+c(U)p^{2} \quad \text{and} \quad 2x_{Y}J_{Y}=c^{2}(U)U_{Y}^{2}+c(U)q^{2}. 
\end{equation}
Furthermore, we note that the relations 
\begin{subequations}\label{eq:xtJKRel}
\begin{align}
	\label{eq:xtRel}
	x_{X}&=c(U)t_{X}, & x_{Y}&=-c(U)t_{Y},\\
	\label{eq:JKRel}
	J_{X}&=c(U)K_{X}, & J_{Y}&=-c(U)K_{Y}
\end{align}
\end{subequations}
hold. To summarize, we obtain the following system of equations
\begin{subequations}
\label{eq:goveq}
\begin{align}
	\label{eq:goveqt}
    \dXY{t}&=-\frac{c'(U)}{2c(U)}(U_{Y}t_{X}+U_{X}t_{Y}),\\
    \label{eq:goveqx}
    \dXY{x}&=\frac{c'(U)}{2c(U)}(U_{Y}x_{X}+U_{X}x_{Y}),\\
    \label{eq:goveqU}
    \dXY{U}&=\frac{c'(U)}{2c^3(U)}(x_{Y}J_{X}+x_{X}J_{Y})-\frac{c'(U)}{2c(U)}U_{X}U_{Y},\\
    \label{eq:goveqJ}
    \dXY{J}&=\frac{c'(U)}{2c(U)}(U_{Y}J_{X}+U_{X}J_{Y}),\\
    \label{eq:goveqK}
    \dXY{K}&=-\frac{c'(U)}{2c(U)}(U_{Y}K_{X}+U_{X}K_{Y}),\\
    \label{eq:goveqp}
    p_{Y}&=0,\\
    \label{eq:goveqq}
    q_{X}&=0.
\end{align}
\end{subequations}
We introduce the vector $Z=(t,x,U,J,K)$. The
system \eqref{eq:goveqt}-\eqref{eq:goveqK} then rewrites  as
\begin{equation}
\label{eq:condgoveq}
  Z_{XY}=F(Z)(Z_X,Z_Y),
\end{equation}
where $F(Z)$ is a bilinear and symmetric tensor from $\Real^5\times\Real^5$ to $\Real^5$. Due to the relations \eqref{eq:xtRel}, either one of the equations in \eqref{eq:goveqt} and
\eqref{eq:goveqx} is redundant: one could remove one of them, and the system would remain well-posed, and one retrieves $t$ or $x$ by using \eqref{eq:xtRel}. Similarly, either one of the equations \eqref{eq:goveqJ} and \eqref{eq:goveqK} becomes redundant by \eqref{eq:JKRel}. However,
we find it convenient to work with the complete set of variables, i.e., $Z=(t,x,U,J,K)$. 

We observe that the equations \eqref{eq:goveqt}-\eqref{eq:goveqK} are not coupled with the last two \eqref{eq:goveqp}-\eqref{eq:goveqq}. Furthermore they are identical to the set of equations \eqref{eq:nvwgoveq} found for the NVW equation. However, we see that \eqref{eq:xJURel} is different from \eqref{eq:xJUpqRel}, so the solutions will not be identical. Also, from \eqref{eq:xJUpqRel} we see that the solutions $t,x,U,J,K$ of the five first equations are not independent of the solutions $p,q$ of the last two equations.

To prove the existence of solutions of \eqref{eq:goveq} we use a fixed point argument which is similar to the one found in \cite{HolRay:11}. In order to do so we need a curve $(\X(s),\Y(s))$ parametrized by $s\in\mathbb{R}$ in the $(X,Y)$-plane that corresponds to the initial time, i.e., it consists of all points $(X,Y)\in\mathbb{R}^{2}$ such that $t(X,Y)=0$. We will admit curves of the following type.  

\begin{definition}\label{def:curves}
	We denote by $\C$ the set of curves in the plane $\mathbb{R}^2$ parametrized by $(\X(s),\Y(s))$ with $s \in \mathbb{R}$, such that
	\begin{subequations}
		\label{eq:initialcurve}
		\begin{align}
		\label{eq:initialcurvereg}
		&\X-\id, \ \Y-\id \in \Winf(\Real), \\
		&\dot{\X}\geq 0, \ \dot{\Y}\geq 0 
		\end{align}	
		with the normalization
		\begin{equation}
		\label{eq:initialcurvenormalization}
		\frac{1}{2}(\X(s)+\Y(s))=s \quad \text{for all } s \in \Real.
		\end{equation}
		We set
		\begin{equation}
		\norm{(\X,\Y)}_{\C}=\norm{\X-\id}_{\Linf(\mathbb{R})}+\norm{\Y-\id}_{\Linf(\mathbb{R})}.
		\end{equation}
	\end{subequations}
\end{definition}

In the above derivation where we assumed that the solutions are smooth and bounded we found that $0<t_{X}<\infty$ and $-\infty<t_{Y}<0$. This implies that both $\X(s)$ and $\Y(s)$ are strictly increasing functions. Indeed, by differentiating $t(\X(s),\Y(s))=0$ and using \eqref{eq:initialcurvenormalization} we get
\begin{equation*}
	\dot{\X}=-\frac{2t_{Y}}{t_{X}-t_{Y}} \quad \text{and} \quad \dot{\Y}=\frac{2t_{X}}{t_{X}-t_{Y}},
\end{equation*}
which implies $\dot{\X}>0$ and $\dot{\Y}>0$. Thus, in this case $(\X(s),\Y(s))$ is a strictly monotone curve.

For general initial data the set 
\begin{equation*}
\Gamma_{0}=\{(X,Y)\in \mathbb{R}^2\mid t(X,Y)=0\}
\end{equation*}
will be the union of strictly monotone curves, horizontal and vertical lines, and boxes. We define this set implicitly in Definition \ref{def:mapfromDtoF} and \ref{def:mapfromFtoG0}, and the examples following the definitions show how the set $\Gamma_{0}$ depends on the initial data $(u_{0},R_{0},S_{0},\rho_{0},\sigma_{0},\mu_{0},\nu_{0})$.

The idea is the following. The backward characteristics transports the energy described by the measure $\mu_{0}$, while the forward characteristics transports the energy described by the measure $\nu_{0}$. 

At points $(0,x_{0})$ where the initial data is smooth and bounded and the measures are absolutely continuous, there is a finite amount of energy, and there is exactly one forward and one backward characteristic starting from $(0,x_{0})$. This point is mapped to one point in Lagrangian coordinates. An interval of such points yields a strictly monotone curve in Lagrangian coordinates, like we showed above.

At points $(0,x_{0})$ where only one of the measures is singular, say $\mu_{0}$, there is a finite amount of forward energy and an infinite amount of backward energy. Thus there are
infinitely many backward characteristics, but only one forward characteristic starting from $(0,x_0)$. If we think of characteristics as particles, then the infinite amount of backward energy at $(0,x_0)$ is distributed over infinitely many particles. 
To label these particles, we map this point to a horizontal line in the $(X,Y)$-plane.

If only $\nu_{0}$ is singular at $(0,x_{0})$, the point is mapped to a vertical line in the $(X,Y)$-plane. 

At points $(0,x_{0})$ where both measures are singular, there is an infinite amount of both forward and backward energy. Thus there are infinitely many forward and backward characteristics starting from $(0,x_{0})$. If we think of characteristics as particles, then the infinite amount of both forward and backward energy at $(0,x_0)$ is distributed over infinitely many particles and we need a rectangular box in the $(X,Y)$-plane to label all these particles. 

From the set $\Gamma_{0}$ we have to choose a unique curve $(\X,\Y)\in\C$. In the case of a box there are in principle infinitely many possible ways of doing this. We define the curve in Definition \ref{def:mapfromFtoG0}, where we in the case of a box roughly speaking define it to be the union of the left vertical side and the upper horizontal side of the box.    

Having defined the curve $(\X,\Y)$, we have to assign the values of $Z$, $Z_{X}$, $Z_{Y}$, $p$ and $q$ on it in order to solve \eqref{eq:goveq}. In addition we require that several properties derived in this section for the smooth case hold on the curve, see Definition \ref{def:initialDataH}. Later we will prove that solutions of \eqref{eq:goveq} satisfy the same properties. In Section \ref{sec:3} we explain how to define the functions on the curve for general initial data $u_{0},R_{0},S_{0},\rho_{0},\sigma_{0}\in L^{2}(\mathbb{R})$ and measures $\mu_{0}$ and $\nu_{0}$. Here we present how to proceed for initial data such that the functions $u_{0},R_{0},S_{0},\rho_{0},\sigma_{0}$ are smooth and bounded, and such that the measures $\mu_{0}$ and $\nu_{0}$ are absolutely continuous. We have to specify the values of 19 functions. Let us see how many equations we have available to determine these values. Let
\begin{equation}
\label{eq:initialtime}
	t(\X(s),\Y(s))=0,
\end{equation}
and
\begin{equation}
\label{eq:Uinitialtime}
	U(\X(s),\Y(s))=u_{0}(x(\X(s),\Y(s))).
\end{equation}
From \eqref{eq:Rid}, \eqref{eq:Sid} and \eqref{eq:JDer2} we have
\begin{align}	
	\label{eq:initialU}
	U_{X}(\X,\Y)&=x_{X}(\X,\Y)\frac{R_{0}(x(\X,\Y))}{c(u_{0}(x(\X,\Y)))},\\ U_{Y}(\X,\Y)&=-x_{Y}(\X,\Y)\frac{S_{0}(x(\X,\Y))}{c(u_{0}(x(\X,\Y)))},\\
	\label{eq:initialJX}
	J_{X}(\X,\Y)&=\frac{1}{2}x_{X}(\X,\Y)(R_{0}^{2}+c(u_{0})\rho_{0}^{2})(x(\X,\Y)),\\
	\label{eq:initialJY}
	J_{Y}(\X,\Y)&=\frac{1}{2}x_{Y}(\X,\Y)(S_{0}^{2}+c(u_{0})\sigma_{0}^{2})(x(\X,\Y)),\\
	\label{eq:initialPandQ}
	p(\X,\Y)&=x_{X}(\X,\Y)\rho_{0}(x(\X,\Y)),\\
	\label{eq:initialq}
	q(\X,\Y)&=x_{Y}(\X,\Y)\sigma_{0}(x(\X,\Y))
\end{align}	
and from \eqref{eq:xtRel} and \eqref{eq:JKRel} we have the relations
\begin{align}
\label{eq:initialxXi}
	x_{X}(\X,\Y)&=c(u_{0}(x(\X,\Y)))t_{X}(\X,\Y),\\
\label{eq:initialxXii}	
	x_{Y}(\X,\Y)&=-c(u_{0}(x(\X,\Y)))t_{Y}(\X,\Y),\\
\label{eq:initialJXJYi}
	J_{X}(\X,\Y)&=c(u_{0}(x(\X,\Y)))K_{X}(\X,\Y),\\
\label{eq:initialJXJYii}	
	J_{Y}(\X,\Y)&=-c(u_{0}(x(\X,\Y)))K_{Y}(\X,\Y).
\end{align}
We also want to use the fact that $Z_{X}$ and $Z_{Y}$ are derivatives of $Z$ to assign values of $t$, $ x$, $U$, $J$, $K$, and we can write this condition as	
\begin{equation}
\label{eq:compatibility}
	\dot{Z}(\X(s),\Y(s))=Z_{X}(\X(s),\Y(s))\dot{\X}(s)+Z_{Y}(\X(s),\Y(s))\dot{\Y}(s),
\end{equation}	
where the notation means $\dot{Z}(\X(s),\Y(s))=\frac{d}{ds}Z(\X(s),\Y(s))$. We have 19 unknowns $(\X,\Y,p,q,Z,Z_{X},Z_{Y})$ and 17 equations, given by \eqref{eq:initialcurvenormalization} and \eqref{eq:initialtime}-\eqref{eq:compatibility}. We use the two remaining degrees of freedom to obtain $Z_{X}$, $p$, $Z_{Y}$ and $q$ bounded. We set
\begin{align}
\label{eq:degfree1}
	2x_{X}(\X(s),\Y(s))+J_{X}(\X(s),\Y(s))&=1,\\
\label{eq:degfree2}	
	2x_{Y}(\X(s),\Y(s))+J_{Y}(\X(s),\Y(s))&=1.
\end{align}
In view of \eqref{eq:xJUpqRel}, $x_{X}$ and $J_{X}$ have the same sign, so that \eqref{eq:degfree1} implies that they are non-negative and bounded. Similarly we find that $x_{Y}\geq 0$, $J_{Y}\geq 0$, and that they are bounded from above. From \eqref{eq:initialxXi} and \eqref{eq:initialxXii} it then follows that $t_{X}$, $K_{X}$, $t_{Y}$ and $K_{Y}$ are bounded and $t_{X}\geq 0$, $K_{X}\geq 0$, $t_{Y}\leq 0$ and $K_{Y}\leq 0$. The relation \eqref{eq:xJUpqRel} also implies that $U_{X}$, $p$, $U_{Y}$ and $q$ are bounded.

Using \eqref{eq:initialJX} and \eqref{eq:initialJY} in \eqref{eq:degfree1} and \eqref{eq:degfree2} yields
\begin{align}
\label{eq:initialxX2}
	x_{X}(\X,\Y)&=\bigg(\frac{2}{4+R_{0}^{2}+c(u_{0})\rho_{0}^{2}}\bigg)(x(\X,\Y)),\\ 
\label{eq:initialxY2}	
	x_{Y}(\X,\Y)&=\bigg(\frac{2}{4+S_{0}^{2}+c(u_{0})\sigma_{0}^{2}}\bigg)(x(\X,\Y)),
\end{align}
which implies by \eqref{eq:initialxXi} and \eqref{eq:initialxXii} that
\begin{align*}
	t_{X}(\X,\Y)&=\bigg(\frac{2}{c(u_{0})(4+R_{0}^{2}+c(u_{0})\rho_{0}^{2})}\bigg)(x(\X,\Y)),\\ 	
	t_{Y}(\X,\Y)&=-\bigg(\frac{2}{c(u_{0})(4+S_{0}^{2}+c(u_{0})\sigma_{0}^{2})}\bigg)(x(\X,\Y)).
\end{align*} 
Now \eqref{eq:initialU}-\eqref{eq:initialq} take the form
\begin{align*}
	U_{X}(\X,\Y)&=\bigg(\frac{2R_{0}}{c(u_{0})(4+R_{0}^{2}+c(u_{0})\rho_{0}^{2})}\bigg)(x(\X,\Y)),\\ U_{Y}(\X,\Y)&=-\bigg(\frac{2S_{0}}{c(u_{0})(4+S_{0}^{2}+c(u_{0})\sigma_{0}^{2})}\bigg)(x(\X,\Y)), \\
	J_{X}(\X,\Y)&=\bigg(\frac{R_{0}^{2}+c(u_{0})\rho_{0}^{2}}{4+R_{0}^{2}+c(u_{0})\rho_{0}^{2}}\bigg)(x(\X,\Y)),\\
	J_{Y}(\X,\Y)&=\bigg(\frac{S_{0}^{2}+c(u_{0})\sigma_{0}^{2}}{4+S_{0}^{2}+c(u_{0})\sigma_{0}^{2}}\bigg)(x(\X,\Y)),\\
	p(\X,\Y)&=\bigg(\frac{2\rho_{0}}{4+R_{0}^{2}+c(u_{0})\rho_{0}^{2}}\bigg)(x(\X,\Y)),\\ q(\X,\Y)&=\bigg(\frac{2\sigma_{0}}{4+S_{0}^{2}+c(u_{0})\sigma_{0}^{2}}\bigg)(x(\X,\Y))
\end{align*}
and from \eqref{eq:initialJXJYi} and \eqref{eq:initialJXJYii} we get
\begin{align*}
	K_{X}(\X,\Y)&=\bigg(\frac{R_{0}^{2}+c(u_{0})\rho_{0}^{2}}{c(u_{0})(4+R_{0}^{2}+c(u_{0})\rho_{0}^{2})}\bigg)(x(\X,\Y)),\\
	K_{Y}(\X,\Y)&=-\bigg(\frac{S_{0}^{2}+c(u_{0})\sigma_{0}^{2}}{c(u_{0})(4+S_{0}^{2}+c(u_{0})\sigma_{0}^{2})}\bigg)(x(\X,\Y)).
\end{align*}
It remains to determine $(\X,\Y)$ and the value of $x$, $J$ and $K$ on the curve.

Differentiating \eqref{eq:initialtime} with respect to $s$ yields
\begin{equation*}
	t_{X}(\X(s),\Y(s))\dot{\X}(s)+t_{Y}(\X(s),\Y(s))\dot{\Y}(s)=0
\end{equation*}
and after using \eqref{eq:initialxXi} and \eqref{eq:initialxXii} we get
\begin{equation*}
	x_{X}(\X(s),\Y(s))\dot{\X}(s)=x_{Y}(\X(s),\Y(s))\dot{\Y}(s).
\end{equation*}
Using this in \eqref{eq:compatibility} implies
\begin{equation}
\label{eq:initialxrelation}
	\dot{x}(\X(s),\Y(s))=2x_{X}(\X(s),\Y(s))\dot{\X}(s)=2x_{Y}(\X(s),\Y(s))\dot{\Y}(s).
\end{equation}
We use \eqref{eq:initialxrelation} then \eqref{eq:initialxX2} and \eqref{eq:initialxY2} in \eqref{eq:initialcurvenormalization}, and get
\begin{align*}
	2&=\dot{\X}(s)+\dot{\Y}(s)\\
	&=\frac{1}{2}\bigg(\frac{1}{x_{X}(\X(s),\Y(s))}+\frac{1}{x_{Y}(\X(s),\Y(s))}\bigg)\dot{x}(\X(s),\Y(s))\\
	&=\bigg(2+\frac{1}{4}(R_{0}^{2}+c(u_{0})\rho_{0}^{2}+S_{0}^{2}+c(u_{0})\sigma_{0}^{2})\bigg)(x(\X(s),\Y(s)))\dot{x}(\X(s),\Y(s)).
\end{align*}
We define $x(\X(s),\Y(s))$ implicitly as
\begin{equation}
\label{eq:initialdefx}
	2x(\X(s),\Y(s))+\frac{1}{4}\int_{-\infty}^{x(\X(s),\Y(s))}(R_{0}^{2}+c(u_{0})\rho_{0}^{2}+S_{0}^{2}+c(u_{0})\sigma_{0}^{2})(z)\,dz=2s.
\end{equation}
Note that the left-hand side is a strictly increasing function with respect to $x$, so that \eqref{eq:initialdefx} uniquely defines $x(\X(s),\Y(s))$.

From \eqref{eq:initialxrelation} and \eqref{eq:initialxX2} it follows that
\begin{equation}
\label{eq:Xdot}
	\dot{\X}(s)=\bigg(1+\frac{1}{4}(R_{0}^{2}+c(u_{0})\rho_{0}^{2})\bigg)(x(\X(s),\Y(s)))\dot{x}(\X(s),\Y(s))
\end{equation} 
and we define
\begin{equation*}
	\X(s)=x(\X(s),\Y(s))+\frac{1}{4}\int_{-\infty}^{x(\X(s),\Y(s))}(R_{0}^{2}+c(u_{0})\rho_{0}^{2})(z)\,dz.
\end{equation*}

Similarly, by \eqref{eq:initialxrelation} and \eqref{eq:initialxY2}, we get
\begin{equation}
\label{eq:Ydot}
	\dot{\Y}(s)=\bigg(1+\frac{1}{4}(S_{0}^{2}+c(u_{0})\sigma_{0}^{2})\bigg)(x(\X(s),\Y(s)))\dot{x}(\X(s),\Y(s))
\end{equation}
and we set
\begin{equation*}
	\Y(s)=x(\X(s),\Y(s))+\frac{1}{4}\int_{-\infty}^{x(\X(s),\Y(s))}(S_{0}^{2}+c(u_{0})\sigma_{0}^{2})(z)\,dz.
\end{equation*}

From \eqref{eq:degfree1}, \eqref{eq:degfree2} and \eqref{eq:initialcurvenormalization} we have
\begin{aalign}
\label{eq:xandJdot}
	\dot{J}(\X,\Y)&=J_{X}(\X,\Y)\dot{\X}+J_{Y}(\X,\Y)\dot{\Y}\\
	&=(1-2x_{X}(\X,\Y))\dot{\X}+(1-2x_{Y}(\X,\Y))\dot{\Y}\\
	&=2-2\dot{x}(\X,\Y),
\end{aalign}
so that
\begin{equation*}
	2x(\X(s),\Y(s))+J(\X(s),\Y(s))=2s,
\end{equation*}
which combined with \eqref{eq:initialdefx} yields
\begin{equation*}
	J(\X(s),\Y(s))=\frac{1}{4}\int_{-\infty}^{x(\X(s),\Y(s))}(R_{0}^{2}+c(u_{0})\rho_{0}^{2}+S_{0}^{2}+c(u_{0})\sigma_{0}^{2})(z)\,dz.
\end{equation*}

From \eqref{eq:initialJXJYi} and \eqref{eq:initialJXJYii} we have
\begin{equation*}
	\dot{K}(\X,\Y)=K_{X}(\X,\Y)\dot{\X}+K_{Y}(\X,\Y)\dot{\Y}=\frac{J_{X}(\X,\Y)\dot{\X}-J_{Y}(\X,\Y)\dot{\Y}}{c(u_{0}(x(\X,\Y)))}.
\end{equation*}	
Multiplying \eqref{eq:degfree1} by $\dot{\X}$ and \eqref{eq:degfree2} by $\dot{\Y}$, yields
\begin{equation*}
	J_{X}(\X,\Y)\dot{\X}=\dot{\X}-2x_{X}(\X,\Y)\dot{\X} \quad \text{and} \quad J_{Y}(\X,\Y)\dot{\Y}=\dot{\Y}-2x_{Y}(\X,\Y)\dot{\Y}.
\end{equation*}
Using \eqref{eq:initialxrelation}, we get
\begin{equation*}
	J_{X}(\X,\Y)\dot{\X}-J_{Y}(\X,\Y)\dot{\Y}=\dot{\X}-\dot{\Y},
\end{equation*}
which implies by \eqref{eq:Xdot} and \eqref{eq:Ydot} that
\begin{equation*}	
	\dot{K}(\X,\Y)=\frac{(R_{0}^{2}+c(u_{0})\rho_{0}^{2}-S_{0}^{2}-c(u_{0})\sigma_{0}^{2})(x(\X,\Y))}{4c(u_{0}(x(\X,\Y)))}\dot{x}(\X,\Y).
\end{equation*}
We define
\begin{equation*}	
	K(\X(s),\Y(s))=\int_{-\infty}^{x(\X(s),\Y(s))}\frac{1}{4c(u_{0})}(R_{0}^{2}+c(u_{0})\rho_{0}^{2}-S_{0}^{2}-c(u_{0})\sigma_{0}^{2})(z)\,dz.
\end{equation*}

In Section \ref{sec:WeakSoln} we prove the existence of global, weak, conservative solutions of \eqref{eq:nvwsys}. Our approach follows closely \cite{HolRay:11}. The solutions we construct will be conservative in the sense that for all $t\geq 0$,
\begin{equation*}
	\mu(t)(\mathbb{R})+\nu(t)(\mathbb{R})=\mu_{0}(\mathbb{R})+\nu_{0}(\mathbb{R}),
\end{equation*}
where we denote the solution at time $t$ by $(u,R,S,\rho,\sigma,\mu,\nu)(t)$, see Theorem \ref{thm:cons}. This is a consequence of the fact that the energy function $J$ in Lagrangian coordinates satisfies that the limit
\begin{equation*}
	\lim_{s\rightarrow\pm\infty}J(\X(s),\Y(s))
\end{equation*}
is independent of the curve $(\X,\Y)\in\C$. Thus, the same limiting values of $J$ are obtained for curves corresponding to different times, see Lemma \ref{lemma:curveind}. We do not address uniqueness of conservative solutions. 

The main results of this paper is contained in Section \ref{sec:Reg}, where we first prove that under certain conditions we have local smooth solutions of \eqref{eq:nvwsys}. More specifically, on a finite interval $[x_{l},x_{r}]$ we assume that the initial data satisfies the following: $u_{0},R_{0},S_{0},\rho_{0},\sigma_{0}$ are smooth and bounded, $\mu_{0}$ and $\nu_{0}$ are absolutely continuous, and the functions $\rho_{0}$ and $\sigma_{0}$ are strictly positive. Then we prove that for every time $t\in\big[0,\frac{1}{2\kappa}(x_{r}-x_{l})\big]$, the functions $\rho(t,x)$ and $\sigma(t,x)$ are strictly positive for all $x\in[x_{l}+\kappa t,x_{r}-\kappa t]$. This has a regularizing effect on the solution in the sense that the solution at time $t$ will then satisfy the same regularity conditions as the initial data does on the interval $[x_{l}+\kappa t,x_{r}-\kappa t]$, see Corollary \ref{cor:smooth}. Roughly speaking, the variables $p$ and $q$ contain information about $\rho_{0}$ and $\sigma_{0}$, respectively. In particular, since $p_{Y}=0$ and $q_{X}=0$, the strict positivity of $p$ and $q$ is preserved in the characteristic directions. The identities in \eqref{eq:xJUpqRel} then imply the strict positivity of $x_X$ and $x_Y$.

In Theorem \ref{thm:main} we prove that we can locally approximate weak solutions of \eqref{eq:nvw} by smooth solutions of \eqref{eq:nvwsys} in $L^{\infty}$, provided that certain regularity and convergence conditions hold, see Section \ref{sec:Reg}.

\section{From Eulerian to Lagrangian Coordinates}
\label{sec:3}
%%%%%%%%%%%%%%%%%%%%%%%%%%%%%%%%%%%%%%%%%%%%%%%%%%%%%%%%%
%        Map from Eulerian to Lagrangian coordinates
%%%%%%%%%%%%%%%%%%%%%%%%%%%%%%%%%%%%%%%%%%%%%%%%%%%%%%%%%

We first define the set $\D$ which consists of possible initial data corresponding to \eqref{eq:nvwsys} in Eulerian coordinates.   

\begin{definition}
	The set $\D$ consists of the elements $(u,R,S,\rho,\sigma,\mu,\nu)$ such that
	\begin{equation*}
	u,R,S,\rho,\sigma \in L^{2}(\mathbb{R}),
	\end{equation*}
	\begin{equation}
	\label{eq:setDux}
	u_{x}=\frac{1}{2c(u)}(R-S),
	\end{equation}
	and $\mu$ and $\nu$ are finite positive Radon measures with
	\begin{equation}
	\label{eq:setDabscont}
	\mu_{\emph{ac}}=\frac{1}{4}(R^{2}+c(u)\rho^{2})\,dx \quad \text{and} \quad \nu_{\emph{ac}}=\frac{1}{4}(S^{2}+c(u)\sigma^{2})\,dx.
	\end{equation}
\end{definition}

Note that this definition allows for initial data with concentrated energy. The next step is to map elements from $\D$ to a set $\F$ which is defined as follows.

\begin{definition}\label{def:G}
	The group $G$ is given by all invertible functions $f$ such that
	\begin{equation}
	\label{eq:groupcond}
	f-\id \text{ and } f^{-1}-\id \text{ both belong to } \Winf(\mathbb{R}),
	\end{equation}
	and
	\begin{equation}
	\label{eq:groupcond2}
	(f-\id)'\in L^{2}(\mathbb{R}).
	\end{equation}
\end{definition}

Note that if $f$, $g\in G$, then also $f^{-1}$, $g^{-1}$ and $f\circ g$ belong to $G$.

\begin{definition}
	\label{def:setF}
	The set $\F$ consists of all functions $\psi=(\psi_{1},\psi_{2})$ such that
	\begin{equation*}
	\psi_{1}(X)=(x_{1}(X),U_{1}(X),J_{1}(X),K_{1}(X),V_{1}(X),H_{1}(X))
	\end{equation*}
	and
	\begin{equation*}	
	\psi_{2}(Y)=(x_{2}(Y),U_{2}(Y),J_{2}(Y),K_{2}(Y),V_{2}(Y),H_{2}(Y))
	\end{equation*}
	satisfy the following regularity and decay conditions
	\begin{subequations}
		\label{eqns:setF}
		\begin{equation}
		\label{eq:setF1}
		x_{1}-\id,\ x_{2}-\id,\ J_{1},\ J_{2},\ K_{1},\ K_{2} \in \Winf(\mathbb{R}),
		\end{equation}
		\begin{equation}
		\label{eq:setF2}
		x_{1}'-1,\ x_{2}'-1,\ J_{1}',\ J_{2}',\ K_{1}',\ K_{2}',\ H_{1},\ H_{2} \in L^{2}(\mathbb{R})\cap L^{\infty}(\mathbb{R}), 
		\end{equation}
		\begin{equation}
		\label{eq:setF3}
		U_{1},\ U_{2} \in L^2(\mathbb{R})\cap L^\infty(\mathbb{R}),
		\end{equation}
		\begin{equation}
		\label{eq:setF4}
		V_{1},\ V_{2} \in L^{2}(\mathbb{R})\cap L^{\infty}(\mathbb{R}),
		\end{equation}
	\end{subequations}
	and the additional conditions
	\begin{equation}
	\label{eq:setFrel1}
	x_{1}',x_{2}',J_{1}',J_{2}'\geq 0,
	\end{equation}
	\begin{equation}
	\label{eq:setFrel2}
	J_{1}'=c(U_{1})K_{1}',\quad J_{2}'=-c(U_{2})K_{2}',
	\end{equation}
	\begin{equation}
	\label{eq:setFrel3}
	x_{1}'J_{1}'=(c(U_{1})V_{1})^{2}+c(U_{1})H_{1}^{2}, \quad
	x_{2}'J_{2}'=(c(U_{2})V_{2})^{2}+c(U_{2})H_{2}^{2},
	\end{equation}
	\begin{equation}
	\label{eq:setFrel4}
	x_{1}+J_{1},\ x_{2}+J_{2} \in G, 
	\end{equation}
	\begin{equation}
	\label{eq:setFrel5}
	\displaystyle\lim_{X \rightarrow -\infty} J_{1}(X)=\displaystyle\lim_{Y \rightarrow -\infty} J_{2}(Y)=0.
	\end{equation}
	Moreover, for any curve $(\X,\Y)\in \C$ such that
	\begin{equation*}
	x_{1}(\X(s))=x_{2}(\Y(s)) \text{ for all } s\in \mathbb{R},
	\end{equation*}
	we have
	\begin{subequations}
		\begin{equation}
		\label{eq:setFrel6}
		U_{1}(\X(s))=U_{2}(\Y(s))
		\end{equation}
		for all $s \in \mathbb{R}$ and
		\begin{equation}
		\label{eq:setFrel7}
		\frac{d}{ds}U_{1}(\X(s))=\frac{d}{ds}U_{2}(\Y(s))=V_{1}(\X(s))\dot{\X}(s)+V_{2}(\Y(s))\dot{\Y}(s)
		\end{equation}
		for almost all $s\in \mathbb{R}$.
	\end{subequations}
\end{definition}

Condition \eqref{eq:setFrel4} will be important in Section \ref{sec:SemigroupD}
where we prove that the solution operator from $\D$ to $\D$ is a semigroup. In the proof we use $x_{i}+J_{i}$ for $i=1,2$ as relabeling functions.  

For any strictly monotone curve $(\X,\Y)\in\C$ we introduce 
\begin{equation*}
\X(Y)=\X(\Y^{-1}(Y))\quad  \text{ and }\quad \Y(X)=\Y(\X^{-1}(X)). 
\end{equation*}
In the context of the previous section where we derived the system \eqref{eq:goveq} for smooth solutions $(Z,p,q)$, the elements $(\psi_{1},\psi_{2})$ should be thought of 
\begin{align*}
	x_{1}(X)&=x(X,\Y(X)), & x_{2}(Y)&=x(\X(Y),Y), \\ 
	U_{1}(X)&=U(X,\Y(X)), & U_{2}(Y)&=U(\X(Y),Y), \\
	J_{1}(X)&=\int_{-\infty}^{X}J_{X}(Z,\Y(Z))\,dZ, & J_{2}(Y)&=\int_{-\infty}^{Y}J_{Y}(\X(Z),Z)\,dZ,\\
	K_{1}(X)&=\int_{-\infty}^{X}K_{X}(Z,\Y(Z))\,dZ, & K_{2}(Y)&=\int_{-\infty}^{Y}K_{Y}(\X(Z),Z)\,dZ, \\
	V_{1}(X)&=U_{X}(X,\Y(X)), & V_{2}(Y)&=U_{Y}(\X(Y),Y), \\
	H_{1}(X)&=p(X,\Y(X)), & H_{2}(Y)&=q(\X(Y),Y)
\end{align*}
and
\begin{align*}
	x_{1}'(X)&=2x_{X}(X,\Y(X)), & x_{2}'(Y)&=2x_{Y}(\X(Y),Y), \\
	J_{1}'(X)&=J_{X}(X,\Y(X)), & J_{2}'(Y)&=J_{Y}(\X(Y),Y), \\
	K_{1}'(X)&=K_{X}(X,\Y(X)), & K_{2}'(Y)&=K_{Y}(\X(Y),Y).
\end{align*}

We define the map from $\D$ to $\F$.

\begin{definition}
	\label{def:mapfromDtoF}
	Given $(u,R,S,\rho,\sigma,\mu,\nu)\in \D$, we define $\psi_{1}=(x_{1},U_{1},J_{1},K_{1},V_{1},H_{1})$ and $\psi_{2}=(x_{2},U_{2},J_{2},K_{2},V_{2},H_{2})$ as
	\begin{subequations}
		\begin{align}
		\label{eq:mapfromDtoF1}
		x_{1}(X)&=\sup \{x\in \mathbb{R} \ | \ x'+\mu((-\infty, x'))<X \text{ for all } x'<x\}, \\
		\label{eq:mapfromDtoF2}
		x_{2}(Y)&=\sup \{x\in \mathbb{R} \ | \ x'+\nu((-\infty, x'))<Y \text{ for all } x'<x\}
		\end{align}
		and
		\begin{align}
		\label{eq:mapfromDtoF3}
		&J_{1}(X)=X-x_{1}(X), &&J_{2}(Y)=Y-x_{2}(Y), \\
		\label{eq:mapfromDtoF4}
		&U_{1}(X)=u(x_{1}(X)), &&U_{2}(Y)=u(x_{2}(Y)), \\
		\label{eq:mapfromDtoF5}
		&V_{1}(X)=x_{1}'(X)\frac{R(x_{1}(X))}{2c(U_{1}(X))}, &&V_{2}(Y)=-x_{2}'(Y)\frac{S(x_{2}(Y))}{2c(U_{2}(Y))}, \\
		\label{eq:mapfromDtoF6}
		&K_{1}(X)=\int_{-\infty}^{X}\frac{J_{1}'(\bar{X})}{c(U_{1}(\bar{X}))}\,d\bar{X},
		&&K_{2}(Y)=-\int_{-\infty}^{Y}\frac{J_{2}'(\bar{Y})}{c(U_{2}(\bar{Y}))}\,d\bar{Y}, \\
		\label{eq:mapfromDtoF7}
		&H_{1}(X)=\frac{1}{2}\rho(x_{1}(X))x_{1}'(X), &&H_{2}(Y)=\frac{1}{2}\sigma(x_{2}(Y))x_{2}'(Y).
		\end{align}
	\end{subequations}
	We let $\bf{L}: \D \rightarrow \F$ denote the mapping which to any $(u,R,S,\rho,\sigma,\mu,\nu)\in \D$ associates the element $\psi=(\psi_{1},\psi_{2})\in \F$ as defined above.
\end{definition}

As mentioned before solutions can develop singularities in finite time and energy can concentrate on sets of measure zero. If this is the case one has to put some extra effort into understanding \eqref{eq:mapfromDtoF5} and \eqref{eq:mapfromDtoF7} since they might be of the form $0\cdot \infty$, when $x_1'(X)=0$. One has, in the smooth case for $X_1<X_2$ that
\begin{equation*}
\int_{x_1(X_1)}^{x_1(X_2)} \frac{R}{2c(u)}(x)\,dx=\int_{X_1}^{X_2} \frac{R(x_1(\tilde X)}{2c(u(x_1(\tilde X)))}x_1'(\tilde X)\,d\tilde X = \int_{X_1}^{X_2} V_1(\tilde X)\,d\tilde X
\end{equation*}
and 
\begin{align}
\label{eq:expl}
\left\vert \int_{x_1(X_1)}^{x_1(X_2)} \frac{R}{2c(u)}(x)\,dx\right\vert &\leq \kappa\sqrt{ x_1(X_2)-x_1(X_1)}\sqrt{\mu_{\text{ac}}((x_1(X_1),x_1(X_2))}\\ \nonumber &\leq \kappa\sqrt{ x_1(X_2)-x_1(X_1)}\sqrt{ J_1(X_2)-J_1(X_1)}.
\end{align}
If we now consider the nonsmooth case, \eqref{eq:expl} still holds and the above calculations imply that $V_1(X)$ exists and is bounded.  Furthermore, if $x_1'(X)=0$, we must have that $V_1(X)=0$.

In the case of initial data $(u_{0},R_{0},S_{0},\rho_{0},\sigma_{0},\mu_{0},\nu_{0})\in\D$ such that $\mu_{0}$ and $\nu_{0}$ are absolutely continuous with respect to the Lebesgue measure, we check that we end up with the same expressions as at the end of Section \ref{sec:equivsys}. By \eqref{eq:setDabscont} we have
\begin{equation*}
	\mu_{0}((-\infty,x))=\frac{1}{4}\int_{-\infty}^{x}(R_{0}^{2}+c(u_{0})\rho_{0}^{2})(z)\,dz
\end{equation*}
and
\begin{equation*}
	\nu_{0}((-\infty,x))=\frac{1}{4}\int_{-\infty}^{x}(S_{0}^{2}+c(u_{0})\sigma_{0}^{2})(z)\,dz.
\end{equation*}
Since the functions $x+\mu_{0}((-\infty,x))$ and $x+\nu_{0}((-\infty,x))$ are continuous and strictly increasing, we get from \eqref{eq:mapfromDtoF1} and \eqref{eq:mapfromDtoF2},
\begin{equation*}
	x_{1}(X)+\frac{1}{4}\int_{-\infty}^{x_{1}(X)}(R_{0}^{2}+c(u_{0})\rho_{0}^{2})(z)\,dz=X	
\end{equation*}
and
\begin{equation*}
	x_{2}(Y)+\frac{1}{4}\int_{-\infty}^{x_{2}(Y)}(S_{0}^{2}+c(u_{0})\sigma_{0}^{2})(z)\,dz=Y.
\end{equation*}
We add these equalities and since $x_{1}(\X(s))=x_{2}(\Y(s))=x(\X(s),\Y(s))$ and $\X(s)+\Y(s)=2s$ we get
\begin{equation*}
	2x(\X(s),\Y(s))+\frac{1}{4}\int_{-\infty}^{x(\X(s),\Y(s))}(R_{0}^{2}+c(u_{0})\rho_{0}^{2}+S_{0}^{2}+c(u_{0})\sigma_{0}^{2})(z)\,dz=2s
\end{equation*}
and we recover (\ref{eq:initialdefx}). In a similar way we can show by using Definition \ref{def:mapfromDtoF} that we get the other expressions that we derived in Section \ref{sec:equivsys}.

We illustrate the mappings in this section with a series of examples. We study three possible situations where the initial measures are absolutely continuous and discrete. We want to illustrate how the region where $x_{1}(X)=x_{2}(Y)$ and $Y=2s-X$ in the $(X,Y)$-plane looks like in each situation. This is important in \eqref{eq:mapFtoGX}, the definition of the initial curve $(\X,\Y)$.  

\textbf{Example 1.} We first consider the case where both $\mu_{0}$ and $\nu_{0}$ are absolutely continuous. More specifically, let
\begin{equation*}
	\mu_{0}((-\infty,x])=\nu_{0}((-\infty,x])=\arctan(x)+\frac{\pi}{2}.
\end{equation*}
The measures are absolutely continuous.
Let $f(x)=\arctan(x)+x+\frac{\pi}{2}$, which is strictly increasing and continuous. We have $x_{1}(X)=f^{-1}(X)$ and $x_{2}(Y)=f^{-1}(Y)$. By differentiating the identity $f(f^{-1}(X))=X$ we obtain
\begin{equation*}
	x_{1}'(X)=\frac{1}{1+\frac{1}{1+x_{1}(X)^{2}}}\geq \frac{1}{2},
\end{equation*} 
and similarly we get $x_{2}'(Y)\geq\frac{1}{2}$, so that both $x_{1}$ and $x_{2}$ are strictly increasing functions. 

\textbf{Example 2.}  We consider the case where one of the measures is absolutely continuous and the other is not. Let 
\begin{equation*}
	\mu_{0}((-\infty,x))=\begin{cases} 0, & x\leq 0,\\ 1, & x> 0 \end{cases} \quad \text{and} \quad \nu_{0}((-\infty,x))=\arctan(x)+\frac{\pi}{2}.
\end{equation*}
The measure $\mu_{0}$ is not absolutely continuous with respect to the Lebesgue measure, as
\begin{equation*}
	\mu_{0}(\{0\})=\mu_{0}\bigg(\bigcap_{n=1}^{\infty}\bigg[0,\frac{1}{n}\bigg)\bigg)=\lim_{n\rightarrow\infty}\mu_{0}\bigg(\bigg[0,\frac{1}{n}\bigg)\bigg)=1.
\end{equation*} 
Using Definition \ref{def:mapfromDtoF}, we find that
\begin{equation*}
	x_{1}(X)=\begin{cases} X & \text{if } X\leq 0,\\ 0 &\text{if } 0\leq X\leq 1,\\ X-1, &\text{if } 1\leq X, \end{cases}
\end{equation*}
and as in Example 1 we have $x_{2}(Y)=f^{-1}(Y)$.

\textbf{Example 3.} We consider the case where both measures are singular at the same point. Let $(u_{0},R_{0},S_{0},\rho_{0},\sigma_{0},\mu_{0},\nu_{0})\in\D$ be such that
\begin{equation*}
	\mu_{0}((-\infty,x))=\begin{cases} 0, & x\leq 0,\\ 1, & x> 0 \end{cases} \quad \text{and} \quad \nu_{0}((-\infty,x))=\begin{cases} 0, & x\leq 0,\\ 1, & x> 0. \end{cases}
\end{equation*}
From Definition \ref{def:mapfromDtoF} we get
\begin{equation*}
	x_{1}(X)=\begin{cases} X & \text{if } X\leq 0,\\ 0 &\text{if } 0\leq X\leq 1,\\ X-1, &\text{if } 1\leq X \end{cases}  \quad \text{and} \quad  x_{2}(Y)=\begin{cases} Y & \text{if } Y\leq 0,\\ 0 &\text{if } 0\leq Y\leq 1,\\ Y-1, &\text{if } 1\leq Y. \end{cases}
\end{equation*} 

\begin{proof}[Proof of the well-posedness of Definition \ref{def:mapfromDtoF}]	
	We only show that $\psi_{1}$ as defined above satisfies the conditions in the definition of $\F$. The corresponding proof for $\psi_{2}$ is similar. First we prove that the derivatives of $x_{1}$ and $J_{1}$ are well-defined. Let us show that $x_{1}$ is Lipschitz continuous. Consider $X,X'\in \mathbb{R}$ such that $X<X'$ and $x_{1}(X)<x_{1}(X')$. The definition of $x_{1}$ implies that there exists an increasing sequence, $z_{i}'$, and a decreasing one, $z_{i}$, such that $\displaystyle\lim_{i\rightarrow \infty}z_{i}'=x_{1}(X')$ and $\displaystyle\lim_{i\rightarrow \infty}z_{i}=x_{1}(X)$ with $z_{i}'+\mu((-\infty,z_{i}'))<X'$ and $z_{i}+\mu((-\infty,z_{i}))\geq X$. Combining these two inequalities gives  
	\begin{equation*}
	\mu((-\infty,z_{i}'))-\mu((-\infty,z_{i}))+z_{i}'-z_{i}<X'-X.
	\end{equation*}
	For sufficiently large $i$, we have $z_{i}'>z_{i}$, so that $\mu((-\infty,z_{i}'))-\mu((-\infty,z_{i}))=\mu([z_{i},z_{i}'))\geq 0$. Hence, $z_{i}'-z_{i}<X'-X$. Letting $i$ tend to infinity, we obtain $x_{1}(X')-x_{1}(X)\leq X'-X$ and $x_{1}$ is Lipschitz continuous with Lipschitz constant at most one. Thus, $x_{1}$ is differentiable almost everywhere. Then, by \eqref{eq:mapfromDtoF3} it follows that $J_{1}$ is Lipschitz continuous with Lipschitz constant at most two, so that $J_{1}$ is differentiable almost everywhere. 
		
	Next, we show \eqref{eq:setF1}-\eqref{eq:setF4} and that $K_{1}$ is well-defined and differentiable almost everywhere. It is clear from \eqref{eq:mapfromDtoF1} that $x_{1}$ yields a nondecreasing function. For any $z>x_{1}(X)$, we have $z+\mu((-\infty,z))\geq X$. Hence, $X-z \leq \mu(\mathbb{R})$ and, since we can choose $z$ arbitrarily close to $x_{1}(X)$, we obtain $X-x_{1}(X) \leq \mu(\mathbb{R})$. Since $x_{1}(X)\leq X$, we have
	\begin{equation}
	\label{eq:mapfromDtoFproofid}
	|X-x_{1}(X)| \leq \mu(\mathbb{R})
	\end{equation}
	and $x_{1}-\id \in L^{\infty}(\mathbb{R})$. Since $x_{1}$ is nondecreasing and has Lipschitz constant at most one, we have $0\leq x_{1}'\leq 1$ almost everywhere, so that $x_{1}'-1\in \Linf(\mathbb{R})$. From (\ref{eq:mapfromDtoFproofid}), we obtain $|J_{1}(X)|\leq \mu(\mathbb{R})$ and $J_{1}\in \Linf(\mathbb{R})$. We have $J_{1}'=1-x_{1}'$ a.e. and therefore $0\leq J_{1}'\leq 1$ a.e., which implies that $J_{1}'\in \Linf(\mathbb{R})$. Thus, $J_{1}'\in L^{1}(\mathbb{R})$ as $\int_{-\infty}^{X}J_{1}'(\bar{X})\,d\bar{X}\leq ||J_{1}||_{\Linf(\mathbb{R})}$. By H{\"o}lder's inequality, we obtain
	\begin{equation*}
	\norm{J_{1}'}_{L^{2}(\mathbb{R})}^{2}\leq \norm{J_{1}'}_{L^{\infty}(\mathbb{R})}\norm{J_{1}'}_{L^{1}(\mathbb{R})}\leq \norm{J_{1}}_{L^{\infty}(\mathbb{R})}\leq \mu(\mathbb{R}).
	\end{equation*}
	Hence, $J_{1}'\in L^{2}(\mathbb{R})$ and since $J_{1}'=1-x_{1}'$ a.e., we have that $x_{1}'-1\in L^{2}(\mathbb{R})$. Note that, by the above, the inequalities for $x_{1}'$ and $J_{1}'$ in \eqref{eq:setFrel1} are satisfied. The fact that $J_{1}'$ is integrable also implies that $K_{1}$ is well-defined and differentiable almost everywhere. By differentiating \eqref{eq:mapfromDtoF6}, we obtain $K_{1}'=\frac{J_{1}'}{c(U_{1})}$, so that $K_{1}\in \Winf(\mathbb{R})$ and $K_{1}'\in L^{2}(\mathbb{R})\cap L^\infty(\mathbb{R})$. By a change of variables and, using the fact that $x_{1}'\leq 1$ a.e., we get
	\begin{equation*}
	\int_{\mathbb{R}}H_{1}^{2}(X)\,dX\leq \frac{1}{4}\int_{\mathbb{R}}\rho^{2}(x)\,dx<\infty
	\end{equation*}
	and
	\begin{equation*}
	\int_{\mathbb{R}}V_{1}^{2}(X)\,dX\leq \frac{\kappa^{2}}{4}\int_{\mathbb{R}}R^{2}(x)\,dx<\infty,
	\end{equation*}
	so that $H_{1}$ and $V_{1}$ belong to $L^{2}(\mathbb{R})$.
	
	Next, we prove that $U_{1}$ is in $L^{2}(\mathbb{R})$. Let $B_{3}=\{X\in \mathbb{R} \ | \ x_{1}'(X)<\frac{1}{2} \}$. Since $J_{1}'=1-x_{1}'$, $B_{3}=\{X\in \mathbb{R} \ | \ J_{1}'(X)>\frac{1}{2} \}$, and $J_{1}'\in L^{2}(\mathbb{R})$, we obtain $\text{meas}(B_{3})<\infty$ after using Chebyshev's inequality. We have, since $x_{1}' \geq \frac{1}{2}$ in $B_{3}^{c}$,
	\begin{align*}
	\int_{\mathbb{R}}U_{1}^{2}(X)\,dX&=\int_{B_{3}}U_{1}^{2}(X)\,dX+\int_{B_{3}^{c}}U_{1}^{2}(X)\,dX \\
	&\leq \text{meas}(B_{3})\norm{u}_{L^{\infty}(\mathbb{R})}^{2}+2\int_{B_{3}^{c}}u^{2}(x_{1}(X))x_{1}'(X)\,dX \\
	&\leq \text{meas}(B_{3})\norm{u}_{L^{\infty}(\mathbb{R})}^{2}+2\norm{u}_{L^{2}(\mathbb{R})}^{2}.
	\end{align*}
	Since $u\in H^{1}(\mathbb{R})$, we have that $u\in \Linf(\mathbb{R})$ and we conclude that $U_{1}\in L^{2}(\mathbb{R})\cap L^\infty(\mathbb{R})$. It remains to show that $H_{1}$ and $V_{1}$ belong to $\Linf(\mathbb{R})$. In order to prove this and \eqref{eq:setFrel3}, we have to compute the derivative of $x_{1}$. Following \cite{Folland}, we decompose $\mu$ into its absolutely continuous, singular continuous and discrete part, denoted by $\mu_{\text{ac}}$, $\mu_{\text{sc}}$ and $\mu_{\text{d}}$, respectively. The support of $\mu_{\text{d}}$ consists of a countable set of points. The function $G(x)=\mu((-\infty,x))$ is lower semi-continuous and its points of discontinuity coincide exactly with the support of $\mu_{\text{d}}$. Let $A$ denote the complement of $x_{1}^{-1}(\text{supp}(\mu_{\text{d}}))$, that is, $A=\{X\in \mathbb{R} \ | \ x_{1}(X)\in \text{supp}(\mu_{\text{d}})^{c}\}$. We claim that for any $X\in A$, we have
	\begin{equation}
	\label{eq:mapfromDtoFproofclaim}
	\mu((-\infty,x_{1}(X)))+x_{1}(X)=X.
	\end{equation}  
	By \eqref{eq:mapfromDtoF1} there exists an increasing sequence $z_{i}$ which converges to $x_{1}(X)$ such that $G(z_{i})+z_{i}<X$. Since $G$ is lower semi-continuous, $\displaystyle\lim_{i\rightarrow \infty}G(z_{i})=G(x_{1}(X))$ and therefore
	\begin{equation*}
	G(x_{1}(X))+x_{1}(X)\leq X.
	\end{equation*}
	Assume that $G(x_{1}(X))+x_{1}(X)<X$. Since $x_{1}(X)$ is a point of continuity of $G$, we can find an $x$ such that $x>x_{1}(X)$ and $G(x)+x<X$. This contradicts the definition of $x_{1}(X)$ and proves our claim (\ref{eq:mapfromDtoFproofclaim}). Let
	\begin{equation*}
	B_{1}=\Big\{ x\in \mathbb{R} \ | \ \displaystyle\lim_{\varepsilon \downarrow 0}\frac{1}{2\varepsilon}\mu((x-\varepsilon,x+\varepsilon))=\frac{1}{4}(R^{2}(x)+c(u(x))\rho^{2}(x)) \Big\}.
	\end{equation*}
	Since $\frac{1}{4}(R^{2}+c\rho^{2})\,dx$ is the absolutely continuous part of $\mu$, we have from Besicovitch's derivation theorem that $\text{meas}(B_{1}^{c})=0$. The proof can be found in \cite{Ambrosio}. Given $X\in x_{1}^{-1}(B_{1})$, we denote $x=x_{1}(X)$. We claim that for all $i\in \mathbb{N}$, there exists $0<\varepsilon< \frac{1}{i}$ such that $x-\varepsilon$ and $x+\varepsilon$ both belong to $\text{supp}(\mu_{\text{d}})^{c}$. Let us assume the opposite. Then, there exists $i\in \mathbb{N}$ such that for all $0<\varepsilon< \frac{1}{i}$, $x-\varepsilon$ and $x+\varepsilon$ both belong to $\text{supp}(\mu_{\text{d}})$. Since the set $(0,\frac{1}{i})$ is uncountable, this implies that uncountably many points belong to $\text{supp}(\mu_{\text{d}})$. This is a contradiction, and our claim is proved. Hence, we can find two sequences $X_{i}$ and $X_{i}'$ in $A$ such that $\frac{1}{2}(x_{1}(X_{i})+x_{1}(X_{i}'))=x_{1}(X)$ and $0<X_{i}'-X_{i}<\frac{1}{i}$. We have by (\ref{eq:mapfromDtoFproofclaim}), since $X_{i}$ and $X_{i}'$ belong to $A$,
	\begin{equation}
	\label{eq:mapfromDtoFproofmeas}
	\mu([x_{1}(X_{i}),x_{1}(X'_{i})))+x_{1}(X'_{i})-x_{1}(X_{i})=X_{i}'-X_{i}.
	\end{equation} 
	Since $x_{1}(X_{i})\notin \text{supp}(\mu_{\text{d}})$, we infer that $\mu(\{x_{1}(X_{i})\})=0$ and $\mu([x_{1}(X_{i}),x_{1}(X'_{i})))=\mu((x_{1}(X_{i}),x_{1}(X'_{i})))$. Dividing (\ref{eq:mapfromDtoFproofmeas}) by $X_{i}'-X_{i}$, we get
	\begin{equation*}
	\frac{x_{1}(X'_{i})-x_{1}(X_{i})}{X_{i}'-X_{i}}\frac{\mu((x_{1}(X_{i}),x_{1}(X'_{i})))}{x_{1}(X'_{i})-x_{1}(X_{i})}+\frac{x_{1}(X'_{i})-x_{1}(X_{i})}{X_{i}'-X_{i}}=1
	\end{equation*}
	and letting $i$ tend to infinity, we obtain
	\begin{equation}
	\label{eq:mapfromDtoFproofx1}
	x_{1}'(X)\frac{1}{4}(R^{2}+c(u)\rho^{2})(x_{1}(X))+x_{1}'(X)=1
	\end{equation}
	for almost every $X\in x_{1}^{-1}(B_{1})$. Since $\mathbb{R}=x_{1}^{-1}(B_{1})\cup x_{1}^{-1}(B_{1}^{c})$, it remains to study the behavior of $x_{1}'$ in $x_{1}^{-1}(B_{1}^{c})$. We proved above that $\text{meas}(B_{1}^{c})=0$, which does not imply in general that $\text{meas}(x_{1}^{-1}(B_{1}^{c}))=0$.\footnote{If $\mu=\delta_{0}$, then $B_{1}^{c}=\{0\}$, but $x_{1}^{-1}(\{0\})=[0,1]$.} Therefore, we need the following result.
	
	\begin{lemma}[{\cite[Lemma 3.9]{HolRay:07}}]
		\label{lemma:increasinglipschitz}
		Given an increasing Lipschitz continuous function $f:\mathbb{R}\rightarrow \mathbb{R}$, for any set $B$ of measure zero, we have $f'=0$ almost everywhere in $f^{-1}(B)$.
	\end{lemma}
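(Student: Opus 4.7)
The plan is to exploit the fact that a Lipschitz function is absolutely continuous, so that the fundamental theorem of calculus applies on intervals, and then to combine this with outer regularity of the null set $B$. Note that $f' \geq 0$ almost everywhere, since $f$ is increasing, so it suffices to show that $\int_{f^{-1}(B)} f'(x)\,dx = 0$.

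First I would reduce to the case where $B$ is Borel: any null set is contained in a $G_\delta$ set of measure zero, and enlarging $B$ only enlarges $f^{-1}(B)$, so the conclusion for the Borel version implies the general one. Then I would fix $\delta > 0$ and, by outer regularity, choose an open set $U \supseteq B$ with $m(U) < \delta$. Write $U = \bigsqcup_n I_n$ as a disjoint countable union of open intervals $I_n = (a_n, b_n)$. Because $f$ is increasing and continuous, each preimage $f^{-1}(I_n)$ is a (possibly empty or unbounded) interval with endpoints $c_n \leq d_n$ satisfying $f(d_n^-) - f(c_n^+) \leq b_n - a_n$.

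The main step is the absolute continuity estimate: since $f$ is Lipschitz, it is absolutely continuous on every bounded interval, and hence
\begin{equation*}
\int_{f^{-1}(I_n)} f'(x)\,dx \;\leq\; f(d_n^-) - f(c_n^+) \;\leq\; b_n - a_n \;=\; m(I_n).
\end{equation*}
(Strictly speaking one should truncate to a bounded interval and take a limit, which is harmless as $f'\in L^\infty$ and $f$ is monotone.) Summing over $n$ and using that the $I_n$ are disjoint yields
\begin{equation*}
\int_{f^{-1}(U)} f'(x)\,dx \;\leq\; \sum_n m(I_n) \;=\; m(U) \;<\; \delta.
\end{equation*}

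Finally, since $f^{-1}(B) \subseteq f^{-1}(U)$ and $f' \geq 0$ almost everywhere, this gives $\int_{f^{-1}(B)} f'(x)\,dx < \delta$. Letting $\delta \to 0$ shows that this integral vanishes, and since $f' \geq 0$, we conclude that $f' = 0$ almost everywhere on $f^{-1}(B)$. The only mildly delicate point in the argument is the bookkeeping at the endpoints $c_n, d_n$ of the preimage intervals, together with the measurability of $f^{-1}(B)$; both are handled cleanly by the initial reduction to Borel $B$ and by using continuity of $f$.
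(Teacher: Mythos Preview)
Your proof is correct and is the standard argument for this fact. Note that the paper does not actually prove this lemma: it quotes it as \cite[Lemma~3.9]{HolRay:07} and uses it as a black box, so there is no in-paper proof to compare against. Your approach---reducing to a Borel null set, covering by an open set of small measure, decomposing into disjoint intervals, and using absolute continuity plus monotonicity to bound $\int_{f^{-1}(I_n)} f' \le m(I_n)$---is exactly the classical route and would serve as a self-contained proof of the cited result.
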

	
	We apply Lemma \ref{lemma:increasinglipschitz} and get, since $\text{meas}(B_{1}^{c})=0$, that $x_{1}'=0$ almost everywhere in $x_{1}^{-1}(B_{1}^{c})$. From (\ref{eq:mapfromDtoFproofx1}), we get
	\begin{align*}
	x_{1}'(X)J_{1}'(X)&=x_{1}'(X)^{2}\frac{1}{4}(R^{2}(x_{1}(X))+c(U_{1}(X))\rho^{2}(x_{1}(X))) \\
	&=(c(U_{1}(X))V_{1}(X))^{2}+c(U_{1}(X))H_{1}^{2}(X)
	\end{align*}
	and \eqref{eq:setFrel3} follows. The relation in \eqref{eq:setFrel2} follows by differentiating \eqref{eq:mapfromDtoF6}. Now we can prove that $H_{1}$ and $V_{1}$ belong to $\Linf(\mathbb{R})$. By \eqref{eq:setFrel3}, we have
	\begin{align*}
	0&\leq \bigg(\frac{1}{\kappa}|V_{1}|+\frac{1}{\sqrt{\kappa}}|H_{1}|\bigg)^{2}\\
	&\leq \big(c(U_{1})|V_{1}|+\sqrt{c(U_{1})}|H_{1}|\big)^{2}\\
	&\leq 2\big(c^{2}(U_{1})V_{1}^{2}+c(U_{1})H_{1}^{2}\big)\\
	&=2x_{1}'J_{1}'\leq 2
	\end{align*}
	since $x_{1}',J_{1}'\in [0,1]$. This implies that $H_{1},V_{1}\in \Linf(\mathbb{R})$. 
	
	Since $x_1+J_1=\id$, all conditions in Definition~\ref{def:G} are satisfied and hence also \eqref{eq:setFrel4}. 
		
	The function $J_{1}$ vanishes at $-\infty$ since $J_{1}$ is non-decreasing and non-negative and $x_{1}(X)\leq X$. Hence, \eqref{eq:setFrel5} is satisfied for $J_{1}$. Let us verify that \eqref{eq:setFrel6} and \eqref{eq:setFrel7} hold. Consider a curve $(\X,\Y)\in \C$ such that $x_{1}(\X(s))=x_{2}(\Y(s))$. By \eqref{eq:mapfromDtoF4}, we have
	\begin{equation*}
	U_{1}(\X(s))=u(x_{1}(\X(s)))=u(x_{2}(\Y(s)))=U_{2}(\Y(s)).
	\end{equation*}
	We obtain
	\begin{align*}
	U_1(\X(\bar s))-U_1(\X(s))&=\int_{x_1(\X(s))}^{x_1(\X(\bar s))} u_{x}(x)\,dx\\
	& = \int_{x_1(\X(s))}^{x_1(\X(\bar s))}\frac{(R-S)}{2c(u)}(x)\,dx\\
	& =  \int_{x_1(\X(s))}^{x_1(\X(\bar s))} \frac{R}{2c(u)}(x)dx-\int_{x_2(\Y(s))}^{x_2(\Y(\bar s))} \frac{S}{2c(u)}(x)\,dx\\
	& = \int_s^{\bar s}(V_{1}(\X)\dot{\X}+V_{2}(\Y)\dot{\Y})(r)\,dr 
	\end{align*}
	where we used that $u_{x}=\frac{1}{2c(u)}(R-S)$ and $x_{1}(\X)=x_{2}(\Y)$.
	Dividing both sides by $\bar s-s$ and letting $\bar s\to s$ yields \eqref{eq:setFrel7}.
\end{proof}

Given an element in $\F$ we want to define a curve $(\X,\Y)$ and the values of $\psi$ on that curve. We define the set $\G$ which consists of curves $(\X,\Y)$ and five functions $\Z$, $\V$, $\W$, $\p$ and $\q$, next. Recalling Section \ref{sec:equivsys} the idea is that these functions in the smooth case are given through
\begin{equation*}
	\Z(s)=Z(\X(s),\Y(s))
\end{equation*} 
and
\begin{align*}
	&\V(\X(s))=Z_{X}(\X(s),\Y(s)), &&\W(\Y(s))=Z_{Y}(\X(s),\Y(s)), \\
	&\p(\X(s))=p(\X(s),\Y(s)), &&\q(\Y(s))=q(\X(s),\Y(s)),
\end{align*}
and hence motivate some of the regularity conditions that are imposed in the definition of the set $\G$. For example, from the derivation in the previous section we know that the function $x(X,Y)$ is increasing with respect to both its arguments and is therefore unbounded. However, from \eqref{eq:initialdefx} we get
\begin{equation*}
	|x(\X(s),\Y(s))-s|\leq\frac{1}{2}(\mu_{0}(\mathbb{R})+\nu_{0}(\mathbb{R}))
\end{equation*}
which belongs to $L^{\infty}(\mathbb{R})$. Therefore, we require that $\Z_{2}-\id$ belongs to $L^{\infty}(\mathbb{R})$. 

It is convenient to introduce the following notation: to any triplet $(\Z,\V,\W)$ of five dimensional vector functions we associate a triplet $(\Z^{a},\V^{a},\W^{a})$ given by
\begin{subequations}\label{eq:atriplet}
\begin{align}
	&\Z_{1}^{a}=\Z_{1}-\frac{1}{c(0)}(\X-\id), &&\V_{1}^{a}=\V_{1}-\frac{1}{2c(0)}, &&\W_{1}^{a}=\W_{1}+\frac{1}{2c(0)}, \\
	&\Z_{2}^{a}=\Z_{2}-\id, &&\V_{2}^{a}=\V_{2}-\frac{1}{2}, &&\W_{2}^{a}=\W_{2}-\frac{1}{2}, \\
	&\Z_{i}^{a}=\Z_{i}, &&\V_{i}^{a}=\V_{i}, &&\W_{i}^{a}=\W_{i}
\end{align}
for $i \in \{3,4,5 \}$.
\end{subequations}

\begin{definition}
	\label{def:setG}
	The set $\G$ is the set of all elements $\Theta=(\X,\Y,\Z,\V,\W,\p,\q)$ which consist of a curve $(\X(s),\Y(s))\in \C$, three vector-valued functions 
	\begin{align*}
		\Z(s)&=(\Z_{1}(s),\Z_{2}(s),\Z_{3}(s),\Z_{4}(s),\Z_{5}(s)),\\
		\V(X)&=(\V_{1}(X),\V_{2}(X),\V_{3}(X),\V_{4}(X),\V_{5}(X)),\\
		\W(Y)&=(\W_{1}(Y),\W_{2}(Y),\W_{3}(Y),\W_{4}(Y),\W_{5}(Y)),
	\end{align*}
	and two functions $\p(X)$ and $\q(Y)$. We set
	\begin{equation}
	\label{eq:setGnorm}
	\norm{\Theta}_{\G}^{2}=\norm{\Z_{3}}_{L^{2}(\mathbb{R})}^{2}+\norm{\V^{a}}_{L^{2}(\mathbb{R})}^{2}+\norm{\W^{a}}_{L^{2}(\mathbb{R})}^{2}+\norm{\p}_{L^{2}(\mathbb{R})}^{2}+\norm{\q}_{L^{2}(\mathbb{R})}^{2}
	\end{equation}
	and	
	\begin{aalign}
	\label{eq:setGseminorm}
	\tnorm{\Theta}_{\G}&=\norm{(\X,\Y)}_{\C}+\norm{\frac{1}{\V_{2}+\V_{4}}}_{L^{\infty}(\mathbb{R})}+\norm{\frac{1}{\W_{2}+\W_{4}}}_{L^{\infty}(\mathbb{R})} \\
	&\quad +\norm{\Z^{a}}_{L^{\infty}(\mathbb{R})}+\norm{\V^{a}}_{L^{\infty}(\mathbb{R})}+\norm{\W^{a}}_{L^{\infty}(\mathbb{R})}+\norm{\p}_{L^{\infty}(\mathbb{R})}+\norm{\q}_{L^{\infty}(\mathbb{R})}.
	\end{aalign}
	The element $\Theta$ belongs to $\G$ if
	\begin{enumerate}
		\item [(i)]
		\begin{equation}
		\label{eq:setGmember}
		\norm{\Theta}_{\G}<\infty \quad \text{and} \quad \tnorm{\Theta}_{\G}<\infty;
		\end{equation}
		\item [(ii)]
		\begin{equation}
		\label{eq:setGpositive}
		\V_{2},\W_{2},\V_{4},\W_{4}\geq 0;
		\end{equation}
		\item [(iii)] for almost every $s\in\mathbb{R}$, we have
		\begin{equation}
		\label{eq:setGcomp}
		\dot{\Z}(s)=\V(\X(s))\dot{\X}(s)+\W(\Y(s))\dot{\Y}(s)
		\end{equation}
		\item [(iv)]
		\begin{subequations}
			\begin{align}
			\label{eq:setGrel1}
			&\V_{2}(\X(s))=c(\Z_{3}(s))\V_{1}(\X(s)), &&\W_{2}(\Y(s))=-c(\Z_{3}(s))\W_{1}(\Y(s)), \\
			\label{eq:setGrel2}
			&\V_{4}(\X(s))=c(\Z_{3}(s))\V_{5}(\X(s)), &&\W_{4}(\Y(s))=-c(\Z_{3}(s))\W_{5}(\Y(s))
			\end{align}	
			and	
			\begin{align}
			\label{eq:setGrel3}
			2\V_{4}(\X(s))\V_{2}(\X(s))&=(c(\Z_{3}(s))\V_{3}(\X(s)))^{2}+c(\Z_{3}(s))\p^{2}(\X(s)), \\ 
			\label{eq:setGrel4}
			2\W_{4}(\Y(s))\W_{2}(\Y(s))&=(c(\Z_{3}(s))\W_{3}(\Y(s)))^{2}+c(\Z_{3}(s))\q^{2}(\Y(s));
			\end{align}
		\end{subequations}
		\item [(v)]
		\begin{equation}
		\label{eq:setGrel5}
		\displaystyle\lim_{s \rightarrow -\infty} \Z_{4}(s)=0.
		\end{equation}   
	\end{enumerate}
	We denote by $\G_{0}$ the subset of $\G$ which parametrize the data at time $t=0$, that is,
	\begin{equation*}
	\G_{0}=\{ \Theta \in \G \ | \ \Z_{1}=0 \}.
	\end{equation*}
\end{definition}

For $\Theta \in \G_{0}$, we get by using (\ref{eq:setGcomp}) and (\ref{eq:setGrel1}), that
\begin{equation}
\label{eq:setG0rel}
\V_{2}(\X(s))\dot{\X}(s)=\W_{2}(\Y(s))\dot{\Y}(s).
\end{equation}
This implies that
\begin{equation}
\label{eq:setG0rel2}
\dot{\Z}_{2}(s)=2\V_{2}(\X(s))\dot{\X}(s)=2\W_{2}(\Y(s))\dot{\Y}(s)
\end{equation}

Note that for an element $\Theta\in\G$ we have $\V_{2}+\V_{4}>0$ and $\W_{2}+\W_{4}>0$ almost everywhere. As we shall see, this property is preserved in the solution and is important in proving that the solution operator from $\D$ to $\D$ is a semigroup.

\begin{definition}
	\label{def:mapfromFtoG0}
	For any $\psi=(\psi_{1},\psi_{2})\in \F$, we define $(\X,\Y,\Z,\V,\W,\p,\q)$ as
	\begin{equation}
	\label{eq:mapFtoGX}
	\X(s)=\sup\{X\in \mathbb{R} \ | \ x_{1}(X')<x_{2}(2s-X') \text{ for all } X'<X \}
	\end{equation}
	and set $\Y(s)=2s-\X(s)$. We have
	\begin{equation}
	\label{eq:mapFtoG2}
	x_{1}(\X(s))=x_{2}(\Y(s)).
	\end{equation}
	We define
	\begin{subequations}
		\begin{align}
		\label{eq:mapFtoGZ1}
		\Z_{1}(s)&=0, \\
		\label{eq:mapFtoGZ2}	
		\Z_{2}(s)&=x_{1}(\X(s))=x_{2}(\Y(s)), \\
		\label{eq:mapFtoGZ3}	
		\Z_{3}(s)&=U_{1}(\X(s))=U_{2}(\Y(s)), \\
		\label{eq:mapFtoGZ4}	
		\Z_{4}(s)&=J_{1}(\X(s))+J_{2}(\Y(s)), \\
		\label{eq:mapFtoGZ5}	
		\Z_{5}(s)&=K_{1}(\X(s))+K_{2}(\Y(s))
		\end{align}
	\end{subequations}
	and
	\begin{subequations}
		\begin{align}
		\label{eq:mapFtoGV1}
		&\V_{1}(X)=\frac{1}{2c(U_{1}(X))}x_{1}'(X), &&\W_{1}(Y)=-\frac{1}{2c(U_{2}(Y))}x_{2}'(Y), \\
		\label{eq:mapFtoGV2}
		&\V_{2}(X)=\frac{1}{2}x_{1}'(X), &&\W_{2}(Y)=\frac{1}{2}x_{2}'(Y), \\
		\label{eq:mapFtoGV3}
		&\V_{3}(X)=V_{1}(X), &&\W_{3}(Y)=V_{2}(Y), \\
		\label{eq:mapFtoGV4}
		&\V_{4}(X)=J_{1}'(X), &&\W_{4}(Y)=J_{2}'(Y), \\
		\label{eq:mapFtoGV5}
		&\V_{5}(X)=K_{1}'(X), &&\W_{5}(Y)=K_{2}'(Y), \\
		\label{eq:mapFtoGp}
		&\p(X)=H_{1}(X), &&\q(Y)=H_{2}(Y).
		\end{align}
	\end{subequations}
	Let $\bf{C}:\F\rightarrow \G_{0}$ denote the mapping which to any $\psi \in \F$ associates the element $(\X,\Y,\Z,\V,\W,\p,\q)\in \G_{0}$ as defined above.
\end{definition}

\textbf{Example 1 continued.} The function $\X(s)$ is given as the unique point of intersection $X$ between $x_{1}(X)$ and $x_{2}(2s-X)$, i.e.,
\begin{equation*}
	x_{1}(\X(s))=x_{2}(2s-\X(s)),
\end{equation*}
which implies that
\begin{equation*}
	\X(s)=s \quad \text{and} \quad \Y(s)=s.
\end{equation*}
Hence, $(\X,\Y)$ is a strictly monotone curve.

\textbf{Example 2 continued.} For $s\leq\frac{\pi}{4}$, $\X(s)$ is given implicitly as the solution of the equation $X=x_{2}(2s-X)$, or
\begin{equation*}
	\arctan(\X(s))+2\X(s)+\frac{\pi}{2}=2s.
\end{equation*}
By differentiating, we get 
\begin{equation*}
	\dot{\X}(s)=\frac{1}{1+\frac{1}{2+2\X(s)^{2}}},
\end{equation*}
so that $\frac{2}{3}\leq\dot{\X}(s)\leq 1$ which implies that $\dot{\Y}(s)\geq 1$. Hence, for $s\leq\frac{\pi}{4}$  $(\X,\Y)$ is a strictly monotone curve.

For $\frac{\pi}{4}\leq s\leq \frac{\pi}{4}+\frac{1}{2}$, we have
\begin{equation*}
	\X(s)=2s-\frac{\pi}{2}
\end{equation*}
and $\Y(s)=\frac{\pi}{2}$, that is, $\Y(s)$ is constant.

For $s>\frac{\pi}{4}+\frac{1}{2}$, $\X(s)$ is given as the solution of the equation $X-1=x_{2}(2s-X)$, that is
\begin{equation*}
	\arctan(\X(s)-1)+2\X(s)-1+\frac{\pi}{2}=2s.
\end{equation*}
We differentiate and get 
\begin{equation*}
	\dot{\X}(s)=\frac{1}{1+\frac{1}{2+2(\X(s)-1)^{2}}},
\end{equation*}
so that $\frac{2}{3}\leq\dot{\X}(s)\leq 1$ and $\dot{\Y}(s)\geq 1$. Hence, for $s>\frac{\pi}{4}+\frac{1}{2}$ the curve $(\X,\Y)$ is the graph of a strictly increasing function. 

We conclude that the curve $(\X,\Y)$ consists of two strictly increasing parts (when $s\leq\frac{\pi}{4}$ and $s>\frac{\pi}{4}+\frac{1}{2}$) which are joined by a horizontal line segment (when $\frac{\pi}{4}\leq s\leq \frac{\pi}{4}+\frac{1}{2}$). 

\textbf{Example 3 continued.} In order to compute $\X$ as defined in \eqref{eq:mapFtoGX} we study the region $(X,Y)$ such that $Y=2s-X$ and $x_{1}(X)=x_{2}(Y)$. We have
\begin{equation*}
x_{2}(2s-X)=\begin{cases} 2s-X-1 & \text{if } X\leq 2s-1,\\ 0 &\text{if } 2s-1\leq X\leq 2s,\\ 2s-X, &\text{if } 2s\leq X. \end{cases}
\end{equation*}
If $2s\leq 0$, the two functions intersect at only one point $X=s$. The same holds for $2s-1\geq 1$, where they intersect at $X=s$.

If $0\leq 2s\leq 1$ then $x_{1}(X)=x_{2}(2s-X)$ for all $0\leq X\leq 2s$. Since $Y=2s-X$ this corresponds to straight line segments in the $(X,Y)$-plane with endpoints in $(0,2s)$ and $(2s,0)$. When we consider all $0\leq 2s\leq 1$ we therefore get a triangle in the $(X,Y)$-plane with corner points at $(0,0)$, $(1,0)$ and $(0,1)$.

If $0\leq 2s-1\leq 1$ then $x_{1}(X)=x_{2}(2s-X)$ for all $2s-1\leq X\leq 1$. Since $Y=2s-X$ this corresponds to straight line segments in the $(X,Y)$-plane with endpoints in $(2s-1,1)$ and $(1,2s-1)$. When we consider all $0\leq 2s-1\leq 1$ we therefore get a triangle in the $(X,Y)$-plane with corner points at $(1,0)$, $(1,1)$ and $(0,1)$.

Therefore, for $0\leq s\leq 1$ the region in the $(X,Y)$-plane where $x_{1}(X)=x_{2}(Y)$ for $Y=2s-X$ is a box with corners at $(0,0)$, $(1,0)$, $(1,1)$ and $(0,1)$. 

In principle we could pick any curve $(\X(s),\Y(s))$ in the box which satisfies Definition~\ref{def:curves}. This is because for any such curve in the box we have $x_{1}(\X(s))=x_{2}(\Y(s))$. From \eqref{eq:mapFtoGZ1} we would then have $\Z_{1}(s)=0$ so that $t(\X(s),\Y(s))=0$ for any such curve. Hence, $t(X,Y)=0$ for all $(X,Y)=[0,1]\times[0,1]$. In other words, time is equal to zero in the box. In order to proceed we must pick one of these curves, and from \eqref{eq:mapFtoGX} this curve consists of the straight line between $(0,0)$ and $(0,1)$, and the straight line between $(0,1)$ and $(1,1)$, that is,
\begin{equation*}
\X(s)=\begin{cases}
s & \text{if } s\leq 0,\\
0 & \text{if } 0\leq s\leq \frac{1}{2},\\
2s-1 & \text{if } \frac{1}{2}\leq s\leq 1,\\
s & \text{if } s\geq 1
\end{cases} \quad \text{and} \quad 
\Y(s)=\begin{cases}
s & \text{if } s\leq 0,\\
2s & \text{if } 0\leq s\leq \frac{1}{2},\\
1 & \text{if } \frac{1}{2}\leq s\leq 1,\\
s & \text{if } s\geq 1,
\end{cases}
\end{equation*}
see Figure \ref{fig:FigEx3i} and \ref{fig:FigEx3ii}.

\begin{figure}
	\centerline{\hbox{\includegraphics[width=10cm]{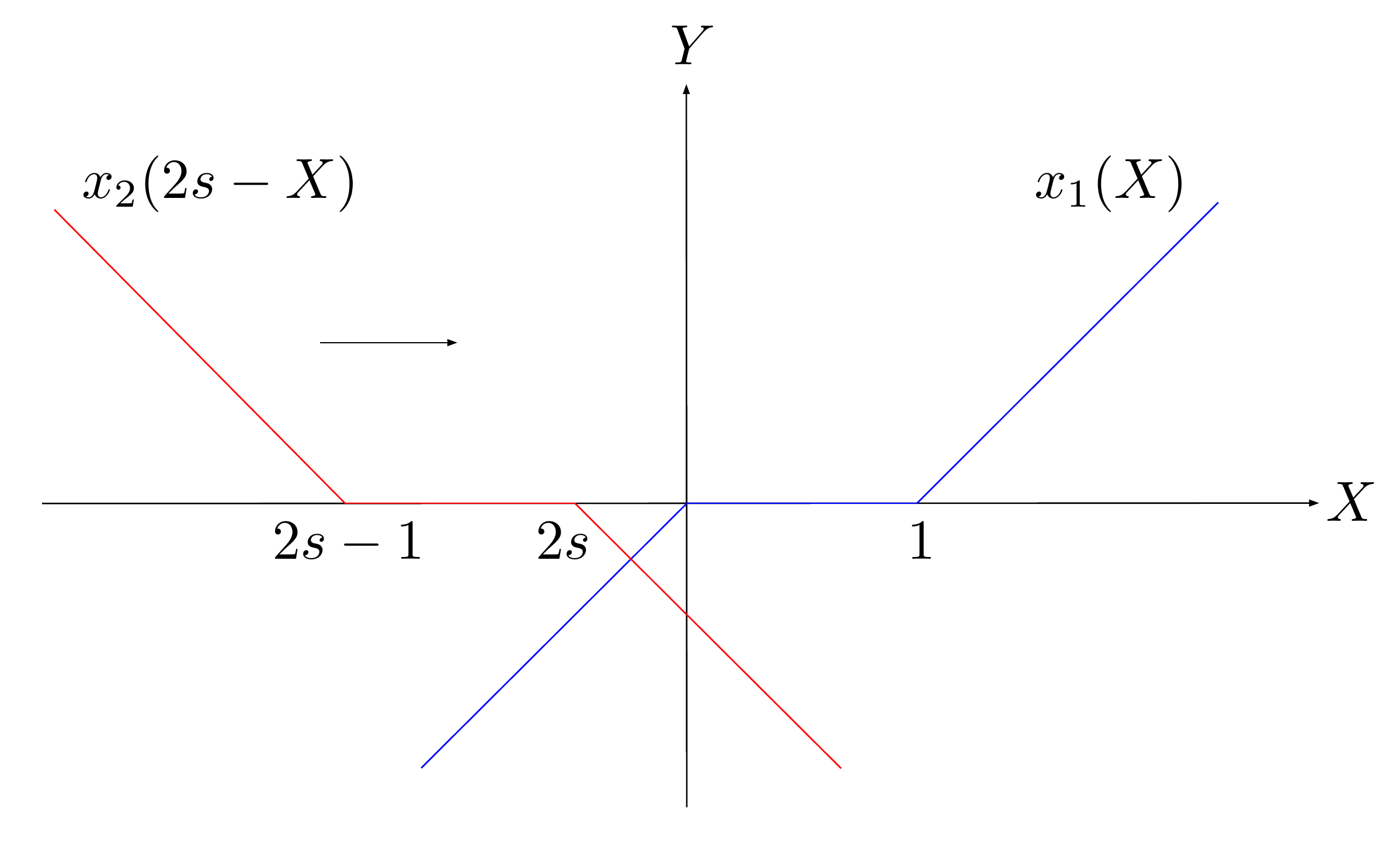}}}
	\caption{The functions $x_{1}(X)$ and $x_{2}(2s-X)$, from Example 3, for some $s<0$. The functions intersect at $X=\X(s)$. As $s$ increases, the graph of $x_{2}(2s-X)$ moves to the right. Eventually, the functions will intersect on intervals, which correspond to the box in Figure \ref{fig:FigEx3ii}.}
	\label{fig:FigEx3i}
\end{figure}

\begin{figure}
	\centerline{\hbox{\includegraphics[width=10cm]{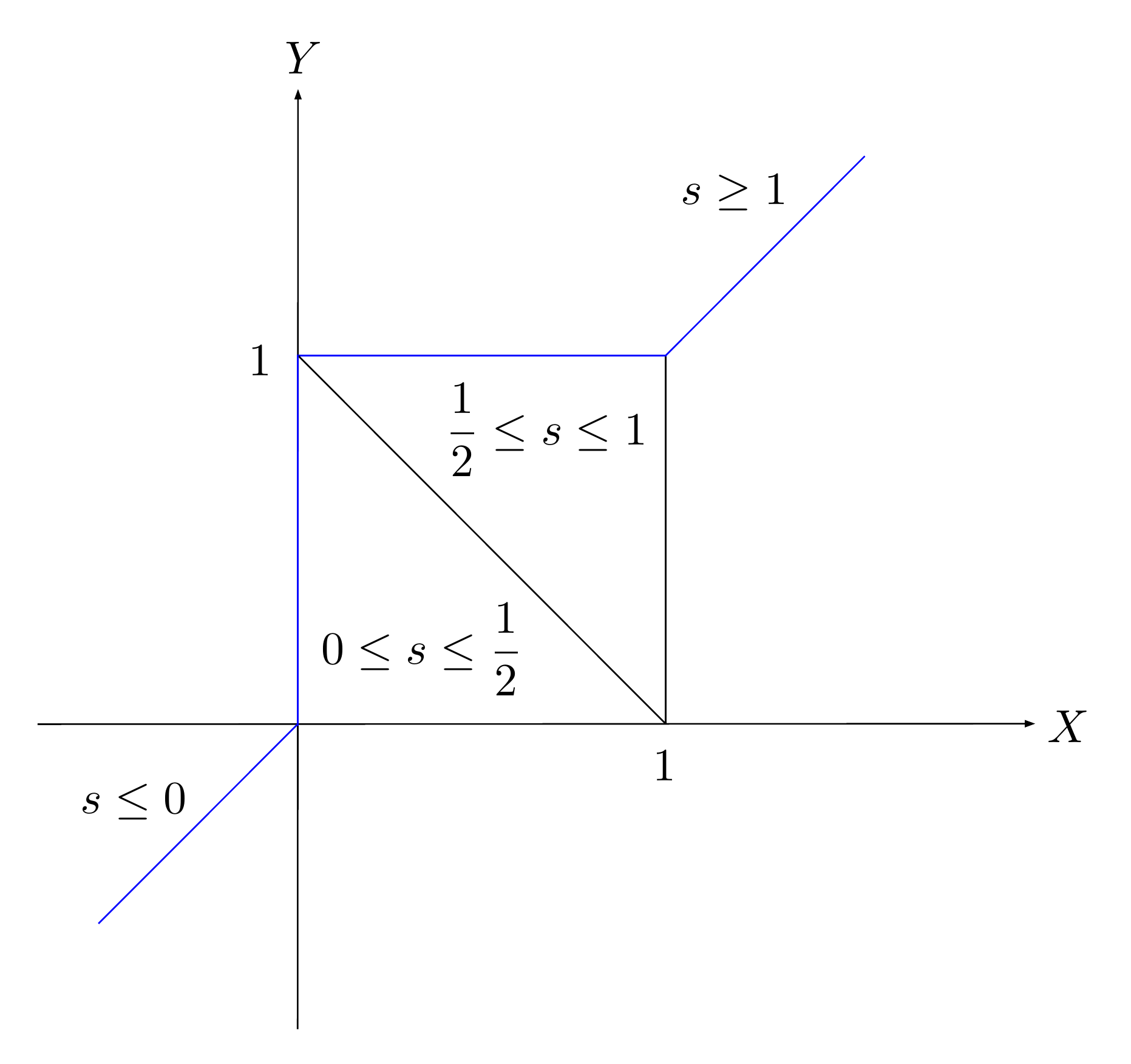}}}
	\caption{The set of all $X$ such that $x_{1}(X)=x_{2}(2s-X)$, from Example 3, for different values of $s$. The curve $(\X(s), \Y(s))$ is marked in blue.}
	\label{fig:FigEx3ii}
\end{figure}

We mention that if the measures are not discrete at the same point, we do not get boxes. Instead, the region in the $(X,Y)$-plane where $x_{1}(X)=x_{2}(Y)$ for $Y=2s-X$ is a curve consisting of the graph of strictly increasing functions and horizontal and vertical line segments.

\begin{proof}[Proof of the well-posedness of Definition \ref{def:mapfromFtoG0}]
	Let us verify that $(\X,\Y)$ belongs to $\C$. We first prove that $\X$ is nondecreasing. Let $s<\bar{s}$ and consider a sequence $X_{i}$ such that $\displaystyle\lim_{i\rightarrow \infty}X_{i}=\X(s)$ with $X_{i}<\X(s)$. By \eqref{eq:mapFtoGX} and since $x_{2}$ is nondecreasing, we have
	\begin{equation*}
	x_{1}(X_{i})<x_{2}(2s-X_{i})\leq x_{2}(2\bar{s}-X_{i}).
	\end{equation*}
	Hence, $X_{i}<\X(\bar{s})$. By letting $i$ tend to infinity, we conclude that $\X(s)\leq \X(\bar{s})$. By the continuity of $x_{1}$ and $x_{2}$, we obtain \eqref{eq:mapFtoG2}. We show that $\X$ is differentiable almost everywhere. We claim that $\X$ is Lipschitz continuous with Lipschitz constant bounded by two, that is,
	\begin{equation}
	\label{eq:Xlipschitz}
	|\X(\bar{s})-\X(s)|\leq 2|\bar{s}-s|.
	\end{equation}
	We may assume without loss of generality that $s<\bar{s}$. Assume that (\ref{eq:Xlipschitz}) does not hold, so that
	\begin{equation}
	\label{eq:Xlipschitzcontra}
	\X(\bar{s})-\X(s)>2(\bar{s}-s)
	\end{equation}
	for some $\bar{s}>s\in \mathbb{R}$. Thus, $\Y(s)>\Y(\bar{s})$. Then, since $x_{2}$ is nondecreasing,
	\begin{equation*}
	x_{1}(\X(s))=x_{2}(\Y(s))\geq x_{2}(\Y(\bar{s}))=x_{1}(\X(\bar{s})).
	\end{equation*}
	This implies that $x_{1}(\X(s))=x_{1}(\X(\bar{s}))$ because $x_{1}'\geq 0$ and $\X(s)<\X(\bar{s})$. Hence, $x_{1}$ is constant on $[\X(s),\X(\bar{s})]$. One proves similarly that $x_{2}$ is constant on $[\Y(\bar{s}),\Y(s)]$. Consider the point $(X,Y)$ given by $Y=\Y(s)$ and $X=2\bar{s}-\Y(s)$. We have
	\begin{equation*}
	\X(s)=2s-\Y(s)<X<2\bar{s}-\Y(\bar{s})=\X(\bar{s}),
	\end{equation*}
	so that $(X,Y)\in [\X(s),\X(\bar{s})]\times [\Y(\bar{s}),\Y(s)]$. It follows that
	$x_{1}(X)=x_{1}(\X(s))=x_{2}(\Y(s))=x_{2}(2\bar{s}-X)$ and $X<\X(\bar{s})$, which contradicts the definition of $\X$. Therefore, (\ref{eq:Xlipschitzcontra}) cannot hold and we have proved $(\ref{eq:Xlipschitz})$. Then, by Rademacher's theorem, $\X$ is differentiable almost everywhere. Let us prove that $\X-\id \in \Winf(\mathbb{R})$. This follows since
	\begin{equation*}
	\X(s)-s=\frac{1}{2}(\X(s)-\Y(s))=\frac{1}{2}(\X(s)-x_{1}(\X(s))+x_{2}(\Y(s))-\Y(s))
	\end{equation*} 
	and $x_{1}-\id,x_{2}-\id \in \Winf(\mathbb{R})$. Since $\dot{\X}\leq 2$, it follows that $\dot{\Y}=2-\dot{\X}\geq 0$. As above, one can show that $\Y-\id \in \Winf(\mathbb{R})$. Hence, $(\X,\Y)\in \C$. We prove that $||\Theta||_{\G}$ and $|||\Theta|||_{\G}$ are finite. In order to prove that $\Z_{3}\in L^{2}(\mathbb{R})$ we define the set
	\begin{equation*}
	B=\{s\in \mathbb{R} \ | \ \dot{\X}(s)\geq 1\}. 
	\end{equation*}
	Since $\dot{\X}+\dot{\Y}=2$, we have $\dot{\Y}>1$ on $B^{c}$. Thus,
	\begin{align*}
	\int_{\mathbb{R}}\Z_{3}^{2}(s)\,ds&=\int_{B}U_{1}^{2}(\X(s))\,ds+\int_{B^{c}}U_{2}^{2}(\Y(s))\,ds \\
	&\leq\int_{B}U_{1}^{2}(\X(s))\dot{\X}(s)\,ds+\int_{B^{c}}U_{2}^{2}(\Y(s))\dot{\Y}(s)\,ds \\
	&\leq \norm{U_{1}}_{L^{2}(\mathbb{R})}^2+\norm{U_{2}}_{L^{2}(\mathbb{R})}^2
	\end{align*}
	and $\Z_{3}\in L^{2}(\mathbb{R})$. The fact that $\Z_{3}^{a}\in L^{\infty}(\mathbb{R})$ follows from $U_{1},U_{2}\in L^2(\mathbb{R})\cap L^\infty(\mathbb{R})$. Next we show that the components of $\V^{a}$ belong to $L^{2}(\mathbb{R})\cap \Linf(\mathbb{R})$. By \eqref{eq:atriplet} and \eqref{eq:mapFtoGV1}, we have
	\begin{align*}
	|\V_{1}^{a}(X)|&=\bigg|\frac{1}{2c(U_{1}(X))}x_{1}'(X)-\frac{1}{2c(0)}\bigg|\\
	&=\bigg|\frac{1}{2c(U_{1}(X))}(x_{1}'(X)-1)+\frac{c(0)-c(U_{1}(X))}{2c(U_{1}(X))c(0)}\bigg|\\
	&\leq \frac{\kappa}{2}|x_{1}'(X)-1|+\frac{\kappa^{2}}{2}|c(0)-c(U_{1}(X))|\\
	&=\frac{\kappa}{2}|x_{1}'(X)-1|+\frac{\kappa^{2}}{2}\bigg| \int_{U_{1}(X)}^{0}c'(\tilde{U})\,d\tilde{U}\bigg|\\
	&\leq \frac{\kappa}{2}|x_{1}'(X)-1|+\frac{\kappa^{2}k_1}{2}|U_{1}(X)|,
	\end{align*}
	which implies that $\V_{1}^{a}$ belongs to $L^{2}(\mathbb{R})\cap \Linf(\mathbb{R})$, as  $x_{1}'-1,U_{1}\in L^{2}(\mathbb{R})\cap \Linf(\mathbb{R})$. We have
	\begin{equation*}
	\V_{2}^{a}=\V_{2}-\frac{1}{2}=\frac{1}{2}(x_{1}'-1),
	\end{equation*}
	so that $\V_{2}^{a}\in L^{2}(\mathbb{R})\cap \Linf(\mathbb{R})$. Since $\V_{3}^{a}=V_{1}$, $\V_{4}^{a}=J_{1}'$ and $\V_{5}^{a}=K_{1}'$, we conclude that all the components of $\V^{a}$ belong to $L^{2}(\mathbb{R})\cap \Linf(\mathbb{R})$. Similarly, one shows that the components of $\W^{a}$ belong to $L^{2}(\mathbb{R})\cap \Linf(\mathbb{R})$. Since $\p=H_{1}$ and $\q=H_{2}$, we have $\p,\q \in L^{2}(\mathbb{R})\cap \Linf(\mathbb{R})$. We have that $\Z_{2}^{a}\in \Linf(\mathbb{R})$ because
	\begin{equation*}
	\Z_{2}^{a}(s)=\Z_{2}(s)-s=x_{1}(\X(s))-s=x_{1}(\X(s))-\X(s)+\X(s)-s
	\end{equation*}
	and $x_{1}-\id,\X-\id \in \Linf(\mathbb{R})$. Since $\Z_{1}=0$, we have
	\begin{equation*}
	\Z_{1}^{a}(s)=-\frac{1}{c(0)}(\X(s)-s),
	\end{equation*}
	so that $\Z_{1}^{a}\in \Linf(\mathbb{R})$. From the relations $\Z_{4}^{a}=J_{1}(\X)+J_{2}(\Y)$ and $\Z_{5}^{a}=K_{1}(\X)+K_{2}(\Y)$, it follows from \eqref{eq:setF1} that they belong to $\Linf(\mathbb{R})$. To check that $\frac{1}{\V_{2}+\V_{4}}$ and $\frac{1}{\W_{2}+\W_{4}}$ are bounded, we need the following result.

	\begin{lemma}[{\cite[Lemma 3.2]{HolRay:07}}]
		\label{lemma:auxiliaryG}
		If $f\in G$ satisfies $||f-\id||_{\Winf(\mathbb{R})}+||f^{-1}-\id||_{\Winf(\mathbb{R})}\leq \alpha$ for some $\alpha\geq 0$, then $\frac{1}{1+\alpha}\leq f'\leq 1+\alpha$ almost everywhere. Conversely, if $f$ is absolutely continuous, $f-\id \in L^{\infty}(\mathbb{R})$, $f'-1\in L^{2}(\mathbb{R})$ and there exists $c\geq 1$ such that $\frac{1}{c}\leq f'\leq c$ almost everywhere, then $f$ belongs to $G$ and satisfies $||f-\id||_{\Winf(\mathbb{R})}+||f^{-1}-\id||_{\Winf(\mathbb{R})}\leq \alpha$,
		where $\alpha\geq 0$ only depends on $c$ and $||f-\id||_{L^{\infty}(\mathbb{R})}$.
	\end{lemma}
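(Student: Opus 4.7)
The plan is to prove the two implications of Lemma \ref{lemma:auxiliaryG} separately, with the almost-everywhere chain rule for bi-Lipschitz homeomorphisms of $\Real$ serving as the common key ingredient.

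For the direct implication, I would first observe that $\|f-\id\|_{\Winf(\Real)}\leq\alpha$ immediately gives $\|f'-1\|_{\Linf(\Real)}\leq\alpha$, so $f'\leq 1+\alpha$ almost everywhere. The analogous bound applied to $f^{-1}$ gives $(f^{-1})'\leq 1+\alpha$ almost everywhere. To obtain the lower bound on $f'$, I would invoke the a.e.\ chain rule $(f^{-1})'(f(x))\,f'(x)=1$, which is valid for bi-Lipschitz monotone homeomorphisms of $\Real$ such as elements of $G$, and conclude $f'(x)=\tfrac{1}{(f^{-1})'(f(x))}\geq\tfrac{1}{1+\alpha}$ a.e.

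For the converse, I would proceed in three short steps. First, the hypotheses $\tfrac{1}{c}\leq f'\leq c$ with $c\geq 1$ give $|f'-1|\leq c-1$ a.e., so together with $f-\id\in\Linf(\Real)$ one gets $f-\id\in\Winf(\Real)$, with norm controlled by $\|f-\id\|_{\Linf(\Real)}$ and $c-1$. Second, since $f'\geq\tfrac{1}{c}>0$ a.e.\ and $f$ is absolutely continuous and close to $\id$ in $\Linf(\Real)$, $f$ is a strictly increasing bijection of $\Real$; the inverse $g=f\inv$ is then absolutely continuous with $g'(y)=1/f'(g(y))$ a.e., so $\tfrac{1}{c}\leq g'\leq c$. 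The identity $|g(y)-y|=|g(y)-f(g(y))|\leq\|f-\id\|_{\Linf(\Real)}$, obtained by substituting $x=g(y)$ into $|f(x)-x|\leq\|f-\id\|_{\Linf(\Real)}$, shows $g-\id\in\Linf(\Real)$ with the same bound, hence $g-\id\in\Winf(\Real)$. Third, $(f-\id)'\in L^{2}(\Real)$ holds by hypothesis, so all conditions of Definition \ref{def:G} are met and $f\in G$; summing the four contributions to the two $\Winf$-norms one may take $\alpha=2\|f-\id\|_{\Linf(\Real)}+2(c-1)$, which depends only on $c$ and $\|f-\id\|_{\Linf(\Real)}$ as claimed.

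The only delicate point, and thus the main obstacle, is the a.e.\ chain rule linking $f$ and $f^{-1}$, which requires both to be Lipschitz and strictly monotone; in the forward direction this is automatic from $f\in G$, and in the converse it follows from $\tfrac{1}{c}\leq f'\leq c$ combined with absolute continuity. Apart from this the argument is elementary norm bookkeeping, and importantly no $L^{2}$-control on $(f^{-1}-\id)'$ is required, since Definition \ref{def:G} only asks for $(f-\id)'\in L^{2}(\Real)$.
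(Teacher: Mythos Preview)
Your proof is correct. The paper does not actually prove this lemma---it is quoted verbatim from \cite[Lemma~3.2]{HolRay:07} and used as an auxiliary tool---so there is no in-paper argument to compare against; your elementary approach via the a.e.\ chain rule for bi-Lipschitz homeomorphisms and norm bookkeeping is exactly the standard proof.
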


	Since $x_{1}+J_{1}\in G$, Lemma \ref{lemma:auxiliaryG} implies that for some $\alpha \geq 0$, $1/(1+\alpha)\leq x_{1}'+J_{1}'\leq 1+\alpha$ almost everywhere and it follows that $\frac{1}{\V_{2}+\V_{4}}\in L^{\infty}(\mathbb{R})$. Similarly, one can show that $\frac{1}{\W_{2}+\W_{4}}\in L^{\infty}(\mathbb{R})$. Hence, \eqref{eq:setGmember} holds. From \eqref{eq:mapFtoGV2}, \eqref{eq:mapFtoGV4} and \eqref{eq:setFrel1}, we can check that \eqref{eq:setGpositive} is satisfied. We verify that \eqref{eq:setGcomp} holds. By differentiating \eqref{eq:mapFtoG2}, we obtain $x_{1}'(\X)\dot{\X}=x_{2}'(\Y)\dot{\Y}$, which after using \eqref{eq:mapFtoGV2} yields \eqref{eq:setG0rel}. It follows that
	\begin{equation*}
	\dot{\Z}_{2}=\frac{1}{2}x_{1}'(\X)\dot{\X}+\frac{1}{2}x_{2}'(\Y)\dot{\Y}=\V_{2}(\X)\dot{\X}+\W_{2}(\Y)\dot{\Y}.
	\end{equation*}
	By \eqref{eq:mapFtoGZ1}, we have $\dot{\Z}_{1}=0$, and by \eqref{eq:mapFtoGV1}, \eqref{eq:mapFtoGZ2} and \eqref{eq:mapFtoGZ3}, we obtain 
	\begin{equation*}
	\V_{1}(\X)\dot{\X}+\W_{1}(\Y)\dot{\Y}=\frac{1}{2c(U_{1}(\X))}x_{1}'(\X)\dot{\X}-\frac{1}{2c(U_{2}(\Y))}x_{2}'(\Y)\dot{\Y}=0.
	\end{equation*}
	Using \eqref{eq:mapFtoGZ3}, \eqref{eq:setFrel7} and \eqref{eq:mapFtoGV3}, we find
	\begin{equation*}
	\dot{\Z}_{3}=\frac{1}{2}U_{1}'(\X)\dot{\X}+\frac{1}{2}U_{2}'(\Y)\dot{\Y}=V_{1}(\X)\dot{\X}+V_{2}(\Y)\dot{\Y}=\V_{3}(\X)\dot{\X}+\W_{3}(\Y)\dot{\Y}.
	\end{equation*}
	The relations for $\dot{\Z}_{4}$ and $\dot{\Z}_{5}$ in \eqref{eq:setGcomp} follow by differentiating \eqref{eq:mapFtoGZ4} and \eqref{eq:mapFtoGZ5}, respectively. The two identities in \eqref{eq:setGrel1} follow from \eqref{eq:mapFtoGV1} and \eqref{eq:mapFtoGV2}. Using \eqref{eq:setFrel2}, we can verify that \eqref{eq:setGrel2} holds. The last two identities \eqref{eq:setGrel3} and \eqref{eq:setGrel4} follows from \eqref{eq:setFrel3}. It remains to prove \eqref{eq:setGrel5}. Since $\Z_{4}(s)=J_{1}(\X(s))+J_{2}(\Y(s))$, $\displaystyle\lim_{s\rightarrow-\infty}\X(s)=-\infty$ and $\displaystyle\lim_{s\rightarrow-\infty}\Y(s)=-\infty$, it follows from \eqref{eq:setFrel5} that $\displaystyle\lim_{s\rightarrow -\infty}\Z_{4}(s)=0$, so that \eqref{eq:setGrel5} is satisfied.  
\end{proof}

\section{Existence of Solutions for the Equivalent System}

\subsection{Existence of Short-Range Solutions}
%%%%%%%%%%%%%%%%%%%%%%%%%%%%%%%%%%%%%%%%%%%%%
%        Existence of Short-Range Solutions
%%%%%%%%%%%%%%%%%%%%%%%%%%%%%%%%%%%%%%%%%%%%%

In the following we denote rectangular domains by 
\begin{equation*}
	\Omega=[X_{l},X_{r}]\times[Y_{l},Y_{r}]
\end{equation*}
and we set $s_{l}=\frac{1}{2}(X_{l}+Y_{l})$ and $s_{r}=\frac{1}{2}(X_{r}+Y_{r})$. We define curves in rectangular domains as follows.

\begin{definition}
Given $\Omega=[X_{l},X_{r}]\times[Y_{l},Y_{r}]$, we denote by $\C(\Omega)$ the set of curves in $\Omega$ parametrized by $(\X(s),\Y(s))$ with $s \in [s_{l},s_{r}]$ such that $(\X(s_{l}),\Y(s_{l}))=(X_{l},Y_{l})$, $(\X(s_{r}),\Y(s_{r}))=(X_{r}, Y_{r})$ and
\begin{subequations}
\begin{align}
	&\X-\id,\quad \Y-\id \in \Winf([s_{l},s_{r}]), \\
	&\dot{\X}\geq 0,\quad \dot{\Y}\geq 0, \\
	\label{eq:curvenormalization}
	&\frac{1}{2}(\X(s)+\Y(s))=s \quad \text{for all } s \in [s_{l},s_{r}]. 
\end{align}
\end{subequations}
We set
\begin{equation*}
	\norm{(\X,\Y)}_{\C(\Omega)}=\norm{\X-\id}_{\Linf([s_{l},s_{r}])}+\norm{\Y-\id}_{\Linf([s_{l},s_{r}])}.
\end{equation*}
\end{definition}

We introduce the counterpart of $\G$ on bounded domains, which we denote by $\G(\Omega)$.

\begin{definition}
Given $\Omega=[X_{l},X_{r}]\times[Y_{l},Y_{r}]$, we denote by $\G(\Omega)$ the set of all elements which consist of a curve $(\X,\Y)\in \C(\Omega)$, three vector-valued functions $\Z(s)$, $\V(X)$ and $\W(Y)$, and two functions $\p(X)$ and $\q(Y)$. We denote \ $\Theta=(\X,\Y,\Z,\V,\W,\p,\q)$ and set
\begin{equation*}
	\norm{\Theta}_{\G(\Omega)}^{2}=\norm{\Z_{3}}_{L^{2}([s_{l},s_{r}])}^{2}+\norm{\V^{a}}_{L^{2}([X_{l},X_{r}])}^{2}+\norm{\W^{a}}_{L^{2}([Y_{l},Y_{r}])}^{2}+\norm{\p}_{L^{2}([X_{l},X_{r}])}^{2}+\norm{\q}_{L^{2}([Y_{l},Y_{r}])}^{2}
\end{equation*}
and	
\begin{align*}
	\tnorm{\Theta}_{\G(\Omega)}&=\norm{(\X,\Y)}_{\C(\Omega)}+\norm{\frac{1}{\V_{2}+\V_{4}}}_{L^{\infty}([X_{l},X_{r}])}+\norm{\frac{1}{\W_{2}+\W_{4}}}_{L^{\infty}([Y_{l},Y_{r}])}\\
	&\quad+\norm{\Z^{a}}_{L^{\infty}([s_{l},s_{r}])}
	+\norm{\V^{a}}_{L^{\infty}([X_{l},X_{r}])}+\norm{\W^{a}}_{L^{\infty}([Y_{l},Y_{r}])}\\
	&\quad+\norm{\p}_{L^{\infty}([X_{l},X_{r}])}+\norm{\q}_{L^{\infty}([Y_{l},Y_{r}])}.
\end{align*}
The element $\Theta$ belongs\footnote{Note that condition (i) implies $\norm{\Theta}_{\G(\Omega)}<\infty$ because $\Omega$ is bounded.} to $\G(\Omega)$,
 if
\begin{enumerate}
\item [(i)]
\begin{equation*}
	 \tnorm{\Theta}_{\G(\Omega)}<\infty,
	 \end{equation*}
\item [(ii)]
\begin{equation*}
	\V_{2},\W_{2},\Z_{4},\V_{4},\W_{4}\geq 0,
\end{equation*}
\item [(iii)] for almost every $s\in\mathbb{R}$, we have
\begin{equation}
\label{eq:localGcomp}
	\dot{\Z}(s)=\V(\X(s))\dot{\X}(s)+\W(\Y(s))\dot{\Y}(s),
\end{equation}
\item [(iv)]
\begin{subequations}
\begin{align}
	\label{eq:setGlocrel1}
	&\V_{2}(\X(s))=c(\Z_{3}(s))\V_{1}(\X(s)), &&\W_{2}(\Y(s))=-c(\Z_{3}(s))\W_{1}(\Y(s)), \\
	&\V_{4}(\X(s))=c(\Z_{3}(s))\V_{5}(\X(s)), &&\W_{4}(\Y(s))=-c(\Z_{3}(s))\W_{5}(\Y(s))
\end{align}	
and	
\begin{align}
	\label{eq:setGlocrel3}
	2\V_{4}(\X(s))\V_{2}(\X(s))&=(c(\Z_{3}(s))\V_{3}(\X(s)))^{2}+c(\Z_{3}(s))\p^{2}(\X(s)), \\
	2\W_{4}(\Y(s))\W_{2}(\Y(s))&=(c(\Z_{3}(s))\W_{3}(\Y(s)))^{2}+c(\Z_{3}(s))\q^{2}(\Y(s)).
\end{align}
\end{subequations}
\end{enumerate}
\end{definition}

By definition we have for any $(\X,\Y,\Z,\V,\W,\p,\q)\in \G(\Omega)$ that the functions $\X$ and $\Y$ are nondecreasing. To any nondecreasing function one can associate its generalized inverse, a concept which is presented in, e.g., \cite{brenier:09}.

\begin{definition}
\label{def:GenInv}
Given $\Omega=[X_{l},X_{r}]\times[Y_{l},Y_{r}]$ and $(\X,\Y)\in \C(\Omega)$, we define the generalized inverse of $\X$ and $\Y$ as
\begin{align*}
	\alpha(X)&=\sup \{ s\in [s_{l},s_{r}] \ | \ \X(s)<X \} \quad \text{ for } X\in (X_{l},X_{r}], \\
	\beta(Y)&=\sup \{ s\in [s_{l},s_{r}] \ | \ \Y(s)<Y \} \quad \text{ for } Y\in (Y_{l},Y_{r}],
\end{align*}
respectively. We denote $\X^{-1}=\alpha$ and $\Y^{-1}=\beta$.
\end{definition}

The generalized inverse functions $\X^{-1}$ and $\Y^{-1}$ satisfy the following properties.

\begin{lemma}
\label{lemma:GenInv}
The functions $\X^{-1}$ and $\Y^{-1}$ are lower semicontinuous and nondecreasing. We have
\begin{subequations}
\begin{equation}
\label{eq:geninv1}
	\X \circ \X^{-1}=\emph{Id} \quad \text{and} \quad \Y \circ \Y^{-1}=\emph{Id},
\end{equation}
\begin{equation}
\label{eq:geninv2}
	\X^{-1} \circ \X(s)=s \text{ for any } s \text{ such that } \dot{\X}(s)>0
\end{equation}	
and
\begin{equation}
\label{eq:geninv3}
	\Y^{-1} \circ \Y(s)=s \text{ for any } s \text{ such that } \dot{\Y}(s)>0.
\end{equation}
\end{subequations}
\end{lemma}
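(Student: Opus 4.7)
The plan is to verify each assertion in turn, relying only on the fact that $\X$ and $\Y$ are nondecreasing and continuous (continuity coming from $\X-\id,\Y-\id\in\Winf([s_{l},s_{r}])$). I will work throughout with $\X$ and $\alpha=\X\inv$; the corresponding proof for $\Y$ and $\beta=\Y\inv$ is obtained by repeating each step verbatim.

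Monotonicity of $\alpha$ is immediate from the definition: if $X_{1}<X_{2}$, then $\{s:\X(s)<X_{1}\}\subseteq\{s:\X(s)<X_{2}\}$, so their suprema satisfy $\alpha(X_{1})\leq\alpha(X_{2})$. For lower semicontinuity at $X\in(X_{l},X_{r}]$, fix an arbitrary $s<\alpha(X)$; by the definition of supremum there exists $s'\in(s,\alpha(X)]$ with $\X(s')<X$. For any sequence $X_{n}\to X$, continuity of $\X$ at $s'$ combined with the strict inequality $\X(s')<X$ forces $\X(s')<X_{n}$ for $n$ sufficiently large, whence $\alpha(X_{n})\geq s'>s$. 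Since $s<\alpha(X)$ was arbitrary, this yields $\liminf_{X_{n}\to X}\alpha(X_{n})\geq\alpha(X)$, i.e., lower semicontinuity.

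To prove \eqref{eq:geninv1}, set $s^{*}=\alpha(X)$. Choosing $s_{n}\uparrow s^{*}$ with $\X(s_{n})<X$ and invoking continuity of $\X$ yields $\X(s^{*})\leq X$. If $s^{*}<s_{r}$, then every $s'\in(s^{*},s_{r}]$ satisfies $\X(s')\geq X$ (otherwise it would belong to the defining set and exceed the supremum), so letting $s'\downarrow s^{*}$ gives $\X(s^{*})\geq X$; if instead $s^{*}=s_{r}$, the endpoint relation $\X(s_{r})=X_{r}$ from the definition of $\C(\Omega)$ combined with $X\leq X_{r}$ and $\X(s^{*})\leq X$ forces $\X(s^{*})=X=X_{r}$. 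In either case $\X(\alpha(X))=X$. For \eqref{eq:geninv2}, suppose $\dot{\X}(s)>0$. If $\X(s')=\X(s)$ for some $s'<s$, then by monotonicity $\X$ is constant on $[s',s]$, so the left derivative of $\X$ at $s$ vanishes, contradicting $\dot{\X}(s)>0$. Hence $\X(s')<\X(s)$ for every $s'<s$, which gives $\alpha(\X(s))\geq s$, while monotonicity of $\X$ rules out the existence of $s'>s$ with $\X(s')<\X(s)$, yielding the reverse inequality $\alpha(\X(s))\leq s$.

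No substantive obstacle is expected; the only point requiring genuine care is the boundary case $s^{*}=s_{r}$ in the proof of \eqref{eq:geninv1}, which is handled separately above using the endpoint condition $\X(s_{r})=X_{r}$ built into $\C(\Omega)$. The argument for \eqref{eq:geninv3} is identical to that for \eqref{eq:geninv2} with $\X$ replaced by $\Y$ and $\alpha$ by $\beta$.
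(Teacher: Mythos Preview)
Your proof is correct. The paper does not actually prove this lemma; it simply cites \cite[Lemma~3]{HolRay:11}. You supply a complete, self-contained argument using only the continuity and monotonicity of $\X$ (which follow from $\X-\id\in\Winf([s_l,s_r])$ and $\dot\X\ge0$), so in that sense your route is more informative than what appears in the paper.

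One harmless slip in wording: in the lower-semicontinuity step you invoke ``continuity of $\X$ at $s'$'' to conclude $\X(s')<X_n$ for large $n$, but continuity of $\X$ plays no role there --- the conclusion follows simply because $\X(s')$ is a fixed number strictly less than $X$ and $X_n\to X$. Continuity of $\X$ is, of course, used elsewhere (in proving $\X(\alpha(X))=X$), so this is only a matter of where you attribute it. Your handling of the endpoint case $s^*=s_r$ via $\X(s_r)=X_r$ is correct and necessary.
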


We refer to \cite[Lemma 3]{HolRay:11} for a proof.

Now we define solutions of (\ref{eq:goveq}) on rectangular domains.
Consider the Banach spaces
\begin{align*}
	L^{\infty}_{X}(\Omega)&=L^{\infty}([Y_{l},Y_{r}],C([X_{l},X_{r}])), & L^{\infty}_{Y}(\Omega)&=L^{\infty}([X_{l},X_{r}],C([Y_{l},Y_{r}])), \\
	W^{1,\infty}_{X}(\Omega)&=L^{\infty}([Y_{l},Y_{r}],\Winf([X_{l},X_{r}])), & W^{1,\infty}_{Y}(\Omega)&=L^{\infty}([X_{l},X_{r}],\Winf([Y_{l},Y_{r}])).
\end{align*}
The corresponding norms for $f:\Omega\mapsto\mathbb{R}$ are defined as
\begin{align*}
	||f||_{L^{\infty}_{X}(\Omega)}&=\text{ess}\, \text{sup}_{Y\in [Y_{l},Y_{r}]} ||f(\cdot,Y)||_{\Linf([X_{l},X_{r}])},\\
	||f||_{L^{\infty}_{Y}(\Omega)}&=\text{ess}\, \text{sup}_{X\in [X_{l},X_{r}]} ||f(X,\cdot)||_{\Linf([Y_{l},Y_{r}])},\\
	||f||_{W^{1,\infty}_{X}(\Omega)}&=\text{ess}\, \text{sup}_{Y\in [Y_{l},Y_{r}]} ||f(\cdot,Y)||_{\Winf([X_{l},X_{r}])},\\
	||f||_{W^{1,\infty}_{Y}(\Omega)}&=\text{ess}\, \text{sup}_{X\in [X_{l},X_{r}]} ||f(X,\cdot)||_{\Winf([Y_{l},Y_{r}])}.
\end{align*}

We introduce the function $Z^{a}$, defined as
\begin{subequations}\label{eq:decayinfty}
\begin{align}
	Z_{1}^{a}(X,Y)&=Z_{1}(X,Y)-\frac{1}{2c(0)}(X-Y), \\
	Z_{2}^{a}(X,Y)&=Z_{2}(X,Y)-\frac{1}{2}(X+Y), \\
	Z_{i}^{a}(X,Y)&=Z_{i}(X,Y) \quad \text{for } i\in \{3,4,5\}
\end{align}
\end{subequations}
in order to conveniently express the decay of $Z$ at infinity in the diagonal direction. Although we are not yet concerned with the behavior at infinity, the notation will be useful when introducing global solutions.

\begin{definition}
\label{def:soln}
We say that $(Z,p,q)$ is a solution of (\ref{eq:goveq}) in $\Omega=[X_{l},X_{r}]\times[Y_{l},Y_{r}]$ if
\begin{enumerate}
	\item [(i)]
	\begin{aalign}
	\label{eq:soln1}
		&Z^{a}\in [\Winf(\Omega)]^{5}, \quad Z^{a}_{X}\in [W^{1,\infty}_{Y}(\Omega)]^{5}, \quad Z^{a}_{Y}\in [W^{1,\infty}_{X}(\Omega)]^{5}, \\ &\hspace{60pt}p\in W^{1,\infty}_{Y}(\Omega), \quad q\in W^{1,\infty}_{X}(\Omega),
	\end{aalign}
	\item [(ii)] for almost every $X\in [X_{l},X_{r}]$,
	\begin{equation}
	\label{eq:soln2}
		(Z_{X}(X,Y))_{Y}=F(Z)(Z_{X},Z_{Y})(X,Y),
	\end{equation}
	\item [(iii)] for almost every $Y\in [Y_{l},Y_{r}]$,
	\begin{equation}
	\label{eq:soln3}
	(Z_{Y}(X,Y))_{X}=F(Z)(Z_{X},Z_{Y})(X,Y),
	\end{equation}
	\item [(iv)] for almost every $X\in [X_{l},X_{r}]$,
	\begin{equation}
	\label{eq:soln4}
		p_{Y}(X,Y)=0,
	\end{equation}
	\item [(v)] for almost every $Y\in [Y_{l},Y_{r}]$,
	\begin{equation}
	\label{eq:soln5}
		q_{X}(X,Y)=0.
	\end{equation}
We say that $(Z,p,q)$ is a global solution of (\ref{eq:goveq}), if these conditions hold for any rectangular domain $\Omega$.
\end{enumerate}
\end{definition}

The following lemma, whose proof follows the same lines as the one of \cite[Lemma 4]{HolRay:11}, shows that the imposed regularity in Definition \ref{def:soln} is necessary to extract relevant data from a curve. Slightly abusing the notation, we denote 
\begin{equation}
\label{eq:invabuse}
	\X(Y)=\X \circ \Y^{-1}(Y) \quad \text{and} \quad \Y(X)=\Y \circ \X^{-1}(X).
\end{equation}

\begin{lemma}
Let $\Omega$ be a rectangular domain in $\mathbb{R}^{2}$ and assume that
\begin{align*}
	&Z^{a}\in [\Winf(\Omega)]^{5}, \quad Z^{a}_{X}\in [W^{1,\infty}_{Y}(\Omega)]^{5}, \quad Z^{a}_{Y}\in [W^{1,\infty}_{X}(\Omega)]^{5}, \\ &\hspace{60pt}p\in W^{1,\infty}_{Y}(\Omega), \quad q\in W^{1,\infty}_{X}(\Omega).
\end{align*}
Given a curve $(\X,\Y)\in \C(\Omega)$, let $(\Z,\V,\W,\p,\q)$ be defined as
\begin{equation*}
	\Z(s)=Z(\X(s),\Y(s)) \text{ for all } s\in \mathbb{R}
\end{equation*}
and
\begin{align*}
	\V(X)&=Z_{X}(X,\Y(X)) \text{ for a.e. } X\in \mathbb{R}, \\
	\W(Y)&=Z_{Y}(\X(Y),Y) \text{ for a.e. } Y\in \mathbb{R}, \\
	\p(X)&=p(X,\Y(X))  \text{ for a.e. } X\in \mathbb{R}, \\
	\q(Y)&=q(\X(Y),Y) \text{ for a.e. } Y\in \mathbb{R} \\
\end{align*}
or equivalently
\begin{align*}
	\V(\X(s))&=Z_{X}(\X(s),\Y(s)) \text{ for a.e. } s\in \mathbb{R} \text{ such that } \dot{\X}(s)>0, \\
	\W(\Y(s))&=Z_{Y}(\X(s),\Y(s)) \text{ for a.e. } s\in \mathbb{R} \text{ such that } \dot{\Y}(s)>0, \\
	\p(\X(s))&=p(\X(s),\Y(s)) \text{ for a.e. } s\in \mathbb{R} \text{ such that } \dot{\X}(s)>0, \\
	\q(\Y(s))&=q(\X(s),\Y(s)) \text{ for a.e. } s\in \mathbb{R} \text{ such that } \dot{\Y}(s)>0.
\end{align*}
Then $\Z,\V,\W,\p,\q\in L^{\infty}_{\emph{loc}}(\mathbb{R})$ and we denote $\Theta=(\X,\Y,\Z,\V,\W,\p,\q)$ by
\begin{equation*}
	(Z,p,q) \bullet (\X,\Y).
\end{equation*}
\end{lemma}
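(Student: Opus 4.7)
The plan is to handle $\Z$ separately from the derivative-type quantities $\V, \W, \p, \q$: $Z$ is jointly Lipschitz on $\Omega$ (via $Z^{a}\in[\Winf(\Omega)]^{5}$ and \eqref{eq:decayinfty}), while the others enjoy only anisotropic slicewise regularity. For $\Z$ itself, since $(\X,\Y)\in\C(\Omega)$ is Lipschitz in $s$ (because $\X-\id,\Y-\id\in\Winf$), the composition $\Z(s)=Z(\X(s),\Y(s))$ is continuous on $[s_{l},s_{r}]$ and therefore trivially in $L^{\infty}_{\mathrm{loc}}$.

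For $\V$, I would invoke the identification $W^{1,\infty}_{Y}(\Omega)=L^{\infty}([X_{l},X_{r}];\Winf([Y_{l},Y_{r}]))$ to choose a representative of $Z_{X}$ such that, outside a null set $N\subset[X_{l},X_{r}]$, every slice $Z_{X}(X,\cdot)$ is a genuine Lipschitz function of $Y$, uniformly bounded by $\|Z_{X}^{a}\|_{W^{1,\infty}_{Y}(\Omega)}$ plus the fixed affine correction from \eqref{eq:decayinfty}. For such $X$ the value $Z_{X}(X,\Y(X))$ is well defined and uniformly bounded, so $\V\in L^{\infty}([X_{l},X_{r}])$. Measurability of $\V$ follows from the Carath\'eodory-type structure of $Z_{X}$ (measurable in $X$, continuous in $Y$) together with the fact that $\Y(X)=\Y\circ\X^{-1}(X)$ is nondecreasing and hence Borel measurable. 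The identical argument, using $W^{1,\infty}_{X}$ regularity for $Z_{Y}$ and $q$ and $W^{1,\infty}_{Y}$ regularity for $p$, handles $\W$, $\p$, $\q$.

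To reconcile the two parametrizations, fix $s$ with $\dot{\X}(s)>0$; then \eqref{eq:geninv2} yields $\X^{-1}\circ\X(s)=s$, so $\Y(\X(s))=\Y\circ\X^{-1}(\X(s))=\Y(s)$ and hence $\V(\X(s))=Z_{X}(\X(s),\Y(s))$. To promote the ``a.e.\ in $X$'' qualifier to ``a.e.\ $s$ with $\dot{\X}(s)>0$'', the area formula applied to the Lipschitz monotone map $\X$ gives $\int_{\X^{-1}(N)\cap\{\dot{\X}>0\}}\dot{\X}(s)\,ds\leq |N|=0$, so the preimage is null; the symmetric argument using $\Y$ covers the cases involving $\W$ and $\q$. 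I expect this measurability/consistency bookkeeping to be the only nonroutine point: once it is in place, the $L^{\infty}_{\mathrm{loc}}$ bound on each of $\Z,\V,\W,\p,\q$ is immediate from the ambient regularity spaces specified in the hypotheses.
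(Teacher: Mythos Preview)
Your proposal is correct. The paper does not actually prove this lemma; it simply states that the argument follows the same lines as \cite[Lemma 4]{HolRay:11}, so there is no in-paper proof to compare against. Your outline covers precisely the points one expects: joint Lipschitz continuity of $Z$ (via $Z^{a}\in[\Winf(\Omega)]^{5}$ and the affine correction \eqref{eq:decayinfty}) handles $\Z$; the anisotropic slicewise regularity $W^{1,\infty}_{Y}$ (resp.\ $W^{1,\infty}_{X}$) gives, for a.e.\ $X$ (resp.\ $Y$), a genuine Lipschitz slice on which evaluation at $\Y(X)$ (resp.\ $\X(Y)$) is well defined and uniformly bounded; measurability comes from the Carath\'eodory structure together with monotonicity of $\Y\circ\X^{-1}$; and the equivalence of the two parametrizations follows from Lemma~\ref{lemma:GenInv} combined with the change-of-variables argument $\int_{\X^{-1}(N)}\dot\X(s)\,ds=|N|=0$, which forces $|\X^{-1}(N)\cap\{\dot\X>0\}|=0$. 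This is the standard bookkeeping and matches the referenced argument.
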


We now introduce the set $\H(\Omega)$ of all solutions of (\ref{eq:goveq}) on rectangular domains, which satisfy \eqref{eq:xJUpqRel}, \eqref{eq:xtJKRel}, and some additional constraints.
\begin{definition}
\label{def:initialDataH}
Given $\Omega=[X_{l},X_{r}]\times[Y_{l},Y_{r}]$, let $\H(\Omega)$ be the set of all solutions $(Z,p,q)$ to (\ref{eq:goveq}) in the sense of Definition \ref{def:soln} which satisfy the following properties
\begin{subequations}
\label{eqns:setH}
\begin{align}
	\label{eq:setH1}
	x_{X}&=c(U)t_{X}, & x_{Y}&=-c(U)t_{Y}, \\
	\label{eq:setH2}
	J_{X}&=c(U)K_{X}, & J_{Y}&=-c(U)K_{Y}, \\
	\label{eq:setH3}
	2J_{X}x_{X}&=(c(U)U_{X})^{2}+c(U)p^{2}, & 2J_{Y}x_{Y}&=(c(U)U_{Y})^{2}+c(U)q^{2}, \\
	\label{eq:setH4}
	x_{X}&\geq 0, & x_{Y}&\geq 0, \\
	\label{eq:setH5}
	J_{X}&\geq 0, & J_{Y}&\geq 0, \\
	\label{eq:setH6}
	x_{X}+J_{X}&>0, & x_{Y}+J_{Y}&>0. 
\end{align}  
\end{subequations}
\end{definition}

We have the following short-range existence theorem.

\begin{theorem}
\label{thm:solnsmallrect}
Given $\Omega=[X_l,X_r]\times [Y_l,Y_r]$, then for any $\Theta=(\X,\Y,\Z,\V,\W,\p,\q)\in \G(\Omega)$, there exists a unique solution $(Z,p,q)\in \H(\Omega)$ such that
\begin{equation}
\label{eq:shortrange}
	\Theta=(Z,p,q) \bullet (\X,\Y),
\end{equation}
if $s_{r}-s_{l}\leq 1/C(|||\Theta|||_{\G(\Omega)})$. Here $C$ denotes an increasing function dependent on $\Omega$, $\kappa$, $k_1$, and $k_2$.
\end{theorem}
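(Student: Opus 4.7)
The plan is to carry out a Picard fixed-point construction on $\Omega$ modeled after \cite{HolRay:11}. The transport equations \eqref{eq:goveqp}--\eqref{eq:goveqq} are essentially trivial: since $p_Y = 0$ and $q_X = 0$, I would set $p(X,Y) := \p(X)$ and $q(X,Y) := \q(Y)$, which directly satisfy \eqref{eq:soln4}--\eqref{eq:soln5} with the required regularity. For the five-vector $(Z, Z_X, Z_Y)$ I would recast \eqref{eq:condgoveq} as a system of integral equations. Using $\Y(X) := \Y \circ \X^{-1}(X)$ and $\X(Y) := \X \circ \Y^{-1}(Y)$ from \eqref{eq:invabuse}, integrating $(Z_X)_Y = F(Z)(Z_X, Z_Y)$ and $(Z_Y)_X = F(Z)(Z_X, Z_Y)$ from the curve yields
\begin{equation*}
	Z_X(X,Y) = \V(X) + \int_{\Y(X)}^{Y} F(Z)(Z_X, Z_Y)(X, Y')\,dY',
\end{equation*}
\begin{equation*}
	Z_Y(X,Y) = \W(Y) + \int_{\X(Y)}^{X} F(Z)(Z_X, Z_Y)(X', Y)\,dX',
\end{equation*}
\begin{equation*}
	Z(X,Y) = \Z(\X^{-1}(X)) + \int_{\Y(X)}^{Y} Z_Y(X, Y')\,dY'.
\end{equation*}

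Next, I would view the right-hand sides as a nonlinear map $\mathcal{T}$ acting on a closed ball around the natural extension of the curve data in the product space $[\Winf(\Omega)]^5 \times [W^{1,\infty}_Y(\Omega)]^5 \times [W^{1,\infty}_X(\Omega)]^5$. Since $c \in C^2$ with the bounds \eqref{eq:cassumption}--\eqref{eq:cderassumption}, and since $\tnorm{\Theta}_{\G(\Omega)}$ controls $\Z^a, \V^a, \W^a, \p, \q$ uniformly in $L^\infty$, the tensor $F$ is locally Lipschitz on this ball with modulus depending only on $\tnorm{\Theta}_{\G(\Omega)}$. Each integral in the definition of $\mathcal{T}$ is taken over an interval of length at most $2(s_r - s_l)$, so both the self-map property and the Lipschitz constant of $\mathcal{T}$ scale linearly with $s_r - s_l$. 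For $s_r - s_l \le 1/C(\tnorm{\Theta}_{\G(\Omega)})$ with $C$ a sufficiently large increasing function, $\mathcal{T}$ is a contraction on the ball, and Banach's theorem yields a unique fixed point satisfying the regularity required in Definition \ref{def:soln}.

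Finally, I would verify that this fixed point lies in $\H(\Omega)$. For the algebraic relations \eqref{eq:setH1}--\eqref{eq:setH3} I would introduce defect quantities, e.g.\ $D_1 = x_X - c(U) t_X$, $D_2 = J_X - c(U) K_X$, $D_3 = 2 J_X x_X - (c(U) U_X)^2 - c(U) p^2$ (together with their $X \leftrightarrow Y$ analogues), differentiate them in $Y$ (respectively $X$) and substitute \eqref{eq:goveq}. A direct calculation, using $p_Y = 0$, shows each $D_i(X,\cdot)$ satisfies a homogeneous linear ODE in $Y$ at fixed $X$ (for instance $(D_3)_Y = \frac{c'(U) U_Y}{c(U)} D_3$). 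The $\G(\Omega)$-compatibility \eqref{eq:setGlocrel1}--\eqref{eq:setGlocrel3} forces $D_i = 0$ on the curve, so ODE uniqueness propagates this to all of $\Omega$. Nonnegativity of $t_X, x_X, K_X, J_X$ (and their $Y$-counterparts) propagates from $\V_2, \V_4, \W_2, \W_4 \ge 0$ via the linear ODEs \eqref{eq:goveqt}--\eqref{eq:goveqK}, and the strict inequality \eqref{eq:setH6} follows from the same ODEs together with the $L^\infty$ lower bounds on $\V_2+\V_4$ and $\W_2+\W_4$ already encoded in $\tnorm{\Theta}_{\G(\Omega)}$.

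The main obstacle is the contraction step in the second paragraph: because $F(Z)(Z_X, Z_Y)$ couples all components of $(Z, Z_X, Z_Y)$ nonlinearly through $c(U), c'(U)$ and products such as $x_X J_Y$, one must track the $L^\infty$, $W^{1,\infty}_Y$ and $W^{1,\infty}_X$ norms simultaneously and close the estimates using only the a priori bound $\tnorm{\Theta}_{\G(\Omega)}$. This is precisely what forces the diagonal length $s_r - s_l$ to shrink in proportion to $1/C(\tnorm{\Theta}_{\G(\Omega)})$; once the contraction is in place, the propagation of the constraints in $\H(\Omega)$ is a routine verification once the correct defect quantities are identified.
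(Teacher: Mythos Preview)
Your proposal is correct and follows essentially the same route as the paper: a Banach fixed-point iteration on the integral equations obtained by integrating \eqref{eq:condgoveq} from the curve, with $p=\p$, $q=\q$ handled trivially, followed by propagation of the $\H(\Omega)$ constraints via defect quantities satisfying homogeneous linear equations (the paper's computation $z_Y=\frac{c'(U)}{c(U)}U_Y\,z$ for \eqref{eq:setH3} is exactly your $(D_3)_Y$). Two small deviations are worth noting: the paper iterates on a symmetric four-component system $(Z_h,Z_v,V,W)$ in $L^\infty_X\times L^\infty_Y$-type spaces and then shows $Z_h=Z_v$ via the compatibility \eqref{eq:localGcomp}, rather than your three-component $(Z,Z_X,Z_Y)$; and the sign conditions \eqref{eq:setH4}--\eqref{eq:setH6} are \emph{not} obtained by linear-ODE positivity (the source terms in \eqref{eq:goveqx}--\eqref{eq:goveqJ} have indefinite sign) but by first running Gronwall on $\eta=1/(x_X+J_X)$ to get \eqref{eq:setH6}, and then combining $x_X+J_X>0$ with the already-established $x_XJ_X\ge 0$ from \eqref{eq:setH3} to deduce $x_X\ge 0$, $J_X\ge 0$ individually.
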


\begin{proof}
We aim to use the Banach fixed-point theorem. Define $\mathcal{B}$ as the set of all elements $(Z_{h},Z_{v},V,W)$ such that
\begin{equation*}
	Z_{h}\in [L^{\infty}_{X}(\Omega)]^{5}, \quad Z_{v}\in [L^{\infty}_{Y}(\Omega)]^{5}, \quad V\in [L^{\infty}_{Y}(\Omega)]^{5}, \quad W\in [L^{\infty}_{X}(\Omega)]^{5}
\end{equation*}
and
\begin{equation}
\label{eq:Bnorm}
	\sum_{i=1}^{5}(||Z_{h,i}^{a}||_{L^{\infty}_{X}(\Omega)}+||Z_{v,i}^{a}||_{L^{\infty}_{Y}(\Omega)}+||V_{i}^{a}||_{L^{\infty}_{Y}(\Omega)}+||W_{i}^{a}||_{L^{\infty}_{X}(\Omega)})\leq 2|||\Theta|||_{\G(\Omega)},
\end{equation}
where we used the same notation for $Z_{h}$ and $Z_{v}$ as in (\ref{eq:decayinfty}) for $Z$. For $V$ and $W$ we used the same notation as in (\ref{eq:atriplet}) for $\V$ and $\W$, that is,
\begin{subequations}\label{eq:auxarel}
\begin{align}
	&V_{1}^{a}=V_{1}-\frac{1}{2c(0)}, \ &&W_{1}^{a}=W_{1}+\frac{1}{2c(0)}, \\
	&V_{2}^{a}=V_{2}-\frac{1}{2}, \ &&W_{2}^{a}=W_{2}-\frac{1}{2}, \\
	&V_{i}^{a}=V_{i}, \ &&W_{i}^{a}=W_{i} \hspace{50pt} \text{for } i \in \{3,4,5 \}.
\end{align}
\end{subequations}
As we shall see, for the fixed point, the functions $Z_{h}$ and $Z_{v}$ coincide and are equal to the solution $Z$, but we find it convenient to define both quantities in order to keep the symmetry of the problem with respect to the $X$ and $Y$ variables. For any $(Z_{h},Z_{v},V,W)\in \mathcal{B}$, we introduce the mapping $\mathcal{T}$ given by 
\begin{equation*}
	(\bar{Z_{h}},\bar{Z_{v}},\bar{V},\bar{W})=\mathcal{T}(Z_{h},Z_{v},V,W),
\end{equation*}
where
\begin{equation}
\label{eq:Zhbar}
	\bar{Z_{h}}(X,Y)=\Z(\Y^{-1}(Y))+\int_{\X(Y)}^{X} V(\tilde{X},Y)\,d\tilde{X}
\end{equation}
for a.e. $Y\in [Y_{l},Y_{r}]$ and all $X\in [X_{l},X_{r}]$,
\begin{equation}
\label{eq:Zvbar}
	\bar{Z_{v}}(X,Y)=\Z(\X^{-1}(X))+\int_{\Y(X)}^{Y} W(X,\tilde{Y})\,d\tilde{Y}
\end{equation}
for a.e. $X\in [X_{l},X_{r}]$ and all $Y\in [Y_{l},Y_{r}]$,	
\begin{equation}
\label{eq:Vbar}
	\bar{V}(X,Y)=\V(X)+\int_{\Y(X)}^{Y} F(Z_{h})(V,W)(X,\tilde{Y})\,d\tilde{Y}
\end{equation}
for a.e. $X\in [X_{l},X_{r}]$ and all $Y\in [Y_{l},Y_{r}]$,	
\begin{equation}
\label{eq:Wbar}
	\bar{W}(X,Y)=\W(Y)+\int_{\X(Y)}^{X} F(Z_{h})(V,W)(\tilde{X},Y)\,d\tilde{X}
\end{equation}	
for a.e. $Y\in [Y_{l},Y_{r}]$ and all $X\in [X_{l},X_{r}]$.

Let us compare this mapping with a solution $Z$ of \eqref{eq:goveqt}-\eqref{eq:goveqK} (in the sense of Definition \ref{def:soln}) which satisfies (\ref{eq:shortrange}). For any $(X,Y)\in \Omega$, we have
\begin{equation*}
	Z(X,\Y(s))=\Z(s)+\int_{\X(s)}^{X}Z_{X}(\tilde{X},\Y(s))\,d\tilde{X},
\end{equation*}
which after setting $s=\Y^{-1}(Y)$ yields	
\begin{equation*}
	Z(X,Y)=\Z(\Y^{-1}(Y))+\int_{\X(Y)}^{X}Z_{X}(\tilde{X},Y)\,d\tilde{X}.
\end{equation*}	
Similarly, we obtain, by setting $s=\X^{-1}(X)$,
\begin{equation*}
	Z(X,Y)=\Z(\X^{-1}(X))+\int_{\Y(X)}^{Y}Z_{Y}(X,\tilde{Y})\,d\tilde{Y}.
\end{equation*}
For a.e. $X\in [X_{l},X_{r}]$ and all $Y\in [Y_{l},Y_{r}]$, we have
\begin{equation*}
	Z_{X}(X,Y)=Z_{X}(X,\Y(X))+\int_{\Y(X)}^{Y}F(Z)(Z_{X},Z_{Y})(X,\tilde{Y})\,d\tilde{Y}
\end{equation*}
which by (\ref{eq:shortrange}) rewrites as
\begin{equation*}
	Z_{X}(X,Y)=\V(X)+\int_{\Y(X)}^{Y}F(Z)(Z_{X},Z_{Y})(X,\tilde{Y})\,d\tilde{Y}.
\end{equation*}	
By a similar argument, we get
\begin{equation*}
	Z_{Y}(X,Y)=\W(Y)+\int_{\X(Y)}^{X}F(Z)(Z_{X},Z_{Y})(\tilde{X},Y)\,d\tilde{X}
\end{equation*}
for a.e. $Y\in [Y_{l},Y_{r}]$ and all $X\in [X_{l},X_{r}]$. Thus, if $Z$ is a solution of \eqref{eq:goveqt}-\eqref{eq:goveqK} which satisfies \eqref{eq:shortrange}, then $(Z,Z,Z_{X},Z_{Y})$ is a fixed point of $\mathcal{T}$. 

Let us show that the mapping $\mathcal{T}$ maps $\mathcal{B}$ into $\mathcal{B}$. To begin with we derive some estimates. We set $\delta=s_{r}-s_{l}$. By \eqref{eq:geninv1} and since $0\leq \dot{\X}\leq 2$, we have	
\begin{equation}
\label{eq:deltabound1}
	|X-\X(Y)|=|\X(\alpha(X))-\X(\beta(Y))|\leq \X(s_{r})-\X(s_{l})\leq 2(s_{r}-s_{l})=2\delta
\end{equation}
and similarly, we get
\begin{equation}
\label{eq:deltabound2}
	|Y-\Y(X)|\leq 2(s_{r}-s_{l})=2\delta.
\end{equation}
For the first component of $\bar{Z}_{h}^{a}$, we have
\begin{align*}
	\bar{Z}_{h,1}^{a}(X,Y)&=\bar{Z}_{h,1}(X,Y)-\frac{1}{2c(0)}(X-Y) \quad \text{by } \eqref{eq:decayinfty}\\
	&=\Z_{1}(\Y^{-1}(Y))+\int_{\X(Y)}^{X} V_{1}(\tilde{X},Y)\,d\tilde{X}-\frac{1}{2c(0)}(X-Y) \quad \text{by } \eqref{eq:Zhbar} \\
	&=\Z_{1}^{a}(\Y^{-1}(Y))+\frac{1}{c(0)}(\X(Y)-\Y^{-1}(Y)) \quad \text{by } \eqref{eq:atriplet} \text{ and } \eqref{eq:auxarel}\\
	&\quad+\int_{\X(Y)}^{X} \bigg(V_{1}^{a}(\tilde{X},Y)+\frac{1}{2c(0)}\bigg)\,d\tilde{X}-\frac{1}{2c(0)}(X-Y)\\	
	&=\Z_{1}^{a}(\Y^{-1}(Y))+\int_{\X(Y)}^{X} V_{1}^{a}(\tilde{X},Y)\,d\tilde{X}\\
	&\quad+\frac{1}{c(0)}\bigg(\frac{1}{2}\X(Y)+\frac{1}{2}Y-\Y^{-1}(Y)\bigg).
\end{align*}
From \eqref{eq:curvenormalization}, we obtain
\begin{equation*}
	\frac{1}{2}\X(Y)+\frac{1}{2}Y-\Y^{-1}(Y)=\frac{1}{2}\X(Y)+\frac{1}{2}Y-\frac{1}{2}(\X(Y)+Y)=0.
\end{equation*}
Hence, by \eqref{eq:deltabound1},
\begin{equation*}
	||\bar{Z}_{h,1}^{a}||_{L^{\infty}_{X}(\Omega)}\leq ||\Z_{1}^{a}||_{L^{\infty}([s_{l},s_{r}])}+2\delta||V_{1}^{a}||_{L^{\infty}_{Y}(\Omega)}.
\end{equation*}
We proceed similarly for the second component of $\bar{Z}_{h}^{a}$ and get
\begin{align*}
	\bar{Z}_{h,2}^{a}(X,Y)&=\bar{Z}_{h,2}(X,Y)-\frac{1}{2}(X+Y) \\
	&=\Z_{2}(\Y^{-1}(Y))+\int_{\X(Y)}^{X}V_{2}(\tilde{X},Y)\,d\tilde{X}-\frac{1}{2}(X+Y) \\
	&=\Z_{2}^{a}(\Y^{-1}(Y))+\Y^{-1}(Y)+\int_{\X(Y)}^{X} \bigg(V_{2}^{a}(\tilde{X},Y)+\frac{1}{2}\bigg)\,d\tilde{X}-\frac{1}{2}(X+Y)\\
	&=\Z_{2}^{a}(\Y^{-1}(Y))+\int_{\X(Y)}^{X}V_{2}^{a}(\tilde{X},Y)\,d\tilde{X}+\Y^{-1}(Y)-\frac{1}{2}\X(Y)-\frac{1}{2}Y.
\end{align*} 
By \eqref{eq:curvenormalization}, we have
\begin{align*}
	\Y^{-1}(Y)-\frac{1}{2}\X(Y)-\frac{1}{2}Y=\frac{1}{2}(\X(Y)+Y)-\frac{1}{2}\X(Y)-\frac{1}{2}Y=0,
\end{align*}
which leads to the estimate
\begin{equation*}
	||\bar{Z}_{h,2}^{a}||_{L^{\infty}_{X}(\Omega)}\leq ||\Z_{2}^{a}||_{L^{\infty}([s_{l},s_{r}])}+2\delta||V_{2}^{a}||_{L^{\infty}_{Y}(\Omega)}.
\end{equation*}
For $i\in\{3,4,5\}$, we have
\begin{equation*}
	||\bar{Z}_{h,i}^{a}||_{L^{\infty}_{X}(\Omega)}\leq ||\Z_{i}^{a}||_{L^{\infty}([s_{l},s_{r}])}+2\delta||V_{i}^{a}||_{L^{\infty}_{Y}(\Omega)}.
\end{equation*}
Similar bounds hold for the components of $\bar{Z}_{v}^{a}$. Let us consider $\bar{V}_{1}^{a}$. By using the governing equations (\ref{eq:goveq}), we obtain
\begin{align*}
	\bar{V}_{1}^{a}(X,Y)&=\bar{V}_{1}(X,Y)-\frac{1}{2c(0)} \\
	&=\V_{1}(X)-\frac{1}{2c(0)}+\int_{\Y(X)}^{Y} F_{1}(Z_{h})(V,W)(X,\tilde{Y})\,d\tilde{Y} \\
	&=\V_{1}^{a}(X)-\int_{\Y(X)}^{Y} \bigg( \frac{c'(Z_{h,3})}{2c(Z_{h,3})}(V_{3}W_{1}+W_{3}V_{1})\bigg)(X,\tilde{Y})\,d\tilde{Y}.
\end{align*} 
We have
\begin{align*}
	|V_{3}W_{1}+W_{3}V_{1}|&=\bigg|V_{3}^{a}W_{1}^{a}+W_{3}^{a}V_{1}^{a}-\frac{1}{2c(0)}(W_{3}^{a}-V_{3}^{a})\bigg|\\
	&\leq ||V_{3}^{a}||_{L^{\infty}_{Y}(\Omega)}||W_{1}^{a}||_{L^{\infty}_{X}(\Omega)}+||W_{3}^{a}||_{L^{\infty}_{X}(\Omega)}||V_{1}^{a}||_{L^{\infty}_{Y}(\Omega)}\\
	&\quad+\frac{\kappa}{2}\big(||W_{3}^{a}||_{L^{\infty}_{X}(\Omega)}+||V_{3}^{a}||_{L^{\infty}_{Y}(\Omega)}\big)\\
	&\leq 4|||\Theta|||_{\G(\Omega)}^{2}+\kappa |||\Theta|||_{\G(\Omega)} \quad \text{by } \eqref{eq:Bnorm}.
\end{align*}
Hence,
\begin{equation*}
	||\bar{V}_{1}^{a}||_{L^{\infty}_{Y}(\Omega)}\leq ||\V_{1}^{a}||_{L^{\infty}([X_{l},X_{r}])}+\delta \kappa k_1 \big(4|||\Theta|||_{\G(\Omega)}^{2}+\kappa|||\Theta|||_{\G(\Omega)}\big).
\end{equation*}
After doing the same for the other components of $\bar{V}$ and $\bar{W}$, we get
\begin{equation*}
	\sum_{i=1}^{5}(||\bar{Z}_{h,i}^{a}||_{L^{\infty}_{X}(\Omega)}+||\bar{Z}_{v,i}^{a}||_{L^{\infty}_{Y}(\Omega)}+||\bar{V}_{i}^{a}||_{L^{\infty}_{Y}(\Omega)}+||\bar{W}_{i}^{a}||_{L^{\infty}_{X}(\Omega)})
	\leq |||\Theta|||_{\G(\Omega)}+\delta C_1(|||\Theta|||_{\G(\Omega)})
\end{equation*}
for some increasing function $C_1(|||\Theta|||_{\G(\Omega)})$. Hence, by setting $\delta$ small enough, the mapping $\mathcal{T}$ maps $\B$ into $\B$. 

It remains to show that $\mathcal{T}$ is a contraction. Let $(Z_{h},Z_{v},V,W)$ and $(Z_{h}',Z_{v}',V',W')$ belong to $\B$, and fix $(X,Y)\in \Omega$. For the first component of $F$, we have 
\begin{aalign}
\label{eq:V1contracting}
	&|\bar{V}_{1}^{a}(X,Y)-(\bar{V}_{1}')^{a}(X,Y)|\\
	&=|\bar{V_{1}}(X,Y)-\bar{V_{1}'}(X,Y)|\\
	&\leq\int_{\Y(X)}^{Y}|F_{1}(Z_{h})(V,W)-F_{1}(Z_{h}')(V',W')|(X,\tilde{Y})\,d\tilde{Y}
\end{aalign} 
and by \eqref{eq:goveq}, we get
\begin{align*}
	&F_{1}(Z_{h})(V,W)-F_{1}(Z_{h}')(V',W')\\
	&=-\frac{c'(Z_{h,3})}{2c(Z_{h,3})}(V_{3}W_{1}+W_{3}V_{1})+\frac{c'(Z_{h,3}')}{2c(Z_{h,3}')}(V_{3}'W_{1}'+W_{3}'V_{1}')\\
	&=-\frac{c'(Z_{h,3}^{a})}{2c(Z_{h,3}^{a})}\bigg(V_{3}^{a}\bigg(W_{1}^{a}-\frac{1}{2c(0)}\bigg)+W_{3}^{a}\bigg(V_{1}^{a}+\frac{1}{2c(0)}\bigg)\bigg)\\
	&\quad+\frac{c'((Z_{h,3}')^{a})}{2c((Z_{h,3}')^{a})}\bigg((V_{3}')^{a}\bigg((W_{1}')^{a}-\frac{1}{2c(0)}\bigg)+(W_{3}')^{a}\bigg((V_{1}')^{a}+\frac{1}{2c(0)}\bigg)\bigg)\\
	&=\frac{c'((Z_{h,3}')^{a})}{2c((Z_{h,3}')^{a})}\bigg( (V_{3}')^{a}((W_{1}')^{a}-W_{1}^{a})+W_{1}^{a}((V_{3}')^{a}-V_{3}^{a})\\
	&\hspace{80pt}+(W_{3}')^{a}((V_{1}')^{a}-V_{1}^{a})+V_{1}^{a}((W_{3}')^{a}-W_{3}^{a})\\
	&\hspace{80pt}-\frac{1}{2c(0)}((V_{3}')^{a}-V_{3}^{a})+\frac{1}{2c(0)}((W_{3}')^{a}-W_{3}^{a})\bigg)\\
	&\quad+\frac{1}{2}\bigg(V_{3}^{a}W_{1}^{a}+W_{3}^{a}V_{1}^{a}-\frac{1}{2c(0)}V_{3}^{a}+\frac{1}{2c(0)}W_{3}^{a}\bigg)\bigg(\frac{c'((Z_{h,3}')^{a})}{c((Z_{h,3}')^{a})}-\frac{c'(Z_{h,3}^{a})}{c(Z_{h,3}^{a})}\bigg).
\end{align*}	
Since
\begin{equation*}
	\frac{c'((Z_{h,3}')^{a})}{c((Z_{h,3}')^{a})}-\frac{c'(Z_{h,3}^{a})}{c(Z_{h,3}^{a})}=
	\int_{Z_{h,3}^{a}}^{(Z_{h,3}')^{a}}\bigg(\frac{c''c-(c')^{2}}{c^{2}}\bigg)(Z)\,dZ,
\end{equation*}
this leads to the estimate
\begin{align*}
	&|F_{1}(Z_{h})(V,W)-F_{1}(Z_{h}')(V',W')|\\
	&\leq \frac{\kappa k_{1}}{2}\bigg(2|||\Theta|||_{\G(\Omega)}\Big(||(W_{1}')^{a}-W_{1}^{a}||_{L^{\infty}_{X}(\Omega)}+||(V_{3}')^{a}-V_{3}^{a}||_{L^{\infty}_{Y}(\Omega)}\\
	&\hspace{110pt}+||(V_{1}')^{a}-V_{1}^{a}||_{L^{\infty}_{Y}(\Omega)}+||(W_{3}')^{a}-W_{3}^{a}||_{L^{\infty}_{X}(\Omega)}\Big)\\
	&\hspace{50pt}+\frac{\kappa}{2}\Big(||(V_{3}')^{a}-V_{3}^{a}||_{L^{\infty}_{Y}(\Omega)}+||(W_{3}')^{a}-W_{3}^{a}||_{L^{\infty}_{X}(\Omega)}\Big)\bigg)\\
	&\quad+\frac{1}{2}\Big(4|||\Theta|||_{\G(\Omega)}^{2}+\kappa|||\Theta|||_{\G(\Omega)}\Big)||(Z_{h,3}')^{a}-Z_{h,3}^{a}||_{L^{\infty}_{X}(\Omega)}\\
	&\quad \quad \times (\kappa k_{2}+(\kappa k_{1})^{2}),
\end{align*}
where we used \eqref{eq:Bnorm}. We insert this into \eqref{eq:V1contracting} and obtain
\begin{align*}
	&|\bar{V}_{1}^{a}(X,Y)-(\bar{V}_{1}')^{a}(X,Y)|\\
	&\leq \delta K\big(||W^{a}-(W')^{a}||_{L^{\infty}_{X}(\Omega)}+||V^{a}-(V')^{a}||_{L^{\infty}_{Y}(\Omega)}+||Z_{h}^{a}-(Z_{h}')^{a}||_{L^{\infty}_{X}(\Omega)}\big),
\end{align*}
where $K$ depends on $|||\Theta|||_{\G(\Omega)}$, $\kappa$, $k_{1}$ and $k_{2}$ because of \eqref{eq:cassumption} and \eqref{eq:cderassumption}. Following the same lines, we obtain
\begin{align*}
	&||\bar{Z}_{h}^{a}-(\bar{Z}_{h}')^{a}||_{L^{\infty}_{X}(\Omega)}+||\bar{Z}_{v}^{a}-(\bar{Z}_{v}')^{a}||_{L^{\infty}_{Y}(\Omega)}+||\bar{V}^{a}-(\bar{V}')^{a}||_{L^{\infty}_{Y}(\Omega)}+||\bar{W}^{a}-(\bar{W}')^{a}||_{L^{\infty}_{X}(\Omega)} \\
	&\leq \delta C_2 \big( ||Z_{h}^{a}-(Z_{h}')^{a}||_{L^{\infty}_{X}(\Omega)}+||Z_{v}^{a}-(Z_{v}')^{a}||_{L^{\infty}_{Y}(\Omega)}\\
	&\hspace{40pt}+||V^{a}-(V')^{a}||_{L^{\infty}_{Y}(\Omega)}+||W^{a}-(W')^{a}||_{L^{\infty}_{X}(\Omega)}\big)
\end{align*}
for some increasing function $C_2$ depending on $|||\Theta|||_{\G(\Omega)}$, $\Omega$, $\kappa$, $k_{1}$ and $k_{2}$. By setting $\delta>0$ so small that $\delta C_2<1$, we conclude that $\mathcal{T}$ is a contraction. Hence, $\mathcal{T}$ admits a unique fixed point that we denote $(Z_{h},Z_{v},V,W)$. 

Let us prove that $Z_{h}=Z_{v}$, $V=Z_{X}$ and $W=Z_{Y}$. We denote by $\N_{X}$ the set of points $X\in [X_{l},X_{r}]$ for which (\ref{eq:Zvbar}) and (\ref{eq:Vbar}) hold. Similarly, we denote by $\N_{Y}$ the set of points $Y\in [Y_{l},Y_{r}]$ for which (\ref{eq:Zhbar}) and (\ref{eq:Wbar}) hold. We have $\text{meas}([X_{l},X_{r}]\setminus \N_{X})=\text{meas}([Y_{l},Y_{r}]\setminus \N_{Y})=0$ such that $\text{meas}(\Omega \setminus \N_{X}\times \N_{Y})=0$. For any $(X,Y)\in \N_{X}\times \N_{Y}$, we have
\begin{aalign}
\label{eq:ZhZvdiff}
	Z_{h}(X,Y)-Z_{v}(X,Y)&=\Z(\Y^{-1}(Y))+\int_{\X(Y)}^{X} V(\tilde{X},Y)\,d\tilde{X} \\
	&\quad-\Z(\X^{-1}(X))-\int_{\Y(X)}^{Y} W(X,\tilde{Y})\,d\tilde{Y}.
\end{aalign}
From (\ref{eq:Vbar}) and (\ref{eq:Wbar}), we obtain
\begin{align*}
	&\int_{\X(Y)}^{X} V(\tilde{X},Y)\,d\tilde{X}-\int_{\Y(X)}^{Y} W(X,\tilde{Y})\,d\tilde{Y} \\
	&=\int_{\X(Y)}^{X}\bigg(\V(\tilde{X})+\int_{\Y(\tilde{X})}^{Y} F(Z_{h})(V,W)(\tilde{X},\tilde{Y})\,d\tilde{Y}\bigg)\,d\tilde{X}\\
	&\quad -\int_{\Y(X)}^{Y}\bigg(\W(\tilde{Y})+\int_{\X(\tilde{Y})}^{X} F(Z_{h})(V,W)(\tilde{X},\tilde{Y})\,d\tilde{X}\bigg)\,d\tilde{Y}\\	
	&=\int_{\X(Y)}^{X} \V(\tilde{X})\,d\tilde{X}-\int_{\Y(X)}^{Y} \W(\tilde{Y})\,d\tilde{Y} \\
	&=\int_{\X\circ \Y^{-1}(Y)}^{\X\circ \X^{-1}(X)} \V(\tilde{X})\,d\tilde{X}-\int_{\Y\circ \X^{-1}(X)}^{\Y\circ \Y^{-1}(Y)} \W(\tilde{Y})\,d\tilde{Y} \quad \text{by } (\ref{eq:geninv1}) \text{ and } (\ref{eq:invabuse}) \\
	&=-\int_{\X^{-1}(X)}^{\Y^{-1}(Y)} \big(\V(\X(s))\dot{\X}(s)+\W(\Y(s))\dot{\Y}(s)\big)\, ds \quad \text{by a change of variables} \\
	&=-\int_{\X^{-1}(X)}^{\Y^{-1}(Y)} \dot{\Z}(s)\,ds \quad \text{by } (\ref{eq:localGcomp}) \\
	&=\Z(\X^{-1}(X))-\Z(\Y^{-1}(Y)).
\end{align*}	
By inserting this into (\ref{eq:ZhZvdiff}), we conclude that $Z_{h}(X,Y)=Z_{v}(X,Y)$ for all $(X,Y)\in \N_{X}\times \N_{Y}$, that is, almost everywhere in $\Omega$. 

We denote $Z=Z_{h}=Z_{v}$. The function $Z$ is only defined in $\N_{X}\times \N_{Y}$. We define $Z(X,Y)$ for all $(X,Y)\in\Omega$ by setting 
\begin{equation}
\label{eq:Zdangerzone}
	Z(X,Y)=\displaystyle\lim_{n\rightarrow\infty}Z(X_{n},Y_{n}),
\end{equation}
where $(X_{n},Y_{n})$ is a squence in $\N_{X}\times \N_{Y}$ such that $(X_{n},Y_{n})\rightarrow(X,Y)$ as $n\to \infty$. Let us prove that this is well-defined. First we show that $Z$ is Lipschitz continuous in $\N_{X}\times \N_{Y}$. Let $(X_{1},Y_{1}),(X_{2},Y_{2})\in\N_{X}\times \N_{Y}$. By \eqref{eq:Zhbar}, we have
\begin{align*}
	Z(X_{2},Y_{2})-Z(X_{1},Y_{2})&=\int_{\X(Y_{2})}^{X_{2}} V(\tilde{X},Y_{2})\,d\tilde{X}-\int_{\X(Y_{2})}^{X_{1}} V(\tilde{X},Y_{2})\,d\tilde{X}\\
	&=\int_{X_{1}}^{X_{2}} V(\tilde{X},Y_{2})\,d\tilde{X} 
\end{align*}
and, from \eqref{eq:Zvbar}, we get
\begin{align*}
	Z(X_{1},Y_{2})-Z(X_{1},Y_{1})&=\int_{\Y(X_{1})}^{Y_{2}} W(X_{1},\tilde{Y})\,d\tilde{Y}-\int_{\Y(X_{1})}^{Y_{1}} W(X_{1},\tilde{Y})\,d\tilde{Y}\\
	&=\int_{Y_{1}}^{Y_{2}} W(X_{1},\tilde{Y})\,d\tilde{Y}.
\end{align*}
This implies that
\begin{aalign}
\label{eq:ZXdiffandZYdiff}
	&|Z(X_{2},Y_{2})-Z(X_{1},Y_{1})|\\
	&\leq|Z(X_{2},Y_{2})-Z(X_{1},Y_{2})|+|Z(X_{1},Y_{2})-Z(X_{1},Y_{1})|\\
	&\leq (||V||_{L_{Y}^{\infty}(\Omega)}+||W||_{L_{X}^{\infty}(\Omega)})(|X_{2}-X_{1}|+|Y_{2}-Y_{1}|)\\
	&\leq C(|X_{2}-X_{1}|+|Y_{2}-Y_{1}|),
\end{aalign}
where $C$ only depends on $|||\Theta|||_{\G(\Omega)}$, and we conclude that $Z$ is Lipschitz continuous in $\N_{X}\times \N_{Y}$.
 
For any $(X,Y)\in\Omega$, there exists a sequence $(X_{n},Y_{n})\in \N_{X}\times\N_{Y}$ such that $(X_{n},Y_{n})\rightarrow(X,Y)$ as $n\rightarrow\infty$, since $\text{meas}(\Omega\setminus \N_{X}\times\N_{Y})=0$. From \eqref{eq:ZXdiffandZYdiff}, we have
\begin{equation*}
	|Z(X_{n},Y_{n})-Z(X_{m},Y_{m})|\leq C(|X_{n}-X_{m}|+|Y_{n}-Y_{m}|),
\end{equation*}
so that the limit $\displaystyle\lim_{n\rightarrow\infty}Z(X_{n},Y_{n})$ exists. We claim that the limit is independent of the particular choice of the sequence in $\N_{X}\times\N_{Y}$ converging to $(X,Y)$. Let $(\bar{X}_{m},\bar{Y}_{m})$ be another sequence in $\N_{X}\times\N_{Y}$ such that $(\bar{X}_{m},\bar{Y}_{m})\rightarrow(X,Y)$ as $n\rightarrow\infty$. By \eqref{eq:ZXdiffandZYdiff}, we have
\begin{align*}
	|Z(X_{n},Y_{n})-Z(\bar{X}_{m},\bar{Y}_{m})|&\leq C(|X_{n}-\bar{X}_{m}|+|Y_{n}-\bar{Y}_{m}|)\\
	&\leq C(|X_{n}-X|+|X-\bar{X}_{m}|+|Y_{n}-Y|+|Y-\bar{Y}_{m}|)
\end{align*}
which implies that $\displaystyle\lim_{n\rightarrow\infty}Z(X_{n},Y_{n})=\displaystyle\lim_{m\rightarrow\infty}Z(\bar{X}_{m},\bar{Y}_{m})$. This proves the claim and \eqref{eq:Zdangerzone} is well-defined. It now follows from \eqref{eq:ZXdiffandZYdiff} that $Z$ is Lipschitz continuous in $\Omega$. Indeed, for any $(X,Y),(\bar{X},\bar{Y})\in\Omega$, there exist sequences $(X_{n},Y_{n}),(\bar{X}_{n},\bar{Y}_{n})\in\N_{X}\times\N_{Y}$, such that $(X_{n},Y_{n})\rightarrow(X,Y)$ and $(\bar{X}_{n},\bar{Y}_{n})\rightarrow(\bar{X},\bar{Y})$, which yields
\begin{align*}
	|Z(X,Y)-Z(\bar{X},\bar{Y})|&=\displaystyle\lim_{n\rightarrow\infty}|Z(X_{n},Y_{n})-Z(\bar{X}_{n},\bar{Y}_{n})|\\
	&\leq C\displaystyle\lim_{n\rightarrow\infty} (|X_{n}-\bar{X}_{n}|+|Y_{n}-\bar{Y}_{n}|)\\
	&=C(|X-\bar{X}|+|Y-\bar{Y}|).
\end{align*}
Thus, $Z$ is Lipschitz continuous and therefore differentiable almost everywhere in $\Omega$. It follows from
(\ref{eq:Zhbar}) and (\ref{eq:Zvbar}), that
\begin{equation*}
	Z_{X}(X,Y)=V(X,Y) \quad \text{and} \quad Z_{Y}(X,Y)=W(X,Y)
\end{equation*}
for almost every $X\in [X_{l},X_{r}]$ and $Y\in [Y_{l},Y_{r}]$. Now we define the solutions $p$ and $q$ of \eqref{eq:goveqp} and \eqref{eq:goveqq}, respectively. For almost every $X\in [X_{l},X_{r}]$ and all $Y\in [Y_{l},Y_{r}]$, we set
\begin{equation}
\label{eq:pdef}
	p(X,Y)=\p(X)
\end{equation}
and for almost every $Y\in [Y_{l},Y_{r}]$ and all $X\in [X_{l},X_{r}]$, we set
\begin{equation}
\label{eq:qdef}
	q(X,Y)=\q(Y).
\end{equation}
Let us check that \eqref{eq:soln1} is satisfied. Since $Z$ is Lipschitz continuous, it follows that all the components of $Z$ belong to $\Winf(\Omega)$. As in the argument above where we showed that $Z$ is Lipschitz continuous, one can show that for almost every $X$, $Z_{X}$ is Lipschitz continuous with respect to $Y$, and that for almost every $Y$, $Z_{Y}$ is Lipschitz continuous with respect to $X$. It implies that $Z_{XY}(X,\cdot)\in \Linf([Y_{l},Y_{r}])$ for almost every $X\in [X_{l},X_{r}]$ and $Z_{YX}(\cdot,Y)\in \Linf([X_{l},X_{r}])$ for almost every $Y\in [X_{l},X_{r}]$, which in turn implies that	
\begin{equation*}
\text{ess}\, \text{sup}_{X\in [X_{l},X_{r}]} ||Z_{X}(X,\cdot)||_{\Winf([Y_{l},Y_{r}])} 
\end{equation*}
and 
\begin{equation*}
\text{ess}\, \text{sup}_{Y\in [Y_{l},Y_{r}]} ||Z_{Y}(\cdot,Y)||_{\Winf([X_{l},X_{r}])} 
\end{equation*}
are bounded by a constant. Hence, $Z_{X}\in W^{1,\infty}_{Y}(\Omega)$ and $Z_{Y}\in W^{1,\infty}_{X}(\Omega)$. The fact that $p\in W^{1,\infty}_{Y}(\Omega)$ and $q\in W^{1,\infty}_{X}(\Omega)$ follows by \eqref{eq:pdef} and \eqref{eq:qdef}, since $\p \in \Linf([X_{l},X_{r}])$ and $\q \in \Linf([Y_{l},Y_{r}])$. Thus, \eqref{eq:soln1} holds. 

Next, we verify that the relations \eqref{eq:soln2}-\eqref{eq:soln5} are satisfied. Since $(Z,Z,Z_{X},Z_{Y})$ is a fixed point of $\mathcal{T}$, we have, by differentiating \eqref{eq:Vbar} and \eqref{eq:Wbar}, that \eqref{eq:soln2} and \eqref{eq:soln3} hold. The relations \eqref{eq:soln4} and \eqref{eq:soln5} follow by differentiating \eqref{eq:pdef} and \eqref{eq:qdef}, respectively. 

We prove that $(Z,p,q)$ satisfies (\ref{eq:shortrange}). Since $(Z,Z,Z_{X},Z_{Y})$ is a fixed point of $\mathcal{T}$, we get, by \eqref{eq:Vbar} and \eqref{eq:Wbar}, that 	
\begin{equation*}
	Z_{X}(X,\Y(X))=\V(X)\quad \text{and} \quad Z_{Y}(\X(Y),Y)=\W(Y).
\end{equation*}
By \eqref{eq:Zvbar}, we have	
\begin{equation*}
	Z(\X(s),\Y(s))=\Z(\X^{-1}(\X(s)))
\end{equation*}
and, by \eqref{eq:geninv2}, this implies \eqref{eq:shortrange} for all $s\in [s_{l},s_{r}]$ such that $\dot{\X}(s)>0$. Similarly, by \eqref{eq:Zhbar}, we have	
\begin{equation*}
	Z(\X(s),\Y(s))=\Z(\Y^{-1}(\Y(s)))
\end{equation*}
and, by \eqref{eq:geninv3}, this implies \eqref{eq:shortrange} for all $s\in [s_{l},s_{r}]$ such that $\dot{\Y}(s)>0$. Since $\dot{\X}+\dot{\Y}=2$, the set of all $s\in [s_{l},s_{r}]$ such that both $\dot{\X}(s)=0$ and $\dot{\Y}(s)=0$ has zero measure. Hence, for almost every $s\in [s_{l},s_{r}]$, \eqref{eq:shortrange} holds, and since $Z$ is continuous, we get that $Z(\X(s),\Y(s))=\Z(s)$ for all	$s\in [s_{l},s_{r}]$. By \eqref{eq:pdef} and \eqref{eq:qdef}, it follows that
\begin{equation*}
	p(X,\Y(X))=\p(X)
\end{equation*}
for almost every $X\in [X_{l},X_{r}]$ and	
\begin{equation*}
	q(\X(Y),Y)=\q(Y)
\end{equation*}
for almost every $Y\in [Y_{l},Y_{r}]$, respectively, and we conclude that \eqref{eq:shortrange} holds. 	
Hence, we have shown that $Z$ is a solution of \eqref{eq:goveqt}-\eqref{eq:goveqK} which satisfies \eqref{eq:shortrange} if and only if it is a fixed point for $\mathcal{T}$. Since the fixed point exists and is unique, we have proved the existence and uniqueness of the solution $Z$ to \eqref{eq:goveqt}-\eqref{eq:goveqK}. Furthermore, since the functions $p$ and $q$, as defined in \eqref{eq:pdef} and \eqref{eq:qdef}, respectively, satisfy Definition \ref{def:soln} and \eqref{eq:shortrange}, we have proved the existence of a unique solution $(Z,p,q)$ of \eqref{eq:goveq}. Next we prove that the solution $(Z,p,q)$ belongs to $\H(\Omega)$. We define the functions $v\in W^{1,\infty}_{Y}(\Omega)$ and $w\in W^{1,\infty}_{X}(\Omega)$ as
\begin{equation*}
	v=x_{X}-c(U)t_{X} \quad \text{and} \quad w=x_{Y}+c(U)t_{Y}.
\end{equation*}
We want to prove that $v$ and $w$ are both zero. By using the governing equations (\ref{eq:goveq}), we obtain
\begin{equation*}
	v_{Y}=x_{XY}-c'(U)U_{Y}t_{X}-c(U)t_{XY}=\frac{c'(U)}{2c(U)}(U_{Y}v+U_{X}w)
\end{equation*}
and
\begin{equation*}
	w_{X}=x_{XY}+c'(U)U_{X}t_{Y}+c(U)t_{XY}=\frac{c'(U)}{2c(U)}(U_{Y}v+U_{X}w).
\end{equation*}
By \eqref{eq:shortrange} and \eqref{eq:setGlocrel1}, we have $v(X,\Y(X))=0$ and $w(\X(Y),Y)=0$. It follows that
\begin{align*}
	v(X,Y)&=\int_{\Y(X)}^{Y}\bigg(\frac{c'(U)}{2c(U)}(U_{Y}v+U_{X}w)\bigg)(X,\tilde{Y})\,d\tilde{Y}, \\
	w(X,Y)&=\int_{\X(Y)}^{X}\bigg(\frac{c'(U)}{2c(U)}(U_{Y}v+U_{X}w)\bigg)(\tilde{X},Y)\,d\tilde{X},
\end{align*}
which implies, by using (\ref{eq:deltabound1}) and (\ref{eq:deltabound2}), that  
\begin{align*}
	||v||_{L^{\infty}_{Y}(\Omega)}&\leq \delta \kappa k_1\big( ||U_{Y}||_{L^{\infty}_{X}(\Omega)}+||U_{X}||_{L^{\infty}_{Y}(\Omega)})(||v||_{L^{\infty}_{Y}(\Omega)}+||w||_{L^{\infty}_{X}(\Omega)}\big), \\
	||w||_{L^{\infty}_{X}(\Omega)}&\leq \delta \kappa k_1\big( ||U_{Y}||_{L^{\infty}_{X}(\Omega)}+||U_{X}||_{L^{\infty}_{Y}(\Omega)} )(||v||_{L^{\infty}_{Y}(\Omega)}+||w||_{L^{\infty}_{X}(\Omega)}\big).
\end{align*}
Hence, by setting $\delta>0$ smaller if necessary, we get $||v||_{L^{\infty}_{Y}(\Omega)}=||w||_{L^{\infty}_{X}(\Omega)}=0$. One proceeds similarly in order to prove (\ref{eq:setH2}). We show (\ref{eq:setH3}). Define $z\in W^{1,\infty}_{Y}(\Omega)$ as 
\begin{equation*}
	z=2J_{X}x_{X}-(c(U)U_{X})^{2}-c(U)p^{2}.
\end{equation*}
We have
\begin{align*}
	z_{Y}&=2J_{XY}x_{X}+2J_{X}x_{XY}-2c(U)^{2}U_{X}U_{XY}-2c(U)c'(U)U_{Y}U_{X}^{2} \\
	&\quad-c'(U)U_{Y}p^{2}-2c(U)pp_{Y} \\
	&=\frac{c'(U)}{c(U)}U_{Y}z
\end{align*}
and by \eqref{eq:setGlocrel3}, $z(X,\Y(X))=0$ for $X\in [X_{l},X_{r}]$, since $(Z,p,q)\bullet (\X,\Y)=\Theta \in \G(\Omega)$. After integrating, we obtain, since $|U_{Y}|\leq 2|||\Theta|||_{\G(\Omega)}$ and $|Y-\Y(X)|\leq 2\delta$, that
\begin{align*}
	|z(X,Y)|\leq |z(X,\Y(X))|e^{4\delta\kappa k_1|||\Theta|||_{\G(\Omega)}}.
\end{align*} 
Hence, $z=0$. Similarly, one proves that $2J_{Y}x_{Y}=(c(U)U_{Y})^{2}+c(U)q^{2}$. Now we prove (\ref{eq:setH4})-(\ref{eq:setH6}). Since
\begin{equation*}
	\frac{1}{x_{X}+J_{X}}(X,\Y(X))=\frac{1}{\V_{2}+\V_{4}}(X)
\end{equation*}
for almost every $X\in [X_{l},X_{r}]$, and since $\V_{2}$ and $\V_{4}$ belong to $\G(\Omega)$, we have
\begin{equation*}
	\norm{\frac{1}{x_{X}+J_{X}}(\cdot,\Y(\cdot))}_{L^{\infty}([X_{l},X_{r}])}\leq \tnorm{\Theta}_{\G(\Omega)}.
\end{equation*}
Let $X\in [X_l,X_r]$ such that
\begin{equation*}
	\frac{1}{x_{X}+J_{X}}(X,\Y(X))\leq \tnorm{\Theta}_{\G(\Omega)},
\end{equation*}	
and we define	
\begin{equation*}
	Y_{*}=\inf\{Y\in[Y_{l},Y_{r}] \ | \ Y\leq \Y(X) \text{ and } (x_{X}+J_{X})(X,Y')>0 \text{ for all } Y'>Y\}
\end{equation*}	
and	
\begin{equation*}
	Y^{*}=\sup\{Y\in[Y_{l},Y_{r}] \ | \ Y\geq \Y(X) \text{ and } (x_{X}+J_{X})(X,Y')>0 \text{ for all } Y'<Y\}.
\end{equation*}	
For $Y\in (Y_{*},Y^{*})$, we have $(x_{X}+J_{X})(X,Y)>0$ and we define
\begin{equation*}
	\eta(Y)=\frac{1}{(x_{X}+J_{X})(X,Y)}.
\end{equation*}	
Let us assume that $Y^{*}<Y_{r}$. Then
\begin{equation}
\label{eq:gronwallcontra}
	(x_{X}+J_{X})(X,Y^{*})=0.
\end{equation}	
Since $\eta(Y)\geq0$ for $Y\in (Y_{*},Y^{*})$, and $J_{X}x_{X}\geq0$ by (\ref{eq:setH3}), we have that
\begin{equation*}
	x_{X}(X,Y)\geq 0 \quad \text{and} \quad J_{X}(X,Y)\geq 0
\end{equation*}
for $Y\in (Y_{*},Y^{*})$. By (\ref{eq:goveq}), we have
\begin{equation*}
	\eta_{Y}=-\frac{x_{XY}+J_{XY}}{(x_{X}+J_{X})^{2}}=-\frac{c'(U)}{2c(U)}\frac{U_{Y}(x_{X}+J_{X})+U_{X}(x_{Y}+J_{Y})}{(x_{X}+J_{X})^{2}}
\end{equation*}
and from (\ref{eq:setH3}), we obtain
\begin{equation*}
	|U_{X}|\leq \frac{1}{c(U)}\sqrt{2J_{X}x_{X}}\leq \frac{1}{\sqrt{2}c(U)}(J_{X}+x_{X}).
\end{equation*} 	
Hence,
\begin{equation*}
	\eta_{Y}\leq \frac{|c'(U)|}{2c(U)}\Big(|U_{Y}|+\frac{1}{\sqrt{2}c(U)}(x_{Y}+J_{Y})\Big)\eta\leq C\eta
\end{equation*}	
for some constant $C$ which only depends on $\tnorm{\Theta}_{\G(\Omega)}$. From Gronwall's inequality it follows that
\begin{equation}
\label{eq:xXandJXbounded}
	\frac{1}{x_{X}+J_{X}}(X,Y)\leq \frac{1}{\V_{2}+\V_{4}}(X)e^{C|Y-\Y(X)|},
\end{equation}	
which contradicts (\ref{eq:gronwallcontra}), so that we must have $Y^{*}=Y_{r}$. In the same way one proves that $Y_{*}=Y_{l}$. Hence, 
\begin{equation*}
	x_{X}(X,Y)\geq 0, \quad J_{X}(X,Y)\geq 0 \quad \text{and} \quad (x_{X}+J_{X})(X,Y)>0
\end{equation*}
for almost every $X\in [X_{l},X_{r}]$ and all $Y\in [Y_{l},Y_{r}]$. This concludes the proof of the first identities in \eqref{eq:setH4}--\eqref{eq:setH6}. The second identities in \eqref{eq:setH4} -- \eqref{eq:setH6} can be proved in a similar way.
\end{proof}

\subsection{Existence of Local Solutions}
%%%%%%%%%%%%%%%%%%%%%%%%%%%%%%%%%%%%%%%%%%%%%
%        Existence of Local Solutions
%%%%%%%%%%%%%%%%%%%%%%%%%%%%%%%%%%%%%%%%%%%%%

We begin with some a priori estimates.

Given a positive constant $L$, we call domains of the type
\begin{equation*}
\{(X,Y)\in \mathbb{R}^{2} \ | \ |Y-X|\leq2L \}
\end{equation*}
\emph{strip domains}, which correspond to domains where time is bounded. We have the following a priori estimates for the solution of \eqref{eq:goveq}.

\begin{lemma}
	\label{lemma:stripest}
	Given $\Omega=[X_{l},X_{r}]\times[Y_{l},Y_{r}]$ and $\Theta=(\X,\Y,\Z,\V,\W,\p,\q)\in \G(\Omega)$, let 
	$(Z,p,q)\in \H(\Omega)$ be a solution of (\ref{eq:goveq}) such that $\Theta=(Z,p,q)\,\bullet\, (\X,\Y)$. Let $\E_{0}=||\Z_{4}||_{L^{\infty}([s_{l},s_{r}])}+||\Z_{5}||_{L^{\infty}([s_{l},s_{r}])}$. Then the following statements hold:
	\begin{enumerate}
		\item[(i)] Boundedness of the energy, that is,
		\begin{subequations}
		\begin{equation}
		\label{eq:striplemmaJ}
		0\leq J(X,Y)\leq \E_{0} \quad \text{for all } (X,Y)\in \Omega
		\end{equation}
		and
		\begin{equation}
		\label{eq:striplemmaK}
		||K||_{L^{\infty}(\Omega)}\leq (1+\kappa)\E_{0}.
		\end{equation} 
		\end{subequations}
		\item[(ii)] The functions $Z$, $Z_{X}$, $Z_{Y}$, $p$ and $q$ remain uniformly bounded in strip domains which contain $\Omega$, that is, there exists a nondecreasing function $C_{1}=C_{1}(\tnorm{\Theta}_{\G(\Omega)},L)$ such that for any $L>0$ and any $(X,Y)\in \Omega$ such that $|X-Y|\leq 2L$, we have
		\begin{subequations}
		\begin{equation}
		\label{eq:striplemmaZ}
		|Z^{a}(X,Y)|\leq C_{1}, \quad |Z_{X}(X,Y)|\leq C_{1}, \quad |Z_{Y}(X,Y)|\leq C_{1},
		\end{equation}
		\begin{equation}
		\label{eq:striplemmapq}
		|p(X,Y)|\leq C_{1}, \quad |q(X,Y)|\leq C_{1}
		\end{equation}
		and
		\begin{equation}
		\label{eq:striplemma2}
		\frac{1}{x_{X}+J_{X}}(X,Y)\leq C_{1}, \quad \frac{1}{x_{Y}+J_{Y}}(X,Y)\leq C_{1}.
		\end{equation}
		\end{subequations}
		Condition \emph{(ii)} is equivalent to the following:
		\item[(iii)] For any curve $(\bar{\X},\bar{\Y})\in \C(\Omega)$, we have
		\begin{equation}
		\label{eq:striplemma3}
		\tnorm{(Z,p,q) \bullet (\bar{\X},\bar{\Y})}_{\G(\Omega)}\leq C_{1},
		\end{equation}
		where $C_{1}=C_{1}(||(\bar{\X},\bar{\Y})||_{\C(\Omega)},|||\Theta|||_{\G(\Omega)})$ is an increasing function with respect to both its arguments.
	\end{enumerate}

\end{lemma}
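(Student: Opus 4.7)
From \eqref{eq:setH5}, $J_X, J_Y \geq 0$, so $J$ is nondecreasing in each argument. For $(X,Y) \in \Omega$, monotonicity gives $0 \leq \Z_4(s_l) = J(X_l, Y_l) \leq J(X,Y) \leq J(X_r, Y_r) = \Z_4(s_r) \leq \E_0$, which yields \eqref{eq:striplemmaJ}. For the bound on $K$, the algebraic relation \eqref{eq:setH2} together with \eqref{eq:cassumption} gives $|K_X| \leq \kappa J_X$ and $|K_Y| \leq \kappa J_Y$. Writing
\[
K(X,Y) - K(X_l,Y_l) = \int_{X_l}^X K_X(\tilde X, Y_l)\, d\tilde X + \int_{Y_l}^Y K_Y(X, \tilde Y)\, d\tilde Y
\]
and using that the $J$-differences telescope, we get $|K(X,Y) - K(X_l,Y_l)| \leq \kappa (J(X,Y) - J(X_l, Y_l)) \leq \kappa \E_0$, and \eqref{eq:striplemmaK} follows from $|K(X_l, Y_l)| = |\Z_5(s_l)| \leq \E_0$.

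\textbf{Part (ii), easy bounds.} The inequalities \eqref{eq:striplemmapq} are immediate: the equations $p_Y = 0$ and $q_X = 0$ force $p(X,Y) = \p(X)$ and $q(X,Y) = \q(Y)$, each bounded by $\tnorm{\Theta}_{\G(\Omega)}$. The lower bounds \eqref{eq:striplemma2} on $x_X+J_X$ and $x_Y+J_Y$ follow exactly as in the proof of Theorem~\ref{thm:solnsmallrect}: setting $\eta=1/(x_X+J_X)$, the constraint \eqref{eq:setH3} and AM--GM give $|U_X|\leq(\kappa/\sqrt{2})(x_X+J_X)$, so the $x_{XY}$ and $J_{XY}$ equations produce $\eta_Y\leq C\,\eta$ with $C$ depending on $\tnorm{\Theta}_{\G(\Omega)}$; Gronwall then yields an upper bound on $\eta(X,Y)$ exponential in $|Y-\Y(X)|$, and symmetrically for $x_Y+J_Y$. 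The bound $|Y - \Y(X)|$ is itself controlled by $L$ and $\norm{(\X,\Y)}_{\C(\Omega)}$ via the argument in \eqref{eq:deltabound2}.

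\textbf{Part (ii), upper bounds on $Z_X$, $Z_Y$ and $Z^a$.} The central step is controlling $A := x_X+J_X$ and $B := x_Y+J_Y$ from above: once these are bounded, every component of $Z_X$ is controlled (e.g.\ $|t_X| = x_X/c(U) \leq \kappa A$, $|K_X| \leq \kappa J_X \leq \kappa A$, $|U_X| \leq (\kappa/\sqrt{2}) A$), and symmetrically for $Z_Y$. Differentiating $A$ in $Y$ via \eqref{eq:goveq} and using the AM--GM bounds on $|U_X|, |U_Y|$ gives $|A_Y|\leq C_0 AB$, with the symmetric $|B_X|\leq C_0 AB$, where $C_0 = C_0(\kappa,k_1)$. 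Taking logarithms and integrating from the initial curve yields
\[
\ln A(X,Y) \leq \ln A(X,\Y(X)) + C_0\bigl(x(X,Y)-x(X,\Y(X)) + J(X,Y)-J(X,\Y(X))\bigr),
\]
and similarly for $B$. The $J$-terms are bounded by \eqref{eq:striplemmaJ}; the $x$-differences, after the substitution $x=x^a+(X+Y)/2$, are controlled in strip domains $|X-Y|\leq 2L$ by a quantity involving $L$, $\norm{(\X,\Y)}_{\C(\Omega)}$, and $\norm{Z^a}_{L^\infty(\Omega)}$. Since $(Z,p,q)\in\H(\Omega)$ already guarantees $Z^a\in[\Winf(\Omega)]^5$, a bootstrap argument (or equivalently, chaining the short-range estimate from Theorem~\ref{thm:solnsmallrect} across $\Omega$) closes the coupled system and produces an explicit nondecreasing $C_1 = C_1(\tnorm{\Theta}_{\G(\Omega)}, L)$. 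The $Z^a$ bound in \eqref{eq:striplemmaZ} then follows by integrating $Z_X$ and $Z_Y$ along horizontal and vertical lines from the initial curve.

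\textbf{Part (iii) and main obstacle.} The equivalence (ii)$\Leftrightarrow$(iii) is structural: $(Z,p,q)\bullet(\bar\X,\bar\Y)$ simply records the pointwise values of $Z$, $Z_X$, $Z_Y$, $p$, $q$ along $(\bar\X,\bar\Y)$, so every $L^\infty$-component of $\tnorm{\cdot}_{\G(\Omega)}$ is dominated by the corresponding supremum norm on $\Omega$ appearing in (ii); conversely, through any $(X_0,Y_0)\in\Omega$ one can thread a monotone curve in $\C(\Omega)$ with controlled $\C(\Omega)$-norm and read off (ii) from (iii). The principal obstacle is precisely closing the Gronwall loop for the upper bounds on $A$ and $B$: the exponential factor contains $x$, itself one of the quantities being bounded. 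Exploiting the strip structure $|X-Y|\leq 2L$ together with the qualitative $\Winf$-regularity already provided by $\H(\Omega)$ is what makes the final constant quantitative in $\tnorm{\Theta}_{\G(\Omega)}$ and $L$.
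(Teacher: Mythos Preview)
Your treatment of part~(i), the $p,q$ bounds, and the lower bound \eqref{eq:striplemma2} are correct and essentially match the paper. The equivalence (ii)$\Leftrightarrow$(iii) is also fine.

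The gap is precisely where you flag it: closing the loop for the upper bounds on $Z_X$, $Z_Y$, $Z^a$. You control the $x$-difference appearing in your Gronwall exponent by $\norm{Z^a}_{L^\infty(\Omega)}$, then appeal to ``the qualitative $\Winf$-regularity already provided by $\H(\Omega)$'' and an unspecified bootstrap or chaining of Theorem~\ref{thm:solnsmallrect} to make this quantitative. Neither closes the argument. Membership in $\H(\Omega)$ gives only that $\norm{Z^a}_{L^\infty(\Omega)}<\infty$, not a bound in terms of $\tnorm{\Theta}_{\G(\Omega)}$ and $L$; feeding a solution-dependent constant into the exponential does not produce a data-dependent constant. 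And chaining the short-range theorem across $\Omega$ is exactly what Lemma~\ref{lemma:SolnBigRectangle} does \emph{using} the present lemma as input, so invoking it here is circular.

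The paper avoids the loop by reversing the order: it bounds $x^a$ \emph{first}, directly and without any Gronwall, using only the monotonicity $x_X,x_Y\geq 0$. For $P=(X,Y)$ set $s_0=\Y^{-1}(Y)$, $s_1=\X^{-1}(X)$; then $x(P)$ is squeezed between $x(\X(s_0),\Y(s_0))=\Z_2(s_0)$ and $x(\X(s_1),\Y(s_1))=\Z_2(s_1)$, values on the initial curve controlled by $\tnorm{\Theta}_{\G(\Omega)}$. Since $\tfrac12(X+Y)$ lies within $L$ of $\Y(s_0)$ (and of $\X(s_1)$) in the strip, one gets $|x^a(P)|\leq 2\tnorm{\Theta}_{\G(\Omega)}+L$ outright. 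With $x^a$ (hence the $x$-differences) already bounded by data, the Gronwall for $Z_X$ becomes clean: $|Z_Y|\leq C(x_Y+J_Y)$, so $\int|Z_Y|\,dY$ is bounded by a $J$-difference (controlled by $\E_0$) plus an $x$-difference (just bounded), and the exponent is quantitative in $\tnorm{\Theta}_{\G(\Omega)}$ and $L$. The $t^a$ and $U$ bounds follow the same pattern. In short, monotonicity of $x$ lets you bound $Z^a$ \emph{before} $Z_X,Z_Y$, which is what breaks the circularity.
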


\begin{proof}
	Given $P=(X,Y)\in \Omega$, let $s_{0}=\Y^{-1}(Y)$ and $s_{1}=\X^{-1}(X)$, where we assume that $s_{0}\leq s_{1}$ (the proof for the other case is similar). We denote $P_{0}=(\X(s_{0}),\Y(s_{0}))$ and $P_{1}=(\X(s_{1}),\Y(s_{1}))$. Since $\X$ and $\Y$ are increasing functions, we have that $X=\X(s_{1})\geq \X(s_{0})$ and $Y=\Y(s_{0})\leq \Y(s_{1})$. Then, because $\Z_{4}\geq0$, $J_{X}\geq0$ and $J_{Y}\geq0$, we have
	\begin{equation}
	\label{eq:stripJbound}
	0\leq \Z_{4}(s_{0})=J(P_{0})\leq J(P)\leq J(P_{1})=\Z_{4}(s_{1})\leq \E_{0},
	\end{equation}
	which proves (\ref{eq:striplemmaJ}). By (\ref{eq:setH2}), we have
	\begin{equation*}
	K(P)-K(P_{0})=\int_{\X(s_{0})}^{X}\bigg(\frac{J_{X}}{c(U)}\bigg)(\tilde{X},Y)\,d\tilde{X}.
	\end{equation*}
	Hence,
	\begin{equation*}
	|K(P)|\leq |K(P_{0})|+\kappa(J(P)-J(P_{0}))\leq (1+\kappa)\E_{0}
	\end{equation*}
	by (\ref{eq:striplemmaJ}). This proves (\ref{eq:striplemmaK}). Next we show \eqref{eq:striplemmaZ}-\eqref{eq:striplemmapq}. Since $x_{X}\geq0$, we have
	\begin{equation*}
	x(P)\geq x(P_{0})=\Z_{2}(s_{0})=\Z_{2}^{a}(s_{0})+s_{0}\geq -\tnorm{\Theta}_{\G(\Omega)}+s_{0} 
	\end{equation*}
	and since $\frac{1}{2}(X+Y)=Y+\frac{1}{2}(X-Y)\leq \Y(s_{0})+L$, it follows that
	\begin{equation*}
	x(P)-\frac{1}{2}(X+Y) \geq -\tnorm{\Theta}_{\G(\Omega)}+(s_{0}-\Y(s_{0}))-L \geq -2\tnorm{\Theta}_{\G(\Omega)}-L.
	\end{equation*}
	Similarly, we find $x(P)\leq \tnorm{\Theta}_{\G(\Omega)}+s_{1}$ and $\frac{1}{2}(X+Y)\geq \X(s_{1})-L$, which implies that
	\begin{equation*}
	x(P)-\frac{1}{2}(X+Y) \leq 2\tnorm{\Theta}_{\G(\Omega)}+L.
	\end{equation*}
	Hence,
	\begin{equation*}
	|Z_{2}^{a}(P)|=\Big|x(P)-\frac{1}{2}(X+Y)\Big|\leq 2\tnorm{\Theta}_{\G(\Omega)}+L.
	\end{equation*}
	By \eqref{eq:setH1}, we have
	\begin{align*}
	|t(P)|&\leq |t(P_{1})|+\int_{Y}^{\Y(s_{1})}|t_{Y}(X,\tilde{Y})|\,d\tilde{Y}\\
	&=|t(P_{1})|+\int_{Y}^{\Y(s_{1})}\bigg(\frac{x_{Y}}{c(U)}\bigg)(X,\tilde{Y})\,d\tilde{Y}\\
	&\leq |t(P_{1})|+\kappa(x(P_{1})-x(P))
	\end{align*}
	and since
	\begin{equation}
	\label{eq:stripestxdiff}
	x(P_{1})-x(P)\leq x(P_{1})-x(P_{0})=\Z_{2}^{a}(s_{1})+s_{1}-\Z_{2}^{a}(s_{0})-s_{0}\leq 2\tnorm{\Theta}_{\G(\Omega)}+s_{1}-s_{0}
	\end{equation}
	and
	\begin{equation}
	\label{eq:stripestsdiff}
	s_{1}-s_{0}=(s_{1}-\X(s_{1}))+(\Y(s_{0})-s_{0})+(X-Y)\leq 2\tnorm{\Theta}_{\G(\Omega)}+2L,
	\end{equation}
	it follows that
	\begin{equation*}
	|t(P)|=|Z_{1}(P)|\leq \tnorm{\Theta}_{\G(\Omega)}+\kappa(4\tnorm{\Theta}_{\G(\Omega)}+2L).
	\end{equation*}
	Hence,
	\begin{equation*}
	|Z_{1}^{a}(P)|=\Big|t(P)-\frac{1}{2c(0)}(X-Y)\Big|\leq \tnorm{\Theta}_{\G(\Omega)}+\kappa(4\tnorm{\Theta}_{\G(\Omega)}+3L).
	\end{equation*}
	We have
	\begin{equation*}
	|U(P)|\leq |U(P_{1})|+\int_{Y}^{\Y(s_{1})}|U_{Y}(X,\tilde{Y})|\,d\tilde{Y}.
	\end{equation*}
	By (\ref{eq:setH3}), we have $2J_{Y}x_{Y}\geq (c(U)U_{Y})^{2}$, which implies that
	\begin{equation}
	\label{eq:stripestUbound}
	|U_{Y}|\leq \frac{\kappa}{\sqrt{2}}(J_{Y}+x_{Y}).
	\end{equation}
	Hence, by \eqref{eq:stripJbound}, \eqref{eq:stripestxdiff} and \eqref{eq:stripestsdiff}, we obtain
	\begin{aalign}
	\label{eq:stripestU2bound}
	|U(P)|&\leq |U(P_{1})|+\frac{\kappa}{\sqrt{2}}(J(P_{1})+x(P_{1})-J(P)-x(P)) \\
	&\leq \tnorm{\Theta}_{\G(\Omega)}+\frac{\kappa}{\sqrt{2}}(\E_{0}+4\tnorm{\Theta}_{\G(\Omega)}+2L).
	\end{aalign}
	We prove that $Z_{X}$ and $Z_{Y}$ remain bounded. As above, we assume that $Y\leq \Y(X)$. For almost every $X\in [X_{l},X_{r}]$, we have
	\begin{equation*}
	|Z_{X}(X,Y)|\leq |Z_{X}(X,\Y(X))|+\int_{Y}^{\Y(X)}(|t_{XY}|+|x_{XY}|+|U_{XY}|+|J_{XY}|+|K_{XY}|)(X,\tilde{Y})\,d\tilde{Y}.
	\end{equation*} 
	From the governing equations (\ref{eq:goveq}), we find that
	\begin{equation*}
	|t_{XY}|+|x_{XY}|+|U_{XY}|+|J_{XY}|+|K_{XY}|\leq C|Z_{X}||Z_{Y}|
	\end{equation*}
	for some constant $C$ dependent on $\Omega$, $\kappa$, and $k_1$. By Gronwall's lemma we obtain
	\begin{aalign}
	\label{eq:stripestZXbound}
	|Z_{X}(X,Y)|&\leq |Z_{X}(X,\Y(X))|\exp\Big(\int_{Y}^{\Y(X)}C|Z_{Y}(X,\tilde{Y})|\,d\tilde{Y}\Big) \\
	&=|\V(X)|\exp\Big(C\int_{Y}^{\Y(X)}|Z_{Y}(X,\tilde{Y})|\,d\tilde{Y}\Big).
	\end{aalign}
	From \eqref{eq:atriplet}, we have
	\begin{equation*}
	|\V(X)|\leq |\V^{a}(X)|+C\leq \tnorm{\Theta}_{\G(\Omega)}+C
	\end{equation*}
	for some constant $C$ dependent on $\Omega$, $\kappa$, and $k_1$. Furthermore, using (\ref{eq:setH1}), (\ref{eq:setH2}) and (\ref{eq:stripestUbound}), we obtain
	\begin{equation*}
	|Z_{Y}|=\frac{1}{c(U)}(x_{Y}+J_{Y})+x_{Y}+J_{Y}+|U_{Y}|\leq C(x_{Y}+J_{Y}).
	\end{equation*} 
	Hence, 
	\begin{align*}
	\int_{Y}^{\Y(X)}|Z_{Y}(X,\tilde{Y})|\,d\tilde{Y}&\leq C\int_{Y}^{\Y(X)}(x_{Y}+J_{Y})(X,\tilde{Y})\,d\tilde{Y} \\
	&=C(x(X,\Y(X))+J(X,\Y(X))-x(X,Y)-J(X,Y)) \\
	&\leq C(\E_{0}+4\tnorm{\Theta}_{\G(\Omega)}+2L),
	\end{align*}
	where we used the same estimate as in (\ref{eq:stripestU2bound}). Combined with (\ref{eq:stripestZXbound}), this yields
	\begin{equation*}
	|Z_{X}(X,Y)|\leq C_1
	\end{equation*}
	for some constant $C_1$ which only depends on $\tnorm{\Theta}_{\G(\Omega)}$ and $L$. Similarly, one proves the bound on $Z_{Y}$. The estimates in (\ref{eq:striplemmapq}) follows from the fact that $p_{Y}=0$ and $q_{X}=0$. Indeed, we have
	\begin{equation*}
	|p(X,Y)|=|p(X,\Y(X))|=|\p(X)|\leq \tnorm{\Theta}_{\G(\Omega)}
	\end{equation*}
	and similarly for $q$. Let us prove (\ref{eq:striplemma2}). In (\ref{eq:xXandJXbounded}), we found that
	\begin{equation}
	\label{eq:stripestxXJXbound}
	\frac{1}{x_{X}+J_{X}}(X,Y)\leq \frac{1}{\V_{2}+\V_{4}}(X)e^{C|Y-\Y(X)|}
	\end{equation}
	for a constant $C$ which only depends on $\tnorm{\Theta}_{\G(\Omega)}$. We have
	\begin{align*}
	|Y-\Y(X)|&=|Y-X+\X(\X^{-1}(X))-\X^{-1}(X)+\X^{-1}(X)-\Y(\X^{-1}(X))| \\
	&\leq L+\norm{\X-\id}_{\Linf([s_{l},s_{r}])}+\norm{\Y-\id}_{\Linf([s_{l},s_{r}])} \\
	&\leq L+\tnorm{\Theta}_{\G(\Omega)},
	\end{align*}
	which combined with \eqref{eq:stripestxXJXbound} yields the first inequality in (\ref{eq:striplemma2}). The other inequality in \eqref{eq:striplemma2} can be proved in a similar way. We show that the conditions (ii) and (iii) are equivalent. Given a curve $(\bar{\X},\bar{\Y})\in \C(\Omega)$, we have, since $\bar{\X}+\bar{\Y}=2s$, that
	\begin{equation*}
	||\bar{\X}-\bar{\Y}||_{\Linf}=2||\bar{\X}-\id||_{\Linf}=||\bar{\X}-\id||_{\Linf}+||\bar{\Y}-\id||_{\Linf}=||(\bar{\X},\bar{\Y})||_{\C(\Omega)}
	\end{equation*}  
	and \eqref{eq:striplemma3} follows.
\end{proof}

We have the following existence and uniqueness result.

\begin{lemma}[Existence and uniqueness on arbitrarily large rectangles]
	\label{lemma:SolnBigRectangle}
	Given a rectangular domain $\Omega=[X_{l},X_{r}]\times[Y_{l},Y_{r}]$ and $\Theta=(\X,\Y,\Z,\V,\W,\p,\q)\in \G(\Omega)$, there exists a unique solution $(Z,p,q)\in \H(\Omega)$ such that
	\begin{equation*}
	\Theta=(Z,p,q) \bullet (\X,\Y).
	\end{equation*}
\end{lemma}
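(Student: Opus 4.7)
My plan is to extend Theorem~\ref{thm:solnsmallrect} from small rectangles to arbitrary ones by sweeping outward from the initial curve $(\X,\Y)$ along a tiling of $\Omega$, with uniform step size controlled by the a priori bound of Lemma~\ref{lemma:stripest}. First, by Lemma~\ref{lemma:stripest}(iii) applied to any (hypothetical) solution on $\Omega$ and to any curve in $\C(\Omega')$ for any sub-rectangle $\Omega'\subseteq\Omega$, we obtain a constant $C_{1}=C_{1}(\tnorm{\Theta}_{\G(\Omega)},L)$ depending only on the original data norm and the $s$-width $L$ of $\Omega$, such that the pulled-back $\G(\Omega')$-norm never exceeds $C_{1}$. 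Setting $\delta_{0}=1/C(C_{1})$ with $C$ as in Theorem~\ref{thm:solnsmallrect}, the short-range theorem then applies on any sub-rectangle of $s$-width $<\delta_{0}$ carrying initial data of norm $\leq C_{1}$.

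Choose then a partition $s_{l}=s_{0}<\cdots<s_{N}=s_{r}$ with $s_{k+1}-s_{k}<\delta_{0}/2$, and set $X_{i}=\X(s_{i})$, $Y_{j}=\Y(s_{j})$. The tiles $\Omega_{ij}=[X_{i},X_{i+1}]\times[Y_{j},Y_{j+1}]$ then have $s$-width $<\delta_{0}$. On each diagonal tile $\Omega_{ii}$, Theorem~\ref{thm:solnsmallrect} produces a local solution with initial curve $(\X,\Y)|_{[s_{i},s_{i+1}]}\in\C(\Omega_{ii})$. I then sweep outward in layers indexed by $\ell=|j-i|$. For $\ell\geq 1$ above the diagonal, the already-solved tiles $\Omega_{i,i+\ell-1}$ and $\Omega_{i+1,i+\ell}$ furnish data on the BOTTOM and RIGHT sides of $\Omega_{i,i+\ell}$; their concatenation forms an L-shape joining LL to UR of $\Omega_{i,i+\ell}$ with $\dot{\X},\dot{\Y}\geq 0$, hence an element of $\C(\Omega_{i,i+\ell})$ of $\G$-norm $\leq C_{1}$, and Theorem~\ref{thm:solnsmallrect} extends the solution to $\Omega_{i,i+\ell}$. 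The region below the diagonal is handled symmetrically using LEFT~+~TOP as initial curve. After $N-1$ layers the whole of $\Omega$ is covered. Adjacent tiles share their boundary data by construction, so the local pieces assemble into $(Z,p,q)\in\H(\Omega)$ satisfying $\Theta=(Z,p,q)\bullet(\X,\Y)$; global uniqueness then follows by rerunning the same inductive procedure and invoking the uniqueness clause of Theorem~\ref{thm:solnsmallrect} on each tile.

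The main obstacle I anticipate is justifying the uniform bound $C_{1}$ \emph{during} the iteration, rather than only for the final global solution: since Lemma~\ref{lemma:stripest} assumes a solution already exists, a continuation/bootstrap argument is required so that the bound propagates from one tile to the next. Concretely, one shows by induction that the pulled-back data on each successive L-shape remains of norm $\leq C_{1}$, using that the estimate of Lemma~\ref{lemma:stripest}(ii) depends only on $\tnorm{\Theta}_{\G(\Omega)}$ and the distance to the original curve, and that this bound is preserved under L-shape gluing because the algebraic constraints \eqref{eq:setH1}--\eqref{eq:setH6} defining $\H$ are pointwise. Additional bookkeeping is needed where $(\X,\Y)$ has $\dot{\X}=0$ or $\dot{\Y}=0$ on a subinterval (some diagonal tiles then degenerate and must be replaced by purely horizontal or vertical strips) and at tiles meeting $\partial\Omega$ (where one of the two L-sides is supplied by $\partial\Omega$ rather than by an adjacent neighbor, so the L-shape must be truncated).
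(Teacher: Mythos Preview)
Your proposal is correct and follows essentially the same strategy as the paper: tile $\Omega$ via the grid $X_i=\X(s_i)$, $Y_j=\Y(s_j)$, solve the diagonal tiles from the restricted data, then extend outward using L-shaped initial curves whose $\G$-norms are bounded uniformly by Lemma~\ref{lemma:stripest}, so that Theorem~\ref{thm:solnsmallrect} applies on each tile. The only difference is the traversal order---the paper grows nested rectangles $\Omega_n=[X_0,X_n]\times[Y_0,Y_n]$ by adjoining one row and column at a time rather than sweeping in layers $|i-j|=\ell$---and your worry about tiles meeting $\partial\Omega$ is unnecessary, since every off-diagonal tile has both required neighbors in the previous layer.
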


\begin{proof}
	Let $\delta=\frac{s_{r}-s_{l}}{N}$, where $N$ is an integer that we will specifiy later. For $i=0,\dots,N$, let $s_{i}=i\delta+s_{l}$ and $P_{i}=(X_{i},Y_{i})=(\X(s_{i}),\Y(s_{i}))$. For $i,j=0,\dots,N$, we construct a grid which consists of the points $P_{i,j}=(X_{i,j},Y_{i,j})$, where $X_{i,j}=X_{i}$ and $Y_{i,j}=Y_{j}$. We denote by $\Omega_{i,j}$ the rectangle with diagonal points $P_{i,j}$ and $P_{i+1,j+1}$, and by $\Omega_{n}$ the rectangle with diagonal points $(X_{0},Y_{0})$ and $(X_{n},Y_{n})$. We prove by induction that there exists a unique $(Z,p,q)\in \H(\Omega_{n})$ such that $\Theta=(Z,p,q) \bullet (\X,\Y)\in \G(\Omega_{n})$. There is an increasing function $C=C(|||\Theta|||_{\G(\Omega)})$ such that 
	\begin{equation*}
	s_{1}-s_{0}=\delta\leq 1/C(|||\Theta|||_{\G(\Omega)})\leq 1/C(|||\Theta|||_{\G(\Omega_{1})}),
	\end{equation*}
	provided that $N$ is large enough. Hence, by Theorem \ref{thm:solnsmallrect}, there exists a unique solution $(Z,p,q)\in \H(\Omega_{1})$ such that $\Theta=(Z,p,q) \bullet (\X,\Y)\in \G(\Omega_{1})$. We assume that there exists a unique solution $(Z,p,q)$ on $\Omega_{n}$ and prove that there exists a solution on $\Omega_{n+1}$. By Theorem \ref{thm:solnsmallrect}, there exists a unique solution on $\Omega_{n,n}$ since 
	\begin{equation*}
	s_{n+1}-s_{n}=\delta\leq 1/C(|||\Theta|||_{\G(\Omega)})\leq 1/C(|||\Theta|||_{\G(\Omega_{n,n})}).
	\end{equation*}
	For $j=n-1,\dots,0$, we iteratively construct the unique solution in $\Omega_{n,j}$ and $\Omega_{j,n}$ as follows. We only treat the case of $\Omega_{n,j}$. We assume that the solution is known on $\Omega_{n,j+1}$, then we define $\tilde{\Theta}=(\tilde{\X},\tilde{\Y},\tilde{\Z},\tilde{\V},\tilde{\W},\tilde{\p},\tilde{\q})\in \G(\Omega_{n,j})$ as follows: the curve $(\tilde{\X}(s),\tilde{\Y}(s))$ is given by
	\begin{equation*}
	\tilde{\X}(s)=X_{n}, \quad \tilde{\Y}(s)=2s-X_{n}
	\end{equation*}
	for $\frac{1}{2}(X_{n}+Y_{j})\leq s\leq \frac{1}{2}(X_{n}+Y_{j+1})$ and
	\begin{equation*}
	\tilde{\X}(s)=2s-Y_{j+1}, \quad \tilde{\Y}(s)=Y_{j+1}
	\end{equation*}
	for $\frac{1}{2}(X_{n}+Y_{j+1})\leq s\leq \frac{1}{2}(X_{n+1}+Y_{j+1})$. We set
	\begin{align*}
	\tilde{Z}(s)&=Z(\tilde{\X}(s),\tilde{\Y}(s)) \quad \text{for } s\in \Big[\frac{1}{2}(X_{n}+Y_{j}) , \frac{1}{2}(X_{n+1}+Y_{j+1})\Big], \\
	\tilde{\V}(X)&=Z_{X}(X,Y_{j+1}) \quad \text{for a.e. } X\in [X_{n},X_{n+1}], \\
	\tilde{\W}(Y)&=Z_{Y}(X_{n},Y) \quad \text{for a.e. } Y\in [Y_{j},Y_{j+1}], \\
	\tilde{\p}(X)&=p(X,Y_{j+1}) \quad \text{for a.e. } X\in [X_{n},X_{n+1}], \\
	\tilde{\q}(Y)&=q(X_{n},Y) \quad \text{for a.e. } Y\in [Y_{j},Y_{j+1}],
	\end{align*}
	where $(Z,p,q)$ is the solution on $\Omega_{n}\cup (\cup_{i=j+1}^{n}\Omega_{n,i})$. By Lemma \ref{lemma:stripest}, we have that $|||\tilde{\Theta}|||_{\G(\Omega_{n,j})}\leq C_{1}(|||\Theta|||_{\G(\Omega)},L)$. We have
	\begin{equation*}
	\frac{1}{2}(X_{n+1}+Y_{j+1})-\frac{1}{2}(X_{n}+Y_{j})=\frac{1}{2}(\X(s_{n+1})-\X(s_{n})+\Y(s_{j+1})-\Y(s_{j}))\leq 2\delta,
	\end{equation*}
	because $\X$ and $\Y$ are Lipschitz continuous with Lipschitz constant smaller than 2. By setting $N$ so large that $2\delta\leq 1/C(C_{1})$ it follows that $2\delta \leq 1/C(|||\tilde{\Theta}|||_{\G(\Omega_{n,j})})$, so that we can apply Theorem \ref{thm:solnsmallrect} to $\Omega_{n,j}$ and obtain the existence of a unique solution in $\H(\Omega_{n,j})$. Similarly we obtain the existence of a unique solution in $\H(\Omega_{j,n})$. Since
	\begin{equation*}
	\Omega_{n+1}=\Omega_{n}\cup (\cup_{j=0}^{n}\Omega_{n,j})\cup(\cup_{j=0}^{n-1}\Omega_{j,n}),
	\end{equation*}
	we have proved the existence of a unique solution in $\Omega_{n+1}$.
\end{proof}

\begin{lemma}[A Gronwall lemma for curves]
	\label{lemma:groncurve}
	Let $\Omega=[X_{l},X_{r}]\times [Y_{l},Y_{r}]$ and assume that $(Z,p,q)\in \H(\Omega)$ and $(\X,\Y)\in \C(\Omega)$. Then, for any $(\bar{\X},\bar{\Y})\in \C(\Omega)$, we have
	\begin{equation*}
	||(Z,p,q)\bullet (\bar{\X},\bar{\Y})||_{\G(\Omega)}\leq C||(Z,p,q)\bullet (\X,\Y)||_{\G(\Omega)},
	\end{equation*}
	where $C=C(||(\bar{\X},\bar{\Y})||_{\C(\Omega)},|||(Z,p,q)\bullet (\X,\Y)|||_{\G(\Omega)})$ is an increasing function with respect to both its arguments.
\end{lemma}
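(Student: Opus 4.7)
Write $\Theta=(Z,p,q)\bullet(\X,\Y)$ and $\bar\Theta=(Z,p,q)\bullet(\bar{\X},\bar{\Y})$. By Lemma~\ref{lemma:stripest}(iii), the seminorm $\tnorm{\bar\Theta}_{\G(\Omega)}$ is already controlled by a constant depending only on $\norm{(\bar{\X},\bar{\Y})}_{\C(\Omega)}$ and $\tnorm{\Theta}_{\G(\Omega)}$, so all pointwise bounds of part~(ii) of that lemma---on $Z^{a}$, $Z_{X}$, $Z_{Y}$, $p$, $q$, and $1/(x_{X}+J_{X})$, $1/(x_{Y}+J_{Y})$---hold uniformly on $\Omega$. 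Since $p_{Y}=0$ and $q_{X}=0$ throughout $\Omega$, $p(X,Y)=\p(X)$ and $q(X,Y)=\q(Y)$ everywhere; consequently $\bar\p=\p$ and $\bar\q=\q$, and these $L^{2}$-norms transfer with equality.

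The core of the argument is a pointwise ODE estimate for $\partial_{Y}V^{a}$ and $\partial_{X}W^{a}$. Inspecting \eqref{eq:goveqt}--\eqref{eq:goveqK}, every term of every component of the bilinear form $F(Z)(V,W)$ contains at least one factor drawn from $\{V_{3},V_{4},V_{5},W_{3},W_{4},W_{5}\}$, and the constant adjustment vectors $e_{V}=(\tfrac{1}{2c(0)},\tfrac{1}{2},0,0,0)$ and $e_{W}=(-\tfrac{1}{2c(0)},\tfrac{1}{2},0,0,0)$ from \eqref{eq:atriplet} have vanishing last three coordinates; hence $F(Z)(e_{V},e_{W})=0$. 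Expanding $F(Z)(V,W)=F(Z)(V^{a}+e_{V},W^{a}+e_{W})$ and using the $L^{\infty}$-bounds from step one,
\begin{equation*}
|\partial_{Y}V^{a}(X,Y)|=|F(Z)(V,W)(X,Y)|\leq K\bigl(|V^{a}|+|W^{a}|\bigr)(X,Y),
\end{equation*}
with an analogous bound for $|\partial_{X}W^{a}|$, where $K$ depends only on $\kappa$, $k_{1}$ and the constants from Lemma~\ref{lemma:stripest}.

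Writing $\bar V^{a}(X)=\V^{a}(X)+\int_{\Y(X)}^{\bar\Y(X)}\partial_{Y}V^{a}(X,\tilde Y)\,d\tilde Y$ and using $|\bar\Y(X)-\Y(X)|\leq 2(\norm{\bar{\X}-\id}_{L^{\infty}}+\norm{\X-\id}_{L^{\infty}})=:L'$, Cauchy--Schwarz gives
\begin{equation*}
\norm{\bar\V^{a}}_{L^{2}}^{2}\leq 2\norm{\V^{a}}_{L^{2}}^{2}+2L'K^{2}\,I,\qquad I:=\int_{\Omega}(|V^{a}|^{2}+|W^{a}|^{2})\,dX\,dY,
\end{equation*}
with a symmetric estimate for $\norm{\bar\W^{a}}_{L^{2}}^{2}$. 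Iterating the same pointwise bound starting from the old curve and integrating over $\Omega$ yields
\begin{equation*}
I\leq 2L_{Y}\norm{\V^{a}}_{L^{2}}^{2}+2L_{X}\norm{\W^{a}}_{L^{2}}^{2}+4K^{2}(L_{X}^{2}+L_{Y}^{2})\,I,
\end{equation*}
where $L_{X}=X_{r}-X_{l}$ and $L_{Y}=Y_{r}-Y_{l}$. When $\Omega$ is small enough that $4K^{2}(L_{X}^{2}+L_{Y}^{2})\leq 1/2$, we absorb the last term to conclude $I\leq C\norm{\Theta}_{\G(\Omega)}^{2}$. For a general rectangle one subdivides $\Omega$ into a grid of subrectangles of that size, exactly as in the proof of Lemma~\ref{lemma:SolnBigRectangle}, and iteratively propagates the $L^{2}$-bounds on $(V^{a},W^{a})$ across the bottom and left boundaries of each subrectangle, using the small-domain estimate inside the neighbouring subrectangle to compare the boundary $L^{2}$-norms to those on the original curve.

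Finally, $\norm{\bar\Z_{3}}_{L^{2}([s_{l},s_{r}])}^{2}=\int_{s_{l}}^{s_{r}}U(\bar{\X}(s),\bar{\Y}(s))^{2}\,ds$ is handled by splitting $[s_{l},s_{r}]$ into $\{\dot{\bar{\X}}\geq 1\}$ and its complement (on which $\dot{\bar{\Y}}\geq 1$ since $\dot{\bar{\X}}+\dot{\bar{\Y}}=2$), changing variables to reduce to $\int_{X_{l}}^{X_{r}}U(X,\bar\Y(X))^{2}\,dX$ and $\int_{Y_{l}}^{Y_{r}}U(\bar\X(Y),Y)^{2}\,dY$, and then writing $U(X,\bar\Y(X))=U(X,\Y(X))+\int_{\Y(X)}^{\bar\Y(X)}U_{Y}(X,Y)\,dY$ and applying the same change of variables to the first integral; the resulting bulk integral of $|U_{Y}|^{2}=|W_{3}^{a}|^{2}$ is controlled by $I$. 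The main obstacle is the grid-and-chain argument behind the bulk bound on $I$: closing the $L^{2}$ Gronwall loop requires absorbing a coefficient that scales with the size of $\Omega$, which forces the inductive subdivision mirroring Lemma~\ref{lemma:SolnBigRectangle} and a careful bookkeeping to ensure the final constant depends only on $\norm{(\bar{\X},\bar{\Y})}_{\C}$ and $\tnorm{\Theta}_{\G}$.
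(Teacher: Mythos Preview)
Your observation that $F(Z)(e_{V},e_{W})=0$ is correct and clean---each term of $F$ indeed carries a factor from $\{V_{3},V_{4},V_{5},W_{3},W_{4},W_{5}\}$, so the constant vectors are annihilated and you get the pointwise linear bound $|\partial_{Y}V^{a}|\leq K(|V^{a}|+|W^{a}|)$. Your small-domain absorption argument for $I$ is also essentially the paper's Step~1 repackaged.

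The gap is in the large-domain step. Your two-dimensional grid iteration propagates the $L^{2}$-norms of $V^{a},W^{a}$ from one subrectangle to the next, incurring a multiplicative factor at each step; after $N$ steps the constant is $C^{N}$ with $N\sim (s_{r}-s_{l})/\delta$. This depends on the size of $\Omega$, not just on $\norm{(\bar{\X},\bar{\Y})}_{\C(\Omega)}$ and $\tnorm{\Theta}_{\G(\Omega)}$, so it does not prove the lemma as stated, and it would break the application in Lemma~\ref{lemma:curveind} where one lets the rectangles exhaust the plane. The ``careful bookkeeping'' you allude to cannot fix this by itself: in a genuine $2$D grid, cells far from the original curve have no data on $(\X,\Y)$, so you are forced to chain.

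The paper avoids this entirely. Its Step~2 treats the case where $\bar\Gamma$ lies strictly on one side of $\Gamma$ by inserting an exponential weight $e^{-K(Y-X)}$: after differentiating, the bulk term carries a factor $-K$ which, for $K$ large enough (depending only on the $L^{\infty}$ bounds, hence on $\tnorm{\Theta}_{\G}$ and the curve norms), dominates all other bulk contributions and makes the right-hand side nonpositive---no absorption, no iteration, and a constant of the form $e^{K(\norm{(\bar{\X},\bar{\Y})}_{\C}+\norm{(\X,\Y)}_{\C})}$ that is independent of $s_{r}-s_{l}$. Step~3 then decomposes $[s_{l},s_{r}]$ into intervals on which either the curves do not cross (Step~2) or the interval is short (Step~1); since both curves lie in every such subrectangle, the $\G$-norms split as \emph{sums} over the pieces, so the final constant is the \emph{maximum} of the per-piece constants, not their product. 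That is the idea you are missing.
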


\begin{proof}
	Note that for any function $f\in \WY(\Omega)$, $f_{Y}$ is well-defined, but not $f_{X}$. This means that the form $f(X,Y)\,dX$ is well-defined, while the form $f(X,Y)\,dY$ is not. Similarly, for any function $g\in \WX(\Omega)$, the form $g(X,Y)\,dY$ is well-defined and $g(X,Y)\,dX$ is not. Thus, given $(Z,p,q)\in \H(\Omega)$, we can consider the forms $U^{2}\,dX$, $U^{2}\,dY$, $|Z_{X}^{a}|^{2}\,dX$, $|Z_{Y}^{a}|^{2}\,dY$, $p^{2}\,dX$ and $q^{2}\,dY$. For any curve $\bar{\Gamma}=(\bar{\X},\bar{\Y})\in \C(\Omega)$ and $\bar{\Theta}=(\bar{\X},\bar{\Y},\bar{\Z},\bar{\V},\bar{\W},\bar{\p},\bar{\q})$ such that $\bar{\Theta}=(Z,p,q)\bullet (\bar{\X},\bar{\Y})$, we have
	\begin{equation*}
	\int_{\bar{\Gamma}}U^{2}(X,Y)\,dX+U^{2}(X,Y)\,dY=2\int_{s_{l}}^{s_{r}}\bar{\Z}_{3}^{2}(s)\,ds
	\end{equation*}
	and, since $Z_{X}^{a}(X,Y)=Z_{X}^{a}(X,\bar{\Y}(X))$ and $Z_{Y}^{a}(X,Y)=Z_{Y}^{a}(\bar{\X}(Y),Y)$ on $\bar{\Gamma}$, we have
	\begin{equation*}
	\int_{\bar{\Gamma}}|Z_{X}^{a}(X,Y)|^{2}\,dX=\int_{X_{l}}^{X_{r}}|\bar{\V}^{a}(X)|^{2}\,dX,\quad \int_{\bar{\Gamma}}|Z_{Y}^{a}(X,Y)|^{2}\,dY=\int_{Y_{l}}^{Y_{r}}|\bar{\W}^{a}(Y)|^{2}\,dY.
	\end{equation*}
	Similarly, since $p(X,Y)=p(X,\bar{\Y}(X))$ and $q(X,Y)=q(\bar{\X}(Y),Y)$ on $\bar{\Gamma}$, we obtain
	\begin{align*}
	\int_{\bar{\Gamma}}p(X,Y)^{2}\,dX=\int_{X_{l}}^{X_{r}}\bar{\p}(X)^{2}\,dX,\quad \int_{\bar{\Gamma}}q(X,Y)^{2}\,dY=\int_{Y_{l}}^{Y_{r}}\bar{\q}(Y)^{2}\,dY.
	\end{align*} 
	Now we can rewrite
	\begin{aalign}
	\label{eq:Gnormequivalent}
	&||(Z,p,q)\bullet (\bar{\X},\bar{\Y})||_{\G(\Omega)}^{2}\\
	&=\int_{\bar{\Gamma}}\bigg( \frac{1}{2}U^{2}\,(dX+dY)+|Z_{X}^{a}|^{2}\,dX+|Z_{Y}^{a}|^{2}\,dY+p^{2}\,dX+q^{2}\,dY \bigg).
	\end{aalign}  	
	Thus, we want to prove that
	\begin{aalign}
	\label{eq:GronCurveProveThis}
	&\int_{\bar{\Gamma}}\bigg( \frac{1}{2}U^{2}\,(dX+dY)+|Z_{X}^{a}|^{2}\,dX+|Z_{Y}^{a}|^{2}\,dY+p^{2}\,dX+q^{2}\,dY \bigg)\\
	&\leq C\int_{\Gamma}\bigg( \frac{1}{2}U^{2}\,(dX+dY)+|Z_{X}^{a}|^{2}\,dX+|Z_{Y}^{a}|^{2}\,dY+p^{2}\,dX+q^{2}\,dY \bigg).
	\end{aalign}
	We decompose the proof into three steps.
	
	\textbf{Step 1.} We first prove that \eqref{eq:GronCurveProveThis} holds for small domains. We claim that there exist constants $\delta$ and $C$, which depend on $||(\bar{\X},\bar{\Y})||_{\C(\Omega)}$ and $|||(Z,p,q)\bullet (\X,\Y)|||_{\G(\Omega)}$, such that for any rectangular domain $\Omega=[X_{l},X_{r}]\times [Y_{l},Y_{r}]$ with $s_{r}-s_{l}\leq \delta$, \eqref{eq:GronCurveProveThis} holds. By Lemma \ref{lemma:stripest}, we have
	\begin{equation*}
	||U||_{\Linf(\Omega)}+||Z_{X}^{a}||_{\Linf(\Omega)}+||Z_{Y}^{a}||_{\Linf(\Omega)}+||p||_{\Linf(\Omega)}+||q||_{\Linf(\Omega)}\leq C,
	\end{equation*}
	where $C=C(\Omega,|||(Z,p,q)\bullet (\X,\Y)|||_{\G(\Omega)})$ which is increasing with respect to its second argument. Let
	\begin{equation*}
	A=\sup_{\bar{\Gamma}}\int_{\bar{\Gamma}}\bigg( \frac{1}{2}U^{2}\,(dX+dY)+|Z_{X}^{a}|^{2}\,dX+|Z_{Y}^{a}|^{2}\,dY+p^{2}\,dX+q^{2}\,dY \bigg),
	\end{equation*}
	where the supremum is taken over all $\bar{\Gamma}=(\bar{\X},\bar{\Y})\in \C(\Omega)$. We have
	\begin{aalign}
	\label{eq:gronlemmaxXest}
	\bigg(x_{X}-\frac{1}{2}\bigg)^{2}(X,Y)&=\bigg(x_{X}-\frac{1}{2}\bigg)^{2}(X,\Y(X))\\
	&\quad+\int_{\Y(X)}^{Y}2\bigg(\bigg(x_{X}-\frac{1}{2}\bigg)x_{XY}\bigg)(X,\tilde{Y})\,d\tilde{Y}.
	\end{aalign}
	Since $(Z,p,q)$ is a solution of \eqref{eq:goveq}, we have for almost every $X\in [X_{l},X_{r}]$ and all $Y\in [Y_{l},Y_{r}]$, that
	\begin{align*}
	2\bigg(x_{X}-\frac{1}{2}\bigg)x_{XY}&=\bigg(x_{X}-\frac{1}{2}\bigg)\frac{c'(U)}{c(U)}(U_{Y}x_{X}+U_{X}x_{Y})\\
	&=\frac{c'(U)}{c(U)}\bigg(U_{Y}\bigg(x_{X}-\frac{1}{2}\bigg)^{2}
	+\frac{1}{2}U_{Y}\bigg(x_{X}-\frac{1}{2}\bigg)\\
	&\hspace{52pt}+U_{X}\bigg(x_{Y}-\frac{1}{2}\bigg)\bigg(x_{X}-\frac{1}{2}\bigg)+
	\frac{1}{2}U_{X}\bigg(x_{X}-\frac{1}{2}\bigg)\bigg).
	\end{align*}
	Using \eqref{eq:cassumption}, \eqref{eq:cderassumption} and Young's inequality, we get
	\begin{aalign}
	\label{eq:gronwallxXest}
	&\bigg|2\bigg(x_{X}-\frac{1}{2}\bigg)x_{XY}\bigg|\\
	&\leq\kappa k_{1}\bigg(||Z_{Y}^{a}||_{\Linf(\Omega)}\bigg(x_{X}-\frac{1}{2}\bigg)^{2}
	+\frac{1}{2}|U_{Y}|\bigg|x_{X}-\frac{1}{2}\bigg|\\
	&\hspace{40pt}+||Z_{X}^{a}||_{\Linf(\Omega)}\bigg|x_{Y}-\frac{1}{2}\bigg|\bigg|x_{X}-\frac{1}{2}\bigg|+\frac{1}{2}|U_{X}|\bigg|x_{X}-\frac{1}{2}\bigg|\bigg)\\
	&\leq\kappa k_{1}\bigg(||Z_{Y}^{a}||_{\Linf(\Omega)}\bigg(x_{X}-\frac{1}{2}\bigg)^{2}
	+\frac{1}{4}U_{Y}^{2}+\frac{1}{4}\bigg(x_{X}-\frac{1}{2}\bigg)^{2}\\
	&\hspace{40pt}+\frac{1}{2}||Z_{X}^{a}||_{\Linf(\Omega)}\bigg(x_{Y}-\frac{1}{2}\bigg)^{2}+\frac{1}{2}||Z_{X}^{a}||_{\Linf(\Omega)}\bigg(x_{X}-\frac{1}{2}\bigg)^{2}\\
	&\hspace{40pt}+\frac{1}{4}U_{X}^{2}+\frac{1}{4}\bigg(x_{X}-\frac{1}{2}\bigg)^{2}\bigg)\\
	&\leq\kappa k_{1}\bigg(C\bigg(\bigg(x_{X}-\frac{1}{2}\bigg)^{2}+\bigg(x_{Y}-\frac{1}{2}\bigg)^{2}\bigg)\\
	&\hspace{40pt}+\frac{1}{2}\bigg(\bigg(x_{X}-\frac{1}{2}\bigg)^{2}+U_{X}^{2}+U_{Y}^{2}\bigg)\bigg)\\
	&\leq\kappa k_{1}\bigg(C+\frac{1}{2}\bigg)\big(|Z_{X}^{a}|^{2}+|Z_{Y}^{a}|^{2}\big).
	\end{aalign}
	Inserting this into \eqref{eq:gronlemmaxXest} and integrating over $[X_{l},X_{r}]$ gives
	\begin{aalign}
	\label{eq:GronCurve2ndComp}
	\int_{\bar{\Gamma}}\bigg(x_{X}-\frac{1}{2}\bigg)^{2}\,dX&\leq \int_{\Gamma}\bigg(x_{X}-\frac{1}{2}\bigg)^{2}\,dX\\
	&\quad+\kappa k_{1}\bigg(C+\frac{1}{2}\bigg)\int_{X_{l}}^{X_{r}}\int_{Y_{l}}^{Y_{r}}\big(|Z_{X}^{a}|^{2}+|Z_{Y}^{a}|^{2}\big)\,dY\,dX.
	\end{aalign}
	For any $Y\in [Y_{l},Y_{r}]$, the integral $\int_{X_{l}}^{X_{r}}|Z_{X}^{a}|^{2}(X,Y)\,dX$ can be seen as part of the integral of the form $|Z_{X}^{a}|^{2}\,dX$ on the piecewise linear path going through the points $(X_{l},Y_{l})$, $(X_{l},Y)$, $(X_{r},Y)$ and $(X_{r},Y_{r})$, which implies that $\int_{X_{l}}^{X_{r}}|Z_{X}^{a}|^{2}(X,Y)\,dX\leq A$. Similarly, for any $X\in [X_{l},X_{r}]$, $\int_{Y_{l}}^{Y_{r}}|Z_{Y}^{a}|^{2}(X,Y)\,dY\leq A$. Hence, by \eqref{eq:GronCurve2ndComp},
	\begin{align*}
	\int_{\bar{\Gamma}} \big(Z_{2,X}^{a}\big)^{2}\,dX&\leq \int_{\Gamma} \big(Z_{2,X}^{a}\big)^{2}\,dX+\kappa k_{1}\big(C+\frac{1}{2}\big)A(Y_{r}-Y_{l}+X_{r}-X_{l})\\
	&\leq \int_{\Gamma} \big(Z_{2,X}^{a}\big)^{2}\,dX+2\delta \kappa k_{1}\big(C+\frac{1}{2}\big)A
	\end{align*} 
	since $Y_{r}-Y_{l}+X_{r}-X_{l}=2(s_{r}-s_{l})$. By treating the other components of $Z_{X}^{a}$ and $Z_{Y}^{a}$ similarly, we get
	\begin{align}
	\label{eq:GronCurveStep1i}
	\int_{\bar{\Gamma}}|Z_{X}^{a}|^{2}\,dX&\leq \int_{\Gamma}|Z_{X}^{a}|^{2}\,dX+\delta \bar{C}A,\\
	\label{eq:GronCurveStep1ii}
	\int_{\bar{\Gamma}}|Z_{Y}^{a}|^{2}\,dY&\leq \int_{\Gamma}|Z_{Y}^{a}|^{2}\,dY+\delta \bar{C}A.
	\end{align}
	where $\bar{C}$ depends on $||(\bar{\X},\bar{\Y})||_{\C(\Omega)}$, $|||(Z,p,q)\bullet (\X,\Y)|||_{\G(\Omega)}$, $\kappa$ and $k_{1}$. For almost every $X\in[X_{l},X_{r}]$ and all $Y\in[Y_{l},Y_{r}]$, we have $p_{Y}(X,Y)=0$, so that $p(X,Y)=p(X,\Y(X))$. By squaring this expression and integrating over $[X_{l},X_{r}]$, we obtain
	\begin{equation}
	\label{eq:GronCurveStep1iiip}
	\int_{\bar{\Gamma}}p^{2}\,dX=\int_{\Gamma}p^{2}\,dX.
	\end{equation}
	Since $q_{X}(X,Y)=0$ for almost every $Y\in[Y_{l},Y_{r}]$ and all $X\in[X_{l},X_{r}]$, we get $q(X,Y)=q(\X(Y),Y)$ which implies, after squaring and integrating over $[Y_{l},Y_{r}]$, that
	\begin{equation}
	\label{eq:GronCurveStep1iiiq} 
	\int_{\bar{\Gamma}}q^{2}\,dY=\int_{\Gamma}q^{2}\,dY.
	\end{equation}
	For $U$, we have 
	\begin{align*}
	U^{2}(X,Y)&=U^{2}(X,\Y(X))+2\int_{\Y(X)}^{Y}(UU_{Y})(X,\tilde{Y})\,d\tilde{Y}\\\
	&\leq U^{2}(X,\Y(X))+\left\vert\int_{\Y(X)}^{Y}U^{2}(X,\tilde{Y})\,d\tilde{Y}\right\vert+\left\vert\int_{\Y(X)}^{Y}U_{Y}^{2}(X,\tilde{Y})\,d\tilde{Y}\right\vert.
	\end{align*}	
	As above, it follows that
	\begin{equation}
	\label{eq:GronCurveStep1iv}
	\int_{\bar{\Gamma}}U^{2}\,dX\leq \int_{\Gamma}U^{2}\,dX+2\delta A.
	\end{equation}	
	Similarly, we obtain	
	\begin{equation}
	\label{eq:GronCurveStep1v}
	\int_{\bar{\Gamma}}U^{2}\,dY\leq \int_{\Gamma}U^{2}\,dY+2\delta A.
	\end{equation}	
	By adding \eqref{eq:GronCurveStep1i} -- \eqref{eq:GronCurveStep1v}, we obtain
	\begin{align*}
	&\int_{\bar{\Gamma}}\bigg( \frac{1}{2}U^{2}\,(dX+dY)+|Z_{X}^{a}|^{2}\,dX+|Z_{Y}^{a}|^{2}\,dY+p^{2}\,dX+q^{2}\,dY \bigg)\\
	&\leq \int_{\Gamma}\bigg( \frac{1}{2}U^{2}\,(dX+dY)+|Z_{X}^{a}|^{2}\,dX+|Z_{Y}^{a}|^{2}\,dY+p^{2}\,dX+q^{2}\,dY \bigg)+2\delta\bar{C}A+2\delta A,
	\end{align*}	
	which yields, after taking the supremum over all curves $\bar{\Gamma}$,	
	\begin{equation*}
	(1-2\delta\bar{C}-2\delta)A\leq \int_{\Gamma}\bigg( \frac{1}{2}U^{2}\,(dX+dY)+|Z_{X}^{a}|^{2}\,dX+|Z_{Y}^{a}|^{2}\,dY+p^{2}\,dX+q^{2}\,dY \bigg)
	\end{equation*}
	and \eqref{eq:GronCurveProveThis} follows.	
	
	\textbf{Step 2.} For an arbitrarily large rectangular domain $\Omega=[X_{l},X_{r}]\times [Y_{l},Y_{r}]$, we now prove that \eqref{eq:GronCurveProveThis} holds for curves $\bar{\Gamma}=(\bar{\X},\bar{\Y})\in \C(\Omega)$ such that
	\begin{equation}
	\label{eq:barcurveovercurve}
	\bar{\Y}(s)-\bar{\X}(s)>\Y(s)-\X(s) \quad \text{for all } s\in (s_{l},s_{r}). 
	\end{equation}
	We claim that \eqref{eq:barcurveovercurve} implies that the curve $\bar{\Gamma}$ lies above $\Gamma$ and intersects $\Gamma$ only at the end points. From \eqref{eq:barcurveovercurve} and \eqref{eq:curvenormalization}, we find that $\bar{\Y}(s)>\Y(s)$ and $\X(s)>\bar{\X}(s)$. If $\bar{\X}(\bar{s})=\X(s)$ for some $\bar{s}\in(s_{l},s_{r})$, then $\bar{\X}(\bar{s})=\X(s)>\bar{\X}(s)$, so that $\bar{s}\geq s$ because $\bar{\X}$ is nondecreasing. This implies, since also $\bar{\Y}$ is nondecreasing, that $\Y(s)<\bar{\Y}(s)\leq\bar{\Y}(\bar{s})$. Hence, $\bar{\Gamma}$ is above $\Gamma$ except at the end points. The proof in the case when $\bar{\Gamma}$ is below $\Gamma$ is similar. For some constant $K>0$ that will be determined later, we have for almost every $X\in[X_{l},X_{r}]$, that
	\begin{align*}
	&e^{-K(\bar{\Y}(X)-X)}\bigg(x_{X}-\frac{1}{2}\bigg)^{2}(X,\bar{\Y}(X))-e^{-K(\Y(X)-X)}\bigg(x_{X}-\frac{1}{2}\bigg)^{2}(X,\Y(X))\\
	&=-K\int_{\Y(X)}^{\bar{\Y}(X)}e^{-K(Y-X)}\bigg(x_{X}-\frac{1}{2}\bigg)^{2}(X,Y)\,dY\\
	&\quad+\int_{\Y(X)}^{\bar{\Y}(X)}e^{-K(Y-X)}\bigg(2\bigg(x_{X}-\frac{1}{2}\bigg)x_{XY}\bigg)(X,Y)\,dY.
	\end{align*}
	By integrating over $[X_{l},X_{r}]$ and using \eqref{eq:gronwallxXest}, we get that
	\begin{align*}
	&\int_{\bar{\Gamma}}e^{-K(\bar{\Y}(X)-X)}\bigg(x_{X}-\frac{1}{2}\bigg)^{2}(X,\bar{\Y}(X))\,dX-\int_{\Gamma}e^{-K(\Y(X)-X)}\bigg(x_{X}-\frac{1}{2}\bigg)^{2}(X,\Y(X))\,dX\\
	&\leq-K\int_{X_{l}}^{X_{r}}\int_{\Y(X)}^{\bar{\Y}(X)}e^{-K(Y-X)}\bigg(x_{X}-\frac{1}{2}\bigg)^{2}(X,Y)\,dY\,dX\\
	&\quad+\kappa k_{1}\bigg(C+\frac{1}{2}\bigg) \int_{X_{l}}^{X_{r}}\int_{\Y(X)}^{\bar{\Y}(X)}e^{-K(Y-X)}\big(|Z_{X}^{a}|^{2}+|Z_{Y}^{a}|^{2}\big)(X,Y)\,dY\,dX.
	\end{align*}
	We treat the other components of $Z_{X}^{a}$ in the same way and obtain
	\begin{aalign}
	\label{eq:GronCurveStep2i}
	&\int_{\bar{\Gamma}}e^{-K(\bar{\Y}(X)-X)}|Z_{X}^{a}(X,\bar{\Y}(X))|^{2}\,dX-\int_{\Gamma}e^{-K(\Y(X)-X)}|Z_{X}^{a}(X,\Y(X))|^{2}\,dX\\
	&\leq-K\int_{X_{l}}^{X_{r}}\int_{\Y(X)}^{\bar{\Y}(X)}e^{-K(Y-X)}|Z_{X}^{a}(X,Y)|^{2}\,dY\,dX\\
	&\quad+M\int_{X_{l}}^{X_{r}}\int_{\Y(X)}^{\bar{\Y}(X)}e^{-K(Y-X)}\big(|Z_{X}^{a}|^{2}+|Z_{Y}^{a}|^{2}\big)(X,Y)\,dY\,dX,
	\end{aalign}
	where $M$ depends on $||(\bar{\X},\bar{\Y})||_{\C(\Omega)}$, $|||(Z,p,q)\bullet (\X,\Y)|||_{\G(\Omega)}$, $\kappa$ and $k_{1}$. Similarly, for $Z_{Y}^{a}$, we obtain 
	\begin{aalign}
	\label{eq:GronCurveStep2ii}
	&\int_{\bar{\Gamma}}e^{-K(Y-\bar{\X}(Y))}|Z_{Y}^{a}(\bar{\X}(Y),Y)|^{2}\,dY-\int_{\Gamma}e^{-K(Y-\X(Y))}|Z_{Y}^{a}(\X(Y),Y)|^{2}\,dY\\
	&\leq-K\int_{Y_{l}}^{Y_{r}}\int_{\bar{\X}(Y)}^{\X(Y)}e^{-K(Y-X)}|Z_{Y}^{a}(X,Y)|^{2}\,dX\,dY\\
	&\quad+M\int_{Y_{l}}^{Y_{r}}\int_{\bar{\X}(Y)}^{\X(Y)}e^{-K(Y-X)}\big(|Z_{X}^{a}|^{2}+|Z_{Y}^{a}|^{2}\big)(X,Y)\,dX\,dY.
	\end{aalign}
	We claim that the sets
	\begin{equation*}
	\N_{1}=\{(X,Y) \ | \ X_{l}<X<X_{r},\ \Y(X)<Y<\bar{\Y}(X)\}
	\end{equation*}
	and
	\begin{equation*}	
	\N_{2}=\{(X,Y) \ | \ Y_{l}<Y<Y_{r},\ \bar{\X}(Y)<X<\X(Y)\}
	\end{equation*}	
	are equal up to a set of zero measure. Let $(X,Y)\in \N_{1}$ and set $s_{1}=\X^{-1}(X)$, $s_{2}=\Y^{-1}(Y)$, $s_{3}=\bar{\Y}^{-1}(Y)$ and $s_{4}=\bar{\X}^{-1}(X)$. We have
	\begin{equation*}
	\Y(s_{1})=\Y(X)<Y=\Y(s_{2})=\bar{\Y}(s_{3})<\bar{\Y}(X)=\bar{\Y}(s_{4})
	\end{equation*}
	so that $s_{1}<s_{2}$ and $s_{3}<s_{4}$. It follows that 
	\begin{equation*}
	\bar{\X}(Y)=\bar{\X}(s_{3})\leq \bar{\X}(s_{4})=X=\X(s_{1})\leq \X(s_{2})=\X(Y).
	\end{equation*}
	Hence, $\N_{1}\subset \N_{2}$ up to a set of measure zero. Similarly, one proves the reverse inclusion. Now \eqref{eq:GronCurveStep2i} and \eqref{eq:GronCurveStep2ii} take the form
	\begin{aalign}
	\label{eq:gronwallZX}
	&\int_{\bar{\Gamma}}e^{-K(\bar{\Y}(X)-X)}|Z_{X}^{a}(X,\bar{\Y}(X))|^{2}\,dX-\int_{\Gamma}e^{-K(\Y(X)-X)}|Z_{X}^{a}(X,\Y(X))|^{2}\,dX\\
	&\leq-K\iint_{\N_{1}}e^{-K(Y-X)}|Z_{X}^{a}(X,Y)|^{2}\,dX\,dY\\
	&\quad+M\iint_{\N_{1}}e^{-K(Y-X)}\big(|Z_{X}^{a}|^{2}+|Z_{Y}^{a}|^{2}\big)(X,Y)\,dX\,dY
	\end{aalign}
	and 
	\begin{aalign}
	\label{eq:gronwallZY}
	&\int_{\bar{\Gamma}}e^{-K(Y-\bar{\X}(Y))}|Z_{Y}^{a}(\bar{\X}(Y),Y)|^{2}\,dY-\int_{\Gamma}e^{-K(Y-\X(Y))}|Z_{Y}^{a}(\X(Y),Y)|^{2}\,dY\\
	&\leq-K\iint_{\N_{1}}e^{-K(Y-X)}|Z_{Y}^{a}(X,Y)|^{2}\,dX\,dY\\
	&\quad+M\iint_{\N_{1}}e^{-K(Y-X)}\big(|Z_{X}^{a}|^{2}+|Z_{Y}^{a}|^{2}\big)(X,Y)\,dX\,dY.
	\end{aalign}
	A similar computation as above yields
	\begin{aalign}
	\label{eq:gronwallUX}
	&\int_{\bar{\Gamma}}e^{-K(\bar{\Y}(X)-X)}U^{2}(X,\bar{\Y}(X))\,dX-\int_{\Gamma}e^{-K(\Y(X)-X)}U^{2}(X,\Y(X))\,dX\\
	&=\iint_{\N_{1}}e^{-K(Y-X)}(-KU^{2}+2UU_{Y})(X,Y)\,dX\,dY\\
	&\leq\iint_{\N_{1}}e^{-K(Y-X)}(-KU^{2}+U^{2}+U_{Y}^{2})(X,Y)\,dX\,dY
	\end{aalign}
	and
	\begin{aalign}
	\label{eq:gronwallUY}
	&\int_{\bar{\Gamma}}e^{-K(Y-\bar{\X}(Y))}U^{2}(\bar{\X}(Y),Y)\,dY-\int_{\Gamma}e^{-K(Y-\X(Y))}U^{2}(\X(Y),Y)\,dY\\
	&\leq\iint_{\N_{1}}e^{-K(Y-X)}(-KU^{2}+U^{2}+U_{X}^{2})(X,Y)\,dX\,dY.
	\end{aalign}
	Furthermore, we have
	\begin{aalign}
	\label{eq:gronwallp}
	&\int_{\bar{\Gamma}}e^{-K(\bar{\Y}(X)-X)}p^{2}(X,\bar{\Y}(X))\,dX-\int_{\Gamma}e^{-K(\Y(X)-X)}p^{2}(X,\Y(X))\,dX\\
	&=-K\iint_{\N_{1}}e^{-K(Y-X)}p^{2}(X,Y)\,dX\,dY
	\end{aalign} 
	and
	\begin{aalign}
	\label{eq:gronwallq}
	&\int_{\bar{\Gamma}}e^{-K(Y-\bar{\X}(Y))}q^{2}(\bar{\X}(Y),Y)\,dY-\int_{\Gamma}e^{-K(Y-\X(Y))}q^{2}(\X(Y),Y)\,dY\\
	&=-K\iint_{\N_{1}}e^{-K(Y-X)}q^{2}(X,Y)\,dX\,dY.
	\end{aalign}
	By combining \eqref{eq:gronwallZX}-\eqref{eq:gronwallq}, we obtain
	\begin{align*}
	&\int_{\bar{\Gamma}}e^{-K(\bar{\Y}(X)-X)}\bigg(\frac{1}{2}U^{2}+|Z_{X}^{a}|^{2}+p^{2}\bigg)(X,\bar{\Y}(X))\,dX\\
	&\quad+\int_{\bar{\Gamma}}e^{-K(Y-\bar{\X}(Y))}\bigg(\frac{1}{2}U^{2}+|Z_{Y}^{a}|^{2}+q^{2}\bigg)(\bar{\X}(Y),Y)\,dY\\
	&\quad-\int_{\Gamma}e^{-K(\Y(X)-X)}\bigg(\frac{1}{2}U^{2}+|Z_{X}^{a}|^{2}+p^{2}\bigg)(X,\Y(X))\,dX\\
	&\quad-\int_{\Gamma}e^{-K(Y-\X(Y))}\bigg(\frac{1}{2}U^{2}+|Z_{Y}^{a}|^{2}+q^{2}\bigg)(\X(Y),Y)\,dY\\
	&\leq\iint_{\N_{1}}e^{-K(Y-X)}\bigg(-KU^{2}+U^{2}+\frac{1}{2}U_{Y}^{2}+\frac{1}{2}U_{X}^{2}-K|Z_{X}^{a}|^{2}-K|Z_{Y}^{a}|^{2}\\
	&\hspace{103pt}+2M|Z_{X}^{a}|^{2}+2M|Z_{Y}^{a}|^{2}-Kp^{2}-Kq^{2}\bigg)(X,Y)\,dX\,dY\\
	&\leq(2M+1-K)\iint_{\N_{1}}e^{-K(Y-X)}(U^{2}+|Z_{X}^{a}|^{2}+|Z_{Y}^{a}|^{2}+p^{2}+q^{2})(X,Y)\,dX\,dY.
	\end{align*}
	By choosing $K$ so large that the right-hand side is negative and get that
	\begin{align*}
	&e^{-K||(\bar{\X},\bar{\Y})||_{\C(\Omega)}}\int_{\bar{\Gamma}}\bigg( \frac{1}{2}U^{2}\,(dX+dY)+|Z_{X}^{a}|^{2}\,dX+|Z_{Y}^{a}|^{2}\,dY+p^{2}\,dX+q^{2}\,dY \bigg)\\
	&\leq e^{K||(\X,\Y)||_{\C(\Omega)}}\int_{\Gamma}\bigg( \frac{1}{2}U^{2}\,(dX+dY)+|Z_{X}^{a}|^{2}\,dX+|Z_{Y}^{a}|^{2}\,dY+p^{2}\,dX+q^{2}\,dY \bigg)
	\end{align*}
	and \eqref{eq:GronCurveProveThis} follows.
	
	\textbf{Step 3.} Given any rectangle $\Omega=[X_{l},X_{r}]\times [Y_{l},Y_{r}]$, we consider a sequence of rectangular domains $\Omega_{i}=[X_{i},X_{i+1}]\times [Y_{i},Y_{i+1}]$ for $i=0,\dots,N-1$ such that $X_{i}$ and $Y_{i}$ are increasing, $(X_{0}, Y_{0})=(X_{l}, Y_{l})$, $(X_{N}, Y_{N})=(X_{r}, Y_{r})$, and $(\X,\Y),(\bar{\X},\bar{\Y})\in \C(\Omega_{i})$ for $s\in [s_{i},s_{i+1}]$. We construct the sequence of rectangles such that either $s_{i+1}-s_{i}\leq \delta$ (and Step 1 applies) or $\bar{\Y}(s)-\bar{\X}(s)>\Y(s)-\X(s)$ or $\bar{\Y}(s)-\bar{\X}(s)<\Y(s)-\X(s)$ for $s\in (s_{i},s_{i+1})$ (and Step 2 applies). Then
	\begin{align*}
	||(Z,p,q)\bullet (\bar{\X},\bar{\Y})||_{\G(\Omega)}^{2}
	&=\sum_{i=0}^{N-1}||(Z,p,q)\bullet (\bar{\X},\bar{\Y})||_{\G(\Omega_{i})}^{2}\\
	&\leq \sum_{i=0}^{N-1}C||(Z,p,q)\bullet (\X,\Y)||_{\G(\Omega_{i})}^{2}\\
	&=C||(Z,p,q)\bullet (\X,\Y)||_{\G(\Omega)}^{2}.
	\end{align*}
\end{proof}

\begin{lemma}[Stability in $L^{2}$]
	Let $\Omega=[X_{l},X_{r}]\times [Y_{l},Y_{r}]$ and assume that $(Z,p,q)$, $(\tilde{Z},\tilde{p},\tilde{q})\in \H(\Omega)$ and $(\X,\Y)\in \C(\Omega)$. Then, for any $(\bar{\X},\bar{\Y})\in \C(\Omega)$, we have
	\begin{equation*}
	||(Z-\tilde{Z},p-\tilde{p},q-\tilde{q})\bullet (\bar{\X},\bar{\Y})||_{\G(\Omega)}\leq D||(Z-\tilde{Z},p-\tilde{p},q-\tilde{q})\bullet (\X,\Y)||_{\G(\Omega)},
	\end{equation*}
	where $D=D(||(\bar{\X},\bar{\Y})||_{\C(\Omega)},|||(Z,p,q)\bullet (\X,\Y)|||_{\G(\Omega)},|||(\tilde{Z},\tilde{p},\tilde{q})\bullet (\X,\Y)|||_{\G(\Omega)})$ is an increasing function with respect to all its arguments.
\end{lemma}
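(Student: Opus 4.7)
My plan is to follow the three-step architecture of Lemma~\ref{lemma:groncurve} almost line for line, with the observation that the difference $(Z-\tilde{Z},p-\tilde{p},q-\tilde{q})$ satisfies a linearized version of the governing system whose coefficients are controlled uniformly on $\Omega$ by Lemma~\ref{lemma:stripest} applied to both $(Z,p,q)$ and $(\tilde{Z},\tilde{p},\tilde{q})$. Since the $a$--shifts in \eqref{eq:atriplet} and \eqref{eq:decayinfty} cancel in the difference, the analogue of \eqref{eq:Gnormequivalent} reads
\begin{align*}
&\norm{(Z-\tilde Z,p-\tilde p,q-\tilde q)\bullet(\bar\X,\bar\Y)}_{\G(\Omega)}^2\\
&=\int_{\bar\Gamma}\Big(\tfrac12(U-\tilde U)^2(dX+dY)+\abs{Z_X-\tilde Z_X}^2\,dX+\abs{Z_Y-\tilde Z_Y}^2\,dY\\
&\qquad+(p-\tilde p)^2\,dX+(q-\tilde q)^2\,dY\Big),
\end{align*}
so the task reduces to showing that this curve integral is controlled by its counterpart over $\Gamma$.

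The key algebraic input is the identity, valid from the bilinearity and symmetry of $F$,
\begin{equation*}
F(Z)(Z_X,Z_Y)-F(\tilde Z)(\tilde Z_X,\tilde Z_Y)=F(Z)(Z_X-\tilde Z_X,Z_Y)+F(Z)(\tilde Z_X,Z_Y-\tilde Z_Y)+[F(Z)-F(\tilde Z)](\tilde Z_X,\tilde Z_Y),
\end{equation*}
where the last bracket, written out via \eqref{eq:goveq}, depends only on $Z_3-\tilde Z_3=U-\tilde U$ through the differences $c(Z_3)-c(\tilde Z_3)$, $c'(Z_3)-c(\tilde Z_3)$, etc., which are Lipschitz in $U-\tilde U$ by \eqref{eq:cassumption}--\eqref{eq:cderassumption}. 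Combined with the uniform $L^\infty$ bounds on $Z,\tilde Z,Z_X,\tilde Z_X,Z_Y,\tilde Z_Y$ from Lemma~\ref{lemma:stripest}, this yields the pointwise estimate
\begin{equation*}
\abs{F(Z)(Z_X,Z_Y)-F(\tilde Z)(\tilde Z_X,\tilde Z_Y)}\leq C\bigl(\abs{Z-\tilde Z}+\abs{Z_X-\tilde Z_X}+\abs{Z_Y-\tilde Z_Y}\bigr),
\end{equation*}
with $C$ depending only on the quantities listed in the function $D$. The $p$ and $q$ parts are trivial: $(p-\tilde p)_Y=0$ and $(q-\tilde q)_X=0$ give exact identities along any curve, exactly as in \eqref{eq:GronCurveStep1iiip}--\eqref{eq:GronCurveStep1iiiq} and \eqref{eq:gronwallp}--\eqref{eq:gronwallq}.

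With these ingredients in hand, Step~1 of Lemma~\ref{lemma:groncurve} transfers: integrating $(Z-\tilde Z)_Y^2$ and $(Z-\tilde Z)_X^2$ in the $Y$ direction from $\Y(X)$ and applying the inequality above with Young's inequality produces the analogue of \eqref{eq:GronCurve2ndComp}, and taking the supremum over curves and absorbing $\delta A$ gives the result on small rectangles. For Step~2, one multiplies by the weight $e^{-K(Y-X)}$ and reproduces the computations \eqref{eq:gronwallZX}--\eqref{eq:gronwallUY} with $(Z,p,q)$ replaced by the difference: the cross-term $[F(Z)-F(\tilde Z)](\tilde Z_X,\tilde Z_Y)$ contributes an additional term of the form $M(U-\tilde U)^2$ on the right-hand side of \eqref{eq:gronwallZX}--\eqref{eq:gronwallZY}, which is harmless because the $U^2$ balance \eqref{eq:gronwallUX}--\eqref{eq:gronwallUY} (applied to $U-\tilde U$) already produces the matching negative $-K(U-\tilde U)^2$ when $K$ is chosen large. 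Finally, Step~3 is a pure combinatorial decomposition of $\Omega$ into rectangles on which $\bar\Gamma-\Gamma$ has a sign or is of small diameter, identical to the argument concluding Lemma~\ref{lemma:groncurve}.

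The main obstacle I anticipate is the coupling introduced by $[F(Z)-F(\tilde Z)](\tilde Z_X,\tilde Z_Y)$, which brings the zeroth-order quantity $U-\tilde U$ into the equations for the derivative differences and thus requires the $U$-balance to be run simultaneously, with $K$ tuned so that the single parameter dominates the contributions from all three balances. This is precisely the reason the $\frac12 U^2(dX+dY)$ term is already present in the norm used in Lemma~\ref{lemma:groncurve}, and carrying the corresponding $\frac12(U-\tilde U)^2(dX+dY)$ through the same exponential-weight argument closes the loop without any new difficulty beyond careful bookkeeping of the constants, all of which depend monotonically on $\norm{(\bar\X,\bar\Y)}_{\C(\Omega)}$, $\tnorm{(Z,p,q)\bullet(\X,\Y)}_{\G(\Omega)}$, $\tnorm{(\tilde Z,\tilde p,\tilde q)\bullet(\X,\Y)}_{\G(\Omega)}$, $\kappa$, $k_1$ and $k_2$.
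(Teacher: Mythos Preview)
Your proposal is correct and follows essentially the same three-step architecture as the paper's proof, including the same exponential-weight argument in Step~2 and the same handling of the coupling via the $(U-\tilde U)^2$ term. The only cosmetic difference is that the paper works component-by-component (writing out $x_{XY}-\tilde x_{XY}$ explicitly, etc.) rather than invoking the abstract bilinearity of $F$, and in your pointwise estimate $\abs{Z-\tilde Z}$ should read $\abs{U-\tilde U}$ since $F(Z)$ depends only on $Z_3$---a point you yourself note one line earlier.
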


\begin{proof}
	As in the proof of Lemma \ref{lemma:groncurve}, we can consider the forms $(U-\tilde{U})^{2}\,dX$, $(U-\tilde{U})^{2}\,dY$, $|Z_{X}^{a}-\tilde{Z}_{X}^{a}|^{2}\,dX$, $|Z_{Y}^{a}-\tilde{Z}_{Y}^{a}|^{2}\,dY$, $(p-\tilde{p})^{2}\,dX$ and $(q-\tilde{q})^{2}\,dY$. For any curve $\bar{\Gamma}=(\bar{\X},\bar{\Y})\in \C(\Omega)$, we find
	\begin{align*}
	&||(Z-\tilde{Z},p-\tilde{p},q-\tilde{q})\bullet (\bar{\X},\bar{\Y})||_{\G(\Omega)}^{2}\\
	&=\int_{\bar{\Gamma}}\bigg( \frac{1}{2}(U-\tilde{U})^{2}\,(dX+dY)+|Z_{X}^{a}-\tilde{Z}_{X}^{a}|^{2}\,dX\\
	&\hspace{37pt}+|Z_{Y}^{a}-\tilde{Z}_{Y}^{a}|^{2}\,dY+(p-\tilde{p})^{2}\,dX+(q-\tilde{q})^{2}\,dY \bigg).
	\end{align*}  	
	Thus, we want to prove that
	\begin{aalign}
	\label{eq:GronCurveProveThis2}
	&\int_{\bar{\Gamma}}\bigg(\frac{1}{2}(U-\tilde{U})^{2}\,(dX+dY)+|Z_{X}^{a}-\tilde{Z}_{X}^{a}|^{2}\,dX\\
	&\hspace{23pt}+|Z_{Y}^{a}-\tilde{Z}_{Y}^{a}|^{2}\,dY+(p-\tilde{p})^{2}\,dX+(q-\tilde{q})^{2}\,dY \bigg)\\
	&\leq D\int_{\Gamma}\bigg( \frac{1}{2}(U-\tilde{U})^{2}\,(dX+dY)+|Z_{X}^{a}-\tilde{Z}_{X}^{a}|^{2}\,dX\\
	&\hspace{49pt}+|Z_{Y}^{a}-\tilde{Z}_{Y}^{a}|^{2}\,dY+(p-\tilde{p})^{2}\,dX+(q-\tilde{q})^{2}\,dY \bigg).
	\end{aalign} 	
	We decompose the proof into three steps.
	
	\textbf{Step 1.} We prove that \eqref{eq:GronCurveProveThis2} holds for small domains. We claim that there exist constants $\delta$ and $D$, which depend on 
	$||(\bar{\X},\bar{\Y})||_{\C(\Omega)}$, $|||(Z,p,q)\bullet (\X,\Y)|||_{\G(\Omega)}$ and $|||(\tilde{Z},\tilde{p},\tilde{q})\bullet (\X,\Y)|||_{\G(\Omega)}$, such that for any rectangular domain $\Omega=[X_{l},X_{r}]\times [Y_{l},Y_{r}]$ with $s_{r}-s_{l}\leq \delta$, \eqref{eq:GronCurveProveThis2} holds. By Lemma \ref{lemma:stripest}, we have
	\begin{aalign}
	\label{eq:CandCtilde}
	||U||_{\Linf(\Omega)}+||Z_{X}^{a}||_{\Linf(\Omega)}+||Z_{Y}^{a}||_{\Linf(\Omega)}+||p||_{\Linf(\Omega)}+||q||_{\Linf(\Omega)}&\leq C,\\
	||\tilde{U}||_{\Linf(\Omega)}+||\tilde{Z}_{X}^{a}||_{\Linf(\Omega)}+||\tilde{Z}_{Y}^{a}||_{\Linf(\Omega)}+||\tilde{p}||_{\Linf(\Omega)}+||\tilde{q}||_{\Linf(\Omega)}&\leq \tilde{C}
	\end{aalign}
	where $C=C(\Omega,|||(Z,p,q)\bullet (\X,\Y)|||_{\G(\Omega)})$ and $\tilde{C}=\tilde{C}(\Omega,|||(\tilde{Z},\tilde{p},\tilde{q})\bullet (\X,\Y)|||_{\G(\Omega)})$ are increasing with respect to the second argument. Let
	\begin{align*}
	A&=\sup_{\bar{\Gamma}}\int_{\bar{\Gamma}}\bigg(\frac{1}{2}(U-\tilde{U})^{2}\,(dX+dY)+|Z_{X}^{a}-\tilde{Z}_{X}^{a}|^{2}\,dX\\
	&\hspace{48pt}+|Z_{Y}^{a}-\tilde{Z}_{Y}^{a}|^{2}\,dY+(p-\tilde{p})^{2}\,dX+(q-\tilde{q})^{2}\,dY \bigg),
	\end{align*}
	where the supremum is taken over all $\bar{\Gamma}=(\bar{\X},\bar{\Y})\in \C(\Omega)$. We have
	\begin{aalign}
	\label{eq:L2stabilitystep1xXdiff}
	(x_{X}-\tilde{x}_{X})^{2}(X,Y)&=
	(x_{X}-\tilde{x}_{X})^{2}(X,\Y(X))\\
	&\quad+\int_{\Y(X)}^{Y}2(x_{X}-\tilde{x}_{X})(x_{XY}-\tilde{x}_{XY})(X,\tilde{Y})\,d\tilde{Y}.
	\end{aalign}
	Since $(Z,p,q)$ is a solution of \eqref{eq:goveq}, we have for almost every $X\in [X_{l},X_{r}]$ and all $Y\in [Y_{l},Y_{r}]$, that		
	\begin{align*}
	x_{XY}-\tilde{x}_{XY}&=\frac{c'(U)}{2c(U)}(U_{Y}x_{X}+U_{X}x_{Y})-\frac{c'(\tilde{U})}{2c(\tilde{U})}(\tilde{U}_{Y}\tilde{x}_{X}+\tilde{U}_{X}\tilde{x}_{Y})\\
	&=\frac{c'(U)}{2c(U)}(U_{Y}x_{X}+U_{X}x_{Y}-\tilde{U}_{Y}\tilde{x}_{X}-\tilde{U}_{X}\tilde{x}_{Y})\\
	&\quad+\bigg(\frac{c'(U)}{2c(U)}-\frac{c'(\tilde{U})}{2c(\tilde{U})}\bigg)(\tilde{U}_{Y}\tilde{x}_{X}+\tilde{U}_{X}\tilde{x}_{Y})\\
	&=\frac{c'(U)}{2c(U)}\bigg(U_{Y}(x_{X}-\tilde{x}_{X})+\bigg(\bigg(\tilde{x}_{X}-\frac{1}{2}\bigg)+\frac{1}{2}\bigg)(U_{Y}-\tilde{U}_{Y})\\
	&\hspace{55pt}+U_{X}(x_{Y}-\tilde{x}_{Y})+\bigg(\bigg(\tilde{x}_{Y}-\frac{1}{2}\bigg)+\frac{1}{2}\bigg)(U_{X}-\tilde{U}_{X})\bigg)\\
	&\quad+\frac{1}{2}\bigg(\tilde{U}_{Y}\bigg(\bigg(\tilde{x}_{X}-\frac{1}{2}\bigg)+\frac{1}{2}\bigg)+\tilde{U}_{X}\bigg(\bigg(\tilde{x}_{Y}-\frac{1}{2}\bigg)+\frac{1}{2}\bigg)\bigg)\\
	&\hspace{24pt}\times \int_{\tilde{U}}^{U}\bigg(\frac{c''(u)c(u)-c'(u)^{2}}{c(u)^{2}}\bigg)\,du.
	\end{align*}
	Using \eqref{eq:cassumption}, \eqref{eq:cderassumption} and \eqref{eq:CandCtilde}, this implies that
	\begin{align*}
	&\big|2(x_{X}-\tilde{x}_{X})(x_{XY}-\tilde{x}_{XY})\big|\\
	&\leq\kappa k_{1}\bigg(||Z_{Y}^{a}||_{\Linf(\Omega)}(x_{X}-\tilde{x}_{X})^{2}+\bigg(||\tilde{Z}_{X}^{a}||_{\Linf(\Omega)}+\frac{1}{2}\bigg)|U_{Y}-\tilde{U}_{Y}||x_{X}-\tilde{x}_{X}|\\
	&\hspace{35pt}+||Z_{X}^{a}||_{\Linf(\Omega)}|x_{Y}-\tilde{x}_{Y}||x_{X}-\tilde{x}_{X}|+\bigg(||\tilde{Z}_{Y}^{a}||_{\Linf(\Omega)}+\frac{1}{2}\bigg)|U_{X}-\tilde{U}_{X}||x_{X}-\tilde{x}_{X}|\bigg)\\
	&\quad+\bigg(||\tilde{Z}_{Y}^{a}||_{\Linf(\Omega)}\bigg(||\tilde{Z}_{X}^{a}||_{\Linf(\Omega)}+\frac{1}{2}\bigg)+||\tilde{Z}_{X}^{a}||_{\Linf(\Omega)}\bigg(||\tilde{Z}_{Y}^{a}||_{\Linf(\Omega)}+\frac{1}{2}\bigg)\bigg)\\
	&\hspace{20pt}\times(\kappa k_{2}+\kappa^{2}k_{1}^{2})|U-\tilde{U}||x_{X}-\tilde{x}_{X}|\\
	&\leq\kappa k_{1}\bigg(C+\tilde{C}+\frac{1}{2}\bigg)\bigg((x_{X}-\tilde{x}_{X})^{2}+|U_{Y}-\tilde{U}_{Y}||x_{X}-\tilde{x}_{X}|\\
	&\hspace{115pt}+|x_{Y}-\tilde{x}_{Y}||x_{X}-\tilde{x}_{X}|+|U_{X}-\tilde{U}_{X}||x_{X}-\tilde{x}_{X}|\bigg)\\
	&\quad+2\bigg(\tilde{C}+\frac{1}{2}\bigg)^{2}(\kappa k_{2}+\kappa^{2}k_{1}^{2})|U-\tilde{U}||x_{X}-\tilde{x}_{X}|\\
	&\leq\kappa k_{1}\bigg(C+\tilde{C}+\frac{1}{2}\bigg)\bigg((x_{X}-\tilde{x}_{X})^{2}+\frac{1}{2}(U_{Y}-\tilde{U}_{Y})^{2}+\frac{1}{2}(x_{X}-\tilde{x}_{X})^{2}\\
	&\hspace{96pt}+\frac{1}{2}(x_{Y}-\tilde{x}_{Y})^{2}+\frac{1}{2}(x_{X}-\tilde{x}_{X})^{2}+\frac{1}{2}(U_{X}-\tilde{U}_{X})^{2}+\frac{1}{2}(x_{X}-\tilde{x}_{X})^{2}\bigg)\\
	&\quad+2\bigg(\tilde{C}+\frac{1}{2}\bigg)^{2}(\kappa k_{2}+\kappa^{2}k_{1}^{2})\bigg(\frac{1}{2}(U-\tilde{U})^{2}+\frac{1}{2}(x_{X}-\tilde{x}_{X})^{2}\bigg)\\
	&\leq\frac{5}{2}\kappa k_{1}\bigg(C+\tilde{C}+\frac{1}{2}\bigg)\bigg((x_{X}-\tilde{x}_{X})^{2}+(U_{Y}-\tilde{U}_{Y})^{2}+(x_{Y}-\tilde{x}_{Y})^{2}+(U_{X}-\tilde{U}_{X})^{2}\bigg)\\
	&\quad+\bigg(\tilde{C}+\frac{1}{2}\bigg)^{2}(\kappa k_{2}+\kappa^{2}k_{1}^{2})\bigg((U-\tilde{U})^{2}+(x_{X}-\tilde{x}_{X})^{2}\bigg). 
	\end{align*}
	We set
	\begin{equation*}
	m=\max\bigg\{\frac{5}{2}\kappa k_{1}\bigg(C+\tilde{C}+\frac{1}{2}\bigg),\bigg(\tilde{C}+\frac{1}{2}\bigg)^{2}(\kappa k_{2}+\kappa^{2}k_{1}^{2})\bigg\}
	\end{equation*}
	and get
	\begin{aalign}
	\label{eq:L2stabilityleq2m}
	&\big|2(x_{X}-\tilde{x}_{X})(x_{XY}-\tilde{x}_{XY})\big|\\
	&\leq m\big((U_{Y}-\tilde{U}_{Y})^{2}+(x_{Y}-\tilde{x}_{Y})^{2}+(U_{X}-\tilde{U}_{X})^{2}+(U-\tilde{U})^{2}\big)\\
	&\quad+2m(x_{X}-\tilde{x}_{X})^{2}\\
	&\leq 2m\big((U-\tilde{U})^{2}+|Z_{X}^{a}-\tilde{Z}_{X}^{a}|^{2}+|Z_{Y}^{a}-\tilde{Z}_{Y}^{a}|^{2}\big).
	\end{aalign}
	We insert this into \eqref{eq:L2stabilitystep1xXdiff} and get
	\begin{align*}
	(x_{X}-\tilde{x}_{X})^{2}(X,Y)&\leq
	(x_{X}-\tilde{x}_{X})^{2}(X,\Y(X))\\
	&\quad+2m\int_{Y_{l}}^{Y_{r}}\big((U-\tilde{U})^{2}+|Z_{X}^{a}-\tilde{Z}_{X}^{a}|^{2}+|Z_{Y}^{a}-\tilde{Z}_{Y}^{a}|^{2}\big)(X,\tilde{Y})\,d\tilde{Y}
	\end{align*}	
	which, after integrating over $[X_{l},X_{r}]$, yields
	\begin{align*}
	\int_{\bar{\Gamma}}(x_{X}-\tilde{x}_{X})^{2}\,dX&\leq
	\int_{\Gamma}(x_{X}-\tilde{x}_{X})^{2}\,dX\\
	&\quad+2m\int_{X_{l}}^{X_{r}}\int_{Y_{l}}^{Y_{r}}\big((U-\tilde{U})^{2}+|Z_{X}^{a}-\tilde{Z}_{X}^{a}|^{2}+|Z_{Y}^{a}-\tilde{Z}_{Y}^{a}|^{2}\big)\,dY\,dX.
	\end{align*}
	For any $Y\in [Y_{l},Y_{r}]$, $\int_{X_{l}}^{X_{r}}\big(\frac{1}{2}(U-\tilde{U})^{2}+|Z_{X}^{a}-\tilde{Z}_{X}^{a}|^{2}\big)(X,Y)\,dX$ can be seen as part of the integral of the form $\big(\frac{1}{2}(U-\tilde{U})^{2}+|Z_{X}^{a}-\tilde{Z}_{X}^{a}|^{2}\big)\,dX$ on the piecewise linear path going through the points $(X_{l},Y_{l})$, $(X_{l},Y)$, $(X_{r},Y)$ and $(X_{r},Y_{r})$. This implies that $\int_{X_{l}}^{X_{r}}\big(\frac{1}{2}(U-\tilde{U})^{2}+|Z_{X}^{a}-\tilde{Z}_{X}^{a}|^{2}\big)(X,Y)\,dX\leq A$. Similarly, for any $X\in [X_{l},X_{r}]$, we get $\int_{Y_{l}}^{Y_{r}}\big(\frac{1}{2}(U-\tilde{U})^{2}+|Z_{Y}^{a}-\tilde{Z}_{Y}^{a}|^{2}\big)(X,Y)\,dY\leq A$. Hence,
	\begin{align*}
	\int_{\bar{\Gamma}}\big(Z_{2,X}^{a}-\tilde{Z}_{2,X}^{a}\big)^{2}\,dX&\leq
	\int_{\Gamma}\big(Z_{2,X}^{a}-\tilde{Z}_{2,X}^{a}\big)^{2}\,dX+2mA(Y_{r}-Y_{l}+X_{r}-X_{l})\\
	&\leq \int_{\Gamma}\big(Z_{2,X}^{a}-\tilde{Z}_{2,X}^{a}\big)^{2}\,dX+4\delta mA
	\end{align*}
	because $Y_{r}-Y_{l}+X_{r}-X_{l}=2(s_{r}-s_{l})$. By treating the other components of $Z_{X}^{a}-\tilde{Z}_{X}^{a}$ and $Z_{Y}^{a}-\tilde{Z}_{Y}^{a}$ similarly, we get
	\begin{align}
	\label{eq:L2stabilityZXineq}
	\int_{\bar{\Gamma}}|Z_{X}^{a}-\tilde{Z}_{X}^{a}|^{2}\,dX&\leq \int_{\Gamma}|Z_{X}^{a}-\tilde{Z}_{X}^{a}|^{2}\,dX+MA\delta,\\
	\label{eq:L2stabilityZYineq}
	\int_{\bar{\Gamma}}|Z_{Y}^{a}-\tilde{Z}_{Y}^{a}|^{2}\,dY&\leq \int_{\Gamma}|Z_{Y}^{a}-\tilde{Z}_{Y}^{a}|^{2}\,dY+MA\delta,
	\end{align}
	where $M$ depends on $||(\bar{\X},\bar{\Y})||_{\C(\Omega)}$, $|||(Z,p,q)\bullet (\X,\Y)|||_{\G(\Omega)}$, $|||(\tilde{Z},\tilde{p},\tilde{q})\bullet (\X,\Y)|||_{\G(\Omega)}$, $\kappa$, $k_{1}$ and $k_{2}$. For almost every $X\in[X_{l},X_{r}]$ and all $Y\in[Y_{l},Y_{r}]$, we have $p_{Y}(X,Y)=0$ and $\tilde{p}_{Y}(X,Y)=0$, so that $p(X,Y)-\tilde{p}(X,Y)=p(X,\Y(X))-\tilde{p}(X,\Y(X))$. By squaring this expression and integrating over $[X_{l},X_{r}]$, we obtain
	\begin{equation}
	\label{eq:L2stabilitypineq}
	\int_{\bar{\Gamma}}(p-\tilde{p})^{2}\,dX=\int_{\Gamma}(p-\tilde{p})^{2}\,dX.
	\end{equation}
	Since $q_{X}(X,Y)=0$ and $\tilde{q}_{X}(X,Y)=0$ for almost every $Y\in[Y_{l},Y_{r}]$ and all $X\in[X_{l},X_{r}]$, we get $q(X,Y)-\tilde{q}(X,Y)=q(\X(Y),Y)-\tilde{q}(\X(Y),Y)$ which implies, after squaring and integrating over $[Y_{l},Y_{r}]$, that
	\begin{equation} 
	\label{eq:L2stabilityqineq}
	\int_{\bar{\Gamma}}(q-\tilde{q})^{2}\,dY=\int_{\Gamma}(q-\tilde{q})^{2}\,dY.
	\end{equation}
	We have
	\begin{align*}
	&(U-\tilde{U})^{2}(X,Y)\\
	&=(U-\tilde{U})^{2}(X,\Y(X))+2\int_{\Y(X)}^{Y}(U-\tilde{U})(U_{Y}-\tilde{U}_{Y})(X,\tilde{Y})\,d\tilde{Y}\\
	&\leq (U-\tilde{U})^{2}(X,\Y(X))+\int_{\Y(X)}^{Y}(U-\tilde{U})^{2}(X,\tilde{Y})\,d\tilde{Y}+\int_{\Y(X)}^{Y}(U_{Y}-\tilde{U}_{Y})^{2}(X,\tilde{Y})\,d\tilde{Y}.
	\end{align*}
	As above, it follows that
	\begin{equation}
	\label{eq:L2stabilityU1ineq}
	\int_{\bar{\Gamma}}(U-\tilde{U})^{2}\,dX\leq \int_{\Gamma}(U-\tilde{U})^{2}\,dX+2A\delta.
	\end{equation}
	Similarly, we obtain	
	\begin{equation}
	\label{eq:L2stabilityU2ineq}
	\int_{\bar{\Gamma}}(U-\tilde{U})^{2}\,dY\leq \int_{\Gamma}(U-\tilde{U})^{2}\,dY+2A\delta.
	\end{equation}
	By adding \eqref{eq:L2stabilityZXineq}-\eqref{eq:L2stabilityU2ineq}, we obtain
	\begin{align*}
	&\int_{\bar{\Gamma}}\bigg(\frac{1}{2}(U-\tilde{U})^{2}\,(dX+dY)+|Z_{X}^{a}-\tilde{Z}_{X}^{a}|^{2}\,dX\\
	&\hspace{23pt}+|Z_{Y}^{a}-\tilde{Z}_{Y}^{a}|^{2}\,dY+(p-\tilde{p})^{2}\,dX+(q-\tilde{q})^{2}\,dY \bigg)\\
	&\leq \int_{\Gamma}\bigg( \frac{1}{2}(U-\tilde{U})^{2}\,(dX+dY)+|Z_{X}^{a}-\tilde{Z}_{X}^{a}|^{2}\,dX\\
	&\hspace{37pt}+|Z_{Y}^{a}-\tilde{Z}_{Y}^{a}|^{2}\,dY+(p-\tilde{p})^{2}\,dX+(q-\tilde{q})^{2}\,dY \bigg)+2\delta MA+2\delta A
	\end{align*}	
	which yields, after taking the supremum over all curves $\bar{\Gamma}$,	
	\begin{align*}
	(1-2\delta M-2\delta)A&\leq \int_{\Gamma}\bigg( \frac{1}{2}(U-\tilde{U})^{2}\,(dX+dY)+|Z_{X}^{a}-\tilde{Z}_{X}^{a}|^{2}\,dX\\
	&\hspace{37pt}+|Z_{Y}^{a}-\tilde{Z}_{Y}^{a}|^{2}\,dY+(p-\tilde{p})^{2}\,dX+(q-\tilde{q})^{2}\,dY \bigg)
	\end{align*}
	and \eqref{eq:GronCurveProveThis2} follows.
	
	\textbf{Step 2.} For an arbitrarily large rectangular domain $\Omega=[X_{l},X_{r}]\times [Y_{l},Y_{r}]$, we now prove that \eqref{eq:GronCurveProveThis2} holds for curves $\bar{\Gamma}=(\bar{\X},\bar{\Y})\in \C(\Omega)$ such that
	\begin{equation*}
	\bar{\Y}(s)-\bar{\X}(s)>\Y(s)-\X(s) \quad \text{for all } s\in (s_{l},s_{r}), 
	\end{equation*}
	that is, the curve $\bar{\Gamma}$ lies above $\Gamma$ and intersects $\Gamma$ only at the end points, as in the proof of Lemma~\ref{lemma:groncurve}. The proof in the case when $\bar{\Gamma}$ is below $\Gamma$ is similar. For a constant $K>0$ that will be determined later, we have for almost every $X\in[X_{l},X_{r}]$, that
	\begin{align*}
	&e^{-K(\bar{\Y}(X)-X)}(x_{X}-\tilde{x}_{X})^{2}(X,\bar{\Y}(X))-e^{-K(\Y(X)-X)}(x_{X}-\tilde{x}_{X})^{2}(X,\Y(X))\\
	&=-K\int_{\Y(X)}^{\bar{\Y}(X)}e^{-K(Y-X)}(x_{X}-\tilde{x}_{X})^{2}(X,Y)\,dY\\
	&\quad+\int_{\Y(X)}^{\bar{\Y}(X)}e^{-K(Y-X)}(2(x_{X}-\tilde{x}_{X})(x_{XY}-\tilde{x}_{XY}))(X,Y)\,dY.
	\end{align*}
	We integrate over $[X_{l},X_{r}]$ and get, by using \eqref{eq:L2stabilityleq2m}, that
	\begin{align*}
	&\int_{\bar{\Gamma}}e^{-K(\bar{\Y}(X)-X)}(x_{X}-\tilde{x}_{X})^{2}(X,\bar{\Y}(X))\,dX-\int_{\Gamma}e^{-K(\Y(X)-X)}(x_{X}-\tilde{x}_{X})^{2}(X,\Y(X))\,dX\\
	&\leq-K\int_{X_{l}}^{X_{r}}\int_{\Y(X)}^{\bar{\Y}(X)}e^{-K(Y-X)}(x_{X}-\tilde{x}_{X})^{2}(X,Y)\,dY\,dX\\
	&\quad+2m\int_{X_{l}}^{X_{r}}\int_{\Y(X)}^{\bar{\Y}(X)}e^{-K(Y-X)}
	\big((U-\tilde{U})^{2}+|Z_{X}^{a}-\tilde{Z}_{X}^{a}|^{2}+|Z_{Y}^{a}-\tilde{Z}_{Y}^{a}|^{2}\big)(X,Y)
	\,dY\,dX.
	\end{align*} 
	By treating the other components of $Z_{X}^{a}-\tilde{Z}_{X}^{a}$ in the same way, we obtain
	\begin{align*}
	&\int_{\bar{\Gamma}}e^{-K(\bar{\Y}(X)-X)}|Z_{X}^{a}-\tilde{Z}_{X}^{a}|^{2}(X,\bar{\Y}(X))\,dX-\int_{\Gamma}e^{-K(\Y(X)-X)}|Z_{X}^{a}-\tilde{Z}_{X}^{a}|^{2}(X,\Y(X))\,dX\\
	&\leq-K\int_{X_{l}}^{X_{r}}\int_{\Y(X)}^{\bar{\Y}(X)}e^{-K(Y-X)}|Z_{X}^{a}-\tilde{Z}_{X}^{a}|^{2}(X,Y)\,dY\,dX\\
	&\quad+M\int_{X_{l}}^{X_{r}}\int_{\Y(X)}^{\bar{\Y}(X)}e^{-K(Y-X)}
	\big((U-\tilde{U})^{2}+|Z_{X}^{a}-\tilde{Z}_{X}^{a}|^{2}+|Z_{Y}^{a}-\tilde{Z}_{Y}^{a}|^{2}\big)(X,Y)
	\,dY\,dX
	\end{align*}
	where $M$ depends on $||(\bar{\X},\bar{\Y})||_{\C(\Omega)}$, $|||(Z,p,q)\bullet (\X,\Y)|||_{\G(\Omega)}$, $|||(\tilde{Z},\tilde{p},\tilde{q})\bullet (\X,\Y)|||_{\G(\Omega)}$, $\kappa$, $k_{1}$ and $k_{2}$. Similarly, for $Z_{Y}^{a}-\tilde{Z}_{Y}^{a}$, we get
	\begin{align*}
	&\int_{\bar{\Gamma}}e^{-K(Y-\bar{\X}(Y))}|Z_{Y}^{a}-\tilde{Z}_{Y}^{a}|^{2}(\bar{\X}(Y),Y)\,dY-\int_{\Gamma}e^{-K(Y-\X(Y))}|Z_{Y}^{a}-\tilde{Z}_{Y}^{a}|^{2}(\X(Y),Y)\,dY\\
	&\leq-K\int_{Y_{l}}^{Y_{r}}\int_{\bar{\X}(Y)}^{\X(Y)}e^{-K(Y-X)}|Z_{Y}^{a}-\tilde{Z}_{Y}^{a}|^{2}(X,Y)\,dX\,dY\\
	&\quad+M\int_{Y_{l}}^{Y_{r}}\int_{\bar{\X}(Y)}^{\X(Y)}e^{-K(Y-X)}
	\big((U-\tilde{U})^{2}+|Z_{X}^{a}-\tilde{Z}_{X}^{a}|^{2}+|Z_{Y}^{a}-\tilde{Z}_{Y}^{a}|^{2}\big)(X,Y)
	\,dX\,dY.
	\end{align*}
	In the proof of Lemma \ref{lemma:groncurve}, we showed that the sets
	\begin{equation*}
	\N_{1}=\{(X,Y) \ | \ X_{l}<X<X_{r},\ \Y(X)<Y<\bar{\Y}(X)\}
	\end{equation*}
	and
	\begin{equation*}	
	\N_{2}=\{(X,Y) \ | \ Y_{l}<Y<Y_{r},\ \bar{\X}(Y)<X<\X(Y)\}
	\end{equation*}	
	are equal up to a set of zero measure. Hence, we get
	\begin{aalign}
	\label{eq:stabilityL2expZX}
	&\int_{\bar{\Gamma}}e^{-K(\bar{\Y}(X)-X)}|Z_{X}^{a}-\tilde{Z}_{X}^{a}|^{2}(X,\bar{\Y}(X))\,dX\\
	&-\int_{\Gamma}e^{-K(\Y(X)-X)}|Z_{X}^{a}-\tilde{Z}_{X}^{a}|^{2}(X,\Y(X))\,dX\\
	&\leq-K\iint_{\N_{1}}e^{-K(Y-X)}|Z_{X}^{a}-\tilde{Z}_{X}^{a}|^{2}(X,Y)\,dX\,dY\\
	&\quad+M\iint_{\N_{1}}e^{-K(Y-X)}
	\big((U-\tilde{U})^{2}+|Z_{X}^{a}-\tilde{Z}_{X}^{a}|^{2}\\
	&\hspace{125pt}+|Z_{Y}^{a}-\tilde{Z}_{Y}^{a}|^{2}\big)(X,Y)
	\,dX\,dY
	\end{aalign}
	and
	\begin{aalign}
	\label{eq:stabilityL2expZY}
	&\int_{\bar{\Gamma}}e^{-K(Y-\bar{\X}(Y))}|Z_{Y}^{a}-\tilde{Z}_{Y}^{a}|^{2}(\bar{\X}(Y),Y)\,dY\\
	&-\int_{\Gamma}e^{-K(Y-\X(Y))}|Z_{Y}^{a}-\tilde{Z}_{Y}^{a}|^{2}(\X(Y),Y)\,dY\\
	&\leq-K\iint_{\N_{1}}e^{-K(Y-X)}|Z_{Y}^{a}-\tilde{Z}_{Y}^{a}|^{2}(X,Y)\,dX\,dY\\
	&\quad+M\iint_{\N_{1}}e^{-K(Y-X)}
	\big((U-\tilde{U})^{2}+|Z_{X}^{a}-\tilde{Z}_{X}^{a}|^{2}\\
	&\hspace{125pt}+|Z_{Y}^{a}-\tilde{Z}_{Y}^{a}|^{2}\big)(X,Y)
	\,dX\,dY.
	\end{aalign}
	A similar computation as above yields
	\begin{aalign}
	\label{eq:stabilityL2expUX}
	&\int_{\bar{\Gamma}}e^{-K(\bar{\Y}(X)-X)}(U-\tilde{U})^{2}(X,\bar{\Y}(X))\,dX\\
	&-\int_{\Gamma}e^{-K(\Y(X)-X)}(U-\tilde{U})^{2}(X,\Y(X))\,dX\\
	&=\iint_{\N_{1}}e^{-K(Y-X)}(-K(U-\tilde{U})^{2}+2(U-\tilde{U})(U_{Y}-\tilde{U}_{Y}))(X,Y)\,dX\,dY\\
	&\leq\iint_{\N_{1}}e^{-K(Y-X)}(-K(U-\tilde{U})^{2}+(U-\tilde{U})^{2}+(U_{Y}-\tilde{U}_{Y})^{2})(X,Y)\,dX\,dY
	\end{aalign}
	and
	\begin{aalign}
	\label{eq:stabilityL2expUY}
	&\int_{\bar{\Gamma}}e^{-K(Y-\bar{\X}(Y))}(U-\tilde{U})^{2}(\bar{\X}(Y),Y)\,dY\\
	&-\int_{\Gamma}e^{-K(Y-\X(Y))}(U-\tilde{U})^{2}(\X(Y),Y)\,dY\\
	&\leq\iint_{\N_{1}}e^{-K(Y-X)}(-K(U-\tilde{U})^{2}+(U-\tilde{U})^{2}+(U_{X}-\tilde{U}_{X})^{2})(X,Y)\,dX\,dY.
	\end{aalign}
	Furthermore, we have
	\begin{aalign}
	\label{eq:stabilityL2expp}
	&\int_{\bar{\Gamma}}e^{-K(\bar{\Y}(X)-X)}(p-\tilde{p})^{2}(X,\bar{\Y}(X))\,dX\\
	&-\int_{\Gamma}e^{-K(\Y(X)-X)}(p-\tilde{p})^{2}(X,\Y(X))\,dX\\
	&=-K\iint_{\N_{1}}e^{-K(Y-X)}(p-\tilde{p})^{2}(X,Y)\,dX\,dY
	\end{aalign} 
	and
	\begin{aalign}
	\label{eq:stabilityL2expq}
	&\int_{\bar{\Gamma}}e^{-K(Y-\bar{\X}(Y))}(q-\tilde{q})^{2}(\bar{\X}(Y),Y)\,dY\\
	&-\int_{\Gamma}e^{-K(Y-\X(Y))}(q-\tilde{q})^{2}(\X(Y),Y)\,dY\\
	&=-K\iint_{\N_{1}}e^{-K(Y-X)}(q-\tilde{q})^{2}(X,Y)\,dX\,dY.
	\end{aalign} 
	Combining \eqref{eq:stabilityL2expZX}-\eqref{eq:stabilityL2expq}, we obtain
	\begin{align*}
	&\int_{\bar{\Gamma}}e^{-K(\bar{\Y}(X)-X)}\bigg(\frac{1}{2}(U-\tilde{U})^{2}+|Z_{X}^{a}-\tilde{Z}_{X}^{a}|^{2}+(p-\tilde{p})^{2}\bigg)(X,\bar{\Y}(X))\,dX\\
	&\quad+\int_{\bar{\Gamma}}e^{-K(Y-\bar{\X}(Y))}\bigg(\frac{1}{2}(U-\tilde{U})^{2}+|Z_{Y}^{a}-\tilde{Z}_{Y}^{a}|^{2}+(q-\tilde{q})^{2}\bigg)(\bar{\X}(Y),Y)\,dY\\
	&\quad-\int_{\Gamma}e^{-K(\Y(X)-X)}\bigg(\frac{1}{2}(U-\tilde{U})^{2}+|Z_{X}^{a}-\tilde{Z}_{X}^{a}|^{2}+(p-\tilde{p})^{2}\bigg)(X,\Y(X))\,dX\\
	&\quad-\int_{\Gamma}e^{-K(Y-\X(Y))}\bigg(\frac{1}{2}(U-\tilde{U})^{2}+|Z_{Y}^{a}-\tilde{Z}_{Y}^{a}|^{2}+(q-\tilde{q})^{2}\bigg)(\X(Y),Y)\,dY\\
	&\leq\iint_{\N_{1}}e^{-K(Y-X)}\bigg(-K(U-\tilde{U})^{2}+(U-\tilde{U})^{2}+\frac{1}{2}(U_{Y}-\tilde{U}_{Y})^{2}\\
	&\hspace{102pt}+\frac{1}{2}(U_{X}-\tilde{U}_{X})^{2}-K|Z_{X}^{a}-\tilde{Z}_{X}^{a}|^{2}-K|Z_{Y}^{a}-\tilde{Z}_{Y}^{a}|^{2}\\
	&\hspace{102pt}+2M(U-\tilde{U})^{2}+2M|Z_{X}^{a}-\tilde{Z}_{X}^{a}|^{2}+2M|Z_{Y}^{a}-\tilde{Z}_{Y}^{a}|^{2}\\
	&\hspace{102pt}-K(p-\tilde{p})^{2}-K(q-\tilde{q})^{2}\bigg)(X,Y)\,dX\,dY\\
	&\leq(2M+1-K)\iint_{\N_{1}}e^{-K(Y-X)}\bigg((U-\tilde{U})^{2}+|Z_{X}^{a}-\tilde{Z}_{X}^{a}|^{2}+|Z_{Y}^{a}-\tilde{Z}_{Y}^{a}|^{2}\\
	&\hspace{175pt}+(p-\tilde{p})^{2}+(q-\tilde{q})^{2}\bigg)(X,Y)\,dX\,dY.
	\end{align*}
	We choose $K$ so large that the right-hand side becomes negative. This implies that
	\begin{align*}
	&e^{-K||(\bar{\X},\bar{\Y})||_{\C(\Omega)}}\int_{\bar{\Gamma}}\bigg(\frac{1}{2}(U-\tilde{U})^{2}\,(dX+dY)+|Z_{X}^{a}-\tilde{Z}_{X}^{a}|^{2}\,dX\\
	&\hspace{90pt}+|Z_{Y}^{a}-\tilde{Z}_{Y}^{a}|^{2}\,dY+(p-\tilde{p})^{2}\,dX+(q-\tilde{q})^{2}\,dY \bigg)\\
	&\leq e^{K||(\X,\Y)||_{\C(\Omega)}}\int_{\Gamma}\bigg( \frac{1}{2}(U-\tilde{U})^{2}\,(dX+dY)+|Z_{X}^{a}-\tilde{Z}_{X}^{a}|^{2}\,dX\\
	&\hspace{100pt}+|Z_{Y}^{a}-\tilde{Z}_{Y}^{a}|^{2}\,dY+(p-\tilde{p})^{2}\,dX+(q-\tilde{q})^{2}\,dY \bigg)
	\end{align*}	
	and \eqref{eq:GronCurveProveThis2} follows.	
	
	\textbf{Step 3.} Given any rectangle $\Omega=[X_{l},X_{r}]\times [Y_{l},Y_{r}]$, we consider a sequence of rectangular domains $\Omega_{i}=[X_{i},X_{i+1}]\times [Y_{i},Y_{i+1}]$ for $i=0,\dots,N-1$ such that $X_{i}$ and $Y_{i}$ are increasing, $(X_{0}, Y_{0})=(X_{l}, Y_{l})$, $(X_{N}, Y_{N})=(X_{r}, Y_r)$, and $(\X,\Y),(\bar{\X},\bar{\Y})\in \C(\Omega_{i})$ for $s\in [s_{i},s_{i+1}]$. We construct the sequence of rectangles such that either $s_{i+1}-s_{i}\leq \delta$ (and Step 1 applies) or $\bar{\Y}(s)-\bar{\X}(s)>\Y(s)-\X(s)$ or $\bar{\Y}(s)-\bar{\X}(s)<\Y(s)-\X(s)$ for $s\in (s_{i},s_{i+1})$ (and Step 2 applies). Then
	\begin{align*}
	||(Z-\tilde{Z},p-\tilde{p},q-\tilde{q})\bullet (\bar{\X},\bar{\Y})||_{\G(\Omega)}^{2}
	&=\sum_{i=0}^{N-1}||(Z-\tilde{Z},p-\tilde{p},q-\tilde{q})\bullet (\bar{\X},\bar{\Y})||_{\G(\Omega_{i})}^{2}\\
	&\leq \sum_{i=0}^{N-1}D||(Z-\tilde{Z},p-\tilde{p},q-\tilde{q})\bullet (\X,\Y)||_{\G(\Omega_{i})}^{2}\\
	&=D||(Z-\tilde{Z},p-\tilde{p},q-\tilde{q})\bullet (\X,\Y)||_{\G(\Omega)}^{2}.
	\end{align*}	
\end{proof}

\subsection{Existence of Global Solutions in $\H$}
%%%%%%%%%%%%%%%%%%%%%%%%%%%%%%%%%%%%%%%%%%%%%
%        Existence of Global Solutions
%%%%%%%%%%%%%%%%%%%%%%%%%%%%%%%%%%%%%%%%%%%%%

\begin{definition}[Global solutions]
	\label{def:globalsolutions}	
	Let $\H$ be the set of all functions $(Z,p,q)$ such that
	\begin{enumerate}
		\item[(i)] $(Z,p,q)\in \H(\Omega)$ for all rectangular domains $\Omega$;
		\item[(ii)] there exists a curve $(\X,\Y)\in \C$ such that $(Z,p,q)\bullet (\X,\Y)\in \G$. 
	\end{enumerate}
\end{definition}

The following lemma shows that condition (ii) does not depend on the particular curve for which it holds. In particular, we can replace this condition by the requirement that $(Z,p,q)\bullet (\X_{d},\Y_{d})\in \G$ for the diagonal, which is given by $\X_{d}(s)=\Y_{d}(s)=s$.

\begin{lemma}
	\label{lemma:curveind}
	Given $(Z,p,q)\in \H$, we have $(Z,p,q)\bullet (\X,\Y)\in \G$ for any curve $(\X,\Y)\in \C$. Moreover, the limit $\displaystyle\lim_{s\rightarrow \infty}J(\X(s),\Y(s))$ is independent of the curve $(\X,\Y)\in \C$. 
\end{lemma}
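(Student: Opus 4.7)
The plan is to verify each defining property of $\G$ for $\Theta = (Z,p,q) \bullet (\X,\Y)$, using the hypothesis that there exists some reference curve $(\X_0,\Y_0) \in \C$ with $\Theta_0 = (Z,p,q)\bullet(\X_0,\Y_0) \in \G$. The pointwise relations \eqref{eq:setGcomp}--\eqref{eq:setGrel4} for $\Theta$ are just the restrictions to the curve $(\X,\Y)$ of the identities \eqref{eq:setH1}--\eqref{eq:setH3} satisfied on every rectangle by $(Z,p,q) \in \H(\Omega)$; in particular, \eqref{eq:setGcomp} is the chain rule for the Lipschitz function $Z \circ (\X,\Y)$. So the work lies in verifying $\norm{\Theta}_\G < \infty$, $\tnorm{\Theta}_\G < \infty$, and the decay \eqref{eq:setGrel5}, and in establishing curve-independence of $\lim_{s\to\infty} J(\X(s),\Y(s))$.

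For $\tnorm{\Theta}_\G$ I would exhaust $\mathbb{R}^2$ by rectangles $\Omega_n$ containing ever larger portions of both curves, and invoke Lemma \ref{lemma:stripest}~(iii). The resulting bound $\tnorm{\Theta}_{\G(\Omega_n)} \leq C_1(\norm{(\X,\Y)}_{\C(\Omega_n)}, \tnorm{\Theta_0}_{\G(\Omega_n)})$ is uniform in $n$ because both arguments are controlled by the global $\norm{\X-\id}_{L^\infty} + \norm{\Y-\id}_{L^\infty}$ and $\tnorm{\Theta_0}_\G$, each finite.

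The $L^2$ bound is the heart of the argument and I would derive it from Lemma \ref{lemma:groncurve}. The technical obstacle is that this lemma compares two curves that share corners of the same $\C(\Omega)$, which the originals do not. My plan is, for each $L>0$, to set
\[
    X_l = \min(\X(-L),\X_0(-L)), \quad X_r = \max(\X(L),\X_0(L))
\]
and analogous $Y_l, Y_r$, and to form $\Omega_L = [X_l, X_r] \times [Y_l, Y_r]$. On this rectangle, I construct modified curves $(\tilde\X, \tilde\Y), (\tilde\X_0, \tilde\Y_0) \in \C(\Omega_L)$ that coincide with $(\X,\Y)$ and $(\X_0,\Y_0)$ respectively on $[-L,L]$ and are joined to the corners via linear segments on two short parameter intervals at the ends. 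Linear interpolation between endpoints satisfying $\X + \Y = 2s$ preserves the normalization \eqref{eq:initialcurvenormalization}, and the choice of corners guarantees monotonicity. Applying Lemma \ref{lemma:groncurve} gives
\[
    \norm{(Z,p,q) \bullet (\tilde\X, \tilde\Y)}_{\G(\Omega_L)} \leq C\, \norm{(Z,p,q) \bullet (\tilde\X_0, \tilde\Y_0)}_{\G(\Omega_L)},
\]
with $C$ depending only on quantities bounded uniformly in $L$. The right-hand side splits into a contribution on $[-L, L]$ bounded by $\norm{\Theta_0}_\G$ and boundary corrections from the linear extensions; the extensions have parameter length controlled by $\norm{\X_0 - \id}_{L^\infty} + \norm{\X - \id}_{L^\infty} + \norm{\Y_0 - \id}_{L^\infty} + \norm{\Y - \id}_{L^\infty}$ (hence independent of $L$), and Lemma \ref{lemma:stripest} gives uniform $L^\infty$ bounds on $Z^a, Z_X, Z_Y, p, q$ on these extensions, so the boundary contribution is bounded uniformly in $L$. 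Since the left-hand side dominates $\norm{\Theta}_{\G([-L,L])}$, sending $L \to \infty$ yields $\norm{\Theta}_\G < \infty$. I expect the main difficulty to be in showing that the boundary corrections are controlled uniformly in $L$, which forces the construction to keep the extensions short and to exploit both Gronwall and the strip estimates.

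Finally, for the curve-independence of the limit and the decay at $-\infty$, I would use only the monotonicity of $J$ in each variable, which follows from $J_X, J_Y \geq 0$ in \eqref{eq:setH5} on every rectangle. The function $J(\X_0(s),\Y_0(s))$ is nondecreasing, so $L_0 = \lim_{s\to\infty} J(\X_0(s),\Y_0(s))$ exists in $[0,\infty]$, and $\Theta_0 \in \G$ yields $\lim_{s\to-\infty} J(\X_0(s),\Y_0(s)) = 0$. Since $\X_0 - \id, \Y_0 - \id \in L^\infty$, we have $\X_0(s), \Y_0(s) \to \pm\infty$ as $s \to \pm\infty$; given $s$, pick $s'$ large with $\X_0(s') \geq \X(s)$ and $\Y_0(s') \geq \Y(s)$, so that monotonicity gives $J(\X(s),\Y(s)) \leq J(\X_0(s'),\Y_0(s')) \leq L_0$, and therefore $\lim_{s\to\infty} J(\X,\Y) \leq L_0$. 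The reverse inequality is obtained by the symmetric sandwich, and the same argument at $-\infty$ forces $\lim_{s\to-\infty} J(\X(s),\Y(s)) = 0$, which is \eqref{eq:setGrel5}.
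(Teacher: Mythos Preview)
Your strategy is the same as the paper's: use Lemma~\ref{lemma:stripest}~(iii) for the seminorm, Lemma~\ref{lemma:groncurve} for the $L^2$ norm, and a monotonicity sandwich for the limits of $J$. The sandwich argument for $\lim_{s\to\pm\infty} J(\X(s),\Y(s))$ is essentially identical to the paper's.

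The one substantive difference is in how you pick the exhausting rectangle, and your choice makes the argument harder than necessary. You set $X_l=\min(\X(-L),\X_0(-L))$, etc., which in general puts neither curve through the corners of $\Omega_L$; you then must extend \emph{both} curves by linear segments and control boundary corrections for the modified reference curve $(\tilde\X_0,\tilde\Y_0)$. The trouble is that the constants in Lemmas~\ref{lemma:stripest} and~\ref{lemma:groncurve} require a $\Theta\in\G(\Omega_L)$ whose $\tnorm{\cdot}_{\G(\Omega_L)}$ you already control, and with your rectangle neither original curve supplies one directly. Your appeal to Lemma~\ref{lemma:stripest} for the boundary pieces is thus slightly circular: that lemma itself needs a reference $\Theta$ on the same rectangle.

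The paper sidesteps this by defining the rectangle $\tilde\Omega_{\bar s}$ so that its corners are $(\X_0(s_{\min}),\Y_0(s_{\min}))$ and $(\X_0(s_{\max}),\Y_0(s_{\max}))$, where $s_{\min},s_{\max}$ are chosen (via generalized inverses) so that $\tilde\Omega_{\bar s}$ contains the portion of the \emph{new} curve over $[-\bar s,\bar s]$. Then the reference curve lies in $\C(\tilde\Omega_{\bar s})$ by construction, so $\Theta_0|_{\tilde\Omega_{\bar s}}\in\G(\tilde\Omega_{\bar s})$ with $\tnorm{\cdot}\le\tnorm{\Theta_0}_\G$, and only the new curve needs a linear extension. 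Both lemmas then apply cleanly with constants depending only on $\tnorm{\Theta_0}_\G$ and $\norm{(\tilde\X,\tilde\Y)}_\C$, uniformly in $\bar s$. Your construction can be repaired along these lines; the idea is right, but the rectangle should be anchored on the reference curve rather than on the min/max of both.
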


\begin{proof}
	Since $(Z,p,q)\in \H$, we know that there exists a curve $(\X,\Y)\in \C$ such that $(Z,p,q)\, \bullet \,(\X,\Y)\in \G$. We have to check that the conditions (i)-(v) of Definition \ref{def:setG} are satisfied for $\bar{\Theta}=(Z,p,q)\,\bullet \,(\bar{\X},\bar{\Y})$. For any curve $(\bar{\X},\bar{\Y})\in \C$, we have to prove that
	\begin{equation}
	\label{eq:anycurveprovethis1}
	|||(Z,p,q)\bullet (\bar{\X},\bar{\Y})|||_{\G}<\infty \quad \text{and} \quad ||(Z,p,q)\bullet (\bar{\X},\bar{\Y})||_{\G}<\infty.
	\end{equation}
	For any positive number $\bar{s}$, we denote $\Omega_{\bar{s}}=[\bar{\X}(-\bar{s}),\bar{\X}(\bar{s})]\times [\bar{\Y}(-\bar{s}),\bar{\Y}(\bar{s})]$. Let 
	\begin{equation*}
	s_{\text{max}}=\begin{cases}
	\Y^{-1}(\bar{\Y}(\bar{s})) & \text{if } \Y(\X^{-1}(\bar{\X}(\bar{s})))\leq \bar{\Y}(\bar{s}) \\
	\X^{-1}(\bar{\X}(\bar{s})) & \text{otherwise}
	\end{cases}
	\end{equation*}
	and
	\begin{equation*}
	s_{\text{min}}=\begin{cases}
	\X^{-1}(\bar{\X}(-\bar{s})) & \text{if } \bar{\X}(-\bar{s})\leq \X(\Y^{-1}(\bar{\Y}(-\bar{s}))) \\
	\Y^{-1}(\bar{\Y}(-\bar{s})) & \text{otherwise}.
	\end{cases}
	\end{equation*}
	We claim that $s_{\text{min}}\leq-\bar{s}\leq\bar{s}\leq s_{\text{max}}$. 
	
	If $\Y(\X^{-1}(\bar{\X}(\bar{s})))\leq \bar{\Y}(\bar{s})$, then $\X^{-1}(\bar{\X}(\bar{s}))\leq \Y^{-1}(\bar{\Y}(\bar{s}))=s_{\text{max}}$, so that $\bar{\X}(\bar{s})\leq \X(s_{\text{max}})$. This implies, since $\Y(s_{\text{max}})=\bar{\Y}(\bar{s})$, that
	\begin{equation*}
	2\bar{s}-\bar{\Y}(\bar{s})\leq 2s_{\text{max}}-\Y(s_{\text{max}})=2s_{\text{max}}-\bar{\Y}(\bar{s})
	\end{equation*}
	and we conclude that $\bar{s}\leq s_{\text{max}}$. 
	
	If $\bar{\X}(-\bar{s})\leq \X(\Y^{-1}(\bar{\Y}(-\bar{s})))$, then $s_{\text{min}}=\X^{-1}(\bar{\X}(-\bar{s}))\leq\Y^{-1}(\bar{\Y}(-\bar{s}))$ and we get $\Y(s_{\text{min}})\leq \bar{\Y}(-\bar{s})$, which implies, since $\X(s_{\text{min}})=\bar{\X}(-\bar{s})$, that
	\begin{equation*}
	2s_{\text{min}}-\X(s_{\text{min}})\leq -2\bar{s}-\bar{\X}(-\bar{s})=-2\bar{s}-\X(s_{\text{min}}).
	\end{equation*}
	Hence, $s_{\text{min}}\leq -\bar{s}$. The other cases can be treated in a similar way.
	
	We denote $\tilde{\Omega}_{\bar{s}}=[\X(s_{\text{min}}),\X(s_{\text{max}})]\times [\Y(s_{\text{min}}),\Y(s_{\text{max}})]$ and, since $s_{\text{min}}\leq -\bar{s}\leq \bar{s}\leq s_{\text{max}}$, we have $\Omega_{\bar{s}}\subset \tilde{\Omega}_{\bar{s}}$. We define the curve 
	\begin{align*}
	&(\tilde{\X}(s),\tilde{\Y}(s))\\
	&=\begin{cases}
	(\X(s),\Y(s)) & \text{if } s<s_{\text{min}},\\
	\text{straight line joining } (\X(s_{\text{min}}),\Y(s_{\text{min}})) \text{ and } (\bar{\X}(-\bar{s}),\Y(-\bar{s})) & \text{if } s_{\text{min}}\leq s<-\bar{s},\\
	(\bar{\X}(s),\bar{\Y}(s)) & \text{if } -\bar{s}\leq s \leq \bar{s},\\
	\text{straight line joining } (\bar{\X}(\bar{s}),\Y(\bar{s})) \text{ and } (\X(s_{\text{max}}),\Y(s_{\text{max}})) & \text{if } \bar{s}<s\leq s_{\text{max}},\\
	(\X(s),\Y(s)) & \text{if } s_{\text{max}}<s.
	\end{cases}
	\end{align*}
	We have that $(\tilde{\X},\tilde{\Y})$ and $(\X,\Y)$ belong to $\C(\tilde{\Omega}_{\bar{s}})$. By Lemma \ref{lemma:stripest}, we get
	\begin{align*}
	|||(Z,p,q)\bullet (\bar{\X},\bar{\Y})|||_{\G(\Omega_{\bar{s}})}&\leq |||(Z,p,q)\bullet (\tilde{\X},\tilde{\Y})|||_{\G(\tilde{\Omega}_{\bar{s}})}\\
	&\leq C_{1}\Big(||(\tilde{\X},\tilde{\Y})||_{\C(\tilde{\Omega}_{\bar{s}})},|||\Theta|||_{\G(\tilde{\Omega}_{\bar{s}})}\Big)\\
	&\leq C_{1}\big(||(\tilde{\X},\tilde{\Y})||_{\C},|||\Theta|||_{\G}\big),
	\end{align*}
	and by letting $\bar{s}$ tend to infinity, we obtain $|||(Z,p,q)\bullet (\bar{\X},\bar{\Y})|||_{\G}<\infty$. By Lemma \ref{lemma:groncurve}, we obtain
	\begin{align*}
	||(Z,p,q)\bullet (\bar{\X},\bar{\Y})||_{\G(\Omega_{\bar{s}})}
	&\leq ||(Z,p,q)\bullet (\tilde{\X},\tilde{\Y})||_{\G(\tilde{\Omega}_{\bar{s}})}\\
	&\leq C||(Z,p,q)\bullet (\X,\Y)||_{\G(\tilde{\Omega}_{\bar{s}})}\\
	&\leq C||(Z,p,q)\bullet (\X,\Y)||_{\G},
	\end{align*}
	where 
	\begin{align*}
	C&=C\Big(||(\tilde{\X},\tilde{\Y})||_{\C(\Omega_{\bar{s}})},|||(Z,p,q)\bullet (\X,\Y)|||_{\G(\tilde{\Omega}_{\bar{s}})}\Big)\\
	&\leq C\big(||(\tilde{\X},\tilde{\Y})||_{\C},|||(Z,p,q)\bullet (\X,\Y)|||_{\G}\big).
	\end{align*}
	By letting $\bar{s}$ tend to infinity, we get $||(Z,p,q)\bullet (\bar{\X},\bar{\Y})||_{\G}<\infty$ and we have proved \eqref{eq:anycurveprovethis1}. Hence, condition (i) of Definition \ref{def:setG} is satisfied. The conditions (ii)-(iv) follow directly since $(Z,p,q)\in\H$ and $(\bar{\X},\bar{\Y})\in \C$. We prove that
	\begin{equation}
	\label{eq:anycurveprovethis2}
	\displaystyle\lim_{s\rightarrow \pm\infty}J(\bar{\X}(s),\bar{\Y}(s))=\displaystyle\lim_{s\rightarrow \pm\infty}J(\X(s),\Y(s)).
	\end{equation}
	For any $s\in \mathbb{R}$, let $s_{1}=\X^{-1}(\bar{\X}(s))$ and $s_{2}=\Y^{-1}(\bar{\Y}(s))$. If $s_{1}\leq s_{2}$, then $\bar{\X}(s)=\X(s_{1})\leq \X(s_{2})$ and $\bar{\Y}(s)=\Y(s_{2})\geq \Y(s_{1})$. Since $J_{X},J_{Y}\geq 0$, we get
	\begin{equation*}
	J(\X(s_{1}),\Y(s_{1}))\leq J(\bar{\X}(s),\bar{\Y}(s))\leq J(\X(s_{2}),\Y(s_{2})).
	\end{equation*}
	Similarly, if $s_{2}\leq s_{1}$, we obtain
	\begin{equation*}
	J(\X(s_{2}),\Y(s_{2}))\leq J(\bar{\X}(s),\bar{\Y}(s))\leq J(\X(s_{1}),\Y(s_{1})),
	\end{equation*}
	so that
	\begin{equation}
	\label{eq:squeeze1}
	\min\{J(\X(s_{1}),\Y(s_{1})),J(\X(s_{2}),\Y(s_{2}))\}\leq J(\bar{\X}(s),\bar{\Y}(s))
	\end{equation}
	and 
	\begin{equation}
	\label{eq:squeeze2}
	J(\bar{\X}(s),\bar{\Y}(s))\leq \max\{J(\X(s_{1}),\Y(s_{1})),J(\X(s_{2}),\Y(s_{2}))\}.
	\end{equation}
	Since
	\begin{equation*}
	|s_{1}-s|\leq |\X(s_{1})-s_{1}|+|\bar{\X}(s)-s|\leq ||(\X,\Y)||_{\C}+||(\bar{\X},\bar{\Y})||_{\C},
	\end{equation*}
	we have that $\displaystyle\lim_{s\rightarrow \pm\infty}s_{1}=\displaystyle\lim_{s\rightarrow \pm\infty}\X^{-1}(\bar{\X}(s))=\pm\infty$. Similarly, we find that $\displaystyle\lim_{s\rightarrow \pm\infty}s_{2}=\displaystyle\lim_{s\rightarrow \pm\infty}\Y^{-1}(\bar{\Y}(s))=\pm\infty$. Hence, \eqref{eq:squeeze1} and \eqref{eq:squeeze2} yield \eqref{eq:anycurveprovethis2}, since $J$ is bounded and monotone. 
	In particular, we have that these limits are independent of which curve $(\bar{\X},\bar{\Y})$ is chosen. Furthermore, by \eqref{eq:setGrel5},
	\begin{equation*}
	\displaystyle\lim_{s\rightarrow -\infty}J(\bar{\X}(s),\bar{\Y}(s))=\displaystyle\lim_{s\rightarrow -\infty}J(\X(s),\Y(s))=0,
	\end{equation*}
	which shows that the last condition (v) in Definition \ref{def:setG} is satisfied for $\bar{\Theta}=(Z,p,q)\,\bullet \,(\bar{\X},\bar{\Y})$. Hence, $\bar{\Theta}\in \G$.
\end{proof}

We have the following global existence theorem.

\begin{theorem}[Existence and uniqueness of global solutions]
	\label{thm:globalsoln}
	For any initial data \newline $\Theta=(\X,\Y,\Z,\V,\W,\p,\q)\in\G$, there exists a unique solution $(Z,p,q)\in \H$ such that $\Theta=(Z,p,q)\bullet(\X,\Y)$. We denote this solution mapping by 
	\begin{equation}
	\label{eq:Soperator}
	\bf{S}: \G\rightarrow \H.
	\end{equation}
\end{theorem}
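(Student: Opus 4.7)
The strategy is to exhaust the plane by rectangular domains adapted to the curve $(\X,\Y)$, solve on each rectangle using Lemma~\ref{lemma:SolnBigRectangle}, and paste the local solutions together by the uniqueness clause of that same lemma.

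First I would choose the rectangles $\Omega_n=[\X(-n),\X(n)]\times[\Y(-n),\Y(n)]$. Because $\X$ and $\Y$ are nondecreasing with $\X-\id,\Y-\id\in \Winf(\Real)$, the values $\X(\pm n)$ and $\Y(\pm n)$ diverge to $\pm\infty$, so every fixed bounded rectangle is eventually contained in some $\Omega_n$. The restricted curve $(\X_n,\Y_n):=(\X|_{[-n,n]},\Y|_{[-n,n]})$ joins the corners of $\Omega_n$ and hence lies in $\C(\Omega_n)$. Defining $\Theta_n$ by restricting $\Z$ to $[-n,n]$ and $\V,\p$ (resp.\ $\W,\q$) to $[\X(-n),\X(n)]$ (resp.\ $[\Y(-n),\Y(n)]$), all of the pointwise algebraic identities and sign conditions in Definition~\ref{def:setG} survive restriction, and $\tnorm{\Theta_n}_{\G(\Omega_n)}\leq \tnorm{\Theta}_{\G}<\infty$. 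Thus $\Theta_n\in\G(\Omega_n)$.

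Next I would apply Lemma~\ref{lemma:SolnBigRectangle} on each $\Omega_n$ to obtain a unique $(Z_n,p_n,q_n)\in\H(\Omega_n)$ with $\Theta_n=(Z_n,p_n,q_n)\bullet(\X_n,\Y_n)$. For $m\geq n$, the restriction $(Z_m,p_m,q_m)|_{\Omega_n}$ still lies in $\H(\Omega_n)$, since every condition in Definition~\ref{def:initialDataH} is pointwise, and by construction its bullet with $(\X_n,\Y_n)$ recovers exactly $\Theta_n$. Uniqueness in Lemma~\ref{lemma:SolnBigRectangle} then forces $(Z_m,p_m,q_m)|_{\Omega_n}=(Z_n,p_n,q_n)$. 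This compatibility allows me to glue the family into a single $(Z,p,q)$ defined on all of $\Real^2$ by $(Z,p,q)|_{\Omega_n}=(Z_n,p_n,q_n)$.

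Finally I would verify $(Z,p,q)\in\H$ in the sense of Definition~\ref{def:globalsolutions}. Condition \emph{(i)} holds because any rectangular $\Omega$ sits inside some $\Omega_n$, and restriction preserves membership in $\H(\cdot)$. Condition \emph{(ii)} is satisfied with the given curve $(\X,\Y)\in\C$ itself: by construction $[(Z,p,q)\bullet(\X,\Y)]|_{[-n,n]}=\Theta_n$ for every $n$, and taking $n\to\infty$ yields $(Z,p,q)\bullet(\X,\Y)=\Theta\in\G$. Uniqueness of the global solution is immediate: any competitor $(Z',p',q')\in\H$ with $\Theta=(Z',p',q')\bullet(\X,\Y)$ restricts on each $\Omega_n$ to a solution matching $\Theta_n$, and Lemma~\ref{lemma:SolnBigRectangle} forces $(Z',p',q')|_{\Omega_n}=(Z_n,p_n,q_n)$ for all $n$. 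The map $\mathbf{S}$ of \eqref{eq:Soperator} is then well defined by $\mathbf{S}(\Theta)=(Z,p,q)$. The main obstacle is essentially bookkeeping: one has to check carefully that restriction commutes with the bullet operation and that all the structural conditions (including the asymptotic condition $\lim_{s\to-\infty}\Z_4(s)=0$, which is inherited from $\Theta$) survive the gluing; no new a priori estimates beyond those in Lemmas~\ref{lemma:stripest} and~\ref{lemma:SolnBigRectangle} are needed.
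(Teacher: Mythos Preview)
Your proposal is correct and follows essentially the same approach as the paper: exhaust the plane by rectangles whose diagonal corners lie on the curve $(\X,\Y)$, invoke Lemma~\ref{lemma:SolnBigRectangle} on each, and glue via the uniqueness clause. The paper tiles the plane with a countable family of abutting rectangles (first along the curve, then filling in off-diagonal pieces), whereas your nested sequence $\Omega_n=[\X(-n),\X(n)]\times[\Y(-n),\Y(n)]$ is a cleaner and equally valid implementation of the same idea; no additional ingredients are required.
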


\begin{proof}
	First we show how to construct the solution on rectangles with diagonal points which lie on the curve $(\X,\Y)\in\C$. 
	
	We consider two points, $(\bar{X}_{l},\bar{Y}_{l})$ and $(\bar{X}_{r},\bar{Y}_{r})$, on the curve $(\X,\Y)\in\C$ such that $\bar{X}_{l}<\bar{X}_{r}$ and $\bar{Y}_{l}<\bar{Y}_{r}$. Set $\bar{s}_{l}=\frac{1}{2}(\bar{X}_{l}+\bar{Y}_{l})$ and $\bar{s}_{r}=\frac{1}{2}(\bar{X}_{r}+\bar{Y}_{r})$. Let the restriction of $\Theta$ to $\bar{\Omega}=[\bar{X}_{l},\bar{X}_{r}]\times[\bar{Y}_{l},\bar{Y}_{r}]$, which we denote by $\bar{\Theta}=(\bar{\X},\bar{\Y},\bar{\Z},\bar{\V},\bar{\W},\bar{\p},\bar{\q})$, be such that $\bar{\X}(s)=\X(s)$, $\bar{\Y}(s)=\Y(s)$, $\bar{\Z}(s)=\Z(s)$, $\bar{\V}(\bar{\X}(s))=\V(\X(s))$, $\bar{\W}(\bar{\Y}(s))=\W(\Y(s))$, $\bar{\p}(\bar{\X}(s))=\p(\X(s))$ and $\bar{\q}(\bar{\Y}(s))=\q(\Y(s))$ for $s\in[\bar{s}_{l},\bar{s}_{r}]$. Then, $\bar{\Theta}\in\G(\bar{\Omega})$ and, by Lemma \ref{lemma:SolnBigRectangle}, there exists a unique solution $(\bar{Z},\bar{p},\bar{q})\in \H(\bar{\Omega})$ such that $\bar{\Theta}=(\bar{Z},\bar{p},\bar{q})\bullet(\bar{\X},\bar{\Y})$. 
	
	We can consider a new rectangle $\tilde{\Omega}=[\tilde{X}_{l},\tilde{X}_{r}]\times[\tilde{Y}_{l},\tilde{Y}_{r}]$ such that the upper right diagonal point of $\bar{\Omega}$ is the lower left diagonal point of $\tilde{\Omega}$, that is, 
	$\tilde{X}_{l}=\bar{X}_{r}$ and $\tilde{Y}_{l}=\bar{Y}_{r}$. We set the upper right diagonal point $(\tilde{X}_{r},\tilde{Y}_{r})$ to be on the curve $(\X,\Y)$. By the above argument, we obtain a unique solution $(\tilde{Z},\tilde{p},\tilde{q})\in\H(\tilde{\Omega})$ such that $\tilde{\Theta}=(\tilde{Z},\tilde{p},\tilde{q})\, \bullet\, (\tilde{\X},\tilde{\Y})$, where $\tilde{\Theta}=(\tilde{\X},\tilde{\Y},\tilde{\Z},\tilde{\V},\tilde{\W},\tilde{\p},\tilde{\q})$ is the restriction of $\Theta$ to $\tilde{\Omega}$. By repeating this argument on rectangles above $\tilde{\Omega}$ and below $\bar{\Omega}$, we obtain unique solutions in rectangles which cover the curve $(\X,\Y)$.
	
	Now we show how to construct solutions on rectangles with diagonal points which does not lie on the given curve $(\X,\Y)\in\C$. We consider the above setting. 
	
	Let $\hat{\Omega}=[\hat{X}_{l},\hat{X}_{r}]\times[\hat{Y}_{l},\hat{Y}_{r}]$ be the rectangle which lies above $\bar{\Omega}$ and to the left of $\tilde{\Omega}$, that is, $\hat{X}_{l}=\bar{X}_{l}$, $\hat{X}_{r}=\bar{X}_{r}=\tilde{X}_{l}$, $\hat{Y}_{l}=\tilde{Y}_{l}=\bar{Y}_{r}$ and $\hat{Y}_{r}=\tilde{Y}_{r}$. Set
	\begin{equation*}
	(\hat{\X}(s),\hat{\Y}(s))=\begin{cases}
	(2s-\hat{Y}_{l},\hat{Y}_{l}) & \text{if } \hat{s}_{l}\leq s\leq \tilde{s}_{l},\\
	(\hat{X}_{r},2s-\hat{X}_{r}) & \text{if } \tilde{s}_{l}<s\leq \hat{s}_{r},
	\end{cases}
	\end{equation*} 
	where $\hat{s}_{l}$, $\tilde{s}_{l}$ and $\hat{s}_{r}$ are defined similarly to $\bar{s}_{l}$ and $\bar{s}_{r}$ above. We have $(\hat{\X},\hat{\Y})\in\C(\hat{\Omega})$, and we denote $\hat{\Theta}=(\hat{\X},\hat{\Y},\hat{\Z},\hat{\V},\hat{\W},\hat{\p},\hat{\q})$, where
	\begin{align*}
	&\hat{\Z}(s)=\bar{\Z}(s), \quad \hat{\V}(\hat{\X}(s))=\bar{\V}(\hat{\X}(s)), \quad \hat{\W}(\hat{\Y}(s))=\bar{\W}(\hat{\Y}(s)),\\ &\hat{\p}(\hat{\X}(s))=\bar{\p}(\hat{\X}(s)), \quad \hat{\q}(\hat{\Y}(s))=\bar{\q}(\hat{\Y}(s))
	\end{align*}
	for $\hat{s}_{l}\leq s\leq \tilde{s}_{l}$ and
	\begin{align*}
	&\hat{\Z}(s)=\tilde{\Z}(s), \quad \hat{\V}(\hat{\X}(s))=\tilde{\V}(\hat{\X}(s)), \quad \hat{\W}(\hat{\Y}(s))=\tilde{\W}(\hat{\Y}(s)),\\ &\hat{\p}(\hat{\X}(s))=\tilde{\p}(\hat{\X}(s)), \quad \hat{\q}(\hat{\Y}(s))=\tilde{\q}(\hat{\Y}(s))
	\end{align*}
	for $\tilde{s}_{l}<s\leq \hat{s}_{r}$. We have $\hat{\Theta}\in\G(\hat{\Omega})$. By Lemma
	\ref{lemma:SolnBigRectangle}, there exists a unique solution $(\hat{Z},\hat{p},\hat{q})\in \H(\hat{\Omega})$ such that $\hat{\Theta}=(\hat{Z},\hat{p},\hat{q})\bullet(\hat{\X},\hat{\Y})$. 
	By repeatedly applying this argument to rectangles that are adjacent to rectangles where we have a solution, we obtain unique solutions in any rectangular domain. Hence, condition (i) of Definition \ref{def:globalsolutions} is satisfied.
	
	We define $(Z,p,q)$ to be the unique solution in each rectangle. Then, we have $(Z,p,q)\bullet(\X,\Y)=\Theta\in\G$ and condition (ii) of Definition \ref{def:globalsolutions} is satisfied, so that $(Z,p,q)\in\H$. 
\end{proof}

\section{From Lagrangian to Eulerian Coordinates}

\subsection{Mapping from $\H$ to $\F$}

Given an element $(Z,p,q)$ in $\H$ we now want to map it to an element in the set $\G$ and then further to one in $\F$. For a solution in $\H$ corresponding to time $T>0$, i.e., $t(X,Y)=T$, we find it convenient to first shift the time to zero so that we can map the solution to an element in $\G_{0}$ in the next step.  

\begin{definition}
	\label{lemma:tTmap}
	Given $T\geq 0$ and $(Z,p,q)\in \H$, we define
	\begin{subequations}
		\label{eqns:tTmap}
		\begin{equation}
		\bar{t}(X,Y)=t(X,Y)-T
		\end{equation}
		and
		\begin{align}	
		\bar{x}(X,Y)&=x(X,Y), & \bar{U}(X,Y)&=U(X,Y),\\
		\bar{J}(X,Y)&=J(X,Y), & \bar{K}(X,Y)&=K(X,Y),\\
		\bar{p}(X,Y)&=p(X,Y), & \bar{q}(X,Y)&=q(X,Y).
		\end{align}
	\end{subequations} 
	We denote by $\mathbf{t}_{T}:\H \rightarrow \H$ the mapping which associates to any $(Z,p,q)\in \H$ the element $(\bar{Z},\bar{p},\bar{q})\in \H$. We have
	\begin{equation}
	\label{eq:tsemigroup}
	\mathbf{t}_{T+T'}=\mathbf{t}_{T}\circ \mathbf{t}_{T'}.
	\end{equation}
\end{definition}

\begin{definition}
\label{def:mapE}
Given $(Z,p,q)\in \H$, we define
\begin{equation}
\label{eq:Emap}
	\X(s)=\sup\{X\in \mathbb{R}\ | \ t(X',2s-X')<0 \text{ for all } X'<X\}
\end{equation}
and $\Y(s)=2s-\X(s)$. Then, we have $(\X,\Y)\in \C$ and $(Z,p,q)\bullet (\X,\Y)\in \G_{0}$. We denote by $\bf{E}:\H\rightarrow \G_{0}$ the mapping which associates to any $(Z,p,q)\in \H$ the element $(Z,p,q)\bullet (\X,\Y)\in \G_{0}$.
\end{definition}

\begin{proof}[Proof of the well-posedness of Definition \ref{def:mapE}]
We prove that $(\X,\Y)$ belongs to $\C$. Let us verify that $\X$ is nondecreasing. Let $s<\bar{s}$ and consider a sequence $X_{i}$ such that $X_{i}<\X(s)$ and $\displaystyle\lim_{i\rightarrow \infty}X_{i}=\X(s)$. We have $t(X_{i},2s-X_{i})<0$ and, since $t_{Y}\leq 0$, $t(X_{i},2\bar{s}-X_{i})<0$, which implies that $X_{i}<\X(\bar{s})$. By letting $i$ tend to infinity, we obtain $\X(s)\leq \X(\bar{s})$, so that $\X$ is nondecreasing. 

Next we show that $\X$ is differentiable almost everywhere. We claim that $\X$ is Lipschitz continuous with Lipschitz constant at most two. Let us assume the opposite, that is, there exists $\bar{s}>s$ such that
\begin{equation*}
	\X(\bar{s})-\X(s)>2(\bar{s}-s).
\end{equation*}
This implies that $\Y(s)>\Y(\bar{s})$ and we denote $\Omega=[\X(s),\X(\bar{s})]\times [\Y(\bar{s}),\Y(s)]$. Since $t_{X}\geq 0$ and $t_{Y}\leq 0$, we have, for any $(X,Y)\in \Omega$, that
\begin{equation*}
	0=t(\X(s),\Y(s))\leq t(X,\Y(s))\leq t(X,Y)\leq t(X,\Y(\bar{s}))\leq t(\X(\bar{s}),\Y(\bar{s}))=0,
\end{equation*}
so that $t(X,Y)=0$ for all $(X,Y)\in \Omega$. Consider the point $(X,Y)$ given by $Y=\Y(s)$ and $X=2\bar{s}-\Y(s)$. It belongs to $\Omega$ since $X=2\bar{s}-\Y(s)<2\bar{s}-\Y(\bar{s})=\X(\bar{s})$. We have $t(X,Y)=0$, $X+Y=2\bar{s}$ and $X<\X(\bar{s})$, which contradicts the definition of $\X(\bar{s})$. Hence, we have proved that $\X$ is Lipschitz continuous with Lipschitz constant at most two and therefore differentiable almost everywhere. Furthermore, it follows that $\Y$ is nondecreasing and differentiable almost everywhere. Since $\dot{\X},\dot{\Y}\in [0,2]$, we find that $\dot{\X}-1,\dot{\Y}-1\in \Linf (\mathbb{R})$. It remains to prove that $\X-\id$ and $\Y-\id$ belong to $\Linf (\mathbb{R})$. We will prove that
\begin{equation}
\label{eq:limsup}
	\displaystyle\limsup_{s\rightarrow \pm \infty}|\X(s)-s|\leq \frac{L}{2}
\end{equation}
for a constant $L$ that will be set later, and which depends on $\kappa$ and $|||(Z,p,q)\bullet (\X_{d},\Y_{d})|||_{\G}$. First we prove that 
\begin{equation}
\label{eq:firstClaim}
	\displaystyle\limsup_{s\rightarrow \infty}(\X(s)-s)\leq \frac{L}{2}.
\end{equation}
Assume the opposite. Introducing $f(s)=\displaystyle\sup_{r\geq s}(\X(r)-r)$, which is a nonincreasing function, we then have
\begin{equation*}
	\inf_{s\geq 0}f(s)>\frac{L}{2}.
\end{equation*}
This implies that
\begin{equation*}
	f(s)>\frac{L}{2}
\end{equation*}
for all $s\geq 0$. The function $\X-\id$ can only be unbounded at infinity since it is continuous with bounded derivative. If $\X-\id$ is bounded at infinity, \eqref{eq:firstClaim} is immediately satisfied. Thus, we assume that $\X(s)-s$ tends to infinity as $s\rightarrow\infty$. This implies that $f(s)\rightarrow\infty$ as $s\rightarrow\infty$. Then there is an increasing sequence $s_{n}\rightarrow\infty$ as $n\rightarrow\infty$, such that
\begin{equation*}
	\X(s_{n})-s_{n}>\frac{L}{2}
\end{equation*} 
for all $n$, and $\X(s_{n})-s_{n}\rightarrow\infty$ as $n\rightarrow\infty$. We have $\Y(s_{n})=2s_{n}-\X(s_{n})\leq s_{n}-\frac{L}{2}$. Since $t_{X}\geq 0$ and $t_{Y}\leq 0$ we have
\begin{equation*}
	0=t(\X(s_{n}),\Y(s_{n}))\geq t\Big(s_{n}+\frac{L}{2},s_{n}-\frac{L}{2}\Big).
\end{equation*}
Next we prove that
\begin{equation}
\label{eq:HtoG0mapliminft}
	\displaystyle\liminf_{n\rightarrow \infty}t\Big(s_{n}+\frac{L}{2},s_{n}-\frac{L}{2}\Big)\geq 1,
\end{equation}  
which will lead to the contradiction
\begin{equation*}
	0\geq\displaystyle\liminf_{n\rightarrow \infty}t\Big(s_{n}+\frac{L}{2},s_{n}-\frac{L}{2}\Big)\geq 1,
\end{equation*}
and \eqref{eq:firstClaim} follows.

By \eqref{eq:setH1} and \eqref{eq:setH4}, we have
\begin{aalign}
	\label{eq:HtoG0maptestimate}
	&t\Big(s_{n}+\frac{L}{2},s_{n}-\frac{L}{2}\Big)\\
	&=t\Big(s_{n}-\frac{L}{2},s_{n}-\frac{L}{2}\Big)+\int_{s_{n}-\frac{L}{2}}^{s_{n}+\frac{L}{2}}\bigg(\frac{x_{X}}{c(U)}\bigg)\Big(\tilde{X},s_{n}-\frac{L}{2}\Big)\,d\tilde{X}\\
	&\geq -|||(Z,p,q)\bullet (\X_{d},\Y_{d})|||_{\G}+\frac{L}{2\kappa}+\frac{1}{\kappa}\int_{s_{n}-\frac{L}{2}}^{s_{n}+\frac{L}{2}}\bigg(x_{X}-\frac{1}{2}\bigg)\Big(\tilde{X},s_{n}-\frac{L}{2}\Big)\,d\tilde{X}.
\end{aalign}
Let $\Omega_{n,L}=[s_{n}-\frac{L}{2},s_{n}+\frac{L}{2}]\times[s_{n}-\frac{L}{2},s_{n}+\frac{L}{2}]$ and consider the curve
\begin{equation*}
	(\bar{\X}(s),\bar{\Y}(s))=\begin{cases}
	(\X_{d}(s),\Y_{d}(s)) & \text{for } s<s_{n}-\frac{L}{2},\\
	(2s-(s_{n}-\frac{L}{2}),s_{n}-\frac{L}{2}) & \text{for } s_{n}-\frac{L}{2}\leq s\leq s_{n},\\
	(s_{n}+\frac{L}{2},2s-(s_{n}+\frac{L}{2})) & \text{for } s_{n}\leq s\leq s_{n}+\frac{L}{2},\\
	(\X_{d}(s),\Y_{d}(s)) & \text{for } s>s_{n}+\frac{L}{2}.
\end{cases}
\end{equation*}
Both $(\X_{d},\Y_{d})$ and $(\bar{\X},\bar{\Y})$ belong to $\C(\Omega_{n,L})$. By the Cauchy--Schwarz inequality and Lemma \ref{lemma:groncurve}, we find
\begin{aalign}
	\label{eq:HtoG0mapxestimate}
	&\bigg|\int_{s_{n}-\frac{L}{2}}^{s_{n}+\frac{L}{2}}\bigg(x_{X}-\frac{1}{2}\bigg)\Big(\tilde{X},s_{n}-\frac{L}{2}\Big)\,d\tilde{X}\bigg|\\
	&\leq\sqrt{L}\bigg(\int_{s_{n}-\frac{L}{2}}^{s_{n}+\frac{L}{2}}\bigg(x_{X}-\frac{1}{2}\bigg)^{2}\Big(\tilde{X},s_{n}-\frac{L}{2}\Big)\,d\tilde{X}\bigg)^{\frac{1}{2}}\\
	&\leq \sqrt{L}\,||(Z,p,q)\bullet (\bar{\X},\bar{\Y})||_{\G(\Omega_{n,L})}\\
	&\leq \sqrt{L}C||(Z,p,q)\bullet (\X_{d},\Y_{d})||_{\G(\Omega_{n,L})}.
\end{aalign}
Here, $C$ is an increasing function with respect to both its arguments and we have
\begin{align*}
	C&=C\big(||(\bar{\X},\bar{\Y})||_{\C(\Omega_{n,L})},|||(Z,p,q)\bullet (\X_{d},\Y_{d})|||_{\G(\Omega_{n,L})}\big)\\
	&\leq C\big(L,|||(Z,p,q)\bullet (\X_{d},\Y_{d})|||_{\G}\big),
\end{align*} 
where we used that $||(\bar{\X},\bar{\Y})||_{\C(\Omega_{n,L})}=L$. From \eqref{eq:Gnormequivalent}, we have
\begin{align*}
	&\displaystyle\lim_{n\rightarrow \infty}||(Z,p,q)\bullet (\X_{d},\Y_{d})||_{\G(\Omega_{n,L})}^{2}\\
	&=\displaystyle\lim_{n\rightarrow \infty}\int_{s_{n}-\frac{L}{2}}^{s_{n}+\frac{L}{2}}\big(U^{2}+|Z_{X}^{a}|^{2}+|Z_{Y}^{a}|^{2}+p^{2}+q^{2}\big)(\tilde{X},\tilde{X})\,d\tilde{X}=0,
\end{align*}
which combined with \eqref{eq:HtoG0mapxestimate} and \eqref{eq:HtoG0maptestimate} yields
\begin{equation*}
	\displaystyle\liminf_{n\rightarrow \infty}t\Big(s_{n}+\frac{L}{2},s_{n}-\frac{L}{2}\Big)\geq -|||(Z,p,q)\bullet (\X_{d},\Y_{d})|||_{\G}+\frac{L}{2\kappa}.
\end{equation*} 
Setting $L\geq 2\kappa(|||(Z,p,q)\bullet (\X_{d},\Y_{d})|||_{\G}+1)$ implies \eqref{eq:HtoG0mapliminft}. Thus, we have proved \eqref{eq:firstClaim}.

It remains to prove that $\displaystyle\liminf_{s\rightarrow \infty}(\X(s)-s)\geq -\frac{L}{2}$ in order to conclude that
\begin{equation*}
	\displaystyle\limsup_{s\rightarrow\infty}|\X(s)-s|\leq \frac{L}{2}.
\end{equation*}
The proof is similar to the one above. Now one has to show that
\begin{equation}
\label{eq:HtoG0mapliminft2}
	\displaystyle\limsup_{n\rightarrow \infty}t\Big(s_{n}-\frac{L}{2},s_{n}+\frac{L}{2}\Big)\leq -1,
\end{equation}
for an increasing sequence $s_{n}\rightarrow\infty$ as $n\rightarrow\infty$, in order to get a contradiction.

The proof of
\begin{equation*}
	\displaystyle\limsup_{s\rightarrow-\infty}|\X(s)-s|\leq \frac{L}{2}
\end{equation*}
is similar to the argument above. To show
\begin{equation*}
	\displaystyle\limsup_{s\rightarrow -\infty}\,(\X(s)-s)\leq \frac{L}{2} \quad \text{and} \quad \displaystyle\liminf_{s\rightarrow -\infty}\,(\X(s)-s)\geq -\frac{L}{2},
\end{equation*}
one proves
\begin{equation*}
	\displaystyle\liminf_{n\rightarrow \infty}t\Big(s_{n}+\frac{L}{2},s_{n}-\frac{L}{2}\Big)\geq 1 \quad \text{and} \quad \displaystyle\limsup_{n\rightarrow \infty}t\Big(s_{n}-\frac{L}{2},s_{n}+\frac{L}{2}\Big)\leq -1,
\end{equation*}
respectively, for a carefully chosen decreasing sequence $s_{n}\rightarrow-\infty$ as $n\rightarrow\infty$.

This concludes the proof of \eqref{eq:limsup}, and we have showed that $\X-\id$ and $\Y-\id$ belong to $\Linf(\mathbb{R})$, so that $(\X,\Y)\in \C$. Then, by Lemma \ref{lemma:curveind}, we have
\begin{equation*}
	(\X,\Y,\Z,\V,\W,\p,\q)=(Z,p,q)\bullet (\X,\Y)\in \G
\end{equation*} 
and by construction $\Z_{1}(s)=t(\X(s),\Y(s))=0$ so that $(Z,p,q)\bullet (\X,\Y)\in \G_{0}$.
\end{proof}

\begin{definition}
	\label{def:mapfromG0toF}
	Given $(\X,\Y,\Z,\V,\W,\p,\q) \in \G_{0}$, let $\psi_{1}=(x_{1},U_{1},J_{1},K_{1},V_{1},H_{1})$ and $\psi_{2}=(x_{2},U_{2},J_{2},K_{2},V_{2},H_{2})$ be defined as
	\begin{subequations}
	\begin{align}
	\label{eq:mapfromG0toF1}
	x_{1}(\X(s))&=x_{2}(\Y(s))=\Z_{2}(s), \\
	\label{eq:mapfromG0toF2}
	U_{1}(\X(s))&=U_{2}(\Y(s))=\Z_{3}(s),
	\end{align}
	\begin{align}
	\label{eq:mapfromG0toF3}
	&J_{1}(\X(s))=\int_{-\infty}^{s}\V_{4}(\X(\tau))\dot{\X}(\tau)\,d\tau, 
	&&J_{2}(\Y(s))=\int_{-\infty}^{s}\W_{4}(\Y(\tau))\dot{\Y}(\tau)\,d\tau, \\ 
	\label{eq:mapfromG0toF4}			&K_{1}(\X(s))=\int_{-\infty}^{s}\V_{5}(\X(\tau))\dot{\X}(\tau)\,d\tau, 
	&&K_{2}(\Y(s))=\int_{-\infty}^{s}\W_{5}(\Y(\tau))\dot{\Y}(\tau)\,d\tau,
	\end{align}
	and
	\begin{align}
	\label{eq:mapfromG0toF5}
	&V_{1}=\V_{3}, &&V_{2}=\W_{3}, \\
	\label{eq:mapfromG0toF6}
	&H_{1}=\p, &&H_{2}=\q.
	\end{align}
	\end{subequations}
	We denote by $\bf{D}:\G_{0}\rightarrow \F$ the mapping which to any $(\X,\Y,\Z,\V,\W,\p,\q) \in \G_{0}$ associates the element $\psi\in \F$ as defined above.
\end{definition}

\begin{proof}[Proof of the well-posedness of Definition \ref{def:mapfromG0toF}]
	We check the well-posedness of (\ref{eq:mapfromG0toF1}) and (\ref{eq:mapfromG0toF2}). Consider $s<\bar{s}$ such that $\X(s)=\X(\bar{s})$. Since $\X$ is nondecreasing and continuous, we have $\dot{\X}(\tilde{s})=0$ and $\dot{\Y}(\tilde{s})=2$ for all $\tilde{s}\in [s,\bar{s}]$. From (\ref{eq:setG0rel}), it follows that $\W_{2}(\Y(\tilde{s}))=0$ for all $\tilde{s}\in [s,\bar{s}]$. Hence,
	\begin{equation*}
	\dot{\Z}_{2}(\tilde{s})=\V_{2}(\X(\tilde{s}))\dot{\X}(\tilde{s})+\W_{2}(\Y(\tilde{s}))\dot{\Y}(\tilde{s})=0
	\end{equation*}
	for all $\tilde{s}\in [s,\bar{s}]$, so that $\Z_{2}(\tilde s)=\Z_{2}(\bar{s})$ and (\ref{eq:mapfromG0toF1}) is well-posed. 
	
	From (\ref{eq:setGrel4}), we obtain
	\begin{equation*}
	0=2\W_{4}(\Y(\tilde{s}))\W_{2}(\Y(\tilde{s}))=(c(\Z_{3}(\tilde{s}))\W_{3}(\Y(\tilde{s})))^{2}+c(\Z_{3}(\tilde{s}))\q^{2}(\Y(\tilde{s})),
	\end{equation*}
	which implies that $\W_{3}(\Y(\tilde{s}))=0$ and $\q(\Y(\tilde{s}))=0$ for all $\tilde{s}\in [s,\bar{s}]$. Thus,
	\begin{equation*}
	\dot{\Z}_{3}(\tilde{s})=\V_{3}(\X(\tilde{s}))\dot{\X}(\tilde{s})+\W_{3}(\Y(\tilde{s}))\dot{\Y}(\tilde{s})=0
	\end{equation*}
	so that $\Z_{3}(\tilde s)=\Z_{3}(\bar{s})$ and (\ref{eq:mapfromG0toF2}) is well-posed.
	
	 Let us prove that  $J_1$ and $K_1$ given by (\ref{eq:mapfromG0toF3}) and (\ref{eq:mapfromG0toF4}) are well-posed. The proof is similar for $J_{2}$ and $K_{2}$. Since $\V_{4},\W_{4},\dot{\X},\dot{\Y}\geq 0$, we have $J_{1}\geq 0$ and
	\begin{aalign}
		\label{eq:mapfromG0toFV4inL1}
		J_{1}(\X(s))&=\int_{-\infty}^{s}\V_{4}(\X(\tau))\dot{\X}(\tau)\,d\tau\\
		&\leq \int_{-\infty}^{s}(\V_{4}(\X(\tau))\dot{\X}(\tau)+\W_{4}(\Y(\tau))\dot{\Y}(\tau))\,d\tau\\
		&=\int_{-\infty}^{s}\dot{\Z}_{4}(\tau)\,d\tau \leq ||\Z_{4}^{a}||_{\Linf(\mathbb{R})},
	\end{aalign}
	so that the function $\V_{4}$ belongs to $L^{1}(\mathbb{R})$ and $J_{1}$ is bounded. If $s,\bar{s}\in\mathbb{R}$ are such that $s<\bar{s}$ and $\X(s)=\X(\bar{s})$, we have $\dot{\X}(\tilde{s})=0$ for all $\tilde{s}\in [s,\bar{s}]$ since $\X$ is nondecreasing. Hence,
	\begin{equation}
	\label{eq:mapfromG0toFJ1wellposed}
	\int_{-\infty}^{\tilde{s}}\V_{4}(\X(\tau))\dot{\X}(\tau)\,d\tau=\int_{-\infty}^{s}\V_{4}(\X(\tau))\dot{\X}(\tau)\,d\tau,	
	\end{equation}
	and $J_1$ is well-posed. For $K_{1}$, we have by \eqref{eq:setGrel2}, that
	\begin{equation}
	\label{eq:mapfromG0toFK1wellposed}
	K_{1}(\X(s))=\int_{-\infty}^{s}\V_{5}(\X(\tau))\dot{\X}(\tau)\,d\tau
	=\int_{-\infty}^{s}\frac{\V_{4}(\X(\tau))}{c(\Z_{3}(\tau))}\dot{\X}(\tau)\,d\tau\leq \kappa J_{1}(\X(s)),
	\end{equation}
	which by \eqref{eq:mapfromG0toFV4inL1} and since $\V_{4}\geq 0$ and $c>0$, implies that $0\leq K_{1}(X)\leq \kappa ||\Z_{4}^{a}||_{\Linf(\mathbb{R})}$. By an argument as in \eqref{eq:mapfromG0toFJ1wellposed} applied to $K_{1}$, we conclude that also $K_1$ is well-posed. 
	
	Next we show that $\psi_{1}=(x_{1},U_{1},J_{1},K_{1},V_{1},H_{1})$ as defined in \eqref{eq:mapfromG0toF1}-\eqref{eq:mapfromG0toF6} satisfies the conditions in the definition of the set $\F$. The proof for $\psi_{2}$ is similar. 
	
	Let us show that $x_{1}$ is Lipschitz continuous and therefore differentiable almost everywhere. Consider $s,\bar{s}\in \mathbb{R}$ and set $X=\X(s)$ and $\bar{X}=\X(\bar{s})$. We have
	\begin{align*}
	|x_{1}(\bar{X})-x_{1}(X)|&=|\Z_{2}(\bar{s})-\Z_{2}(s)|\\
	&=\bigg|\int_{s}^{\bar{s}}\dot{\Z}_{2}(\tilde{s})\,d\tilde{s}\bigg| \\
	&=\bigg|\int_{s}^{\bar{s}}(\V_{2}(\X(\tilde{s}))\dot{\X}(\tilde{s})+\W_{2}(\Y(\tilde{s}))\dot{\Y}(\tilde{s}))\,d\tilde{s}\bigg| \\
	&=\bigg|2\int_{s}^{\bar{s}}\V_{2}(\X(\tilde{s}))\dot{\X}(\tilde{s})\,d\tilde{s}\bigg| \quad \text{by } \eqref{eq:setG0rel}\\
	&=\bigg|2\int_{s}^{\bar{s}}\V_{2}^{a}(\X(\tilde{s}))\dot{\X}(\tilde{s})\,d\tilde{s}+\int_{s}^{\bar{s}}\dot{\X}(\tilde{s})\,d\tilde{s}\bigg| \quad \text{by } \eqref{eq:atriplet}\\
	&\leq (2\norm{\V_{2}^{a}}_{L^{\infty}(\mathbb{R})}+1)|\bar{X}-X|. 
	\end{align*}
	
	From \eqref{eq:mapfromG0toFV4inL1}, we have that $J_{1}$ is increasing and hence differentiable almost everywhere.
	Similarly, one shows that $K_{1}$ is differentiable almost everywhere. 
	
	Next we show \eqref{eq:setF1}-\eqref{eq:setF4}. We have
	\begin{equation}
	\label{eq:mapfromG0toFx1-id}
	x_{1}(\X(s))-\X(s)=\Z_{2}(s)-s+s-\X(s)=\Z_{2}^{a}+s-\X(s),
	\end{equation}
	so that $x_{1}-\id \in \Linf(\mathbb{R})$ since $\Z_{2}^{a}$ and $\X-\id$ belong to $\Linf(\mathbb{R})$. Differentiating \eqref{eq:mapfromG0toF1} and using \eqref{eq:setG0rel}, we obtain $x_{1}'(\X)\dot{\X}=\dot{\Z}_{2}=2\V_{2}(\X)\dot{\X}$. Hence, $x_{1}'=2\V_{2}$ and we get that
	\begin{equation*}
	x_{1}'-1=2\V_{2}-1=2\V_{2}^{a},
	\end{equation*}
	which shows that $x_{1}'-1\in L^{2}(\mathbb{R})\cap \Linf(\mathbb{R})$. By \eqref{eq:mapfromG0toFV4inL1}, we have $J_{1}\in \Linf(\mathbb{R})$. We differentiate \eqref{eq:mapfromG0toF3} and obtain $J_{1}'=\V_{4}=\V_{4}^{a}$, which implies that $J_{1}'\in L^{2}(\mathbb{R})\cap \Linf(\mathbb{R})$. Then, from \eqref{eq:mapfromG0toFK1wellposed} it follows that $K_{1}\in \Linf(\mathbb{R})$ and $K_{1}'\in L^{2}(\mathbb{R})\cap \Linf(\mathbb{R})$. Since $\p,\q \in L^{2}(\mathbb{R})\cap \Linf(\mathbb{R})$, we have by \eqref{eq:mapfromG0toF6} that $H_{1}$ and $H_{2}$ belong to $L^{2}(\mathbb{R})\cap \Linf(\mathbb{R})$. The function $U_{1}$ belongs to $L^{2}(\mathbb{R})$ and $\Linf(\mathbb{R})$, as $U_{1}(\X)=\Z_{3}\in L^{2}(\mathbb{R})\cap \Linf(\mathbb{R})$. We have $V_{1}\in L^{2}(\mathbb{R})\cap \Linf(\mathbb{R})$ by \eqref{eq:mapfromG0toF5} and since $\V_{3}=\V_{3}^{a}\in L^{2}(\mathbb{R})\cap \Linf(\mathbb{R})$. Hence, we have proved \eqref{eq:setF1}-\eqref{eq:setF4}. Let us verify \eqref{eq:setFrel1}. We showed above that $x_{1}'=2\V_{2}$ and $J_{1}'=\V_{4}$. Thus, $x_{1}',J_{1}'\geq 0$ because $\V_{2},\V_{4}\geq 0$. The identity \eqref{eq:setFrel2} follows from \eqref{eq:setGrel2} since $J_{1}'=\V_{4}$ and $K_{1}'=\V_{5}$. We can check that the relation \eqref{eq:setFrel3} holds by using \eqref{eq:setGrel3}, \eqref{eq:mapfromG0toF5} and \eqref{eq:mapfromG0toF6}. Let us prove (\ref{eq:setFrel4}) by using Lemma \ref{lemma:auxiliaryG}. We found above that $J_{1}$ is absolutely continuous. Since $x_{1}$ is Lipschitz continuous, it follows that $x_{1}+J_{1}$ is absolutely continuous. By \eqref{eq:mapfromG0toFV4inL1}, we have
	\begin{equation*}
	|x_{1}+J_{1}-\id|\leq|x_{1}-\id|+||\Z_{4}^{a}||_{\Linf(\mathbb{R})}
	\end{equation*}
	which, by \eqref{eq:mapfromG0toFx1-id}, implies that $x_{1}+J_{1}-\id \in L^{\infty}(\mathbb{R})$.
	
	We proved above that $x_{1}'-1,J_{1}'\in L^{2}(\mathbb{R})$, which implies that $x_{1}'+J_{1}'-1\in L^{2}(\mathbb{R})$.
	
	The fact that $\frac{1}{\V_{2}+\V_{4}}\in L^{\infty}(\mathbb{R})$ implies that there exists a number $k>0$ such that $\V_{2}(X)+\V_{4}(X)\geq k$ for almost every $X\in \mathbb{R}$. Then, since $x_{1}'+J_{1}'=2\V_{2}+\V_{4}$, we obtain
	\begin{equation*}
	k\leq \V_{2}+\V_{4}\leq x_{1}'+J_{1}'\leq 2(\V_{2}+\V_{4})=2(\V_{2}^{a}+\V_{4}^{a})+1\leq 2\big(||\V_{2}^{a}||_{\Linf(\mathbb{R})}+||\V_{4}^{a}||_{\Linf(\mathbb{R})}\big)+1,
	\end{equation*}
	so that the remaining condition in Lemma \ref{lemma:auxiliaryG} holds. Hence, $x_{1}+J_{1}\in G$ and we have proved \eqref{eq:setFrel4}. 
	
	By \eqref{eq:mapfromG0toF3} and \eqref{eq:setGrel5}, we have
	\begin{equation*}
	0\leq J_{1}(\X(s))+J_{2}(\Y(s))=\int_{-\infty}^{s}\dot{\Z}_{4}(\tau)\,d\tau=\Z_{4}(s),
	\end{equation*}
	and since $\displaystyle\displaystyle\lim_{s\rightarrow-\infty}\X(s)=-\infty$ and $\displaystyle\displaystyle\lim_{s\rightarrow-\infty}\Y(s)=-\infty$, \eqref{eq:setGrel5} implies \eqref{eq:setFrel5}. The relation \eqref{eq:setFrel6} follows directly from \eqref{eq:mapfromG0toF2}. Using \eqref{eq:mapfromG0toF2}, \eqref{eq:setGcomp}, and \eqref{eq:mapfromG0toF5}, we obtain
	\begin{equation*}
	\frac{d}{ds}U_{1}(\X)=\frac{d}{ds}U_{2}(\Y)=\dot{\Z}_{3}=\V_{3}(\X)\dot{\X}+\W_{3}(\Y)\dot{\Y}=V_{1}(\X)\dot{\X}+V_{2}(\Y)\dot{\Y},
	\end{equation*}
	so that \eqref{eq:setFrel7} holds.

\end{proof}

\subsection{Semigroup of Solutions in $\F$}

We define the solution operator on the set $\F$.

\begin{definition}
	For any $T\geq 0$, we define the mapping $S_{T}:\F\rightarrow \F$ by
	\begin{equation*}
	S_{T}=\mathbf{D}\circ\mathbf{E}\circ\mathbf{t}_{T}\circ\mathbf{S}\circ\mathbf{C}.
	\end{equation*}		
\end{definition}

In order to show that $S_{T}$ is a semigroup we need the following result.

\begin{lemma}
\label{lemma:setrelations}
We have
\begin{equation}
\label{eq:FHbijection1}
	\mathbf{C}\circ\mathbf{D}\circ\mathbf{E}=\mathbf{E}, \quad \mathbf{D}\circ\mathbf{C}=\id
\end{equation}
and
\begin{equation}
\label{eq:FHbijection2}
	\mathbf{E}\circ\mathbf{S}\circ\mathbf{C}=\mathbf{C}, \quad \mathbf{S}\circ\mathbf{E}=\id.
\end{equation}
It follows that $\mathbf{S}\circ\mathbf{C}=(\mathbf{D}\circ\mathbf{E})^{-1}$ and the sets $\F$ and $\H$ are in bijection.
\end{lemma}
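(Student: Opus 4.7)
The plan is to verify the four stated identities; the bijection between $\F$ and $\H$ then follows immediately, since $\mathbf{S}\circ\mathbf{C}$ and $\mathbf{D}\circ\mathbf{E}$ will be mutually inverse by $\mathbf{D}\mathbf{E}\,\mathbf{S}\mathbf{C}=\mathbf{D}(\mathbf{E}\mathbf{S}\mathbf{C})=\mathbf{D}\mathbf{C}=\id$ and $\mathbf{S}\mathbf{C}\,\mathbf{D}\mathbf{E}=\mathbf{S}(\mathbf{C}\mathbf{D}\mathbf{E})=\mathbf{S}\mathbf{E}=\id$. The two pointwise identities $\mathbf{D}\circ\mathbf{C}=\id$ and $\mathbf{S}\circ\mathbf{E}=\id$ will be established first and directly; the remaining two then reduce to showing that the curve-selection in $\mathbf{E}$ and the curve-selection in $\mathbf{C}$ always agree.

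First I would verify $\mathbf{D}\circ\mathbf{C}=\id$ on $\F$ by a direct unpacking of the definitions. Given $\psi=(\psi_{1},\psi_{2})\in\F$, set $\Theta=\mathbf{C}\psi=(\X,\Y,\Z,\V,\W,\p,\q)$ and $\bar\psi=\mathbf{D}\Theta$. Combining \eqref{eq:mapfromG0toF1} with \eqref{eq:mapFtoGZ2} yields $\bar{x}_{1}(\X(s))=\Z_{2}(s)=x_{1}(\X(s))$; since $\X-\id\in W^{1,\infty}(\mathbb{R})$ is continuous, nondecreasing, and tends to $\pm\infty$, its image is all of $\mathbb{R}$, and $\Z_{2}$ is constant on flat regions of $\X$ by \eqref{eq:setG0rel2}, so this determines $\bar{x}_{1}=x_{1}$ unambiguously. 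The identities $\bar{U}_{1}=U_{1}$, $\bar{V}_{1}=V_{1}$, $\bar{H}_{1}=H_{1}$ follow from \eqref{eq:mapfromG0toF2}, \eqref{eq:mapfromG0toF5}, \eqref{eq:mapfromG0toF6} together with \eqref{eq:mapFtoGZ3}, \eqref{eq:mapFtoGV3}, \eqref{eq:mapFtoGp}, while for $\bar{J}_{1}$ we use \eqref{eq:mapfromG0toF3} and \eqref{eq:mapFtoGV4} to compute $\bar{J}_{1}(\X(s))=\int_{-\infty}^{s}J_{1}'(\X(\tau))\dot\X(\tau)\,d\tau=J_{1}(\X(s))$, with \eqref{eq:setFrel5} handling the lower limit; $\bar{K}_{1}$ is analogous, and $\psi_{2}$ is symmetric. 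The second identity $\mathbf{S}\circ\mathbf{E}=\id$ is then immediate: for $(Z,p,q)\in\H$ the element $\Theta=\mathbf{E}(Z,p,q)=(Z,p,q)\bullet(\X,\Y)$ is by construction the restriction of $(Z,p,q)$ to its $t=0$ curve, so $(Z,p,q)$ itself solves \eqref{eq:goveq} with initial data $\Theta$, and the uniqueness part of Theorem \ref{thm:globalsoln} forces $\mathbf{S}\Theta=(Z,p,q)$.

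The remaining identities $\mathbf{E}\mathbf{S}\mathbf{C}=\mathbf{C}$ and $\mathbf{C}\mathbf{D}\mathbf{E}=\mathbf{E}$ both reduce to matching the curve selected by $\mathbf{E}$ via \eqref{eq:Emap} with the curve selected by $\mathbf{C}$ via \eqref{eq:mapFtoGX}; once the curves coincide, the remaining $\G_{0}$-components agree by pointwise evaluation of $(Z,p,q)$. The key ingredient is that along any antidiagonal $X+Y=2s$, the map $X\mapsto t(X,2s-X)$ is nondecreasing, since \eqref{eq:setH1} and \eqref{eq:setH4} give $t_{X}\geq 0$ and $t_{Y}\leq 0$, whence $\frac{d}{dX}t(X,2s-X)=t_{X}-t_{Y}\geq 0$; moreover, on any subinterval of the antidiagonal where this derivative vanishes, both $t_{X}$ and $t_{Y}$ vanish separately (each being of definite sign), so $x_{X}=c(U)t_{X}=0$ and $x_{Y}=-c(U)t_{Y}=0$ there, and $x(X,2s-X)$ is constant too. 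Since the relations $x_{1}(\X(s))=x_{2}(\Y(s))=x(\X(s),\Y(s))$ hold in both compositions and $x$ is nondecreasing in each variable separately, one deduces by propagating along vertical and horizontal segments between $(X,\Y(X))$, $(X,2s-X)$ and $(\X(2s-X),2s-X)$ that the set $\{X:t(X,2s-X)=0\}$ on an antidiagonal coincides with $\{X:x_{1}(X)=x_{2}(2s-X)\}$, so the respective leftmost points agree. The main technical obstacle is handling the "box" regions where concentrated energy produces intervals on which $\X$ or $\Y$ is constant; well-posedness of the definitions of $x_{1}$, $x_{2}$ in those regions relies on the compatibility relations \eqref{eq:setG0rel} and \eqref{eq:setGrel3}--\eqref{eq:setGrel4} in $\G_{0}$, which force the $\Z$-components to remain constant on flat regions of $\X$ or $\Y$, so that the antidiagonal argument above goes through uniformly.
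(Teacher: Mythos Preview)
Your approach is correct and closely parallels the paper's, with one notable streamlining. For $\mathbf{D}\circ\mathbf{C}=\id$ and $\mathbf{S}\circ\mathbf{E}=\id$ your arguments match the paper's (direct computation and uniqueness, respectively), and for $\mathbf{E}\circ\mathbf{S}\circ\mathbf{C}=\mathbf{C}$ the paper uses exactly your propagation argument along vertical and horizontal segments: from $t(\bar\X(s),\bar\Y(s))=0$ and $t=0$ on the $\mathbf{C}$-curve one forces $t_X=t_Y=0$ on the connecting segments, hence $x_X=x_Y=0$ by \eqref{eq:setH1}, giving $x_1(\bar\X(s))=x_2(\bar\Y(s))$ and a contradiction with \eqref{eq:mapFtoGX}.

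The genuine difference is in $\mathbf{C}\circ\mathbf{D}\circ\mathbf{E}=\mathbf{E}$. The paper argues by contradiction that if $x_1(\bar X)=x_1(\X(\bar s))$ and $x_2(\bar Y)=x_2(\Y(\bar s))$ for some $\bar X<\X(\bar s)$, then on the resulting rectangle one can write down an explicit trivial solution $(\tilde Z,\tilde p,\tilde q)$ of \eqref{eq:goveq} and invoke the uniqueness in Theorem~\ref{thm:globalsoln} to conclude $t(\bar X,\bar Y)=0$. Your route is more elementary: from $x_1(\bar\X(s))=x_2(\bar\Y(s))$ and the monotonicity of $x$ in each variable you get $x(\bar\X(s),\Y(s_0))=x(\bar\X(s),\bar\Y(s))=x(\X(s_1),\bar\Y(s))$, whence $x_Y=0$ on the vertical segment, so $t_Y=0$ by \eqref{eq:setH1}, and $t(\bar\X(s),\bar\Y(s))=t(\bar\X(s),\Y(s_0))=0$, contradicting the strict inequality built into \eqref{eq:Emap}. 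This avoids the construction of $(\tilde Z,\tilde p,\tilde q)$ and the appeal to uniqueness, using only the pointwise relations in $\H$. Your claim that the two zero-level sets actually \emph{coincide} on each antidiagonal is correct and gives a pleasantly unified treatment of both curve-matching identities; just be aware that in a full write-up the handling of generalized inverses on flat stretches of $\X,\Y$ (your acknowledged ``box'' obstacle) needs to be made explicit via Lemma~\ref{lemma:GenInv}, since the argument relies on producing $s_0,s_1$ with $\X(s_0)=\bar\X(s)$, $\Y(s_1)=\bar\Y(s)$ and $s_0<s<s_1$.
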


\begin{proof}
We first prove \eqref{eq:FHbijection1}. Given $(Z,p,q)\in \H$, let
\begin{align*}
	(\X,\Y,\Z,\V,\W,\p,\q)&=\mathbf{E}(Z,p,q),\\
	(\psi_{1},\psi_{2})&=\mathbf{D}(\X,\Y,\Z,\V,\W,\p,\q),\\
	(\bar{\X},\bar{\Y},\bar{\Z},\bar{\V},\bar{\W},\bar{\p},\bar{\q})&=\mathbf{C}(\psi_{1},\psi_{2}).
\end{align*}
We want to prove that $(\bar{\X},\bar{\Y},\bar{\Z},\bar{\V},\bar{\W},\bar{\p},\bar{\q})=(\X,\Y,\Z,\V,\W,\p,\q)$. Let us show that $\bar{\X}=\X$. We claim that for any $s\in \mathbb{R}$ and $(X,Y)$ such that $X<\X(s)$ and $X+Y=2s$, we have either
\begin{equation}
\label{eq:FHbijection1x1andx2}
	x_{1}(X)<x_{1}(\X(s)) \quad \text{or} \quad x_{2}(Y)>x_{2}(\Y(s)).
\end{equation}
Let us assume the opposite, that is, there exist $\bar{s}$ and $(\bar{X},\bar{Y})$ such that $\bar{X}<\X(\bar{s})$, $\bar{X}+\bar{Y}=2\bar{s}$ and
\begin{equation*}
	x_{1}(\bar{X})=x_{1}(\X(\bar{s}))=\Z_{2}(\bar{s})=x_{2}(\Y(\bar{s}))=x_{2}(\bar{Y}),
\end{equation*}
where we used \eqref{eq:mapfromG0toF1}. Let $s_{0}=\X^{-1}(\bar{X})$ and $s_{1}=\Y^{-1}(\bar{Y})$. Since $\bar{X}<\X(\bar{s})$, $\Y(\bar{s})<\bar{Y}$ and $\dot{\X},\dot{\Y}\geq 0$, we have $s_{0}<\bar{s}<s_{1}$. Consider the rectangular domain $\Omega=[\X(s_{0}),\X(s_{1})]\times [\Y(s_{0}),\Y(s_{1})]$. We want to construct a solution $(\tilde{Z},\tilde{p},\tilde{q})$ of 
\eqref{eq:goveq} in $\Omega$.

Since
\begin{align*}
	\Z_{2}(s_{0})&=x_{1}(\X(s_{0}))=x_{1}(\bar{X})=\Z_{2}(\bar{s}),\\
	\Z_{2}(s_{1})&=x_{2}(\Y(s_{1}))=x_{2}(\bar{Y})=\Z_{2}(\bar{s})
\end{align*}
and $\Z_{2}$ is nondecreasing, we have $\Z_{2}(s)=\Z_{2}(s_{0})=\Z_{2}(s_{1})$ for all $s\in [s_{0},s_{1}]$. We have $\dot{\Z}_{2}(s)=\V_{2}(\X(s))\dot{\X}(s)+\W_{2}(\Y(s))\dot{\Y}(s)=0$ for all $s\in [s_{0},s_{1}]$, which implies that $\V_{2}(X)=0$ for almost every $X\in [\X(s_{0}),\X(s_{1})]$ and $\W_{2}(Y)=0$ for almost every $Y\in [\Y(s_{0}),\Y(s_{1})]$. Then, by \eqref{eq:setGrel1}, \eqref{eq:setGrel3} and \eqref{eq:setGrel4}, we have $\V_{1}(X)=\V_{3}(X)=\p(X)=0$ for almost every $X\in [\X(s_{0}),\X(s_{1})]$ and $\W_{1}(Y)=\W_{3}(Y)=\q(Y)=0$ for almost every $Y\in [\Y(s_{0}),\Y(s_{1})]$. Hence, $\Z_{1}(s)$ is constant for all $s\in [s_{0},s_{1}]$ and we define\footnote{In a rectangular domain where $t=0$, $(\X,\Y)$ as defined by \eqref{eq:Emap} has to consist of the vertical straight line connecting the lower left diagonal point with the upper left corner, and the horizontal straight line connecting the upper left corner with the upper right diagonal point, since $t_{X}\geq0$ and $t_{Y}\leq0$} $\tilde{t}(X,Y)=0$ in $\Omega$. Let
\begin{align*}
	\tilde{x}(X,Y)&=\Z_2(s), & \tilde{U}(X,Y)&=\Z_3(s), \\
	\tilde{J}(X,Y)&=J_1(X)+J_2(Y), & \tilde{K}(X,Y)&=K_1(X)+K_2(Y), \\
	\tilde{p}(X,Y)&=\p(X), & \tilde{q}(X,Y)&=\q(Y).
\end{align*}
Then, $(\tilde{Z},\tilde{p},\tilde{q})$ is a solution of \eqref{eq:goveq} in $\Omega$. By the uniqueness of the solution, we get $(\tilde{Z},\tilde{p},\tilde{q})=(Z,p,q)$. In particular, we have $t(\bar{X},\bar{Y})=0$ such that $\bar{X}<\X(\bar{s})$ and $\bar{X}+\bar{Y}=2\bar{s}$, which contradicts the definition of $\X$ given by \eqref{eq:Emap}. Hence, we conclude that \eqref{eq:FHbijection1x1andx2} holds. By \eqref{eq:mapfromG0toF1}, we have $x_{1}(\X(s))=x_{2}(2s-\X(s))$. Thus, \eqref{eq:mapFtoGX} implies that $\bar{\X}(s)\leq \X(s)$ and it follows that $\bar{\Y}(s)\geq \Y(s)$. From \eqref{eq:mapFtoG2}, we have
\begin{equation}
\label{eq:FHbijection1x1equalsx2}
	x_{1}(\bar{\X}(s))=x_{2}(\bar{\Y}(s)).
\end{equation}  
Let us assume that $\bar{\X}(s)<\X(s)$. Then, by \eqref{eq:FHbijection1x1andx2}, we have either
$x_{1}(\bar{\X}(s))<x_{1}(\X(s))$ or $x_{2}(\bar{\Y}(s))>x_{2}(\Y(s))$. If $x_{1}(\bar{\X}(s))<x_{1}(\X(s))$, then
\begin{equation*}
	x_{1}(\bar{\X}(s))<x_{1}(\X(s))=x_{2}(\Y(s))\leq x_{2}(\bar{\Y}(s)),
\end{equation*}
which contradicts \eqref{eq:FHbijection1x1equalsx2}. Similarly, if $x_{2}(\bar{\Y}(s))>x_{2}(\Y(s))$, we obtain the contradiction
\begin{equation*}
	x_{2}(\bar{\Y}(s))>x_{2}(\Y(s))=x_{1}(\X(s))\geq x_{1}(\bar{\X}(s)).
\end{equation*}
Hence, $\bar{\X}=\X$ and therefore $\bar{\Y}=\Y$. Then, by \eqref{eq:mapFtoGZ2} and \eqref{eq:mapfromG0toF1}, we have $\bar{\Z}_{2}(s)=x_{1}(\bar{\X}(s))=x_{1}(\X(s))=\Z_{2}(s)$. Similarly, one finds that $\bar{\Z}_{3}=\Z_{3}$. By \eqref{eq:mapFtoGZ1} and since $(\X,\Y,\Z,\V,\W,\p,\q)\in \G_{0}$, we have $\bar{\Z}_{1}=\Z_{1}=0$. We have
\begin{align*}
	\bar{\Z}_{4}(s)&=J_{1}(\bar{\X}(s))+J_{2}(\bar{\Y}(s)) \quad \text{by } \eqref{eq:mapFtoGZ4}\\
	&=J_{1}(\X(s))+J_{2}(\Y(s))\\
	&=\int_{-\infty}^{s}(\V_{4}(\X(\tau))\dot{\X}(\tau)+\W_{4}(\Y(\tau))\dot{\Y}(\tau))\,d\tau \quad \text{by } \eqref{eq:mapfromG0toF3}\\
	&=\int_{-\infty}^{s}\dot{\Z}_{4}(\tau)\,d\tau=\Z_{4}(s) \quad \text{by } \eqref{eq:setGcomp}
\end{align*}
and by a similar calculation, we obtain $\bar{\Z}_{5}=\Z_{5}$. Let us verify that $\bar{\V}=\V$ (one shows that $\bar{\W}=\W$ in a similar way). By differentiating $x_{1}(\X(s))=\Z_{2}(s)$ and using \eqref{eq:setG0rel}, we obtain $x_{1}'=2\V_{2}$. This yields
\begin{equation*}
	\bar{\V}_{1}(\bar{\X})=\bar{\V}_{1}(\X)=\frac{1}{2c(U_{1}(\X))}x_{1}'(\X)=\frac{1}{c(\Z_{3})}\V_{2}(\X)=\V_{1}(\X)
\end{equation*}
by \eqref{eq:mapFtoGV1} and \eqref{eq:setGrel1}, and
\begin{equation*}
	\bar{\V}_{2}(\bar{\X})=\bar{\V}_{2}(\X)=\frac{1}{2}x_{1}'(\X)=\V_{2}(\X)
\end{equation*}
by \eqref{eq:mapFtoGV2}. From \eqref{eq:mapFtoGV3}-\eqref{eq:mapFtoGp} and \eqref{eq:mapfromG0toF3}-\eqref{eq:mapfromG0toF6}, we obtain
\begin{align*}
	\bar{\V}_{3}(\bar{\X})&=\bar{\V}_{3}(\X)=V_{1}(\X)=\V_{3}(\X),\\
	\bar{\V}_{4}(\bar{\X})&=\bar{\V}_{4}(\X)=J_{1}'(\X)=\V_{4}(\X),\\
	\bar{\V}_{5}(\bar{\X})&=\bar{\V}_{5}(\X)=K_{1}'(\X)=\V_{5}(\X),\\
	\bar{\p}(\bar{\X})&=\bar{\p}(\X)=H_{1}(\X)=\p(\X),\\
	\bar{\q}(\bar{\Y})&=\bar{\q}(\Y)=H_{2}(\Y)=\q(\X).
\end{align*}
Hence, we have proved that $\mathbf{C}\circ\mathbf{D}\circ\mathbf{E}=\mathbf{E}$. By a straightforward calculation, using Definition \ref{def:mapfromFtoG0} and Definition \ref{def:mapfromG0toF}, one proves that $\mathbf{D}\circ \mathbf{C}=\id$. This concludes the proof of \eqref{eq:FHbijection1}. 

Next we prove \eqref{eq:FHbijection2}. Given $(\psi_{1},\psi_{2})\in \F$, let
\begin{align*}
	(\X,\Y,\Z,\V,\W,\p,\q)&=\mathbf{C}(\psi_{1},\psi_{2}),\\
	(Z,p,q)&=\mathbf{S}(\X,\Y,\Z,\V,\W,\p,\q),\\
	(\bar{\X},\bar{\Y},\bar{\Z},\bar{\V},\bar{\W},\bar{\p},\bar{\q})&=\mathbf{E}(Z,p,q).
\end{align*}
As before, we first show that $\bar{\X}=\X$. Since $(Z,p,q)\in \H$ is a solution with $(Z,p,q)\bullet (\X,\Y)=(\X,\Y,\Z,\V,\W,\p,\q)\in \G_{0}$, we have that $t(\X(s),\Y(s))=0$. Hence, by \eqref{eq:Emap}, we get $\bar{\X}(s)\leq \X(s)$. Assume that there exists $s\in \mathbb{R}$ such that $\bar{\X}(s)<\X(s)$. Let $s_{0}=\X^{-1}(\bar{\X}(s))$ and $s_{1}=\Y^{-1}(\bar{\Y}(s))$. Since
$\X(s_{0})=\bar{\X}(s)<\X(s)$ and $\Y(s_{1})=\bar{\Y}(s)>\Y(s)$, we have $s_{0}<s<s_{1}$. By \eqref{eq:setH1}, \eqref{eq:setH4} and since $t(\X(s_{0}),\Y(s_{0}))=t(\bar{\X}(s),\bar{\Y}(s))=t(\X(s_{1}),\Y(s_{1}))=0$, we get $t_{Y}(\X(s_{0}),Y)=0$ for $Y\in [\Y(s_{0}),\Y(s_{1})]$ and $t_{X}(X,\Y(s_{1}))=0$ for $X\in [\X(s_{0}),\X(s_{1})]$. This implies, by \eqref{eq:setH1}, that $x_{Y}(\X(s_{0}),Y)=0$ for $Y\in [\Y(s_{0}),\Y(s_{1})]$ and $x_{X}(X,\Y(s_{1}))=0$ for $X\in [\X(s_{0}),\X(s_{1})]$. Then,
\begin{align*}
	x_{1}(\bar{\X}(s))&=x_{1}(\X(s_{0}))\\
	&=\Z_{2}(s_{0}) \quad \text{by } \eqref{eq:mapFtoGZ2}\\
	&=x(\X(s_{0}),\Y(s_{0})) \quad \text{since } (Z,p,q)\bullet (\X,\Y)=(\X,\Y,\Z,\V,\W,\p,\q)\\
	&=x(\X(s_{0}),\Y(s_{1}))\\
	&=x(\X(s_{1}),\Y(s_{1}))=\Z_{2}(s_{1})=x_{2}(\Y(s_{1}))=x_{2}(\bar{\Y}(s)).
\end{align*}
However, the fact that $x_{1}(\bar{\X}(s))=x_{2}(\bar{\Y}(s))$ and $\bar{\X}(s)<\X(s)$ contradicts the definition of $\X$ in \eqref{eq:mapFtoGX}. Hence, we must have $\bar{\X}=\X$, which yields $\bar{\Y}=\Y$ and
\begin{align*}
	\bar{\Z}(s)&=Z(\bar{\X}(s),\bar{\Y}(s))=Z(\X(s),\Y(s))=\Z(s),\\
	\bar{\V}(\bar{\X})&=\bar{\V}(\X)=Z_{X}(\X,\Y)=\V(\X),\\
	\bar{\W}(\bar{\Y})&=\bar{\W}(\Y)=Z_{Y}(\X,\Y)=\W(\Y),\\
	\bar{\p}(\bar{\X})&=\bar{\p}(\X)=p(\X,\Y)=\p(\X),\\
	\bar{\q}(\bar{\Y})&=\bar{\q}(\Y)=q(\X,\Y)=\q(\Y).
\end{align*}  
Hence, we have proved that $\mathbf{E}\circ\mathbf{S}\circ\mathbf{C}=\mathbf{C}$. By the uniqueness of the solution for given data $(\X,\Y,\Z,\V,\W,\p,\q)\in \G_{0}$, we have that $\mathbf{S}\circ\mathbf{E}=\id$. This concludes the proof of \eqref{eq:FHbijection2}.
\end{proof}

\begin{theorem}
\label{thm:STsemigroup}
The mapping $S_{T}$ is a semigroup.
\end{theorem}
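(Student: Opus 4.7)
The plan is to prove the semigroup property $S_{T_1+T_2}=S_{T_1}\circ S_{T_2}$ together with $S_0=\id$ by purely algebraic manipulation of the defining composition $S_T=\mathbf{D}\circ\mathbf{E}\circ\mathbf{t}_T\circ\mathbf{S}\circ\mathbf{C}$. All the nontrivial content has already been packaged into Lemma~\ref{lemma:setrelations} and the flow identity~\eqref{eq:tsemigroup}, so the job here is to chain these identities together in the correct order.

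First I would write out
\begin{equation*}
S_{T_1}\circ S_{T_2}=\mathbf{D}\circ\mathbf{E}\circ\mathbf{t}_{T_1}\circ\mathbf{S}\circ\mathbf{C}\circ\mathbf{D}\circ\mathbf{E}\circ\mathbf{t}_{T_2}\circ\mathbf{S}\circ\mathbf{C}.
\end{equation*}
The key observation is that the middle block $\mathbf{S}\circ\mathbf{C}\circ\mathbf{D}\circ\mathbf{E}$ collapses to the identity on $\H$: applying $\mathbf{S}$ to the first identity in \eqref{eq:FHbijection1} gives $\mathbf{S}\circ\mathbf{C}\circ\mathbf{D}\circ\mathbf{E}=\mathbf{S}\circ\mathbf{E}$, and by the second identity in \eqref{eq:FHbijection2} this equals $\id$. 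Substituting,
\begin{equation*}
S_{T_1}\circ S_{T_2}=\mathbf{D}\circ\mathbf{E}\circ\mathbf{t}_{T_1}\circ\mathbf{t}_{T_2}\circ\mathbf{S}\circ\mathbf{C}.
\end{equation*}
Now I invoke \eqref{eq:tsemigroup}, which gives $\mathbf{t}_{T_1}\circ\mathbf{t}_{T_2}=\mathbf{t}_{T_1+T_2}$, yielding $S_{T_1}\circ S_{T_2}=S_{T_1+T_2}$.

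For the identity at $T=0$, note that $\mathbf{t}_0=\id$ on $\H$ directly from Definition~\ref{lemma:tTmap}, hence $S_0=\mathbf{D}\circ\mathbf{E}\circ\mathbf{S}\circ\mathbf{C}$. Using the identity $\mathbf{E}\circ\mathbf{S}\circ\mathbf{C}=\mathbf{C}$ from \eqref{eq:FHbijection2} and then $\mathbf{D}\circ\mathbf{C}=\id$ from \eqref{eq:FHbijection1} we obtain $S_0=\mathbf{D}\circ\mathbf{C}=\id$ on $\F$. Since no computational obstacle remains—every needed collapsing identity is already established—the proof is short and essentially a bookkeeping exercise. The only thing that must be checked implicitly is that the intermediate compositions have matching domains and codomains, which follows from the fact (stated after Lemma~\ref{lemma:setrelations}) that $\mathbf{D}\circ\mathbf{E}$ and $\mathbf{S}\circ\mathbf{C}$ realise the bijection between $\F$ and $\H$, while $\mathbf{t}_T$ preserves $\H$.
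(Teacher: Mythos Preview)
Your proof is correct and follows essentially the same approach as the paper: expand the composition $S_{T_1}\circ S_{T_2}$, collapse the inner block $\mathbf{S}\circ\mathbf{C}\circ\mathbf{D}\circ\mathbf{E}$ to the identity on $\H$ via Lemma~\ref{lemma:setrelations}, and then use \eqref{eq:tsemigroup}. Your version is slightly more explicit about which identities from \eqref{eq:FHbijection1}--\eqref{eq:FHbijection2} are being invoked, and you additionally verify $S_0=\id$, which the paper omits.
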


\begin{proof}
We have
\begin{align*}
	S_{T} \circ S_{T'}&=
	\mathbf{D}\circ\mathbf{E}\circ\mathbf{t}_{T}\circ\mathbf{S}\circ\mathbf{C}\circ\mathbf{D}\circ\mathbf{E}\circ\mathbf{t}_{T'}\circ\mathbf{S}\circ\mathbf{C}\\
	&=\mathbf{D}\circ\mathbf{E}\circ\mathbf{t}_{T}\circ\mathbf{t}_{T'}\circ\mathbf{S}\circ\mathbf{C} \quad \text{by Lemma } \ref{lemma:setrelations}\\
	&=\mathbf{D}\circ\mathbf{E}\circ\mathbf{t}_{T+T'}\circ\mathbf{S}\circ\mathbf{C} \quad \text{by } \eqref{eq:tsemigroup}\\
	&=S_{T+T'}.
\end{align*}
\end{proof}

\subsection{Mapping from $\F$ to $\D$}

\begin{definition}
\label{def:mapMFtoD}
Given $\psi=(\psi_{1},\psi_{2})\in \F$, we define $(u,R,S,\rho,\sigma,\mu,\nu)$ as
\begin{subequations}
\label{eqns:mapFtoD1}
\begin{equation}
\label{eq:mapFtoD1}
	u(x)=U_{1}(X) \quad \text{if } x_{1}(X)=x
\end{equation}
or, equivalently,
\begin{equation}
\label{eq:mapFtoD2}
	u(x)=U_{2}(Y) \quad \text{if } x_{2}(Y)=x,
\end{equation}
\begin{align}
	\label{eq:mapFtoD3}
	R(x)\,dx&=(x_{1})_{\#}(2c(U_{1}(X))V_{1}(X)\,dX),\\
	\label{eq:mapFtoD4}
	S(x)\,dx&=(x_{2})_{\#}(-2c(U_{2}(Y))V_{2}(Y)\,dY),\\
	\label{eq:mapFtoD5}
	\rho(x)\,dx&=(x_{1})_{\#}(2H_{1}(X)\,dX),\\
	\label{eq:mapFtoD6}
	\sigma(x)\,dx&=(x_{2})_{\#}(2H_{2}(Y)\,dY),\\
	\label{eq:mapFtoD7}
	\mu&=(x_{1})_{\#}(J_{1}'(X)\,dX),\\
	\label{eq:mapFtoD8}
	\nu&=(x_{2})_{\#}(J_{2}'(Y)\,dY).
\end{align}
\end{subequations}
The relations \eqref{eq:mapFtoD3}-\eqref{eq:mapFtoD6} are equivalent to
\begin{subequations}
\begin{align}
\label{eq:mapFtoD9}
	R(x_{1}(X))x_{1}'(X)&=2c(U_{1}(X))V_{1}(X),\\
	S(x_{2}(Y))x_{2}'(Y)&=-2c(U_{2}(Y))V_{2}(Y),\\
	\label{eq:mapFtoD11}
	\rho(x_{1}(X))x_{1}'(X)&=2H_{1}(X),\\
	\label{eq:mapFtoD12}
	\sigma(x_{2}(Y))x_{2}'(Y)&=2H_{2}(Y),
\end{align}
\end{subequations}
respectively, for almost every $X$ and $Y$. We denote by $\bf{M}:\F\rightarrow \D$ the mapping which to any $\psi\in \F$ associates the element $(u,R,S,\rho,\sigma,\mu,\nu)\in \D$ as defined above.
\end{definition}

The push-forward of a measure $\lambda$ by a function $f$ is the measure $f_{\#}\lambda$ defined by $f_{\#}\lambda(B)=\lambda(f^{-1}(B))$ for Borel sets $B$. 

The well-posedness of Definition \ref{def:mapMFtoD} is part of the proof of the following lemma. 

\begin{lemma}
\label{lemma:mapG0toD}
Given $\psi=(\psi_{1},\psi_{2})\in \F$, let $(u,R,S,\rho,\sigma,\mu,\nu)=\mathbf{M}(\psi_{1},\psi_{2})$. Then, for any $\Theta=(\X,\Y,\Z,\V,\W,\p,\q)\in \G_{0}$ such that $(\psi_{1},\psi_{2})=\mathbf{D}(\X,\Y,\Z,\V,\W,\p,\q)$, we have
\begin{subequations}
\label{eqns:mapGtoD1}
\begin{equation}
\label{eq:mapGtoD1}
	u(x)=\Z_{3}(s) \quad \text{if } x=\Z_{2}(s),
\end{equation}
\begin{align}
	\label{eq:mapGtoD3}
	R(x)\,dx&=(\Z_{2})_{\#}(2c(\Z_{3}(s))\V_{3}(\X(s))\dot{\X}(s)\,ds),\\
	\label{eq:mapGtoD4}
	S(x)\,dx&=(\Z_{2})_{\#}(-2c(\Z_{3}(s))\W_{3}(\Y(s))\dot{\Y}(s)\,ds),\\
	\label{eq:mapGtoD5}
	\rho(x)\,dx&=(\Z_{2})_{\#}(2\p(\X(s))\dot{\X}(s)\,ds),\\
	\label{eq:mapGtoD6}
	\sigma(x)\,dx&=(\Z_{2})_{\#}(2\q(\Y(s))\dot{\Y}(s)\,ds),\\
	\label{eq:mapGtoD7}
	\mu&=(\Z_{2})_{\#}(\V_{4}(\X(s))\dot{\X}(s)\,ds),\\
	\label{eq:mapGtoD8}
	\nu&=(\Z_{2})_{\#}(\W_{4}(\Y(s))\dot{\Y}(s)\,ds).
\end{align}
\end{subequations}
The relations \eqref{eq:mapGtoD3} and \eqref{eq:mapGtoD5} are equivalent to
\begin{subequations}
\label{eq:mapGtoD9and10}
\begin{align}
	\label{eq:mapGtoD9}
	R(\Z_{2}(s))\V_{2}(\X(s))&=c(\Z_{3}(s))\V_{3}(\X(s)),\\
	\label{eq:mapGtoD10}
	\rho(\Z_{2}(s))\V_{2}(\X(s))&=\p(\X(s))
\end{align}
\end{subequations}
for any $s$ such that $\dot{\X}(s)>0$, respectively. The relations \eqref{eq:mapGtoD4} and \eqref{eq:mapGtoD6} are equivalent to
\begin{subequations}
\begin{align}
	\label{eq:mapGtoD11}
	S(\Z_{2}(s))\W_{2}(\Y(s))&=-c(\Z_{3}(s))\W_{3}(\Y(s)),\\
	\label{eq:mapGtoD12}
	\sigma(\Z_{2}(s))\W_{2}(\Y(s))&=\q(\Y(s))
\end{align}
\end{subequations}
for any $s$ such that $\dot{\Y}(s)>0$, respectively.
\end{lemma}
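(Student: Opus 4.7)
The plan is to push the identities of Definition \ref{def:mapfromG0toF} through to the Eulerian side via the change of variables $X=\X(s)$, $Y=\Y(s)$. Before doing this, I would first verify that Definition \ref{def:mapMFtoD} is well-posed, namely that \eqref{eq:mapFtoD1} and \eqref{eq:mapFtoD2} agree and are independent of the chosen preimage. If $x_{1}(X_{1})=x_{1}(X_{2})$ for $X_{1}<X_{2}$, then monotonicity of $x_{1}$ forces $x_{1}'=0$ a.e.\ on $[X_{1},X_{2}]$, so by \eqref{eq:setFrel3} we get $V_{1}=0$ and $H_{1}=0$ a.e.\ on that interval; picking any curve $(\X,\Y)\in\C$ that crosses $[X_{1},X_{2}]$ horizontally and using \eqref{eq:setFrel7} then gives $U_{1}(X_{1})=U_{1}(X_{2})$. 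The equality $U_{1}(X)=U_{2}(Y)$ whenever $x_{1}(X)=x_{2}(Y)$ is exactly \eqref{eq:setFrel6}, which handles the equivalence of \eqref{eq:mapFtoD1} and \eqref{eq:mapFtoD2}.

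Next, I would collect the explicit correspondences obtained by applying $\mathbf{D}$ from Definition \ref{def:mapfromG0toF}: namely $x_{1}\circ\X=x_{2}\circ\Y=\Z_{2}$, $U_{1}\circ\X=U_{2}\circ\Y=\Z_{3}$, $V_{1}=\V_{3}$, $V_{2}=\W_{3}$, $H_{1}=\p$, $H_{2}=\q$, together with the derivative identities $x_{1}'=2\V_{2}$, $x_{2}'=2\W_{2}$, $J_{1}'=\V_{4}$, $J_{2}'=\W_{4}$, $K_{1}'=\V_{5}$, $K_{2}'=\W_{5}$; the first of these follows from differentiating $x_{1}(\X(s))=\Z_{2}(s)$ and using \eqref{eq:setG0rel}, while the others come from differentiating \eqref{eq:mapfromG0toF3}--\eqref{eq:mapfromG0toF4}. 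With these in hand, \eqref{eq:mapGtoD1} is immediate: $x=\Z_{2}(s)=x_{1}(\X(s))$ together with \eqref{eq:mapFtoD1} yield $u(x)=U_{1}(\X(s))=\Z_{3}(s)$.

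For the push-forward formulas \eqref{eq:mapGtoD3}--\eqref{eq:mapGtoD8}, I would exploit the factorization $\Z_{2}=x_{1}\circ\X$, which gives $(\Z_{2})_{\#}=(x_{1})_{\#}\circ\X_{\#}$. The key computational lemma is that for any Borel function $g$, the change of variables $X=\X(s)$ (valid because $\X$ is absolutely continuous and nondecreasing) yields
\begin{equation*}
\X_{\#}\bigl(g(\X(s))\dot{\X}(s)\,ds\bigr)=g(X)\,dX,
\end{equation*}
and a set on which $\dot{\X}=0$ contributes nothing because of the factor $\dot{\X}(s)$. Taking $g(X)=2c(U_{1}(X))V_{1}(X)$, $g(X)=2H_{1}(X)$, and $g(X)=J_{1}'(X)$ in turn, and using the correspondences from the previous paragraph to rewrite $g\circ\X$ in Lagrangian terms, gives \eqref{eq:mapGtoD3}, \eqref{eq:mapGtoD5}, and \eqref{eq:mapGtoD7}. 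The identities \eqref{eq:mapGtoD4}, \eqref{eq:mapGtoD6}, and \eqref{eq:mapGtoD8} are obtained in the same way with $x_{2}\circ\Y=\Z_{2}$.

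Finally, the pointwise equivalents \eqref{eq:mapGtoD9}--\eqref{eq:mapGtoD12} follow by evaluating \eqref{eq:mapFtoD9}--\eqref{eq:mapFtoD12} at $X=\X(s)$ (respectively $Y=\Y(s)$) and inserting $x_{1}'(\X(s))=2\V_{2}(\X(s))$ together with $U_{1}(\X(s))=\Z_{3}(s)$ and $V_{1}=\V_{3}$, $H_{1}=\p$; the assumption $\dot{\X}(s)>0$ is exactly what is needed to divide by $\dot{\X}(s)$ after differentiating the defining relations (analogously for $\Y$). The main point of care throughout the argument is the handling of the degenerate regions: places where $\dot{\X}=0$, $\dot{\Y}=0$, or $x_{1}'=0$ could in principle cause $0\cdot\infty$ ambiguities in the Eulerian formulas, but they are always controlled because the offending factor appears explicitly in the integrand and because \eqref{eq:setFrel3} together with $\V_{2}\ge 0$, $\V_{4}\ge 0$ force the relevant quantities ($V_{1}$, $H_{1}$, and hence $R$, $\rho$ on singular support of $\mu$) to vanish whenever they would otherwise be ill-defined. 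This is the only nontrivial part of the argument and follows the pattern already used in the proof of well-posedness of Definition \ref{def:mapfromDtoF}.
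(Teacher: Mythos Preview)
Your derivation of the pushforward identities \eqref{eq:mapGtoD1}--\eqref{eq:mapGtoD8} via the factorization $\Z_{2}=x_{1}\circ\X$ and the change of variables $X=\X(s)$ is correct and matches the paper's Step~1. The well-posedness argument for $u$ is also essentially right, though the paper argues it at the level of $\Z_{2},\Z_{3}$ rather than $x_{1},U_{1}$.

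There is, however, a substantial gap. As the paper states just before the lemma, the well-posedness of Definition~\ref{def:mapMFtoD} is \emph{part of the proof of this lemma}, and you have only addressed well-posedness of $u$. The bulk of the paper's proof (Steps~3--5) establishes that the pushforward measures on the right-hand sides of \eqref{eq:mapFtoD3}--\eqref{eq:mapFtoD6} are absolutely continuous (so that $R,S,\rho,\sigma$ exist as functions at all), that $u,R,S,\rho,\sigma\in L^{2}(\mathbb{R})$, that $u_{x}=\frac{R-S}{2c(u)}$, and that $\mu_{\text{ac}}=\frac{1}{4}(R^{2}+c(u)\rho^{2})\,dx$, $\nu_{\text{ac}}=\frac{1}{4}(S^{2}+c(u)\sigma^{2})\,dx$; these are needed for the image to lie in $\D$. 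The absolute continuity step in particular is nontrivial: it uses \eqref{eq:setGrel3} and Cauchy--Schwarz to bound the variation of $F(x)=\int_{\Z_{2}^{-1}((-\infty,x])}2c(\Z_{3})\V_{3}(\X)\dot{\X}\,ds$ by $C(\sum|\bar{x}_{i}-x_{i}|)^{1/2}$, and cannot be bypassed by the remark that ``the offending factor appears explicitly in the integrand.''

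Your last paragraph also has a circularity: you derive \eqref{eq:mapGtoD9}--\eqref{eq:mapGtoD12} by evaluating \eqref{eq:mapFtoD9}--\eqref{eq:mapFtoD12}, but the equivalence of \eqref{eq:mapFtoD3}--\eqref{eq:mapFtoD6} with \eqref{eq:mapFtoD9}--\eqref{eq:mapFtoD12} is precisely one of the things being proved here. The paper handles this by a measure-theoretic argument (passing from $\int_{\Z_{2}^{-1}(A)}\ldots=\int_{\Z_{2}^{-1}(A)}\ldots$ for Borel $A$ to $\int_{B}\ldots=\int_{B}\ldots$ for arbitrary measurable $B$, using that $\dot{\Z}_{2}=0$ on $B^{c}\cap\Z_{2}^{-1}(\Z_{2}(B))$), which then yields the pointwise identity after dividing by $\dot{\X}(s)>0$.
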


\begin{proof}
We decompose the proof into five steps.
	
\textbf{Step 1.} We first prove that \eqref{eqns:mapFtoD1} implies \eqref{eqns:mapGtoD1}. If $x=x_{1}(X)$, let $s=\X^{-1}(X)$. Then, by \eqref{eq:mapfromG0toF1}, $x=x_{1}(X)=x_{1}(\X(s))=\Z_{2}(s)$, and, by \eqref{eq:mapfromG0toF2}, $u(x)=U_{1}(X)=U_{1}(\X(s))=\Z_{3}(s)$. Similarly, if $x=x_{2}(Y)$, we let $s=\Y^{-1}(Y)$ and obtain $x=x_{2}(Y)=x_{2}(\Y(s))=\Z_{2}(s)$ and $u(x)=U_{2}(Y)=U_{2}(\Y(s))=\Z_{3}(s)$. Hence, both \eqref{eq:mapFtoD1} and \eqref{eq:mapFtoD2} imply 
\eqref{eq:mapGtoD1}. The identity \eqref{eq:mapGtoD3} follows from \eqref{eq:mapFtoD3} since, for any Borel set $A$, we have
\begin{align*}
	\int_{A}R(x)\,dx&=\int_{x_{1}^{-1}(A)}2c(U_{1}(X))V_{1}(X)\,dX\\
	&=\int_{(x_{1}\circ \X)^{-1}(A)}2c(U_{1}(\X(s)))V_{1}(\X(s))\dot{\X}(s)\,ds \quad \text{by a change of variables}\\
	&=\int_{\Z_{2}^{-1}(A)}2c(\Z_{3}(s))\V_{3}(\X(s))\dot{\X}(s)\,ds \quad \text{by \eqref{eq:mapfromG0toF1}, \eqref{eq:mapfromG0toF2} and \eqref{eq:mapfromG0toF5}.} 
\end{align*}
In the same way, one proves that \eqref{eq:mapFtoD4} implies \eqref{eq:mapGtoD4}. By a similar calculation as above, we obtain
\begin{align*}
	\int_{A}\rho(x)\,dx&=\int_{x_{1}^{-1}(A)}2H_{1}(X)\,dX\\
	&=\int_{(x_{1}\circ \X)^{-1}(A)}2H_{1}(\X(s))\dot{\X}(s)\,ds\\
	&=\int_{\Z_{2}^{-1}(A)}2\p(\X(s))\dot{\X}(s)\,ds
\end{align*}
and
\begin{align*}
	\int_{A}\sigma(x)\,dx&=\int_{x_{2}^{-1}(A)}2H_{2}(Y)\,dY\\
	&=\int_{(x_{2}\circ \Y)^{-1}(A)}2H_{2}(\Y(s))\dot{\Y}(s)\,ds\\
	&=\int_{\Z_{2}^{-1}(A)}2\q(\Y(s))\dot{\Y}(s)\,ds,
\end{align*}
which shows that \eqref{eq:mapFtoD5} and \eqref{eq:mapFtoD6} imply \eqref{eq:mapGtoD5} and \eqref{eq:mapGtoD6}, respectively. From \eqref{eq:mapFtoD7}, we find 
\begin{equation*}
	\mu(A)=\int_{x_{1}^{-1}(A)}J_{1}'(X)\,dX=\int_{(x_{1}\circ \X)^{-1}(A)}J_{1}'(\X(s))\dot{\X}(s)\,ds
	=\int_{\Z_{2}^{-1}(A)}\V_{4}(\X(s))\dot{\X}(s)\,ds, 
\end{equation*}
so that \eqref{eq:mapFtoD7} leads to \eqref{eq:mapGtoD7}. By a similar calculation, one shows that \eqref{eq:mapFtoD8} yields \eqref{eq:mapGtoD8}.  
	
\textbf{Step 2.} We prove that $u$ is a well-defined function that belongs to $L^{2}(\mathbb{R})$. Consider $s_{0},s_{1}\in \mathbb{R}$ such that $s_{0}<s_{1}$ and $x=\Z_{2}(s_{0})=\Z_{2}(s_{1})$. Since $\Z_{2}$ is continuous and nondecreasing, we have $\dot{\Z}_{2}(s)=\V_{2}(\X(s))\dot{\X}(s)+\W_{2}(\Y(s))\dot{\Y}(s)=0$ for $s\in [s_{0},s_{1}]$, which implies that $\V_{2}(\X(s))\dot{\X}(s)=\W_{2}(\Y(s))\dot{\Y}(s)=0$. Then, by multiplying \eqref{eq:setGrel3} with $\dot{\X}(s)^{2}$ and \eqref{eq:setGrel4} with $\dot{\Y}(s)^{2}$, we obtain $\V_{3}(\X(s))\dot{\X}(s)=\W_{3}(\Y(s))\dot{\Y}(s)=0$ and therefore $\dot{\Z}_{3}(s)=\V_{3}(\X(s))\dot{\X}(s)+\W_{3}(\Y(s))\dot{\Y}(s)=0$ for $s\in [s_{0},s_{1}]$. Hence, $\Z_{3}(s_{0})=\Z_{3}(s_{1})$ and \eqref{eq:mapGtoD1} is well-defined. We have
\begin{align*}
	\int_{\mathbb{R}}u^{2}(x)\,dx&=\int_{\mathbb{R}}u^{2}(\Z_{2}(s))\dot{\Z}_{2}(s)\,ds \quad \text{by a change of variables}\\
	&=\int_{\mathbb{R}}\Z_{3}^{2}(s)\dot{\Z}_{2}(s)\,ds \quad \text{by } \eqref{eq:mapGtoD1}\\
	&=2\int_{\mathbb{R}}\Z_{3}^{2}(s)\V_{2}(\X(s))\dot{\X}(s)\,ds \quad \text{by } \eqref{eq:setG0rel}\\
	&\leq 4\int_{\mathbb{R}}\Z_{3}^{2}(s)\V_{2}(\X(s))\,ds \quad \text{since } 0\leq \dot{\X}\leq 2 \text{ and } \V_{2}\geq0\\
	&=4\int_{\mathbb{R}}\Z_{3}^{2}(s)\bigg(\V^{a}_{2}(\X(s))+\frac{1}{2}\bigg)\,ds \quad \text{by } \eqref{eq:atriplet}\\
	&\leq 4\bigg(||\V^{a}_{2}||_{\Linf(\mathbb{R})}+\frac{1}{2}\bigg)||\Z_{3}||_{L^{2}(\mathbb{R})}^{2}
\end{align*}
and $u\in L^{2}(\mathbb{R})$. 
	
\textbf{Step 3.} We show that the definitions \eqref{eq:mapGtoD3}-\eqref{eq:mapGtoD6} are well-defined, and that the relations \eqref{eq:mapGtoD9}-\eqref{eq:mapGtoD12} hold. First we prove that the measures 
\begin{align*}
	&(\Z_{2})_{\#}(2c(\Z_{3}(s))\V_{3}(\X(s))\dot{\X}(s)\,ds),\\
	&(\Z_{2})_{\#}(-2c(\Z_{3}(s))\W_{3}(\Y(s))\dot{\Y}(s)\,ds),\\
	&(\Z_{2})_{\#}(2\p(\X(s))\dot{\X}(s)\,ds),\\
	&(\Z_{2})_{\#}(2\q(\Y(s))\dot{\Y}(s)\,ds)
\end{align*}
are absolutely continuous with respect to Lebesgue measure. We claim that the function
\begin{equation*}
	F(x)=\int_{\Z_{2}^{-1}((-\infty,x])}2c(\Z_{3}(s))\V_{3}(\X(s))\dot{\X}(s)\,ds
\end{equation*}
is absolutely continuous. Let $(x_{i},\bar{x}_{i})$, $i=1,\dots,N$, be non-intersecting intervals. We have
\begin{equation*}
	\sum_{i=1}^{N}|F(\bar{x}_{i})-F(x_{i})|=\sum_{i=1}^{N}\bigg|\int_{\Z_{2}^{-1}((x_{i},\bar{x}_{i}])}2c(\Z_{3}(s))\V_{3}(\X(s))\dot{\X}(s)\,ds\bigg|.
\end{equation*}
The set $\Z_{2}^{-1}((x_{i},\bar{x}_{i}])$ is an interval, since the function $\Z_{2}$ is nondecreasing and continuous. Denote $s_{i}=\sup\{s\in\mathbb{R} \ | \ \Z_{2}(s)\leq x_{i}\}$ and $\bar{s}_{i}=\sup\{s\in\mathbb{R} \ | \ \Z_{2}(s)\leq\bar{x}_{i}\}$. We have $\Z_{2}(s_{i})=x_{i}$, $\Z_{2}(\bar{s}_{i})=\bar{x}_{i}$ and  $\Z_{2}^{-1}((x_{i},\bar{x}_{i}])=(s_{i},\bar{s}_{i}]$. Then
\begin{equation*}
	\sum_{i=1}^{N}|F(\bar{x}_{i})-F(x_{i})|
	\leq 2\kappa \int_{\displaystyle{\bigcup_{i=1}^{N}}(s_{i},\bar{s}_{i}]}|\V_{3}(\X(s))|\dot{\X}(s)\,ds. 
\end{equation*}
From \eqref{eq:setGrel3}, we obtain $|\V_{3}(\X)|\leq \kappa(2\V_{4}(\X)\V_{2}(\X))^{\frac{1}{2}}$. This implies, by the Cauchy--Schwarz inequality, that
\begin{aalign}
	\label{eq:mapGtoDFest}
	&\sum_{i=1}^{N}|F(\bar{x}_{i})-F(x_{i})|\\
	&\leq 2\kappa^{2}\int_{\displaystyle{\bigcup_{i=1}^{N}}(s_{i},\bar{s}_{i}]}(\V_{4}(\X(s))\dot{\X}(s))^{\frac{1}{2}}(2\V_{2}(\X(s))\dot{\X}(s))^{\frac{1}{2}}\,ds\\
	&\leq 2\kappa^{2}\bigg(\int_{\displaystyle{\bigcup_{i=1}^{N}}(s_{i},\bar{s}_{i}]}\V_{4}(\X(s))\dot{\X}(s)\,ds\bigg)^{\frac{1}{2}}\bigg(\int_{\displaystyle{\bigcup_{i=1}^{N}}(s_{i},\bar{s}_{i}]} 2\V_{2}(\X(s))\dot{\X}(s)\,ds\bigg)^{\frac{1}{2}}.
\end{aalign}	
Inserting the estimates
\begin{align*}	
	\int_{\displaystyle{\bigcup_{i=1}^{N}}(s_{i},\bar{s}_{i}]}\V_{4}(\X(s))\dot{\X}(s)\,ds
	&\leq\int_{\mathbb{R}}\V_{4}(\X(s))\dot{\X}(s)\,ds\\
	&\leq\int_{\mathbb{R}}(\V_{4}(\X(s))\dot{\X}(s)+\W_{4}(\Y(s))\dot{\Y}(s))\,ds\\
	&=\int_{\mathbb{R}}\dot{\Z}_{4}(s)\,ds\\
	&\leq||\Z_{4}^{a}||_{\Linf(\mathbb{R})} \quad \text{by } \eqref{eq:setGrel5}
\end{align*}
and
\begin{equation*}
	\int_{\displaystyle{\bigcup_{i=1}^{N}}(s_{i},\bar{s}_{i}]}2\V_{2}(\X(s))\dot{\X}(s)\,ds=\int_{\displaystyle{\bigcup_{i=1}^{N}}(s_{i},\bar{s}_{i}]}\dot{\Z}_{2}(s)\,ds=\sum_{i=1}^{N}|\bar{x}_{i}-x_{i}|
\end{equation*}
into \eqref{eq:mapGtoDFest}, we get
\begin{equation*}
	\sum_{i=1}^{N}|F(\bar{x}_{i})-F(x_{i})|\leq C\bigg(\sum_{i=1}^{N}|\bar{x}_{i}-x_{i}|\bigg)^{\frac{1}{2}}
\end{equation*}
for a constant $C$ which only depends on $|||\Theta|||_{\G}$ and $\kappa$. This implies that $F$ is absolutely continuous. Then, the measure $(\Z_{2})_{\#}(2c(\Z_{3}(s))\V_{3}(\X(s))\dot{\X}(s)\,ds)$ is absolutely continuous. In the same way, one proves that measures \\$(\Z_{2})_{\#}(-2c(\Z_{3}(s))\W_{3}(\Y(s))\dot{\Y}(s)\,ds)$, $(\Z_{2})_{\#}(2\p(\X(s))\dot{\X}(s)\,ds)$ and
\\$(\Z_{2})_{\#}(2\q(\Y(s))\dot{\Y}(s)\,ds)$ are absolutely continuous, so that the functions $R$, $S$, $\rho$ and $\sigma$ as given by \eqref{eq:mapGtoD3}-\eqref{eq:mapGtoD6} are well-defined. 
	
Let us prove \eqref{eq:mapGtoD9}. We have
\begin{equation}
	\label{eq:mapGtoDZ2}
	\int_{\Z_{2}^{-1}(A)}R(\Z_{2}(s))\dot{\Z}_{2}(s)\,ds=\int_{\Z_{2}^{-1}(A)}2c(\Z_{3}(s))\V_{3}(\X(s))\dot{\X}(s)\,ds
\end{equation}
for any Borel set $A$, and we want to show that for any measurable set $B$, 
\begin{equation}
	\label{eq:mapGtoDB}
	\int_{B}R(\Z_{2}(s))\dot{\Z}_{2}(s)\,ds=\int_{B}2c(\Z_{3}(s))\V_{3}(\X(s))\dot{\X}(s)\,ds.
\end{equation}
For any measurable set $B$, we have the decomposition $\Z_{2}^{-1}(\Z_{2}(B))=B\cup(B^{c}\cap\Z_{2}^{-1}(\Z_{2}(B)))$. Let us prove that $\dot{\Z}_{2}=0$ on $B^{c}\cap\Z_{2}^{-1}(\Z_{2}(B))$. Consider a point $\bar{s}\in B^{c}\cap\Z_{2}^{-1}(\Z_{2}(B))$. There exists $\tilde{s}\in B$ such that $\Z_{2}(\bar{s})=\Z_{2}(\tilde{s})$, which implies, since $\Z_{2}$ is nondecreasing, that $\dot{\Z}_{2}=0$ on the interval joining the points $\bar{s}$ and $\tilde{s}$. Since $\bar{s}$ was arbitrary, we conclude that $\dot{\Z}_{2}(s)=0$ for all $s\in B^{c}\cap\Z_{2}^{-1}(\Z_{2}(B))$. Then, by an estimate as above, we get
\begin{align*}
	&\bigg|\int_{B^{c}\cap\Z_{2}^{-1}(\Z_{2}(B))}2c(\Z_{3}(s))\V_{3}(\X(s))\dot{\X}(s)\,ds\bigg|\\ &\leq
	2\kappa^{2}||\Z_{4}^{a}||_{\Linf(\mathbb{R})}^{\frac{1}{2}}\bigg(\int_{B^{c}\cap\Z_{2}^{-1}(\Z_{2}(B))}\dot{\Z}_{2}(s)\,ds \bigg)^{\frac{1}{2}}=0.
\end{align*}
Hence, by taking $A=\Z_{2}(B)$ in \eqref{eq:mapGtoDZ2}, we obtain \eqref{eq:mapGtoDB}. Thus, 
\begin{equation*}
	R(\Z_{2}(s))\dot{\Z}_{2}(s)=2c(\Z_{3}(s))\V_{3}(\X(s))\dot{\X}(s)
\end{equation*}
which yields, because $\dot{\Z}_{2}(s)=2\V_{2}(\X(s))\dot{\X}(s)$,
\begin{equation*}
	R(\Z_{2}(s))\V_{2}(\X(s))=c(\Z_{3}(s))\V_{3}(\X(s))
\end{equation*}
for any $s$ such that $\dot{\X}(s)>0$. Similarly, one proves \eqref{eq:mapGtoD10}, \eqref{eq:mapGtoD11} and \eqref{eq:mapGtoD12}.
	
\textbf{Step 4.} We show that $R$, $S$, $\rho$ and $\sigma$ belong to $L^{2}(\mathbb{R})$, and $u_{x}=\frac{R-S}{2c(u)}$. Since
\begin{align*}
	\int_{\mathbb{R}}R^{2}(x)\,dx&=\int_{\mathbb{R}}R^{2}(\Z_{2}(s))\dot{\Z}_{2}(s)\,ds \quad \text{by a change of variables}\\
	&=2\int_{\mathbb{R}}R^{2}(\Z_{2}(s))\V_{2}(\X(s))\dot{\X}(s)\,ds \quad \text{by \eqref{eq:setG0rel}}\\
	&=2\int_{\{s\in \mathbb{R}\, | \, \V_{2}(\X(s))>0 \}}\frac{\big(R(\Z_{2}(s))\V_{2}(\X(s))\big)^{2}}{\V_{2}(\X(s))}\dot{\X}(s)\,ds\\
	&=2\int_{\{s\in \mathbb{R}\, | \, \V_{2}(\X(s))>0 \}}\frac{\big(c(\Z_{3}(s))\V_{3}(\X(s))\big)^{2}}{\V_{2}(\X(s))}\dot{\X}(s)\,ds \quad \text{by \eqref{eq:mapGtoD9}}\\
	&\leq 4\int_{\mathbb{R}}\V_{4}(\X(s))\dot{\X}(s)\,ds \quad \text{by } \eqref{eq:setGrel3}\\
	&\leq 4\int_{\mathbb{R}}\big(\V_{4}(\X(s))\dot{\X}(s)+\W_{4}(\Y(s))\dot{\Y}(s)\big)\,ds \quad \text{since } \W_{4}\geq 0\\
	&=4\int_{\mathbb{R}}\dot{\Z_{4}}(s)\,ds\\
	&\leq 4||\Z_{4}^{a}||_{\Linf(\mathbb{R})} \quad \text{by } \eqref{eq:setGrel5}, 
\end{align*}
$R$ belongs to $L^{2}(\mathbb{R})$. Similarly, using \eqref{eq:mapGtoD10}, \eqref{eq:mapGtoD11} and \eqref{eq:mapGtoD12}, one proves that $\rho$, $S$ and $\sigma$ belong to $L^{2}(\mathbb{R})$, respectively. 
	
Let $\phi$ be a smooth test function with compact support. We have
\begin{align*}
	&\int_{\mathbb{R}}u(x)\phi_{x}(x)\,dx\\
	&=\int_{\mathbb{R}}u(\Z_{2}(s))\phi_{x}(\Z_{2}(s))\dot{\Z}_{2}(s)\,ds \quad \text{by a change of variables}\\
	&=\int_{\mathbb{R}}\Z_{3}(s)(\phi(\Z_{2}(s)))_{s}\,ds \quad \text{by \eqref{eq:mapGtoD1}}\\
	&=-\int_{\mathbb{R}}\dot{\Z}_{3}(s)\phi(\Z_{2}(s))\,ds \quad \text{by integrating by parts}\\
	&=-\int_{\mathbb{R}}\big(\V_{3}(\X(s))\dot{\X}(s)+\W_{3}(\Y(s))\dot{\Y}(s)\big)\phi(\Z_{2}(s))\,ds\\
	&=-\int_{\mathbb{R}}\frac{1}{c(\Z_{3}(s))}\big(R(\Z_{2}(s))\V_{2}(\X(s))\dot{\X}(s)\\
	&\hspace{90pt}-S(\Z_{2}(s))\W_{2}(\Y(s))\dot{\Y}(s)\big)\phi(\Z_{2}(s))\,ds \quad \text{by \eqref{eq:mapGtoD9and10}}\\
	&=-\int_{\mathbb{R}}\frac{1}{2c(\Z_{3}(s))}\big(R(\Z_{2}(s))-S(\Z_{2}(s))\big)\phi(\Z_{2}(s))\dot{\Z}_{2}(s)\,ds \quad \text{by \eqref{eq:setG0rel}}\\
	&=-\int_{\mathbb{R}}\frac{1}{2c(u(x))}\big(R(x)-S(x)\big)\phi(x)\,ds \quad \text{by a change of variables}.
\end{align*} 
Hence, $u_{x}=\frac{R-S}{2c(u)}$ in the sense of distributions.
	
\textbf{Step 5.}
We prove that $\mu_{\text{ac}}=\frac{1}{4}(R^{2}+c(u)\rho^{2})\,dx$ and $\nu_{\text{ac}}=\frac{1}{4}(S^{2}+c(u)\sigma^{2})\,dx$. Let
\begin{equation}
\label{eq:setAandB}
	A=\{s\in\mathbb{R} \ | \ \V_{2}(\X(s))>0 \} \quad \text{and} \quad B=(\Z_{2}(A^{c}))^{c}.
\end{equation}
Since $\dot{\Z}_{2}=2\V_{2}(\X)\dot{\X}$, we have $\dot{\Z}_{2}=0$ on $A^{c}$, so that 
\begin{equation*}
	\text{meas}(B^{c})=\int_{A^{c}}\dot{\Z}_{2}(s)\,ds=0.
\end{equation*}
Since $\Z_{2}^{-1}(B^{c})=\Z_{2}^{-1}(\Z_{2}(A^{c}))\supset A^{c}$, we have $\Z_{2}^{-1}(B)\subset A$\footnote{The following example is useful to have in mind. Suppose that $\Z_{2}$ is strictly increasing outside an interval $I$ on which $\dot{\Z}_{2}=0$. Assume further that $A^{c}$ is a subinterval of $I$. We have $\V_{2}(\X)=0$ on $A^{c}$, and $\dot{\X}=0$ on $I\setminus A^{c}$. In this case it is not hard to check that $\Z_{2}^{-1}(B)\subset A$}.
Let $M$ be any Borel set. We have $\mu(M)=\mu(M\cap B)+\mu(M\cap B^{c})$, and we claim that $\mu(M\cap B)$ is the absolutely continuous part of $\mu$. Since $M\cap B\subset B$, $\Z_{2}^{-1}(M\cap B)\subset A$. Hence,
\begin{align*}
	&\mu(M\cap B)\\
	&=\int_{\Z_{2}^{-1}(M\cap B)}\V_{4}(\X(s))\dot{\X}(s)\,ds\\
	&=\int_{\Z_{2}^{-1}(M\cap B)}\frac{\V_{4}(\X(s))\V_{2}(\X(s))}{\V_{2}(\X(s))}\dot{\X}(s)\,ds\\
	&=\int_{\Z_{2}^{-1}(M\cap B)}\frac{(c(\Z_{3}(s))\V_{3}(\X(s)))^{2}+c(\Z_{3}(s))\p^{2}(\X(s)) }{2\V_{2}(\X(s))}\dot{\X}(s)\,ds \quad \text{by } \eqref{eq:setGrel3}\\
	&=\frac{1}{4}\int_{\Z_{2}^{-1}(M\cap B)}(R^{2}(\Z_{2}(s))+c(\Z_{3}(s))\rho^{2}(\Z_{2}(s)))2\V_{2}(\X(s))\dot{\X}(s)\,ds \quad \text{by } \eqref{eq:mapGtoD9and10}\\
	&=\frac{1}{4}\int_{\Z_{2}^{-1}(M\cap B)}(R^{2}(\Z_{2}(s))+c(\Z_{3}(s))\rho^{2}(\Z_{2}(s)))\dot{\Z}_{2}(s)\,ds\\
	&=\frac{1}{4}\int_{M\cap B}(R^{2}(x)+c(u(x))\rho^{2}(x))\,dx \quad \text{by a change of variables}.
\end{align*}
It follows that for any Borel set $M$ with measure zero, $\mu(M\cap B)=0$, so that $\mu_{\text{ac}}=\frac{1}{4}(R^{2}(x)+c(u(x))\rho^{2}(x))\,dx$. Similarly, one proves that $\nu_{\text{ac}}=\frac{1}{4}(S^{2}(x)+c(u(x))\sigma^{2}(x))\,dx$.
	
For further reference, let us prove that the singular part of $\mu$, $\mu_{\text{sing}}(M)=\mu(M\cap B^{c})=\mu(M\cap\Z_{2}(A^{c}))$,  can be written as
\begin{equation}
	\label{eq:mapGtoDsing1}
	\mu_{\text{sing}}(M)=\int_{\Z_{2}^{-1}(M)\cap A^{c}}\V_{4}(\X(s))\dot{\X}(s)\,ds.
\end{equation}
We have
\begin{equation}
	\label{eq:mapGtoDsing2}
	\mu_{\text{sing}}(M)=\int_{\Z_{2}^{-1}(M\cap\Z_{2}(A^{c}))}\V_{4}(\X(s))\dot{\X}(s)\,ds
\end{equation}
and
\begin{align*}
	\Z_{2}^{-1}(M\cap\Z_{2}(A^{c}))&=\Z_{2}^{-1}(M)\cap\Z_{2}^{-1}(\Z_{2}(A^{c}))\\
	&=\Z_{2}^{-1}(M)\cap(A^{c}\cup(A\cap\Z_{2}^{-1}(\Z_{2}(A^{c}))))\\
	&=(\Z_{2}^{-1}(M)\cap A^{c})\cup(\Z_{2}^{-1}(M)\cap(A\cap\Z_{2}^{-1}(\Z_{2}(A^{c})))). 
\end{align*}
Either the set $A\cap\Z_{2}^{-1}(\Z_{2}(A^{c}))$ is empty, or $\Z_{2}$ is constant on $A\cap\Z_{2}^{-1}(\Z_{2}(A^{c}))$, and that $\dot{\Z}_{2}=2\V_{2}(\X)\dot{\X}=0$ and, since $\V_{2}(\X)>0$ on $A$, we must have that $\dot{\X}=0$ on $A\cap\Z_{2}^{-1}(\Z_{2}(A^{c}))$. Then \eqref{eq:mapGtoDsing1} follows from \eqref{eq:mapGtoDsing2}. In a similar way, one shows that 
\begin{equation*}
	\nu_{\text{sing}}(M)=\int_{\Z_{2}^{-1}(M)\cap A^{c}}\W_{4}(\Y(s))\dot{\Y}(s)\,ds.
\end{equation*}
\end{proof}

By using the semigroup $S_{T}$ we can, together with the mappings from $\D$ to $\F$ and vica versa, study the solution in the original set of variables, for given initial data in $\D$.

\begin{lemma}
\label{lemma:semigroupDtoDproperties}
Given $(u_{0},R_{0},S_{0},\rho_{0},\sigma_{0},\mu_{0},\nu_{0})\in \D$, let
\begin{equation*}
	(u,R,S,\rho,\sigma,\mu,\nu)(T)=\mathbf{M}\circ S_{T}\circ\mathbf{L}(u_{0},R_{0},S_{0},\rho_{0},\sigma_{0},\mu_{0},\nu_{0})
\end{equation*}
and
\begin{equation*}
	(Z,p,q)=\mathbf{S}\circ\mathbf{C}\circ\mathbf{L}(u_{0},R_{0},S_{0},\rho_{0},\sigma_{0},\mu_{0},\nu_{0}).
\end{equation*}
Then, we have
\begin{equation}
\label{eq:lemmasemigprop1}
	u(t(X,Y),x(X,Y))=U(X,Y)
\end{equation}
for all $(X,Y)\in \mathbb{R}^{2}$,
\begin{subequations}
\begin{align}
	\label{eq:lemmasemigprop2}
	R(t(X,Y),x(X,Y))x_{X}(X,Y)&=c(U(X,Y))U_{X}(X,Y),\\
	\label{eq:lemmasemigprop3}
	\rho(t(X,Y),x(X,Y))x_{X}(X,Y)&=p(X,Y)
\end{align}
\end{subequations}
for almost every $(X,Y)\in \mathbb{R}^{2}$ such that $x_{X}(X,Y)>0$, and
\begin{subequations}
\begin{align}
	\label{eq:lemmasemigprop4}
	S(t(X,Y),x(X,Y))x_{Y}(X,Y)&=-c(U(X,Y))U_{Y}(X,Y),\\
	\label{eq:lemmasemigprop5}
	\sigma(t(X,Y),x(X,Y))x_{Y}(X,Y)&=q(X,Y)
\end{align}
\end{subequations}
for almost every $(X,Y)\in \mathbb{R}^{2}$ such that $x_{Y}(X,Y)>0$. Furthermore, we have
\begin{equation}
\label{eq:lemmasemigprop6}
	u_{t}=\frac{1}{2}(R+S) \quad \text{and} \quad u_{x}=\frac{1}{2c(u)}(R-S)
\end{equation}
in the sense of distributions.
\end{lemma}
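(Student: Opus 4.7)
The strategy is to fix $T\ge 0$ and unpack the definition of $S_T$. Write $(\bar Z,\bar p,\bar q)=\mathbf{t}_T(Z,p,q)$ (so $\bar t=t-T$, all other components unchanged), $\Theta_T=\mathbf E(\bar Z,\bar p,\bar q)=(\X_T,\Y_T,\Z_T,\V_T,\W_T,\p_T,\q_T)\in\G_0$, and $\psi_T=\mathbf D(\Theta_T)\in\F$. Then by definition $(u,R,S,\rho,\sigma,\mu,\nu)(T)=\mathbf M(\psi_T)$, and Lemma \ref{lemma:mapG0toD} translates properties of $\Theta_T$ into pointwise identities for the Eulerian variables along the level set $\{t=T\}$.

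For \eqref{eq:lemmasemigprop1}, fix $(X,Y)\in\Real^2$, set $T=t(X,Y)$, $s_0=(X+Y)/2$, and $(X^{*},Y^{*})=(\X_T(s_0),\Y_T(s_0))$. By construction of $\mathbf E$ we have $X^{*}\le X$, $Y^{*}\ge Y$, and $t(X^{*},Y^{*})=T$. Monotonicity of $t$ in $X$ and in $-Y$ (a consequence of \eqref{eq:setH1} together with $x_X,x_Y\ge 0$ in \eqref{eq:setH4}) forces $t\equiv T$ on the rectangle $[X^{*},X]\times[Y,Y^{*}]$; then \eqref{eq:setH1} and \eqref{eq:setH3} give $x_X=x_Y=U_X=U_Y=0$ throughout this rectangle, so in particular $x(X,Y)=\Z_{T,2}(s_0)$ and $U(X,Y)=\Z_{T,3}(s_0)$. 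Identity \eqref{eq:mapGtoD1} from Lemma \ref{lemma:mapG0toD} applied to $\Theta_T$ then yields $u(T,x(X,Y))=U(X,Y)$.

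For \eqref{eq:lemmasemigprop2}--\eqref{eq:lemmasemigprop5}, suppose $x_X(X,Y)>0$ (the $x_Y>0$ case is symmetric). Then $t_X(X,Y)>0$ by \eqref{eq:setH1}, so $t$ is strictly increasing along the anti-diagonal through $(X,Y)$; combined with the rectangle argument above this forces $\X_T(s_0)=X$ and $\Y_T(s_0)=Y$, so $(X,Y)$ lies exactly on the curve associated to time $T$. Where additionally $\dot\X_T(s_0)>0$ -- equivalently $t_Y(X,Y)<0$, equivalently $x_Y(X,Y)>0$ -- the pointwise identities \eqref{eq:mapGtoD9} and \eqref{eq:mapGtoD10} from Lemma \ref{lemma:mapG0toD} translate directly into \eqref{eq:lemmasemigprop2}--\eqref{eq:lemmasemigprop3} via $\V_{T,2}(\X_T(s_0))=x_X(X,Y)$, $\V_{T,3}(\X_T(s_0))=U_X(X,Y)$, and $\p_T(\X_T(s_0))=p(X,Y)$. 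On the residual set $\{x_X>0,x_Y=0\}$ I will work from the measure form \eqref{eq:mapGtoD3}, \eqref{eq:mapGtoD5}: on each horizontal segment where $x$ depends only on $X$, integrating over subintervals in $X$ and differentiating recovers the desired pointwise relations, using that \eqref{eq:setH3} gives $U_Y=q=0$ there so the competing identities \eqref{eq:lemmasemigprop4}--\eqref{eq:lemmasemigprop5} are automatic.

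Finally, I will derive \eqref{eq:lemmasemigprop6} by a change of variables from the Eulerian half-plane to the Lagrangian plane. For $\phi\in C_c^\infty((0,\infty)\times\Real)$, set $v(X,Y)=\phi(t(X,Y),x(X,Y))$; the Jacobian identity $dt\,dx=\tfrac{2x_Xx_Y}{c(U)}\,dX\,dY$ on $\{x_Xx_Y>0\}$ (from \eqref{eq:setH1}) together with $\phi_t=\tfrac{c(U)}{2}(v_X/x_X-v_Y/x_Y)$ yields
\begin{equation*}
\iint u\,\phi_t\,dt\,dx=\iint U\,(x_Y v_X-x_X v_Y)\,dX\,dY,
\end{equation*}
while \eqref{eq:lemmasemigprop2}--\eqref{eq:lemmasemigprop5} convert $\iint(R+S)\phi\,dt\,dx$ into $2\iint(U_Xx_Y-U_Yx_X)v\,dX\,dY$. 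Integration by parts in $(X,Y)$, in which the cross terms in $x_{XY}$ cancel, reconciles the two expressions and gives $u_t=(R+S)/2$ distributionally; the analogous computation with $c(U)\phi_x=\tfrac{c(U)}{2}(v_X/x_X+v_Y/x_Y)$ delivers $u_x=(R-S)/(2c(u))$, which is also available at each fixed time from Step 4 of the proof of Lemma \ref{lemma:mapG0toD}. The main technical hurdle is justifying the change of variables across the degeneracy set $\{x_Xx_Y=0\}$; this is handled by observing that its image under $(X,Y)\mapsto(t(X,Y),x(X,Y))$ has zero $2$-dimensional Lebesgue measure (on each connected component one of $t,x$ is locally constant in one Lagrangian direction), so this set contributes nothing on either side of the identity.
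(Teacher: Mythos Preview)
Your approach is essentially the same as the paper's. Both proofs fix $T=t(X,Y)$, pass to $\Theta_T=\mathbf E\circ\mathbf t_T(Z,p,q)\in\G_0$, and invoke Lemma~\ref{lemma:mapG0toD}; both prove \eqref{eq:lemmasemigprop1} via the rectangle argument showing $t\equiv T$ (hence $x_X=x_Y=U_X=U_Y=0$) between $(X,Y)$ and the point on the curve; and both derive \eqref{eq:lemmasemigprop6} by the same change of variables with Jacobian $t_Xx_Y-t_Yx_X=2x_Xx_Y/c(U)$ followed by integration by parts in $(X,Y)$.

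One minor difference: for \eqref{eq:lemmasemigprop2}--\eqref{eq:lemmasemigprop3} you split into the cases $x_Y>0$ and $x_Y=0$ and propose a separate measure-theoretic argument on the residual set $\{x_X>0,\,x_Y=0\}$. The paper does not split cases; it argues instead that whenever $t$ and $x$ agree at $(X,Y)$ and at $(\X(\bar s),\Y(\bar s))$, either the points coincide or \emph{all} of $x_X,x_Y,U_X,U_Y,p,q$ vanish on the rectangle between them. Since $x_X(X,Y)>0$ excludes the second alternative, $(X,Y)$ lies on the curve, and the identity \eqref{eq:mapGtoD9} (which depends on $\X(s)$ only) transfers directly. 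Your extra case-split is not wrong, but it is not needed, and your sketch for the residual set (``integrating over subintervals in $X$ and differentiating'') is vaguer than the paper's dichotomy. Your concern about justifying the change of variables across $\{x_Xx_Y=0\}$ in \eqref{eq:lemmasemigprop6} is legitimate and the paper is silent on it, but note that the intermediate identities $U_Xx_Y=\tfrac{R}{c(U)}x_Xx_Y$ etc.\ hold everywhere (both sides vanish where $x_X=0$ by \eqref{eq:setH3}), so the formal manipulation goes through without excising a set.
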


\begin{proof}
Given $(X,Y)\in \mathbb{R}^{2}$, we denote $\bar{t}=t(X,Y)$ and $\bar{x}=x(X,Y)$. Let 
\begin{equation*}
	(\X,\Y,\Z,\V,\W,\p,\q)=\mathbf{E}\circ\mathbf{t}_{\bar{t}}(Z,p,q).
\end{equation*}
We have
$t(\X(s),\Y(s))=\bar{t}$, $\Z_{2}(s)=x(\X(s),\Y(s))$ and $\Z_{3}(s)=U(\X(s),\Y(s))$. Notice that we can write
\begin{align*}
	(u,R,S,\rho,\sigma,\mu,\nu)(\bar{t})&=\mathbf{M}\circ S_{\bar{t}}\circ\mathbf{L}(u_{0},R_{0},S_{0},\rho_{0},\sigma_{0},\mu_{0},\nu_{0})\\
	&=\mathbf{M}\circ \mathbf{D}\circ\mathbf{E}\circ\mathbf{t}_{\bar{t}}\circ\mathbf{S}\circ\mathbf{C}\circ\mathbf{L}(u_{0},R_{0},S_{0},\rho_{0},\sigma_{0},\mu_{0},\nu_{0})\\
	&=\mathbf{M}\circ \mathbf{D}\circ\mathbf{E}\circ\mathbf{t}_{\bar{t}}(Z,p,q)\\
	&=\mathbf{M}\circ \mathbf{D}(\X,\Y,\Z,\V,\W,\p,\q),
\end{align*}
so that we can apply Lemma \ref{lemma:mapG0toD}, from which we have that $u(\bar{t},\bar{x})=\Z_{3}(s)$ for any $s$ such that $\bar{x}=\Z_{2}(s)$. This implies that, for any $\bar{s}$ such that
\begin{equation}
\label{eq:tandxbar}
	t(\X(\bar{s}),\Y(\bar{s}))=\bar{t}=t(X,Y)\quad \text{and} \quad x(\X(\bar{s}),\Y(\bar{s}))=\bar{x}=x(X,Y),
\end{equation} 
we have
\begin{equation*}
	u(\bar{t},\bar{x})=U(\X(\bar{s}),\Y(\bar{s})).
\end{equation*}
Then, \eqref{eq:lemmasemigprop1} will be proved once we have proved that
\begin{equation}
\label{eq:Ubar}
	U(\X(\bar{s}),\Y(\bar{s}))=U(X,Y).
\end{equation}
We show that when \eqref{eq:tandxbar} holds, then either $(X,Y)=(\X(\bar{s}),\Y(\bar{s}))$ or
\begin{equation}
\label{eq:allvanish}
	x_{X}=x_{Y}=U_{X}=U_{Y}=p=q=0
\end{equation}
in the rectangle with corners at $(X,Y)$ and $(\X(\bar{s}),\Y(\bar{s}))$, so that \eqref{eq:Ubar} holds in both cases. We first consider the rectangle where $\X(\bar{s})\leq X$ and $\Y(\bar{s})\leq Y$. Since $x_{X}\geq0$ and $x_{Y}\geq 0$, \eqref{eq:tandxbar} implies that $x_{X}=0$ and $x_{Y}=0$ in $[\X(\bar{s}),X]\times [\Y(\bar{s}),Y]$. By \eqref{eq:setH3}, we have $U_{X}=U_{Y}=p=q=0$ in
$[\X(\bar{s}),X]\times [\Y(\bar{s}),Y]$, so that $U$ is constant and we have proved \eqref{eq:Ubar}.
In the case where $\X(\bar{s})\leq X$ and $\Y(\bar{s})\geq Y$, we find, since $t_{X}\geq 0$ and $t_{Y}\leq 0$, that $t_{X}=0$ and $t_{Y}=0$ in $[\X(\bar{s}),X]\times [Y,\Y(\bar{s})]$. By \eqref{eq:setH1}, it follows that $x_{X}=x_{Y}=0$ and we prove \eqref{eq:Ubar} as before. The other cases can be treated in the same way. Thus, \eqref{eq:Ubar} holds and we have proved \eqref{eq:lemmasemigprop1}. We prove \eqref{eq:lemmasemigprop2} and \eqref{eq:lemmasemigprop3}. By \eqref{eq:mapGtoD9}, \eqref{eq:mapGtoD10} and the definition of $\mathbf{E}$, we have
\begin{align*}
	R(\bar{t},x(\X(s),\Y(s)))x_{X}(\X(s),\Y(s))&=c(U(\X(s),\Y(s)))U_{X}(\X(s),\Y(s)),\\
	\rho(\bar{t},x(\X(s),\Y(s)))x_{X}(\X(s),\Y(s))&=p(\X(s),\Y(s)),
\end{align*}
so that
\begin{align*}
	R(t(X,Y),x(X,Y))x_{X}(\X(\bar{s}),\Y(\bar{s}))&=c(U(\X(\bar{s}),\Y(\bar{s})))U_{X}(\X(\bar{s}),\Y(\bar{s})),\\
	\rho(t(X,Y),x(X,Y))x_{X}(\X(\bar{s}),\Y(\bar{s}))&=p(\X(\bar{s}),\Y(\bar{s}))
\end{align*}
for any $\bar{s}$ such that \eqref{eq:tandxbar} holds. This implies \eqref{eq:lemmasemigprop2} and \eqref{eq:lemmasemigprop3} because when \eqref{eq:tandxbar} is satisfied, then either $(X,Y)=(\X(\bar{s}),\Y(\bar{s}))$ or \eqref{eq:allvanish} holds. Similarly, from \eqref{eq:mapGtoD11} and \eqref{eq:mapGtoD12}, we obtain
\begin{align*}
	S(t(X,Y),x(X,Y))x_{Y}(\X(\bar{s}),\Y(\bar{s}))&=-c(U(\X(\bar{s}),\Y(\bar{s})))U_{Y}(\X(\bar{s}),\Y(\bar{s})),\\
	\sigma(t(X,Y),x(X,Y))x_{Y}(\X(\bar{s}),\Y(\bar{s}))&=q(\X(\bar{s}),\Y(\bar{s}))
\end{align*}
for any $\bar{s}$ such that \eqref{eq:tandxbar} holds, so that \eqref{eq:lemmasemigprop4} and \eqref{eq:lemmasemigprop5} follows. Now we prove \eqref{eq:lemmasemigprop6}. Let $\phi(t,x)$ be a smooth test function with compact support. We have
\begin{align*}
	&\iint_{\mathbb{R}^{2}}(u\phi_{t})(t,x)\,dt\,dx\\
	&=\iint_{\mathbb{R}^{2}}\big((u\phi_{t})\circ(t,x)(t_{X}x_{Y}-t_{Y}x_{X})\big)(X,Y)\,dX\,dY \quad \text{by a change of variables}\\
	&=\iint_{\mathbb{R}^{2}}\big(U\phi_{t}\circ(t,x)(t_{X}x_{Y}-t_{Y}x_{X})\big)(X,Y)\,dX\,dY \quad \text{by } \eqref{eq:lemmasemigprop1}\\
	&=\iint_{\mathbb{R}^{2}}\big(U(\phi_{X}\circ(t,x)x_{Y}-\phi_{Y}\circ(t,x)x_{X})\big)(X,Y)\,dX\,dY	\quad \text{by calculating } \phi_{X} \text{ and } \phi_{Y}\\
	&=-\iint_{\mathbb{R}^{2}}\big(((Ux_{Y})_{X}-(Ux_{X})_{Y})\phi\circ(t,x)\big)(X,Y)\,dX\,dY \quad \text{by integrating by parts}\\
	&=-\iint_{\mathbb{R}^{2}}\big((U_{X}x_{Y}-U_{Y}x_{X})\phi\circ(t,x)\big)(X,Y)\,dX\,dY\\
	&=-\iint_{\mathbb{R}^{2}}\bigg(\bigg(\frac{R+S}{c(u)}\phi\bigg)\circ(t,x)x_{X}x_{Y}\bigg)(X,Y)\,dX\,dY \quad \text{by } \eqref{eq:lemmasemigprop2} \text{ and } \eqref{eq:lemmasemigprop4}\\
	&=-\iint_{\mathbb{R}^{2}}\bigg(\bigg(\frac{1}{2}(R+S)\phi\bigg)\circ(t,x)(t_{X}x_{Y}-t_{Y}x_{X})\bigg)(X,Y)\,dX\,dY \quad \text{by } \eqref{eq:setH1}\\
	&=-\iint_{\mathbb{R}^{2}}\bigg(\frac{1}{2}(R+S)\phi\bigg)(t,x)\,dt\,dx,
\end{align*}
which proves the first identity in \eqref{eq:lemmasemigprop6}. The second one is proven in the same way.
\end{proof}

\subsection{Semigroup of Solutions in $\D$}
\label{sec:SemigroupD}

Now we can define a mapping on the original set of variables, $\D$.

\begin{definition}
\label{def:modsemigroup}
For any $T>0$, let $\bar{S}_{T}:\D\rightarrow \D$ be defined as
\begin{equation*}
	\bar{S}_{T}=\mathbf{M}\circ S_{T}\circ\mathbf{L}.
\end{equation*}	
\end{definition}

Since
\begin{equation*}
	\bar{S}_{T}\circ\bar{S}_{T'}=\mathbf{M}\circ S_{T}\circ\mathbf{L}\circ\mathbf{M}\circ S_{T'}\circ\mathbf{L}
\end{equation*}
it would immediately follow from the semigroup property of $S_{T}$ that $\bar{S}_{T}$ is also a semigroup if we had $\mathbf{L}\circ\mathbf{M}=\id$, but this identity does not hold in general. To see this consider an element $\psi$ in $\F$. By Definition \ref{def:setF} we have $x_{1}+J_{1}\in G$, which in particular means that $x_{1}+J_{1}-\id\in\Linf(\mathbb{R})$. Let $\xi=\mathbf{M}(\psi)$ and $\bar{\psi}=\mathbf{L}(\xi)$. From Definition \ref{def:mapfromDtoF} we have $\bar{x}_{1}+\bar {J}_{1}=\id$, and it is clear that in general we have $\psi\neq\bar{\psi}$. 

It is the aim of this section to prove that $\bar{S}_{T}$ is a semigroup. The idea is loosely speaking the following. Assume that $x_{1}(X)+J_{1}(X)=f(X)$ where $f\in G$. We associate to $x_{1}$ and $J_{1}$ the functions $\bar{x}_{1}$ and $\bar{J}_{1}$ such that $\bar{x}_{1}+\bar{J}_{1}=\id$. We observe that the transformation $\bar{x}_{1}(X)=x_{1}(f^{-1}(X))$ and $\bar{J}_{1}(X)=J_{1}(f^{-1}(X))$ is such a mapping. The identities \eqref{eq:setFrel2} and \eqref{eq:setFrel3} allow us to define the remaining elements of $\bar{\psi}$, see Definition \ref{def:Faction} below. The transformation of $\psi$ to $\bar{\psi}$ defines an action of $G^{2}$ on the set $\F$. It defines a projection $\Pi$ from $\F$ on the set
\begin{equation*}
	\F_{0}=\{\psi=(\psi_{1},\psi_{2})\in\F \ | \ x_{1}+J_{1}=\id \text{ and } x_{2}+J_{2}=\id\}.
\end{equation*}
Thus, we have $\bar{\psi}=\Pi(\psi)$.

We prove that the map $S_{T}$ is invariant under the group acting on $\F$. Thus, we have to define the action of $G^{2}$ on the sets $\C$, $\G$ and $\H$. The definition of the action on $\F$ will naturally lead to the definition of the action on the set of curves $\C$ and the set $\G$. We define the action of $G^{2}$ on the set of solutions $\H$ so that it commutes with the $\bullet$ operation, see Lemma \ref{lemma:GHCactioncommute} below.

Then we prove that the map $\mathbf{M}$ satisfies $\mathbf{M}=\mathbf{M}\circ\Pi$, and that $\F_{0}$ contains exactly one element of each equivalence class of $\F$ with respect to $G^{2}$.      
    
\begin{definition}
	\label{def:Faction}
	For any $\psi=(\psi_{1},\psi_{2})\in \F$ and $f,g\in G$, we define $\bar{\psi}=(\bar{\psi}_{1},\bar{\psi}_{2})$ as
	\begin{subequations}
		\begin{align}
		\label{eq:Faction1}
		\bar{x}_{1}(X)&=x_{1}(f(X)), & \bar{x}_{2}(Y)&=x_{2}(g(Y)),\\
		\label{eq:Faction2}
		\bar{U}_{1}(X)&=U_{1}(f(X)), & \bar{U}_{2}(Y)&=U_{2}(g(Y)),\\
		\label{eq:Faction3}
		\bar{J}_{1}(X)&=J_{1}(f(X)), & \bar{J}_{2}(Y)&=J_{2}(g(Y)),\\
		\label{eq:Faction4}
		\bar{K}_{1}(X)&=K_{1}(f(X)), & \bar{K}_{2}(Y)&=K_{2}(g(Y)),\\
		\label{eq:Faction5}
		\bar{V}_{1}(X)&=f'(X)V_{1}(f(X)), & \bar{V}_{2}(Y)&=g'(Y)V_{2}(g(Y)),\\
		\label{eq:Faction6}
		\bar{H}_{1}(X)&=f'(X)H_{1}(f(X)), & \bar{H}_{2}(Y)&=g'(Y)H_{2}(g(Y)).
		\end{align}
	\end{subequations}
	The mapping $\F\times G^{2}\rightarrow\F$ given by $\psi\times (f,g)\mapsto \bar{\psi}$ defines an action of the group $G^{2}$ on $\F$ and we denote $\bar{\psi}=\psi\cdot (f,g)$.
\end{definition}

\begin{proof}[Proof of the well-posedness of Definition \ref{def:Faction}] We prove that $\bar{\psi}=(\bar{\psi}_{1},\bar{\psi}_{2})$ belongs to $\F$. We only show that $\bar{\psi}_{1}$ satisfies the conditions in Definition \ref{def:setF}. The proof is similar for $\bar{\psi}_{2}$. First we show that $\bar{\psi}_{1}$ satisfies the regularity conditions in \eqref{eqns:setF}. We will use the following result throughout the proof. By Lemma \ref{lemma:auxiliaryG}, there exists $\alpha\geq0$ such that $\frac{1}{1+\alpha}\leq f'(X)\leq 1+\alpha$ for almost every $X\in \mathbb{R}$ and $\frac{1}{1+\alpha}\leq g'(Y)\leq 1+\alpha$ for almost every $Y\in \mathbb{R}$. 
	
	Since $x_{1}-\id,f-\id\in\Linf(\mathbb{R})$,
	\begin{equation*}
	\bar{x}_{1}(X)-X=(x_{1}(f(X))-f(X))+(f(X)-X)
	\end{equation*}	
	and $\bar x_1-\id$ belongs to $\Linf(\mathbb{R})$. We differentiate and obtain
	\begin{equation*}
	\bar{x}_{1}'(X)-1=(x_{1}'(f(X))-1)f'(X)+(f'(X)-1).
	\end{equation*}	
	Since $x_{1}'-1,f'-1\in\Linf(\mathbb{R})$ and $\frac{1}{1+\alpha}\leq f'\leq 1+\alpha$, $\bar{x}_{1}'-1$ belongs to $\Linf(\mathbb{R})$. Moreover, by a straightforward calculation, we get
	\begin{align*}
	\int_{\mathbb{R}}(\bar{x}_{1}'(X)-1)^{2}\,dX&\leq 2\int_{\mathbb{R}}(x_{1}'(f(X))-1)^{2}f'(X)^{2}\,dX+2\int_{\mathbb{R}}(f'(X)-1)^{2}\,dX\\
	&\leq 2(1+\alpha)\int_{\mathbb{R}}(x_{1}'(f(X))-1)^{2}f'(X)\,dX+2\int_{\mathbb{R}}(f'(X)-1)^{2}\,dX\\
	&=2(1+\alpha)\int_{\mathbb{R}}(x_{1}'(X)-1)^{2}\,dX+2\int_{\mathbb{R}}(f'(X)-1)^{2}\,dX,
	\end{align*}
	where we used a change of variables in the last equality. This shows that $\bar{x}_{1}'-1\in L^{2}(\mathbb{R})$ because $x_{1}'-1,f'-1\in L^{2}(\mathbb{R})$. Since $J_{1}\in\Linf(\mathbb{R})$ it follows immediately from the definition of $\bar{J}_{1}$ that it also belongs to $\Linf(\mathbb{R})$.
	We have $\bar{J}_{1}'(X)=J_{1}'(f(X))f'(X)$, so that $\bar{J}_{1}'\in\Linf(\mathbb{R})$ because $J_{1}'\in\Linf(\mathbb{R})$ and $\frac{1}{1+\alpha}\leq f'\leq 1+\alpha$. The function $\bar{J}_{1}'$ also belongs to $L^{2}(\mathbb{R})$ since
	\begin{aalign}
		\label{eq:FactionJ1derL2norm}
		\int_{\mathbb{R}}\bar{J}_{1}'(X)^{2}\,dX&=\int_{\mathbb{R}}J_{1}'(f(X))^{2}f'(X)^{2}\,dX\\
		&\leq (1+\alpha)\int_{\mathbb{R}}J_{1}'(f(X))^{2}f'(X)\,dX\\
		&=(1+\alpha)\int_{\mathbb{R}}J_{1}'(X)^{2}\,dX \quad \text{by a change of variables}
	\end{aalign}
	and $J_{1}'\in L^{2}(\mathbb{R})$. In a similar way, one shows that $\bar{K}_{1}\in\Linf(\mathbb{R})$ and $\bar{K}_{1}',\bar{H}_{1},\bar{V}_{1}\in L^{2}(\mathbb{R})\cap\Linf(\mathbb{R})$. We have
	\begin{align*}
	\int_{\mathbb{R}}\bar{U}_{1}(X)^{2}\,dX&=\int_{\mathbb{R}}U_{1}(f(X))^{2}\frac{f'(X)}{f'(X)}\,dX\\
	&\leq (1+\alpha)\int_{\mathbb{R}}U_{1}(f(X))^{2}f'(X)\,dX\\
	&=(1+\alpha)\int_{\mathbb{R}}U_{1}(X)^{2}\,dX \quad \text{by a change of variables},
	\end{align*}
	so that $\bar{U}_{1}\in L^{2}(\mathbb{R})\cap L^\infty(\mathbb{R})$ as $U_{1}\in L^{2}(\mathbb{R})\cap L^\infty(\mathbb{R})$. Hence, we have proved \eqref{eqns:setF}. By differentiating $\bar{x}_{1}$ and $\bar{J}_{1}$, we find that the inequalities in \eqref{eq:setFrel1} are satisfied since $x_{1}', J_1'\geq 0$ and $f'\geq\frac{1}{1+\alpha}>0$. 
	
	The relations in \eqref{eq:setFrel2}-\eqref{eq:setFrel3} follow by direct calculation, for example, we have  
	\begin{align*}
	\bar{x}_{1}'(X)\bar{J}_{1}'(X)&=f'(X)^{2}x_{1}'(f(X))J_{1}'(f(X))\\
	&=f'(X)^{2}\big[(c(U_{1}(f(X)))V_{1}(f(X)))^{2}+c(U_{1}(f(X)))H_{1}^{2}(f(X))\big]\\
	&=(c(\bar{U}_{1}(X))\bar{V}_{1}(X))^{2}+c(\bar{U}_{1}(X))\bar{H}_{1}^{2}(X).
	\end{align*}
	
	Let us prove that $\bar{x}_{1}+\bar{J}_{1}\in G$. We proved above that $\bar{x}_{1}-\id,\bar{J}_{1}\in\Linf(\mathbb{R})$ and $\bar{x}_{1}'-1,\bar{J}_{1}'\in L^{2}(\mathbb{R})\cap\Linf(\mathbb{R})$, which implies that $\bar{x}_{1}+\bar{J}_{1}-\id\in\Winf(\mathbb{R})$ and $\bar{x}_{1}'+\bar{J}_{1}'-1 \in L^{2}(\mathbb{R})$. Since $x_{1}+J_{1}\in G$ we get by Lemma \ref{lemma:auxiliaryG} that there exists $\alpha_{1}\geq 0$ such that $\frac{1}{1+\alpha_{1}}\leq x_{1}'+J_{1}'\leq 1+\alpha_{1}$ and $\bar{x}_{1}+\bar{J}_{1}$ is invertible because
	\begin{equation*}
	\bar{x}_{1}'(X)+\bar{J}_{1}'(X)=f'(X)(x_{1}'(f(X))+J_{1}'(f(X)))\geq \frac{1}{(1+\alpha)(1+\alpha_{1})}>0.
	\end{equation*}
	We prove that $(\bar{x}_{1}+\bar{J}_{1})^{-1}-\id$ belongs to $\Winf(\mathbb{R})$. Since
	\begin{align*}
	(\bar{x}_{1}+\bar{J}_{1})^{-1}-\id&=f^{-1}\circ(x_{1}+J_{1})^{-1}-\id\\
	&=f^{-1}\circ(x_{1}+J_{1})^{-1}-(x_{1}+J_{1})^{-1}+(x_{1}+J_{1})^{-1}-\id
	\end{align*}
	and $f^{-1}-\id,(x_{1}+J_{1})^{-1}-\id\in\Linf(\mathbb{R})$, $(\bar{x}_{1}+\bar{J}_{1})^{-1}-\id$ belongs to $\Linf(\mathbb{R})$. Let $v=\bar{x}_{1}+\bar{J}_{1}$. We have
	\begin{equation*}
	(v^{-1})'=\frac{1}{v'(v^{-1})}=\frac{1}{f'(v^{-1})(x_{1}'(f(v^{-1}))+J_{1}'(f(v^{-1})))},
	\end{equation*}
	so that
	\begin{equation*}
	\frac{1}{(1+\alpha)(1+\alpha_{1})}-1\leq (v^{-1})'-1\leq (1+\alpha)(1+\alpha_{1})-1
	\end{equation*}
	and $((\bar{x}_{1}+\bar{J}_{1})^{-1})'-1\in\Linf(\mathbb{R})$. Hence, we have proved \eqref{eq:setFrel4}. 
	
	We prove \eqref{eq:setFrel5}. Since $f-\id\in\Linf(\mathbb{R})$, we have $\displaystyle\lim_{X\rightarrow-\infty}f(X)=-\infty$. This implies, by \eqref{eq:setFrel5}, that 
	\begin{equation*}
	\displaystyle\lim_{X\rightarrow -\infty}\bar{J}_{1}(X)=\displaystyle\lim_{X\rightarrow -\infty}J_{1}(f(X))=0.
	\end{equation*}
	In order to prove the identities \eqref{eq:setFrel6} and \eqref{eq:setFrel7} we have to define the action of $G^{2}$ on $\C$. 

\begin{definition}
	\label{def:Caction}
	For any $(\X,\Y)\in \C$ and $f,g\in G$, we define $(\bar{\X},\bar{\Y})$ as
	\begin{equation}
	\label{eq:Caction}
	\bar{\X}=f^{-1}\circ\X\circ h \quad \text{and} \quad \bar{\Y}=g^{-1}\circ\Y\circ h,
	\end{equation}
	where $h\in G$ is the re-normalizing function which yields $\bar{\X}+\bar{\Y}=2\id$, that is,
	\begin{equation}
	\label{eq:Caction2}
	(f^{-1}\circ\X+g^{-1}\circ\Y)\circ h=2\id.
	\end{equation}
	The mapping $\C\times G^{2}\rightarrow\C$ given by $(\X,\Y)\times (f,g)\mapsto (\bar{\X},\bar{\Y})$ defines an action of the group $G^{2}$ on $\C$ and we denote $(\bar{\X},\bar{\Y})=(\X,\Y)\cdot (f,g)$.
\end{definition}

The action corresponds to a stretching of the curve in the $X$ and $Y$ directions.  

\begin{proof}[Proof of the well-posedness of Definition \ref{def:Caction}]	
	Let $v=\frac{1}{2}(f^{-1}\circ\X+g^{-1}\circ\Y)$. We want to prove that $v$ belongs to $G$ by using Lemma \ref{lemma:auxiliaryG}. We have
	\begin{align*}
	v-\id&=\frac{1}{2}(f^{-1}\circ\X-\X+\X+g^{-1}\circ\Y)-\id\\
	&=\frac{1}{2}(f^{-1}\circ\X-\X+2\id-\Y+g^{-1}\circ\Y)-\id\\
	&=\frac{1}{2}(f^{-1}\circ\X-\X+g^{-1}\circ\Y-\Y)
	\end{align*}
	which belongs to $\Linf(\mathbb{R})$ because $f-\id,g-\id\in \Linf(\mathbb{R})$. Since $f$ and $g$ belong to $G$, we have, by Lemma \ref{lemma:auxiliaryG}, that there exists $\alpha\geq0$ such that $\frac{1}{1+\alpha}\leq
	(f^{-1})'\leq 1+\alpha$ and $\frac{1}{1+\alpha}\leq
	(g^{-1})'\leq 1+\alpha$ almost everywhere. Then,
	\begin{equation*}
	v'=\frac{1}{2}(((f^{-1})'\circ\X)\dot{\X}+((g^{-1})'\circ\Y)\dot{\Y})\leq \frac{1}{2}(1+\alpha)(\dot{\X}+\dot{\Y})=1+\alpha
	\end{equation*}
	and, similarly, we obtain that 
	\begin{equation}
	\label{eq:Cgroupvder}
	v'\geq\frac{1}{1+\alpha}.
	\end{equation}
	We show that $v$ is absolutely continuous. Let $(s_{i},\bar{s}_{i})$, $i=1,\dots,N$, be non-intersecting intervals.
	We have
	\begin{align*}
	\sum_{i=1}^{N}|v(\bar{s}_{i})-v(s_{i})|&=\frac{1}{2}\sum_{i=1}^{N}|f^{-1}\circ\X(\bar{s}_{i})+g^{-1}\circ\Y(\bar{s}_{i})-f^{-1}\circ\X(s_{i})-g^{-1}\circ\Y(s_{i})|\\
	&\leq\frac{1}{2}\sum_{i=1}^{N}\bigg(\bigg|\int_{\X(s_{i})}^{\X(\bar{s}_{i})}(f^{-1})'(X)\,dX\bigg|+\bigg|\int_{\Y(s_{i})}^{\Y(\bar{s}_{i})}(g^{-1})'(Y)\,dY\bigg|\bigg)\\
	&\leq\frac{1}{2}(1+\alpha)\sum_{i=1}^{N}(|\X(\bar{s}_{i})-\X(s_{i})|+|\Y(\bar{s}_{i})-\Y(s_{i})|) \\
	&\leq 2(1+\alpha)\sum_{i=1}^{N}|\bar{s}_{i}-s_{i}|, 
	\end{align*}
	where we used that $\dot{\X}$ and $\dot{\Y}$ are bounded by 2. Hence, $v$ is absolutely continuous.
	
	Let us prove that if $w\in G$, then $(w^{-1})'-1$ belongs to $L^{2}(\mathbb{R})$. Since $w\circ w^{-1}=\id$, we have $(w'\circ w^{-1})(w^{-1})'=1$, so that, 
	by Lemma \ref{lemma:auxiliaryG}, $(w^{-1})'\leq 1+\beta$ for some $\beta>0$. This implies that
	\begin{aalign}
		\label{eq:invOfwInG}
		&\int_{\mathbb{R}}((w^{-1})'(x)-1)^{2}\,dx\\
		&=\int_{\mathbb{R}}((w^{-1})'(x)-(w'\circ (w^{-1})(x))(w^{-1})'(x))^{2}\,dx\\
		&=\int_{\mathbb{R}}(1-w'\circ (w^{-1})(x))^{2}(w^{-1})'(x)^{2}\,dx\\
		&\leq (1+\beta)\int_{\mathbb{R}}(1-w'\circ (w^{-1})(x))^{2}(w^{-1})'(x)\,dx\\
		&\leq (1+\beta)\int_{\mathbb{R}}(1-w'(x))^{2}\,dx \quad \text{by a change of variables},
	\end{aalign}     
	which is bounded because $w'-1\in L^{2}(\mathbb{R})$, and we conclude that $(w^{-1})'-1\in L^{2}(\mathbb{R})$.
	
	Since $\dot{\X}+\dot{\Y}=2$, we have
	\begin{align*}
	v'-1&=\frac{1}{2}(((f^{-1})'\circ\X)\dot{\X}+((g^{-1})'\circ\Y)\dot{\Y}-2)\\
	&=\frac{1}{2}(((f^{-1})'\circ\X)\dot{\X}-\dot{\X}+((g^{-1})'\circ\Y)\dot{\Y}-\dot{\Y}).
	\end{align*} 
	Hence,
	\begin{align*}
	&\int_{\mathbb{R}}(v'(s)-1)^{2}\,ds\\
	&=\frac{1}{4}\int_{\mathbb{R}}(((f^{-1})'\circ\X(s))\dot{\X}(s)-\dot{\X}(s)+((g^{-1})'\circ\Y(s))\dot{\Y}(s)-\dot{\Y}(s))^{2}\,ds\\
	&\leq\frac{1}{2}\int_{\mathbb{R}}(((f^{-1})'\circ\X(s))\dot{\X}(s)-\dot{\X}(s))^{2}\,ds\\
	&\quad+\frac{1}{2}\int_{\mathbb{R}}(((g^{-1})'\circ\Y(s))\dot{\Y}(s)-\dot{\Y}(s))^{2}\,ds\\
	&\leq\int_{\mathbb{R}}((f^{-1})'\circ\X(s)-1)^{2}\dot{\X}(s)\,ds\\
	&\quad+\int_{\mathbb{R}}((g^{-1})'\circ\Y(s)-1)^{2}\dot{\Y}(s)\,ds \quad \text{since } \dot{\X}\leq 2 \text{ and } \dot{\Y}\leq 2\\
	&=\int_{\mathbb{R}}((f^{-1})'(X)-1)^{2}\,dX+\int_{\mathbb{R}}((g^{-1})'(Y)-1)^{2}\,dY \quad \text{by a change of variables}
	\end{align*}
	and by \eqref{eq:invOfwInG}, we conclude that $v'-1\in L^{2}(\mathbb{R})$. Then, by Lemma \ref{lemma:auxiliaryG}, $v\in G$.
	
	We define $h=v^{-1}$. Since $v\circ h=\id$, \eqref{eq:Caction2} holds. We claim that $h\in G$. Since $v$ belongs to $G$, we have that $h-\id$ and $h^{-1}-\id$ belong to $\Winf(\mathbb{R})$. From \eqref{eq:invOfwInG}, we get that $h'-1\in L^{2}(\mathbb{R})$. Therefore, $h\in G$.
	
	Now we prove that $(\bar{\X},\bar{\Y})\in\C$. We have $\bar{\X}-\id\in\Winf(\mathbb{R})$ since
	\begin{equation*}
	\bar{\X}-\id=f^{-1}\circ\X\circ h-\X\circ h+\X\circ h-h+h-\id
	\end{equation*}
	and $f^{-1}-\id,\X-\id,h-\id\in\Winf(\mathbb{R})$. Since $\dot{\X}\geq 0$, we get
	\begin{equation*}
	\dot{\bar{\X}}=((f^{-1})'\circ\X\circ h)(\dot{\X}\circ h)h'\geq \frac{1}{(1+\alpha)^{2}}\dot{\X}\circ h\geq 0.
	\end{equation*}
	Similarly, one shows that $\bar{\Y}-\id\in\Winf(\mathbb{R})$ and $\dot{\bar{\Y}}\geq 0$. The identity \eqref{eq:initialcurvenormalization} is satisfied since $v\circ h=\id$, which we proved above. Hence, $(\bar{\X},\bar{\Y})\in\C$.
\end{proof}

\emph{End of proof of the well-posedness of Definition \ref{def:Faction}.}

It remains to prove \eqref{eq:setFrel6} and \eqref{eq:setFrel7}. Let $(\bar{\X},\bar{\Y})\in \C$. Then $(\X,\Y)=(\bar \X, \bar \Y)\cdot(f^{-1}, g^{-1})$ belongs to $\C$. In particular,  $x_{1}(\X(s))=x_{2}(\Y(s))$ for all $s\in\mathbb{R}$ and the identities \eqref{eq:setFrel6} and \eqref{eq:setFrel7} hold for the elements corresponding to $(\psi_{1},\psi_{2})$. Furthermore, $(\bar \X, \bar \Y)=(\X,\Y)\cdot (f,g)$, which implies, using the same notation as in Definition~\ref{def:Caction}, that
\begin{equation*}
	\bar{x}_{1}(\bar{\X}(s))=x_{1}(\X(h(s)))=x_{2}(\Y(h(s)))=\bar{x}_{2}(\bar{\Y}(s)).
\end{equation*}
We find
\begin{equation*}
	\bar{U}_{1}(\bar{\X}(s))=U_{1}(\X(h(s)))=U_{2}(\Y(h(s)))=\bar{U}_{2}(\bar{\Y}(s)),
\end{equation*}
which proves \eqref{eq:setFrel6}. Since
\begin{equation*}
	\dot{\bar{\X}}(s)=\frac{\dot{\X}(h(s))h'(s)}{f'(\bar{\X}(s))} \quad \text{and} \quad \dot{\bar{\Y}}(s)=\frac{\dot{\Y}(h(s))h'(s)}{g'(\bar{\Y}(s))}
\end{equation*}
we have
\begin{equation*}
	\bar{V}_{1}(\bar{\X}(s))\dot{\bar{\X}}(s)+\bar{V}_{2}(\bar{\Y}(s))\dot{\bar{\Y}}(s)=h'(s)\big[V_{1}(\X(h(s)))\dot{\X}(h(s))+V_{2}(\Y(h(s)))\dot{\Y}(h(s))
	\big].
\end{equation*}
and we obtain
\begin{align*}
	\bar{V}_{1}(\bar{\X}(s))\dot{\bar{\X}}(s)+\bar{V}_{2}(\bar{\Y}(s))\dot{\bar{\Y}}(s)&=\frac{d}{ds}U_{1}(\X(h(s)))=\frac{d}{ds}\bar{U}_{1}(\bar{\X}(s))\\
	&=\frac{d}{ds}U_{2}(\Y(h(s)))=\frac{d}{ds}\bar{U}_{2}(\bar{\Y}(s)).
\end{align*}
This proves \eqref{eq:setFrel7}.
\end{proof}

\begin{definition}
	\label{def:Gaction}
	For any $\Theta=(\X,\Y,\Z,\V,\W,\p,\q)\in \G$ and $f,g\in G$, we define $\bar{\Theta}=(\bar{\X},\bar{\Y},\bar{\Z},\bar{\V},\bar{\W},\bar{\p},\bar{\q})$ as
	\begin{subequations}
		\begin{align}
		(\bar{\X},\bar{\Y})&=(\X,\Y)\cdot (f,g),\\
		\label{eq:Gaction2}
		\bar{\Z}&=\Z\circ h,
		\end{align}
		where $h$ is given by \eqref{eq:Caction2}, and
		\begin{align}
		\label{eq:Gaction3}
		\bar{\V}(X)&=f'(X)\V(f(X)), & \bar{\W}(Y)&=g'(Y)\W(g(Y)),\\
		\label{eq:Gaction4}
		\bar{\p}(X)&=f'(X)\p(f(X)), & \bar{\q}(Y)&=g'(Y)\q(g(Y)).
		\end{align}
	\end{subequations}
	The mapping $\G\times G^{2}\rightarrow\G$ given by $\Theta\times (f,g)\mapsto \bar{\Theta}$ defines an action of the group $G^{2}$ on $\G$ and we denote $\bar{\Theta}=\Theta\cdot (f,g)$.
\end{definition}

\begin{proof}[Proof of the well-posedness of Definition \ref{def:Gaction}]
	We have to prove that $\bar{\Theta}$ belongs to $\G$. In the proof of the well-posedness of Definition \ref{def:Caction}, we showed that $(\bar{\X},\bar{\Y})\in \C$. We check that $||\bar{\Theta}||_{\G}$ and $|||\bar{\Theta}|||_{\G}$ are finite. Since $f,h\in G$, we have from \eqref{eq:groupcond} that $f-\id,f^{-1}-\id,h-\id\in\Winf(\mathbb{R})$ and therefore, by \eqref{eq:atriplet}, we conclude that the quantities
	\begin{align*}
	\bar{\Z}_{1}^{a}&=\bar{\Z}_{1}-\frac{1}{c(0)}(\bar{\X}-\id)\\
	&=\Z_{1}\circ h-\frac{1}{c(0)}(f^{-1}\circ\X\circ h-\id)\\
	&=\Z_{1}\circ h-\frac{1}{c(0)}(f^{-1}\circ\X\circ h-\X\circ h+\X\circ h-h+h-\id)\\
	&=\Z_{1}^{a}\circ h-\frac{1}{c(0)}(f^{-1}\circ\X\circ h-\X\circ h+h-\id),
	\end{align*} 
	\begin{equation*}
	\bar{\Z}_{2}^{a}=\bar{\Z}_{2}-\id=\Z_{2}\circ h-\id=\Z_{2}\circ h-h+h-\id=\Z_{2}^{a}\circ h+h-\id
	\end{equation*}
	and
	\begin{equation*}
	\bar{\Z}_{i}^{a}=\bar{\Z}_{i}=\Z_{i}\circ h=\Z_{i}^{a}\circ h \quad \text{for } i\in \{3,4,5\}
	\end{equation*}
	belong to $\Linf(\mathbb{R})$. 
	
	By Lemma \ref{lemma:auxiliaryG}, there exists $\delta>0$ such that 
	\begin{equation}
	\label{eq:Gactionfandgder}
	f'(X)\geq\delta, \quad g'(X)\geq\delta,  \quad \text{ and }\quad h'(X)\geq \delta,
	\end{equation}
	for almost every $X\in \mathbb{R}$. This yields
	\begin{equation*}
	\int_{\mathbb{R}}\bar{\Z}_{3}(s)^{2}\,ds=\int_{\mathbb{R}}(\Z_{3}\circ h(s))^{2}\,ds\leq \frac{1}{\delta}\int_{\mathbb{R}}(\Z_{3}\circ h(s))^{2}h'(s)\,ds=\frac{1}{\delta}\int_{\mathbb{R}}\Z_{3}(s)^{2}\,ds
	\end{equation*}
	by a change of variables, and we conclude that $\bar{\Z}_{3}\in L^{2}(\mathbb{R})$. 
	Furthermore, we have
	\begin{align*}
	\bar{\V}_{1}^{a}&=\bar{\V}_{1}-\frac{1}{2c(0)}\\
	&=f'\V_{1}\circ f-\frac{1}{2c(0)}\\
	&=f'\bigg(\V_{1}\circ f-\frac{1}{2c(0)}\bigg)+\frac{1}{2c(0)}(f'-1)\\
	&=f'\V_{1}^{a}\circ f+\frac{1}{2c(0)}(f'-1),
	\end{align*}
	\begin{equation*}
	\bar{\V}_{2}^{a}=\bar{\V}_{2}-\frac{1}{2}=f'\V_{2}\circ f-\frac{1}{2}=f'\bigg(\V_{2}\circ f-\frac{1}{2}\bigg)+\frac{1}{2}(f'-1)=f'\V_{2}^{a}\circ f+\frac{1}{2}(f'-1),
	\end{equation*}
	\begin{equation*}
	\bar{\V}_{i}^{a}=\bar{\V}_{i}=f'\V_{i}\circ f=f'\V_{i}^{a}\circ f \quad \text{for } i\in \{3,4,5\},
	\end{equation*}
	\begin{equation*}	
	\bar{\p}=f'\p\circ f
	\end{equation*}
	and
	\begin{equation*}	
	\bar{\q}=g'\p\circ g
	\end{equation*}
	which implies, by \eqref{eq:groupcond}, \eqref{eq:groupcond2} and \eqref{eq:Gactionfandgder}, that all the components of $\bar{\V}^{a}$ and $\bar{\p}$ and $\bar{\q}$ belong to $L^{2}(\mathbb{R})\cap\Linf(\mathbb{R})$. Similarly, one shows that the components of $\bar{\W}^{a}$ belong to the same set. From \eqref{eq:Gaction3} and \eqref{eq:Gactionfandgder}, we see that the inequalities in \eqref{eq:setGpositive} are satisfied for $\bar{\Theta}$. Since 
	\begin{equation*}
	\frac{1}{\bar{\V}_{2}+\bar{\V}_{4}}=\frac{1}{f'(\V_{2}\circ f+\V_{4}\circ f)}\leq \frac{1}{\delta(\V_{2}\circ f+\V_{4}\circ f)}
	\end{equation*}
	and
	\begin{equation*}
	\frac{1}{\bar{\W}_{2}+\bar{\W}_{4}}=\frac{1}{g'(\W_{2}\circ g+\W_{4}\circ g)}\leq \frac{1}{\delta(\W_{2}\circ g+\W_{4}\circ g)},
	\end{equation*}
	$\frac{1}{\bar{\V}_{2}+\bar{\V}_{4}}$ and $\frac{1}{\bar{\W}_{2}+\bar{\W}_{4}}$ belong to $\Linf(\mathbb{R})$. Hence, $||\bar{\Theta}||_{\G}$ and $|||\bar{\Theta}|||_{\G}$ are finite. By \eqref{eq:Caction}, we obtain
	\begin{align*}
	\dot{\bar{\Z}}(s)&=\dot{\Z}(h(s))\dot{h}(s)\\
	&=\big[\V(\X(h(s)))\dot{\X}(h(s))+\W(\Y(h(s)))\dot{\Y}(h(s))\big]\dot{h}(s)\\
	&=\V(f(\bar{\X}(s)))f'(\bar{\X}(s))\dot{\bar{\X}}(s)+\W(g(\bar{\Y}(s)))g'(\bar{\Y}(s))\dot{\bar{\Y}}(s)\\
	&=\bar{\V}(\bar{\X}(s))\dot{\bar{\X}}(s)+\bar{\W}(\bar{\Y}(s))\dot{\bar{\Y}}(s)
	\end{align*}
	and we have proved that $\Theta$ satisfies \eqref{eq:setGcomp}. The relations \eqref{eq:setGrel1}-\eqref{eq:setGrel4} follows by direct computation, for instance, we have
	\begin{align*}
	2\bar{\V}_{4}(\bar{\X})\bar{\V}_{2}(\bar{\X})
	&=2f'(\bar{\X})^{2}\V_{4}(f(\bar{\X}))\V_{2}(f(\bar{\X}))\\
	&=2f'(\bar{\X})^{2}\V_{4}(\X(h))\V_{2}(\X(h))\\
	&=f'(\bar{\X})^{2}\big[(c(\Z_{3}(h))\V_{3}(\X(h)))^{2}+c(\Z_{3}(h))\p(\X(h))^{2}\big]\\
	&=f'(\bar{\X})^{2}\big[(c(\bar{\Z}_{3})\V_{3}(f(\bar{\X})))^{2}+c(\bar{\Z}_{3})\p(f(\bar{\X}))^{2}\big]\\
	&=(c(\bar{\Z}_{3})\bar{\V}_{3}(\bar{\X}))^{2}+c(\bar{\Z}_{3})\bar{\p}(\bar{\X})^{2}.
	\end{align*}
	It remains to prove that \eqref{eq:setGrel5} holds for $\bar{\Theta}$. Since $h-\id\in\Linf(\mathbb{R})$, $\displaystyle\lim_{s\rightarrow-\infty}h(s)=-\infty$, so that
	\begin{equation*}
	\displaystyle\lim_{s\rightarrow -\infty}\bar{\Z}_{4}(s)=\displaystyle\lim_{s\rightarrow -\infty}\Z_{4}(h(s))=0
	\end{equation*}
	and we conclude that $\bar{\Theta}\in\G$.
\end{proof}

\begin{definition}
\label{def:Haction}
For any $(Z,p,q)\in \H$ and $f,g\in G$, we define
\begin{subequations}
\label{eqns:Haction}
\begin{align}
	\label{eq:HactionZ}
	\bar{Z}(X,Y)&=Z(f(X),g(Y)),\\ 
	\label{eq:Hactionp}
	\bar{p}(X,Y)&=f'(X)p(f(X),g(Y)),\\ 
	\label{eq:Hactionq}
	\bar{q}(X,Y)&=g'(Y)q(f(X),g(Y)).
\end{align}
\end{subequations}
The mapping $\H\times G^{2}\rightarrow\H$ given by $(Z,p,q)\times (f,g)\mapsto (\bar{Z},\bar{p},\bar{q})$ defines an action of the group $G^{2}$ on $\H$ and we denote $(\bar{Z},\bar{p},\bar{q})=(Z,p,q)\cdot (f,g)$.
\end{definition}

\begin{proof}[Proof of the well-posedness of Definition \ref{def:Haction}]
Condition (i) of Definition \ref{def:globalsolutions} implies that for any $(Z,p,q)\in\H$, we have 
\begin{aalign}
\label{eq:Hactionreg}
	&Z^{a}\in [\Winf(\mathbb{R}^{2})]^{5}, \quad Z^{a}_{X}\in [W^{1,\infty}_{Y}(\mathbb{R}^{2})]^{5}, \quad Z^{a}_{Y}\in [W^{1,\infty}_{X}(\mathbb{R}^{2})]^{5}, \\ &\hspace{60pt}p\in W^{1,\infty}_{Y}(\mathbb{R}^{2}), \quad q\in W^{1,\infty}_{X}(\mathbb{R}^{2}),
\end{aalign}
\begin{equation}
\label{eq:Hactionsoln2}
	(Z_{X}(X,Y))_{Y}=F(Z)(Z_{X},Z_{Y})(X,Y)
\end{equation}
for almost every $X\in\mathbb{R}$,
\begin{equation}
\label{eq:Hactionsoln3}
	(Z_{Y}(X,Y))_{X}=F(Z)(Z_{X},Z_{Y})(X,Y)
\end{equation}
for almost every $Y\in\mathbb{R}$,
\begin{equation}
\label{eq:Hactionsoln4}
	p_{Y}(X,Y)=0
\end{equation}
for almost every $X\in\mathbb{R}$,
\begin{equation}
\label{eq:Hactionsoln5}
	q_{X}(X,Y)=0
\end{equation}
for almost every $Y\in\mathbb{R}$. We first prove that the same regularity conditions hold for $(\bar{Z},\bar{p},\bar{q})$. Let us consider the first component of $\bar{Z}$. We have, by \eqref{eq:decayinfty},
\begin{align*}
	&\bar{Z}_{1}^{a}(X,Y)\\
	&=Z_{1}(f(X),g(Y))-\frac{1}{2c(0)}(X-Y)\\
	&=Z_{1}(f(X),g(Y))-\frac{1}{2c(0)}(f(X)-g(Y))+\frac{1}{2c(0)}(f(X)-g(Y))-\frac{1}{2c(0)}(X-Y)\\
	&=Z_{1}^{a}(f(X),g(Y))+\frac{1}{2c(0)}(f(X)-X+Y-g(Y)).
\end{align*}
Since $f-\id,g-\id\in\Linf(\mathbb{R})$ and $c\geq\frac{1}{\kappa}$, we get, by \eqref{eq:Hactionreg}, that $\bar{Z}_{1}^{a}\in\Linf(\mathbb{R}^{2})$. We differentiate and obtain
\begin{equation*}
	\bar{Z}_{1,X}^{a}(X,Y)=f'(X)Z_{1,X}^{a}(f(X),g(Y))+\frac{1}{2c(0)}(f'(X)-1)
\end{equation*}
and
\begin{equation*}
	\bar{Z}_{1,Y}^{a}(X,Y)=g'(Y)Z_{1,Y}^{a}(f(X),g(Y))-\frac{1}{2c(0)}(g'(Y)-1).
\end{equation*}
Since $f$ and $g$ belong to $G$, we have, by Lemma \ref{lemma:auxiliaryG}, that there exists $\alpha\geq0$ such that $\frac{1}{1+\alpha}\leq f'(X)\leq 1+\alpha$ for almost every $X\in \mathbb{R}$ and $\frac{1}{1+\alpha}\leq g'(Y)\leq 1+\alpha$ for almost every $Y\in \mathbb{R}$. This implies, by \eqref{eq:Hactionreg}, that $\bar{Z}_{1,X}^{a},\bar{Z}_{1,Y}^{a}\in\Linf(\mathbb{R}^{2})$ as $f'-1,g'-1\in\Linf(\mathbb{R})$ and $c\geq\frac{1}{\kappa}$. We have
\begin{equation*}
	\sup_{X\in\mathbb{R}}||\bar{Z}_{1,X}^{a}(X,\cdot)||_{\Linf(\mathbb{R})}\leq (1+\alpha)\sup_{X\in\mathbb{R}}||Z_{1,X}^{a}(X,\cdot)||_{\Linf(\mathbb{R})}+\frac{\kappa}{2}(\alpha+2)
\end{equation*}
and
\begin{equation*}
	\sup_{Y\in\mathbb{R}}||\bar{Z}_{1,Y}^{a}(\cdot,Y)||_{\Linf(\mathbb{R})}\leq(1+\alpha)\sup_{Y\in\mathbb{R}}||Z_{1,Y}^{a}(\cdot,Y)||_{\Linf(\mathbb{R})}+\frac{\kappa}{2}(\alpha+2).
\end{equation*}
Differentiating $\bar{Z}_{1,X}^{a}$ and $\bar{Z}_{1,Y}^{a}$ yields
\begin{equation*}
	\sup_{X\in\mathbb{R}}||\bar{Z}_{1,XY}^{a}(X,\cdot)||_{\Linf(\mathbb{R})}\leq (1+\alpha)^{2}\sup_{X\in\mathbb{R}}||Z_{1,XY}^{a}(X,\cdot)||_{\Linf(\mathbb{R})}
\end{equation*}
and
\begin{equation*}
	\sup_{Y\in\mathbb{R}}||\bar{Z}_{1,YX}^{a}(\cdot,Y)||_{\Linf(\mathbb{R})}\leq(1+\alpha)^{2}\sup_{Y\in\mathbb{R}}||Z_{1,YX}^{a}(\cdot,Y)||_{\Linf(\mathbb{R})}.
\end{equation*}
From \eqref{eq:Hactionreg}, we conclude that $\bar{Z}^{a}_{1,X}\in W^{1,\infty}_{Y}(\mathbb{R}^{2})$ and $\bar{Z}^{a}_{1,Y}\in W^{1,\infty}_{X}(\mathbb{R}^{2})$. One proves the same inclusions for the other components of $\bar{Z}$ in a similar way. We have
\begin{equation*}
	\sup_{X\in\mathbb{R}}||\bar{p}(X,\cdot)||_{\Linf(\mathbb{R})}\leq (1+\alpha)\sup_{X\in\mathbb{R}}||p(X,\cdot)||_{\Linf(\mathbb{R})}
\end{equation*}
and
\begin{equation*}
	\sup_{Y\in\mathbb{R}}||\bar{q}(\cdot,Y)||_{\Linf(\mathbb{R})}\leq(1+\alpha)\sup_{Y\in\mathbb{R}}||q(\cdot,Y)||_{\Linf(\mathbb{R})}
\end{equation*}
which are bounded by \eqref{eq:Hactionreg}. From \eqref{eq:Hactionsoln4} and \eqref{eq:Hactionsoln5}, we have
\begin{equation*}
	\bar{p}_{Y}(X,Y)=f'(X)g'(Y)p_{Y}(f(X),g(Y))=0
\end{equation*}
and
\begin{equation*}
	\bar{q}_{X}(X,Y)=f'(X)g'(Y)q_{X}(f(X),g(Y))=0,
\end{equation*}
so that condition (iv) and (v) of Definition \ref{def:soln} are satisfied. Moreover, $\bar{p}\in W^{1,\infty}_{Y}(\mathbb{R}^{2})$ and $\bar{q}\in W^{1,\infty}_{X}(\mathbb{R}^{2})$, so that conditions (i) of Definition \ref{def:soln} holds. By using the linearity of the mapping $F(Z)$, we obtain
\begin{align*}
	\bar{Z}_{XY}&=f'g'Z_{XY}(f,g)\\
	&=f'g'F(Z(f,g))(Z_{X}(f,g),Z_{Y}(f,g))\\
	&=F(Z(f,g))(f'Z_{X}(f,g),g'Z_{Y}(f,g))\\
	&=F(\bar{Z})(\bar{Z}_{X},\bar{Z}_{Y}),
\end{align*} 
so that conditions (ii) and (iii) of Definition \ref{def:soln} are satisfied. It remains to show the relations in \eqref{eqns:setH}. The identities \eqref{eq:setH1}-\eqref{eq:setH3} follows by direct computation. For instance, we have
\begin{align*}
	2\bar{J}_{X}\bar{x}_{X}&=2(f')^{2}J_{X}(f,g)x_{X}(f,g)\\
	&=(f')^{2}[(c(U(f,g))U_{X}(f,g))^{2}+c(U(f,g))p^{2}(f,g)]\\
	&=(c(U(f,g))f'U_{X}(f,g))^{2}+c(U(f,g))(f'p(f,g))^{2}\\
	&=(c(\bar{U})\bar{U}_{X})^{2}+c(\bar{U})\bar{p}^{2}.
\end{align*}
Since $x_{X}+J_{X}>0$ and $f'\geq\frac{1}{1+\alpha}>0$, we get
\begin{align*}
	\bar{x}_{X}+\bar{J}_{X}=f'(x_{X}(f,g)+J_{X}(f,g))\geq \frac{1}{1+\alpha}(x_{X}(f,g)+J_{X}(f,g))>0.
\end{align*}
Similarly, one shows the other inequalities in \eqref{eq:setH4}-\eqref{eq:setH6}. Hence, we have proved that $(\bar{Z},\bar{p},\bar{q})$ satisfies condition (i) of Definition \ref{def:globalsolutions}. To prove that the second condition is satisfied, we need the following lemma.

\begin{lemma}
\label{lemma:GHCactioncommute}
For any $(Z,p,q)\in \H$, $\Gamma=(\X,\Y)\in\C$ and $\phi=(f,g)\in G^{2}$, we have
\begin{equation*}
	((Z,p,q)\bullet\Gamma)\cdot\phi=((Z,p,q)\cdot\phi)\bullet(\Gamma\cdot\phi).
\end{equation*}
\end{lemma}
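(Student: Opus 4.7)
The plan is to unpack both sides using the definitions of $\bullet$, of the action $\cdot$ on $\G$ (Definition~\ref{def:Gaction}), and of the action $\cdot$ on $\H$ (Definition~\ref{def:Haction}), and then verify componentwise equality. Write $\phi=(f,g)$, $\Gamma=(\X,\Y)$, $(Z,p,q)\bullet\Gamma=\Theta=(\X,\Y,\Z,\V,\W,\p,\q)$, and let $h\in G$ be the renormalizing function given by \eqref{eq:Caction2}, so that $\Gamma\cdot\phi=(\bar\X,\bar\Y)=(f^{-1}\circ\X\circ h,\,g^{-1}\circ\Y\circ h)$. On the left-hand side, $\Theta\cdot\phi$ has curve $(\bar\X,\bar\Y)$, and the remaining components are $\bar\Z=\Z\circ h$, $\bar\V(X)=f'(X)\V(f(X))$, $\bar\W(Y)=g'(Y)\W(g(Y))$, $\bar\p(X)=f'(X)\p(f(X))$ and $\bar\q(Y)=g'(Y)\q(g(Y))$. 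On the right-hand side, $(Z,p,q)\cdot\phi=(\bar Z,\bar p,\bar q)$ as defined by \eqref{eqns:Haction}, and both sides obviously share the same curve $(\bar\X,\bar\Y)$, so it remains to compare the $\Z$, $\V$, $\W$, $\p$ and $\q$ parts.

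The key computation is the identity
\[
g\bigl(\bar\Y\circ\bar\X^{-1}(X)\bigr)=\Y\circ\X^{-1}(f(X))\qquad\text{and}\qquad f\bigl(\bar\X\circ\bar\Y^{-1}(Y)\bigr)=\X\circ\Y^{-1}(g(Y)),
\]
which follows directly from $\bar\X^{-1}=h^{-1}\circ\X^{-1}\circ f$ and $\bar\Y^{-1}=h^{-1}\circ\Y^{-1}\circ g$ (using the generalized inverses of Definition~\ref{def:GenInv}; note that $h$ is a genuine homeomorphism). Using the shorthand $\Y(X)=\Y\circ\X^{-1}(X)$ from \eqref{eq:invabuse}, this gives $g(\bar\Y(X))=\Y(f(X))$ and $f(\bar\X(Y))=\X(g(Y))$. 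With this, we compute
\begin{align*}
((\bar Z,\bar p,\bar q)\bullet(\bar\X,\bar\Y))_{\Z}(s)&=\bar Z(\bar\X(s),\bar\Y(s))=Z(\X(h(s)),\Y(h(s)))=\Z(h(s))=\bar\Z(s),\\
((\bar Z,\bar p,\bar q)\bullet(\bar\X,\bar\Y))_{\V}(X)&=\bar Z_X(X,\bar\Y(X))=f'(X)Z_X(f(X),g(\bar\Y(X)))\\
&=f'(X)Z_X(f(X),\Y(f(X)))=f'(X)\V(f(X))=\bar\V(X),
\end{align*}
and analogously for the $\W$ component using $f(\bar\X(Y))=\X(g(Y))$.

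For the $\p$ and $\q$ components, recall from \eqref{eq:soln4}–\eqref{eq:soln5} that $p_Y=0$ and $q_X=0$ almost everywhere, so $p(X,Y)$ depends only on $X$ (a.e.) and $q(X,Y)$ only on $Y$; in particular $\p(X)=p(X,\Y(X))=p(X,Y)$ for any admissible $Y$, and similarly for $\q$. Hence
\[
((\bar Z,\bar p,\bar q)\bullet(\bar\X,\bar\Y))_{\p}(X)=\bar p(X,\bar\Y(X))=f'(X)p(f(X),g(\bar\Y(X)))=f'(X)\p(f(X))=\bar\p(X),
\]
and the analogous identity for $\q$. This exhausts all components, so $((Z,p,q)\bullet\Gamma)\cdot\phi=((Z,p,q)\cdot\phi)\bullet(\Gamma\cdot\phi)$.

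The only technical subtlety, and the place that requires the most care, is the use of generalized inverses: $\X^{-1}$ and $\bar\X^{-1}$ are only defined in the sense of Definition~\ref{def:GenInv}, so one must check that the identities $\X\circ\X^{-1}=\id$, $\bar\X=f^{-1}\circ\X\circ h$ and $h$ being an honest bijection are enough to make the manipulations above legitimate on the full-measure set where $\dot{\bar\X}>0$ (and symmetrically for $Y$). The equalities of the continuous functions $\bar\Z$ and $\tilde\Z$ then extend everywhere, while the equalities for $\V,\W,\p,\q$ hold a.e., which is the regularity they are required to have as elements of $\G$.
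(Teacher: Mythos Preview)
Your proof is correct and follows essentially the same approach as the paper's: both unwind the definitions and verify componentwise equality, the only cosmetic difference being that the paper parameterizes the $\V,\W,\p,\q$ computations by $s$ along the curve (evaluating at $\bar\X(s)$ and using $f\circ\bar\X=\X\circ h$, $g\circ\bar\Y=\Y\circ h$ directly), whereas you work at a generic $X$ via the generalized-inverse identity $g(\bar\Y\circ\bar\X^{-1}(X))=\Y\circ\X^{-1}(f(X))$. Your explicit handling of the generalized inverses and the $p_Y=0$, $q_X=0$ step is a welcome clarification of points the paper leaves implicit.
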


\begin{proof}
Let 
\begin{align*}
	\Theta&=(\X,\Y,\Z,\V,\W,\p,\q)=(Z,p,q)\bullet\Gamma,\\
	\bar{\Theta}&=(\bar{\X},\bar{\Y},\bar{\Z},\bar{\V},\bar{\W},\bar{\p},\bar{\q})=\Theta\cdot\phi,\\
	\bar{\Gamma}&=(\bar{\X},\bar{\Y})=\Gamma\cdot\phi,\\
	(\bar{Z},\bar{p},\bar{q})&=(Z,p,q)\cdot\phi,\\
	\tilde{\Theta}&=(\bar{\X},\bar{\Y},\tilde{\Z},\tilde{\V},\tilde{\W},\tilde{\p},\tilde{\q})=(\bar{Z},\bar{p},\bar{q})\bullet\bar{\Gamma}.
\end{align*}
We want to prove that $\bar{\Theta}=\tilde{\Theta}$. By \eqref{eq:HactionZ}, \eqref{eq:Caction} and \eqref{eq:Gaction2}, we get
\begin{equation*}
	\tilde{\Z}=\bar{Z}(\bar{\X},\bar{\Y})=Z(f\circ\bar{\X},g\circ\bar{\Y})=Z(\X\circ h,\Y\circ h)=\Z\circ h=\bar{\Z}.
\end{equation*}
We have
\begin{align*}
	\tilde{\V}(\bar{\X})&=\bar{Z}_{X}(\bar{\X},\bar{\Y})\\
	&=f'(\bar{\X})Z_{X}(f\circ\bar{\X},g\circ\bar{\Y}) \quad \text{by } \eqref{eq:HactionZ}\\
	&=f'(\bar{\X})Z_{X}(\X\circ h,\Y\circ h) \quad \text{by } \eqref{eq:Caction}\\
	&=f'(\bar{\X})\V(\X\circ h)\\
	&=f'(\bar{\X})\V(f\circ\bar{\X}) \quad \text{by } \eqref{eq:Caction}\\
	&=\bar{\V}(\bar{\X}) \quad \text{by } \eqref{eq:Gaction3}. 
\end{align*}
In a similar way, one shows that $\tilde{\W}=\bar{\W}$. Moreover,
\begin{align*}
	\tilde{\p}(\bar{\X})&=\bar{p}(\bar{\X},\bar{\Y})\\
	&=f'(\bar{\X})p(f\circ\bar{\X},g\circ\bar{\Y}) \quad \text{by } \eqref{eq:Hactionp}\\
	&=f'(\bar{\X})p(\X\circ h,\Y\circ h) \quad \text{by } \eqref{eq:Caction}\\
	&=f'(\bar{\X})\p(\X\circ h)\\
	&=f'(\bar{\X})\p(f\circ\bar{\X}) \quad \text{by } \eqref{eq:Caction}\\
	&=\bar{\p}(\bar{\X}) \quad \text{by } \eqref{eq:Gaction4}. 
\end{align*}
and
\begin{align*}
	\tilde{\q}(\bar{\Y})&=\bar{q}(\bar{\X},\bar{\Y})\\
	&=g'(\bar{\Y})q(f\circ\bar{\X},g\circ\bar{\Y}) \quad \text{by } \eqref{eq:Hactionq}\\
	&=g'(\bar{\Y})q(\X\circ h,\Y\circ h) \quad \text{by } \eqref{eq:Caction}\\
	&=g'(\bar{\Y})\q(\Y\circ h)\\
	&=g'(\bar{\Y})\q(g\circ\bar{\Y}) \quad \text{by } \eqref{eq:Caction}\\
	&=\bar{\q}(\bar{\Y}) \quad \text{by } \eqref{eq:Gaction4}
\end{align*}
and we have proved that $\bar{\Theta}=\tilde{\Theta}$.
\end{proof}

\emph{End of proof of the well-posedness of Definition \ref{def:Haction}.} Now we prove that for any $\phi=(f,g)\in G^{2}$ and $(Z,p,q)\in\H$, $(\bar{Z},\bar{p},\bar{q})$, as defined in \eqref{eqns:Haction}, satisfies condition (ii) in Definition \ref{def:globalsolutions}. Since $(Z,p,q)\in\H$, there exists a curve $\Gamma\in\C$ such that $(Z,p,q)\bullet\Gamma\in\G$. Consider the curve $\bar{\Gamma}=\Gamma\cdot\phi$ which, by Definition \ref{def:Caction}, belongs to $\C$. By Definition \ref{def:Gaction}, $((Z,p,q)\bullet\Gamma)\cdot\phi$ belongs to $\G$. This implies that $(\bar{Z},\bar{p},\bar{q})\bullet\bar{\Gamma}\in\G$, as
\begin{align*}
	(\bar{Z},\bar{p},\bar{q})\bullet\bar{\Gamma}&=((Z,p,q)\cdot\phi)\bullet(\Gamma\cdot\phi) && \text{by } \eqref{eqns:Haction}\\
	&=((Z,p,q)\bullet\Gamma)\cdot\phi && \text{by Lemma } \ref{lemma:GHCactioncommute}
\end{align*}
and we have proved the last condition. Hence, we conclude that $(\bar{Z},\bar{p},\bar{q})\in\H$.
\end{proof}

\begin{lemma}
The mappings $\mathbf{E}$, $\mathbf{t}_{T}$, $\mathbf{S}$, $\mathbf{D}$ and $\mathbf{C}$ are $G^{2}$-equivariant, that is, for all $\phi=(f,g)\in G^{2}$, we have
\begin{subequations}
\begin{align}
	\label{eq:equivar1}
	\mathbf{E}((Z,p,q)\cdot\phi)&=\mathbf{E}(Z,p,q)\cdot\phi,\\
	\label{eq:equivar2}
	\mathbf{t}_{T}((Z,p,q)\cdot\phi)&=\mathbf{t}_{T}(Z,p,q)\cdot\phi
\end{align}
for all $(Z,p,q)\in\H$,
\begin{equation}
\label{eq:equivar3}
	\mathbf{S}(\Theta\cdot\phi)=\mathbf{S}(\Theta)\cdot\phi
\end{equation}
for all $\Theta\in\G$,
\begin{equation}
\label{eq:equivar4}
	\mathbf{D}(\Theta\cdot\phi)=\mathbf{D}(\Theta)\cdot\phi
\end{equation}
for all $\Theta\in\G_{0}$, and
\begin{equation}
\label{eq:equivar5}
	\mathbf{C}(\psi\cdot\phi)=\mathbf{C}(\psi)\cdot\phi
\end{equation}
for all $\psi\in\F$. Therefore $S_{T}$ is $G^{2}$-equivariant, that is,
\begin{equation}
\label{eq:equivar6}
	S_{T}(\psi\cdot\phi)=S_{T}(\psi)\cdot\phi
\end{equation}
for all $\psi\in\F$.
\end{subequations}
\end{lemma}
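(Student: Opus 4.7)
The plan is to verify each of the five equivariance identities for the intermediate maps separately and then deduce \eqref{eq:equivar6} from the defining composition $S_T=\mathbf{D}\circ\mathbf{E}\circ\mathbf{t}_T\circ\mathbf{S}\circ\mathbf{C}$. The identity \eqref{eq:equivar2} is essentially trivial: by Definition~\ref{lemma:tTmap}, $\mathbf{t}_T$ only subtracts $T$ from the first component $t=Z_1$, and this operation clearly commutes with the precomposition by $(f,g)$ and the multiplication of $p$, $q$ by $f'$, $g'$ prescribed by \eqref{eqns:Haction}.

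The identities \eqref{eq:equivar4} and \eqref{eq:equivar5} are proved by direct, componentwise verification. For \eqref{eq:equivar5}, writing $\bar\psi=\psi\cdot\phi$, the relation $\bar x_1(X)=x_1(f(X))$ together with its $Y$-counterpart shows that the curve produced by \eqref{eq:mapFtoGX} applied to $\bar\psi$ coincides with $(f^{-1}\circ\X\circ h,\ g^{-1}\circ\Y\circ h)$, where $h$ is the re-normalization of \eqref{eq:Caction2}, so that this curve is exactly $(\X,\Y)\cdot\phi$. The assignments \eqref{eq:mapFtoGZ1}--\eqref{eq:mapFtoGp} then produce the quantities prescribed by Definition~\ref{def:Gaction}, the derivative factors $f'$ and $g'$ arising from the chain rule applied to $\bar x_1'$, $\bar J_1'$, $\bar K_1'$ and their $Y$-counterparts. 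The identity \eqref{eq:equivar4} is analogous, relying on invariance of the integrals in \eqref{eq:mapfromG0toF3}--\eqref{eq:mapfromG0toF4} under the reparametrization induced by $h$.

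For \eqref{eq:equivar3} I would argue by uniqueness. Writing $(Z,p,q)=\mathbf{S}(\Theta)$ one has $\Theta=(Z,p,q)\bullet(\X,\Y)$ for some $(\X,\Y)\in\C$, and Lemma~\ref{lemma:GHCactioncommute} gives
\begin{equation*}
((Z,p,q)\cdot\phi)\bullet((\X,\Y)\cdot\phi)=((Z,p,q)\bullet(\X,\Y))\cdot\phi=\Theta\cdot\phi.
\end{equation*}
Since $(Z,p,q)\cdot\phi\in\H$ and $(\X,\Y)\cdot\phi\in\C$ by the well-posedness of Definitions~\ref{def:Caction} and~\ref{def:Haction}, the uniqueness part of Theorem~\ref{thm:globalsoln} forces $\mathbf{S}(\Theta\cdot\phi)=(Z,p,q)\cdot\phi=\mathbf{S}(\Theta)\cdot\phi$.

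The main obstacle is \eqref{eq:equivar1}, where one must identify the curve produced by \eqref{eq:Emap} for the transformed solution. Since $\bar t(X,Y)=t(f(X),g(Y))$ with $f$ and $g$ strictly increasing, the zero set of $\bar t$ is the $(f^{-1},g^{-1})$-preimage of the zero set of $t$. Exploiting the monotonicity properties $t_X\geq 0$, $t_Y\leq 0$ available from \eqref{eq:setH1} and \eqref{eq:setH4}, I would verify that the candidate $\bar X(s)=f^{-1}(\X(h(s)))$ both satisfies $\bar t(\bar X(s),2s-\bar X(s))=0$ and fulfills the supremum condition in \eqref{eq:Emap}, so that $(\bar\X,\bar\Y)=(\X,\Y)\cdot\phi$; the subtle point is to handle the case where the zero set of $t$ contains a rectangular box, which requires the same kind of care as in the well-posedness proof of Definition~\ref{def:mapE}. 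A final appeal to Lemma~\ref{lemma:GHCactioncommute} yields \eqref{eq:equivar1}, and \eqref{eq:equivar6} then follows immediately from the composition formula for $S_T$ together with the five intermediate identities.
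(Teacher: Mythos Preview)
Your plan matches the paper's proof essentially step for step: direct verification for $\mathbf{t}_T$ and $\mathbf{D}$, the uniqueness argument via Lemma~\ref{lemma:GHCactioncommute} and Theorem~\ref{thm:globalsoln} for $\mathbf{S}$, curve identification plus Lemma~\ref{lemma:GHCactioncommute} for $\mathbf{E}$, and composition for $S_T$. One point worth flagging: for $\mathbf{C}$ you describe the curve identification as immediate from $\bar x_1=x_1\circ f$, but the supremum in \eqref{eq:mapFtoGX} requires exactly the same contradiction argument you outline for $\mathbf{E}$ (now handling intervals where $x_1$ or $x_2$ are constant rather than boxes where $t=0$); the paper carries this out explicitly in its Step~5, and you should expect to do the same.
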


\begin{proof}	
We decompose the proof into six steps.
	
\textbf{Step 1.}
We prove \eqref{eq:equivar1}. Let
\begin{align*}
	\Theta&=(\X,\Y,\Z,\V,\W,\p,\q)=\mathbf{E}(Z,p,q),\\
	\tilde{\Theta}&=(\tilde{\X},\tilde{\Y},\tilde{\Z},\tilde{\V},\tilde{\W},\tilde{\p},\tilde{\q})=\Theta\cdot\phi,\\
	(\bar{Z},\bar{p},\bar{q})&=(Z,p,q)\cdot\phi,\\
	\bar{\Theta}&=(\bar{\X},\bar{\Y},\bar{\Z},\bar{\V},\bar{\W},\bar{\p},\bar{\q})=\mathbf{E}(\bar{Z},\bar{p},\bar{q}).
\end{align*}
We want to prove that $\tilde{\Theta}=\bar{\Theta}$. First we show that $(\tilde{\X},\tilde{\Y})=(\bar{\X},\bar{\Y})$. By \eqref{eq:Emap} and \eqref{eqns:Haction}, we have
\begin{equation}
\label{eq:EequivX}
	\bar{\X}(s)=\sup\{X\in \mathbb{R}\ | \ t(f(X'),g(2s-X'))<0 \text{ for all } X'<X\}.
\end{equation}
From \eqref{eq:Emap}, we have $t(\X(s),\Y(s))=0$, so that $t(\X\circ h(s),\Y\circ h(s))=0$ which implies, by \eqref{eq:Caction}, that $t(f\circ\tilde{\X}(s),g\circ\tilde{\Y}(s))=0$. Hence, by \eqref{eq:EequivX}, $\bar{\X}\leq \tilde{\X}$. Let us assume that $\bar{\X}(s)<\tilde{\X}(s)$ for some $s\in\mathbb{R}$. Since $f$ and $g$ are strictly increasing functions, we have $f(\bar{\X}(s))<f(\tilde{\X}(s))$
and $g(\tilde{\Y}(s))<g(\bar{\Y}(s))$ which implies, by \eqref{eq:setH1} and \eqref{eq:setH4}, that
\begin{equation*}
	t(f\circ\bar{\X}(s),g\circ\bar{\Y}(s))\leq t(f\circ\tilde{\X}(s),g\circ\bar{\Y}(s))\leq t(f\circ\tilde{\X}(s),g\circ\tilde{\Y}(s)). 
\end{equation*}
By \eqref{eq:EequivX}, we have $t(f\circ\bar{\X}(s),g\circ\bar{\Y}(s))=0$ and since $t(f\circ\tilde{\X}(s),g\circ\tilde{\Y}(s))=t(\X\circ h(s),\Y\circ h(s))=0$, the monotonicity of $t$
implies that $t(X,Y)=0$ for all $(X,Y)\in[f\circ\bar{\X}(s),f\circ\tilde{\X}(s)]\times[g\circ\tilde{\Y}(s),g\circ\bar{\Y}(s)]$. If $f\circ\bar{\X}(s)\leq 2h(s)-g\circ\bar{\Y}(s)$, set $X'=2h(s)-g\circ\bar{\Y}(s)$ and $Y'=g\circ\bar{\Y}(s)$. We get
\begin{equation*}
	f\circ\bar{\X}(s)\leq X'<2h(s)-g\circ\tilde{\Y}(s)=2h(s)-\Y\circ h(s)=\X\circ h(s)=f\circ\tilde{\X}(s),
\end{equation*}
that is, $X'\in[f\circ\bar{\X}(s),f\circ\tilde{\X}(s)]$, so that $t(X',Y')=0$. Thus, we have $t(X',Y')=0$, $X'<\X\circ h(s)$ and $X'+Y'=2h(s)$, which contradicts the definition \eqref{eq:Emap} of $(\X,\Y)$ at $h(s)$. If $f\circ\bar{\X}(s)>2h(s)-g\circ\bar{\Y}(s)$, let $X'=f\circ\bar{\X}(s)$ and $Y'=2h(s)-f\circ\bar{\X}(s)$, which implies that $X'<f\circ\tilde{\X}(s)=\X\circ h(s)$ and
\begin{align*}
	g\circ\tilde{\Y}(s)&=\Y\circ h(s)=2h(s)-\X\circ h(s)=2h(s)-f\circ\tilde{\X}(s)\\
	&<2h(s)-f\circ\bar{\X}(s)=Y'<g\circ\bar{\Y}(s),
\end{align*}
so that $Y'\in[g\circ\tilde{\Y}(s),g\circ\bar{\Y}(s)]$. Thus, $t(X',Y')=0$ which is a contradiction, because $X'<\X\circ h(s)$ and $X'+Y'=2h(s)$. Hence, we must have $\bar{\X}=\tilde{\X}$, which implies that $\bar{\Y}=\tilde{\Y}$. Then, we get
\begin{align*}
	\mathbf{E}((Z,p,q)\cdot\phi)&=((Z,p,q)\cdot\phi)\bullet(\bar{\X},\bar{\Y})\\
	&=((Z,p,q)\cdot\phi)\bullet(\tilde{\X},\tilde{\Y})\\
	&=((Z,p,q)\cdot\phi)\bullet((\X,\Y)\cdot\phi)\\
	&=((Z,p,q)\bullet(\X,\Y))\cdot\phi \quad \text{by Lemma } \ref{lemma:GHCactioncommute}\\
	&=\mathbf{E}(Z,p,q)\cdot\phi
\end{align*}
and we have proved \eqref{eq:equivar1}.

\textbf{Step 2.}
Let us prove \eqref{eq:equivar2}. We denote
\begin{align*}
	(\hat{Z},\hat{p},\hat{q})&=(Z,p,q)\cdot\phi,\\
	(\tilde{Z},\tilde{p},\tilde{q})&=\mathbf{t}_{T}(\hat{Z},\hat{p},\hat{q}),\\
	(\check{Z},\check{p},\check{q})&=\mathbf{t}_{T}(Z,p,q),\\
	(\bar{Z},\bar{p},\bar{q})&=(\check{Z},\check{p},\check{q})\cdot\phi
\end{align*}
and we want to show that $(\tilde{Z},\tilde{p},\tilde{q})=(\bar{Z},\bar{p},\bar{q})$. By \eqref{eqns:Haction} and \eqref{eqns:tTmap}, we obtain
\begin{align*}
	\tilde{t}&=\hat{t}-T=t(f,g)-T=\check{t}(f,g)=\bar{t},\\
	\tilde{x}&=\hat{x}=x(f,g)=\check{x}(f,g)=\bar{x},\\
	\tilde{p}&=\hat{p}=f'p(f,g)=f'\check{p}(f,g)=\bar{p},\\
	\tilde{q}&=\hat{q}=g'q(f,g)=g'\check{q}(f,g)=\bar{q}.
\end{align*}
By a calculation similar to the one where it is shown that $\tilde{x}=\bar{x}$, one obtains for the remaining components that  $\tilde{Z}=\bar{Z}$.

\textbf{Step 3.}
We prove \eqref{eq:equivar3}. For any $\Theta=(\X,\Y,\Z,\V,\W,\p,\q)$, we denote
\begin{align*}
	(\tilde{Z},\tilde{p},\tilde{q})&=\mathbf{S}(\Theta\cdot\phi),\\
	(Z,p,q)&=\mathbf{S}(\Theta),\\
	(\bar{Z},\bar{p},\bar{q})&=(Z,p,q)\cdot\phi.
\end{align*}
We want to prove that $(\tilde{Z},\tilde{p},\tilde{q})=(\bar{Z},\bar{p},\bar{q})$. From the definition of the solution operator $\mathbf{S}$ in \eqref{eq:Soperator}, we have
\begin{equation*}
	(\tilde{Z},\tilde{p},\tilde{q})\bullet((\X,\Y)\cdot\phi)=\Theta\cdot\phi \quad \text{and} \quad (Z,p,q)\bullet(\X,\Y)=\Theta
\end{equation*}
which implies, along with Lemma \ref{lemma:GHCactioncommute}, that
\begin{align*}
	(\bar{Z},\bar{p},\bar{q})\bullet((\X,\Y)\cdot\phi)&=((Z,p,q)\cdot\phi)\bullet((\X,\Y)\cdot\phi)\\
	&=((Z,p,q)\bullet(\X,\Y))\cdot\phi=\Theta\cdot\phi.
\end{align*}
Hence, $(\tilde{Z},\tilde{p},\tilde{q})$ and $(\bar{Z},\bar{p},\bar{q})$ are two solutions to the same data $\Theta\cdot\phi$. By Theorem \ref{thm:globalsoln}, the solution is unique, so that $(\tilde{Z},\tilde{p},\tilde{q})=(\bar{Z},\bar{p},\bar{q})$.

\textbf{Step 4.}
Now we prove \eqref{eq:equivar4}. For any $\Theta=(\X,\Y,\Z,\V,\W,\p,\q)\in\G_{0}$, let
\begin{align*}
	\tilde{\Theta}&=(\tilde{\X},\tilde{\Y},\tilde{\Z},\tilde{\V},\tilde{\W},\tilde{\p},\tilde{\q})=\Theta\cdot\phi,\\ 
	\tilde{\psi}&=(\tilde{\psi}_{1},\tilde{\psi}_{2})=\mathbf{D}(\tilde{\Theta}),\\ 
	\psi&=(\psi_{1},\psi_{2})=\mathbf{D}(\Theta),\\ 
	\bar{\psi}&=(\bar{\psi}_{1},\bar{\psi}_{2})=\psi\cdot\phi,
\end{align*}
where we denote $\tilde{\psi}_{1}=(\tilde{x}_{1},\tilde{U}_{1},\tilde{J}_{1},\tilde{K}_{1},\tilde{V}_{1},\tilde{H}_{1})$ and  $\tilde{\psi}_{2}=(\tilde{x}_{2},\tilde{U}_{2},\tilde{J}_{2},\tilde{K}_{2},\tilde{V}_{2},\tilde{H}_{2})$,
and similar for $\psi$ and $\bar{\psi}$. We want to show that $\tilde{\psi}=\bar{\psi}$. The proofs of $\tilde{\psi}_{1}=\bar{\psi}_{1}$ and $\tilde{\psi}_{2}=\bar{\psi}_{2}$ are similar and we only show that $\tilde{\psi}_{1}=\bar{\psi}_{1}$. We have
\begin{align*}
	\tilde{x}_{1}(\tilde{\X}(s))&=\tilde{\Z}_{2}(s) \quad \text{by } \eqref{eq:mapfromG0toF1}\\
	&=\Z_{2}(h(s)) \quad \text{by } \eqref{eq:Gaction2}\\
	&=x_{1}(\X\circ h(s)) \quad \text{by } \eqref{eq:mapfromG0toF1}\\
	&=x_{1}(f\circ\tilde{\X}(s)) \quad \text{by } \eqref{eq:Caction}\\
	&=\bar{x}_{1}(\tilde{\X}(s)) \quad \text{by } \eqref{eq:Faction1}.
\end{align*}
Similarly, one proves that $\tilde{U}_{1}=\bar{U}_{1}$.
\begin{align*}
	\tilde{J}_{1}(\tilde{\X}(s))&=\int_{-\infty}^{\tilde{\X}(s)}\tilde{\V}_{4}(X)\,dX \quad \text{by } \eqref{eq:mapfromG0toF3}\\
	&=\int_{-\infty}^{f^{-1}\circ\X\circ h(s)}f'(X)\V_{4}(f(X))\,dX \quad \text{by } \eqref{eq:Caction} \text{ and } \eqref{eq:Gaction3}\\
	&=\int_{-\infty}^{\X\circ h(s)}\V_{4}(X)\,dX \quad \text{by a change of variables}\\
	&=J_{1}(\X\circ h(s)) \quad \text{by } \eqref{eq:mapfromG0toF3}\\
	&=J_{1}(f\circ\tilde{\X}(s)) \quad \text{by } \eqref{eq:Caction}\\
	&=\bar{J}_{1}(\tilde{\X}(s)) \quad \text{by } \eqref{eq:Faction3}.
\end{align*}
By a similar calculation, one shows that $\tilde{K}_{1}=\bar{K}_{1}$. From \eqref{eq:mapfromG0toF6}, \eqref{eq:Gaction4} and \eqref{eq:Faction6}, we obtain
\begin{align*}
	\tilde{H}_{1}(\tilde{\X}(s))&=\tilde{\p}(\tilde{\X}(s))=f'(\tilde{\X}(s))\p(f\circ\tilde{\X}(s))=f'(\tilde{\X}(s))H_{1}(f\circ\tilde{\X}(s))=\bar{H}_{1}(\tilde{\X}(s)),\\
	\tilde{H}_{2}(\tilde{\Y}(s))&=\tilde{\q}(\tilde{\Y}(s))=g'(\tilde{\Y}(s))\q(g\circ\tilde{\Y}(s))=g'(\tilde{\Y}(s))H_{2}(g\circ\tilde{\Y}(s))=\bar{H}_{2}(\tilde{\Y}(s)).
\end{align*}
Similarly, one proves that $\tilde{V}_{1}=\bar{V}_{1}$.

\textbf{Step 5.}
We prove \eqref{eq:equivar5}. Given $\psi=(\psi_{1},\psi_{2})\in\F$, we denote
\begin{align*}
	\bar{\psi}&=(\bar{\psi}_{1},\bar{\psi}_{2})=\psi\cdot\phi,\\
	\bar{\Theta}&=(\bar{\X},\bar{\Y},\bar{\Z},\bar{\V},\bar{\W},\bar{\p},\bar{\q})=\mathbf{C}(\bar{\psi}),\\
	\Theta&=(\X,\Y,\Z,\V,\W,\p,\q)=\mathbf{C}(\psi),\\
	\tilde{\Theta}&=(\tilde{\X},\tilde{\Y},\tilde{\Z},\tilde{\V},\tilde{\W},\tilde{\p},\tilde{\q})=\Theta\cdot\phi.
\end{align*}
We want to prove that $\tilde{\Theta}=\bar{\Theta}$. We first show that $(\tilde{\X},\tilde{\Y})=(\bar{\X},\bar{\Y})$. By \eqref{eq:mapFtoGX} and \eqref{eq:Faction1}, we have
\begin{equation}
\label{eq:CequivX}
	\bar{\X}(s)=\sup\{X\in \mathbb{R} \ | \ x_{1}\circ f(X')<x_{2}\circ g(2s-X') \text{ for all } X'<X \}.
\end{equation}
From \eqref{eq:mapFtoG2}, we obtain $x_{1}\circ\X\circ h=x_{2}\circ\Y\circ h$ which implies, by \eqref{eq:Caction}, that $x_{1}\circ f\circ\tilde{\X}=x_{2}\circ g\circ\tilde{\Y}$. Hence, by \eqref{eq:CequivX}, $\bar{\X}\leq \tilde{\X}$. Assume that $\bar{\X}(s)<\tilde{\X}(s)$ for some $s\in\mathbb{R}$. Since $f$ and $g$ are strictly increasing functions, we have $f(\bar{\X}(s))<f(\tilde{\X}(s))$
and $g(\tilde{\Y}(s))<g(\bar{\Y}(s))$ which implies, by \eqref{eq:setFrel1}, that
\begin{equation*}
	x_{1}\circ f\circ\bar{\X}(s)\leq x_{1}\circ f\circ\tilde{\X}(s)=x_{2}\circ g\circ\tilde{\Y}(s)\leq x_{2}\circ g\circ\bar{\Y}(s).
\end{equation*}
By \eqref{eq:CequivX}, we have $x_{1}\circ f\circ\bar{\X}(s)=x_{2}\circ g\circ\bar{\Y}(s)$, so we must have $x_{1}\circ f\circ\bar{\X}(s)=x_{1}\circ f\circ\tilde{\X}(s)$ and $x_{2}\circ g\circ\tilde{\Y}(s)=x_{2}\circ g\circ\bar{\Y}(s)$. This implies, since $x_{1}$ and $x_{2}$ are nondecreasing, that $x_{1}$ and $x_{2}$ are constant on $[f\circ\bar{\X}(s),f\circ\tilde{\X}(s)]$ and $[g\circ\tilde{\Y}(s),g\circ\bar{\Y}(s)]$, respectively. If $f\circ\bar{\X}(s)\leq 2h(s)-g\circ\bar{\Y}(s)$, set $X'=2h(s)-g\circ\bar{\Y}(s)$ and $Y'=g\circ\bar{\Y}(s)$. We get
\begin{equation*}
	f\circ\bar{\X}(s)\leq X'<2h(s)-g\circ\tilde{\Y}(s)=2h(s)-\Y\circ h(s)=\X\circ h(s)=f\circ\tilde{\X}(s),
\end{equation*}
that is, $X'\in[f\circ\bar{\X}(s),f\circ\tilde{\X}(s)]$, so that $x_{1}(X')=x_{1}(f\circ\tilde{\X}(s))$. Similarly, we get that $x_{2}(Y')=x_{2}(g\circ\tilde{\Y}(s))$. Then, by \eqref{eq:Caction} and \eqref{eq:mapFtoG2}, we obtain
\begin{equation*}
	x_{1}(X')=x_{1}(f\circ\tilde{\X}(s))=x_{1}(\X\circ h(s))=x_{2}(\Y\circ h(s))=x_{2}(g\circ\tilde{\Y}(s))=x_{2}(Y').
\end{equation*} 
Thus, we have $x_{1}(X')=x_{2}(Y')$, $X'<\X\circ h(s)$ and $X'+Y'=2h(s)$, which contradicts the definition \eqref{eq:mapFtoGX} of $(\X,\Y)$ at $h(s)$. If $f\circ\bar{\X}(s)>2h(s)-g\circ\bar{\Y}(s)$, let $X'=f\circ\bar{\X}(s)$ and $Y'=2h(s)-f\circ\bar{\X}(s)$, which implies that $X'<f\circ\tilde{\X}(s)=\X\circ h(s)$ and $x_{1}(X')=x_{1}(f\circ\tilde{\X}(s))$. We get
\begin{align*}
	g\circ\tilde{\Y}(s)&=\Y\circ h(s)=2h(s)-\X\circ h(s)=2h(s)-f\circ\tilde{\X}(s)\\
	&<2h(s)-f\circ\bar{\X}(s)=Y'<g\circ\bar{\Y}(s),
\end{align*}
so that $Y'\in[g\circ\tilde{\Y}(s),g\circ\bar{\Y}(s)]$ and $x_{2}(Y')=x_{2}(g\circ\tilde{\Y}(s))$. Then, as before, we obtain $x_{1}(X')=x_{2}(Y')$ which is a contradiction, because $X'<\X\circ h(s)$ and $X'+Y'=2h(s)$. Hence, we must have $\bar{\X}=\tilde{\X}$, which implies that $\bar{\Y}=\tilde{\Y}$.
Then, by a straightforward calculation, one proves that $\bar{\Z}=\tilde{\Z}$, $\bar{\V}=\tilde{\V}$, $\bar{\W}=\tilde{\W}$, $\bar{\p}=\tilde{\p}$ and $\bar{\q}=\tilde{\q}$. For example, we have
\begin{align*}
	\bar{\Z}_{3}(s)&=\bar{U}_{1}(\bar{\X}(s)) \quad \text{by } \eqref{eq:mapFtoGZ3}\\
	&=\bar{U}_{1}(\tilde{\X}(s))\\
	&=U_{1}(f\circ\tilde{\X}(s)) \quad \text{by } \eqref{eq:Faction2}\\
	&=U_{1}(\X\circ h(s)) \quad \text{by } \eqref{eq:Caction}\\
	&=\Z_{3}(h(s)) \quad \text{by } \eqref{eq:mapFtoGZ3}\\
	&=\tilde{\Z}_{3}(s) \quad \text{by } \eqref{eq:Gaction2},
\end{align*}
\begin{align*}
	\bar{\V}_{1}(\bar{\X}(s))&=\frac{1}{2c(\bar{U}_{1}\circ\tilde{\X}(s))}\bar{x}_{1}'(\tilde{\X}(s)) \quad \text{by } \eqref{eq:mapFtoGV1}\\
	&=\frac{f'(\tilde{\X}(s))}{2c(U_{1}\circ f\circ\tilde{\X}(s))}x_{1}'(f\circ\tilde{\X}(s)) \quad \text{by } \eqref{eq:Faction1} \text{ and } \eqref{eq:Faction2}\\
	&=f'(\tilde{\X}(s))\V_{1}(f\circ\tilde{\X}(s)) \quad \text{by } \eqref{eq:mapFtoGV1}\\
	&=\tilde{\V}_{1}(\tilde{\X}(s)) \quad \text{by } \eqref{eq:Gaction3},
\end{align*}
and, by \eqref{eq:mapFtoGp}, \eqref{eq:Faction6} and \eqref{eq:Gaction4}, 
\begin{align*}
	\bar{\p}(\bar{\X}(s))&=\bar{H}_{1}(\tilde{\X}(s))=f'(\tilde{\X}(s))H_{1}(f\circ\tilde{\X}(s))=f'(\tilde{\X}(s))\p(f\circ\tilde{\X}(s))=\tilde{\p}(\tilde{\X}(s)),\\
	\bar{\q}(\bar{\Y}(s))&=\bar{H}_{2}(\tilde{\Y}(s))=g'(\tilde{\Y}(s))H_{2}(g\circ\tilde{\Y}(s))=g'(\tilde{\Y}(s))\q(g\circ\tilde{\Y}(s))=\tilde{\q}(\tilde{\Y}(s)).
\end{align*}
This concludes the proof of \eqref{eq:equivar5}.

\textbf{Step 6.}
We are now ready to prove \eqref{eq:equivar6}. Let
\begin{equation*}
	\Theta=\mathbf{C}(\psi),\quad (Z,p,q)=\mathbf{S}(\Theta),\quad (\bar{Z},\bar{p},\bar{q})
	=\mathbf{t}_{T}(Z,p,q),\quad \bar{\Theta}=\mathbf{E}(\bar{Z},\bar{p},\bar{q}),\quad \bar{\psi}=\mathbf{D}(\bar{\Theta}).
\end{equation*}
We claim that $S_{T}(\psi\cdot\phi)=S_T(\psi)\cdot\phi$. This follows from Step 1-6, as
\begin{align*}
	S_{T}(\psi\cdot\phi)&=\mathbf{D}\circ\mathbf{E}\circ\mathbf{t}_{T}\circ\mathbf{S}\circ\mathbf{C}(\psi\cdot\phi)\\
	&=\mathbf{D}\circ\mathbf{E}\circ\mathbf{t}_{T}\circ\mathbf{S}(\mathbf{C}(\psi)\cdot\phi) \quad \text{by } \eqref{eq:equivar5}\\
	&=\mathbf{D}\circ\mathbf{E}\circ\mathbf{t}_{T}\circ\mathbf{S}(\Theta\cdot\phi)\\
	&=\mathbf{D}\circ\mathbf{E}\circ\mathbf{t}_{T}(\mathbf{S}(\Theta)\cdot\phi) \quad \text{by } \eqref{eq:equivar3}\\
	&=\mathbf{D}\circ\mathbf{E}\circ\mathbf{t}_{T}((Z,p,q)\cdot\phi)\\
	&=\mathbf{D}\circ\mathbf{E}(\mathbf{t}_{T}(Z,p,q)\cdot\phi) \quad \text{by } \eqref{eq:equivar2}\\
	&=\mathbf{D}\circ\mathbf{E}((\bar{Z},\bar{p},\bar{q})\cdot\phi)\\
	&=\mathbf{D}(\mathbf{E}(\bar{Z},\bar{p},\bar{q})\cdot\phi) \quad \text{by } \eqref{eq:equivar1}\\
	&=\mathbf{D}(\bar{\Theta}\cdot\phi)\\
	&=\mathbf{D}(\bar{\Theta})\cdot\phi \quad \text{by } \eqref{eq:equivar4}\\
	&=\bar{\psi}\cdot\phi\\
	&=S_T(\psi)\cdot\phi.
\end{align*}
\end{proof}

\begin{definition}
We denote by $\F/G^{2}$ the quotient of $\F$ with respect to the action of the group $G^{2}$ on $\F$. More specifically, we define the equivalence relation, $\sim$, on $\F$ as
\begin{equation*}
	\text{for any } \psi,\bar{\psi}\in\F,\ \psi\sim\bar{\psi} \text{ if there exists } \phi\in G^{2} \text{ such that } \bar{\psi}=\psi\cdot\phi.
\end{equation*}
For an element $\psi\in\F$, we denote the equivalence class by
\begin{equation*}
	[\psi]=\{\bar{\psi}\in\F \ | \ \bar{\psi}\sim\psi\}.
\end{equation*} 
We define the quotient space as
\begin{equation*}
	\F/G^{2}=\{[\psi] \ | \ \psi\in\F\}.
\end{equation*}
\end{definition}

\begin{definition}
\label{def:F0}
Let
\begin{equation*}
	\F_{0}=\{\psi=(\psi_{1},\psi_{2})\in\F \ | \ x_{1}+J_{1}=\id \text{ and } x_{2}+J_{2}=\id\}
\end{equation*}
and $\Pi:\F\rightarrow\F_{0}$ be the projection on $\F_{0}$ given by $\bar{\psi}=(\bar{\psi}_{1},\bar{\psi}_{2})=\Pi(\psi)$ where $\bar{\psi}\in\F_{0}$ is defined as follows. Let
\begin{equation}
\label{eq:projectionfandg}
	f(X)=x_{1}(X)+J_{1}(X) \quad \text{and} \quad g(Y)=x_{2}(Y)+J_{2}(Y)
\end{equation}
and denote $\phi=(f,g)\in G^{2}$. We set
\begin{equation*}
	\bar{\psi}=\psi\cdot\phi^{-1}.
\end{equation*}
\end{definition}

\begin{proof}[Proof of the well-posedness of Definition \ref{def:F0}]
Since $x_{1}+J_{1}$ and $x_{2}+J_{2}$ belong to $G$, $f$ and $g$ belong to $G$ and their inverses exist. We claim that $f^{-1}$ and $g^{-1}$ belong to $G$. This immediately follows from \eqref{eq:groupcond} and the following estimate. Since $f\in G$, Lemma \ref{lemma:auxiliaryG} implies that there exists $\alpha\geq 0$ such that $\frac{1}{1+\alpha}\leq f'\leq 1+\alpha$ almost everywhere, and we get
\begin{align*}
	\int_{\mathbb{R}}(f^{-1}(X)'-1)^{2}\,dX&=\int_{\mathbb{R}}\bigg(\frac{1-f'(f^{-1}(X))}{f'(f^{-1}(X))}\bigg)^{2}\,dX\\
	&\leq(1+\alpha)\int_{\mathbb{R}}\frac{(1-f'(f^{-1}(X)))^{2}}{f'(f^{-1}(X))}\,dX\\
	&=(1+\alpha)\int_{\mathbb{R}}(1-f'(X))^{2}\,dX
\end{align*}
by a change of variables. Hence, $(f^{-1})'-1\in L^{2}(\mathbb{R})$ and $f^{-1}\in G$. The same argument shows that $g^{-1}\in G$. Then the proof of the well-posedness of Definition \ref{def:Faction} implies that $\bar{\psi}=\psi\cdot\phi^{-1}$ belongs to $\F$. Furthermore, we have
\begin{equation*}
	\bar{x}_{1}(X)+\bar{J}_{1}(X)=(x_{1}+J_{1})\circ f^{-1}(X)=(x_{1}+J_{1})\circ(x_{1}+J_{1})^{-1}(X)=X.
\end{equation*}
By a similar calculation, we get $\bar{x}_{2}(Y)+\bar{J}_{2}(Y)=Y$ and conclude that $\bar\psi\in\F_{0}$. 
\end{proof}	

\begin{lemma}
\label{lemma:semigroupDtoDaux}
The following statements hold:
\begin{enumerate}
	\item[(i)] For any $\psi$ and $\bar{\psi}$ in $\F$, we have
	\begin{subequations}
	\begin{equation}
	\label{eq:quotproj1}
		\psi\sim\bar{\psi}\quad\text{if and only if}\quad\Pi(\psi)=\Pi(\bar{\psi}),
	\end{equation} 
	so that the sets $\F/G^{2}$ and $\F_{0}$ are in bijection.
	\item[(ii)] We have
	\begin{equation}
	\label{eq:quotproj2}
		\mathbf{M}\circ\Pi=\mathbf{M}
	\end{equation}
	and
	\begin{equation}
	\label{eq:quotproj3}
		\mathbf{L}\circ\mathbf{M}|_{\F_{0}}=\id|_{\F_{0}} \quad \text{and} \quad \mathbf{M}\circ\mathbf{L}=\id, 
	\end{equation}
	so that the sets $\D$, $\F_{0}$ and $\F/G^{2}$ are in bijection.
	\item[(iii)] We have
	\begin{equation}
	\label{eq:quotproj4}
		\Pi\circ S_{T}\circ\Pi=\Pi\circ S_{T}.
	\end{equation}
	\end{subequations}
\end{enumerate}
\end{lemma}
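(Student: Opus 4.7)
My plan is to exploit the $G^{2}$-equivariance of all the intermediate mappings (already proved in the preceding equivariance lemma) together with the simple observation that $\Pi$ selects a canonical representative of each $G^{2}$-orbit. The three statements of the lemma then reduce to short cocycle-type calculations plus a careful but routine verification that $\mathbf{L}$ and $\mathbf{M}$ are mutually inverse bijections between $\F_{0}$ and $\D$.

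\textbf{Part (i).} The key identity is that if $\bar{\psi}=\psi\cdot(f,g)$, then by \eqref{eq:Faction1} and \eqref{eq:Faction3} we have $\bar{x}_{1}+\bar{J}_{1}=(x_{1}+J_{1})\circ f$ and similarly for the $Y$-component. Writing $f_{\psi}=x_{1}+J_{1}$ and $g_{\psi}=x_{2}+J_{2}$, this yields $f_{\bar\psi}=f_{\psi}\circ f$, $g_{\bar\psi}=g_{\psi}\circ g$. Using that the action composes as $(\psi\cdot(f_{1},g_{1}))\cdot(f_{2},g_{2})=\psi\cdot(f_{1}\circ f_{2},g_{1}\circ g_{2})$, I would compute directly
\begin{equation*}
\Pi(\bar\psi)=\bar\psi\cdot(f_{\bar\psi}^{-1},g_{\bar\psi}^{-1})=\psi\cdot\bigl((f,g)\star(f^{-1}\circ f_{\psi}^{-1},g^{-1}\circ g_{\psi}^{-1})\bigr)=\psi\cdot(f_{\psi}^{-1},g_{\psi}^{-1})=\Pi(\psi).
\end{equation*}
The reverse implication $\Pi(\psi)=\Pi(\bar\psi)\Rightarrow\psi\sim\bar\psi$ is immediate: rearranging gives $\bar\psi=\psi\cdot(f_{\psi}^{-1}\circ f_{\bar\psi},g_{\psi}^{-1}\circ g_{\bar\psi})$. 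Uniqueness of the representative in $\F_{0}$ (each class meets $\F_{0}$ in exactly one point) follows because $\Pi|_{\F_{0}}=\id$, which is clear since $\psi\in\F_{0}$ means $f_{\psi}=g_{\psi}=\id$.

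\textbf{Part (ii).} For $\mathbf{M}\circ\Pi=\mathbf{M}$, I would write $\bar\psi=\Pi(\psi)=\psi\cdot(f_{\psi}^{-1},g_{\psi}^{-1})$ and verify component by component that each of $u,R,S,\rho,\sigma,\mu,\nu$ is invariant under this action. For instance, the Jacobian factors $f'$ and $g'$ attached to $V_{i}$ and $H_{i}$ in \eqref{eq:Faction5}--\eqref{eq:Faction6} are exactly those needed to compensate for the change of variables $\tilde X=f(X)$ in the push-forwards \eqref{eq:mapFtoD3}--\eqref{eq:mapFtoD8}; the compositions in \eqref{eq:Faction1}--\eqref{eq:Faction4} leave $u$ unchanged by \eqref{eq:mapFtoD1}. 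For $\mathbf{L}\circ\mathbf{M}|_{\F_{0}}=\id$, given $\psi\in\F_{0}$ I would use $x_{1}(X)+J_{1}(X)=X$ together with the representation $\mu((-\infty,x_{1}(X)))=J_{1}(X)$ (which follows from \eqref{eq:mapFtoD7}, the push-forward formula, and $J_{1}(-\infty)=0$) to show that $x+\mu((-\infty,x))$ coincides with the identity at every value attained by $x_{1}$, so the sup-definition \eqref{eq:mapfromDtoF1} returns exactly $x_{1}(X)$. For $\mathbf{M}\circ\mathbf{L}=\id$, a direct calculation from the definitions gives back the original data in $\D$, using crucially that $u_{x}=(R-S)/(2c(u))$ and $\mu_{\text{ac}}=\frac{1}{4}(R^{2}+c(u)\rho^{2})\,dx$ from Definition~\ref{def:mapfromDtoF}.

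\textbf{Part (iii).} This is now a short consequence of the equivariance \eqref{eq:equivar6} and part (i). For any $\psi\in\F$, write $\Pi(\psi)=\psi\cdot\phi^{-1}$ with $\phi=(f_{\psi},g_{\psi})$. Then
\begin{equation*}
S_{T}(\Pi(\psi))=S_{T}(\psi\cdot\phi^{-1})=S_{T}(\psi)\cdot\phi^{-1},
\end{equation*}
so $S_{T}(\Pi(\psi))\sim S_{T}(\psi)$ and part (i) gives $\Pi(S_{T}(\Pi(\psi)))=\Pi(S_{T}(\psi))$, which is exactly \eqref{eq:quotproj4}.

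\textbf{Main obstacle.} The only genuinely delicate point is the identity $\mathbf{L}\circ\mathbf{M}|_{\F_{0}}=\id$ at the pre-images of jumps of $\mu$ and $\nu$, where $x_{1}$ develops plateaus. On such a plateau $[X_{0},X_{1}]$ the function $x_{1}+J_{1}=\id$ forces $J_{1}'=1$, so the measure $\mu$ has an atom of mass $X_{1}-X_{0}$ at $x_{1}(X_{0})$, and one must check that the sup in \eqref{eq:mapfromDtoF1} applied to $X\in[X_{0},X_{1}]$ returns $x_{1}(X_{0})=x_{1}(X)$ rather than a nearby point; this amounts to the lower semicontinuity argument used in the existing proof of Definition~\ref{def:mapfromDtoF}, applied in reverse. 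Apart from this, every step is either a direct change of variables or a bookkeeping exercise with the group law, so I expect no substantial difficulty.
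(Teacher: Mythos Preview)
Your proposal is correct and follows essentially the same route as the paper: the cocycle computation $f_{\bar\psi}=f_{\psi}\circ f$ for part (i), the change-of-variables check for $\mathbf{M}\circ\Pi=\mathbf{M}$, the use of $x_{1}+J_{1}=\id$ together with a generalized inverse of $x_{1}$ for $\mathbf{L}\circ\mathbf{M}|_{\F_{0}}=\id$, and the equivariance \eqref{eq:equivar6} for part (iii). One small caution: the identity $\mu((-\infty,x_{1}(X)))=J_{1}(X)$ fails on plateaus of $x_{1}$ (it only holds at the left endpoint), so the paper routes the argument through $g(x)=\sup\{X\mid x_{1}(X)<x\}$ to obtain $x+\mu((-\infty,x))=g(x)$ directly---exactly the point you already flag as the main obstacle.
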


Note that the first identity in \eqref{eq:quotproj3} is equivalent to
\begin{equation}
\label{eq:quotproj5}
	\mathbf{L}\circ\mathbf{M}\circ\Pi=\Pi.
\end{equation}

Before we prove the lemma we make some remarks. 

Let $\xi_{0}=(u_{0},R_{0},S_{0},\rho_{0},\sigma_{0},\mu_{0},\nu_{0})\in\D$. Consider 
$\psi=\mathbf{L}(\xi_{0})$, $\bar{\psi}=S_{T}(\psi)$ and $\xi_{T}=\mathbf{M}(\bar{\psi})$. 

Let $\phi\in G^{2}$ and use $\hat{\psi}=\psi\cdot\phi$ as initial data for the solution operator $S_{T}$. From \eqref{eq:equivar6} we have $S_{T}(\hat{\psi})=S_{T}(\psi)\cdot\phi=\bar{\psi}\cdot\phi=\tilde{\psi}$. Let
$\tilde{\xi}_{T}=\mathbf{M}(\tilde{\psi})$. Since $\phi\in G^{2}$ we have $\tilde{\psi}\sim\bar{\psi}$. Then, by \eqref{eq:quotproj1} we get $\Pi(\tilde{\psi})=\Pi(\bar{\psi})$ and from \eqref{eq:quotproj2} we get  
\begin{equation*}
	\mathbf{M}(\tilde{\psi})=\mathbf{M}(\Pi(\tilde{\psi}))=\mathbf{M}(\Pi(\bar{\psi}))=\mathbf{M}(\bar{\psi}).
\end{equation*}
This implies
\begin{equation*}
	\tilde{\xi}_{T}=\mathbf{M}(\tilde{\psi})=\mathbf{M}(\bar{\psi})=\xi_{T},
\end{equation*} 
so the two solutions are identical.

We can think of this as follows: To each element $\xi_{0}\in\D$ there correspond infinitely many elements in $\F$, all belonging to the same equivalence class. The mapping $\mathbf{L}:\D\to \F_0$ picks one member of the equivalence class, but we could also pick a different one. Applying the solution operator to all elements belonging to the same equivalence class yields infinitely many solutions in $\F$, which form an equivalence class. Using the mapping $\mathbf{M}:\F\to \D$ on all of these solutions yields the same element in $\D$. Since we get the same solution in the end, we can think of each member of the equivalence class as a different "parametrization" of the initial data in $\F$, which are connected through relabeling.

The following example shows how we can use relabeling in order to get different initial curves in $\G_{0}$. We use the same notation as above. Assume that $\xi_{0}=(u_{0},R_{0},S_{0},\rho_{0},\sigma_{0},\mu_{0},\nu_{0})$ belongs to $\D$ and satisfies the additional conditions $u_{0},R_{0},S_{0},\rho_{0},\sigma_{0}\in L^{\infty}(\mathbb{R})$ and that $\mu_{0}$ and $\nu_{0}$ are absolutely continuous. We have
\begin{equation*}
	f(x_{1}(X))=X \quad \text{and} \quad g(x_{2}(Y))=Y,
\end{equation*}
where
\begin{equation*}
	f(x)=x+\frac{1}{4}\int_{-\infty}^{x}(R_{0}^{2}+c(u_{0})\rho_{0}^{2})(z)\,dz \quad \text{and} \quad  g(x)=x+\frac{1}{4}\int_{-\infty}^{x}(S_{0}^{2}+c(u_{0})\sigma_{0}^{2})(z)\,dz.
\end{equation*}
Since the functions $f$ and $g$ are strictly increasing, they are invertible and $x_{1}$ and $x_{2}$ are given as
\begin{equation*}
	x_{1}(X)=f^{-1}(X) \quad \text{and} \quad x_{2}(Y)=g^{-1}(Y).
\end{equation*}
Let $\Theta=\mathbf{C}(\psi)$. The functions $\X$ and $\Y$ are given by
\begin{equation*}
	x_{1}(\X(s))=x_{2}(2s-\X(s)) \quad \text{and} \quad \Y(s)=2s-\X(s). 
\end{equation*}
From this we get that $\X$ and $\Y$ are strictly increasing functions. Furthermore, the functions $f$ and $g$ belong to $G$ and can therefore be used as relabeling functions. In the above notation this means $\phi=(f,g)$. Denote $\hat\psi=\psi\cdot\phi$. Then we get $\hat{x}_{1}(X)=X$ and $\hat{x}_{2}(Y)=Y$. Consider $\hat{\Theta}=\mathbf{C}(\hat{\psi})$. Now we get $\hat{\X}(s)=s$ and $\hat{\Y}(s)=s$. As above, using either $\Theta$ or $\tilde{\Theta}$ yields the same solution in $\D$.
Therefore, for this type of initial data in $\D$ we can without loss of generality assume that the initial curve $(\X,\Y)$ is the identity, i.e., $\X(s)=s$ and $\Y(s)=s$. 
Note that without the additional assumptions on the initial data in $\D$, it is not possible to relabel the initial curve into the identity.

\begin{proof}
	We decompose the proof into five steps.
	
\textbf{Step 1.}
We prove \eqref{eq:quotproj1}. If $\psi\sim\bar{\psi}$, there exists $\tilde{\phi}=(\tilde{f},\tilde{g})\in G^{2}$ such that $(\bar{\psi}_{1},\bar{\psi}_{2})=\bar{\psi}=\psi\cdot\tilde{\phi}=(\psi_{1},\psi_{2})\cdot\tilde{\phi}$. Let $\phi=(f,g)$ and $\bar{\phi}=(\bar{f},\bar{g})$ be given by \eqref{eq:projectionfandg} for $\psi$ and $\bar{\psi}$, respectively. We have $\bar{\phi}=\phi\circ\tilde{\phi}$ because
\begin{equation*}
	\bar{\phi}=(\bar{f},\bar{g})=(\bar{x}_{1}+\bar{J}_{1},\bar{x}_{2}+\bar{J}_{2})=((x_{1}+J_{1})\circ\tilde{f},(x_{2}+J_{2})\circ\tilde{g})=(f\circ\tilde{f},g\circ\tilde{g})=\phi\circ\tilde{\phi}.
\end{equation*}
Then, we get
\begin{align*}
	\Pi(\bar{\psi})&=\bar{\psi}\cdot(\bar{\phi})^{-1}=(\psi\cdot\tilde{\phi})\cdot(\phi\circ\tilde{\phi})^{-1}=\psi\cdot(\tilde{\phi}\circ(\phi\circ\tilde{\phi})^{-1})\\
	&=\psi\cdot(\tilde{\phi}\circ(\tilde{\phi})^{-1}\circ\phi^{-1})=\psi\cdot\phi^{-1}=\Pi(\psi),
\end{align*}
where the identity $(\psi\cdot\tilde{\phi})\cdot(\phi\circ\tilde{\phi})^{-1}=\psi\cdot(\tilde{\phi}\circ(\phi\circ\tilde{\phi})^{-1})$ follows from a straightforward calculation using Definition \ref{def:Faction}. For example, by \eqref{eq:Faction1}, we have
\begin{equation*}
	\bar{x}_{1}\circ F(X)=x_{1}\circ\tilde{f}\circ F(X)=x_{1}\circ f^{-1},
\end{equation*}
where we denote $F=(f \circ\tilde{f})^{-1}$, and
\begin{align*}
	F'(X)\bar{H}_{1}\circ F(X)&=F'(X)(\tilde{f})'\circ F(X)H_{1}\circ\tilde{f}\circ F(X)\\
	&=(\tilde{f}\circ F(X))_{X}H_{1}\circ\tilde{f}\circ F(X)\\
	&=(f^{-1})'H_{1}\circ f^{-1}(X).
\end{align*}
Conversely, if $\Pi(\psi)=\Pi(\bar{\psi})$, then $\psi\cdot\phi^{-1}=\bar{\psi}\cdot(\bar{\phi})^{-1}$, where $\phi=(f,g)$ and $\bar{\phi}=(\bar{f},\bar{g})$ are given by \eqref{eq:projectionfandg}. This implies that
\begin{equation}
\label{eq:projaux}
	\bar{\psi}=\psi\cdot(\phi^{-1}\circ\bar{\phi}).
\end{equation}
This also follows from a direct calculation. For example, we have
\begin{equation*}
	((\bar{f})^{-1})'(X)\bar{H}_{1}\circ(\bar{f})^{-1}(X)=(f^{-1})'(X)H_{1}\circ f^{-1}(X)
\end{equation*} 
which implies that
\begin{equation*}
	((\bar{f})^{-1})'\circ\bar{f}(X)\bar{H}_{1}(X)=(f^{-1})'\circ\bar{f}(X)H_{1}\circ f^{-1}\circ\bar{f}(X)
\end{equation*} 
and since $((\bar{f})^{-1})'\circ\bar{f}(X)=\frac{1}{(\bar{f})'(X)}$, we get\footnote{Since $\bar{f}\in G$, there exists $\delta>0$ such that $\bar{f}'\geq\delta$ almost everywhere, see Lemma \ref{lemma:auxiliaryG}.}
\begin{equation*}
	\bar{H}_{1}(X)=(f^{-1}\circ\bar{f}(X))_{X}H_{1}\circ f^{-1}\circ\bar{f}(X).
\end{equation*}
Then, since $\phi^{-1}\circ\bar{\phi}\in G^{2}$, \eqref{eq:projaux} implies that $\psi$ and $\bar{\psi}$ are equivalent.

\textbf{Step 2.}
We prove \eqref{eq:quotproj2}. Given $\psi=(\psi_{1},\psi_{2})\in\F$, let $(u,R,S,\rho,\sigma,\mu,\nu)=\mathbf{M}(\psi)$, $\bar{\psi}=(\bar{\psi}_{1},\bar{\psi}_{2})=\Pi(\psi)$ and $(\bar{u},\bar{R},\bar{S},\bar{\rho},\bar{\sigma},\bar{\mu},\bar{\nu})=\mathbf{M}(\bar{\psi})$. We want to prove that $(\bar{u},\bar{R},\bar{S},\bar{\rho},\bar{\sigma},\bar{\mu},\bar{\nu})=(u,R,S,\rho,\sigma,\mu,\nu)$. From \eqref{eq:mapFtoD1}, \eqref{eq:Faction1} and \eqref{eq:Faction2}, we get
\begin{equation*}
	\bar{u}(\bar{x}_{1}(X))=\bar{U}_{1}(X)=U_{1}(f^{-1}(X))=u(x_{1}\circ f^{-1}(X))=u(\bar{x}_{1}(X)).
\end{equation*}
For any Borel set $B$, we have
\begin{align*}
	\int_{B}\bar{R}(x)\,dx&=\int_{\bar{x}_{1}^{-1}(B)}2c(\bar{U}_{1}(X))\bar{V}_{1}(X)\,dX \quad \text{by } \eqref{eq:mapFtoD3}\\
	&=\int_{\{X\in\mathbb{R} \ | \ x_{1}(f^{-1}(X))\in B\}}2c(U_{1}(f^{-1}(X)))(f^{-1}(X))'V_{1}(f^{-1}(X))\,dX\\
	&=\int_{x_{1}^{-1}(B)}2c(U_{1}(X))V_{1}(X)\,dX \quad \text{by a change of variables}\\
	&=\int_{B}R(x)\,dx \quad \text{by } \eqref{eq:mapFtoD3},
\end{align*}
where we used \eqref{eq:Faction1}, \eqref{eq:Faction2} and \eqref{eq:Faction5}. Hence, $\bar{R}=R$ almost everywhere. By \eqref{eq:mapFtoD5} and \eqref{eq:Faction6}, we obtain
\begin{align*}
	\int_{B}\bar{\rho}(x)\,dx&=\int_{\bar{x}_{1}^{-1}(B)}2\bar{H}_{1}(X)\,dX\\
	&=\int_{\bar{x}_{1}^{-1}(B)}2(f^{-1}(X))'H_{1}(f^{-1}(X))\,dX\\
	&=\int_{x_{1}^{-1}(B)}2H_{1}(X)\,dX\\
	&=\int_{B}\rho(x)\,dx.
\end{align*} 
Similarly, one proves that $\bar{S}=S$ and $\bar \sigma=\sigma$ almost everywhere. Using \eqref{eq:mapFtoD7} and \eqref{eq:Faction3}, we find for any Borel set $B\subset \mathbb{R}$, that 
\begin{align*}
	\bar{\mu}(B)&=\int_{\bar{x}_{1}^{-1}(B)}\bar{J}_{1}(X)\,dX\\
	&=\int_{\bar{x}_{1}^{-1}(B)}(f^{-1}(X))'J_{1}'(f^{-1}(X))\,dX\\
	&=\int_{x_{1}^{-1}(B)}J_{1}'(X)\,dX\\
	&=\mu(B)
\end{align*}
and $\bar \mu=\mu$. One proves that $\bar{\nu}=\nu$ in a similar way.

\textbf{Step 3.}
We prove that $\mathbf{L}\circ\mathbf{M}|_{\F_{0}}=\id|_{\F_{0}}$. Given $\psi=(\psi_{1},\psi_{2})\in\F_{0}$, let $(u,R,S,\rho,\sigma,\mu,\nu)=\mathbf{M}(\psi)$ and $\bar{\psi}=(\bar{\psi}_{1},\bar{\psi}_{2})=\mathbf{L}(u,R,S,\rho,\sigma,\mu,\nu)$. We want to show that $\bar{\psi}=\psi$. We first prove that $\bar{x}_{1}=x_{1}$. Let
\begin{equation}
\label{eq:LcompMg}
	g(x)=\sup\{X\in\mathbb{R} \ | \ x_{1}(X)<x\}.
\end{equation} 
For all $x\in\mathbb{R}$, we have
\begin{equation}
\label{eq:LcompMx1g}
	x_{1}(g(x))=x
\end{equation}
and since $x_{1}$ is continuous and nondecreasing, $x_{1}^{-1}((-\infty,x))=(-\infty,g(x))$. From \eqref{eq:mapFtoD7} and \eqref{eq:setFrel5}, we get
\begin{equation}
\label{eq:LcompMmuJ1}
	\mu((-\infty,x))=\int_{x_{1}^{-1}((-\infty,x))}J_{1}'(X)\,dX=\int_{-\infty}^{g(x)}J_{1}'(X)\,dX=J_{1}(g(x)).
\end{equation}
Since $\psi\in\F_{0}$, $x_{1}+J_{1}=\id$ which implies, by \eqref{eq:LcompMx1g} and \eqref{eq:LcompMmuJ1}, that
\begin{equation}
\label{eq:LcompMmug}
	x+\mu((-\infty,x))=g(x).
\end{equation}
From the definition \eqref{eq:mapfromDtoF1} of $\bar{x}_{1}$, we then obtain
\begin{equation}
\label{eq:LcompMx1barg}
	\bar{x}_{1}(X)=\sup\{x\in\mathbb{R} \ | \ g(x)<X\}.
\end{equation}
This implies that, for any $X\in\mathbb{R}$, there exists an increasing sequence, $z_{i}$, such that $\displaystyle\lim_{i\rightarrow\infty}z_{i}=\bar{x}_{1}(X)$ and $g(z_{i})<X$. Using that $x_{1}$ is nondecreasing and \eqref{eq:LcompMx1g}, we get $z_{i}\leq x_{1}(X)$. Letting $i$ tend to infinity, we obtain $\bar{x}_{1}(X)\leq x_{1}(X)$. Assume that $\bar{x}_{1}(X)<x_{1}(X)$. Then, there exists $x\in\mathbb{R}$ such that $\bar{x}_{1}(X)<x<x_{1}(X)$ which implies, by \eqref{eq:LcompMx1barg}, that $g(x)\geq X$. On the other hand, $x_{1}(g(x))=x<x_{1}(X)$ implies that $g(x)<X$ because $x_{1}$ is nondecreasing, which gives us a contradiction. Hence, we must have $\bar{x}_{1}=x_{1}$. Then, by \eqref{eq:mapfromDtoF3} and since $x_{1}+J_{1}=\id$, we get
\begin{equation*}
	\bar{J}_{1}(X)=X-\bar{x}_{1}(X)=X-x_{1}(X)=J_{1}(X)
\end{equation*}
and from \eqref{eq:mapfromDtoF4} and \eqref{eq:mapFtoD1}, we obtain
\begin{equation*}
	\bar{U}_{1}(X)=u(\bar{x}_{1}(X))=u(x_{1}(X))=U_{1}(X).
\end{equation*}
By \eqref{eq:mapfromDtoF5} and \eqref{eq:mapFtoD9}, we have
\begin{equation*}
	\bar{V}_{1}(X)=\bar{x}_{1}'(X)\frac{R(\bar{x}_{1}(X))}{2c(\bar{U}_{1}(X))}=x_{1}'(X)\frac{R(x_{1}(X))}{2c(U_{1}(X))}=V_{1}(X)
\end{equation*}
and by \eqref{eq:mapfromDtoF6} and \eqref{eq:setFrel2}, we get
\begin{equation*}
	\bar{K}_{1}(X)=\int_{-\infty}^{X}\frac{\bar{J}_{1}'(\bar{X})}{c(\bar{U}_{1}(\bar{X}))}\,d\bar{X}=\int_{-\infty}^{X}\frac{J_{1}'(\bar{X})}{c(U_{1}(\bar{X}))}\,d\bar{X}=K_{1}(X).
\end{equation*}
Using \eqref{eq:mapfromDtoF7} and \eqref{eq:mapFtoD11}, we find that
\begin{equation*}
	\bar{H}_{1}(X)=\frac{1}{2}\rho(\bar{x}_{1}(X))\bar{x}_{1}'(X)=\frac{1}{2}\rho(x_{1}(X))x_{1}'(X)=H_{1}(X). 
\end{equation*}
Hence, we have proved that $\bar{\psi}_{1}=\psi_{1}$. Similarly, one proves that $\bar{x}_{2}=x_{2}$ and $\bar{\psi}_{2}=\psi_{2}$. For example, by \eqref{eq:mapfromDtoF7} and \eqref{eq:mapFtoD12}, we have
\begin{equation*}
	\bar{H}_{2}(Y)=\frac{1}{2}\sigma(\bar{x}_{2}(Y))\bar{x}_{2}'(Y)=\frac{1}{2}\sigma(x_{2}(Y))x_{2}'(Y)=H_{2}(Y).
\end{equation*} 

\textbf{Step 4.}
Let us prove that $\mathbf{M}\circ\mathbf{L}=\id$. Given $(u,R,S,\rho,\sigma,\mu,\nu)\in\D$, let $\psi=(\psi_{1},\psi_{2})=\mathbf{L}(u,R,S,\rho,\sigma,\mu,\nu)$ and $(\bar{u},\bar{R},\bar{S},\bar{\rho},\bar{\sigma},\bar{\mu},\bar{\nu})=\mathbf{M}(\psi)$. We want to prove that $(\bar{u},\bar{R},\bar{S},\bar{\rho},\bar{\sigma},\bar{\mu},\bar{\nu})=(u,R,S,\rho,\sigma,\mu,\nu)$. First we show that $\bar{\mu}=\mu$. Let $g$ be the function defined as before by \eqref{eq:LcompMg}. The same computation that leads to \eqref{eq:LcompMmug} now gives
\begin{equation}
\label{eq:LcompMmubarg}
	x+\bar{\mu}((-\infty,x))=g(x).
\end{equation}
By \eqref{eq:mapfromDtoF1}, for any $X\in\mathbb{R}$, there exists an increasing sequence, $x_{i}$, such that $\displaystyle\lim_{i\rightarrow\infty}x_{i}=x_{1}(X)$ and $x_i+\mu((-\infty,x_{i}))<X$. Sending $i$ to infinity, and since $x\mapsto\mu((-\infty,x))$ is lower semi-continuous, we obtain $x_1(X)+\mu((-\infty,x_{1}(X))))\leq X$. We set $X=g(x)$ and get, by \eqref{eq:LcompMx1g}, that
\begin{equation}
\label{eq:LcompMmuleqg}
	x+\mu((-\infty,x))\leq g(x).
\end{equation}
From the definition of $g$, we have that, for any $x\in\mathbb{R}$, there exists an increasing sequence, $X_{i}$, such that $\displaystyle\lim_{i\rightarrow\infty}X_{i}=g(x)$ and $x_{1}(X_{i})<x$. This implies, by \eqref{eq:mapfromDtoF1}, that $x+\mu((-\infty,x))\geq X_{i}$. Letting $i$ tend to infinity, we obtain
$x+\mu((-\infty,x))\geq g(x)$ which, together with \eqref{eq:LcompMmuleqg}, yields
\begin{equation}
\label{eq:LcompMmug2}
	x+\mu((-\infty,x))=g(x).
\end{equation}
Comparing \eqref{eq:LcompMmubarg} and \eqref{eq:LcompMmug2}, we get that $\bar{\mu}=\mu$. Similarly, one proves that $\bar{\nu}=\nu$. By \eqref{eq:mapFtoD1} and \eqref{eq:mapfromDtoF4}, we have
\begin{equation*}
	\bar{u}(x_{1}(X))=U_{1}(X)=u(x_{1}(X)).
\end{equation*}
For any Borel set $B$, we have
\begin{align*}
	\int_{B}\bar{R}(x)\,dx&=\int_{x_{1}^{-1}(B)}2c(U_{1}(X))V_{1}(X)\,dX \quad \text{by } \eqref{eq:mapFtoD3}\\
	&=\int_{x_{1}^{-1}(B)}2c(u\circ x_{1}(X))x_{1}'(X)\frac{R(x_{1}(X))}{2c(u\circ x_{1}(X))}\,dX \quad \text{by } \eqref{eq:mapfromDtoF4} \text{ and } \eqref{eq:mapfromDtoF5}\\
	&=\int_{B}R(x)\,dx \quad \text{by a change of variables},
\end{align*}
so that $\bar{R}=R$ almost everywhere. Similarly, one proves that $\bar{S}=S$ almost everywhere. 
From \eqref{eq:mapFtoD5} and \eqref{eq:mapfromDtoF7}, we get
\begin{equation*}
	\int_{B}\bar{\rho}(x)\,dx=\int_{x_{1}^{-1}(B)}2H_{1}(X)\,dX=\int_{x_{1}^{-1}(B)}\rho(x_{1}(X))x_{1}'(X)\,dX=\int_{B}\rho(x)\,dx.
\end{equation*}
Hence, $\bar{\rho}=\rho$ almost everywhere. Similarly, one proves that $\bar{\sigma}=\sigma$ almost everywhere. 

\textbf{Step 5.}
We prove \eqref{eq:quotproj4}. Given $\psi=(\psi_{1},\psi_{2})\in\F$, let $\phi=(f,g)\in G^{2}$ be defined as in \eqref{eq:projectionfandg} so that $\Pi(\psi)=\psi\cdot\phi^{-1}$. By \eqref{eq:equivar6}, we get
\begin{equation*}
	S_{T}\circ\Pi(\psi)=S_{T}(\psi\cdot\phi^{-1})=S_{T}(\psi)\cdot\phi^{-1},
\end{equation*}
which implies that $S_{T}\circ\Pi$ and $S_{T}$ are equivalent. Then, \eqref{eq:quotproj4} follows from
\eqref{eq:quotproj1}.
\end{proof}

Now we are finally in position to prove that $\bar{S}_{T}$ is a semigroup.

\begin{theorem}
\label{thm:STbarsemigroup}
The mapping $\bar{S}_{T}$ is a semigroup.
\end{theorem}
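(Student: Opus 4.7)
The plan is to reduce the semigroup identity $\bar{S}_T \circ \bar{S}_{T'} = \bar{S}_{T+T'}$ on $\D$ to the semigroup identity for $S_T$ on $\F$ (Theorem \ref{thm:STsemigroup}), by systematically absorbing the ``bad'' middle composition $\mathbf{L} \circ \mathbf{M}$ into the projection $\Pi$. The obstacle to a direct computation is that $\mathbf{L} \circ \mathbf{M}$ is \emph{not} the identity on $\F$ in general: $\mathbf{L}$ always lands in the canonical section $\F_0$, while $\mathbf{M} \circ S_{T'} \circ \mathbf{L}$ typically leaves $\F_0$ after $S_{T'}$ has acted. So the first step is to identify $\mathbf{L} \circ \mathbf{M}$ with $\Pi$.

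First I would prove the auxiliary identity $\mathbf{L} \circ \mathbf{M} = \Pi$ on all of $\F$. For any $\psi \in \F$, \eqref{eq:quotproj2} gives $\mathbf{M}(\psi) = \mathbf{M}(\Pi(\psi))$, and since $\Pi(\psi) \in \F_0$, the first identity in \eqref{eq:quotproj3} yields $\mathbf{L}(\mathbf{M}(\Pi(\psi))) = \Pi(\psi)$. Chaining these gives $\mathbf{L} \circ \mathbf{M} = \Pi$. Equivalently, one could cite \eqref{eq:quotproj5} directly.

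Next I would simply compute
\begin{align*}
\bar{S}_{T} \circ \bar{S}_{T'}
&= \mathbf{M} \circ S_{T} \circ \mathbf{L} \circ \mathbf{M} \circ S_{T'} \circ \mathbf{L} \\
&= \mathbf{M} \circ S_{T} \circ \Pi \circ S_{T'} \circ \mathbf{L},
\end{align*}
using the identity above. Applying $\mathbf{M}$ to both sides of \eqref{eq:quotproj4} and invoking \eqref{eq:quotproj2} gives $\mathbf{M} \circ S_{T} \circ \Pi = \mathbf{M} \circ \Pi \circ S_T \circ \Pi = \mathbf{M} \circ \Pi \circ S_T = \mathbf{M} \circ S_T$, so the middle $\Pi$ drops out and we obtain
\begin{equation*}
\bar{S}_{T} \circ \bar{S}_{T'} = \mathbf{M} \circ S_{T} \circ S_{T'} \circ \mathbf{L} = \mathbf{M} \circ S_{T+T'} \circ \mathbf{L} = \bar{S}_{T+T'},
\end{equation*}
where the middle equality is Theorem \ref{thm:STsemigroup}. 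The only conceptual content beyond the preparatory lemmas of Section \ref{sec:SemigroupD} is recognising that the failure of $\mathbf{L} \circ \mathbf{M} = \id$ is exactly the relabeling freedom quotiented out by $\Pi$, and that $S_T$ is $G^2$-equivariant so this freedom is invisible to $\mathbf{M} \circ S_T$.
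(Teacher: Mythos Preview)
Your proof is correct and follows essentially the same approach as the paper: both reduce the problem to Theorem \ref{thm:STsemigroup} by using the identities of Lemma \ref{lemma:semigroupDtoDaux} to absorb the middle $\mathbf{L}\circ\mathbf{M}$ into $\Pi$ and then eliminate $\Pi$. The only cosmetic difference is that you first isolate $\mathbf{L}\circ\mathbf{M}=\Pi$ as a standalone identity, whereas the paper inserts and removes $\Pi$ step by step via \eqref{eq:quotproj2}, \eqref{eq:quotproj5}, and \eqref{eq:quotproj4}.
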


\begin{proof}
The proof relies on Lemma \ref{lemma:semigroupDtoDaux} and Theorem \ref{thm:STsemigroup}.
From Definition \ref{def:modsemigroup} we have
\begin{align*}
	\bar{S}_{T}\circ\bar{S}_{T'}&=\mathbf{M}\circ S_{T}\circ\mathbf{L}\circ\mathbf{M}\circ S_{T'}\circ\mathbf{L}\\
	&=\mathbf{M}\circ\Pi\circ S_{T}\circ\mathbf{L}\circ\mathbf{M}\circ\Pi\circ S_{T'}\circ\mathbf{L} \quad \text{by } \eqref{eq:quotproj2}\\
	&=\mathbf{M}\circ\Pi\circ S_{T}\circ\Pi\circ S_{T'}\circ\mathbf{L} \quad \text{by } \eqref{eq:quotproj5}\\
	&=\mathbf{M}\circ\Pi\circ S_{T}\circ S_{T'}\circ\mathbf{L} \quad \text{by } \eqref{eq:quotproj4}\\
	&=\mathbf{M}\circ S_{T}\circ S_{T'}\circ\mathbf{L} \quad \text{by } \eqref{eq:quotproj2}\\
	&=\mathbf{M}\circ S_{T+T'}\circ\mathbf{L} \quad \text{by Theorem } \ref{thm:STsemigroup}\\ 
	&=\bar{S}_{T+T'}.
\end{align*}	
\end{proof}

%%%%%%%%%%%%%%%%%%%%%%%%%%%%%%%%%%%%%%%%%%%%%%%%%%%%%%%%%%%%%%%%%%%%%%%%
%			Existence of Weak Global Conservative Solutions
%%%%%%%%%%%%%%%%%%%%%%%%%%%%%%%%%%%%%%%%%%%%%%%%%%%%%%%%%%%%%%%%%%%%%%%%

\section{Existence of Weak Global Conservative Solutions}
\label{sec:WeakSoln}

It remains to prove that the solution obtained by using the operator $\bar{S}_{T}$ is a weak solution of \eqref{eq:nvwsys}.

\begin{theorem}
Let $t>0$ and $(u_{0},R_{0},S_{0},\rho_{0},\sigma_{0},\mu_{0},\nu_{0})\in\D$. Then 
\begin{equation*}
(u,R,S,\rho,\sigma,\mu,\nu)(t)=\bar{S}_{t}(u_{0},R_{0},S_{0},\rho_{0},\sigma_{0},\mu_{0},\nu_{0})
\end{equation*}
is a weak solution of \eqref{eq:nvwsys}, meaning that
\begin{subequations}
\label{eq:Weak}
\begin{aalign}
	\label{eq:NVW1weakform}
	&\iint_{[0,\infty)\times\mathbb{R}}\bigg(\big[\phi_{t}-c(u)\phi_{x}\big]R+\big[\phi_{t}+c(u)\phi_{x}\big]S+\frac{c'(u)}{c(u)}RS\phi\bigg)\,dx\,dt\\
	&=\iint_{[0,\infty)\times\mathbb{R}}\frac{2c'(u)}{c(u)}\phi\,d\mu\,dt+\iint_{[0,\infty)\times\mathbb{R}}\frac{2c'(u)}{c(u)}\phi\,d\nu\,dt,
\end{aalign}
\begin{equation}
\label{eq:NVW2weakform}
	\iint_{[0,\infty)\times\mathbb{R}}\big[\phi_{t}-c(u)\phi_{x}\big]\rho\,dx\,dt=0,
\end{equation}
and
\begin{equation}
\label{eq:NVW3weakform}
	\iint_{[0,\infty)\times\mathbb{R}}\big[\phi_{t}+c(u)\phi_{x}\big]\sigma\,dx\,dt=0
\end{equation}
for all $\phi=\phi(t,x)$ in $C^{\infty}_{0}((0,\infty)\times\mathbb{R})$, where
\begin{equation}
\label{eq_NVW4weakform}
	R=u_{t}+c(u)u_{x} \quad \text{and} \quad S=u_{t}-c(u)u_{x}
\end{equation}
in the sense of distributions.
\end{subequations}

Moreover, the measures $\mu$ and $\nu$ satisfy the equations
\begin{subequations}
\label{eq:Weak2}	
\begin{equation}
\label{eq:mainthmmeaseqn1}
	(\mu+\nu)_{t}-(c(u)(\mu-\nu))_{x}=0
\end{equation}
and
\begin{equation}
\label{eq:mainthmmeaseqn2}
	\bigg(\frac{1}{c(u)}(\mu-\nu)\bigg)_{t}-(\mu+\nu)_{x}=0
\end{equation}
\end{subequations}
in the sense of distributions.
\end{theorem}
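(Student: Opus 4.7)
The plan is to pull every integral in the weak formulation back to Lagrangian coordinates, where the governing system \eqref{eq:goveq} holds classically, and then to integrate by parts. Fix $\phi\in C^{\infty}_{0}((0,\infty)\times\mathbb{R})$ and set $\bar\phi(X,Y)=\phi(t(X,Y),x(X,Y))$. By \eqref{eq:setH1} the map $(X,Y)\mapsto(t,x)$ has Jacobian $t_X x_Y-t_Y x_X=\tfrac{2x_X x_Y}{c(U)}\ge 0$, and the chain rule yields $\bar\phi_X=\tfrac{x_X}{c(U)}(\phi_t+c(U)\phi_x)$ and $\bar\phi_Y=-\tfrac{x_Y}{c(U)}(\phi_t-c(U)\phi_x)$. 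Note that \eqref{eq_NVW4weakform} is already contained in \eqref{eq:lemmasemigprop6}, so only the weak identities \eqref{eq:NVW1weakform}--\eqref{eq:NVW3weakform} and the measure equations \eqref{eq:Weak2} need to be verified.

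For \eqref{eq:NVW2weakform}, the change of variables combined with \eqref{eq:lemmasemigprop3} gives
\begin{equation*}
\iint\rho(\phi_t-c(u)\phi_x)\,dx\,dt=-2\iint p\,\bar\phi_Y\,dX\,dY,
\end{equation*}
which vanishes after integration by parts since $p_Y=0$ by \eqref{eq:goveqp}; the proof of \eqref{eq:NVW3weakform} is identical using $q_X=0$ from \eqref{eq:goveqq}. For \eqref{eq:NVW1weakform}, the same change of variables together with \eqref{eq:lemmasemigprop2} and \eqref{eq:lemmasemigprop4} reduces the left-hand side to
\begin{equation*}
-2\iint\bigl(c(U)U_X\bar\phi_Y+c(U)U_Y\bar\phi_X+c'(U)U_X U_Y\bar\phi\bigr)\,dX\,dY.
\end{equation*}
Integrating by parts and substituting \eqref{eq:goveqU} for $U_{XY}$, the $U_X U_Y$ contributions produced by the two moves cancel, leaving $\iint\tfrac{2c'(U)}{c(U)^{2}}(x_Y J_X+x_X J_Y)\bar\phi\,dX\,dY$. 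This is matched with the right-hand side of \eqref{eq:NVW1weakform} via the identification of spacetime measures
\begin{equation*}
d\mu\,dt=\tfrac{x_Y J_X}{c(U)}\,dX\,dY,\qquad d\nu\,dt=\tfrac{x_X J_Y}{c(U)}\,dX\,dY,
\end{equation*}
whose absolutely continuous part is immediate from \eqref{eq:setH3} (which yields $\tfrac{1}{4}(R^{2}+c(u)\rho^{2})=\tfrac{J_X}{2x_X}$) multiplied by the Jacobian factor $\tfrac{2x_X x_Y}{c(U)}$, and whose singular part is extracted by slicing in time and applying Lemma \ref{lemma:mapG0toD} to $\mathbf{E}\circ\mathbf{t}_T(Z,p,q)$ for each $T\ge 0$, then recombining via Fubini in $T$. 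Once these representations are in hand, the two measure equations \eqref{eq:mainthmmeaseqn1}--\eqref{eq:mainthmmeaseqn2} --- which are exactly the weak forms of \eqref{eq:conslaw} --- follow from the same change of variables together with an integration by parts in $(X,Y)$, now invoking \eqref{eq:goveqJ}--\eqref{eq:goveqK} along with \eqref{eq:setH1}--\eqref{eq:setH2}.

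The main obstacle is the validity of the spacetime identifications of $\mu$ and $\nu$ on the degenerate set $\{x_X=0\}\cup\{x_Y=0\}$, where energy concentrates and the map $(X,Y)\mapsto(t,x)$ fails to be bijective. On this set the Lagrangian integrands appearing in the bulk computation vanish by \eqref{eq:setH3}, so the integrations by parts in the second paragraph are not affected; however, the singular parts of $\mu$ and $\nu$ are carried precisely by the images of these sets, and one has to verify time-slice by time-slice that they pair against $\phi$ in the claimed manner. This is the spacetime extension of the argument carried out at $t=0$ in Step 5 of the proof of Lemma \ref{lemma:mapG0toD}, combined with Fubini's theorem in the $t$-variable.
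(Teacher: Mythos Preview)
Your proposal is correct and follows essentially the same route as the paper: pull everything back to $(X,Y)$ via the Jacobian $\tfrac{2x_Xx_Y}{c(U)}$, use $\bar\phi_X,\bar\phi_Y$ and the relations \eqref{eq:lemmasemigprop2}--\eqref{eq:lemmasemigprop5}, integrate by parts, and invoke \eqref{eq:goveq}. For the measure terms the paper makes your ``slice-and-Fubini'' argument concrete by the change of variables $(t,s)\mapsto(X,Y)$ with $s=\tfrac12(X+Y)$, whose Jacobian $\tfrac{t_X-t_Y}{2}$ combines with the constant-time relation $t_X\dot\X+t_Y\dot\Y=0$ to give $\dot\X\cdot\tfrac{t_X-t_Y}{2}=\tfrac{x_Y}{c(U)}$, yielding exactly your identifications $d\mu\,dt=\tfrac{x_YJ_X}{c(U)}\,dX\,dY$ and $d\nu\,dt=\tfrac{x_XJ_Y}{c(U)}\,dX\,dY$ directly from \eqref{eq:mapGtoD7}--\eqref{eq:mapGtoD8} without having to split off the singular part separately.
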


Note that if the two measures $\mu$ and $\nu$ are absolutely continuous,
\eqref{eq:mainthmmeaseqn1} and \eqref{eq:mainthmmeaseqn2} coincide with \eqref{eq:conslaw} in the sense of distributions, which we derived in the smooth case. Moreover, the difference of the sign in front of $\mu$ and $\nu$ indicates the two opposite traveling directions.

\begin{proof}
We decompose the proof into two steps.

\textbf{Step 1.}
We first show \eqref{eq:Weak}. Given $(u_{0},R_{0},S_{0},\rho_{0},\sigma_{0},\mu_{0},\nu_{0})\in\D$, we consider $(u,R,S,\rho,\sigma,\mu,\nu)(t)=\bar{S}_{t}(u_{0},R_{0},S_{0},\rho_{0},\sigma_{0},\mu_{0},\nu_{0})$, where $\bar{S}_{t}$ is given by Definition \ref{def:modsemigroup}. The identities in \eqref{eq_NVW4weakform} follow from \eqref{eq:lemmasemigprop6} in Lemma \ref{lemma:semigroupDtoDproperties}. By a change of variables, we get
\begin{aalign}
\label{eq:equivalentweakform1}
	&\iint_{[0,\infty)\times\mathbb{R}}(\phi_{t}-c(u)\phi_{x})R(t,x)\,dx\,dt\\
	&=\iint_{\mathbb{R}^{2}}(\phi_{t}-c(u)\phi_{x})R(t(X,Y),x(X,Y))(t_{X}x_{Y}-t_{Y}x_{X})(X,Y)\,dX\,dY\\
	&=2\iint_{\mathbb{R}^{2}}\bigg(\frac{\phi_{t}-c(u)\phi_{x}}{c(u)}R\bigg)(t(X,Y),x(X,Y))x_{X}x_{Y}(X,Y)\,dX\,dY \text{ by } \eqref{eq:setH1}\\
	&=-2\iint_{\mathbb{R}^{2}}\phi_{Y}R(t(X,Y),x(X,Y))x_{X}(X,Y)\,dX\,dY\\
	&=-2\iint_{\mathbb{R}^{2}}c(U)U_{X}(X,Y)\phi_{Y}(t(X,Y),x(X,Y))\,dX\,dY \text{ by } \eqref{eq:lemmasemigprop1}, \eqref{eq:lemmasemigprop2}\\
	&=2\iint_{\mathbb{R}^{2}}(c(U)U_{X})_{Y}(X,Y)\phi(t(X,Y),x(X,Y))\,dX\,dY\\
	&=2\iint_{\mathbb{R}^{2}}(c'(U)U_{X}U_{Y}+c(U)U_{XY})(X,Y)\phi(t(X,Y),x(X,Y))\,dX\,dY,
\end{aalign}
where we used integration by parts and 
\begin{aalign}
\label{eq:equivalentweakformphiY}
	&\phi_{Y}(t(X,Y),x(X,Y))\\
	&=\phi_{t}(t(X,Y),x(X,Y))t_{Y}(X,Y)+\phi_{x}(t(X,Y),x(X,Y))x_{Y}(X,Y)\\
	&=-\bigg(\frac{\phi_{t}-c(u)\phi_{x}}{c(u)}\bigg)(t(X,Y),x(X,Y))x_{Y}(X,Y),
\end{aalign}
which follows from \eqref{eq:setH1}. 

Similarly, we find that
\begin{aalign}
\label{eq:equivalentweakform2}
	&\iint_{[0,\infty)\times\mathbb{R}}(\phi_{t}+c(u)\phi_{x})S(t,x)\,dx\,dt\\
	&=2\iint_{\mathbb{R}^{2}}(c'(U)U_{X}U_{Y}+c(U)U_{XY})(X,Y)\phi(t(X,Y),x(X,Y))\,dX\,dY
\end{aalign}
and
\begin{aalign}
\label{eq:equivalentweakform22}
		&\iint_{[0,\infty)\times\mathbb{R}}\frac{c'(u)}{c(u)}RS\phi(t,x)\,dt\,dx\\
		&=-2\iint_{\mathbb{R}^{2}}c'(U)U_{X}U_{Y}(X,Y)\phi(t(X,Y),x(X,Y))\,dX\,dY.
\end{aalign}
Combining \eqref{eq:equivalentweakform1}, \eqref{eq:equivalentweakform2}, \eqref{eq:equivalentweakform22} and \eqref{eq:goveqU} yields
\begin{aalign}
\label{eq:LHS}
	&\iint_{[0,\infty)\times\mathbb{R}}\bigg(\big[\phi_{t}-c(u)\phi_{x}\big]R+\big[\phi_{t}+c(u)\phi_{x}\big]S+\frac{c'(u)}{c(u)}RS\phi\bigg)\,dx\,dt\\
	&=\iint_{\mathbb{R}^{2}}\frac{2c'(U)}{c^2(U)}(x_{Y}J_{X}+x_{X}J_{Y})\phi(t,x)\,dX\,dY.
\end{aalign}

 We have\footnote{Note that although the mapping $\mathbf{D}$ is from $\G_{0}$ to $\F$, the $t$ dependence in $\Theta$ makes sense, since, for any fixed $t\geq0$, we can consider the set $\G_{t}$, that is, the set $\G$ where $\Z_{1}(s)=t$. This still gives $\dot{\Z}_{1}(s)=0$, and $\G_{t}$ can be mapped to $\G_{0}$ by $\mathbf{t}_t$.} $(u,R,S,\rho,\sigma,\mu,\nu)(t)=\mathbf{M}\circ\mathbf{D}(\Theta(t))$ and
by \eqref{eq:mapGtoD7} 
\begin{align*}
	&\iint_{[0,\infty)\times\mathbb{R}}\frac{2c'(u)}{c(u)}\phi(t,x)\,d\mu(t)\,dt\\
	&=\iint_{[0,\infty)\times\mathbb{R}}\frac{2c'(u)}{c(u)}\phi(t,\Z_{2}(t,s))\V_{4}(t,\X(t,s))\X_{s}(t,s)\,ds\,dt,
\end{align*}
where we added the $t$ dependence in $\Theta(t)$, which gives $\X(t,s)$, $\Z_{2}(t,s)$ and $\V_{4}(t,s)$ in the equation above. The measures $\mu$ and $\nu$ integrate with respect to the $x$ variable and the notation $d\mu(t)$ and $d\nu(t)$ means that they depend on $t$. We proceed to the change of variables $s=\frac{1}{2}(X+Y)$ and $t=t(X,Y)$ and obtain
\begin{align*}
	&\iint_{[0,\infty)\times\mathbb{R}}\frac{2c'(u)}{c(u)}\phi(t,\Z_{2}(t,s))\V_{4}(t,\X(t,s))\X_{s}(t,s)\,ds\,dt\\	
	&=\iint_{\mathbb{R}^{2}}\frac{2c'(u)}{c(u)}\phi(t(X,Y),x(X,Y))J_{X}(X,Y)\\
	&\hspace{42pt}\times\X_{s}(t(X,Y),s(X,Y))\bigg(\frac{t_{X}-t_{Y}}{2}\bigg)(X,Y)\,dX\,dY\\
	&=\iint_{\mathbb{R}^{2}}\frac{2c'(U)}{c^{2}(U)}x_{Y}J_{X}(X,Y)\phi(t(X,Y),x(X,Y))\,dX\,dY \quad \text{by } \eqref{eq:setH1},
\end{align*}
where we used that $\dot{\Z}_1(t,s)=0$, which implies
\begin{aalign}
\label{eq:tId}
	0&=t_{X}(X,Y)\X_{s}(t(X,Y),s(X,Y))+t_{Y}(X,Y)\Y_{s}(t(X,Y),s(X,Y))\\
	&=(t_{X}-t_{Y})(X,Y)\X_{s}(t(X,Y),s(X,Y))-2\bigg(\frac{x_{Y}}{c(U)}\bigg)(X,Y),
\end{aalign}
by \eqref{eq:initialcurvenormalization} and \eqref{eq:setH1}. 

Similarly, we obtain
\begin{align*}
	&\iint_{[0,\infty)\times\mathbb{R}}\frac{2c'(u)}{c(u)}\phi(t,x)\,d\nu(t)\,dt\\
	&=\iint_{\mathbb{R}^{2}}\frac{2c'(U)}{c^{2}(U)}x_{X}J_{Y}(X,Y)\phi(t(X,Y),x(X,Y))\,dX\,dY,
\end{align*}
and we get
\begin{aalign}
\label{eq:RHS}
	&\iint_{[0,\infty)\times\mathbb{R}}\frac{2c'(u)}{c(u)}\phi(t,x)\,d\mu(t)\,dt+\iint_{[0,\infty)\times\mathbb{R}}\frac{2c'(u)}{c(u)}\phi(t,x)\,d\nu(t)\,dt\\
	&=\iint_{\mathbb{R}^{2}}\frac{2c'(U)}{c^{2}(U)}(x_{Y}J_{X}+x_{X}J_{Y})\phi(t,x)\,dX\,dY.
\end{aalign}
From \eqref{eq:LHS} and \eqref{eq:RHS} we conclude that \eqref{eq:NVW1weakform} holds.

It remains to prove \eqref{eq:NVW2weakform} and \eqref{eq:NVW3weakform}. We have
\begin{align*}
	&\iint_{[0,\infty)\times\mathbb{R}}(\phi_{t}-c(u)\phi_{x})\rho(t,x)\,dx\,dt\\
	&=2\iint_{\mathbb{R}^{2}}\bigg(\frac{\phi_{t}-c(u)\phi_{x}}{c(u)}\rho\bigg)(t(X,Y),x(X,Y))x_{X}x_{Y}(X,Y)\,dX\,dY\\
	&=-2\iint_{\mathbb{R}^{2}}\phi_{Y}\rho(t(X,Y),x(X,Y))x_{X}(X,Y)\,dX\,dY \quad \text{by } \eqref{eq:equivalentweakformphiY}\\
	&=-2\iint_{\mathbb{R}^{2}}p(X,Y)\phi_{Y}(t(X,Y),x(X,Y))\,dX\,dY \quad \text{by } \eqref{eq:lemmasemigprop1} \text{ and } \eqref{eq:lemmasemigprop3}\\
	&=2\iint_{\mathbb{R}^{2}}p_{Y}(X,Y)\phi(t(X,Y),x(X,Y))\,dX\,dY \quad \text{by integration by parts}\\
	&=0 \qquad \text{by } \eqref{eq:goveqp}
\end{align*} 
and
\begin{align*}
	&\iint_{[0,\infty)\times\mathbb{R}}(\phi_{t}+c(u)\phi_{x})\sigma(t,x)\,dx\,dt\\
	&=2\iint_{\mathbb{R}^{2}}\bigg(\frac{\phi_{t}+c(u)\phi_{x}}{c(u)}\sigma\bigg)(t(X,Y),x(X,Y))x_{X}x_{Y}(X,Y)\,dX\,dY\\
	&=2\iint_{\mathbb{R}^{2}}\phi_{X}\sigma(t(X,Y),x(X,Y))x_{Y}(X,Y)\,dX\,dY\\
	&=2\iint_{\mathbb{R}^{2}}q(X,Y)\phi_{X}(t(X,Y),x(X,Y))\,dX\,dY \quad \text{by } \eqref{eq:lemmasemigprop1} \text{ and } \eqref{eq:lemmasemigprop5}\\
	&=-2\iint_{\mathbb{R}^{2}}q_{X}(X,Y)\phi(t(X,Y),x(X,Y))\,dX\,dY \quad \text{by integration by parts}\\
	&=0 \qquad \text{by } \eqref{eq:goveqq}
\end{align*}
because
\begin{equation*}
	\phi_{X}(t(X,Y),x(X,Y))=\bigg(\frac{\phi_{t}+c(u)\phi_{x}}{c(u)}\bigg)(t(X,Y),x(X,Y))x_{X}(X,Y),
\end{equation*} 
which follows from a similar calculation as in \eqref{eq:equivalentweakformphiY}. Thus, we have proved \eqref{eq:NVW2weakform} and \eqref{eq:NVW3weakform}.

\textbf{Step 2.}
Now we prove \eqref{eq:Weak2}. First we show \eqref{eq:mainthmmeaseqn1}, that is,
\begin{equation*}
	\iint_{[0,\infty)\times\mathbb{R}}(\phi_{t}-c(u)\phi_{x})(t,x)\,d\mu(t)\,dt	+\iint_{[0,\infty)\times\mathbb{R}}(\phi_{t}+c(u)\phi_{x})(t,x)\,d\nu(t)\,dt=0
\end{equation*}
for all $\phi\in C^{\infty}_{0}((0,\infty)\times\mathbb{R})$. By a calculation as above we find
\begin{align*}
	&\iint_{[0,\infty)\times\mathbb{R}}(\phi_{t}-c(u)\phi_{x})(t,x)\,d\mu(t)\,dt\\
	&=\iint_{[0,\infty)\times\mathbb{R}}(\phi_{t}-c(u)\phi_{x})(t,\Z_{2}(t,s))\V_{4}(t,\X(t,s))\X_{s}(t,s)\,ds\,dt \quad \text{by } \eqref{eq:mapGtoD7}\\	
	&=\iint_{\mathbb{R}^{2}}(\phi_{t}-c(u)\phi_{x})(t(X,Y),x(X,Y))J_{X}(X,Y)\\
	&\hspace{42pt}\times\X_{s}(t(X,Y),s(X,Y))\bigg(\frac{t_{X}-t_{Y}}{2}\bigg)(X,Y)\,dX\,dY\\
	&=\iint_{\mathbb{R}^{2}}(\phi_{t}-c(u)\phi_{x})(t(X,Y),x(X,Y))\\
	&\hspace{42pt}\times J_{X}(X,Y)\bigg(\frac{x_{Y}}{c(U)}\bigg)(X,Y)\,dX\,dY \quad \text{by } \eqref{eq:tId} \text{ and } \eqref{eq:setH1}\\
	&=-\iint_{\mathbb{R}^{2}}\phi_{Y}(t(X,Y),x(X,Y))J_{X}(X,Y)\,dX\,dY \quad \text{by } \eqref{eq:equivalentweakformphiY},
\end{align*}
where we used the change of variables $s=\frac{1}{2}(X+Y)$ and $t=t(X,Y)$.

Similarly, one proves that
\begin{equation*}
	\iint_{[0,\infty)\times\mathbb{R}}(\phi_{t}+c(u)\phi_{x})(t,x)\,d\nu(t)\,dt=\iint_{\mathbb{R}^{2}}\phi_{X}(t(X,Y),x(X,Y))J_{Y}(X,Y)\,dX\,dY,
\end{equation*}
so that
\begin{align*}
	&\iint_{[0,\infty)\times\mathbb{R}}(\phi_{t}-c(u)\phi_{x})(t,x)\,d\mu(t)\,dt	+\int_{[0,\infty)\times\mathbb{R}}(\phi_{t}+c(u)\phi_{x})(t,x)\,d\nu(t)\,dt\\
	&=\iint_{\mathbb{R}^{2}}(-\phi_{Y}(t(X,Y),x(X,Y))J_{X}(X,Y)+\phi_{X}(t(X,Y),x(X,Y))J_{Y}(X,Y))\,dX\,dY\\
	&=0
\end{align*}
by integration by parts. This concludes the proof of \eqref{eq:mainthmmeaseqn1}. In a similar way, one proves \eqref{eq:mainthmmeaseqn2}.
\end{proof}

The semigroup of solutions, $\bar{S}_{t}$, is conservative in the following sense.

\begin{theorem}
\label{thm:cons}
Given $(u_{0},R_{0},S_{0},\rho_{0},\sigma_{0},\mu_{0},\nu_{0})\in\D$, let  
\begin{equation*}
	(u,R,S,\rho,\sigma,\mu,\nu)(t)=\bar{S}_{t}(u_{0},R_{0},S_{0},\rho_{0},\sigma_{0},\mu_{0},\nu_{0}).
\end{equation*}
We have:
\begin{enumerate}
\item[(i)] For all $t\geq 0$,
\begin{equation*}
	\mu(t)(\mathbb{R})+\nu(t)(\mathbb{R})=\mu_{0}(\mathbb{R})+\nu_{0}(\mathbb{R}).
\end{equation*}
\item[(ii)] For almost every $t\geq 0$, the singular parts of $\mu(t)$ and $\nu(t)$ are concentrated on the set where $c'(u)=0$.
\end{enumerate}
\end{theorem}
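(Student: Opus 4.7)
Part~(i) is a bookkeeping exercise on top of Lemma~\ref{lemma:curveind}: the total Eulerian energy at time $t$ equals $\lim_{s\to\infty}\Z_4(s)$, which by that lemma is curve-invariant and hence equals the same limit taken along the initial curve. Part~(ii) is the substantive statement; my plan is to identify the Lagrangian singular support in $(X,Y)$-coordinates, observe an algebraic degeneracy forced by \eqref{eq:setH3} and the governing equations \eqref{eq:goveq}, and then integrate in time against bumps supported in $\{c'(u)\ne0\}$ to show the singular contribution is zero for a.e.\ $t$.

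\textbf{Part~(i).} Write $(\X,\Y,\Z,\V,\W,\p,\q)\in\G_{0}$ for the Lagrangian representation of $(u,R,S,\rho,\sigma,\mu,\nu)(t)$ obtained through Definition~\ref{def:modsemigroup}. The formulas \eqref{eq:mapGtoD7}--\eqref{eq:mapGtoD8} together with the compatibility condition \eqref{eq:setGcomp} and the decay \eqref{eq:setGrel5} give
\begin{equation*}
\mu(t)(\mathbb{R})+\nu(t)(\mathbb{R})=\int_{\mathbb{R}}\bigl(\V_{4}(\X(s))\dot\X(s)+\W_{4}(\Y(s))\dot\Y(s)\bigr)\,ds=\int_{\mathbb{R}}\dot\Z_{4}(s)\,ds=\lim_{s\to\infty}\Z_{4}(s),
\end{equation*}
and the right-hand side is $\lim_{s\to\infty}\bigl[J_{1}(\X(s))+J_{2}(\Y(s))\bigr]=\lim_{s\to\infty}J(\X(s),\Y(s))$, where $(Z,p,q)=\mathbf{S}\circ\mathbf{C}\circ\mathbf{L}(u_{0},R_{0},S_{0},\rho_{0},\sigma_{0},\mu_{0},\nu_{0})\in\H$. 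Lemma~\ref{lemma:curveind} asserts that this limit does not depend on the curve chosen in $\C$, so in particular it coincides with the value obtained along the $t=0$ curve, which is $\mu_{0}(\mathbb{R})+\nu_{0}(\mathbb{R})$.

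\textbf{Part~(ii).} From the well-posedness proof of Definition~\ref{def:mapMFtoD}, specifically the formula \eqref{eq:mapGtoDsing1} and its analogue for $\nu$, the singular part of $\mu(t)$ is supported on $\Z_{2}(A^{c}(t))$ with $A(t)=\{s:\V_{2}(\X(s,t))>0\}$, and correspondingly in the $(X,Y)$-plane on $N_{1}=\{x_{X}=0\}$; the singular part of $\nu(t)$ is supported on $N_{2}=\{x_{Y}=0\}$. At any point of $N_{1}$, the relation $2J_{X}x_{X}=(c(U)U_{X})^{2}+c(U)p^{2}$ in \eqref{eq:setH3} forces $U_{X}=0$ and $p=0$, and symmetrically on $N_{2}$ one has $U_{Y}=0$, $q=0$. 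Substituting $U_{X}=0$ into \eqref{eq:goveqx} gives $x_{XY}=\tfrac{c'(U)}{2c(U)}U_{Y}x_{X}=0$ on $N_{1}$, and likewise $x_{XY}=0$ on $N_{2}$; this is the essential degeneracy. The plan is then to fix $\phi\in C_{c}((0,\infty)\times\mathbb{R})$ supported in $\{c'(u)\ne 0\}$ and to rewrite
\begin{equation*}
I(\phi)=\int_{0}^{\infty}\!\!\int_{\mathbb{R}}\phi\,d(\mu_{\mathrm{sing}}(t)+\nu_{\mathrm{sing}}(t))\,dt
\end{equation*}
as a double integral over $(X,Y)\in\mathbb{R}^{2}$: parametrizing the singular parts directly in $(X,Y)$ via \eqref{eq:mapGtoDsing1} and integrating over $t$ using the change of variables $t=t(X,Y)$ with Jacobian $t_{X}-t_{Y}=(x_{X}+x_{Y})/c(U)$, the contribution collapses to an integral of $\phi\circ(t,x)$ against $J_{X}\mathbf{1}_{N_{1}}dX\,dY+J_{Y}\mathbf{1}_{N_{2}}dX\,dY$ weighted by factors that vanish on $N_{1}\cup N_{2}$; the $x_{XY}=0$ constraint, together with a Sard-type statement in the spirit of Lemma~\ref{lemma:increasinglipschitz}, makes this integral zero. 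Since $\phi$ was arbitrary among bumps in $\{c'(u)\ne 0\}$, the singular parts are supported on $\{c'(u)=0\}$ for a.e.\ $t$.

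\textbf{Main obstacle.} The decisive step is the Sard-type argument: one must deduce from $x_{X}\ge 0$, $x_{XY}=0$ on $\{x_{X}=0\}$, and only $W^{1,\infty}$ regularity of $x_{X}$ in each variable separately, that the set $N_{1}\cap\{c'(U)\ne 0\}$ contributes zero mass under $J_{X}\,dX\,dY$, and symmetrically for $N_{2}$. Making this rigorous, while simultaneously controlling the time change-of-variables where the Jacobian $x_{X}x_{Y}/c(U)$ itself degenerates on $N_{1}\cup N_{2}$, is the technical heart of the proof and is what most of the write-up would be devoted to.
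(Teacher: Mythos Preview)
Part~(i) is correct and matches the paper.

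For Part~(ii) there is a genuine gap. After your change of variables the $\mu_{\mathrm{sing}}$-contribution is $\iint_{E} J_{X}\,\tfrac{x_{Y}}{c(U)}\,\phi\circ(t,x)\,dX\,dY$ with $E=\{x_{X}=0,\,c'(U)\ne 0\}$; the weight $x_{Y}$ vanishes on $N_{2}$, not on $N_{1}$, so ``factors that vanish on $N_{1}\cup N_{2}$'' overstates what the Jacobian gives you. More seriously, the observation $x_{XY}=0$ on $N_{1}$ is a red herring: it is automatic for a nonnegative function at a minimum and carries no information about the size of the zero set. The useful observation has the opposite sign. On $A_{1}=\{x_{X}=0,\ x_{Y}>0,\ c'(U)\ne 0\}$ one has $U_{X}=0$, $p=0$, $J_{X}>0$ by \eqref{eq:setH3} and \eqref{eq:setH6}, and then \eqref{eq:goveqU} gives $U_{XY}=\tfrac{c'(U)}{2c^{3}(U)}x_{Y}J_{X}\ne 0$. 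Hence for fixed $X$ the zeros of $U_{X}(X,\cdot)$---equivalently of $x_{X}(X,\cdot)$, since $p(X,\cdot)\equiv 0$ there---are isolated in the open set $\{x_{Y}>0,\,c'(U)\ne 0\}$, and Fubini yields $\mathrm{meas}(A_{1})=0$. This is the fact the paper cites from \cite{HolRay:11}; Lemma~\ref{lemma:increasinglipschitz} is not the right tool.

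The paper packages the time-integration via the area and coarea formulas rather than your direct change of variables: it bounds $\mu_{\mathrm{sing}}(\tau)(\{c'(u)\ne 0\})\le C\,\H^{1}(E\cap t^{-1}(\tau))$ and computes $\int_{0}^{\infty}\H^{1}(E\cap t^{-1}(\tau))\,d\tau=\iint_{E}|\nabla t|\,dX\,dY$. On $A_{3}=\{x_{X}=x_{Y}=0,\ c'(U)\ne 0\}$ the integrand $|\nabla t|=(x_{X}^{2}+x_{Y}^{2})^{1/2}/c(U)$ vanishes; on $A_{1}$ the domain has measure zero. Your sketch effectively captures only the $A_{3}$-mechanism and misidentifies the tool needed for $A_{1}$.
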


\begin{proof}
We prove (i). Given $\tau\geq 0$, let  
\begin{equation*}
	(u,R,S,\rho,\sigma,\mu,\nu)(\tau)=\bar{S}_{\tau}(u_{0},R_{0},S_{0},\rho_{0},\sigma_{0},\mu_{0},\nu_{0}).
\end{equation*}
We consider $\Theta(\tau)\in\G_{\tau}$ and $(Z,p,q)\in\H$ such that $(u,R,S,\rho,\sigma,\mu,\nu)(\tau)=\mathbf{M}\circ\mathbf{D}(\Theta(\tau))$ and $\Theta(\tau)=\mathbf{E}(Z,p,q)$. By Definition \ref{def:mapE}, we have \\$\Z_{2}(\tau,s)=x(\X(\tau,s),\Y(\tau,s))$, $\V_{4}(\X(\tau,s))=J_{X}(\X(\tau,s),\Y(\tau,s))$ 
and $\W_{4}(\Y(\tau,s))=J_{Y}(\X(\tau,s),\Y(\tau,s))$. Then, from \eqref{eq:mapGtoD7} and \eqref{eq:mapGtoD8}, we have for any Borel set $B$, that
\begin{align*}
	\mu(\tau)(B)&=\int_{\{s\in\mathbb{R} \, | \, \Z_{2}(\tau,s)\in B\}}\V_{4}(\X(\tau,s))\X_{s}(\tau,s)\,ds\\
	&=\int_{\{s\in\mathbb{R} \, | \, x(\X(\tau,s),\Y(\tau,s))\in B\}}J_{X}(\X(\tau,s),\Y(\tau,s))\X_{s}(\tau,s)\,ds
\end{align*}
and
\begin{align*}
	\nu(\tau)(B)&=\int_{\{s\in\mathbb{R} \, | \, \Z_{2}(\tau,s)\in B\}}\W_{4}(\Y(\tau,s))\Y_{s}(\tau,s)\,ds\\
	&=\int_{\{s\in\mathbb{R} \, | \, x(\X(\tau,s),\Y(\tau,s))\in B\}}J_{Y}(\X(\tau,s),\Y(\tau,s))\Y_{s}(\tau,s)\,ds
\end{align*}
respectively. Hence, 
\begin{align*}
	&\mu(\tau)(\mathbb{R})+\nu(\tau)(\mathbb{R})\\
	&=\int_{\mathbb{R}}(J_{X}(\X(\tau,s),\Y(\tau,s))\X_{s}(\tau,s)+J_{Y}(\X(\tau,s),\Y(\tau,s))\Y_{s}(\tau,s))\,ds\\
	&=\int_{\mathbb{R}}J_{s}(\X(\tau,s),\Y(\tau,s))\,ds\\
	&=\displaystyle\lim_{s\rightarrow\infty}J(\X(\tau,s),\Y(\tau,s))\\
	&=\displaystyle\lim_{s\rightarrow\infty}J(\X(0,s),\Y(0,s)) \quad \text{by Lemma } \ref{lemma:curveind}\\
	&=\mu_{0}(\mathbb{R})+\nu_{0}(\mathbb{R}),
\end{align*}
where we used that $\displaystyle\lim_{s\rightarrow-\infty}J(\X(\tau,s),\Y(\tau,s))=\displaystyle\lim_{s\rightarrow-\infty}\Z_{4}(\tau,s)=0$.

Let us prove (ii). We decompose $\mu(\tau)$ into its absolutely continuous and singular part, that is, $\mu(\tau)=\mu(\tau)_{\text{ac}}+\mu(\tau)_{\text{sing}}$. We want to prove that, for almost every time $\tau\geq 0$,
\begin{equation*}
	\mu(\tau)_{\text{sing}}(\{x\in\mathbb{R} \ | \ c'(u(\tau,x))\neq 0\})=0.
\end{equation*}
Consider the set
\begin{equation*}
	A_{\tau}=\{s\in\mathbb{R} \ | \ x_{X}(\X(\tau,s),\Y(\tau,s))>0 \}.
\end{equation*}
Since $x_{X}(\X(\tau,s),\Y(\tau,s))=\V_{2}(\X(\tau,s))$, $A_{\tau}$ corresponds to the set $A$ in \eqref{eq:setAandB} in the proof of Lemma \ref{lemma:mapG0toD}. Using $\V_{4}(\X(\tau,s))=J_{X}(\X(\tau,s),\Y(\tau,s))$ and $\Z_{2}(\tau,s)=x(\X(\tau,s),\Y(\tau,s))$ in \eqref{eq:mapGtoDsing1}, we get
\begin{aalign}
\label{eq:iimusing}
	\mu_{\text{sing}}(\tau)(B)&=\int_{\{s\in\mathbb{R} \, | \, \Z_{2}(\tau,s)\in B\}\cap A_{\tau}^{c}}\V_{4}(\X(\tau,s))\X_{s}(\tau,s)\,ds\\
	&=\int_{\{s\in A_{\tau}^{c} \, | \, x(\X(\tau,s),\Y(\tau,s))\in B\}}J_{X}(\X(\tau,s),\Y(\tau,s))\X_{s}(\tau,s)\,ds
\end{aalign}
for any Borel set $B$. Let
\begin{equation*}
	E=\{(X,Y)\in\mathbb{R}^{2} \ | \ x_{X}(X,Y)=0 \text{ and } c'(U(X,Y))\neq 0\}.
\end{equation*}
For a given time $\tau$, we consider the mapping $\Gamma_{\tau}:s\mapsto(\X(\tau,s),\Y(\tau,s))$. From \eqref{eq:iimusing}, we obtain
\begin{aalign}
\label{eq:muSingcDer}
	&\mu(\tau)_{\text{sing}}(\{x\in\mathbb{R} \ | \ c'(u(\tau,x))\neq 0\})\\
	&=\int_{\{s\in\mathbb{R} \, | \, (\X(\tau,s),\Y(\tau,s))\in E\}}J_{X}(\X(\tau,s),\Y(\tau,s))\X_{s}(\tau,s)\,ds\\
	&=\int_{\Gamma_{\tau}^{-1}(E)}J_{X}(\X(\tau,s),\Y(\tau,s))\X_{s}(\tau,s)\,ds\\
	&\leq 2||J_{X}||_{W^{1,\infty}_{Y}(\mathbb{R})}\text{meas}(\Gamma_{\tau}^{-1}(E)).
\end{aalign}
We claim that $\text{meas}(\Gamma_{\tau}^{-1}(E))=0$. By the area formula, see Section 3.3 in \cite{EvaGar}, we obtain
\begin{aalign}
\label{eq:measGammaTau}
	\text{meas}(\Gamma_{\tau}^{-1}(E))&=\int_{\Gamma_{\tau}^{-1}(E)}\frac{1}{2}(\X_{s}(\tau,s)+\Y_{s}(\tau,s))\,ds \quad \text{since } \X_{s}+\Y_{s}=2\\
	&\leq\int_{\Gamma_{\tau}^{-1}(E)}(\X_{s}^{2}(\tau,s)+\Y_{s}^{2}(\tau,s))^{\frac{1}{2}}\,ds\\
	&=\H^{1}(\Gamma_{\tau}\circ\Gamma_{\tau}^{-1}(E))\\
	&\leq\H^{1}(E\cap t^{-1}(\tau)),
\end{aalign}
where we used that $\X_{s}=(\X_{s}^{2})^{\frac{1}{2}}\leq(\X_{s}^{2}+\Y_{s}^{2})^{\frac{1}{2}}$ (and similarly for $\Y_{s}$). Here, $\H^{1}$ denotes the one-dimensional Hausdorff measure. The fact that 
$\Gamma_{\tau}\circ\Gamma_{\tau}^{-1}(E)\subset E\cap t^{-1}(\tau)$ follows from the following argument. If $E\cap t^{-1}(\tau)$ contains a rectangle, we have by the Definition \ref{def:mapE}, that the curve $\Gamma_{\tau}$ consists of the left vertical side and the upper horizontal side of the rectangle. We show that $\H^{1}(E\cap t^{-1}(\tau))=0$. We have
\begin{equation*}
	E=A_{1}\cup A_{3},
\end{equation*}
where
\begin{equation*}
	A_{1}=\{(X,Y)\in\mathbb{R}^{2} \ | \ x_{X}(X,Y)=0,\ x_{Y}(X,Y)>0 \text{ and } c'(U(X,Y))\neq 0\}
\end{equation*}
and
\begin{equation*}
	A_{3}=\{(X,Y)\in\mathbb{R}^{2} \ | \ x_{X}(X,Y)=0, \ x_{Y}(X,Y)=0 \text{ and } c'(U(X,Y))\neq 0\}.
\end{equation*}

By an argument as in the proof of Theorem 4 in \cite{HolRay:11}, we obtain $\text{meas}(A_{1})=0$. Hence, by the coarea formula, see Section 3.4 in \cite{EvaGar}, we get
\begin{align*}
	\int_{\mathbb{R}}\H^{1}(E\cap t^{-1}(\tau))\,d\tau
	&=\iint_{E}(t_{X}^{2}(X,Y)+t_{Y}^{2}(X,Y))^{\frac{1}{2}}\,dX\,dY\\
	&=\iint_{A_{3}}(t_{X}^{2}(X,Y)+t_{Y}^{2}(X,Y))^{\frac{1}{2}}\,dX\,dY\\
	&=\iint_{A_{3}}\frac{1}{c(U(X,Y))}(x_{X}^{2}(X,Y)+x_{Y}^{2}(X,Y))^{\frac{1}{2}}\,dX\,dY \quad \text{by } \eqref{eq:setH1}\\
	&=0.
\end{align*}
Therefore, we get from \eqref{eq:measGammaTau}, that $\text{meas}(\Gamma_{\tau}^{-1}(E))=0$, which inserted into \eqref{eq:muSingcDer} yields
\begin{equation*}
	\mu(\tau)_{\text{sing}}(\{x\in\mathbb{R} \ | \ c'(u(\tau,x))\neq 0\})=0.
\end{equation*}
\end{proof}

\begin{theorem}[Finite speed of propagation]
\label{thm:finiteSpeedOfProp}
For initial data $(u_{0},R_{0},S_{0},\rho_{0},\sigma_{0},\mu_{0},\nu_{0})$ and $(\bar{u}_{0},\bar{R}_{0},\bar{S}_{0},\bar{\rho}_{0},\bar{\sigma}_{0},\bar{\mu}_{0},\bar{\nu}_{0})$ in $\D$, we consider the solutions
\begin{equation*}
	(u,R,S,\rho,\sigma,\mu,\nu)(t)=\bar{S}_{t}(u_{0},R_{0},S_{0},\rho_{0},\sigma_{0},\mu_{0},\nu_{0})
\end{equation*}
and
\begin{equation*}
	(\bar{u},\bar{R},\bar{S},\bar{\rho},\bar{\sigma},\bar{\mu},\bar{\nu})(t)=\bar{S}_{t}(\bar{u}_{0},\bar{R}_{0},\bar{S}_{0},\bar{\rho}_{0},\bar{\sigma}_{0},\bar{\mu}_{0},\bar{\nu}_{0}).
\end{equation*}
Given $\mathbf{t}>0$ and $\mathbf{x}\in\mathbb{R}$, if
\begin{equation*}
	(u_{0},R_{0},S_{0},\rho_{0},\sigma_{0},\mu_{0},\nu_{0})(x)=(\bar{u}_{0},\bar{R}_{0},\bar{S}_{0},\bar{\rho}_{0},\bar{\sigma}_{0},\bar{\mu}_{0},\bar{\nu}_{0})(x)
\end{equation*} 
for $x\in[\mathbf{x}-\kappa\mathbf{t},\mathbf{x}+\kappa\mathbf{t}]$, then
\begin{equation*}
	u(\mathbf{t},\mathbf{x})=\bar{u}(\mathbf{t},\mathbf{x}).
\end{equation*}
\end{theorem}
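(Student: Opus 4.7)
The plan is to exploit the hyperbolic structure of the system \eqref{eq:goveq} in Lagrangian coordinates: from the integral representations used in the proofs of Theorem \ref{thm:solnsmallrect} and Lemma \ref{lemma:SolnBigRectangle}, the value of $Z$ at a Lagrangian point $(X_*,Y_*)$ lying above the initial curve is determined solely by the initial data on the portion of that curve contained in the rectangle $\Omega_*=[\X(Y_*),X_*]\times[\Y(X_*),Y_*]$. The assumption $\frac{1}{\kappa}\le c(u)\le\kappa$ will turn this rectangle, via the identities \eqref{eq:setH1}, into an Eulerian interval of length at most $2\kappa\mathbf{t}$ centered at $\mathbf{x}$, which is exactly the interval on which the two initial data are assumed to agree.

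First I would apply $\mathbf{L}$ to both sets of Eulerian data to obtain $\psi_0,\bar{\psi}_0\in\F$. Because $x_1$ and $\bar{x}_1$ are the generalized inverses of $x\mapsto x+\mu_0((-\infty,x))$ and $x\mapsto x+\bar{\mu}_0((-\infty,x))$, the agreement of the initial data on $[\mathbf{x}-\kappa\mathbf{t},\mathbf{x}+\kappa\mathbf{t}]$ forces these two increasing functions to differ only by a constant $C=\mu_0((-\infty,\mathbf{x}-\kappa\mathbf{t}))-\bar{\mu}_0((-\infty,\mathbf{x}-\kappa\mathbf{t}))$ on that interval, and similarly $C'$ for the $\nu_0$-side. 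The translation $(f,g)(X,Y)=(X+C,Y+C')\in G^2$ is then a valid relabeling, and by $G^2$-equivariance of $S_T$ \eqref{eq:equivar6} together with Lemma \ref{lemma:semigroupDtoDaux}(ii), replacing $\bar{\psi}_0$ by $\bar{\psi}_0\cdot(f,g)$ does not change $\bar{S}_\mathbf{t}(\bar u_0,\bar R_0,\bar S_0,\bar\rho_0,\bar\sigma_0,\bar\mu_0,\bar\nu_0)$. After this adjustment, the two sets of Lagrangian data in $\F$ agree on the $X$-interval $[X_l,X_r]$ with $x_1(X_l)=\mathbf{x}-\kappa\mathbf{t}$, $x_1(X_r)=\mathbf{x}+\kappa\mathbf{t}$ and on the analogous $Y$-interval $[Y_l,Y_r]$. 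Applying $\mathbf{C}$ then yields elements $\Theta_0,\bar{\Theta}_0\in\G_0$ whose restrictions to the rectangle $\Omega_l=[X_l,X_r]\times[Y_l,Y_r]$ coincide.

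Next I would run $\mathbf{S}$ on both elements, yielding solutions $(Z,p,q)$ and $(\bar Z,\bar p,\bar q)$ in $\H$, and identify the point $(X_*,Y_*)$ with $t(X_*,Y_*)=\mathbf{t}$ and $x(X_*,Y_*)=\mathbf{x}$. Tracing the backward Lagrangian characteristics $\{X=X_*\}$ and $\{Y=Y_*\}$ from $(X_*,Y_*)$ down to the initial curve $(\X,\Y)$, and integrating $x_Y=-c(U)t_Y\ge 0$ and $x_X=c(U)t_X\ge 0$ from \eqref{eq:setH1} with $c\in[1/\kappa,\kappa]$, gives
\begin{equation*}
\mathbf{x}-\kappa\mathbf{t}\;\le\;x_2(Y_*)\;\le\;\mathbf{x}\;\le\;x_1(X_*)\;\le\;\mathbf{x}+\kappa\mathbf{t},
\end{equation*}
so $[\X(Y_*),X_*]\subset[X_l,X_r]$ and $[\Y(X_*),Y_*]\subset[Y_l,Y_r]$, i.e.\ $\Omega_*\subset\Omega_l$. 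Since the restrictions of $\Theta_0$ and $\bar\Theta_0$ to $\Omega_*$ agree, Lemma~\ref{lemma:SolnBigRectangle} (uniqueness on rectangles) gives $Z\equiv\bar Z$ on $\Omega_*$, in particular $U(X_*,Y_*)=\bar U(X_*,Y_*)$. Finally, the identification \eqref{eq:lemmasemigprop1} gives $u(\mathbf{t},\mathbf{x})=U(X_*,Y_*)=\bar U(X_*,Y_*)=\bar u(\mathbf{t},\mathbf{x})$.

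The main obstacle is the relabeling/alignment step: $\mathbf{L}$ depends globally on the measures through $\mu_0((-\infty,\cdot))$, so the two Lagrangian parametrizations disagree even when the Eulerian data agree locally. I expect that the trickiest technicality is to verify the alignment at the boundary points $\mathbf{x}\pm\kappa\mathbf{t}$, where $\mu_0$ and $\nu_0$ may carry singular atoms that inflate the interval $[X_l,X_r]$ into a larger closed set; one must choose $X_l,X_r,Y_l,Y_r$ carefully (using the examples following Definition \ref{def:mapfromDtoF} as a guide) so that the closed preimage rectangle of $[\mathbf{x}-\kappa\mathbf{t},\mathbf{x}+\kappa\mathbf{t}]$ still contains $\Omega_*$ and that the constants $C,C'$ correctly relate the two parametrizations on this rectangle.
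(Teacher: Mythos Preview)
Your approach is essentially the paper's: translate one Lagrangian parametrization via $(f,g)\in G^2$ to align the two data sets on a common rectangle $\Omega$, invoke uniqueness on rectangles (Lemma~\ref{lemma:SolnBigRectangle}) to conclude $t,x,U$ agree there, and use the bound $c\le\kappa$ with \eqref{eq:setH1} to locate the point $(X_*,Y_*)$ with $(t,x)=(\mathbf{t},\mathbf{x})$ inside $\Omega$.

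Two points where the paper differs and which you should be aware of. First, rather than comparing two arbitrary data sets directly, the paper replaces $\bar{(\cdot)}_0$ by the data that equals $(u_0,\dots,\nu_0)$ on $[\mathbf{x}-\kappa\mathbf{t},\mathbf{x}+\kappa\mathbf{t}]$ and vanishes outside; since any two admissible data sets share this common truncation, the general statement follows. This reduction neutralizes exactly the boundary-atom issue you flag at the end, because $\bar\mu_0,\bar\nu_0$ then have no mass outside the interval and $\bar x_1,\bar x_2$ are explicit there. Second, after alignment the functions $J,K$ only agree up to additive constants (the paper's Step~3 records $\bar J_1(X)=\tilde J_1(X)-\tilde J_1(X_l)$, etc.); you need to observe that the system \eqref{eq:goveq} is invariant under such shifts, so $t,x,U$ still coincide on $\Omega$. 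Finally, a small orientation slip: for $\mathbf{t}>0$ one has $Y_*<\Y(X_*)$, so the $Y$-side of your $\Omega_*$ should be $[Y_*,\Y(X_*)]$.
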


In the case of the linear wave equation, i.e., $c$ is constant, one has $u(\mathbf{t},\mathbf{x})=\bar{u}(\mathbf{t},\mathbf{x})$ if the initial data are equal on the interval $[\mathbf{x}-c\mathbf{t},\mathbf{x}+c\mathbf{t}]$. If the function $c(u)$ satisfies $\frac{1}{\kappa}\leq c(u)\leq \kappa$ for some $\kappa\geq1$ the corresponding interval is contained in $[\mathbf{x}-\kappa\mathbf{t},\mathbf{x}+\kappa\mathbf{t}]$. Thus, we require the initial data to coincide on a slightly bigger interval.

\begin{proof}
We denote $x_{l}=\mathbf{x}-\kappa\mathbf{t}$ and $x_{r}=\mathbf{x}+\kappa\mathbf{t}$. For a given $(u_{0},R_{0},S_{0},\rho_{0},\sigma_{0},\mu_{0},\nu_{0})\in\D$, we define
\begin{equation*}
	(\bar{u}_{0},\bar{R}_{0},\bar{S}_{0},\bar{\rho}_{0},\bar{\sigma}_{0})(x)=
	\begin{cases}
	(u_{0},R_{0},S_{0},\rho_{0},\sigma_{0})(x) & \text{if } x\in[x_{l},x_{r}]\\
	(0,0,0,0,0) & \text{otherwise}
	\end{cases}
\end{equation*} 
and
\begin{equation*}
	\bar{\mu}_{0}(B)=\mu_{0}(B\cap[x_{l},x_{r}]), \quad
	\bar{\nu}_{0}(B)=\nu_{0}(B\cap[x_{l},x_{r}])
\end{equation*}
for any Borel set $B$. It is enough to prove the theorem for the initial data \\ $(u_{0},R_{0},S_{0},\rho_{0},\sigma_{0},\mu_{0},\nu_{0})$ and $(\bar{u}_{0},\bar{R}_{0},\bar{S}_{0},\bar{\rho}_{0},\bar{\sigma}_{0},\bar{\mu}_{0},\bar{\nu}_{0})$ in $\D$. We have to compute the solutions corresponding to these two initial data. We decompose the proof into five steps.

\textbf{Step 1.} Let $\psi=(\psi_{1},\psi_{2})=\mathbf{L}(u_{0},R_{0},S_{0},\rho_{0},\sigma_{0},\mu_{0},\nu_{0})$ and $\bar{\psi}=(\bar{\psi}_{1},\bar{\psi}_{2})=\mathbf{L}(\bar{u}_{0},\bar{R}_{0},\bar{S}_{0},\bar{\rho}_{0},\bar{\sigma}_{0},\bar{\mu}_{0},\bar{\nu}_{0})$. We denote $X_{l}=x_{l}$, $Y_{l}=x_{l}$, $X_{r}=x_{r}+\mu_{0}([x_{l},x_{r}])$, $Y_{r}=x_{r}+\nu_{0}([x_{l},x_{r}])$ and $\Omega=[X_{l},X_{r}]\times[Y_{l},Y_{r}]$. We claim that
\begin{equation}
\label{eq:x1barthreecases}
	\bar{x}_{1}(X)=\begin{cases}
	X & \text{if } X\leq X_{l},\\
	x_{1}(X+\mu_{0}((-\infty,x_{l}))) & \text{if } X_{l}<X\leq X_{r},\\
	X-\mu_{0}([x_{l},x_{r}]) & \text{if } X>X_{r}
	\end{cases}
\end{equation}
and
\begin{equation}
\label{eq:x2barthreecases}
	\bar{x}_{2}(Y)=\begin{cases}
	Y & \text{if } Y\leq Y_{l},\\
	x_{2}(Y+\nu_{0}((-\infty,x_{l}))) & \text{if } Y_{l}<Y\leq Y_{r},\\
	Y-\nu_{0}([x_{l},x_{r}]) & \text{if } Y>Y_{r}.
	\end{cases}
\end{equation}
From \eqref{eq:mapfromDtoF1}, we have
\begin{equation}
\label{eq:x1bardef}
	\bar{x}_{1}(X)=\sup \{x'\in \mathbb{R} \ | \ x'+\bar{\mu}_{0}((-\infty, x'))<X \}.
\end{equation}

First case: $X\leq X_{l}$. For any $x'$ such that $x'+\bar{\mu}_{0}((-\infty,x'))<X$, we have $x'<X$, so that $x'<X\leq X_{l}=x_{l}$. Then, $\bar{\mu}_{0}((-\infty,x'))=\mu_{0}((-\infty,x')\cap[x_{l},x_{r}])=0$ and $\bar{x}_{1}(X)=X$.

Second case: $X_{l}<X\leq X_{r}$. We have $x_{l}+\bar{\mu}_{0}((-\infty,x_{l}))=x_{l}=X_{l}<X$, so that
\begin{equation}
\label{eq:Step1SecondCase}
	\bar{x}_{1}(X)=\sup \{x'\in[x_{l},\infty) \ | \ x'+\mu_{0}([x_{l}, x'))<X \}.
\end{equation} 
We claim that for any $x'$ such that $x'+\bar{\mu}_{0}((-\infty,x'))<X$, we have $x'\leq x_{r}$. Assume the opposite, that is, $x'>x_{r}$. Then, $\bar{\mu}_{0}((-\infty,x'))=\mu_{0}((-\infty,x')\cap[x_{l},x_{r}])=\mu_{0}([x_{l},x_{r}])$ and we get
\begin{equation*}
	x'+\mu_{0}([x_{l},x_{r}])=x'+\bar{\mu}_{0}((-\infty,x'))<X\leq X_{r}=x_{r}+\mu_{0}([x_{l},x_{r}]),
\end{equation*} 
which contradicts the assumption $x'>x_{r}$. Hence,
\begin{equation}
\label{eq:Step1SecondCaseii}
	\bar{x}_{1}(X)=\sup \{x'\in[x_{l},x_{r}] \ | \ x'+\mu_{0}([x_{l}, x'))<X \}.
\end{equation}
We claim that
\begin{equation*}
	x_{1}(X+\mu_{0}((-\infty,x_{l})))=\sup \{x'\in[x_{l},x_{r}] \ | \ x'+\mu_{0}([x_{l}, x'))<X \}.
\end{equation*}
By \eqref{eq:mapfromDtoF1}, we have
\begin{equation*}
	x_{1}(X+\mu_{0}((-\infty,x_{l})))=\sup \{x'\in \mathbb{R} \ | \ x'+\mu_{0}((-\infty, x'))<X+\mu_{0}((-\infty,x_{l})) \}.
\end{equation*}
We have $x_{l}+\mu_{0}((-\infty,x_{l}))=X_{l}+\mu_{0}((-\infty,x_{l}))<X+\mu_{0}((-\infty,x_{l}))$, so that
\begin{align*}
	x_{1}(X+\mu_{0}((-\infty,x_{l})))&=\sup \{x'\in [x_{l},\infty) \ | \ x'+\mu_{0}((-\infty, x'))<X+\mu_{0}((-\infty,x_{l}))\}\\
	&=\sup \{x'\in [x_{l},\infty) \ | \ x'+\mu_{0}([x_{l},x'))<X \}\\
	&=\bar{x}_{1}(X) \quad \text{by } \eqref{eq:Step1SecondCase}.
\end{align*}
Then, by \eqref{eq:Step1SecondCaseii}, we get
\begin{equation*}
	x_{1}(X+\mu_{0}((-\infty,x_{l})))=\sup \{x'\in[x_{l},x_{r}] \ | \ x'+\mu_{0}([x_{l}, x'))<X \}.
\end{equation*}

Third case: $X>X_{r}$. Since
$x_{r}+\bar{\mu}_{0}((-\infty,x_{r}))=x_{r}+\mu_{0}([x_{l},x_{r}))\leq x_{r}+\mu_{0}([x_{l},x_{r}])=X_{r}<X$, we have
\begin{equation*}
	\bar{x}_{1}(X)=\sup \{x'\in[x_{r},\infty) \ | \ x'+\bar{\mu}_{0}((-\infty,x'))<X \}.
\end{equation*}
If $x'>x_{r}$, $\bar{\mu}_{0}((-\infty,x'))=\mu_{0}([x_{l},x_{r}])$, which implies that
\begin{equation*}
	\bar{x}_{1}(X)=X-\mu_{0}([x_{l},x_{r}]).
\end{equation*}
This concludes the proof of \eqref{eq:x1barthreecases}. Similarly, one proves \eqref{eq:x2barthreecases}.

Let $f(X)=X+\mu_{0}((-\infty,x_{l}))$ and $g(Y)=Y+\nu_{0}((-\infty,x_{l}))$. We claim that $\phi=(f,g)\in G^{2}$. Since $f'=1$, $f$ is invertible and $f^{-1}(X)=X-\mu_{0}((-\infty,x_{l}))$. We have $||f-\id||_{\Linf(\mathbb{R})}\leq \mu_{0}(\mathbb{R})$, $||f^{-1}-\id||_{\Linf(\mathbb{R})}\leq\mu_{0}(\mathbb{R})$, $f'-1=0$ and $(f^{-1})'-1=0$. Hence, $f$ belongs to $G$. Similarly, one shows that $g\in G$. We denote $\tilde{\psi}=\psi\cdot\phi$. We have proved that
\begin{equation}
\label{eq:x1barandtilde}
	\bar{x}_{1}(X)=\tilde{x}_{1}(X) \quad \text{for } X_{l}<X\leq X_{r}
\end{equation}
and
\begin{equation}
\label{eq:x2barandtilde}
	\bar{x}_{2}(Y)=\tilde{x}_{2}(Y) \quad \text{for } Y_{l}<Y\leq Y_{r}.
\end{equation}

\textbf{Step 2.} Let $\bar{\Theta}=\mathbf{C}(\bar{\psi})$ and $\tilde{\Theta}=\mathbf{C}(\tilde{\psi})$. We prove that
\begin{equation}
\label{eq:step2XandYbarandtilde}
	\bar{\X}(s)=\tilde{\X}(s) \quad \text{and} \quad \bar{\Y}(s)=\tilde{\Y}(s)
\end{equation}
for $s\in[s_{l},s_{r}]$ where $s_{l}=\frac{1}{2}(X_{l}+Y_{l})$ and $s_{r}=\frac{1}{2}(X_{r}+Y_{r})$.
First we show that
\begin{equation}
\label{eq:step2XandYtilde}
	\tilde{\X}(s_{l})=X_{l} \quad \text{and} \quad \tilde{\Y}(s_{l})=Y_{l}.
\end{equation}
By \eqref{eq:mapFtoGX}, we have
\begin{equation*}
	\tilde{\X}(s)=\sup\{X\in \mathbb{R} \ | \ \tilde{x}_{1}(X')<\tilde{x}_{2}(2s-X') \text{ for all } X'<X \}.
\end{equation*}
For any $X<X_{l}$, $\tilde{x}_{1}(X)\leq\tilde{x}_{1}(X_{l})$ because $\tilde{x}_{1}$ is nondecreasing. We claim that $\tilde{x}_{1}(X)<\tilde{x}_{1}(X_{l})$. Assume the opposite, that is, $\tilde{x}_{1}(X)=\tilde{x}_{1}(X_{l})$. Then, since $\tilde{x}_{1}(X_{l})=x_{l}$, there exists an increasing sequence $x_{i}$ such that $\displaystyle\lim_{i\rightarrow\infty}x_{i}=x_{l}$ and $x_{i}+\mu_{0}((-\infty,x_{i}))<X+\mu_{0}((-\infty,x_{l}))$. By sending $i$ to infinity, we get, since $x\mapsto\mu_{0}((-\infty,x))$ is lower semi-continuous, that $x_{l}+\mu_{0}((-\infty,x_{l}))\leq X+\mu_{0}((-\infty,x_{l}))$. This leads to the contradiction $x_{l}\leq X<X_{l}=x_{l}$. Therefore, for any $X<X_{l}$,
\begin{align*}
	\tilde{x}_{1}(X)&<\tilde{x}_{1}(X_{l})=x_{1}(x_{l}+\mu_{0}((-\infty,x_{l})))=x_{l}\\
	&=x_{2}(x_{l}+\nu_{0}((-\infty,x_{l})))=\tilde{x}_{2}(Y_{l})\leq\tilde{x}_{2}(2s_{l}-X),
\end{align*}
where we used that $\tilde{\psi}=\psi\cdot\phi$ and the fact that $\tilde{x}_{2}$ is nondecreasing. Hence, $\tilde{\X}(s_{l})=X_{l}$ and we have proved \eqref{eq:step2XandYtilde}. By using similar arguments, one obtains
\begin{equation}
\label{eq:Step2tildeXandY}
	\tilde{\X}(s_{r})=X_{r} \quad \text{and} \quad \tilde{\Y}(s_{r})=Y_{r}.
\end{equation}
The corresponding results for $\bar{\X}$ and $\bar{\Y}$, which we state here for completeness, are
\begin{equation*}
	\bar{\X}(s_{l})=X_{l}, \quad \bar{\Y}(s_{l})=Y_{l} \quad \text{and} \quad \bar{\X}(s_{r})=X_{r}, \quad \bar{\Y}(s_{r})=Y_{r}.
\end{equation*}
In particular, we have proved \eqref{eq:step2XandYbarandtilde} for $s=s_{l}$ and $s=s_{r}$. For $s\in(s_{l},s_{r})$, we have either $X_{l}<\bar{\X}(s)\leq X_{r}$ or $Y_{l}\leq\bar{\Y}(s)<Y_{r}$
by the definition of $\bar{\X}$ and $\bar{\Y}$. We only consider the case $X_{l}<\bar{\X}(s)\leq X_{r}$, as the other case can be treated similarly. By \eqref{eq:mapFtoGX}, there exists an increasing sequence $X_{i}$ such that $\displaystyle\lim_{i\rightarrow\infty}X_{i}=\bar{\X}(s)$ and $\bar{x}_{1}(X_{i})<\bar{x}_{2}(2s-X_{i})$. For sufficiently large $i$, we have $X_{l}<X_{i}\leq X_{r}$ and, by \eqref{eq:x1barandtilde}, we get $\tilde{x}_{1}(X_{i})=\bar{x}_{1}(X_{i})<\bar{x}_{2}(2s-X_{i})$. If $2s-X_{i}\leq Y_{r}$ then, by 
\eqref{eq:x2barandtilde}, $\bar{x}_{2}(2s-X_{i})=\tilde{x}_{2}(2s-X_{i})$, so that $\tilde{x}_{1}(X_{i})<\tilde{x}_{2}(2s-X_{i})$. If $2s-X_{i}>Y_{r}$ then also $\tilde{x}_{1}(X_{i})<\tilde{x}_{2}(2s-X_{i})$. Assume the opposite, that is, $\tilde{x}_{1}(X_{i})\geq\tilde{x}_{2}(2s-X_{i})$. Since $\tilde{x}_{1}$ and $\tilde{x}_{2}$ are nondecreasing, we have $\tilde{x}_{1}(X_{r})\geq \tilde{x}_{1}(X_{i})\geq \tilde{x}_{2}(2s-X_{i})\geq \tilde{x}_{2}(Y_{r})$. We have
\begin{align*}
	\tilde{x}_{1}(X_{r})&=x_{1}(X_{r}+\mu_{0}((-\infty,x_{l})))=x_{1}(x_{r}+\mu_{0}((-\infty,x_{r}]))\\
	&=x_{1}(x_{r}+\mu_{0}((-\infty,x_{r})))=x_{r}
\end{align*}
and similarly, we obtain $\tilde{x}_{2}(Y_{r})=x_{r}$. Thus, $\tilde{x}_{1}(X_{r})=\tilde{x}_{2}(Y_{r})$, which implies that $\tilde{x}_{2}(2s-X_{i})=x_{r}$. By 
\eqref{eq:mapfromDtoF2}, there exists a decreasing sequence $x_{j}$ such that $\displaystyle\lim_{j\rightarrow\infty}x_{j}=\tilde{x}_{2}(2s-X_{i})$ and \newline  $x_{j}+\nu_{0}((-\infty,x_{j}))\geq 2s-X_{i}+\nu_{0}((-\infty,x_{l}))$. Sending $j$ to infinity, we get
\begin{equation*}
	2s-X_{i}+\nu_{0}((-\infty,x_{l}))\leq \tilde{x}_{2}(2s-X_{i})+\nu_{0}((-\infty,\tilde{x}_{2}(2s-X_{i})])=x_{r}+\nu_{0}((-\infty,x_{r}]),
\end{equation*}
which leads to the contradiction
\begin{equation*}
	2s-X_{i}\leq x_{r}+\nu_{0}([x_{l},x_{r}])=Y_{r}.
\end{equation*}
Hence, we have shown that 
\begin{equation}
\label{eq:step2finalrelation1}
	\tilde{x}_{1}(X_{i})<\tilde{x}_{2}(2s-X_{i}).
\end{equation}
If $Y_{l}<\bar{\Y}(s)<Y_{r}$, we get, from \eqref{eq:x1barandtilde} and \eqref{eq:x2barandtilde}, that
\begin{equation}
\label{eq:step2finalrelation2}
	\tilde{x}_{1}(\bar{\X}(s))=\bar{x}_{1}(\bar{\X}(s))=\bar{x}_{2}(\bar{\Y}(s))=\tilde{x}_{2}(\bar{\Y}(s)).
\end{equation}
If $\bar{\Y}(s)=Y_{l}$, we have $\bar{x}_{2}(\bar{\Y}(s))=x_{l}=\tilde{x}_{2}(\bar{\Y}(s))$, so that
\eqref{eq:step2finalrelation2} also holds. It then follows from \eqref{eq:step2finalrelation1} and \eqref{eq:step2finalrelation2} that $\bar{\X}(s)=\tilde{\X}(s)$, which concludes the proof of \eqref{eq:step2XandYbarandtilde}.

\textbf{Step 3.} Let $(\bar{Z},\bar{p},\bar{q})=\mathbf{S}(\bar{\Theta})$ and $(\tilde{Z},\tilde{p},\tilde{q})=\mathbf{S}(\tilde{\Theta})$. We prove that
\begin{aalign}
\label{eq:step3Zpqbarandtilde}
	&\bar{t}(X,Y)=\tilde{t}(X,Y), \quad \bar{x}(X,Y)=\tilde{x}(X,Y), \quad \bar{U}(X,Y)=\tilde{U}(X,Y),\\
	&\hspace{47pt}\bar{p}(X,Y)=\tilde{p}(X,Y), \quad \bar{q}(X,Y)=\tilde{q}(X,Y)
\end{aalign}
for all $(X,Y)\in\Omega$, where $\Omega=[X_l,X_r]\times[Y_l,Y_r]$. We have $\bar{x}_{1}(X)=\tilde{x}_{1}(X)$ for $X\in[X_{l},X_{r}]$ and $\bar{x}_{2}(Y)=\tilde{x}_{2}(Y)$ for $Y\in[Y_{l},Y_{r}]$. Let us show that
\begin{align*}
	&\bar{U}_{1}(X)=\tilde{U}_{1}(X), \quad \bar{V}_{1}(X)=\tilde{V}_{1}(X), \quad \bar{J}_{1}(X)=\tilde{J}_{1}(X)-\tilde{J}_{1}(X_{l}),\\
	&\hspace{35pt}\bar{K}_{1}(X)=\tilde{K}_{1}(X)-\tilde{K}_{1}(X_{l}), \quad \bar{H}_{1}(X)=\tilde{H}_{1}(X)
\end{align*}
for $X\in[X_{l},X_{r}]$, and
\begin{aalign}
\label{eq:step3psibar2}
	&\bar{U}_{2}(Y)=\tilde{U}_{2}(Y), \quad \bar{V}_{2}(Y)=\tilde{V}_{2}(Y), \quad \bar{J}_{2}(Y)=\tilde{J}_{2}(Y)-\tilde{J}_{2}(Y_{l}),\\
	&\hspace{35pt}\bar{K}_{2}(Y)=\tilde{K}_{2}(Y)-\tilde{K}_{2}(Y_{l}), \quad \bar{H}_{2}(Y)=\tilde{H}_{2}(Y)
\end{aalign}
for $Y\in[Y_{l},Y_{r}]$. From \eqref{eq:mapfromDtoF4}, we have
\begin{equation*}
	\bar{U}_{1}(X)=\bar{u}_{0}(\bar{x}_{1}(X))=u_{0}(\tilde{x}_{1}(X))=u_{0}(x_{1}(f(X)))=U_{1}(f(X))=\tilde{U}_{1}(X).
\end{equation*}
By \eqref{eq:mapfromDtoF5}, we get
\begin{align*}
	\bar{V}_{1}(X)&=\bar{x}_{1}'(X)\frac{\bar{R}_{0}(\bar{x}_{1}(X))}{2c(\bar{U}_{1}(X))}\\
	&=\tilde{x}_{1}'(X)\frac{R_{0}(\tilde{x}_{1}(X))}{2c(\tilde{U}_{1}(X))}\\
	&=f'(X)x_{1}'(f(X))\frac{R_{0}(x_{1}(f(X)))}{2c(U_{1}(f(X)))}\\
	&=f'(X)V_{1}(f(X))\\
	&=\tilde{V}_{1}(X).
\end{align*}
We have, by \eqref{eq:mapfromDtoF3}, that
\begin{align*}
	\bar{J}_{1}(X)&=X-\bar{x}_{1}(X)\\
	&=X-\tilde{x}_{1}(X)\\
	&=X-x_{1}(X+\mu_{0}((-\infty,x_{l})))\\
	&=X+\mu_{0}((-\infty,x_{l}))-x_{1}(X+\mu_{0}((-\infty,x_{l})))\\
	&\quad-(X_{l}+\mu_{0}((-\infty,x_{l}))+x_{1}(X_{l}+\mu_{0}((-\infty,x_{l}))))\\ 
	&=J_{1}(X+\mu_{0}((-\infty,x_{l})))-J_{1}(X_{l}+\mu_{0}((-\infty,x_{l})))\\
	&=\tilde{J}_{1}(X)-\tilde{J}_{1}(X_{l})
\end{align*}
since $x_{1}(X_{l}+\mu_{0}((-\infty,x_{l})))=X_{l}$. From \eqref{eq:x1barthreecases}, $\bar{x}_{1}(X)=X$ for $X\leq X_{l}$, so that $\bar{J}_{1}(X)=X-\bar{x}_{1}(X)=0$ for $X\leq X_{l}$. This implies, by \eqref{eq:mapfromDtoF6}, that
\begin{align*}
	\bar{K}_{1}(X)&=\int_{-\infty}^{X}\frac{\bar{J}_{1}'(\bar{X})}{c(\bar{U}_{1}(\bar{X}))}\,d\bar{X}\\
	&=\int_{X_{l}}^{X}\frac{\tilde{J}_{1}'(\bar{X})}{c(\tilde{U}_{1}(\bar{X}))}\,d\bar{X}\\
	&=\int_{X_{l}}^{X}\frac{J_{1}'(\bar{X}+\mu_{0}((-\infty,x_{l})))}{c(U_{1}(\bar{X}+\mu_{0}((-\infty,x_{l}))))}\,d\bar{X}\\
	&=\int_{X_{l}+\mu_{0}((-\infty,x_{l}))}^{X+\mu_{0}((-\infty,x_{l}))}\frac{J_{1}'(\bar{X})}{c(U_{1}(\bar{X}))}\,d\bar{X} \quad \text{by a change of variables}\\
	&=\int_{-\infty}^{X+\mu_{0}((-\infty,x_{l}))}\frac{J_{1}'(\bar{X})}{c(U_{1}(\bar{X}))}\,d\bar{X}-\int_{-\infty}^{X_{l}+\mu_{0}((-\infty,x_{l}))}\frac{J_{1}'(\bar{X})}{c(U_{1}(\bar{X}))}\,d\bar{X}\\
	&=K_{1}(X+\mu_{0}((-\infty,x_{l})))-K_{1}(X_{l}+\mu_{0}((-\infty,x_{l})))\\
	&=\tilde{K}_{1}(X)-\tilde{K}_{1}(X_{l}).
\end{align*}
By \eqref{eq:mapfromDtoF7}, we have
\begin{align*}
	\bar{H}_{1}(X)&=\frac{1}{2}\bar{\rho}_{0}(\bar{x}_{1}(X))\bar{x}_{1}'(X)\\
	&=\frac{1}{2}\rho_{0}(\tilde{x}_{1}(X))\tilde{x}_{1}'(X)\\
	&=\frac{1}{2}\rho_{0}(x_{1}(f(X)))x_{1}'(f(X))f'(X)\\
	&=f'(X)H_{1}(f(X))\\
	&=\tilde{H}_{1}(X).
\end{align*}
In a similar way, one proves \eqref{eq:step3psibar2}.

We have $\bar{\X}(s)=\tilde{\X}(s)$ and $\bar{\Y}(s)=\tilde{\Y}(s)$ for $s\in[s_{l},s_{r}]$. We show that
\begin{align*}
	&\bar{\Z}_{1}(s)=\tilde{\Z}_{1}(s), \quad \bar{\Z}_{2}(s)=\tilde{\Z}_{2}(s), \quad \bar{\Z}_{3}(s)=\tilde{\Z}_{3}(s),\\ 
	&\bar{\Z}_{4}(s)=\tilde{\Z}_{4}(s)-\tilde{\Z}_{4}(s_{l}), \quad \bar{\Z}_{5}(s)=\tilde{\Z}_{5}(s)-\tilde{\Z}_{5}(s_{l})
\end{align*}
for $s\in[s_{l},s_{r}]$, and 
\begin{equation}
\label{eq:step3VWpqbartilde}
	\bar{\V}(X)=\tilde{\V}(X), \quad \bar{\W}(Y)=\tilde{\W}(Y), \quad \bar{\p}(X)=\tilde{\p}(X), \quad \bar{\q}(Y)=\tilde{\q}(Y)
\end{equation}
for $X\in[X_{l},X_{r}]$ and $Y\in[Y_{l},Y_{r}]$. From \eqref{eq:mapFtoGZ1}, we have $\bar{\Z}_{1}(s)=0=\tilde{\Z}_{1}(s)$. By \eqref{eq:mapFtoGZ2} and \eqref{eq:mapFtoGZ3}, we have $\bar{\Z}_{2}(s)=\bar{x}_{1}(\bar{\X}(s))=\tilde{x}_{1}(\tilde{\X}(s))=\tilde{\Z}_{2}(s)$ and $\bar{\Z}_{3}(s)=\bar{U}_{1}(\bar{\X}(s))=\tilde{U}_{1}(\tilde{\X}(s))=\tilde{\Z}_{3}(s)$, respectively. From \eqref{eq:mapFtoGZ4}, we get
\begin{align*}
	\bar{\Z}_{4}(s)&=\bar{J}_{1}(\bar{\X}(s))+\bar{J}_{2}(\bar{\Y}(s))\\
	&=\tilde{J}_{1}(\tilde{\X}(s))-\tilde{J}_{1}(X_{l})+\tilde{J}_{2}(\tilde{\Y}(s))-\tilde{J}_{2}(Y_{l})\\
	&=\tilde{\Z}_{4}(s)-\tilde{\Z}_{4}(s_{l})
\end{align*}
and similarly, we find that $\bar{\Z}_{5}(s)=\tilde{\Z}_{5}(s)-\tilde{\Z}_{5}(s_{l})$. Using \eqref{eq:mapFtoGV1}-\eqref{eq:mapFtoGp}, a straightforward calculation shows \eqref{eq:step3VWpqbartilde}. Hence, $\bar{\Theta}$ and $\tilde{\Theta}$ are equal in $\Omega$, except that the energy potentials differ up to a constant. However, since the governing equations \eqref{eq:goveq} are invariant with respect to addition of a constant to the energy potentials, we get, by Lemma \ref{lemma:SolnBigRectangle}, that \eqref{eq:step3Zpqbarandtilde} holds. 

\textbf{Step 4.} We prove that there exists $(X_{0},Y_{0})\in\Omega$ such that
\begin{equation}
\label{eq:step4tandxbar}
	\bar{t}(X_{0},Y_{0})=\mathbf{t} \quad \text{and} \quad \bar{x}(X_{0},Y_{0})=\mathbf{x}.
\end{equation}
We have
\begin{equation*}
	\bar{x}(X_{l},Y_{l})=\bar{x}_{1}(X_{l})=\bar{x}_{2}(Y_{l})=x_{l} \quad \text{and} \quad \bar{x}(X_{r},Y_{r})=\bar{x}_{1}(X_{r})=\bar{x}_{2}(Y_{r})=x_{r},
\end{equation*}
so that
\begin{aalign}
\label{eq:step4xbardiff}
	\bar{x}(X_{r},Y_{l})-x_{l}&=\bar{x}(X_{r},Y_{l})-\bar{x}(X_{l},Y_{l})\\
	&=\int_{X_{l}}^{X_{r}}\bar{x}_{X}(X,Y_{l})\,dX\\
	&=\int_{X_{l}}^{X_{r}}c(\bar{U}(X,Y_{l}))\bar{t}_{X}(X,Y_{l})\,dX \quad \text{by } \eqref{eq:setH1}\\
	&\leq\kappa\int_{X_{l}}^{X_{r}}\bar{t}_{X}(X,Y_{l})\,dX \quad \text{since } \frac{1}{\kappa}\leq c\leq\kappa \text{ and } \bar{t}_{X}\geq 0\\
	&=\kappa \bar{t}(X_{r},Y_{l}) \quad \text{since } \bar{t}(X_{l},Y_{l})=\bar{t}(\bar{\X}(s_{l}),\bar{\Y}(s_{l}))=0. 
\end{aalign}
In a similar way, one proves that $x_{r}-\bar{x}(X_{r},Y_{l})\leq\kappa\bar{t}(X_{r},Y_{l})$, which added to \eqref{eq:step4xbardiff} yields $x_{r}-x_{l}\leq 2\kappa\bar{t}(X_{r},Y_{l})$, or
\begin{equation}
\label{eq:step4lowerrightcorner}
	\mathbf{t}\leq\bar{t}(X_{r},Y_{l}).
\end{equation}
There exists $(X_{0},Y_{0})\in\mathbb{R}^{2}$, which may not be unique, such that 
\begin{equation*}
	\bar{t}(X_{0},Y_{0})=\mathbf{t} \quad \text{and} \quad \bar{x}(X_{0},Y_{0})=\mathbf{x}.
\end{equation*}
Assume that
\begin{equation}
\label{eq:step4notinomega}
	\bar{t}(X,Y)\neq\mathbf{t} \quad \text{or} \quad \bar{x}(X,Y)\neq\mathbf{x}
\end{equation}
for all $(X,Y)\in\Omega$. We claim that we cannot have
\begin{equation}
\label{eq:step4firstsituation}
	X_{0}>X_{r} \quad \text{and} \quad Y_{0}<Y_{l}
\end{equation}
or
\begin{equation}
\label{eq:step4secondsituation}
	X_{0}<X_{r} \quad \text{and} \quad Y_{0}>Y_{l},
\end{equation}
so that either $X_{0}>X_{r}$ and $Y_{0}\geq Y_{l}$ or $X_{0}\leq X_{r}$ and $Y_{0}<Y_{l}$.

If \eqref{eq:step4firstsituation} holds, we get, since $\bar{t}_{X}\geq 0$ and $\bar{t}_{Y}\leq 0$, that $\mathbf{t}=\bar{t}(X_{0},Y_{0})\geq\bar{t}(X_{r},Y_{l})\geq\mathbf{t}$, where we used \eqref{eq:step4lowerrightcorner}. Hence, $\bar{t}(X_{r},Y_{l})=\mathbf{t}$, which contradicts \eqref{eq:step4notinomega}.

If \eqref{eq:step4secondsituation} holds, we have either $Y_{0}>Y_{r}$ or $Y_{l}<Y_{0}\leq Y_{r}$. If $Y_{0}>Y_{r}$, we get a contradiction, since $\mathbf{t}=\bar{t}(X_{0},Y_{0})\leq \bar{t}(X_{r},Y_{r})=\bar{t}(\bar{\X}(s_{r}),\bar{\Y}(s_{r}))=0$. If $Y_{l}<Y_{0}\leq Y_{r}$, we have $X_{0}<X_{l}$ because $(X_{0},Y_{0})\notin\Omega$, which leads to the contradiction $\mathbf{t}=\bar{t}(X_{0},Y_{0})\leq \bar{t}(X_{l},Y_{l})=\bar{t}(\bar{\X}(s_{l}),\bar{\Y}(s_{l}))=0$.
Hence, we have either $X_{0}>X_{r}$ and $Y_{0}\geq Y_{l}$ or $X_{0}\leq X_{r}$ and $Y_{0}<Y_{l}$.
 
Assume that $X_{0}>X_{r}$ and $Y_{0}\geq Y_{l}$. If $Y_{0}>Y_{r}$, then $\mathbf{x}=\bar{x}(X_{0},Y_{0})\geq\bar{x}(X_{r},Y_{r})=x_{r}=\mathbf{x}+\kappa\mathbf{t}$, that is, $\mathbf{t}\leq 0$, which is a contradiction. Therefore, $Y_{l}\leq Y_{0}\leq Y_{r}$. Since $\bar{x}_{X}\geq 0$, we have $\bar{x}(X_{r},Y_{0})\leq \bar{x}(X_{0},Y_{0})$. We claim that $\bar{x}(X_{r},Y_{0})<\bar{x}(X_{0},Y_{0})$. Assume the opposite, that is, $\bar{x}(X_{r},Y_{0})=\bar{x}(X_{0},Y_{0})$. Then, since $\bar{x}$ is nondecreasing in the $X$ variable, we have $\bar{x}_{X}(X,Y_{0})=0$ for all $X\in[X_{r},X_{0}]$. By \eqref{eq:setH1}, we get $\bar{t}_{X}(X,Y_{0})=0$ for all $X\in[X_{r},X_{0}]$, so that $\bar{t}(X_{r},Y_{0})=\bar{t}(X_{0},Y_{0})=\mathbf{t}$. This contradicts \eqref{eq:step4notinomega} and we must have $\bar{x}(X_{r},Y_{0})<\bar{x}(X_{0},Y_{0})$. We obtain
\begin{align*}
	\mathbf{x}&=\bar{x}(X_{0},Y_{0})\\
	&>\bar{x}(X_{r},Y_{0})\\
	&=x_{r}-\int_{Y_{0}}^{Y_{r}}\bar{x}_{Y}(X_{r},Y)\,dY\\
	&=x_{r}+\int_{Y_{0}}^{Y_{r}}c(\bar{U}(X_{r},Y))\bar{t}_{Y}(X_{r},Y)\,dY \quad \text{by } \eqref{eq:setH1}\\
	&\geq x_{r}+\kappa\int_{Y_{0}}^{Y_{r}}\bar{t}_{Y}(X_{r},Y)\,dY \quad \text{since } \frac{1}{\kappa}\leq c\leq\kappa \text{ and } \bar{t}_{Y}\leq 0\\
	&=x_{r}-\kappa\bar{t}(X_{r},Y_{0}) \quad \text{since } \bar{t}(X_{r},Y_{r})=\bar{t}(\bar{\X}(s_{r}),\bar{\Y}(s_{r}))=0\\
	&\geq x_{r}-\kappa\bar{t}(X_{0},Y_{0}) \quad \text{because } \bar{t}_{X}\geq 0\\
	&=\mathbf{x},
\end{align*}
which is a contradiction. The situation $X_{0}\leq X_{r}$ and $Y_{0}<Y_{l}$ can be treated similarly. 
Hence, \eqref{eq:step4notinomega} cannot hold and we have proved \eqref{eq:step4tandxbar}.

\textbf{Step 5.}
We have
\begin{align*}
	\bar{u}(\mathbf{t},\mathbf{x})&=\bar{U}(X_{0},Y_{0}) \quad \text{by } \eqref{eq:step4tandxbar} \text{ and } \eqref{eq:lemmasemigprop1}\\
	&=\tilde{U}(X_{0},Y_{0}) \quad \text{by } \eqref{eq:step3Zpqbarandtilde}\\
	&=U(f(X_{0}),g(Y_{0}))\\
	&=u(t(f(X_{0}),g(Y_{0})),x(f(X_{0}),g(Y_{0}))) \quad \text{by } \eqref{eq:lemmasemigprop1}\\
	&=u(\tilde{t}(X_{0},Y_{0}),\tilde{x}(X_{0},Y_{0}))\\
	&=u(\bar{t}(X_{0},Y_{0}),\bar{x}(X_{0},Y_{0})) \quad \text{by } \eqref{eq:step3Zpqbarandtilde}\\
	&=u(\mathbf{t},\mathbf{x}) \quad \text{by } \eqref{eq:step4tandxbar}.
\end{align*}
This concludes the proof.
\end{proof}

\section{Regularity of Solutions}
\label{sec:Reg}

The theorems we prove in Section \ref{sec:SmoothSoln} and \ref{sec:Approx} are local results. The main reason for this is that we require the initial data $\rho_{0}$ and $\sigma_{0}$ corresponding to the equations \eqref{eq:nvwsys2} and \eqref{eq:nvwsys3} to be bounded from below by a strictly positive constant and to belong to $L^{2}$, which is not possible globally.

\subsection{Existence of Smooth Solutions}
\label{sec:SmoothSoln}
%%%%%%%%%%%%%%%%%%%%%%%%%%%%%%%%%%%%%%%%%%
%     Existence of Smooth Solutions
%%%%%%%%%%%%%%%%%%%%%%%%%%%%%%%%%%%%%%%%%%

\begin{theorem}
\label{thm:Regular1st}
Let $-\infty<x_{l}<x_{r}<\infty$ and consider $(u_{0},R_{0},S_{0},\rho_{0},\sigma_{0},\mu_{0},\nu_{0})\in\D$. Let $m\in\mathbb{N}$ and assume that,
\begin{itemize}
\item[\mylabel{eq:smoothAssumption1}{(A1)}]
	$u_{0}\in\Linf([x_{l},x_{r}])$,
\item[\mylabel{eq:smoothAssumption2}{(A2)}]
	$R_{0},S_{0},\rho_{0},\sigma_{0}\in W^{m-1,\infty}([x_{l},x_{r}])$,
\item[\mylabel{eq:smoothAssumption3}{(A3)}]
there are constants $d>0$ and $e>0$ such that $\rho_{0}(x)\geq d$ and $\sigma_{0}(x)\geq e$ for all  $x\in[x_{l},x_{r}]$,
\item[\mylabel{eq:smoothAssumption4}{(A4)}]
	$\mu_{0}$ and $\nu_{0}$ are absolutely continuous on $[x_{l},x_{r}]$,
\item[\mylabel{eq:smoothAssumption5}{(A5)}]
	$c\in C^{m-1}(\mathbb{R})$ and $\displaystyle{\max_{u\in\mathbb{R}}}\bigg|\frac{d^{i}}{du^{i}}c(u)\bigg|\leq k_{i}$ for constants $k_{i}, i=3,4,5,\dots,m-1$.
\end{itemize}
For any $\tau\in\big[0,\frac{1}{2\kappa}(x_{r}-x_{l})\big]$ consider  
\begin{equation*}
	(u,R,S,\rho,\sigma,\mu,\nu)(\tau)=\bar{S}_{\tau}(u_{0},R_{0},S_{0},\rho_{0},\sigma_{0},\mu_{0},\nu_{0}).
\end{equation*}
Then
\begin{itemize}
\item[\mylabel{eq:smoothResult1}{(P1)}]
	$u(\tau,\cdot)\in W^{m,\infty}([x_{l}+\kappa\tau,x_{r}-\kappa\tau])$,
\item[\mylabel{eq:smoothResult2}{(P2)}]
	$R(\tau,\cdot),S(\tau,\cdot),\rho(\tau,\cdot),\sigma(\tau,\cdot)\in W^{m-1,\infty}([x_{l}+\kappa\tau,x_{r}-\kappa\tau])$,
\item[\mylabel{eq:smoothResult3}{(P3)}]
there are constants $\bar{d}>0$ and $\bar{e}>0$ such that $\rho(\tau,x)\geq \bar{d}$ and $\sigma(\tau,x)\geq \bar{e}$
	 for all $x\in[x_{l}+\kappa\tau,x_{r}-\kappa\tau]$,
\item[\mylabel{eq:smoothResult4}{(P4)}]
	$\mu(\tau,\cdot)$ and $\nu(\tau,\cdot)$ are absolutely continuous on $[x_{l}+\kappa\tau,x_{r}-\kappa\tau]$.
\end{itemize}
For $\tau\in\big[-\frac{1}{2\kappa}(x_{r}-x_{l}),0\big]$, the solution satisfies the same properties on the interval \newline
$\big[x_{l}-\kappa\tau,x_{r}+\kappa\tau\big]$.
\end{theorem}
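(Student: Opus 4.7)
\medskip

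My plan is to work entirely in Lagrangian coordinates, where the smoothness and positivity propagate cleanly, and then transfer back to Eulerian coordinates using the change-of-variables relations from Lemma~\ref{lemma:semigroupDtoDproperties}. Let $(Z,p,q)=\mathbf{S}\circ\mathbf{C}\circ\mathbf{L}(u_0,R_0,S_0,\rho_0,\sigma_0,\mu_0,\nu_0)$. The assumptions \ref{eq:smoothAssumption1}--\ref{eq:smoothAssumption4} mean that on $[x_l,x_r]$ the initial datum has densities $R_0,S_0,\rho_0,\sigma_0 \in W^{m-1,\infty}$ and the measures are absolutely continuous; moreover, the initial curve is a strictly monotone $W^{m-1,\infty}$ curve in Lagrangian coordinates, and from the explicit formulas at the end of Section~\ref{sec:equivsys} (e.g.\ $x_X=2/(4+R_0^2+c(u_0)\rho_0^2)$ and $p=2\rho_0/(4+R_0^2+c(u_0)\rho_0^2)$) we read off that, at the image of $[x_l,x_r]$ under $\mathbf{L}$, the initial values of $x_X$, $x_Y$, $p$, $q$ are bounded above and below by strictly positive constants.

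\textbf{Step 1 (choice of Lagrangian region).} Using the bounds $1/\kappa\leq c\leq\kappa$ and $x_X\geq 0$, $x_Y\geq 0$, an argument analogous to the proof of Theorem~\ref{thm:finiteSpeedOfProp} identifies a rectangle $\Omega=[X_l,X_r]\times[Y_l,Y_r]$ in $(X,Y)$-space such that the Eulerian trapezoid $\{(t,x):0\leq t\leq\tau,\ x_l+\kappa t\leq x\leq x_r-\kappa t\}$ is contained in the image of $\Omega$ under $(X,Y)\mapsto(t(X,Y),x(X,Y))$, and such that the portion of the initial curve $(\X,\Y)$ inside $\Omega$ lies above the corresponding part of $[x_l,x_r]$ in Eulerian coordinates.

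\textbf{Step 2 (positivity of $p$, $q$, $x_X$, $x_Y$ on $\Omega$).} Since $p_Y=0$ by \eqref{eq:goveqp}, the value of $p$ on any $X$-line equals its value at the intersection with the initial curve, where it is $\geq p_{\min}>0$ by the lower bound on $\rho_0$. Hence $p\geq p_{\min}$ on $\Omega$; analogously $q\geq q_{\min}>0$. Combining with \eqref{eq:xJUpqRel} gives $2x_X J_X\geq c(U)p^2\geq p_{\min}^2/\kappa$, and since $x_X,J_X\geq 0$ by \eqref{eq:setH4}--\eqref{eq:setH5} and $J_X$ is bounded (Lemma~\ref{lemma:stripest}), we obtain a strictly positive lower bound on $x_X$; similarly for $x_Y$. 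This strict positivity is what unlocks everything below.

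\textbf{Step 3 (higher regularity in Lagrangian coordinates).} The initial traces on the curve $(\X,\Y)$ lie in $W^{m-1,\infty}$ by a direct check from the formulas in Section~\ref{sec:equivsys} and assumption \ref{eq:smoothAssumption2}. Using \eqref{eq:goveqp}--\eqref{eq:goveqq} together with $p_{\min},q_{\min}>0$, the functions $p$ and $q$ are in $W^{m-1,\infty}(\Omega)$. Using \eqref{eq:xJUpqRel}, $U_X$ and $U_Y$ have the same regularity as $\sqrt{x_X J_X-\tfrac{1}{2}c(U)p^2}$ and $\sqrt{x_Y J_Y-\tfrac{1}{2}c(U)q^2}$; because the expressions under the roots are bounded below by a positive constant (again using the lower bound on $p$, $q$), the square roots inherit $W^{m-1,\infty}$ regularity. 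A bootstrap argument on \eqref{eq:goveqt}--\eqref{eq:goveqK}, using that $c\in C^{m-1}$ and that the right-hand sides are polynomial in $Z_X,Z_Y$ with $C^{m-1}$ coefficients in $U$, then yields $Z\in W^{m,\infty}(\Omega)$ and $Z_X,Z_Y\in W^{m-1,\infty}(\Omega)$.

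\textbf{Step 4 (transfer to Eulerian coordinates).} Since $x_X,x_Y$ (and by \eqref{eq:setH1} also $t_X,-t_Y$) are bounded above and below by positive constants on $\Omega$, the map $(X,Y)\mapsto(t(X,Y),x(X,Y))$ is a bi-Lipschitz $W^{m-1,\infty}$-diffeomorphism onto its image, with inverse in the same class. Using \eqref{eq:lemmasemigprop1}--\eqref{eq:lemmasemigprop5}, $u(\tau,x)$, $R(\tau,x)$, $S(\tau,x)$, $\rho(\tau,x)=p/x_X$, and $\sigma(\tau,x)=q/x_Y$ inherit the regularity of the corresponding Lagrangian quantities, yielding \ref{eq:smoothResult1}--\ref{eq:smoothResult2}. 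The uniform bounds $x_X,x_Y\leq C$ on $\Omega$ combined with $p\geq p_{\min}$, $q\geq q_{\min}$ give $\rho(\tau,x)\geq \bar d := p_{\min}/C$ and $\sigma(\tau,x)\geq\bar e:=q_{\min}/C$ on $[x_l+\kappa\tau,x_r-\kappa\tau]$, which is \ref{eq:smoothResult3}. Finally, since the pushforwards in \eqref{eq:mapGtoD7}--\eqref{eq:mapGtoD8} reduce on $\Omega$ to integrals against $J_X\,dX$ and $J_Y\,dY$ pushed by a diffeomorphism, $\mu(\tau)$ and $\nu(\tau)$ are absolutely continuous on the relevant interval, giving \ref{eq:smoothResult4}. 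The case $\tau<0$ is handled identically by reversing the roles of past and future characteristics.

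The main obstacle I anticipate is Step~3: the PDE system \eqref{eq:goveq} contains $U_XU_Y$, $J_Xx_Y+J_Yx_X$ and similar products, so a direct bootstrap cannot run until one has simultaneously upgraded $U_X,U_Y$ and $x_X,x_Y,J_X,J_Y,p,q$. The key insight that breaks this circle is that the strict positivity $p,q>0$ on all of $\Omega$ turns \eqref{eq:xJUpqRel} into a relation where $U_X$ and $U_Y$ can be solved for via a smooth square root (as the radicand is bounded below by $c(U)p^2>0$), which lets one carry out the bootstrap one derivative at a time.
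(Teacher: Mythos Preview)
Your Steps 1, 2, and 4 are essentially what the paper does, and they are fine. The gap is in Step 3, and precisely in the ``key insight'' you single out at the end.

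From \eqref{eq:xJUpqRel} you have $(c(U)U_X)^2 = 2x_XJ_X - c(U)p^2$, so the radicand you propose to take a square root of is $x_XJ_X-\tfrac{1}{2}c(U)p^2=\tfrac{1}{2}(c(U)U_X)^2$. This quantity is \emph{not} bounded below by a positive constant: it vanishes exactly where $U_X=0$, i.e.\ where $R_0=0$ on the initial curve (and this is certainly allowed by the hypotheses, which only bound $\rho_0,\sigma_0$ away from zero). The positivity of $p$ does not help here --- the $p^2$ term is subtracted, not added, so it can only make the radicand smaller. Consequently the square root $\sqrt{x_XJ_X-\tfrac{1}{2}c(U)p^2}$ need not be Lipschitz, let alone $W^{m-1,\infty}$, and you also cannot recover the sign of $U_X$ from it. Your bootstrap therefore stalls at the first step.

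The paper avoids this completely by never trying to extract $U_X$ algebraically. Instead it differentiates the governing equations \eqref{eq:goveq} in $X$ (and in $Y$) and observes that the resulting system for $(t_{XX},x_{XX},U_{XX},J_{XX},K_{XX})$ is \emph{linear} in these highest-order quantities with coefficients depending only on $Z_X,Z_Y,U$, which are already known to be bounded on $\Omega$ by Lemma~\ref{lemma:stripest}. A Gronwall argument in $Y$ (using that the initial trace $Z_{XX}(X,\Y(X))$ is computable from the $W^{m-1,\infty}$ initial data and the strict positivity of $\dot\X$ established in your Step~2) then gives $Z_{XX}\in L^\infty(\Omega)$; symmetrically for $Z_{YY}$. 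This is the base for an induction on $m$: at each level the differentiated system is semilinear, so the same Gronwall bounds close. The strict positivity of $x_X,x_Y$ that you correctly obtained in Step~2 is still essential, but its role is to make the map $(X,Y)\mapsto(t,x)$ a bi-Lipschitz diffeomorphism and to control the inverse derivatives $1/\dot\Z_2$, $1/\dot\X$ appearing when you pass back to Eulerian coordinates --- not to make a square root smooth.
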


Note that since $(u_{0})_{x}=\frac{1}{2c(u_{0})}(R_{0}-S_{0})$, it follows from assumptions \ref{eq:smoothAssumption1}, \ref{eq:smoothAssumption2} and \ref{eq:smoothAssumption5} that $u_{0}\in W^{m,\infty}([x_{l},x_{r}])$.

Specifically, \ref{eq:smoothAssumption4} means that $\mu_{0}((-\infty,x_{l}))=\mu_{0}((-\infty,x_{l}])$ and $\nu_{0}((-\infty,x_{l}))=\nu_{0}((-\infty,x_{l}])$.

By \eqref{eq:cassumption} and \eqref{eq:cderassumption}, \ref{eq:smoothAssumption5} holds for $i=0,1,2$.

\begin{proof}
In the following, we will consider the case $0<\tau\leq\frac{1}{2\kappa}(x_{r}-x_{l})$. The case $-\frac{1}{2\kappa}(x_{r}-x_{l})\leq\tau<0$ can be treated in the same way.

We decompose the proof into three steps.

\textbf{Step 1.} We first consider the case $m=1$.

\textbf{(i)} Consider $(\psi_{1},\psi_{2})=\mathbf{L}(u_{0},R_{0},S_{0},\rho_{0},\sigma_{0},\mu_{0},\nu_{0})$. Since $\mu_{0}$ is absolutely continuous on $[x_{l},x_{r}]$ we have from \eqref{eq:mapfromDtoF1},
\begin{equation*}
	x_{1}(X)+\mu_{0}((-\infty,x_{1}(X)))=X
\end{equation*}
for all $x_{1}(X)\in[x_{l},x_{r}]$. For $X_{l}$ and $X_{r}$ satisfying $x_{1}(X_{l})=x_{l}$ and $x_{1}(X_{r})=x_{r}$ we have
\begin{equation*}
	X_{l}=x_{l}+\mu_{0}((-\infty,x_{l})) \quad \text{and} \quad X_{r}=x_{r}+\mu_{0}((-\infty,x_{r})).
\end{equation*}
Therefore, since $x_{1}$ is nondecreasing, we get
\begin{equation}
\label{eq:x1onXlXR}
	x_{1}(X)+\mu_{0}((-\infty,x_{1}(X)))=X
\end{equation}
for all $X\in[X_{l},X_{r}]$. Similarly we find by using \eqref{eq:mapfromDtoF2},
\begin{equation*}
	x_{2}(Y)+\nu_{0}((-\infty,x_{2}(Y)))=Y
\end{equation*}
for all $Y\in[Y_{l},Y_{r}]$, where $x_{2}(Y_{l})=x_{l}$, $x_{2}(Y_{r})=x_{r}$
and
\begin{equation*}
	 Y_{l}=x_{l}+\nu_{0}((-\infty,x_{l})) \quad \text{and} \quad Y_{r}=x_{r}+\nu_{0}((-\infty,x_{r})).
\end{equation*}
We define $\Omega=[X_{l},X_{r}]\times[Y_{l},Y_{r}]$. From now on we only consider $(X,Y)\in\Omega$. Rewriting \eqref{eq:x1onXlXR} yields
\begin{equation*}
	x_{1}(X)+\mu_{0}((-\infty,x_{l}))+(\mu_{0})_{\text{ac}}((x_{l},x_{1}(X)))=X.
\end{equation*}
Hence,
\begin{equation*}
	x_{1}(X)+\mu_{0}((-\infty,x_{l}))+\frac{1}{4}\int_{x_{l}}^{x_{1}(X)}(R_{0}^{2}+c(u_{0})\rho_{0}^{2})(x)\,dx=X.
\end{equation*}
We differentiate and obtain
\begin{equation*}
	x_{1}'(X)+\frac{1}{4}x_{1}'(X)(R_{0}^{2}+c(u_{0})\rho_{0}^{2})\circ x_{1}(X)=1,
\end{equation*}
which implies that
\begin{equation}
\label{eq:x1derexpr}
	x_{1}'(X)=\frac{4}{(R_{0}^{2}+c(u_{0})\rho_{0}^{2})\circ x_{1}(X)+4}.
\end{equation}
Since $R_{0},\rho_{0}\in\Linf([x_{l},x_{r}])$, we get the lower bound
\begin{equation}
\label{eq:regx1strictincreasing}
	x_{1}'(X)\geq\frac{4}{||R_{0}||_{\Linf([x_{l},x_{r}])}^{2}+\kappa||\rho_{0}||_{\Linf([x_{l},x_{r}])}^{2}+4}=:d_{1}>0,
\end{equation}
and since $\rho_{0}(x)\geq d$, we find the upper bound
\begin{equation}
\label{eq:regx1UpperBound}
	x_{1}'(X)\leq \frac{4\kappa}{d^{2}+4\kappa}.
\end{equation}
Similarly, we find
\begin{equation*}
	x_{2}'(Y)=\frac{4}{(S_{0}^{2}+c(u_{0})\sigma_{0}^{2})\circ x_{2}(Y)+4},
\end{equation*}
\begin{equation}
\label{eq:regx2strictincreasing}
	x_{2}'(Y)\geq\frac{4}{||S_{0}||_{\Linf([x_{l},x_{r}])}^{2}+\kappa||\sigma_{0}||_{\Linf([x_{l},x_{r}])}^{2}+4}=:e_{1}>0
\end{equation}
and
\begin{equation}
\label{eq:regx2UpperBound}
	x_{2}'(Y)\leq \frac{4\kappa}{e^{2}+4\kappa}.
\end{equation}

\textbf{(ii)} Let $\Theta=(\X,\Y,\Z,\V,\W,\p,\q)=\mathbf{C}(\psi_{1},\psi_{2})$. By \eqref{eq:mapFtoGX} we have $x_{1}(\X(s))=x_{2}(\Y(s))$, which after differentiating and using $\X(s)+\Y(s)=2s$, yields
\begin{equation}
\label{eq:XandYdotx1x2der}
	\dot{\X}(s)=\frac{2x_{2}'(\Y(s))}{x_{1}'(\X(s))+x_{2}'(\Y(s))} \quad \text{and} \quad \dot{\Y}(s)=\frac{2x_{1}'(\X(s))}{x_{1}'(\X(s))+x_{2}'(\Y(s))}. 
\end{equation}
This implies, by \eqref{eq:regx1strictincreasing}-\eqref{eq:regx2UpperBound}, that
\begin{equation}
\label{eq:XandYdot}
	\dot{\X}(s)\geq 2e_{1}\bigg(\frac{4\kappa}{d^{2}+4\kappa}+\frac{4\kappa}{e^{2}+4\kappa}\bigg)^{-1} \text{ and } \dot{\Y}(s)\geq 2d_{1}\bigg(\frac{4\kappa}{d^{2}+4\kappa}+\frac{4\kappa}{e^{2}+4\kappa}\bigg)^{-1}
\end{equation}
for all $s$ such that $(\X(s),\Y(s))\in\Omega$, that is, for values of $s$ which satisfy $X_{l}\leq\X(s)\leq X_{r}$ and $Y_{l}\leq\Y(s)\leq Y_{r}$. Using this together with the identity $\X(s)+\Y(s)=2s$, we find that \eqref{eq:XandYdot} is valid for all $s\in[s_{l},s_{r}]$, where $s_{l}=\frac{1}{2}(X_{l}+Y_{l})$ and $s_{r}=\frac{1}{2}(X_{r}+Y_{r})$. Hence, $\X(s)$ and $\Y(s)$ are strictly increasing functions on $[s_l,s_r]$.

\textbf{(iii)} Consider $(Z,p,q)=\mathbf{S}(\Theta)$. By an argument as in the proof of Lemma~\ref{lemma:stripest}, we obtain the inequality
\begin{equation*}
	(x_{X}+J_{X})(X,Y)\leq (\V_{2}+\V_{4})(X)e^{C|Y-\Y(X)|}
\end{equation*}
for all $(X,Y)\in\Omega$, where $C$ depends on $|||\Theta|||_{\G(\Omega)}$. We have $\V_{2}+\V_{4}=\frac{1}{2}x_{1}'+J_{1}'$ and since $J_{1}'=1-x_{1}'$, $x_{1}'\geq 0$ and $J_{1}'\geq 0$, this implies that $\frac{1}{2}\leq \V_{2}+\V_{4}\leq 1$. Thus, we get
\begin{equation}
\label{eq:GronIneqxXJX}
	(x_{X}+J_{X})(X,Y)\leq e^{C|Y-\Y(X)|}.
\end{equation}
By \eqref{eq:mapFtoGp}, \eqref{eq:mapfromDtoF7}, \ref{eq:smoothAssumption3} and \eqref{eq:regx1strictincreasing} we obtain
\begin{equation}
\label{eq:pPos}
	p(X,\Y(X))=\p(X)=H_{1}(X)=\frac{1}{2}\rho_{0}(x_{1}(X))x_{1}'(X)\geq\frac{1}{2}dd_{1}=:d_{2}.
\end{equation}
Similarly we obtain
\begin{equation*}
	q(\X(Y),Y)=\q(Y)=H_{2}(Y)=\frac{1}{2}\sigma_{0}(x_{2}(Y))x_{2}'(Y)\geq\frac{1}{2}ee_{1}=:e_{2}.
\end{equation*}
Then, since $p_{Y}=0$, we get
\begin{align*}
	d_{2}^{2}&\leq p^{2}(X,\Y(X))\\
	&=p^{2}(X,Y)\\
	&\leq\bigg(\frac{1}{c(U)}((c(U)U_{X})^{2}+c(U)p^{2})\bigg)(X,Y)\\
	&\leq\kappa((c(U)U_{X})^{2}+c(U)p^{2})(X,Y)\\
	&=2\kappa(J_{X}x_{X})(X,Y)\\
	&\leq 2\kappa(J_{X}+x_{X})x_{X}(X,Y)\\
	&\leq 2\kappa e^{C|Y-\Y(X)|}x_{X}(X,Y),
\end{align*}
where we used \eqref{eq:setH3} and \eqref{eq:GronIneqxXJX}. Hence, 
\begin{equation}
\label{eq:xXpos}
	x_{X}(X,Y)\geq \frac{d_{2}^{2}}{2\kappa}e^{-C|Y-\Y(X)|}.
\end{equation}
Using that $q_{X}=0$, we find in the same way that
\begin{equation}
\label{eq:xYpos}
	x_{Y}(X,Y)\geq \frac{e_{2}^{2}}{2\kappa}e^{-\tilde{C}|X-\X(Y)|},
\end{equation}
where $\tilde{C}$ depends on $|||\Theta|||_{\G(\Omega)}$. From \eqref{eq:setH1}, we then get
\begin{equation}
\label{eq:tXandtYpos}
	t_{X}(X,Y)\geq \frac{d_{2}^{2}}{2\kappa^{2}}e^{-C|Y-\Y(X)|} \quad \text{and} \quad t_{Y}(X,Y)\leq-\frac{e_{2}^{2}}{2\kappa^{2}}e^{-\tilde{C}|X-\X(Y)|}.
\end{equation}

\textbf{(iv)} For any $0<\tau\leq\frac{1}{2\kappa}(x_{r}-x_{l})$ consider $\Theta(\tau)=\mathbf{E}\circ\mathbf{t}_{\tau}(Z,p,q)$. 

We claim that $(\X(\tau,s), \Y(\tau,s))$ lies below the curve\footnote{Note that this is also true outside of $\Omega$.} $(\X(s),\Y(s))$. For any $\bar{s}$ and $s$ such that $\X(\tau,\bar{s})=\X(s)$ we have
\begin{equation*}
	t(\X(s),\Y(s))=0<\tau=t(\X(\tau,\bar{s}),\Y(\tau,\bar{s})),
\end{equation*}
so that by \eqref{eq:setH1} and \eqref{eq:setH4},
\begin{equation}
\label{eq:curveTauInBox1}
	\Y(\tau,\bar{s})<\Y(s),
\end{equation}
which proves the claim.

We prove that there exist $\bar{s}_{\text{min}}$ and $\bar{s}_{\text{max}}$ satisfying $s_{l}<\bar{s}_{\text{min}}\leq\bar{s}_{\text{max}}<s_{r}$ such that $(\X(\tau,s),\Y(\tau,s))$ belongs to $\Omega$ for all $s\in[\bar{s}_{\text{min}},\bar{s}_{\text{max}}]$. First we need an estimate. Since $c(u)\leq\kappa$, $t_{X}(X,Y)>0$, and $t_{Y}(X,Y)<0$ we have
\begin{aalign}
\label{eq:maxTimeBound}
	\frac{1}{2\kappa}(x_{r}-x_{l})
	&=\frac{1}{2\kappa}(x(X_{r},Y_{r})-x(X_{l},Y_{l}))\\
	&=\frac{1}{2\kappa}(x(X_{r},Y_{r})-x(X_{r},Y_{l})+x(X_{r},Y_{l})-x(X_{l},Y_{l}))\\
	&=\frac{1}{2\kappa}\bigg(\int_{Y_{l}}^{Y_{r}}x_{Y}(X_{r},Y)\,dY+\int_{X_{l}}^{X_{r}}x_{X}(X,Y_{l})\,dX\bigg)\\
	&=\frac{1}{2\kappa}\bigg(-\int_{Y_{l}}^{Y_{r}}c(U(X_{r},Y))t_{Y}(X_{r},Y)\,dY\\
	&\hspace{34pt}+\int_{X_{l}}^{X_{r}}c(U(X,Y_{l}))t_{X}(X,Y_{l})\,dX\bigg) \quad \text{by } \eqref{eq:setH1}\\
	&\leq\frac{1}{2}\bigg(-\int_{Y_{l}}^{Y_{r}}t_{Y}(X_{r},Y)\,dY+\int_{X_{l}}^{X_{r}}t_{X}(X,Y_{l})\,dX\bigg)\\
	&=t(X_{r},Y_{l}).
\end{aalign}
Consider $\bar{s}_{\text{max}}$ such that $\X(\tau,\bar{s}_{\text{max}})=X_{r}$. Note that since $\X(\tau,\bar{s}_{\text{max}})=\X(s_{r})$ we get from \eqref{eq:curveTauInBox1}, $\Y(\tau,\bar{s}_{\text{max}})<\Y(s_{r})=Y_{r}$. From \eqref{eq:maxTimeBound} we get
\begin{equation*}
	t(X_{r},\Y(\tau,\bar{s}_{\text{max}}))=\tau\leq\frac{1}{2\kappa}(x_{r}-x_{l})\leq t(X_{r},Y_{l}),
\end{equation*}
so that $\Y(\tau,\bar{s}_{\text{max}})\geq Y_{l}$. In particular, we have
\begin{equation}
\label{eq:smax}
	Y_{l}\leq\Y(\tau,\bar{s}_{\text{max}})<Y_{r}
\end{equation}
and therefore, since $\X(\tau,\cdot)$ and $\Y(\tau,\cdot)$ are nondecreasing and continuous, there exists $\bar{s}_{\text{min}}$ such that $\bar{s}_{\text{min}}\leq\bar{s}_{\text{max}}$, $\Y(\tau,\bar{s}_{\text{min}})=Y_{l}$ and $\X(\tau,\bar{s}_{\text{min}})\leq X_{r}$. Since
\begin{equation*}
	t(X_{l},Y_{l})=0<\tau=t(\X(\tau,\bar{s}_{\text{min}}),Y_{l})
\end{equation*}
we find that $\X(\tau,\bar{s}_{\text{min}})>X_{l}$ and we have
\begin{equation}
\label{eq:smin}
	X_{l}<\X(\tau,\bar{s}_{\text{min}})\leq X_{r}.
\end{equation}
For any $s\in[\bar{s}_{\text{min}},\bar{s}_{\text{max}}]$ we get from \eqref{eq:smax} and \eqref{eq:smin}, since $\X(\tau,\cdot)$ and $\Y(\tau,\cdot)$ are nondecreasing, that
\begin{equation*}
	X_{l}<\X(\tau,\bar{s}_{\text{min}})\leq \X(\tau,s)\leq \X(\tau,\bar{s}_{\text{max}})=X_{r}
\end{equation*}
and
\begin{equation*}
	Y_{l}=\Y(\tau,\bar{s}_{\text{min}})\leq \Y(\tau,s)\leq \Y(\tau,\bar{s}_{\text{max}})<Y_{r}.
\end{equation*}
In other words, the curve $(\X(\tau,s),\Y(\tau,s))$ lies in $\Omega$ for all $s\in[\bar{s}_{\text{min}},\bar{s}_{\text{max}}]$,  and hence all the estimates obtained in (iii) are valid along this curve. Observe that
\begin{equation*}
	s_{l}=\frac{1}{2}(X_{l}+Y_{l})<\frac{1}{2}\big(\X(\tau,\bar{s}_{\text{min}})+\Y(\tau,\bar{s}_{\text{min}})\big)=\bar{s}_{\text{min}}
\end{equation*}
and
\begin{equation*}
	s_{r}=\frac{1}{2}(X_{r}+Y_{r})>\frac{1}{2}\big(\X(\tau,\bar{s}_{\text{max}})+\Y(\tau,\bar{s}_{\text{max}})\big)=\bar{s}_{\text{max}},
\end{equation*}
which implies $s_{l}<\bar{s}_{\text{min}}\leq\bar{s}_{\text{max}}<s_{r}$.

\begin{figure}
	\centerline{\hbox{\includegraphics[width=10cm]{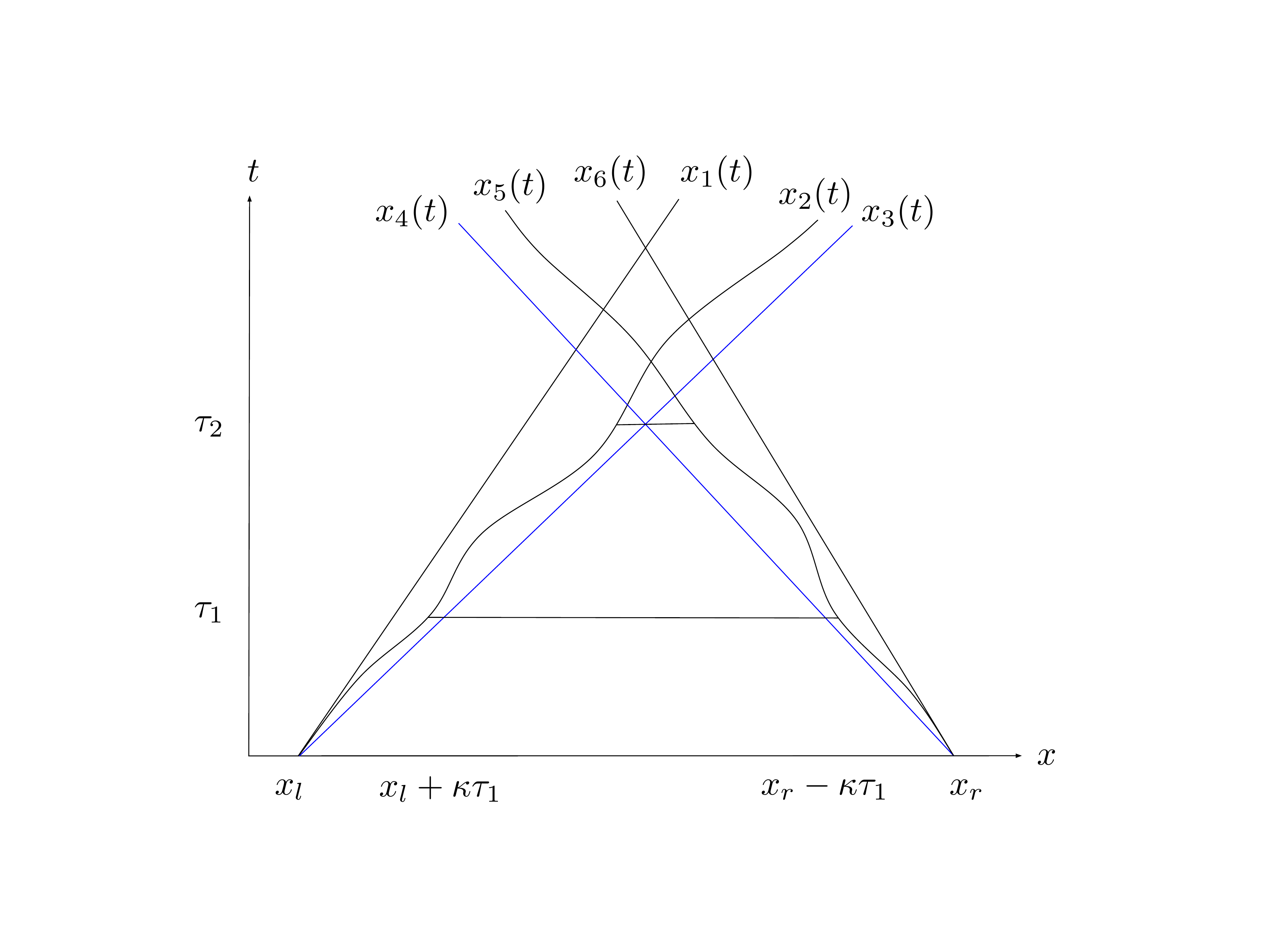}}}
	\caption{The region bounded by the characteristics $x_{2}(t)$ (forward) and $x_{5}(t)$ (backward) starting from $x_{l}$ and $x_{r}$, respectively, at $t=0$. Here, $0<\tau_{1}<\tau_{2}=\frac{1}{2\kappa}(x_{r}-x_{l})$. The remaining functions are given by $x_{1}(t)=x_{l}+\frac{t}{\kappa}$, $x_{3}(t)=x_{l}+\kappa t$, $x_{4}(t)=x_{r}-\kappa t$ and $x_{6}(t)=x_{r}-\frac{t}{\kappa}$. We have $x_{3}(\tau_{1})=x_{l}+\kappa\tau_{1}=x(\X(\tau_{1},\bar{s}_{1}),\Y(\tau_{1},\bar{s}_{1}))$ and $x_{4}(\tau_{1})=x_{r}-\kappa\tau_{1}=x(\X(\tau_{1},\bar{s}_{2}),\Y(\tau_{1},\bar{s}_{2}))$.}
	\label{fig:FigThm71i}
\end{figure}

\begin{figure}
	\centerline{\hbox{\includegraphics[width=10cm]{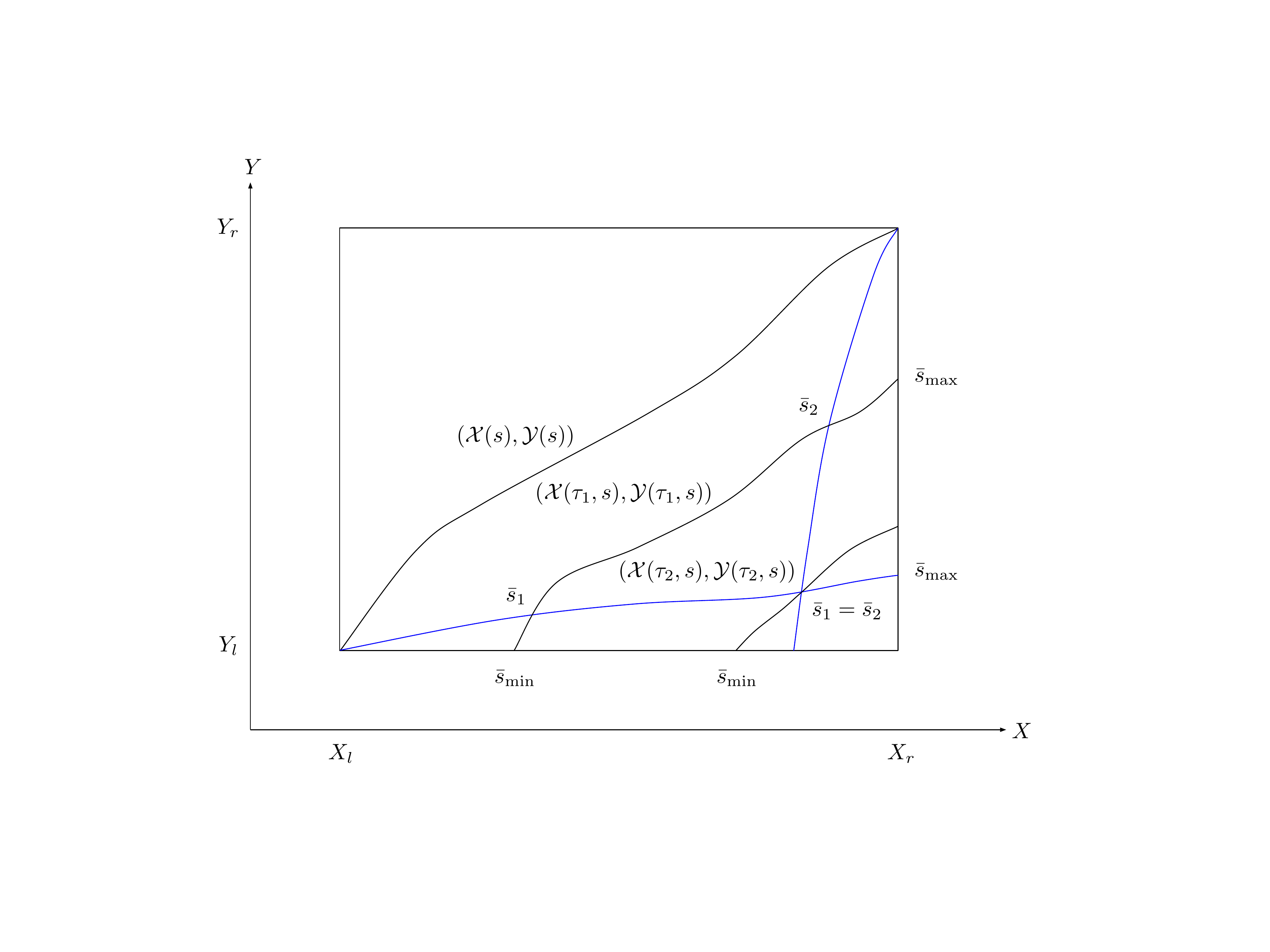}}}
	\caption{The region from Figure \ref{fig:FigThm71i} in Lagrangian coordinates. The curves $(\X(s),\Y(s))$, $(\X(\tau_{1},s),\Y(\tau_{1},s))$ and $(\X(\tau_{2},s),\Y(\tau_{2},s))$ correspond to $t=0$, $t=\tau_{1}$ and $t=\tau_{2}$, respectively.}
	\label{fig:FigThm71ii}
\end{figure}

By differentiating $t(\X(\tau,s),\Y(\tau,s))=\tau$ and using that $\dot{\X}(\tau,s)+\dot{\Y}(\tau,s)=2$, we obtain
\begin{equation}
\label{eq:XtauStrictPos}
	\dot{\X}(\tau,s)=\frac{-2t_{Y}(\X(\tau,s),\Y(\tau,s))}{t_{X}(\X(\tau,s),\Y(\tau,s))-t_{Y}(\X(\tau,s),\Y(\tau,s))}.
\end{equation}
By \eqref{eq:decayinfty}, we have
\begin{equation*}
	|t_{X}(X,Y)|\leq ||Z_{X}^{a}||_{W^{1,\infty}_{Y}(\Omega)}+\frac{\kappa}{2}, \quad |t_{Y}(X,Y)|\leq ||Z_{Y}^{a}||_{W^{1,\infty}_{X}(\Omega)}+\frac{\kappa}{2}
\end{equation*}
for all $(X,Y)\in\Omega$. Using this and \eqref{eq:tXandtYpos} in \eqref{eq:XtauStrictPos}, we find 
\begin{equation}
\label{eq:XdotLowerBound}
	\dot{\X}(\tau,s)\geq \frac{e_{2}^{2}\kappa^{-2}e^{-\tilde{C}|X-\X(Y)|}}{||Z_{X}^{a}||_{W^{1,\infty}_{Y}(\Omega)}+||Z_{Y}^{a}||_{W^{1,\infty}_{X}(\Omega)}+\kappa}=:\alpha_{1}e^{-\tilde{C}|X-\X(Y)|}
\end{equation}
for all $s\in[\bar{s}_{\text{min}},\bar{s}_{\text{max}}]$. Similarly, one proves that
\begin{equation}
\label{eq:YdotLowerBound}
	\dot{\Y}(\tau,s)\geq\alpha_{2}e^{-C|Y-\Y(X)|}
\end{equation}
for some positive constant $\alpha_{2}$ that depends on $d_{2}$, $\kappa$, $||Z_{X}^{a}||_{W^{1,\infty}_{Y}(\Omega)}$ and $||Z_{Y}^{a}||_{W^{1,\infty}_{X}(\Omega)}$.

Next, we prove that there exist $\bar{s}_{1}$ and $\bar{s}_{2}$ satisfying $\bar{s}_{\text{min}}\leq\bar{s}_{1}\leq\bar{s}_{2}\leq\bar{s}_{\text{max}}$ such that
\begin{equation}
\label{eq:s1bars2bar}
	x(\X(\tau,\bar{s}_{1}),\Y(\tau,\bar{s}_{1}))=x_{l}+\kappa\tau \quad \text{and} \quad x(\X(\tau,\bar{s}_{2}),\Y(\tau,\bar{s}_{2}))=x_{r}-\kappa\tau.
\end{equation}
Observe that since $x$ is increasing with respect to both variables this will imply
\begin{equation*}
	x_{l}+\kappa\tau\leq x(\X(\tau,s),\Y(\tau,s))\leq x_{r}-\kappa\tau
\end{equation*}
for all $s\in[\bar{s}_{1},\bar{s}_{2}]$. Furthermore, since $\bar{s}_{\text{min}}\leq\bar{s}_{1}\leq\bar{s}_{2}\leq\bar{s}_{\text{max}}$ and $\X(\tau,\cdot)$ and $\Y(\tau,\cdot)$ are increasing functions, $(\X(\tau,s),\Y(\tau,s))$ belongs to $\Omega$ for all $s\in[\bar{s}_{1},\bar{s}_{2}]$, see Figure \ref{fig:FigThm71i} and \ref{fig:FigThm71ii}.

By \eqref{eq:setH1} and \eqref{eq:cassumption}, we have
\begin{align*}
	x(\X(\tau,\bar{s}_{\text{min}}),Y_{l})&=x(X_{l},Y_{l})+\int_{X_{l}}^{\X(\tau,\bar{s}_{\text{min}})}x_{X}(X,Y_{l})\,dX\\
	&=x(X_{l},Y_{l})+\int_{X_{l}}^{\X(\tau,\bar{s}_{\text{min}})}c(U(X,Y_{l}))t_{X}(X,Y_{l})\,dX\\
	&\leq x(X_{l},Y_{l})+\kappa\int_{X_{l}}^{\X(\tau,\bar{s}_{\text{min}})}t_{X}(X,Y_{l})\,dX\\
	&=x(X_{l},Y_{l})+\kappa(t(\X(\tau,\bar{s}_{\text{min}}),Y_{l})-t(X_{l},Y_{l}))\\
	&=x_{l}+\kappa\tau.
\end{align*}

Similarly, by using the lower bound on $c$, we get
\begin{equation*}
	x(\X(\tau,\bar{s}_{\text{min}}),Y_{l})\geq x_{l}+\frac{1}{\kappa}\tau.
\end{equation*} 
From \eqref{eq:setH1} and \eqref{eq:cassumption}, we have
\begin{align*}
	x_r=x(X_{r},Y_{r})&=x(X_{r},\Y(\tau,\bar{s}_{\text{max}}))+\int_{\Y(\tau,\bar{s}_{\text{max}})}^{Y_{r}}x_{Y}(X_{r},Y)\,dY\\
	&=x(X_{r},\Y(\tau,\bar{s}_{\text{max}}))-\int_{\Y(\tau,\bar{s}_{\text{max}})}^{Y_{r}}c(U(X_{r},Y))t_{Y}(X_{r},Y)\,dY\\
	&\leq x(X_{r},\Y(\tau,\bar{s}_{\text{max}}))-\kappa\int_{\Y(\tau,\bar{s}_{\text{max}})}^{Y_{r}}t_{Y}(X_{r},Y)\,dY \quad \text{since } t_{Y}<0\\
	&=x(X_{r},\Y(\tau,\bar{s}_{\text{max}}))+\kappa\tau,
\end{align*}
so that
\begin{equation*}
	x(X_{r},\Y(\tau,\bar{s}_{\text{max}}))\geq x_{r}-\kappa\tau.
\end{equation*}
Similarly, we obtain
\begin{equation*}
	x(X_{r},\Y(\tau,\bar{s}_{\text{max}}))\leq x_{r}-\frac{1}{\kappa}\tau.
\end{equation*}
Hence, we end up with
\begin{equation}
\label{eq:Xsmin}
	x_{l}+\frac{1}{\kappa}\tau\leq x(\X(\tau,\bar{s}_{\text{min}}),Y_{l})\leq x_{l}+\kappa\tau
\end{equation}
and
\begin{equation}
\label{eq:Ysmax}
	x_{r}-\kappa\tau\leq x(X_{r},\Y(\tau,\bar{s}_{\text{max}}))\leq x_{r}-\frac{1}{\kappa}\tau.
\end{equation}
Since $0<\tau\leq\frac{1}{2\kappa}(x_{r}-x_{l})$ we have $x_{l}+\kappa\tau=x_{l}+2\kappa\tau-\kappa\tau\leq x_{l}+x_{r}-x_{l}-\kappa\tau= x_{r}-\kappa\tau$, which implies, since $x$ is continuous with respect to both variables, that there exist $\bar{s}_{1}$ and $\bar{s}_{2}$ such that $\bar{s}_{1}\leq\bar{s}_{2}$ and
\begin{equation}
\label{eq:s1bars2barProof}
	x(\X(\tau,\bar{s}_{1}),\Y(\tau,\bar{s}_{1}))=x_{l}+\kappa\tau \quad \text{and} \quad x(\X(\tau,\bar{s}_{2}),\Y(\tau,\bar{s}_{2}))=x_{r}-\kappa\tau.
\end{equation}
From \eqref{eq:Xsmin}, \eqref{eq:s1bars2barProof}, and the fact that $x$ increases along the curve $(\X(\tau,s), \Y(\tau,s))$, we have
\begin{equation*}
	x(\X(\tau,\bar{s}_{\text{min}}),Y_{l})\leq x_{l}+\kappa\tau=x(\X(\tau,\bar{s}_{1}),\Y(\tau,\bar{s}_{1})),
\end{equation*}
so that $\X(\tau,\bar{s}_{\text{min}})\leq \X(\tau,\bar{s}_{1})$ which implies $\bar{s}_{\text{min}}\leq \bar{s}_{1}$. By \eqref{eq:Ysmax} and \eqref{eq:s1bars2barProof} we have
\begin{equation*}
	x(\X(\tau,\bar{s}_{2}),\Y(\tau,\bar{s}_{2}))=x_{r}-\kappa\tau\leq x(X_{r},\Y(\tau,\bar{s}_{\text{max}})),
\end{equation*}
and we get $\Y(\tau,\bar{s}_{2})\leq \Y(\tau,\bar{s}_{\text{max}})$ and $\bar{s}_{2}\leq \bar{s}_{\text{max}}$. This concludes the proof of \eqref{eq:s1bars2bar}.

We prove \ref{eq:smoothResult1}. From \eqref{eq:mapGtoD1} we have
\begin{equation*}
	u(\tau,x)=\Z_{3}(\tau,s) \quad \text{if } x=\Z_{2}(\tau,s).
\end{equation*}	
Since the function $\Z_{2}(\tau,s)$ is nondecreasing, the smallest and biggest value 
it can attain for $s\in[\bar{s}_{1},\bar{s}_{2}]$ is
\begin{equation*}
	\Z_{2}(\tau,\bar{s}_{1})=x(\X(\tau,\bar{s}_{1}),\Y(\tau,\bar{s}_{1}))=x_{l}+\kappa\tau
\end{equation*}
and
\begin{equation*}
	\Z_{2}(\tau,\bar{s}_{2})=x(\X(\tau,\bar{s}_{2}),\Y(\tau,\bar{s}_{2}))=x_{r}-\kappa\tau,
\end{equation*}
respectively. The function $\Z_{2}(\tau,\cdot)$ is in fact strictly increasing for $s\in[\bar{s}_{1},\bar{s}_{2}]$, as we now show. We differentiate the relation $\Z_{2}(\tau,s)=x(\X(\tau,s),\Y(\tau,s))$ and get
\begin{equation*}
	\dot{\Z}_{2}(\tau,s)=x_{X}(\X(\tau,s),\Y(\tau,s))\dot{\X}(\tau,s)+ x_{Y}(\X(\tau,s),\Y(\tau,s))\dot{\Y}(\tau,s).
\end{equation*}
From \eqref{eq:xXpos}, \eqref{eq:xYpos}, \eqref{eq:XdotLowerBound} and \eqref{eq:YdotLowerBound} we have
\begin{equation}
\label{eq:Z2bound}
	\dot{\Z}_{2}(\tau,s)\geq \frac{1}{2\kappa}\big(\alpha_{1}d_{2}^{2}+\alpha_{2}e_{2}^{2}\big)
	e^{-C|Y-\Y(X)|-\tilde{C}|X-\X(Y)|}>0.
\end{equation}
Hence, $s\mapsto \Z_{2}(\tau,s)$ is strictly increasing for $s\in[\bar{s}_{1},\bar{s}_{2}]$ and therefore invertible on $[\bar{s}_{1},\bar{s}_{2}]$. For any $x\in[x_{l}+\kappa\tau,x_{r}-\kappa\tau]$ we get
\begin{equation}
\label{eq:utauZ3}
	u(\tau,x)=\Z_{3}(\tau,\Z_{2}^{-1}(\tau,x)),
\end{equation}
and since $|\Z_{3}(\tau,\Z_{2}^{-1}(\tau,x))|=|U(\X(\tau,\Z_{2}^{-1}(\tau,x)),\Y(\tau,\Z_{2}^{-1}(\tau,x)))|\leq ||U||_{L^{\infty}(\Omega)}$ we have
\begin{equation}
\label{eq:uLinf}
	u(\tau,\cdot)\in\Linf([x_{l}+\kappa\tau,x_{r}-\kappa\tau]).
\end{equation}
Next, we differentiate \eqref{eq:utauZ3} and get
\begin{equation}
\label{eq:uxbound}
	u_{x}(\tau,x)=\dot{\Z}_{3}(\tau,\Z_{2}^{-1}(\tau,x))\frac{d}{dx}\Z_{2}^{-1}(\tau,x)
	=\frac{\dot{\Z}_{3}(\tau,\Z_{2}^{-1}(\tau,x))}{\dot{\Z}_{2}(\tau,\Z_{2}^{-1}(\tau,x))}.
\end{equation}
We have
\begin{align*}
	|\dot{\Z}_{3}(\tau,s)|&=|\V_{3}(\tau,\X(\tau,s))\dot{\X}(\tau,s)+\W_{3}(\tau,\Y(\tau,s))\dot{\Y}(\tau,s)|\\
	&=|U_{X}(\X(\tau,s),\Y(\tau,s))\dot{\X}(\tau,s)+U_{Y}(\X(\tau,s),\Y(\tau,s))\dot{\Y}(\tau,s)| \\
	&\leq 2||U_{X}||_{W^{1,\infty}_{Y}(\Omega)}+2||U_{Y}||_{W^{1,\infty}_{X}(\Omega)},
\end{align*}
so that $\dot{\Z}_{3}(\tau,\cdot)\in\Linf([\bar{s}_{1},\bar{s}_{2}])$. By \eqref{eq:Z2bound} we end up with  
\begin{equation*}
	|u_{x}(\tau,x)|\leq\frac{2\kappa||\dot{\Z}_{3}(\tau,\cdot)||_{\Linf([\bar{s}_{1},\bar{s}_{2}])}}{\alpha_{1}d_{2}^{2}+\alpha_{2}e_{2}^{2}}e^{C|Y-\Y(X)|+\tilde{C}|X-\X(Y)|},
\end{equation*}
which implies that
\begin{equation}
\label{eq:uxLinf}
	u_{x}(\tau,\cdot)\in\Linf([x_{l}+\kappa\tau,x_{r}-\kappa\tau]).
\end{equation}
From \eqref{eq:uLinf} and \eqref{eq:uxLinf} we conclude that \ref{eq:smoothResult1} holds.

We prove \ref{eq:smoothResult2}. From \eqref{eq:mapGtoD9} and \eqref{eq:mapGtoD10}, we have
\begin{equation*}
	R(\tau,\Z_{2}(\tau,s))\V_{2}(\tau,\X(\tau,s))=c(\Z_{3}(\tau,s))\V_{3}(\tau,\X(\tau,s))
\end{equation*}	
and	
\begin{equation*}	
	\rho(\tau,\Z_{2}(\tau,s))\V_{2}(\tau,\X(\tau,s))=\p(\tau,\X(\tau,s))
\end{equation*}
for all $s\in[\bar{s}_{1},\bar{s}_{2}]$. We multiply these equations with $2\dot{\X}(\tau,s)$ and use \eqref{eq:setG0rel2} to get
\begin{equation*}
	R(\tau,\Z_{2}(\tau,s))\dot{\Z}_{2}(\tau,s)=2c(\Z_{3}(\tau,s))\V_{3}(\tau,\X(\tau,s))\dot{\X}(\tau,s)
\end{equation*}
and
\begin{equation*}
	\rho(\tau,\Z_{2}(\tau,s))\dot{\Z}_{2}(\tau,s)=2\p(\tau,\X(\tau,s))\dot{\X}(\tau,s),
\end{equation*}
which yields
\begin{equation}
\label{eq:RExpr}
	R(\tau,x)=\frac{2c(\Z_{3}(\tau,\Z_{2}^{-1}(\tau,x)))\V_{3}(\tau,\X(\tau,\Z_{2}^{-1}(\tau,x)))\dot{\X}(\tau,\Z_{2}^{-1}(\tau,x))}{\dot{\Z}_{2}(\tau,\Z_{2}^{-1}(\tau,x))}
\end{equation}
and
\begin{equation}
\label{eq:rhoExpr}
	\rho(\tau,x)=\frac{2\p(\tau,\X(\tau,\Z_{2}^{-1}(\tau,x)))\dot{\X}(\tau,\Z_{2}^{-1}(\tau,x))}{\dot{\Z}_{2}(\tau,\Z_{2}^{-1}(\tau,x))}
\end{equation} 
for all $x\in[x_{l}+\kappa\tau,x_{r}-\kappa\tau]$. Since $\V_{3}(\tau,\X(\tau,s))=U_{X}(\X(\tau,s),\Y(\tau,s))$ and $|\V_{3}(\tau,\X(\tau,s))|\leq ||U_{X}||_{W^{1,\infty}_{Y}(\Omega)}$ we have $\V_{3}(\tau,\X(\tau,\cdot))\in\Linf([\bar{s}_{1},\bar{s}_{2}])$, and since
$\p(\tau,\X(\tau,s))=p(\X(\tau,s),\Y(\tau,s))$ and $|\p(\tau,\X(\tau,s))|\leq ||p||_{W^{1,\infty}_{Y}(\Omega)}$ we have $\p(\tau,\X(\tau,\cdot))\in\Linf([\bar{s}_{1},\bar{s}_{2}])$. Using \eqref{eq:Z2bound} in \eqref{eq:RExpr} and \eqref{eq:rhoExpr} we obtain
\begin{equation*}
	|R(\tau,x)|\leq \frac{8\kappa^{2}||\V_{3}(\tau,\X(\tau,\cdot))||_{\Linf([\bar{s}_{1},\bar{s}_{2}])}}{\alpha_{1}d_{2}^{2}+\alpha_{2}e_{2}^{2}}e^{C|Y-\Y(X)|+\tilde{C}|X-\X(Y)|}
\end{equation*}
and
\begin{equation*}
	|\rho(\tau,x)|\leq \frac{8\kappa||\p(\tau,\X(\tau,\cdot))||_{\Linf([\bar{s}_{1},\bar{s}_{2}])}}{\alpha_{1}d_{2}^{2}+\alpha_{2}e_{2}^{2}}e^{C|Y-\Y(X)|+\tilde{C}|X-\X(Y)|}
\end{equation*}
for all $x\in[x_{l}+\kappa\tau,x_{r}-\kappa\tau]$. Therefore $R(\tau,\cdot),\rho(\tau,\cdot)\in\Linf([x_{l}+\kappa\tau,x_{r}-\kappa\tau])$. In a similar way one shows that $S(\tau,\cdot),\sigma(\tau,\cdot)\in\Linf([x_{l}+\kappa\tau,x_{r}-\kappa\tau])$ and we have proved \ref{eq:smoothResult2}.

We prove \ref{eq:smoothResult3}. By inserting  
\begin{equation*}
	\dot{\Z}_{2}(\tau,\Z_{2}^{-1}(\tau,x))=2x_{X}(\X(\tau,\Z_{2}^{-1}(\tau,x)),\Y(\tau,\Z_{2}^{-1}(\tau,x)))\dot{\X}(\tau,\Z_{2}^{-1}(\tau,x))
\end{equation*}
into \eqref{eq:rhoExpr} we get
\begin{equation*}
	\rho(\tau,x)=\frac{\p(\tau,\X(\tau,\Z_{2}^{-1}(\tau,x)))}{x_{X}(\X(\tau,\Z_{2}^{-1}(\tau,x)),\Y(\tau,\Z_{2}^{-1}(\tau,x)))}
\end{equation*}
for all $s\in[\bar{s}_{1},\bar{s}_{2}]$. Since $p_{Y}=0$ we get from \eqref{eq:pPos},
\begin{equation*}
	\p(\tau,X)=p(X,\Y(\tau,\X^{-1}(\tau,X)))=p(X,\Y(X))\geq d_{2}
\end{equation*}
for all $X\in[X_{l},X_{r}]$. Recalling \eqref{eq:GronIneqxXJX} and that $J_X(X,Y)\geq 0$ we get
\begin{equation*}
	\rho(\tau,x)\geq d_{2}e^{-C\vert Y_l-Y_r\vert}
\end{equation*}
for all $x\in[x_{l}+\kappa\tau,x_{r}-\kappa\tau]$. Similarly we find
\begin{equation*}
	\sigma(\tau,x)\geq e_{2}e^{-\tilde C\vert X_l-X_r\vert}
\end{equation*}
for all $x\in[x_{l}+\kappa\tau,x_{r}-\kappa\tau]$. This concludes the proof of \ref{eq:smoothResult3}.

We prove \ref{eq:smoothResult4}. Let $M\subset[x_{l}+\kappa\tau,x_{r}-\kappa\tau]$ be a Borel set. By \eqref{eq:mapGtoDsing1}, we have
\begin{equation*}
	\mu_{\text{sing}}(\tau,M)=\int_{\Z_{2}^{-1}(\tau,M)\cap A^{c}}\V_{4}(\tau,\X(\tau,s))\dot{\X}(\tau,s)\,ds,
\end{equation*}
where $A=\{s\in\mathbb{R} \ | \ \V_{2}(\tau,\X(\tau,s))>0 \}$. We have $\text{meas}(A^{c})=0$, which implies that
\begin{equation*}
	\mu_{\text{sing}}(\tau,M)\leq 2|||\Theta(\tau)|||_{\G(\Omega)}\,\text{meas}(A^{c})=0.
\end{equation*}
This proves that $\mu(\tau)$ is absolutely continuous on $[x_{l}+\kappa\tau,x_{r}-\kappa\tau]$. Similarly, one shows that $\nu(\tau)$ is absolutely continuous on $[x_{l}+\kappa\tau,x_{r}-\kappa\tau]$. 

\textbf{Step 2.} Assume that $m=2$. By Step 1, \ref{eq:smoothResult3} and \ref{eq:smoothResult4} hold. Moreover, \ref{eq:smoothResult1} and \ref{eq:smoothResult2} hold for $m=1$. It remains to prove that $u_{xx}(\tau,\cdot),R_{x}(\tau,\cdot),S_{x}(\tau,\cdot),\rho_{x}(\tau,\cdot),\sigma_{x}(\tau,\cdot)\in L^{\infty}([x_{l}+\kappa\tau,x_{r}-\kappa\tau])$. In order to do so, we have to show that \\ $Z_{XX}=(t_{XX},x_{XX},U_{XX},J_{XX},K_{XX})$ and $Z_{YY}=(t_{YY},x_{YY},U_{YY},J_{YY},K_{YY})$ exist and are bounded. We first consider $Z_{XX}$. There exists a unique solution $Z_{XX}$ in $\Omega$ of the system
\begin{equation*}
	Z_{XXY}(X,Y)=f(X,Y,Z_{XX})
\end{equation*}
since $f$ is Lipschitz continuous with respect to the $Z_{XX}$ variable, which comes from the fact that the system is semilinear. This can be seen by differentiating the governing equations \eqref{eq:goveq}. For instance, we have
\begin{aalign}
\label{eq:xXXY}
	x_{XXY}&=\frac{c'(U)}{2c(U)}(x_{X}U_{XY}-U_{X}x_{XY})+\frac{c''(U)}{2c(U)}U_{X}(U_{X}x_{Y}+U_{Y}x_{X})\\
	&\quad+\frac{c'(U)}{2c(U)}(x_{Y}U_{XX}+U_{Y}x_{XX})
\end{aalign}
and
\begin{equation}
\label{eq:xXXFundThmCalc}
	|x_{XX}(X,Y)|\leq |x_{XX}(X,\Y(X))|+\int_{Y}^{\Y(X)}|x_{XXY}(X,\tilde{Y})|\,d\tilde{Y},
\end{equation}
if we assume without loss of generality that $Y\leq\Y(X)$ (the other case is similar.)

Let us find a bound on $x_{XX}$ at time $\tau=0$. We differentiate \eqref{eq:x1derexpr} and get
\begin{equation*}
	x_{1}''(X)=-\frac{1}{4}x_{1}'(X)^{3}(2R_{0}R_{0x}+c'(u_{0})u_{0x}\rho_{0}^{2}+2c(u_{0})\rho_{0}\rho_{0x})\circ x_{1}(X),
\end{equation*}
which, by \eqref{eq:regx1UpperBound}, implies that
\begin{align}\label{eq:estx12}
	|x_{1}''(X)|&\leq \frac{1}{4}\bigg(\frac{4\kappa}{d^{2}+4\kappa}\bigg)^{3}(2||R_{0}||_{\Linf([x_{l},x_{r}])} ||(R_{0})_{x}||_{\Linf([x_{l},x_{r}])}\\ \nonumber
	&\hspace{89pt}+k_{1}||(u_{0})_{x}||_{\Linf([x_{l},x_{r}])} ||\rho_{0}||_{\Linf([x_{l},x_{r}])}^{2}\\ \nonumber
	&\hspace{89pt}+2\kappa||\rho_{0}||_{\Linf([x_{l},x_{r}])} ||(\rho_{0})_{x}||_{\Linf([x_{l},x_{r}])})
\end{align}
and we conclude that $x_{1}''\in\Linf([X_{l},X_{r}])$.

By Definition \ref{def:mapfromDtoF} and Definition \ref{def:mapfromFtoG0}, we have
\begin{equation*}
	x_{X}(X,\Y(X))=\V_{2}(X)=\frac{1}{2}x_{1}'(X).
\end{equation*}
We differentiate and get
\begin{equation*}
	x_{XX}(X,\Y(X))+x_{XY}(X,\Y(X))\bigg(\frac{\dot{\Y}}{\dot{\X}}\bigg)\circ \X^{-1}(X)=\frac{1}{2}x_{1}''(X),
\end{equation*}
so that by \eqref{eq:XdotLowerBound},
\begin{align*}
	|x_{XX}(X,\Y(X))|&\leq \frac{1}{2}|x_{1}''(X)|+|x_{XY}(X,\Y(X))|\bigg(\frac{\dot{\Y}}{\dot{\X}}\bigg)\circ \X^{-1}(X)\\
	&\leq \frac{1}{2}||x_{1}''||_{\Linf([X_{l},X_{r}])}+\frac{k_{1}\kappa} {\alpha_{1}e^{-\tilde{C}|X-\X(Y)|}}\bigg(|||\Theta|||_{\G(\Omega)}^{2}+\frac{1}{2}|||\Theta|||_{\G(\Omega)}\bigg)
\end{align*}
and $x_{XX}(\cdot,\Y(\cdot))\in\Linf([X_{l},X_{r}])$. Here we used the estimate
\begin{align*}
	&|x_{XY}(\X(s),\Y(s))|\\
	&=\bigg|\frac{c'(\Z_{3}(s))}{2c(\Z_{3}(s))}(\V_{3}(\X(s))\W_{2}(\Y(s))+\W_{3}(\Y(s))\V_{2}(\X(s)))\bigg| \quad \text{by } \eqref{eq:goveq}\\
	&\leq \frac{1}{2}k_{1}\kappa \bigg(||\V_{3}^{a}||_{\Linf([X_{l},X_{r}])}\bigg(||\W_{2}^{a}||_{\Linf([Y_{l},Y_{r}])}+\frac{1}{2}\bigg)\\
	&\hspace{50pt}+||\W_{3}^{a}||_{\Linf([Y_{l},Y_{r}])}\bigg(||\V_{2}^{a}||_{\Linf([X_{l},X_{r}])}+\frac{1}{2}\bigg)\bigg)\\
	&\leq \frac{1}{2}k_{1}\kappa \bigg(|||\Theta|||_{\G(\Omega)}^{2}+\frac{1}{2}|||\Theta|||_{\G(\Omega)}\bigg).
\end{align*}

We estimate $x_{XXY}$. Since $|Z_{X}^{a}|\leq ||Z_{X}^{a}||_{W^{1,\infty}_{Y}(\Omega)}$ and $|Z_{Y}^{a}|\leq ||Z_{Y}^{a}||_{W^{1,\infty}_{X}(\Omega)}$, we get from \eqref{eq:goveq} that 
\begin{equation}
\label{eq:ZXYeta}
	|Z_{XY}|\leq \eta,
\end{equation}
where $\eta$ depends on $||Z_{X}^{a}||_{W^{1,\infty}_{Y}(\Omega)}$, $||Z_{Y}^{a}||_{W^{1,\infty}_{X}(\Omega)}$, $\kappa$ and $k_{1}$. We obtain from \eqref{eq:xXXY},
\begin{aalign}
\label{eq:xXXYest}
	|x_{XXY}|&\leq \frac{1}{2}k_{1}\kappa\bigg[\bigg(||Z_{X}^{a}||_{W^{1,\infty}_{Y}(\Omega)}+\frac{1}{2}\bigg)\eta+||Z_{X}^{a}||_{W^{1,\infty}_{Y}(\Omega)}\eta\bigg]\\
	&\quad +\frac{1}{2}k_{2}\kappa||Z_{X}^{a}||_{W^{1,\infty}_{Y}(\Omega)}\bigg[||Z_{X}^{a}||_{W^{1,\infty}_{Y}(\Omega)}\bigg(||Z_{Y}^{a}||_{W^{1,\infty}_{X}(\Omega)}+\frac{1}{2}\bigg)\\
	&\hspace{120pt}+||Z_{Y}^{a}||_{W^{1,\infty}_{X}(\Omega)}\bigg(||Z_{X}^{a}||_{W^{1,\infty}_{Y}(\Omega)}+\frac{1}{2}\bigg)\bigg]\\
	&\quad +\frac{1}{2}k_{1}\kappa\bigg[\bigg(||Z_{Y}^{a}||_{W^{1,\infty}_{X}(\Omega)}+\frac{1}{2}\bigg)|U_{XX}|+||Z_{Y}^{a}||_{W^{1,\infty}_{X}(\Omega)}|x_{XX}|\bigg]\\
	&\leq k_{1}\kappa\bigg(||Z_{X}^{a}||_{W^{1,\infty}_{Y}(\Omega)}+\frac{1}{2}\bigg)\eta\\
	&\quad +k_{2}\kappa||Z_{X}^{a}||_{W^{1,\infty}_{Y}(\Omega)}\bigg(||Z_{X}^{a}||_{W^{1,\infty}_{Y}(\Omega)}+\frac{1}{2}\bigg)\bigg(||Z_{Y}^{a}||_{W^{1,\infty}_{X}(\Omega)}+\frac{1}{2}\bigg)\\
	&\quad +\frac{1}{2}k_{1}\kappa\bigg(||Z_{Y}^{a}||_{W^{1,\infty}_{X}(\Omega)}+\frac{1}{2}\bigg)(|U_{XX}|+|x_{XX}|).
\end{aalign}
We insert \eqref{eq:xXXYest} into \eqref{eq:xXXFundThmCalc} and get
\begin{align*}
	&|x_{XX}(X,Y)|\\
	&\leq ||x_{XX}(\cdot,\Y(\cdot))||_{\Linf([X_{l},X_{r}])}+\bigg[
	k_{1}\kappa\bigg(||Z_{X}^{a}||_{W^{1,\infty}_{Y}(\Omega)}+\frac{1}{2}\bigg)\eta\\
	&\quad +k_{2}\kappa||Z_{X}^{a}||_{W^{1,\infty}_{Y}(\Omega)}\bigg(||Z_{X}^{a}||_{W^{1,\infty}_{Y}(\Omega)}+\frac{1}{2}\bigg)\bigg(||Z_{Y}^{a}||_{W^{1,\infty}_{X}(\Omega)}+\frac{1}{2}\bigg)\bigg]|Y_{r}-Y_{l}|\\
	&\quad +\int_{Y}^{\Y(X)}\frac{1}{2}k_{1}\kappa\bigg(||Z_{Y}^{a}||_{W^{1,\infty}_{X}(\Omega)}+\frac{1}{2}\bigg)(|U_{XX}|+|x_{XX}|)(X,\tilde{Y})\,d\tilde{Y}.
\end{align*}
Following the same lines for the other components of $Z_{XX}$, we obtain
\begin{align*}
	&(|t_{XX}|+|x_{XX}|+|U_{XX}|+|J_{XX}|+|K_{XX}|)(X,Y)\\
	&\leq ||Z_{XX}(\cdot,\Y(\cdot))||_{\Linf([X_{l},X_{r}])}+C_{1}|Y_{r}-Y_{l}|\\
	&\quad +\int_{Y}^{\Y(X)}C_{2}(|t_{XX}|+|x_{XX}|+|U_{XX}|+|J_{XX}|+|K_{XX}|)(X,\tilde{Y})\,d\tilde{Y},
\end{align*}
where $C_{1}$ and $C_{2}$ depend on $||Z_{X}^{a}||_{W^{1,\infty}_{Y}(\Omega)}$, $||Z_{Y}^{a}||_{W^{1,\infty}_{X}(\Omega)}$, $\kappa$, $k_{1}$ and $k_{2}$. By Gronwall's lemma, we obtain
\begin{aalign}
\label{eq:ZXXest}
	&(|t_{XX}|+|x_{XX}|+|U_{XX}|+|J_{XX}|+|K_{XX}|)(X,Y)\\
	&\leq (||Z_{XX}(\cdot,\Y(\cdot))||_{\Linf([X_{l},X_{r}])}+C_{1}|Y_{r}-Y_{l}|)e^{C_{2}|Y-\Y(X)|}.
\end{aalign}
A similar procedure yields
\begin{aalign}
\label{eq:ZYYest}
	&(|t_{YY}|+|x_{YY}|+|U_{YY}|+|J_{YY}|+|K_{YY}|)(X,Y)\\
	&\leq (||Z_{YY}(\X(\cdot),\cdot)||_{\Linf([Y_{l},Y_{r}])}+\tilde{C}_{1}|X_{r}-X_{l}|)e^{\tilde{C}_{2}|X-\X(Y)|},
\end{aalign}
where $\tilde{C}_{1}$ and $\tilde{C}_{2}$ depend on $||Z_{X}^{a}||_{W^{1,\infty}_{Y}(\Omega)}$, $||Z_{Y}^{a}||_{W^{1,\infty}_{X}(\Omega)}$, $\kappa$, $k_{1}$ and $k_{2}$.

We also need estimates for $\ddot{\X}(\tau,s)$ and $\ddot{\Y}(\tau,s)$. By \eqref{eq:setG0rel}, we have
\begin{equation*}
	x_{X}(\X(\tau,s),\Y(\tau,s))\dot{\X}(\tau,s)=x_{Y}(\X(\tau,s),\Y(\tau,s))\dot{\Y}(\tau,s).
\end{equation*}
We differentiate and get
\begin{aalign}
\label{eq:xXXandxYYRel}
	&x_{XX}(\X(\tau,s),\Y(\tau,s))\dot{\X}(\tau,s)^{2}+x_{X}(\X(\tau,s),\Y(\tau,s))\ddot{\X}(\tau,s)\\
	&=x_{YY}(\X(\tau,s),\Y(\tau,s))\dot{\Y}(\tau,s)^{2}+x_{Y}(\X(\tau,s),\Y(\tau,s))\ddot{\Y}(\tau,s).
\end{aalign}
Since $\X(\tau,s)+\Y(\tau,s)=2s$, we have $\ddot{\Y}(\tau,s)=-\ddot{\X}(\tau,s)$, so that
\begin{equation*}
	\ddot{\X}(\tau,s)=\frac{x_{YY}(\X(\tau,s),\Y(\tau,s))\dot{\Y}(\tau,s)^{2}-x_{XX}(\X(\tau,s),\Y(\tau,s))\dot{\X}(\tau,s)^{2}}{x_{X}(\X(\tau,s),\Y(\tau,s))+x_{Y}(\X(\tau,s),\Y(\tau,s))}.
\end{equation*}
By \eqref{eq:xXpos}, \eqref{eq:xYpos}, \eqref{eq:ZXXest} and \eqref{eq:ZYYest}, we find
\begin{aalign}
\label{eq:Chi2ndDer1stEst}
	|\ddot{\X}(\tau,s)|&\leq 4\bigg(\frac{|x_{XX}(\X(\tau,s),\Y(\tau,s))|+|x_{YY}(\X(\tau,s),\Y(\tau,s))|}{x_{X}(\X(\tau,s),\Y(\tau,s))+x_{Y}(\X(\tau,s),\Y(\tau,s))}\bigg)\\
	&\leq \frac{4}{\frac{d_{2}^{2}}{2\kappa}e^{-C|Y-\Y(X)|}+\frac{e_{2}^{2}}{2\kappa}e^{-\tilde{C}|X-\X(Y)|}}\\
	&\quad \times ((||Z_{XX}(\cdot,\Y(\cdot))||_{\Linf([X_{l},X_{r}])}+C_{1}|Y_{r}-Y_{l}|)e^{C_{2}|Y-\Y(X)|}\\
	&\hspace{30pt}+(||Z_{YY}(\X(\cdot),\cdot)||_{\Linf([Y_{l},Y_{r}])}+\tilde{C}_{1}|X_{r}-X_{l}|)e^{\tilde{C}_{2}|X-\X(Y)|}).
\end{aalign}
If we define
\begin{align*}
	D&=\big(4\max\{||Z_{XX}(\cdot,\Y(\cdot))||_{\Linf([X_{l},X_{r}])}+C_{1}|Y_{r}-Y_{l}|,\\
	&\hspace{60pt}||Z_{YY}(\X(\cdot),\cdot)||_{\Linf([Y_{l},Y_{r}])}+\tilde{C}_{1}|X_{r}-X_{l}| \}\big)\bigg(\min\Big\{\frac{d_{2}^{2}}{2\kappa},\frac{e_{2}^{2}}{2\kappa}\Big\}\bigg)^{-1}
\end{align*}
and
\begin{equation*}
	\bar{C}=\max\{C,\tilde{C},C_{2},\tilde{C}_{2}\},
\end{equation*}
we get from \eqref{eq:Chi2ndDer1stEst},
\begin{aalign}
\label{eq:Chi2ndDer2ndEst}
	|\ddot{\X}(\tau,s)|&\leq D\bigg(\frac{e^{\bar{C}|X-\X(Y)|}+e^{\bar{C}|Y-\Y(X)|}}{e^{-\bar{C}|X-\X(Y)|}+e^{-\bar{C}|Y-\Y(X)|}}\bigg)\\
	&=De^{\bar{C}(|X-\X(Y)|+|Y-\Y(X)|)}.
\end{aalign}
We conclude that $\ddot{\X}(\tau,\cdot)\in\Linf([\bar{s}_{1},\bar{s}_{2}])$. Here we used that 
\begin{equation*}
	\frac{e^{a}+e^{b}}{e^{-a}+e^{-b}}=e^{a+b}
\end{equation*}
for $a,b\in\mathbb{R}$. We differentiate $\Z_{2}(\tau,s)=x(\X(\tau,s),\Y(\tau,s))$ twice and get, by \eqref{eq:xXXandxYYRel}, that
\begin{align*}
	\ddot{\Z}_{2}(\tau,s)&=x_{XX}(\X(\tau,s),\Y(\tau,s))\dot{\X}(\tau,s)^{2}+x_{X}(\X(\tau,s),\Y(\tau,s))\ddot{\X}(\tau,s)\\
	&\quad+2x_{XY}(\X(\tau,s),\Y(\tau,s))\dot{\X}(\tau,s)\dot{\Y}(\tau,s)\\
	&\quad+x_{YY}(\X(\tau,s),\Y(\tau,s))\dot{\Y}(\tau,s)^{2}+x_{Y}(\X(\tau,s),\Y(\tau,s))\ddot{\Y}(\tau,s)\\
	&=2x_{XX}(\X(\tau,s),\Y(\tau,s))\dot{\X}(\tau,s)^{2}+2x_{X}(\X(\tau,s),\Y(\tau,s))\ddot{\X}(\tau,s)\\
	&\quad+2x_{XY}(\X(\tau,s),\Y(\tau,s))\dot{\X}(\tau,s)\dot{\Y}(\tau,s).
\end{align*}
Using the estimates \eqref{eq:ZXYeta}, \eqref{eq:ZXXest}, and \eqref{eq:Chi2ndDer2ndEst}, we get
\begin{align*}
	|\ddot{\Z}_{2}(\tau,s)|&\leq 8(||Z_{XX}(\cdot,\Y(\cdot))||_{\Linf([X_{l},X_{r}])}+C_{1}|Y_{r}-Y_{l}|)e^{C_{2}|Y-\Y(X)|}\\
	&\quad +2\bigg(||Z_{X}^{a}||_{W^{1,\infty}_{Y}(\Omega)}+\frac{1}{2}\bigg)De^{\bar{C}(|X-\X(Y)|+|Y-\Y(X)|)}+8\eta
\end{align*}
and $\ddot{\Z}_{2}(\tau,\cdot)\in\Linf([\bar{s}_{1},\bar{s}_{2}])$.

We have
\begin{align*}
	\ddot{\Z}_{3}(\tau,s)&=U_{XX}(\X(\tau,s),\Y(\tau,s))\dot{\X}(\tau,s)^{2}+U_{X}(\X(\tau,s),\Y(\tau,s))\ddot{\X}(\tau,s)\\
	&\quad+2U_{XY}(\X(\tau,s),\Y(\tau,s))\dot{\X}(\tau,s)\dot{\Y}(\tau,s)\\
	&\quad+U_{YY}(\X(\tau,s),\Y(\tau,s))\dot{\Y}(\tau,s)^{2}+U_{Y}(\X(\tau,s),\Y(\tau,s))\ddot{\Y}(\tau,s),
\end{align*}
which implies, by \eqref{eq:ZXYeta}, \eqref{eq:ZYYest} and \eqref{eq:Chi2ndDer2ndEst}, that
\begin{align*}
	|\ddot{\Z}_{3}(\tau,s)|&\leq 4(||Z_{XX}(\cdot,\Y(\cdot))||_{\Linf([X_{l},X_{r}])}+C_{1}|Y_{r}-Y_{l}|)e^{C_{2}|Y-\Y(X)|}\\
	&\quad +||Z_{X}^{a}||_{W^{1,\infty}_{Y}(\Omega)}De^{\bar{C}(|X-\X(Y)|+|Y-\Y(X)|)}+8\eta\\
	&\quad +4(||Z_{YY}(\X(\cdot),\cdot)||_{\Linf([Y_{l},Y_{r}])}+\tilde{C}_{1}|X_{r}-X_{l}|)e^{\tilde{C}_{2}|X-\X(Y)|}\\
	&\quad +||Z_{Y}^{a}||_{W^{1,\infty}_{X}(\Omega)}De^{\bar{C}(|X-\X(Y)|+|Y-\Y(X)|)}. 
\end{align*}
Hence, $\ddot{\Z}_{3}(\tau,\cdot)\in\Linf([\bar{s}_{1},\bar{s}_{2}])$.

We compute $u_{xx}$ from \eqref{eq:uxbound} and get
\begin{equation*}
	u_{xx}(\tau,x)=\frac{\ddot{\Z}_{3}(\tau,\Z_{2}^{-1}(\tau,x))}{\dot{\Z}_{2}(\tau,\Z_{2}^{-1}(\tau,x))^{2}}-\frac{\dot{\Z}_{3}(\tau,\Z_{2}^{-1}(\tau,x))\ddot{\Z}_{2}(\tau,\Z_{2}^{-1}(\tau,x))}{\dot{\Z}_{2}(\tau,\Z_{2}^{-1}(\tau,x))^{3}}.
\end{equation*}
By \eqref{eq:Z2bound} we obtain
\begin{align*}
	|u_{xx}(\tau,x)|&\leq\frac{||\ddot{\Z}_{3}(\tau,\cdot)||_{\Linf([\bar{s}_{1},\bar{s}_{2}])}}{\big[\frac{1}{2\kappa}\big(\alpha_{1}d_{2}^{2}+\alpha_{2}e_{2}^{2}\big)e^{-C|Y-\Y(X)|-\tilde{C}|X-\X(Y)|}\big]^{2}}\\
	&\quad+\frac{||\dot{\Z}_{3}(\tau,\cdot)||_{\Linf([\bar{s}_{1},\bar{s}_{2}])}||\ddot{\Z}_{2}(\tau,\cdot)||_{\Linf([\bar{s}_{1},\bar{s}_{2}])}}{\big[\frac{1}{2\kappa}\big(\alpha_{1}d_{2}^{2}+\alpha_{2}e_{2}^{2}\big)e^{-C|Y-\Y(X)|-\tilde{C}|X-\X(Y)|}\big]^{3}},
\end{align*}
and we conclude that $u_{xx}(\tau,\cdot)\in L^{\infty}([x_{l}+\kappa\tau,x_{r}-\kappa\tau])$.

By differentiating \eqref{eq:RExpr} we get
\begin{align*}
	&R_{x}(\tau,x)\\
	&=\frac{2\dot{\X}(\tau,\Z_{2}^{-1}(\tau,x))}{\big[\dot{\Z}_{2}(\tau,\Z_{2}^{-1}(\tau,x))\big]^{2}}\big[c'(\Z_{3}(\tau,\Z_{2}^{-1}(\tau,x)))\dot{\Z}_{3}(\tau,\Z_{2}^{-1}(\tau,x))\V_{3}(\tau,\X(\tau,\Z_{2}^{-1}(\tau,x)))\\
	&\hspace{115pt}+c(\Z_{3}(\tau,\Z_{2}^{-1}(\tau,x)))\dot{\V}_{3}(\tau,\X(\tau,\Z_{2}^{-1}(\tau,x)))\big]\\
	&\quad+\frac{2c(\Z_{3}(\tau,\Z_{2}^{-1}(\tau,x)))\V_{3}(\tau,\X(\tau,\Z_{2}^{-1}(\tau,x)))\ddot{\X}(\tau,\Z_{2}^{-1}(\tau,x))}{\big[\dot{\Z}_{2}(\tau,\Z_{2}^{-1}(\tau,x))\big]^{2}}\\
	&\quad-\frac{2c(\Z_{3}(\tau,\Z_{2}^{-1}(\tau,x)))\V_{3}(\tau,\X(\tau,\Z_{2}^{-1}(\tau,x)))\dot{\X}(\tau,\Z_{2}^{-1}(\tau,x))\ddot{\Z}_{2}(\tau,\Z_{2}^{-1}(\tau,x))}{\big[\dot{\Z}_{2}(\tau,\Z_{2}^{-1}(\tau,x))\big]^{3}},
\end{align*}
where we denote $\dot{\V}_{3}(\tau,\X(\tau,s))=\frac{d}{ds}\V_{3}(\tau,\X(\tau,s))$. Since $\V_{3}(\tau,\X(\tau,s))=\\U_{X}(\X(\tau,s),\Y(\tau,s))$, we have
\begin{equation*}
	\dot{\V}_{3}(\tau,\X(\tau,s))=U_{XX}(\X(\tau,s),\Y(\tau,s))\dot{\X}(\tau,s)+U_{XY}(\X(\tau,s),\Y(\tau,s))\dot{\Y}(\tau,s).
\end{equation*}
From \eqref{eq:ZXYeta} and \eqref{eq:ZXXest} we obtain
\begin{equation*}
	|\dot{\V}_{3}(\tau,\X(\tau,s))|\leq 2\big[(||Z_{XX}(\cdot,\Y(\cdot))||_{\Linf([X_{l},X_{r}])}+C_{1}|Y_{r}-Y_{l}|)e^{C_{2}|Y-\Y(X)|}+\eta\big],
\end{equation*}
where $\eta$ is a constant that depends on $||Z_{X}^{a}||_{W^{1,\infty}_{Y}(\Omega)}$, $||Z_{Y}^{a}||_{W^{1,\infty}_{X}(\Omega)}$, $\kappa$ and $k_{1}$. Therefore we have $\dot{\V}_{3}(\tau,\X(\tau,\cdot))\in\Linf([\bar{s}_{1},\bar{s}_{2}])$. By \eqref{eq:Z2bound} we get
\begin{align*}
	|R_{x}(\tau,x)|&\leq\frac{2}{\big[\frac{1}{2\kappa}\big(\alpha_{1}d_{2}^{2}+\alpha_{2}e_{2}^{2}\big)
	e^{-C|Y-\Y(X)|-\tilde{C}|X-\X(Y)|}\big]^{2}}\\
	&\quad\times\Big(2k_{1}||\dot{\Z}_{3}(\tau,\cdot)||_{\Linf([\bar{s}_{1},\bar{s}_{2}])}||\V_{3}(\tau,\X(\tau,\cdot))||_{\Linf([\bar{s}_{1},\bar{s}_{2}])}\\
	&\hspace{32pt}+2\kappa||\dot{\V}_{3}(\tau,\X(\tau,\cdot))||_{\Linf([\bar{s}_{1},\bar{s}_{2}])}\\
	&\hspace{32pt}+\kappa||\V_{3}(\tau,\X(\tau,\cdot))||_{\Linf([\bar{s}_{1},\bar{s}_{2}])}||\ddot{\X}(\tau,\cdot)||_{\Linf([\bar{s}_{1},\bar{s}_{2}])}\Big)\\
	&\quad+\frac{4\kappa||\V_{3}(\tau,\X(\tau,\cdot))||_{\Linf([\bar{s}_{1},\bar{s}_{2}])}||\ddot{\Z}_{2}(\tau,\cdot)||_{\Linf([\bar{s}_{1},\bar{s}_{2}])}}{\big[\frac{1}{2\kappa}\big(\alpha_{1}d_{2}^{2}+\alpha_{2}e_{2}^{2}\big)
		e^{-C|Y-\Y(X)|-\tilde{C}|X-\X(Y)|}\big]^{3}},
\end{align*}
which implies that $R_{x}(\tau,\cdot)\in L^{\infty}([x_{l}+\kappa\tau,x_{r}-\kappa\tau])$.

We differentiate \eqref{eq:rhoExpr} and get
\begin{align*}
	&\rho_{x}(\tau,x)\\
	&=\frac{2\big[\dot{\p}(\tau,\X(\tau,\Z_{2}^{-1}(\tau,x)))\dot{\X}(\tau,\Z_{2}^{-1}(\tau,x))+\p(\tau,\X(\tau,\Z_{2}^{-1}(\tau,x)))\ddot{\X}(\tau,\Z_{2}^{-1}(\tau,x))\big]}{\big[\dot{\Z}_{2}(\tau,\Z_{2}^{-1}(\tau,x))\big]^{2}}\\
	&\quad-\frac{2\p(\tau,\X(\tau,\Z_{2}^{-1}(\tau,x)))\dot{\X}(\tau,\Z_{2}^{-1}(\tau,x))\ddot{\Z}_{2}(\tau,\Z_{2}^{-1}(\tau,x))}{\big[\dot{\Z}_{2}(\tau,\Z_{2}^{-1}(\tau,x))\big]^{3}},
\end{align*}
where we denote $\dot{\p}(\tau,\X(\tau,s))=\frac{d}{ds}\p(\tau,\X(\tau,s))$. Since $\p(\tau,\X(\tau,s))=\\p(\X(\tau,s),\Y(\tau,s))$ and $p_{Y}(X,Y)=0$ we have
\begin{equation*}
	\dot{\p}(\tau,\X(\tau,s))=p_{X}(\X(\tau,s),\Y(\tau,s))\dot{\X}(\tau,s).
\end{equation*}
Furthermore, by \eqref{eq:mapFtoGp} and \eqref{eq:mapfromDtoF7}, we have
\begin{equation}
\label{eq:pInitial}
	p(X,\Y(X))=\p(X)=H_{1}(X)=\frac{1}{2}\rho_{0}(x_{1}(X))x_{1}'(X).
\end{equation}
We differentiate and get, since $p_{Y}(X,Y)=0$,
\begin{equation*}
	p_{X}(X,Y)=p_{X}(X,\Y(X))=\frac{1}{2}(\rho_{0})_{x}(x_{1}(X))x_{1}'(X)^{2}+\frac{1}{2}\rho_{0}(x_{1}(X))x_{1}''(X).
\end{equation*}
Using \eqref{eq:regx1UpperBound}, this leads to the estimate
\begin{equation*}
	|p_{X}(X,Y)|\leq \frac{1}{2}||(\rho_{0})_{x}||_{\Linf([x_{l},x_{r}])}\bigg(\frac{4\kappa}{d^{2}+4\kappa}\bigg)^{2} +\frac{1}{2}||\rho_{0}||_{\Linf([x_{l},x_{r}])}||x_{1}''||_{\Linf([X_{l},X_{r}])},
\end{equation*}
which implies that
\begin{equation*}
	|\dot{\p}(\tau,\X(\tau,s))|\leq ||(\rho_{0})_{x}||_{\Linf([x_{l},x_{r}])}\bigg(\frac{4\kappa}{d^{2}+4\kappa}\bigg)^{2} +||\rho_{0}||_{\Linf([x_{l},x_{r}])}||x_{1}''||_{\Linf([X_{l},X_{r}])}.
\end{equation*}
Recalling \eqref{eq:estx12}, it follows that $\dot{\p}(\tau,\X(\tau,\cdot))\in\Linf([\bar{s}_{1},\bar{s}_{2}])$. Using \eqref{eq:Z2bound}, we end up with
\begin{align*}
	|\rho_{x}(\tau,x)|&\leq\frac{2\big(2||\dot{\p}(\tau,\X(\tau,\cdot))||_{\Linf([\bar{s}_{1},\bar{s}_{2}])}+||\p(\tau,\X(\tau,\cdot))||_{\Linf([\bar{s}_{1},\bar{s}_{2}])}||\ddot{\X}(\tau,\cdot)||_{\Linf([\bar{s}_{1},\bar{s}_{2}])}\big)}{\big[\frac{1}{2\kappa}\big(\alpha_{1}d_{2}^{2}+\alpha_{2}e_{2}^{2}\big)
	e^{-C|Y-\Y(X)|-\tilde{C}|X-\X(Y)|}\big]^{2}}\\
	&\quad+\frac{4||\p(\tau,\X(\tau,\cdot))||_{\Linf([\bar{s}_{1},\bar{s}_{2}])}||\ddot{\Z}_{2}(\tau,\cdot)||_{\Linf([\bar{s}_{1},\bar{s}_{2}])}}{\big[\frac{1}{2\kappa}\big(\alpha_{1}d_{2}^{2}+\alpha_{2}e_{2}^{2}\big)e^{-C|Y-\Y(X)|-\tilde{C}|X-\X(Y)|}\big]^{3}}
\end{align*}
and we conclude that $\rho_{x}(\tau,\cdot)\in L^{\infty}([x_{l}+\kappa\tau,x_{r}-\kappa\tau])$. 

In a similar way one shows that $S_{x}(\tau,\cdot),\sigma_{x}(\tau,\cdot)\in L^{\infty}([x_{l}+\kappa\tau,x_{r}-\kappa\tau])$.

\textbf{Step 3.} Assume that the result holds for $m=n$, that is, if $u_{0}\in\Linf([x_{l},x_{r}])$ and
$R_{0},S_{0},\rho_{0},\sigma_{0}\in W^{n-1,\infty}([x_{l},x_{r}])$, then
$u(\tau,\cdot)\in W^{n,\infty}([x_{l}+\kappa\tau,x_{r}-\kappa\tau])$, $R(\tau,\cdot)$, $S(\tau,\cdot),\rho(\tau,\cdot)$, $\sigma(\tau,\cdot)\in W^{n-1,\infty}([x_{l}+\kappa\tau,x_{r}-\kappa\tau])$ and \ref{eq:smoothResult3}  and \ref{eq:smoothResult4} hold. We show by induction that the result also holds for $m=n+1$, that is, we assume that $R_{0}$, $S_{0}$, $\rho_{0}$, $\sigma_{0}$ belong $W^{n,\infty}([x_{l},x_{r}])$, and prove that $u(\tau,\cdot)\in W^{n+1,\infty}([x_{l}+\kappa\tau,x_{r}-\kappa\tau])$ and $R(\tau,\cdot),S(\tau,\cdot),\rho(\tau,\cdot),\sigma(\tau,\cdot)\in W^{n,\infty}([x_{l}+\kappa\tau,x_{r}-\kappa\tau])$. 

Since the result holds for $m=n$ we get, following closely the argument used in Step 2 to derive \eqref{eq:ZXXest} and \eqref{eq:ZYYest}, that
\begin{equation}
\label{eq:ZnDerBounded}
	\frac{\partial^{\alpha+\beta}}{\partial X^{\alpha}\partial Y^{\beta}}Z\in[\Linf(\Omega)]^{5}, \quad \alpha,\beta=0,1,\dots,n, \quad \alpha+\beta\leq n.
\end{equation}

Since $R_{0},S_{0},\rho_{0},\sigma_{0}\in W^{n,\infty}([x_{l},x_{r}])$ we get by Definition \ref{def:mapfromDtoF} and Definition \ref{def:mapfromFtoG0} that 
\begin{equation*}
	\frac{\partial^{\alpha+\beta}}{\partial X^{\alpha}\partial Y^{\beta}}Z, \quad \alpha,\beta=0,1,\dots,n+1, \quad \alpha+\beta=n+1
\end{equation*}
is bounded on the curve $(\X(s),\Y(s))$, $s\in[s_{l},s_{r}]$. 

Since the governing equation \eqref{eq:goveq} is semilinear, there exists a unique solution $\frac{\partial^{n+1}}{\partial X^{\alpha}\partial Y^{\beta}}Z$, \newline $\alpha,\beta=0,1,\dots,n+1$, $\alpha+\beta=n+1$ in $\Omega$ of the system
\begin{aalign}
\label{eq:ZnDerExpression}
	&\frac{\partial}{\partial Y}\sum_{\substack{\alpha,\beta=0,1,\dots,n+1 \\ \alpha+\beta=n+1}} \frac{\partial^{n+1}}{\partial X^{\alpha}\partial Y^{\beta}}(t+x+U+J+K)\\
	&=f+\sum_{\substack{\alpha,\beta=0,1,\dots,n+1 \\ \alpha+\beta=n+1}} \bigg< g_{\alpha,\beta},\frac{\partial^{n+1}}{\partial X^{\alpha}\partial Y^{\beta}}Z \bigg>,
\end{aalign}
where $f$ and $g_{\alpha,\beta}$ depend on derivatives up to order $n$. By \eqref{eq:ZnDerBounded}, the functions $f$ and $g_{\alpha,\beta}$ are bounded. Here, $g_{\alpha,\beta}$ denotes $n+1$ five dimensional vectors. To clarify the notation, let us compute \eqref{eq:ZnDerExpression} for $n=2$. We have
\begin{align*}
	&\frac{\partial}{\partial Y}\sum_{\substack{\alpha,\beta=0,1,2,3 \\ \alpha+\beta=3}} \frac{\partial^{3}}{\partial X^{\alpha}\partial Y^{\beta}}(t+x+U+J+K)\\
	&=f+\langle g_{3,0},Z_{XXX} \rangle+\langle g_{2,1},Z_{XXY} \rangle+\langle g_{1,2},Z_{XYY} \rangle+\langle g_{0,3},Z_{YYY} \rangle.
\end{align*}  
By Gronwall's lemma, we obtain 
\begin{equation*}
	\frac{\partial^{n+1}}{\partial X^{\alpha}\partial Y^{\beta}}Z\in[\Linf(\Omega)]^{5}, \quad \alpha,\beta=0,1,\dots,n+1, \quad \alpha+\beta=n+1.
\end{equation*}
This implies, since $x_{X}(\X(\tau,s),\Y(\tau,s))\dot{\X}(\tau,s)=x_{Y}(\X(\tau,s),\Y(\tau,s))\dot{\Y}(\tau,s)$, $\Z_{2}(\tau,s)=x(\X(\tau,s),\Y(\tau,s))$ and $\Z_{3}(\tau,s)=U(\X(\tau,s),\Y(\tau,s))$, that
\begin{equation*}
	\frac{d^{n+1}}{ds^{n+1}}\X(\tau,\cdot),\frac{d^{n+1}}{ds^{n+1}}\Y(\tau,\cdot),\frac{d^{n+1}}{ds^{n+1}}\Z_{2}(\tau,\cdot),\frac{d^{n+1}}{ds^{n+1}}\Z_{3}(\tau,\cdot) \in\Linf([s_{l},s_{r}]).
\end{equation*}
It then follows from \eqref{eq:Z2bound} and \eqref{eq:utauZ3} that
\begin{equation*}
	\frac{\partial^{n+1}}{\partial x^{n+1}}u(\tau,\cdot) \in L^{\infty}([x_{l}+\kappa\tau,x_{r}-\kappa\tau]),
\end{equation*}
and from \eqref{eq:Z2bound} and \eqref{eq:RExpr} we obtain
\begin{equation*}
	\frac{\partial^{n}}{\partial x^{n}}R(\tau,\cdot) \in L^{\infty}([x_{l}+\kappa\tau,x_{r}-\kappa\tau]).
\end{equation*}

By \eqref{eq:pInitial}, $\frac{\partial^{k}}{\partial X^{k}}p(\cdot,\Y(\cdot))$, $k=0,1,\dots,n$, is bounded on the curve $(\X(s),\Y(s))$, $s\in[s_{l},s_{r}]$. Since
\begin{equation*}
	\frac{\partial^{k}}{\partial X^{k}}p(X,\Y(X))=\frac{\partial^{k}}{\partial X^{k}}p(X,Y),
\end{equation*}
we get from \eqref{eq:Z2bound} and \eqref{eq:rhoExpr},
\begin{equation*}
	\frac{\partial^{n}}{\partial x^{n}}\rho(\tau,\cdot) \in L^{\infty}([x_{l}+\kappa\tau,x_{r}-\kappa\tau]).
\end{equation*}

Similarly, one proves that 
\begin{equation*}
	\frac{\partial^{n}}{\partial x^{n}}S(\tau,\cdot),\frac{\partial^{n}}{\partial x^{n}}\sigma(\tau,\cdot) \in L^{\infty}([x_{l}+\kappa\tau,x_{r}-\kappa\tau]).
\end{equation*}
\end{proof}

From Theorem \ref{thm:Regular1st} we obtain the following result.

\begin{corollary}
\label{cor:smooth}
Let $-\infty<x_{l}<x_{r}<\infty$ and consider $(u_{0},R_{0},S_{0},\rho_{0},\sigma_{0},\mu_{0},\nu_{0})\in\D$. Assume that
\begin{itemize}
\item[(A1')]
	$u_{0},R_{0},S_{0},\rho_{0},\sigma_{0}\in C^{\infty}([x_{l},x_{r}])$,
\item[(A2')]
	there are constants $d>0$ and $e>0$ such that $\rho_{0}(x)\geq d$ and $\sigma_{0}(x)\geq e$ for all $x\in[x_{l},x_{r}]$,
\item[(A3')]
	$\mu_{0}$ and $\nu_{0}$ are absolutely continuous on $[x_{l},x_{r}]$,
\item[(A4')]
	$c\in C^{\infty}(\mathbb{R})$ and $c^{(m)}\in L^{\infty}(\mathbb{R})$ for $m=3,4,5,\dots$.
\end{itemize}
For any $\tau\in\big[0,\frac{1}{2\kappa}(x_{r}-x_{l})\big]$ consider  
\begin{equation*}
	(u,R,S,\rho,\sigma,\mu,\nu)(\tau)=\bar{S}_{\tau}(u_{0},R_{0},S_{0},\rho_{0},\sigma_{0},\mu_{0},\nu_{0}).
\end{equation*}
Then
\begin{itemize}
\item[(P1')]
	$u(\tau,\cdot),R(\tau,\cdot),S(\tau,\cdot),\rho(\tau,\cdot),\sigma(\tau,\cdot)\in C^{\infty}([x_{l}+\kappa\tau,x_{r}-\kappa\tau])$,
\item[(P2')]
there are constants $\bar{d}>0$ and $\bar{e}>0$ such that $\rho(\tau,x)\geq \bar{d}$ and $\sigma(\tau,x)\geq \bar{e}$ for all $x\in[x_{l}+\kappa\tau,x_{r}-\kappa\tau]$,
\item[(P3')]
$\mu(\tau,\cdot)$ and $\nu(\tau,\cdot)$ are absolutely continuous on $[x_{l}+\kappa\tau,x_{r}-\kappa\tau]$.
\end{itemize}
For $\tau\in\big[-\frac{1}{2\kappa}(x_{r}-x_{l}),0\big]$, the solution satisfies the same properties on the interval \newline
$\big[x_{l}-\kappa\tau,x_{r}+\kappa\tau\big]$.
\end{corollary}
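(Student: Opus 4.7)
The plan is to derive this corollary as a direct consequence of Theorem \ref{thm:Regular1st} by verifying that the assumptions (A1')--(A4') imply the assumptions \ref{eq:smoothAssumption1}--\ref{eq:smoothAssumption5} of the theorem for every $m \in \mathbb{N}$, and then passing to the limit.

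First I would verify the transfer of hypotheses. Since $[x_l,x_r]$ is a bounded interval and $u_0, R_0, S_0, \rho_0, \sigma_0 \in C^{\infty}([x_l,x_r])$, all their derivatives are continuous on the compact interval $[x_l,x_r]$ and hence bounded. Therefore $u_0 \in L^\infty([x_l,x_r])$, giving \ref{eq:smoothAssumption1}, and $R_0, S_0, \rho_0, \sigma_0 \in W^{m-1,\infty}([x_l,x_r])$ for every $m \in \mathbb{N}$, giving \ref{eq:smoothAssumption2}. Assumption (A2') is exactly \ref{eq:smoothAssumption3}, (A3') is exactly \ref{eq:smoothAssumption4}, and (A4') is exactly \ref{eq:smoothAssumption5} with the constants $k_i = \|c^{(i)}\|_{L^\infty(\mathbb{R})}$.

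Second I would apply Theorem \ref{thm:Regular1st} for each fixed $m \in \mathbb{N}$. For every $\tau \in \bigl[0, \frac{1}{2\kappa}(x_r - x_l)\bigr]$ and every $m$, conclusion \ref{eq:smoothResult1} yields $u(\tau,\cdot) \in W^{m,\infty}([x_l + \kappa\tau, x_r - \kappa\tau])$ and conclusion \ref{eq:smoothResult2} yields $R(\tau,\cdot), S(\tau,\cdot), \rho(\tau,\cdot), \sigma(\tau,\cdot) \in W^{m-1,\infty}([x_l + \kappa\tau, x_r - \kappa\tau])$. Since this holds for every $m$, and on the compact interval $[x_l + \kappa\tau, x_r - \kappa\tau]$ the intersection $\bigcap_{m \geq 1} W^{m,\infty}$ coincides with $C^\infty$, we obtain (P1').

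Finally, conclusions (P2') and (P3') follow by applying Theorem \ref{thm:Regular1st} with $m = 1$ (the smallest case): \ref{eq:smoothResult3} provides the strictly positive lower bounds $\bar{d}, \bar{e}$ on $\rho(\tau,\cdot)$ and $\sigma(\tau,\cdot)$, and \ref{eq:smoothResult4} gives the absolute continuity of $\mu(\tau,\cdot)$ and $\nu(\tau,\cdot)$ on $[x_l + \kappa\tau, x_r - \kappa\tau]$. The case $\tau \in \bigl[-\frac{1}{2\kappa}(x_r - x_l), 0\bigr]$ is handled identically using the corresponding statement in Theorem \ref{thm:Regular1st}. There is no genuine obstacle here: the entire argument is a straightforward combination of Sobolev embedding on bounded intervals and the induction already carried out in the proof of Theorem \ref{thm:Regular1st}.
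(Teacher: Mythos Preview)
Your proposal is correct and matches the paper's approach: the corollary is stated without proof as an immediate consequence of Theorem~\ref{thm:Regular1st}, obtained exactly by applying that theorem for every $m\in\mathbb{N}$ and using $\bigcap_{m}W^{m,\infty}=C^\infty$ on the compact interval.
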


\subsection{Approximation by Smooth Solutions}
\label{sec:Approx}
%%%%%%%%%%%%%%%%%%%%%%%%%%%%%%%%%%%%%%%%%%
%     Approximation by Smooth Solutions
%%%%%%%%%%%%%%%%%%%%%%%%%%%%%%%%%%%%%%%%%%

The following theorem is our main result. In the proof we use Lemma \ref{lemma:1} and Lemma \ref{lemma:2} which are stated and proved in Section \ref{section:aux}.

\begin{theorem}
	\label{thm:main}
	Let $-\infty<x_{l}<x_{r}<\infty$. Consider $(u_{0},R_{0},S_{0},\rho_{0},\sigma_{0},\mu_{0},\nu_{0})$ and $(u_{0}^{n},R_{0}^{n},S_{0}^{n},\rho_{0}^{n},\sigma_{0}^{n},\mu_{0}^{n},\nu_{0}^{n})$ in $\D$. Assume that for all $n\in\mathbb{N}$,
	
	\begin{itemize}
		\item[\mylabel{eq:approxAssumption1}{(A1'')}]
		$u_{0}, R_{0}, S_{0}, u_{0}^{n}, R_{0}^{n}, S_{0}^{n}, \rho_{0}^{n}, \sigma_{0}^{n}\in C^{\infty}([x_{l},x_{r}])$,
		\item[\mylabel{eq:approxAssumption2}{(A2'')}]
		$\rho_{0}(x)=0$ and $\sigma_{0}(x)=0$ for all $x\in[x_{l},x_{r}]$,
		\item[\mylabel{eq:approxAssumption3}{(A3'')}]
		there are constants $d_{n}>0$ and $e_{n}>0$ such that $\rho_{0}^{n}(x)\geq d_{n}$ and $\sigma_{0}^{n}(x)\geq e_{n}$ for all $x\in[x_{l},x_{r}]$,
		\item[\mylabel{eq:approxAssumption4}{(A4'')}]
		$u_{0}^{n}\rightarrow u_{0}$ in $L^{\infty}([x_{l},x_{r}])$, $R_{0}^{n}\rightarrow R_{0}, \ S_{0}^{n}\rightarrow S_{0}, \ \rho_{0}^{n}\rightarrow \rho_{0}$ and $\sigma_{0}^{n}\rightarrow \sigma_{0}$ in $L^{2}([x_{l},x_{r}])$,
		\item[\mylabel{eq:approxAssumption5}{(A5'')}]
		$\mu_{0}, \ \nu_{0}, \ \mu_{0}^{n} \text{ and } \nu_{0}^{n}$ are absolutely continuous on $[x_{l},x_{r}]$,
		\item[\mylabel{eq:approxAssumption6}{(A6'')}]
		$\mu_{0}((-\infty,x_{l}))=\mu_{0}^{n}((-\infty,x_{l}))$, $\mu_{0}((-\infty,x_{r},))=\mu_{0}^{n}((-\infty,x_{r}))$,\\
		$\nu_{0}((-\infty,x_{l}))=\nu_{0}^{n}((-\infty,x_{l}))$ and  $\nu_{0}((-\infty,x_{r},))=\nu_{0}^{n}((-\infty,x_{r}))$,
		\item[\mylabel{eq:approxAssumption7}{(A7'')}]
		$c\in C^{\infty}(\mathbb{R})$ and $c^{(m)}\in L^{\infty}(\mathbb{R})$ for $m=3,4,5,\dots$.		
	\end{itemize}
	For any $\tau\in\big[0,\frac{1}{2\kappa}(x_{r}-x_{l})\big]$ consider
	\begin{equation*}
	(u^{n},R^{n},S^{n},\rho^{n},\sigma^{n},\mu^{n},\nu^{n})(\tau)=\bar{S}_{\tau}(u^{n}_{0},R^{n}_{0},S^{n}_{0},\rho^{n}_{0},\sigma^{n}_{0},\mu^{n}_{0},\nu^{n}_{0})
	\end{equation*}
	and 
	\begin{equation*}
	(u,R,S,\rho,\sigma,\mu,\nu)(\tau)=\bar{S}_{\tau}(u_{0},R_{0},S_{0},\rho_{0},\sigma_{0},\mu_{0},\nu_{0}).
	\end{equation*}
	Then we have
	\begin{itemize}
		\item[\mylabel{eq:approxResult1}{(P1'')}]
		$u^{n}(\tau,\cdot)\rightarrow u(\tau,\cdot)$ in $\Linf([x_{l}+\kappa\tau,x_{r}-\kappa\tau])$,
		\item[\mylabel{eq:approxResult2}{(P2'')}]
		$\rho^{n}(\tau,\cdot)\rightarrow 0$ and $\sigma^{n}(\tau,\cdot)\rightarrow 0$ in $L^{1}([x_{l}+\kappa\tau,x_{r}-\kappa\tau])$.
	\end{itemize}
	The same conclusion holds on the interval $\big[x_{l}-\kappa\tau,x_{r}+\kappa\tau\big]$ in the case where $\tau\in\big[-\frac{1}{2\kappa}(x_{r}-x_{l}),0\big]$.
\end{theorem}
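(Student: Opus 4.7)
The strategy is to pass to Lagrangian coordinates, where the semilinear structure and the stability theory developed in Section~\ref{sec:3} and Section~4 apply uniformly, and then convert back. First, I would invoke the finite speed of propagation (Theorem \ref{thm:finiteSpeedOfProp}) to reduce to initial data that agree outside $[x_l, x_r]$: the values of $u^n(\tau,\cdot)$, $\rho^n(\tau,\cdot)$, $\sigma^n(\tau,\cdot)$ on $[x_l+\kappa\tau, x_r-\kappa\tau]$ depend only on the initial data on $[x_l,x_r]$, so we may freely modify the data outside without affecting the claim. Assumption \ref{eq:approxAssumption6} combined with absolute continuity on $[x_l,x_r]$ (\ref{eq:approxAssumption5}) ensures that after this reduction the Lagrangian rectangle
\begin{equation*}
	\Omega = [X_l, X_r]\times [Y_l, Y_r],\qquad X_l = x_l+\mu_0((-\infty,x_l)),\ \ X_r = x_r+\mu_0((-\infty,x_r)),
\end{equation*}
(and analogously for $Y_l, Y_r$) is the same for the limit and for every approximant.

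Next, I would study the convergence of the Lagrangian initial data $\psi^n = \mathbf{L}(u_0^n,\dots) $ and $\psi = \mathbf{L}(u_0,\dots)$ on $\Omega$. The $L^\infty$ convergence of $u_0^n\to u_0$ together with the $L^2$ convergence of $R_0^n,S_0^n,\rho_0^n,\sigma_0^n$ gives weak convergence of the measures $\mu_0^n\to\mu_0$ and $\nu_0^n\to\nu_0$; since the limiting distribution functions are continuous, the distribution functions converge uniformly on $[x_l,x_r]$, which (after inversion as in the proof of Definition \ref{def:mapfromDtoF}) yields uniform convergence of $x_1^n\to x_1$ and $x_2^n\to x_2$ on $\Omega$. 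Feeding this back into \eqref{eq:mapfromDtoF3}--\eqref{eq:mapfromDtoF7} and performing change-of-variables estimates in each component, one obtains $U_1^n\to U_1$ uniformly, $J_1^n \to J_1$ and $K_1^n\to K_1$ uniformly, $V_1^n\to V_1$ in $L^2$, and $H_1^n\to 0$ in $L^2$ (and analogously for the subscript $2$ quantities). This is convergence of $\mathbf{C}(\psi^n) \to \mathbf{C}(\psi)$ in the $\G(\Omega)$-norm defined in Section~\ref{sec:3}.

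The $L^2$ stability lemma (Lemma in Section~\ref{sec:Approx}'s predecessor subsection) then transfers this convergence of initial data in $\G(\Omega)$ to convergence of the solutions $(Z^n,p^n,q^n)\to (Z,p,q)$ in $\G(\Omega)$ evaluated along any curve in $\C(\Omega)$, in particular the time-$\tau$ curve. By Lemma \ref{lemma:stripest}, the functions $Z^n$ are uniformly Lipschitz in $\Omega$ with bounds independent of $n$ (since the $\tnorm{\cdot}_{\G(\Omega)}$-bounds on the initial data are uniform), so Arzela--Ascoli combined with $L^2$ uniqueness upgrades the $L^2$ convergence to uniform convergence $Z^n \to Z$ on $\Omega$. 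To deduce \ref{eq:approxResult1}, fix $x\in[x_l+\kappa\tau,x_r-\kappa\tau]$: by \eqref{eq:mapGtoD1} there exists $s$ with $\Z_2(\tau,s)=x$ and $u(\tau,x)=\Z_3(\tau,s)$, and by the strict positivity of $\rho^n$ (Theorem \ref{thm:Regular1st}), there is a unique $s^n$ with $\Z_2^n(\tau,s^n)=x$ and $u^n(\tau,x)=\Z_3^n(\tau,s^n)$. Passing to subsequences, $s^n\to s^*$; uniform convergence of $\Z_2^n$ forces $\Z_2(\tau,s^*)=x$, so $u(\tau,x)=\Z_3(\tau,s^*)$, and uniform convergence of $\Z_3^n$ gives $u^n(\tau,x)\to u(\tau,x)$. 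A standard subsequence-and-continuity argument (using that $u(\tau,\cdot)$ is continuous, being in $H^1_{\mathrm{loc}}$) makes this uniform in $x$.

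For \ref{eq:approxResult2}, energy conservation in $\D$ (Theorem \ref{thm:cons}) applied to the data cut off outside $[x_l,x_r]$ bounds $\|\rho^n(\tau,\cdot)\|_{L^2([x_l+\kappa\tau,x_r-\kappa\tau])}^2$ by a multiple of $\mu_0^n([x_l,x_r])+\nu_0^n([x_l,x_r])$, and the $\rho_0^n$-contribution to this goes to zero by \ref{eq:approxAssumption4}; Cauchy--Schwarz on the bounded interval then gives $L^1$ convergence, and analogously for $\sigma^n$. The main obstacle I anticipate is the step of showing convergence of the Lagrangian data and, in particular, of the time-$\tau$ Lagrangian curves: the mapping $\mathbf{L}\circ\mathbf{C}$ is not continuous on $\D$ in general, and the limit curve can contain vertical/horizontal flat pieces (from the regions where $\rho=\sigma=0$) while the approximating curves are strictly monotone. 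Handling this asymmetry carefully, via the absolute-continuity assumption \ref{eq:approxAssumption5} and the uniform Lipschitz bounds from Lemma \ref{lemma:stripest}, is what makes the Arzela--Ascoli-style upgrade possible.
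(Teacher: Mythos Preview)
Your argument for \ref{eq:approxResult2} is wrong. Theorem \ref{thm:cons} conserves the total measure $\mu+\nu$, whose density mixes $(R^n)^2$ and $c(u^n)(\rho^n)^2$; the fact that the $\rho_0^n$-contribution to the \emph{initial} energy vanishes does not force $\|\rho^n(\tau,\cdot)\|_{L^2}$ to vanish, because energy can transfer between the $R$- and $\rho$-modes (indeed \eqref{eq:evolEnergy} shows exactly such coupling). The paper instead uses the transport structure directly: since $p_Y=0$, one has $\p^n(\tau,X)=\p^n(0,X)=H_1^n(X)=\tfrac12\rho_0^n(x_1^n(X))(x_1^n)'(X)$, which tends to $0$ in $L^2([X_l,X_r])$ by \ref{eq:approxAssumption4}. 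The pushforward identity \eqref{eq:mapGtoD5} then gives $\int_{x_l+\kappa\tau}^{x_r-\kappa\tau}\rho^n(\tau,y)\,dy=\int 2\p^n(X)\,dX\to 0$, and positivity of $\rho^n$ (from Theorem \ref{thm:Regular1st}) turns this into $L^1$ convergence.

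There is also a gap in your use of the $L^2$ stability lemma. That lemma compares two solutions whose data are read off the \emph{same} curve $(\X,\Y)$, but here $\Theta^n=(Z^n,p^n,q^n)\bullet(\X^n,\Y^n)$ and $\Theta=(Z,p,q)\bullet(\X,\Y)$ live on different curves; the quantity $\|(Z-Z^n)\bullet(\X,\Y)\|_{\G(\Omega)}$ is not controlled by $\Theta^n-\Theta$ without additional work. The paper's Step~3 handles this by proving a bespoke pointwise Gronwall bound \eqref{eq:Gron} (via a two-variable Gronwall inequality, Lemma \ref{lemma:Gron}) that explicitly carries the curve-difference terms $[\Y\circ\X^{-1}(X)-\Y^n\circ(\X^n)^{-1}(X)]^2$ on the right-hand side, and then integrates along diagonals to obtain the uniform estimate \eqref{eq:ZIneq2}. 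Your Arzel\`a--Ascoli route would need this or something equivalent as input before it can identify the limit; as written, the step ``$L^2$ stability transfers convergence of initial data to convergence of solutions'' is circular. The paper's Step~4 then avoids any convergence of the time-$\tau$ curves by a direct four-term splitting $A_1^n+\cdots+A_4^n$ of $u(\tau,z)-u^n(\tau,z)$, each piece estimated using only \eqref{eq:ZIneq2} and the energy relations \eqref{eq:setGrel3}--\eqref{eq:setGrel4}.
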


Observe that we assume in \ref{eq:approxAssumption4} $u_{0}^{n}\rightarrow u_{0}$ in $L^{\infty}([x_{l},x_{r}])$, in contrast to the $L^2(\mathbb{R})$ convergence in the global case. This is because functions in $H^{1}(\mathbb{R})$ are zero at infinity, while here we have no assumptions on the values of $u_{0}$ and $u_{0}^{n}$ at the endpoints of $[x_{l},x_{r}]$. Since $[x_{l},x_{r}]$ is a bounded interval, the convergence in $L^{\infty}([x_{l},x_{r}])$ implies that $u_{0}^{n}\rightarrow u_{0}$ in $L^{2}([x_{l},x_{r}])$.

By the assumptions \ref{eq:approxAssumption5} and \ref{eq:approxAssumption6} we mean that
\begin{equation*}
\mu_{0}([x_{l},x_{r}])=\mu_{0}^{n}([x_{l},x_{r}]) \quad \text{and} \quad \nu_{0}([x_{l},x_{r}])=\nu_{0}^{n}([x_{l},x_{r}])
\end{equation*}
for all $n$.

Note that the approximating sequence in Theorem \ref{thm:main} is smooth. Indeed, by Corollary \ref{cor:smooth} we have $u^{n}(\tau,\cdot),R^{n}(\tau,\cdot),S^{n}(\tau,\cdot),\rho^{n}(\tau,\cdot),\sigma^{n}(\tau,\cdot)\in C^{\infty}([x_{l}+\kappa\tau,x_{r}-\kappa\tau])$. Furthermore, there are constants $\bar{d}_{n}>0$ and $\bar{e}_{n}>0$ such that $\rho^{n}(\tau,x)\geq \bar{d}_{n}$ and $\sigma^{n}(\tau,x)\geq \bar{e}_{n}$ for all $x\in[x_{l}+\kappa\tau,x_{r}-\kappa\tau]$, and $\mu^{n}(\tau,\cdot)$ and $\nu^{n}(\tau,\cdot)$ are absolutely continuous on $[x_{l}+\kappa\tau,x_{r}-\kappa\tau]$ for all $n$. 
However, the limit solution does not in general satisfy these properties. Of course, we have that $(u,R,S,\rho,\sigma,\mu,\nu)(\tau)$ belongs to $\D$, but the functions are not necessarily smooth and the measures are not necessarily absolutely continuous. We illustrate this with an example. Consider the function
\begin{equation*}
	f_{n}(x)=\bigg(\frac{2}{\pi}\bigg)^{\frac{1}{4}}\sqrt{n}e^{-(nx)^{2}}.
\end{equation*}
We have $||f_{n}||_{L^{2}(\mathbb{R})}=1$, $f_{n}(x)\rightarrow 0$ for $x\neq 0$ and $f_{n}(0)\rightarrow +\infty$,
and $f_{n}\rightarrow 0$ in $L^{1}(\mathbb{R})$ as $n\to\infty$. By standard calculations we get that
\begin{equation*}
	\lim_{n\rightarrow\infty}\int_{\mathbb{R}}\phi(x)f_{n}(x)\,dx=0
\end{equation*}
and
\begin{equation*}
	\lim_{n\rightarrow\infty}\int_{\mathbb{R}}\phi(x)f_{n}^{2}(x)\,dx=\phi(0)
\end{equation*}
for all $\phi\in C_{c}^{\infty}(\mathbb{R})$. In other words, $f_{n}\overset{*}{\rightharpoonup}0$ and $f_{n}^{2}\,dx\overset{*}{\rightharpoonup}\delta_{0}$, where $\delta_{0}$ is the Dirac delta at zero, which is a singular measure. 
Returning to our setting, since $\mu^{n}(\tau,\cdot)$ is absolutely continuous on $[x_{l}+\kappa\tau,x_{r}-\kappa\tau]$ we have
\begin{equation*}
	\mu^{n}(\tau,[x_{l}+\kappa\tau,x_{r}-\kappa\tau])=\frac{1}{4}\int_{x_{l}+\kappa\tau}^{x_{r}-\kappa\tau}[(R^{n})^{2}+c(u^{n})(\rho^{n})^{2}](\tau,x)\,dx.
\end{equation*}
We can think of $\sqrt{c(u^{n}(\tau,x))}\rho^{n}(\tau,x)$ as the function $f_{n}$ in the example above. We then have that $\sqrt{c(u^{n}(\tau,\cdot))}\rho^{n}(\tau,\cdot)$ is in $C^{\infty}([x_{l}+\kappa\tau,x_{r}-\kappa\tau])$ and satisfies $\sqrt{c(u^{n}(\tau,\cdot))}\rho^{n}(\tau,\cdot)\rightarrow 0$ in $L^{1}([x_{l}+\kappa\tau,x_{r}-\kappa\tau])$, but $c(u^n(\tau,\cdot))(\rho^n)^2(\tau,\cdot)\overset{*}{\rightharpoonup}\delta_{0}$.

\begin{proof}
	We will only consider the case $0<\tau\leq \frac{1}{2\kappa}(x_{r}-x_{l})$. The case $-\frac{1}{2\kappa}(x_{r}-x_{l})\leq\tau<0$ can be treated in the same way.
	
	We split the proof into four steps.
	
	\textbf{Step 1.} Set
	\begin{equation*}
		(\psi_{1},\psi_{2})=\mathbf{L}(u_{0},R_{0},S_{0},\rho_{0},\sigma_{0},\mu_{0},\nu_{0})
	\end{equation*}	
	and
	\begin{equation*}	
		(\psi_{1}^{n},\psi_{2}^{n})=\mathbf{L}(u^{n}_{0},R^{n}_{0},S^{n}_{0},\rho^{n}_{0},\sigma^{n}_{0},\mu^{n}_{0},\nu^{n}_{0}),
	\end{equation*}
	where 
	\begin{equation*}
		\psi_{1}=(x_{1},U_{1},J_{1},K_{1},V_{1},H_{1}), \quad \psi_{2}=(x_{2},U_{2},J_{2},K_{2},V_{2},H_{2})
	\end{equation*}
	and
	\begin{equation*}	
		 \psi_{1}^{n}=(x_{1}^{n},U_{1}^{n},J_{1}^{n},K_{1}^{n},V_{1}^{n},H_{1}^{n}), \quad \psi_{2}^{n}=(x_{2}^{n},U_{2}^{n},J_{2}^{n},K_{2}^{n},V_{2}^{n},H_{2}^{n}). 	
	\end{equation*}
	Let us find out what kind of region the interval $[x_{l},x_{r}]$ corresponds to in Lagrangian coordinates $(X,Y)$. Since the measures are assumed to be absolutely continuous we get from \eqref{eq:mapfromDtoF1},
	\begin{equation*}
	x_{1}(X)+\mu_{0}((-\infty,x_{1}(X)))=X
	\end{equation*}
	for all $x_{1}(X)\in[x_{l},x_{r}]$. We show which range of the $X$-variable this corresponds to. If $\hat{X}$ is such that $x_{1}(\hat{X})=x_{l}$ then
	\begin{equation*}
	x_{l}+\mu_{0}((-\infty,x_{l}))=\hat{X}.
	\end{equation*}
	We also have
	\begin{equation*}
	x_{1}^{n}(X)+\mu_{0}^{n}((-\infty,x_{1}^{n}(X)))=X
	\end{equation*}
	for all $x_{1}^{n}(X)\in[x_{l},x_{r}]$. If $\check{X}$ is such that $x_{1}^{n}(\check{X})=x_{l}$ we get
	\begin{equation*}
	x_{l}+\mu_{0}^{n}((-\infty,x_{l}))=\check{X}.
	\end{equation*}
	Using \ref{eq:approxAssumption6} we obtain
	\begin{equation*}
	\hat{X}=\check{X}
	\end{equation*}
	and we denote $X_{l}=\hat{X}=\check{X}$. 
	
	If $\bar{X}$ and $\tilde{X}$ are such that $x_{1}(\bar{X})=x_{1}^{n}(\tilde{X})=x_{r}$ then
	\begin{equation*}
	x_{r}+\mu_{0}((-\infty,x_{r}))=\bar{X}
	\end{equation*}
	and
	\begin{equation*}
	x_{r}+\mu_{0}^{n}((-\infty,x_{r}))=\tilde{X},
	\end{equation*}
	and by \ref{eq:approxAssumption6} we get
	\begin{equation*}
	\bar{X}=\tilde{X}
	\end{equation*}
	which we denote $X_{r}=\bar{X}=\tilde{X}$. We have $X_{l}\leq X_{r}$ since
	\begin{equation*}
	X_{l}=x_{l}+\mu_{0}((-\infty,x_{l}))\leq x_{r}+\mu_{0}((-\infty,x_{r}))=X_{r}.
	\end{equation*}
	In other words, we can define the interval $[X_{l},X_{r}]$ using either the measure $\mu_{0}$ or $\mu_{0}^{n}$, and observe that this is a consequence of the assumptions \ref{eq:approxAssumption5} and \ref{eq:approxAssumption6}.
	
	In a similar way we find that
	\begin{equation*}
	x_{2}(Y)+\nu_{0}((-\infty,x_{2}(Y)))=Y \quad \text{and} \quad x_{2}^{n}(Y)+\nu_{0}^{n}((-\infty,x_{2}^{n}(Y)))=Y
	\end{equation*}
	for all $Y\in [Y_{l},Y_{r}]$ where
	\begin{equation*}
	Y_{l}=x_{l}+\nu_{0}((-\infty,x_{l}))\quad \text{and} \quad Y_{r}=x_{r}+\nu_{0}((-\infty,x_{r})).
	\end{equation*}
	We denote $\Omega=[X_{l},X_{r}]\times[Y_{l},Y_{r}]$.
	
	Following closely the proof of Lemma \ref{lemma:1}, we obtain that $x_{1}$, $x_{1}^{n}$, $x_{2}$ and $x_{2}^{n}$ are strictly increasing for $X\in [X_l, X_r]$ and $Y\in [Y_l,Y_r]$, respectively, and
	\begin{equation*}
	x_{1}^{n}\rightarrow x_{1}, \quad (x_{1}^{n})^{-1}\rightarrow x_{1}^{-1}, \quad U_{1}^{n}\rightarrow U_{1}, \quad J_{1}^{n}\rightarrow J_{1} \quad \text{in } L^{\infty}([X_{l},X_{r}]),
	\end{equation*}
	\begin{equation*}
	x_{2}^{n}\rightarrow x_{2}, \quad (x_{2}^{n})^{-1}\rightarrow x_{2}^{-1}, \quad U_{2}^{n}\rightarrow U_{2}, \quad J_{2}^{n}\rightarrow J_{2} \quad \text{in } L^{\infty}([Y_{l},Y_{r}]),
	\end{equation*}
	\begin{equation*}
     V_{1}^{n}\rightarrow V_{1}, \quad H_{1}^{n}\rightarrow H_{1}, \quad (x_{1}^{n})'\rightarrow x_{1}', \quad (J_{1}^{n})'\rightarrow J_{1}', \quad (K_{1}^{n})'\rightarrow K_{1}' \quad \text{in } L^{2}([X_{l},X_{r}]),
	\end{equation*}
	\begin{equation*}
	 V_{2}^{n}\rightarrow V_{2}, \quad H_{2}^{n}\rightarrow H_{2}, \quad (x_{2}^{n})'\rightarrow x_{2}', \quad(J_{2}^{n})'\rightarrow J_{2}', \quad (K_{2}^{n})'\rightarrow K_{2}' \quad \text{in } L^{2}([Y_{l},Y_{r}]).
	\end{equation*}
	Note that \eqref{eq:mapfromDtoF6} does not imply $K_{1}^{n}\rightarrow K_{1}$ in $L^{\infty}([X_{l},X_{r}])$ and $K_{2}^{n}\rightarrow K_{2}$ in $L^{\infty}([Y_{l},Y_{r}])$. However, we have $K_{1}^{n}-K_{1}^{n}(X_{l})\rightarrow K_{1}-K_{1}(X_{l})$ in $L^{\infty}([X_{l},X_{r}])$ and $K_{2}^{n}-K_{2}^{n}(Y_{l})\rightarrow K_{2}-K_{2}(Y_{l})$ in $L^{\infty}([Y_{l},Y_{r}])$. The proof of this closely follows the procedure in \eqref{eq:K1Diff}.
	
	\textbf{Step 2.} Let 
	\begin{equation*}
		\Theta=(\X,\Y,\Z,\V,\W,\p,\q)=\mathbf{C}(\psi_{1},\psi_{2})
	\end{equation*}
	and  
	\begin{equation*}
		\Theta^{n}=(\X^{n},\Y^{n},\Z^{n},\V^{n},\W^{n},\p^{n},\q^{n})=\mathbf{C}(\psi_{1}^{n},\psi_{2}^{n}).
	\end{equation*}
	From \eqref{eq:mapFtoGX} we have
	\begin{equation*}
	x_{1}(\X(s))=x_{2}(2s-\X(s)) \quad \text{and} \quad x_{1}^{n}(\X^{n}(s))=x_{2}^{n}(2s-\X^{n}(s)).
	\end{equation*}
	Since we only consider the functions $x_{1}$ and $x_{1}^{n}$ on $[X_{l},X_{r}]$, and $x_{2}$ and $x_{2}^{n}$ on $[Y_{l},Y_{r}]$, the relevant values of $s$ are those satisfying $\X(s)\in[X_{l},X_{r}]$ and $2s-\X(s)\in[Y_{l},Y_{r}]$. 
In other words, since $x_1(X_l)=x_2(Y_l)=x_l$ and $x_1(X_r)=x_2(Y_r)=x_r$, we have 	
	\begin{equation*}
	\frac{1}{2}(X_{l}+Y_{l})\leq s\leq \frac{1}{2}(X_{r}+Y_{r})
	\end{equation*}
	and we call $s_{l}=\frac{1}{2}(X_{l}+Y_{l})$ and $s_{r}=\frac{1}{2}(X_{r}+Y_{r})$. The same conclusion holds for the $s$-values of $(\X^{n}(s),\Y^{n}(s))$. The points $s_{l}$ and $s_{r}$ correspond to the two diagonal points in the box for both curves, i.e., $(\X(s_l),\Y(s_l))=(X_l,Y_l)=(\X^n(s_l),\Y^n(s_l))$ and $(\X(s_r),\Y(s_r))=(X_r,Y_r)=(\X^n(s_r),\Y^n(s_r))$. 	
	
	Following closely the proof of Lemma \ref{lemma:2}, we obtain that $\X$, $\Y$, $\X^{n}$ and $\Y^{n}$ are strictly increasing on $[s_l, s_r]$, and
	\begin{equation*}
	\X^{n}\rightarrow\X, \quad \Y^{n}\rightarrow\Y, \quad \Z_{j}^{n}\rightarrow\Z_{j} \quad \text{in } \Linf([s_{l},s_{r}]),
	\end{equation*}
	\begin{equation*}
	\V_{i}^{n}\rightarrow\V_{i}, \quad \p^{n}\rightarrow\p \quad \text{in } L^{2}([X_{l},X_{r}]),
	\end{equation*}
	\begin{equation*}
	\W_{i}^{n}\rightarrow\W_{i}, \quad \q^{n}\rightarrow\q \quad \text{in } L^{2}([Y_{l},Y_{r}])
	\end{equation*}
	for $i=1,\dots,5$, $j=1,\dots,4$. From \eqref{eq:mapFtoGp} and \eqref{eq:mapfromDtoF7} we have
	\begin{equation*}
		\p(X)=H_{1}(X)=\frac{1}{2}\rho_{0}(x_{1}(X))x_{1}'(X)
	\end{equation*}
	and
	\begin{equation*}
		\p^{n}(X)=H_{1}^{n}(X)=\frac{1}{2}\rho_{0}^{n}(x_{1}^{n}(X))(x_{1}^{n})'(X)
	\end{equation*}
	for all $X\in[X_{l},X_{r}]$. Using \ref{eq:approxAssumption2}, \ref{eq:approxAssumption3} and \eqref{eq:regx1strictincreasing} we get for all $X\in[X_{l},X_{r}]$, $\p(X)=0$ and $\p^{n}(X)\geq k_{n}>0$ for some constant $k_{n}$.

	Since $K_{1}^{n}-K_{1}^{n}(X_{l})\rightarrow K_{1}-K_{1}(X_{l})$ in $L^{\infty}([X_{l},X_{r}])$ and $K_{2}^{n}-K_{2}^{n}(Y_{l})\rightarrow K_{2}-K_{2}(Y_{l})$ in $L^{\infty}([Y_{l},Y_{r}])$ we get from \eqref{eq:mapFtoGZ5} that $\Z_{5}^{n}-\Z_{5}^{n}(s_{l})\rightarrow\Z_{5}-\Z_{5}(s_{l})$ in $\Linf([s_{l},s_{r}])$.
	
	\textbf{Step 3.} Consider $(Z,p,q)=\mathbf{S}(\Theta)$ and $(Z^{n},p^{n},q^{n})=\mathbf{S}(\Theta^{n})$. We prove a Gronwall type estimate. We claim that for all $(X,Y)$ in $\Omega$,
	\begin{aalign}
	\label{eq:Gron}
	&\bigg[\big[Z_{3}-Z_{3}^{n}\big]^{2}+\sum_{i=1}^{5}\Big(\big[Z_{i,X}-Z_{i,X}^{n}\big]^{2}+\big[Z_{i,Y}-Z_{i,Y}^{n}\big]^{2}\Big)\bigg](X,Y)\\
	&\leq K\Bigg\{
	||U_{1}-U_{1}^{n}||_{L^{\infty}([X_{l},X_{r}])}^{2}\\
	&\hspace{35pt}+\sum_{j=1}^{5}\Big(\big[\V_{j}(X)-\V_{j}^{n}(X)\big]^{2}+\big[\W_{j}(Y)-\W_{j}^{n}(Y)\big]^{2}\Big)\\
	&\hspace{35pt}+\big[\Y\circ\X^{-1}(X)-\Y^{n}\circ(\X^{n})^{-1}(X)\big]^{2}\\
	&\hspace{35pt}+\big[\X\circ\Y^{-1}(Y)-\X^{n}\circ(\Y^{n})^{-1}(Y)\big]^{2}\Bigg\},
	\end{aalign}
	where $K$ depends on $\kappa$, $k_{1}$, $k_{2}$, $|||\Theta|||_{\G(\Omega)}$ and the size of $\Omega$.

	Let $(X,Y)\in \Omega$. Subtracting the equations
	\begin{equation*}
	t_{X}(X,Y)=t_{X}(X,\Y(X))+\int_{\Y(X)}^{Y}t_{XY}(X,\tilde{Y})\,d\tilde{Y}
	\end{equation*}
	and
	\begin{equation}
	\label{eq:tXnIntegral}
	t^{n}_{X}(X,Y)=t^{n}_{X}(X,\Y^{n}(X))+\int_{\Y^{n}(X)}^{Y}t^{n}_{XY}(X,\tilde{Y})\,d\tilde{Y}
	\end{equation}
	yields
	\begin{aalign}
	\label{eq:tXtXnDiff}
	t_{X}(X,Y)-t^{n}_{X}(X,Y)&=\V_{1}(X)-\V_{1}^{n}(X)-\int_{\Y^{n}(X)}^{\Y(X)}t^{n}_{XY}(X,\tilde{Y})\,d\tilde{Y}\\
	&\quad+\int_{\Y(X)}^{Y}(t_{XY}(X,\tilde{Y})-t^{n}_{XY}(X,\tilde{Y}))\,d\tilde{Y}.
	\end{aalign}
	Using \eqref{eq:goveqt} we get
	\begin{aalign}
	\label{eq:tXYtXYnDiff1}
	&t_{XY}-t^{n}_{XY}\\
	&=-\frac{c'(U)}{2c(U)}(U_{X}t_{Y}+U_{Y}t_{X})+\frac{c'(U^{n})}{2c(U^{n})}(U_{X}^{n}t_{Y}^{n}+U_{Y}^{n}t_{X}^{n})\\
	&=-\frac{c'(U)}{2c(U)}\Big(U_{X}(t_{Y}-t_{Y}^{n})+t_{Y}^{n}(U_{X}-U_{X}^{n})+U_{Y}(t_{X}-t_{X}^{n})+t_{X}^{n}(U_{Y}-U_{Y}^{n})\Big)\\
	&\quad -\frac{1}{2}(U_{X}^{n}t_{Y}^{n}+U_{Y}^{n}t_{X}^{n})\int_{U^{n}}^{U}\bigg(\frac{c''(V)}{c(V)}-\frac{c'(V)^{2}}{c(V)^2}\bigg)\,dV
	\end{aalign}
	where we used
	\begin{equation*}
	\frac{d}{dV}\bigg(\frac{c'(V)}{c(V)}\bigg)=\frac{c''(V)}{c(V)}-\frac{c'(V)^{2}}{c(V)^2}.
	\end{equation*}
	
	We need a pointwise uniform bound on the components of $Z_{X}^{n}$ and $Z_{Y}^{n}$. This will be done in the same way as in Lemma \ref{lemma:stripest}. We show the details here for completeness. By \eqref{eq:goveqt} we get
	\begin{equation*}
	|t_{XY}^{n}|\leq\frac{1}{2}k_{1}\kappa\Big(|t_{X}^{n}|+|x_{X}^{n}|+|U_{X}^{n}|+|J_{X}^{n}|+|K_{X}^{n}|\Big)\Big(|t_{Y}^{n}|+|x_{Y}^{n}|+|U_{Y}^{n}|+|J_{Y}^{n}|+|K_{Y}^{n}|\Big),
	\end{equation*}
	and by doing the same kind of estimate for the other components we get
	\begin{aalign}
	\label{eq:ZXYnEst}
	&\Big(|t_{XY}^{n}|+|x_{XY}^{n}|+|U_{XY}^{n}|+|J_{XY}^{n}|+|K_{XY}^{n}|\Big)\\
	&\leq B_{1}\Big(|t_{X}^{n}|+|x_{X}^{n}|+|U_{X}^{n}|+|J_{X}^{n}|+|K_{X}^{n}|\Big)\\
	&\quad\times\Big(|t_{Y}^{n}|+|x_{Y}^{n}|+|U_{Y}^{n}|+|J_{Y}^{n}|+|K_{Y}^{n}|\Big)
	\end{aalign}
	for a constant $B_{1}$ that only depends on $\kappa$ and $k_{1}$. By \eqref{eq:tXnIntegral} and the corresponding expressions for $x^{n}_{X}$, $U^{n}_{X}$, $J^{n}_{X}$ and $K^{n}_{X}$ we obtain 
	\begin{aalign}
	\label{eq:tXnIntermediateIneq}
	&\Big(|t_{X}^{n}|+|x_{X}^{n}|+|U_{X}^{n}|+|J_{X}^{n}|+|K_{X}^{n}|\Big)(X,Y)\\
	&\leq \Big(|t_{X}^{n}|+|x_{X}^{n}|+|U_{X}^{n}|+|J_{X}^{n}|+|K_{X}^{n}|\Big)(X,\Y^{n}(X))\\
	&\quad +\bigg|\int_{\Y^{n}(X)}^{Y}\Big(|t_{XY}^{n}|+|x_{XY}^{n}|+|U_{XY}^{n}|+|J_{XY}^{n}|+|K_{XY}^{n}|\Big)(X,\tilde{Y})\,d\tilde{Y}\bigg|\\
	&\leq \Big(|t_{X}^{n}|+|x_{X}^{n}|+|U_{X}^{n}|+|J_{X}^{n}|+|K_{X}^{n}|\Big)(X,\Y^{n}(X))\\
	&\quad +\bigg|\int_{\Y^{n}(X)}^{Y}B_{1}\bigg\{\Big(|t_{X}^{n}|+|x_{X}^{n}|+|U_{X}^{n}|+|J_{X}^{n}|+|K_{X}^{n}|\Big)\\
	&\hspace{86pt}\times\Big(|t_{Y}^{n}|+|x_{Y}^{n}|+|U_{Y}^{n}|+|J_{Y}^{n}|+|K_{Y}^{n}|\Big)\bigg\}(X,\tilde{Y})\,d\tilde{Y}\bigg|,
	\end{aalign}
	where we used \eqref{eq:ZXYnEst}. By Gronwall's inequality,
	\begin{align*}
	&\Big(|t_{X}^{n}|+|x_{X}^{n}|+|U_{X}^{n}|+|J_{X}^{n}|+|K_{X}^{n}|\Big)(X,Y)\\
	&\leq \Big(|t_{X}^{n}|+|x_{X}^{n}|+|U_{X}^{n}|+|J_{X}^{n}|+|K_{X}^{n}|\Big)(X,\Y^{n}(X))\\
	&\quad\times\exp\Bigg\{B_{1}\bigg|\int_{\Y^{n}(X)}^{Y}\Big(|t_{Y}^{n}|+|x_{Y}^{n}|+|U_{Y}^{n}|+|J_{Y}^{n}|+|K_{Y}^{n}|\Big)(X,\tilde{Y})\,d\tilde{Y}\bigg|\Bigg\}.
	\end{align*}
	By \eqref{eq:setH1}, \eqref{eq:setH2}, \eqref{eq:setH4} and \eqref{eq:setH5} we get
	\begin{aalign}
	\label{eq:tYnIntermediateIneq}
	|t_{Y}^{n}|+|x_{Y}^{n}|+|U_{Y}^{n}|+|J_{Y}^{n}|+|K_{Y}^{n}|&=\frac{1}{c(U^{n})}x_{Y}^{n}+x_{Y}^{n}+|U_{Y}^{n}|+J_{Y}^{n}+\frac{1}{c(U^{n})}J_{Y}^{n}\\
	&\leq(1+\kappa)(x_{Y}^{n}+J_{Y}^{n})+|U_{Y}^{n}|.
	\end{aalign}
	From \eqref{eq:setH3} we have
	\begin{equation*}
	2J_{Y}^{n}x_{Y}^{n}=(c(U^{n})U_{Y}^{n})^{2}+c(U^{n})(q^{n})^{2}\geq (c(U^{n})U_{Y}^{n})^{2},
	\end{equation*}
	and by Young's inequality,
	\begin{equation*}
	|U_{Y}^{n}|\leq \kappa\sqrt{2J_{Y}^{n}x_{Y}^{n}}\leq \frac{\kappa}{\sqrt{2}}(J_{Y}^{n}+x_{Y}^{n}),
	\end{equation*}
	which by \eqref{eq:tYnIntermediateIneq} implies
	\begin{equation*}
	|t_{Y}^{n}|+|x_{Y}^{n}|+|U_{Y}^{n}|+|J_{Y}^{n}|+|K_{Y}^{n}|\leq\bigg[1+\bigg(1+\frac{1}{\sqrt{2}}\bigg)\kappa\bigg](x_{Y}^{n}+J_{Y}^{n}).
	\end{equation*}
	Using this in \eqref{eq:tXnIntermediateIneq} we obtain
	\begin{aalign}
	\label{eq:ZXnUnifBound1}
	&\Big(|t_{X}^{n}|+|x_{X}^{n}|+|U_{X}^{n}|+|J_{X}^{n}|+|K_{X}^{n}|\Big)(X,Y)\\
	&\leq \Big(|t_{X}^{n}|+|x_{X}^{n}|+|U_{X}^{n}|+|J_{X}^{n}|+|K_{X}^{n}|\Big)(X,\Y^{n}(X))\\
	&\quad\times\exp\Bigg\{B_{2}\bigg|\int_{\Y^{n}(X)}^{Y}(x_{Y}^{n}+J_{Y}^{n})(X,\tilde{Y})\,d\tilde{Y}\bigg|\Bigg\}
	\end{aalign}
	for a new constant $B_{2}$ that only depends on $\kappa$ and $k_{1}$. Since $x^{n}$ and $J^{n}$ are nondecreasing with respect to both variables, we have
	\begin{align*}
	&\bigg|\int_{\Y^{n}(X)}^{Y}(x_{Y}^{n}+J_{Y}^{n})(X,\tilde{Y})\,d\tilde{Y}\bigg|\\
	&=\big|x^{n}(X,Y)-x^{n}(X,\Y^{n}(X))+J^{n}(X,Y)-J^{n}(X,\Y^{n}(X))\big|\\
	&\leq \big|x^{n}(X_{r},Y_{r})-x^{n}(X_{l},Y_{l})\big|+\big|J^{n}(X_{r},Y_{r})-J^{n}(X_{l},Y_{l})\big|\\
	&=\big|x^{n}(\X^{n}(s_{r}),\Y^{n}(s_{r}))-x^{n}(\X^{n}(s_{l}),\Y^{n}(s_{l}))\big|\\
	&\quad+\big|J^{n}(\X^{n}(s_{r}),\Y^{n}(s_{r}))-J^{n}(\X^{n}(s_{l}),\Y^{n}(s_{l}))\big| \\
	&=\big|\Z_{2}^{n}(s_{r})-\Z_{2}^{n}(s_{l})\big|+\big|\Z_{4}^{n}(s_{r})-\Z_{4}^{n}(s_{l})\big|.
	\end{align*}
	Since
	\begin{align*}
	\Z_{2}^{n}(s_{r})-\Z_{2}^{n}(s_{l})&=\Z_{2}^{n}(s_{r})-\Z_{2}(s_{r})+\Z_{2}(s_{r})-\Z_{2}^{n}(s_{l})+\Z_{2}(s_{l})-\Z_{2}(s_{l})\\
	&=\Z_{2}^{n}(s_{r})-\Z_{2}(s_{r})+\Z_{2}^{a}(s_{r})+s_{r}-\Z_{2}^{n}(s_{l})+\Z_{2}(s_{l})-\Z_{2}^{a}(s_{l})-s_{l}
	\end{align*}
	and
	\begin{equation*}
	\Z_{4}^{n}(s_{r})-\Z_{4}^{n}(s_{l})=\Z_{4}^{n}(s_{r})-\Z_{4}(s_{r})+\Z_{4}^{a}(s_{r})-\Z_{4}^{n}(s_{l})+\Z_{4}(s_{l})-\Z_{4}^{a}(s_{l})
	\end{equation*}
	we end up with
	\begin{align*}
	&\bigg|\int_{\Y^{n}(X)}^{Y}(x_{Y}^{n}+J_{Y}^{n})(X,\tilde{Y})\,d\tilde{Y}\bigg|\\
	&\leq 2||\Z_{2}-\Z_{2}^{n}||_{L^{\infty}([s_{l},s_{r}])}+2||\Z_{2}^{a}||_{L^{\infty}([s_{l},s_{r}])}+s_{r}-s_{l}\\
	&\quad+2||\Z_{4}-\Z_{4}^{n}||_{L^{\infty}([s_{l},s_{r}])}+2||\Z_{4}^{a}||_{L^{\infty}([s_{l},s_{r}])}.
	\end{align*}
	The convergence $\Z_{i}^{n}\rightarrow\Z_{i}$ in $L^{\infty}([s_{l},s_{r}])$ implies that for every $\varepsilon>0$ we can choose $n$ so large that $||\Z_{i}-\Z_{i}^{n}||_{L^{\infty}([s_{l},s_{r}])}\leq\varepsilon$, $i=1,2$. Hence,
	\begin{equation}
	\label{eq:ZYnUnifBound2}
	\bigg|\int_{\Y^{n}(X)}^{Y}(x_{Y}^{n}+J_{Y}^{n})(X,\tilde{Y})\,d\tilde{Y}\bigg|\leq 4\varepsilon+2||\Z_{2}^{a}||_{L^{\infty}([s_{l},s_{r}])}+2||\Z_{4}^{a}||_{L^{\infty}([s_{l},s_{r}])}+s_{r}-s_{l}.
	\end{equation}
	We use \eqref{eq:ZYnUnifBound2} and $Z_{i,X}^{n}(X,\Y^{n}(X))=\V_{i}^{n}(X)$ in \eqref{eq:ZXnUnifBound1} and obtain
	\begin{equation}
	\label{eq:ZXnUnifBound2}
	\Big(|t_{X}^{n}|+|x_{X}^{n}|+|U_{X}^{n}|+|J_{X}^{n}|+|K_{X}^{n}|\Big)(X,Y)\leq B_{3}\sum_{l=1}^{5}\big|\V_{l}^{n}(X)\big|
	\end{equation}
	where $B_{3}$ only depends on $\kappa$, $k_{1}$ and $|||\Theta|||_{\G(\Omega)}$. Since $\mu_{0}^{n}$ is absolutely continuous in $[x_{l},x_{r}]$ we obtain as in \eqref{eq:x1derexpr} that
	\begin{equation*}
	0\leq (x_{1}^{n})'(X)\leq 1
	\end{equation*}
	for all $X\in[X_{l},X_{r}]$. Using \eqref{eq:mapFtoGV1}, \eqref{eq:mapFtoGV2}, \eqref{eq:mapFtoGV4}, \eqref{eq:mapFtoGV5}, \eqref{eq:mapfromDtoF3} and \eqref{eq:mapfromDtoF6} we get
	\begin{equation*}
	0\leq\V_{1}^{n}(X)\leq\frac{1}{2}\kappa, \quad 0\leq\V_{2}^{n}(X)\leq\frac{1}{2}, \quad 0\leq\V_{4}^{n}(X)\leq 1, \quad 0\leq\V_{5}^{n}(X)\leq\kappa
	\end{equation*}
	for all $X\in[X_{l},X_{r}]$. From \eqref{eq:mapFtoGV3} and \eqref{eq:setFrel3} we have
	\begin{equation*}
	|\V_{3}^{n}(X)|\leq \frac{1}{c(U_{1}^{n}(X))}\sqrt{(x_{1}^{n})'(X)(J_{1}^{n})'(X)}\leq \kappa.
	\end{equation*}
	By inserting these estimates in \eqref{eq:ZXnUnifBound2} we get
	\begin{equation}
	\label{eq:ZXnUnifBound3}
	\Big(|t_{X}^{n}|+|x_{X}^{n}|+|U_{X}^{n}|+|J_{X}^{n}|+|K_{X}^{n}|\Big)(X,Y)\leq B_{4} \quad \text{ for all } (X,Y)\in \Omega
	\end{equation}
	for a new constant $B_{4}$, which only depends on $\kappa$, $k_{1}$ and $|||\Theta|||_{\G(\Omega)}$. Similarly we can show that there exist constants $B_{5}$, $B_{6}$ and $B_{7}$, which only depend on $\kappa$, $k_{1}$ and $|||\Theta|||_{\G(\Omega)}$, such that for all $(X,Y)\in \Omega$,
	\begin{equation}
	\label{eq:ZYnUnifBound}
	\Big(|t_{Y}^{n}|+|x_{Y}^{n}|+|U_{Y}^{n}|+|J_{Y}^{n}|+|K_{Y}^{n}|\Big)(X,Y)\leq B_{5},
	\end{equation}
	\begin{equation}
	\label{eq:ZXUnifBound}
	\Big(|t_{X}|+|x_{X}|+|U_{X}|+|J_{X}|+|K_{X}|\Big)(X,Y)\leq B_{6}
	\end{equation}
	and
	\begin{equation}
	\label{eq:ZYUnifBound}
	\Big(|t_{Y}|+|x_{Y}|+|U_{Y}|+|J_{Y}|+|K_{Y}|\Big)(X,Y)\leq B_{7}.
	\end{equation}
	From \eqref{eq:ZXYnEst} we get for all $(X,Y)\in \Omega$ that
	\begin{equation}
	\label{eq:tXYBound}
	\Big(|t_{XY}^{n}|+|x_{XY}^{n}|+|U_{XY}^{n}|+|J_{XY}^{n}|+|K_{XY}^{n}|\Big)(X,Y)\leq D
	\end{equation}
	for a constant $D$ that depends on $\kappa$, $k_{1}$ and $|||\Theta|||_{\G(\Omega)}$.
	From \eqref{eq:ZXnUnifBound3}-\eqref{eq:ZYUnifBound} we get in \eqref{eq:tXYtXYnDiff1}, for all $(X,Y)\in \Omega$ that
	\begin{aalign}
	\label{eq:tXYtXYnDiff2}
	&\big|t_{XY}-t^{n}_{XY}\big|(X,Y)\\
	&\leq C_{1}\Big(|U-U^{n}|+|t_{X}-t_{X}^{n}|+|U_{X}-U_{X}^{n}|+|t_{Y}-t_{Y}^{n}|+|U_{Y}-U_{Y}^{n}|\Big)(X,Y),
	\end{aalign}
	where $C_{1}$ depends on $\kappa$, $k_{1}$, $k_{2}$ and $|||\Theta|||_{\G(\Omega)}$. 	
	
	Using the estimates \eqref{eq:tXYtXYnDiff2} and \eqref{eq:tXYBound} in \eqref{eq:tXtXnDiff} we get
	\begin{align*}
	&|t_{X}(X,Y)-t^{n}_{X}(X,Y)|\\
	&\leq|\V_{1}(X)-\V_{1}^{n}(X)|+D|\Y\circ\X^{-1}(X)-\Y^{n}\circ(\X^{n})^{-1}(X)|\\
	&\quad+C_{1}\bigg|\int_{\Y(X)}^{Y}\big(|U-U^{n}|+|t_{X}-t_{X}^{n}|+|U_{X}-U_{X}^{n}|\\
	&\hspace{80pt}+|t_{Y}-t_{Y}^{n}|+|U_{Y}-U_{Y}^{n}|\big)(X,\tilde{Y})\,d\tilde{Y}\bigg|.
	\end{align*}
	To write the estimates more compactly, we denote $Z=(Z_{1},Z_{2},Z_{3},Z_{4},Z_{5})=(t,x,U,J,K)$, and similar for $Z^{n}$. With this notation we get from the above estimate that
	\begin{aalign}
	\label{eq:V1XV1nXDiff}
	&|Z_{1,X}(X,Y)-Z_{1,X}^{n}(X,Y)|\\
	&\leq|\V_{1}(X)-\V_{1}^{n}(X)|+D|\Y\circ\X^{-1}(X)-\Y^{n}\circ(\X^{n})^{-1}(X)|\\
	&\quad+C_{1}\bigg|\int_{\Y(X)}^{Y}\bigg[|Z_{3}-Z_{3}^{n}|+\sum_{i=1}^{5}\big(|Z_{i,X}-Z_{i,X}^{n}|+|Z_{i,Y}-Z_{i,Y}^{n}|\big)\bigg](X,\tilde{Y})\,d\tilde{Y}\bigg|.
	\end{aalign}  
	By the same procedure as above, we obtain
	\begin{aalign}
	\label{eq:VjXVjnXDiff}
	&|Z_{j,X}(X,Y)-Z_{j,X}^{n}(X,Y)|\\
	&\leq|\V_{j}(X)-\V_{j}^{n}(X)|+D|\Y\circ\X^{-1}(X)-\Y^{n}\circ(\X^{n})^{-1}(X)|\\
	&\quad+C_{j}\bigg|\int_{\Y(X)}^{Y}\bigg[|Z_{3}-Z_{3}^{n}|+\sum_{i=1}^{5}\big(|Z_{i,X}-Z_{i,X}^{n}|+|Z_{i,Y}-Z_{i,Y}^{n}|\big)\bigg](X,\tilde{Y})\,d\tilde{Y}\bigg|.  
	\end{aalign}
	for $j=2,3,4,5$, where $C_{j}$ depends on $\kappa$, $k_{1}$, $k_{2}$ and $|||\Theta|||_{\G(\Omega)}$, and $D$ depends on $\kappa$, $k_{1}$ and $|||\Theta|||_{\G(\Omega)}$.
	
	Let us find similar estimates for the partial derivatives with respect to $Y$. Now we subtract
	\begin{equation*}
	t_{Y}(\X(Y),Y)=t_{Y}(X,Y)+\int_{X}^{\X(Y)}t_{XY}(\tilde{X},Y)\,d\tilde{X}
	\end{equation*}
	and
	\begin{equation*}
	t^{n}_{Y}(\X^{n}(Y),Y)=t^{n}_{Y}(X,Y)+\int_{X}^{\X^{n}(Y)}t^{n}_{XY}(\tilde{X},Y)\,d\tilde{X}
	\end{equation*}
	to get
	\begin{align*}
	t_{Y}(X,Y)-t^{n}_{Y}(X,Y)&=\W_{1}(Y)-\W_{1}^{n}(Y)+\int_{\X(Y)}^{\X^{n}(Y)}t^{n}_{XY}(\tilde{X},Y)\,d\tilde{X}\\
	&\quad-\int_{X}^{\X(Y)}(t_{XY}(\tilde{X},Y)-t^{n}_{XY}(\tilde{X},Y))\,d\tilde{X},
	\end{align*}
	and we obtain
	\begin{align*}
	&|t_{Y}(X,Y)-t^{n}_{Y}(X,Y)|\\
	&\leq|\W_{1}(Y)-\W_{1}^{n}(Y)|+D|\X\circ\Y^{-1}(Y)-\X^{n}\circ(\Y^{n})^{-1}(Y)|\\
	&\quad+C_{1}\bigg|\int_{X}^{\X(Y)}\big(|U-U^{n}|+|t_{X}-t_{X}^{n}|+|U_{X}-U_{X}^{n}|\\
	&\hspace{85pt}+|t_{Y}-t_{Y}^{n}|+|U_{Y}-U_{Y}^{n}|\big)(\tilde{X},Y)\,d\tilde{X}\bigg|,
	\end{align*}
	which in the alternative notation takes the form
	\begin{aalign}
	\label{eq:V1YV1YnDiff}
	&|Z_{1,Y}(X,Y)-Z_{1,Y}^{n}(X,Y)|\\
	&\leq|\W_{1}(Y)-\W_{1}^{n}(Y)|+D|\X\circ\Y^{-1}(Y)-\X^{n}\circ(\Y^{n})^{-1}(Y)|\\
	&\hspace{8pt}+C_{1}\bigg|\int_{X}^{\X(Y)}\bigg[|Z_{3}-Z_{3}^{n}|+\sum_{i=1}^{5}\big(|Z_{i,X}-Z_{i,X}^{n}|+|Z_{i,Y}-Z_{i,Y}^{n}|\big)\bigg](\tilde{X},Y)\,d\tilde{X}\bigg|.
	\end{aalign}
	Similarly, we get
	\begin{aalign}
	\label{eq:VjYVjYnDiff}
	&|Z_{j,Y}(X,Y)-Z_{j,Y}^{n}(X,Y)|\\
	&\leq|\W_{j}(Y)-\W_{j}^{n}(Y)|+D|\X\circ\Y^{-1}(Y)-\X^{n}\circ(\Y^{n})^{-1}(Y)|\\
	&\hspace{8pt}+C_{j}\bigg|\int_{X}^{\X(Y)}\bigg[|Z_{3}-Z_{3}^{n}|+\sum_{i=1}^{5}\big(|Z_{i,X}-Z_{i,X}^{n}|+|Z_{i,Y}-Z_{i,Y}^{n}|\big)\bigg](\tilde{X},Y)\,d\tilde{X}\bigg|
	\end{aalign}
	for $j=2,3,4,5$. 
	
	We have
	\begin{equation*}
	U(X,Y)=U(X,\Y(X))+\int_{\Y(X)}^{Y}U_{Y}(X,\tilde{Y})\,d\tilde{Y}
	\end{equation*}
	and
	\begin{equation*}
	U^{n}(X,Y)=U^{n}(X,\Y^{n}(X))+\int_{\Y^{n}(X)}^{Y}U^{n}_{Y}(X,\tilde{Y})\,d\tilde{Y},
	\end{equation*}
	so that
	\begin{align*}
	U(X,Y)-U^{n}(X,Y)&=U(X,\Y(X))-U^{n}(X,\Y^{n}(X))
	-\int_{\Y^{n}(X)}^{\Y(X)}U^{n}_{Y}(X,\tilde{Y})\,d\tilde{Y}\\
	&\quad+\int_{\Y(X)}^{Y}(U_{Y}(X,\tilde{Y})-U^{n}_{Y}(X,\tilde{Y}))\,d\tilde{Y}.
	\end{align*}
	To any $X$ in $[X_l, X_r]$, there exist unique $s$ and $s^{n}$ in $[s_l,s_r]$ such that $X=\X(s)$ and $X=\X^{n}(s^{n})$ and we can write
	\begin{align*}
	U(X,\Y(X))-U^{n}(X,\Y^{n}(X))&=U(\X(s),\Y(s))-U^{n}(\X^{n}(s^n),\Y^{n}(s^{n}))\\
	&=\Z_{3}(s)-\Z_{3}^{n}(s^{n})\\
	&=U_{1}(\X(s))-U_{1}^{n}(\X^{n}(s^{n}))\\
	&=U_{1}(X)-U_{1}^{n}(X).
	\end{align*}
	Therefore, 
	\begin{align*}
	&|U(X,Y)-U^{n}(X,Y)|\\
	&\leq ||U_{1}-U_{1}^{n}||_{L^{\infty}([X_{l},X_{r}])}+B_{5}|\Y\circ\X^{-1}(X)-\Y^{n}\circ(\X^{n})^{-1}(X)|\\
	&\quad+\bigg|\int_{\Y(X)}^{Y}|U_{Y}(X,\tilde{Y})-U^{n}_{Y}(X,\tilde{Y})|\,d\tilde{Y}\bigg|,
	\end{align*}	
	where we used \eqref{eq:ZYnUnifBound}. In the new notation this implies	
	\begin{aalign}
	\label{eq:V3V3nDiff}
	&|Z_{3}(X,Y)-Z^{n}_{3}(X,Y)|\\
	&\leq ||U_{1}-U_{1}^{n}||_{L^{\infty}([X_{l},X_{r}])}+B_{5}|\Y\circ\X^{-1}(X)-\Y^{n}\circ(\X^{n})^{-1}(X)|\\
	&\quad+\bigg|\int_{\Y(X)}^{Y}\bigg[|Z_{3}-Z_{3}^{n}|+\sum_{i=1}^{5}\big(|Z_{i,X}-Z_{i,X}^{n}|+|Z_{i,Y}-Z_{i,Y}^{n}|\big)\bigg](X,\tilde{Y})\,d\tilde{Y}\bigg|.
	\end{aalign}
	
	From \eqref{eq:V1XV1nXDiff}--\eqref{eq:VjYVjYnDiff} we get by using H\"{o}lder's inequality\footnote{The factor $3$ comes from that we first split the right-hand side in three terms, the factor $33=3\cdot 11$ comes from splitting the $11$ terms in the integral.},
	\begin{aalign}
	\label{eq:VjXVjnXDiff2}
	&\big[Z_{j,X}(X,Y)-Z_{j,X}^{n}(X,Y)\big]^{2}\\
	&\leq 3\big[\V_{j}(X)-\V_{j}^{n}(X)\big]^{2}+3D^{2}\big[\Y\circ\X^{-1}(X)-\Y^{n}\circ(\X^{n})^{-1}(X)\big]^{2}\\
	&\quad+33C_{j}^{2}|Y-\Y(X)|\bigg|\int_{\Y(X)}^{Y}\bigg[\big[Z_{3}-Z_{3}^{n}\big]^{2}
	+\sum_{i=1}^{5}\Big(\big[Z_{i,X}-Z_{i,X}^{n}\big]^{2}\\
	&\hspace{230pt}+\big[Z_{i,Y}-Z_{i,Y}^{n}\big]^{2}\Big)\bigg](X,\tilde{Y})\,d\tilde{Y}\bigg|
	\end{aalign}
	and
	\begin{aalign}
	\label{eq:V1YV1YnDiff2}
	&\big[Z_{j,Y}(X,Y)-Z_{j,Y}^{n}(X,Y)\big]^{2}\\
	&\leq 3\big[\W_{j}(Y)-\W_{j}^{n}(Y)\big]^{2}+3D^{2}\big[\X\circ\Y^{-1}(Y)-\X^{n}\circ(\Y^{n})^{-1}(Y)\big]^{2}\\
	&\quad+33C_{j}^{2}|\X(Y)-X|\bigg|\int_{X}^{\X(Y)}\bigg[\big[Z_{3}-Z_{3}^{n}\big]^{2}+\sum_{i=1}^{5}\Big(\big[Z_{i,X}-Z_{i,X}^{n}\big]^{2}\\
	&\hspace{228pt}+\big[Z_{i,Y}-Z_{i,Y}^{n}\big]^{2}\Big)\bigg](\tilde{X},Y)\,d\tilde{X}\bigg|
	\end{aalign}
	for $j=1,\dots,5$. Similarly, from \eqref{eq:V3V3nDiff} we get
	\begin{aalign}
	\label{eq:V3V3nDiff2}
	&\big[Z_{3}(X,Y)-Z^{n}_{3}(X,Y)\big]^{2}\\
	&\leq 3||U_{1}-U_{1}^{n}||_{L^{\infty}([X_{l},X_{r}])}^{2}+3B_{5}^{2}\big[\Y\circ\X^{-1}(X)-\Y^{n}\circ(\X^{n})^{-1}(X)\big]^{2}\\
	&\quad+33|Y-\Y(X)|\bigg|\int_{\Y(X)}^{Y}\bigg[\big[Z_{3}-Z_{3}^{n}\big]^{2}+\sum_{i=1}^{5}\Big(\big[Z_{i,X}-Z_{i,X}^{n}\big]^{2}\\
	&\hspace{227pt}+\big[Z_{i,Y}-Z_{i,Y}^{n}\big]^{2}\Big)\bigg](X,\tilde{Y})\,d\tilde{Y}\bigg|.
	\end{aalign}
	
	We combine \eqref{eq:VjXVjnXDiff2}-\eqref{eq:V3V3nDiff2} and get for all $(X,Y)\in \Omega$ that
	\begin{aalign}
	\label{eq:GronCond}
	&\bigg[\big[Z_{3}-Z_{3}^{n}\big]^{2}+\sum_{i=1}^{5}\Big(\big[Z_{i,X}-Z_{i,X}^{n}\big]^{2}+\big[Z_{i,Y}-Z_{i,Y}^{n}\big]^{2}\Big)\bigg](X,Y)\\
	&\leq 
	3||U_{1}-U_{1}^{n}||_{L^{\infty}([X_{l},X_{r}])}^{2}+3(B_{5}^{2}+5D^2)\big[\Y\circ\X^{-1}(X)-\Y^{n}\circ(\X^{n})^{-1}(X)\big]^{2}\\
	&\quad+15D^2\big[\X\circ\Y^{-1}(Y)-\X^{n}\circ(\Y^{n})^{-1}(Y)\big]^{2}\\
	&\quad+3\sum_{j=1}^{5}\big[\V_{j}(X)-\V_{j}^{n}(X)\big]^{2}+3\sum_{j=1}^{5}\big[\W_{j}(Y)-\W_{j}^{n}(Y)\big]^{2}\\
	&\quad+33(1+C)|Y-\Y(X)|\bigg|\int_{\Y(X)}^{Y}\bigg[\big[Z_{3}-Z_{3}^{n}\big]^{2}+\sum_{i=1}^{5}\Big(\big[Z_{i,X}-Z_{i,X}^{n}\big]^{2}\\
	&\hspace{230pt}+\big[Z_{i,Y}-Z_{i,Y}^{n}\big]^{2}\Big)\bigg](X,\tilde{Y})\,d\tilde{Y}\bigg|\\
	&\quad+33C|\X(Y)-X|\bigg|\int_{X}^{\X(Y)}\bigg[\big[Z_{3}-Z_{3}^{n}\big]^{2}+\sum_{i=1}^{5}\Big(\big[Z_{i,X}-Z_{i,X}^{n}\big]^{2}\\
	&\hspace{225pt}+\big[Z_{i,Y}-Z_{i,Y}^{n}\big]^{2}\Big)\bigg](\tilde{X},Y)\,d\tilde{X}\bigg|,
	\end{aalign}
	where we introduced $C=\sum_{j=1}^{5}C_{j}^{2}$. 
	
	At this point we need the following Gronwall inequality, which can be found in \cite{FiMiPe}, see chapter "Gronwall inequalities in higher dimensions". For completeness, we state and prove the inequality in the following lemma.
	
	\begin{lemma}
		\label{lemma:Gron}
		Consider a nonnegative function $u(x,y)$ in the region $x\geq 0$ and $y\geq 0$. 
		Assume that 
		\begin{equation*}
		u(x,y)\leq c+a\int_{0}^{x}u(r,y)\,dr+b\int_{0}^{y}u(x,s)\,ds
		\end{equation*}
		where $a$, $b$ and $c$ are nonnegative constants. Then we have
		\begin{equation*}
		u(x,y)\leq ce^{2ax+2by+abxy}.
		\end{equation*}
	\end{lemma}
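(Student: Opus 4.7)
The plan is to prove the inequality by iteration of the monotone affine operator
\[
(Tv)(x,y) \;=\; c + a\int_{0}^{x} v(r,y)\,dr + b\int_{0}^{y} v(x,s)\,ds
\]
acting on nonnegative continuous functions on $[0,\infty)^{2}$. The hypothesis reads exactly $u \le Tu$, and since $T$ is monotone (meaning $v_{1}\le v_{2}$ pointwise implies $Tv_{1}\le Tv_{2}$), iteration yields $u \le T^{n}u$ for every $n\ge 0$. On any compact rectangle $[0,X]\times[0,Y]$ the continuous function $u$ is bounded by some constant $M\ge c$, and monotonicity upgrades this to $u\le T^{n}u\le T^{n}M$. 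The proof is therefore reduced to finding a uniform (in $n$) upper bound on $T^{n}M$ of the claimed exponential form.

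The next step would be to show, by induction on $n$, that
\[
(T^{n}M)(x,y) \;\le\; M\sum_{k=0}^{n}\frac{(2ax+2by+abxy)^{k}}{k!},
\]
from which the conclusion $u(x,y)\le M\,e^{2ax+2by+abxy}$ on the rectangle follows by sending $n\to\infty$. A short refinement, exploiting the affine structure $T = L + c\,\mathbf{1}$ where $L$ is the linear integral part, allows one to split $T^{n}u$ into iterates $L^{n}u$ acting on the (bounded) initial datum and partial sums $\sum_{k<n}L^{k}c$ generated by the constant forcing. The former vanishes in the appropriate limit once divided out, while the latter yields precisely the claimed bound with $c$ in place of $M$.

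The main obstacle is the inductive step on $T^{n}M$: each application of $T$ integrates in both $x$ and $y$, producing new cross terms of the form $x^{i}y^{j}$ whose coefficients have to be matched against those in the Taylor expansion of $e^{2ax+2by+abxy}$. The key elementary estimates needed are of the type
\[
a\int_{0}^{x}(2ar+2by+abry)^{k}\,dr \;\le\; \frac{1}{2}(2ax+2by+abxy)^{k+1},
\]
and the symmetric one in $y$; these close the induction by giving, at each step, the factor $1/(k+1)!$ required to pass from the $k$\nobreakdash-th to the $(k+1)$\nobreakdash-th term of the exponential series. The factor $2$ in the linear exponents $2ax$ and $2by$ (rather than the naive $ax+by$) is exactly what creates the slack needed for these bounds to close cleanly; a sharper exponent would require considerably more delicate (Bessel-function-type) analysis.
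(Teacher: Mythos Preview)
Your iteration approach is correct and complete in outline; it is genuinely different from the paper's proof. The paper argues by introducing the antiderivative $G(x,y)=\int_{0}^{y}u(x,s)\,ds$, integrating the resulting one-variable differential inequality, and then defining an auxiliary majorant $g$ for which one can show $g_{x}\le (2a+aby)\,g$; a final integration in $x$ gives $u\le g\le c\,e^{2ax+by+abxy}$, and by symmetry $u\le c\,e^{ax+2by+abxy}$, either of which is stronger than the stated bound. Your route---iterating the monotone affine operator $T=L+c\mathbf{1}$ and proving $(L^{n}\mathbf{1})(x,y)\le P(x,y)^{n}/n!$ with $P=2ax+2by+abxy$---is more structural and arguably cleaner, at the cost of yielding only the combined exponent. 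Both methods use the same mechanism at heart: the factor $2$ in $2ax$ comes from $\partial_{r}P\ge 2a$, which is exactly what makes the paper's final inequality $g_{x}\le (2a+aby)g$ close.

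One correction: the displayed estimate you intend is
\[
a\int_{0}^{x}(2ar+2by+abry)^{k}\,dr \;\le\; \frac{1}{2(k+1)}\,(2ax+2by+abxy)^{k+1},
\]
not $\tfrac{1}{2}(\cdots)^{k+1}$; you clearly know this since you then invoke the factor $1/(k+1)!$, but the line as written is false. With this fixed the induction on $L^{n}\mathbf{1}$ closes immediately, $L^{n}u\le M\,P^{n}/n!\to 0$ on compact rectangles, and $\sum_{k<n}L^{k}c\to c\,e^{P}$, giving the claim.
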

	
	\begin{proof}
		Set
		\begin{equation*}
		G(x,y)=\int_{0}^{y}u(x,s)\,ds.
		\end{equation*}
		We have $G_{y}(x,y)=u(x,y)$, so that
		\begin{equation*}
		G_{y}(x,y)\leq c+a\int_{0}^{x}u(r,y)\,dr+bG(x,y)
		\end{equation*}
		and
		\begin{equation*}
		\frac{d}{dy}\Big(G(x,y)e^{-by}\Big)\leq \bigg(c+a\int_{0}^{x}u(r,y)\,dr\bigg)e^{-by}. 
		\end{equation*}
		Integration yields
		\begin{equation}
		\label{eq:Gbound}
		G(x,y)\leq \frac{c}{b}(e^{by}-1)+a\int_{0}^{y}\int_{0}^{x}u(r,s)e^{b(y-s)}\,dr\,ds.
		\end{equation}
		Therefore we get
		\begin{align*}
		u(x,y)&\leq c+a\int_{0}^{x}u(r,y)\,dr+bG(x,y)\\ 
		&\leq ce^{by}+a\int_{0}^{x}u(r,y)\,dr+ab\int_{0}^{y}\int_{0}^{x}u(r,s)e^{b(y-s)}\,dr\,ds.
		\end{align*}
		Denote
		\begin{equation*}
		g(x,y)=ce^{by}+a\int_{0}^{x}u(r,y)\,dr+ab\int_{0}^{y}\int_{0}^{x}u(r,s)e^{b(y-s)}\,dr\,ds
		\end{equation*}
		which implies
		\begin{equation}
		\label{eq:uBoundg}
		u(x,y)\leq g(x,y).
		\end{equation}
		From \eqref{eq:Gbound} we have
		\begin{aalign}
		\label{eq:GBoundg}
		G(x,y)&\leq \frac{c}{b}(e^{by}-1)+a\int_{0}^{y}\int_{0}^{x}u(r,s)e^{b(y-s)}\,dr\,ds\\
		&\leq \frac{1}{b}\bigg(ce^{by}+ab\int_{0}^{y}\int_{0}^{x}u(r,s)e^{b(y-s)}\,dr\,ds\bigg)\\
		&\leq \frac{1}{b}\bigg(ce^{by}+a\int_{0}^{x}u(r,y)\,dr+ab\int_{0}^{y}\int_{0}^{x}u(r,s)e^{b(y-s)}\,dr\,ds\bigg)\\
		&=\frac{1}{b}g(x,y).
		\end{aalign}
		We compute
		\begin{equation*}
		g_{x}(x,y)=au(x,y)+ab\int_{0}^{y}u(x,s)e^{b(y-s)}\,ds.
		\end{equation*}
		Integration by parts yields
		\begin{equation}
		\label{eq:gxIntByPart}
		g_{x}(x,y)=au(x,y)+ab\int_{0}^{y}u(x,s)\,ds+ab^{2}\int_{0}^{y}\int_{0}^{s}u(x,l)e^{b(y-s)}\,dl\,ds.
		\end{equation}
		In the last integral on the right-hand side we use
		\begin{equation*}
		\int_{0}^{s}u(x,l)\,dl=G(x,s)\leq \frac{c}{b}(e^{bs}-1)+a\int_{0}^{s}\int_{0}^{x}u(r,t)e^{b(s-t)}\,dr\,dt
		\end{equation*}
		and get
		\begin{aalign}
		\label{eq:MainBound}	
		&\int_{0}^{y}\int_{0}^{s}u(x,l)e^{b(y-s)}\,dl\,ds\\
		&\leq\int_{0}^{y}e^{b(y-s)}\bigg(\frac{c}{b}(e^{bs}-1)+a\int_{0}^{s}\int_{0}^{x}u(r,t)e^{b(s-t)}\,dr\,dt\bigg)ds\\
		&\leq
		\int_{0}^{y}e^{b(y-s)}\bigg(\frac{c}{b}e^{bs}+a\int_{0}^{y}\int_{0}^{x}u(r,t)e^{b(s-t)}\,dr\,dt\bigg)ds\\
		&=\int_{0}^{y}\bigg(\frac{c}{b}e^{by}+a\int_{0}^{y}\int_{0}^{x}u(r,t)e^{b(y-t)}\,dr\,dt\bigg)ds\\
		&=\frac{c}{b}ye^{by}+ay\int_{0}^{y}\int_{0}^{x}u(r,t)e^{b(y-t)}\,dr\,dt.
		\end{aalign}
		Using the estimates \eqref{eq:uBoundg}, \eqref{eq:GBoundg} and \eqref{eq:MainBound} in \eqref{eq:gxIntByPart} gives
		\begin{align*}
		g_{x}(x,y)&\leq ag(x,y)+ab\frac{1}{b}g(x,y)+abcye^{by}+a^{2}b^{2}y\int_{0}^{y}\int_{0}^{x}u(r,t)e^{b(y-t)}\,dr\,dt\\
		&=2ag(x,y)+aby\bigg(ce^{by}+ab\int_{0}^{y}\int_{0}^{x}u(r,t)e^{b(y-t)}\,dr\,dt\bigg)\\
		&\leq 2ag(x,y)+aby\bigg(ce^{by}+a\int_{0}^{x}u(r,y)\,dr+ab\int_{0}^{y}\int_{0}^{x}u(r,t)e^{b(y-t)}\,dr\,dt\bigg)\\
		&=2ag(x,y)+abyg(x,y).
		\end{align*} 
		Integration leads to
		\begin{equation*}
		g(x,y)\leq g(0,y)e^{2ax+abxy}=ce^{2ax+by+abxy}
		\end{equation*}
		and using \eqref{eq:uBoundg} finally implies
		\begin{equation*}
		u(x,y)\leq ce^{2ax+by+abxy}.
		\end{equation*}
		If we instead considered 
		\begin{equation*}
		\tilde G(x,y)=\int_{0}^{x}u(r,y)\,dr
		\end{equation*}
		we would have ended up with
		\begin{equation*}
		u(x,y)\leq ce^{ax+2by+abxy}.
		\end{equation*}
	\end{proof}
	
	We return to \eqref{eq:GronCond}. Using Lemma \ref{lemma:Gron} we get
	\begin{aalign}
	\label{eq:Gron1}
	&\bigg[\big[Z_{3}-Z_{3}^{n}\big]^{2}+\sum_{i=1}^{5}\Big(\big[Z_{i,X}-Z_{i,X}^{n}\big]^{2}+\big[Z_{i,Y}-Z_{i,Y}^{n}\big]^{2}\Big)\bigg](X,Y)\\
	&\leq \Bigg\{3||U_{1}-U_{1}^{n}||_{L^{\infty}([X_{l},X_{r}])}^{2}\\
	&\hspace{23pt}+3(B_{5}^{2}+5D^2)\big[\Y\circ\X^{-1}(X)-\Y^{n}\circ(\X^{n})^{-1}(X)\big]^{2}\\
	&\hspace{23pt}+15D^2\big[\X\circ\Y^{-1}(Y)-\X^{n}\circ(\Y^{n})^{-1}(Y)\big]^{2}\\
	&\hspace{23pt}+3\sum_{j=1}^{5}\big[\V_{j}(X)-\V_{j}^{n}(X)\big]^{2}+3\sum_{j=1}^{5}\big[\W_{j}(Y)-\W_{j}^{n}(Y)\big]^{2}\Bigg\}\\
	&\quad \times \exp\bigg\{66C\big[\X(Y)-X\big]^{2}+66(1+C)\big[Y-\Y(X)\big]^{2}\\
	&\hspace{53pt}+33^{2}C(1+C)\big[\X(Y)-X\big]^{2}\big[Y-\Y(X)\big]^{2}\bigg\}.
	\end{aalign}
	Since all the differences appearing in the exponential function are bounded by either $X_{r}-X_{l}$ or $Y_{r}-Y_{l}$ we can find a new constant $K$ which depends on $\kappa$, $k_{1}$, $k_{2}$, $|||\Theta|||_{\G(\Omega)}$ and the size of $\Omega$ such that	
	\begin{align*}
	&\bigg[\big[Z_{3}-Z_{3}^{n}\big]^{2}+\sum_{i=1}^{5}\Big(\big[Z_{i,X}-Z_{i,X}^{n}\big]^{2}+\big[Z_{i,Y}-Z_{i,Y}^{n}\big]^{2}\Big)\bigg](X,Y)\\
	&\leq K\Bigg\{
	||U_{1}-U_{1}^{n}||_{L^{\infty}([X_{l},X_{r}])}^{2}+\sum_{j=1}^{5}\Big(\big[\V_{j}(X)-\V_{j}^{n}(X)\big]^{2}+\big[\W_{j}(Y)-\W_{j}^{n}(Y)\big]^{2}\Big)\\
	&\hspace{33pt}+\big[\Y\circ\X^{-1}(X)-\Y^{n}\circ(\X^{n})^{-1}(X)\big]^{2}
	+\big[\X\circ\Y^{-1}(Y)-\X^{n}\circ(\Y^{n})^{-1}(Y)\big]^{2}\Bigg\},
	\end{align*}
	and we have proved the claim \eqref{eq:Gron}.
	
	From \eqref{eq:Gron} we obtain an estimate for the difference $Z_{i}(X,2s-X)-Z_{i}^{n}(X,2s-X)$ for $i=1,\dots,4$. We have
	\begin{align*}
	&Z_{i}(X,2s-X)-Z_{i}^{n}(X,2s-X)\\
	&=Z_{i}(\X(s),2s-\X(s))+\int_{\X(s)}^{X}(Z_{i,X}-Z_{i,Y})(\xi,2s-\xi)\,d\xi\\
	&\quad -Z_{i}^{n}(\X^{n}(s),2s-\X^{n}(s))-\int_{\X^{n}(s)}^{X}(Z_{i,X}^{n}-Z_{i,Y}^{n})(\xi,2s-\xi)\,d\xi\\
	&=\Z_{i}(s)-\Z_{i}^{n}(s)+
	\int_{\X(s)}^{X}\big[(Z_{i,X}-Z_{i,X}^{n})-(Z_{i,Y}-Z_{i,Y}^{n})\big](\xi,2s-\xi)\,d\xi\\
	&\quad -\int_{\X^{n}(s)}^{\X(s)}(Z_{i,X}^{n}-Z_{i,Y}^{n})(\xi,2s-\xi)\,d\xi,
	\end{align*}
	which implies by \eqref{eq:ZXnUnifBound3}, \eqref{eq:ZYnUnifBound}, and the Cauchy--Schwarz inequality, that
	\begin{aalign}
	\label{eq:IntEst0}
	&\big|Z_{i}(X,2s-X)-Z_{i}^{n}(X,2s-X)\big|\\
	&\leq ||\Z_{i}-\Z_{i}^{n}||_{L^{\infty}([s_{l},s_{r}])}+(B_{4}+B_{5})||\X-\X^{n}||_{L^{\infty}([s_{l},s_{r}])} \\
	&\quad+\big[X-\X(s)\big]^{\frac{1}{2}}\Bigg\{\bigg|\int_{\X(s)}^{X}(Z_{i,X}-Z_{i,X}^{n})^2(\xi,2s-\xi)\,d\xi\bigg|^{\frac{1}{2}}\\
	&\hspace{100pt} +\bigg|\int_{\X(s)}^{X}(Z_{i,Y}-Z_{i,Y}^{n})^2(\xi,2s-\xi)\,d\xi\bigg|^{\frac{1}{2}}\Bigg\}\\
	&\leq ||\Z_{i}-\Z_{i}^{n}||_{L^{\infty}([s_{l},s_{r}])}+(B_{4}+B_{5})||\X-\X^{n}||_{L^{\infty}([s_{l},s_{r}])} \\
	&\quad+2(X_{r}-X_{l})^{\frac{1}{2}}\bigg|\int_{\X(s)}^{X}\bigg[\big[Z_{3}-Z_{3}^{n}\big]^{2}+\sum_{i=1}^{5}\Big(\big[Z_{i,X}-Z_{i,X}^{n}\big]^{2}\\
	&\hspace{206pt}+\big[Z_{i,Y}-Z_{i,Y}^{n}\big]^{2}\Big)\bigg](\xi,2s-\xi)\,d\xi\bigg|^{\frac{1}{2}}.
	\end{aalign}
	From \eqref{eq:Gron}, we have
	\begin{align*}
	&\bigg[\big[Z_{3}-Z_{3}^{n}\big]^{2}+\sum_{i=1}^{5}\Big(\big[Z_{i,X}-Z_{i,X}^{n}\big]^{2}+\big[Z_{i,Y}-Z_{i,Y}^{n}\big]^{2}\Big)\bigg](\xi,2s-\xi)\\
	&\leq K\Bigg\{
	||U_{1}-U_{1}^{n}||_{L^{\infty}([X_{l},X_{r}])}^{2}+\sum_{j=1}^{5}\Big(\big[\V_{j}(\xi)-\V_{j}^{n}(\xi)\big]^{2}+\big[\W_{j}(2s-\xi)-\W_{j}^{n}(2s-\xi)\big]^{2}\Big)\\
	&\hspace{33pt}+\big[\Y\circ\X^{-1}(\xi)-\Y^{n}\circ(\X^{n})^{-1}(\xi)\big]^{2}\\
	&\hspace{33pt}+\big[\X\circ\Y^{-1}(2s-\xi)-\X^{n}\circ(\Y^{n})^{-1}(2s-\xi)\big]^{2}\Bigg\}.
	\end{align*}
	Integration and a change of variables leads to
	\begin{aalign}
	\label{eq:IntEst1}
	&\bigg|\int_{\X(s)}^{X}\bigg[\big[Z_{3}-Z_{3}^{n}\big]^{2}+\sum_{i=1}^{5}\Big(\big[Z_{i,X}-Z_{i,X}^{n}\big]^{2}+\big[Z_{i,Y}-Z_{i,Y}^{n}\big]^{2}\Big)\bigg](\xi,2s-\xi)\,d\xi\bigg|\\
	&\leq K\Bigg\{|X-\X(s)|||U_{1}-U_{1}^{n}||_{L^{\infty}([X_{l},X_{r}])}^{2}\\
	&\hspace{30pt}+\sum_{j=1}^{5}\Bigg(\bigg|\int_{\X(s)}^{X}\big[\V_{j}(\xi)-\V_{j}^{n}(\xi)\big]^{2}\,d\xi\bigg|+\bigg|\int_{2s-X}^{\Y(s)}\big[\W_{j}(\xi)-\W_{j}^{n}(\xi)\big]^{2}\,d\xi\bigg|\Bigg)\\
	&\hspace{30pt}+\bigg|\int_{\X(s)}^{X}\big[\Y\circ\X^{-1}(\xi)-\Y^{n}\circ(\X^{n})^{-1}(\xi)\big]^{2}\,d\xi\bigg|\\
	&\hspace{30pt}+\bigg|\int_{2s-X}^{\Y(s)}\big[\X\circ\Y^{-1}(\xi)-\X^{n}\circ(\Y^{n})^{-1}(\xi)\big]^{2}\,d\xi\bigg|\Bigg\}\\
	&\leq K\Bigg\{(X_{r}-X_{l})||U_{1}-U_{1}^{n}||_{L^{\infty}([X_{l},X_{r}])}^{2}\\
	&\hspace{30pt}+\sum_{j=1}^{5}\Big(||\V_{j}-\V_{j}^{n}||_{L^{2}([X_{l},X_{r}])}^{2}+||\W_{j}-\W_{j}^{n}||_{L^{2}([Y_{l},Y_{r}])}^{2}\Big)\\
	&\hspace{30pt}+\int_{X_{l}}^{X_{r}}\big[\Y\circ\X^{-1}(\xi)-\Y^{n}\circ(\X^{n})^{-1}(\xi)\big]^{2}\,d\xi\\
	&\hspace{30pt}+\int_{Y_{l}}^{Y_{r}}\big[\X\circ\Y^{-1}(\xi)-\X^{n}\circ(\Y^{n})^{-1}(\xi)\big]^{2}\,d\xi\Bigg\}.
	\end{aalign}
	From \eqref{eq:initialcurvenormalization} we have $X+\Y\circ\X^{-1}(X)=2\X^{-1}(X)$ and $X+\Y^{n}\circ(\X^{n})^{-1}(X)=2(\X^{n})^{-1}(X)$, so that
	\begin{equation}
	\label{eq:YChiInvDiff}
		\Y\circ\X^{-1}(X)-\Y^{n}\circ(\X^{n})^{-1}(X)=2(\X^{-1}(X)-(\X^{n})^{-1}(X)).
	\end{equation}
	This leads to
	\begin{aalign}
		\label{eq:YChiInvDiff2}
		&\int_{X_{l}}^{X_{r}}\big[\Y\circ\X^{-1}(\xi)-\Y^{n}\circ(\X^{n})^{-1}(\xi)\big]^{2}\,d\xi\\
		&=4\int_{X_{l}}^{X_{r}}\big[\X^{-1}(\xi)-(\X^{n})^{-1}(\xi)\big]^{2}\,d\xi\\
		&\leq 4(s_{r}-s_{l})\int_{X_{l}}^{X_{r}}|\X^{-1}(\xi)-(\X^{n})^{-1}(\xi)|\,d\xi
	\end{aalign}

To estimate the above integral, we need to introduce another sequence of curves on $[s_l, s_r]$ given by $(\hat\X^n(s), \hat \Y^n(s))=(\max\{\X(s), \X^n(s)\}, \min\{\Y(s), \Y^n(s)\})$. This sequence satisfies that both $(\X, \Y)$ and $(\X^n, \Y^n)$ lie above or are equal to $(\hat \X^n, \hat \Y^n)$. This implies that $\X^{-1}(X)\geq (\hat\X^{n})^{-1}(X)$ and $(\X^n)^{-1}(X)\geq (\hat\X^{n})^{-1}(X)$ for all $X$ in $[X_l,X_r]$, and that 
\begin{aalign}\label{curve}
	\int_{X_{l}}^{X_{r}}|\X^{-1}(\xi)-(\X^{n})^{-1}(\xi)|\,d\xi& =\int_{X_l}^{X_r}(\X^{-1}(\xi)-(\hat\X^n)^{-1}(\xi))\,d\xi\\
	&\quad+\int_{X_l}^{X_r}((\X^n)^{-1}(\xi)-(\hat\X^n)^{-1}(\xi))\,d\xi.
\end{aalign}
By a change of variables and integration by parts we get
	\begin{equation*}
		\int_{X_l}^{X_r}\X^{-1}(\xi)\,d\xi=\int_{\X^{-1}(X_{l})}^{\X^{-1}(X_{r})}s\dot{\X}(s)\,ds=X_r\X^{-1}(X_r)-X_{l}\X^{-1}(X_{l})-\int_{\X^{-1}(X_{l})}^{\X^{-1}(X_{r})}\X(s)\,ds,
	\end{equation*}
	and similarly we find
	\begin{equation*}
		\int_{X_{l}}^{X_{r}}(\hat\X^{n})^{-1}(\xi)\,d\xi=X_{r}(\hat\X^{n})^{-1}(X_{r})-X_{l}(\hat\X^{n})^{-1}(X_{l})-\int_{(\hat\X^{n})^{-1}(X_{l})}^{(\hat\X^{n})^{-1}(X_{r})}\hat\X^{n}(s)\,ds.
	\end{equation*}
	Note that
	\begin{equation*}
		\X^{-1}(X_{l})=s_l=(\hat\X^{n})^{-1}(X_{l}) \quad \text{ and } \quad  \X^{-1}(X_{r})=s_r=(\hat{\X}^{n})^{-1}(X_{r}).
	\end{equation*} 
	Therefore, 
	\begin{equation*}
		\int_{X_{l}}^{X_{r}}|\X^{-1}(\xi)-(\hat\X^{n})^{-1}(\xi)|\,d\xi=\int_{s_l}^{s_r}(\hat\X^n(s)-\X(s))\,ds.
	\end{equation*}
	We obtain in a similar way,
	\begin{equation*}
		\int_{X_{l}}^{X_{r}}|(\X^n)^{-1}(\xi)-(\hat \X^{n})^{-1}(\xi)|\,d\xi=\int_{s_l}^{s_r}(\hat\X^n(s)-\X^{n}(s))\,ds.
	\end{equation*}
	Combining \eqref{curve} and the above estimates we end up with 
	\begin{align*}
        \int_{X_{l}}^{X_{r}}|\X^{-1}(\xi)-(\X^{n})^{-1}(\xi)|\,d\xi& = \int_{s_l}^{s_r}(\hat\X^n(s)-\X(s))\,ds + \int_{s_l}^{s_r}(\hat\X^n(s)-\X^{n}(s))\,ds\\
        & = \int_{s_l}^{s_r}\vert \X^{n}(s)-\X(s)\vert\,ds,
        \end{align*}
    which inserted in \eqref{eq:YChiInvDiff2} yields
	\begin{equation}
	\label{eq:YChiInvDiff3}
		\int_{X_{l}}^{X_{r}}\big[\Y\circ\X^{-1}(\xi)-\Y^{n}\circ(\X^{n})^{-1}(\xi)\big]^{2}\,d\xi\leq 4(s_{r}-s_{l})^{2}||\X-\X^{n}||_{L^{\infty}([s_{l},s_{r}])}.
	\end{equation}

	By similar calculations we find
	\begin{equation}
	\label{eq:ChiYInvDiff}
		\int_{Y_{l}}^{Y_{r}}\big[\X\circ\Y^{-1}(\xi)-\X^{n}\circ(\Y^{n})^{-1}(\xi)\big]^{2}\,d\xi\leq 4(s_{r}-s_{l})^{2}||\Y-\Y^{n}||_{L^{\infty}([s_{l},s_{r}])}.
	\end{equation}
	
	Returning to \eqref{eq:IntEst1} we now get
	\begin{align*}
		&\bigg|\int_{\X(s)}^{X}\bigg[\big[Z_{3}-Z_{3}^{n}\big]^{2}+\sum_{i=1}^{5}\Big(\big[Z_{i,X}-Z_{i,X}^{n}\big]^{2}+\big[Z_{i,Y}-Z_{i,Y}^{n}\big]^{2}\Big)\bigg](\xi,2s-\xi)\,d\xi\bigg|\\
		&\leq K\Bigg\{(X_{r}-X_{l})||U_{1}-U_{1}^{n}||_{L^{\infty}([X_{l},X_{r}])}^{2}\\
		&\hspace{33pt}+\sum_{j=1}^{5}\Big(||\V_{j}-\V_{j}^{n}||_{L^{2}([X_{l},X_{r}])}^{2}+||\W_{j}-\W_{j}^{n}||_{L^{2}([Y_{l},Y_{r}])}^{2}\Big)\\
		&\hspace{33pt}+4(s_{r}-s_{l})^{2}\Big(||\X-\X^{n}||_{L^{\infty}([s_{l},s_{r}])}+||\Y-\Y^{n}||_{L^{\infty}([s_{l},s_{r}])}\Big)\Bigg\},
	\end{align*}
	which we insert in \eqref{eq:IntEst0} and get 
	\begin{aalign}
	\label{eq:ZIneq2}
		&\big|Z_{i}(X,2s-X)-Z_{i}^{n}(X,2s-X)\big|\\
		&\leq ||\Z_{i}-\Z_{i}^{n}||_{L^{\infty}([s_{l},s_{r}])}+(B_{4}+B_{5})||\X-\X^{n}||_{L^{\infty}([s_{l},s_{r}])} \\
		&\quad+2\sqrt{K(X_{r}-X_{l})} \Bigg\{(X_{r}-X_{l})||U_{1}-U_{1}^{n}||_{L^{\infty}([X_{l},X_{r}])}^{2}\\
		&\hspace{30pt}+\sum_{j=1}^{5}\Big(||\V_{j}-\V_{j}^{n}||_{L^{2}([X_{l},X_{r}])}^{2}+||\W_{j}-\W_{j}^{n}||_{L^{2}([Y_{l},Y_{r}])}^{2}\Big)\\
		&\hspace{30pt}+4(s_{r}-s_{l})^{2}\Big(||\X-\X^{n}||_{L^{\infty}([s_{l},s_{r}])}+||\Y-\Y^{n}||_{L^{\infty}([s_{l},s_{r}])}\Big)\Bigg\}^{\frac{1}{2}}
	\end{aalign}
	for $i=1,\dots,4$. A similar inequality is valid for
	\begin{equation*}
		|(Z_{5}(X,2s-X)-Z_{5}(X_{l},2s_{l}-X_{l}))-(Z_{5}^{n}(X,2s-X)-Z_{5}^{n}(X_{l},2s_{l}-X_{l}))\big|,
	\end{equation*} 
	the only difference from \eqref{eq:ZIneq2} being that we get $||(\Z_{5}-\Z_{5}(s_{l}))-(\Z_{5}^{n}-\Z_{5}^{n}(s_{l}))||_{L^{\infty}([s_{l},s_{r}])}$ on the right-hand side.
	
	\textbf{Step 4.} For any $0<\tau\leq\frac{1}{2\kappa}(x_{r}-x_{l})$ consider
	\begin{equation*}
		\Theta(\tau)=\mathbf{E}\circ\mathbf{t}_{\tau}(Z,p,q), \quad \Theta^{n}(\tau)=\mathbf{E}\circ\mathbf{t}_{\tau}(Z^{n},p^{n},q^{n}),
	\end{equation*}
	\begin{equation*}
		(u,R,S,\rho,\sigma,\mu,\nu)(\tau)=\mathbf{M}\circ\mathbf{D}(\Theta(\tau))
	\end{equation*}
	and 
	\begin{equation*}
		(u^{n},R^{n},S^{n},\rho^{n},\sigma^{n},\mu^{n},\nu^{n})(\tau)=\mathbf{M}\circ\mathbf{D}(\Theta^{n}(\tau)).
	\end{equation*}
	We prove \ref{eq:approxResult1}.
	
	A close inspection of the proof of \eqref{eq:s1bars2bar} reveals that there exist $\bar{s}_{1}$ and $\bar{s}_{2}$ such that $s_{l}<\bar{s}_{1}\leq\bar{s}_{2}<s_{r}$, $(\X(\tau,s),\Y(\tau,s))\in\Omega$ for all $s\in[\bar{s}_{1},\bar{s}_{2}]$ and
	\begin{equation*}
	x(\X(\tau,\bar{s}_{1}),\Y(\tau,\bar{s}_{1}))=x_{l}+\kappa\tau \quad \text{and} \quad x(\X(\tau,\bar{s}_{2}),\Y(\tau,\bar{s}_{2}))=x_{r}-\kappa\tau.
	\end{equation*}
	Moreover, there exist $\bar{s}_{1}^{n}$ and $\bar{s}_{2}^{n}$ such that $s_{l}<\bar{s}_{1}^{n}\leq\bar{s}_{2}^{n}<s_{r}$, $(\X^{n}(\tau,s),\Y^{n}(\tau,s))\in\Omega$ for all $s\in[\bar{s}_{1}^{n},\bar{s}_{2}^{n}]$ and
	\begin{equation*}
	x^{n}(\X^{n}(\tau,\bar{s}_{1}^{n}),\Y^{n}(\tau,\bar{s}_{1}^{n}))=x_{l}+\kappa\tau \quad \text{and} \quad x^{n}(\X^{n}(\tau,\bar{s}_{2}^{n}),\Y^{n}(\tau,\bar{s}_{2}^{n}))=x_{r}-\kappa\tau.
	\end{equation*}
	Consider $z\in[x_{l}+\kappa\tau,x_{r}-\kappa\tau]$. There are $s\in[\bar{s}_{1},\bar{s}_{2}]$ and $s^{n}\in[\bar{s}_{1}^{n},\bar{s}_{2}^{n}]$ such that $z=\Z_{2}(\tau,s)=\Z_{2}^{n}(\tau,s^{n})$. Using \eqref{eq:mapGtoD1} we obtain
	\begin{aalign}
	\label{eq:uDiff}
		u(\tau,z)-u^{n}(\tau,z)&=\Z_{3}(\tau,s)-\Z_{3}^{n}(\tau,s^{n})\\
		&=U(\X(\tau,s),\Y(\tau,s))-U^{n}(\X^{n}(\tau,s^{n}),\Y^{n}(\tau,s^{n})).
	\end{aalign}
	Now we can have several different scenarios depending on the order of the points $s$ and $s^{n}$, and the intervals $[\bar{s}_{1},\bar{s}_{2}]$ and $[\bar{s}_{1}^{n},\bar{s}_{2}^{n}]$. We only show one of the challenging cases, the others can be treated in a similar way. Assume that $s\leq \bar{s}_{1}^{n}\leq \bar{s}_{\text{max}}$. For the definition of $\bar{s}_{\text{max}}$ see Step 1 (iv) in the proof of Theorem \ref{thm:Regular1st}. Observe that in this case the point $(\X(\tau,\bar{s}_{1}^{n}),\Y(\tau,\bar{s}_{1}^{n}))$ is in $\Omega$, but the point
	$(\X^{n}(\tau,s),\Y^{n}(\tau,s))$ may be outside $\Omega$. So when estimating \eqref{eq:uDiff} we have to carefully choose points on the curve so that we do not end up outside $\Omega$, see Figure \ref{fig:FigThm73}. 
	
	\begin{figure}
		\centerline{\hbox{\includegraphics[width=10cm]{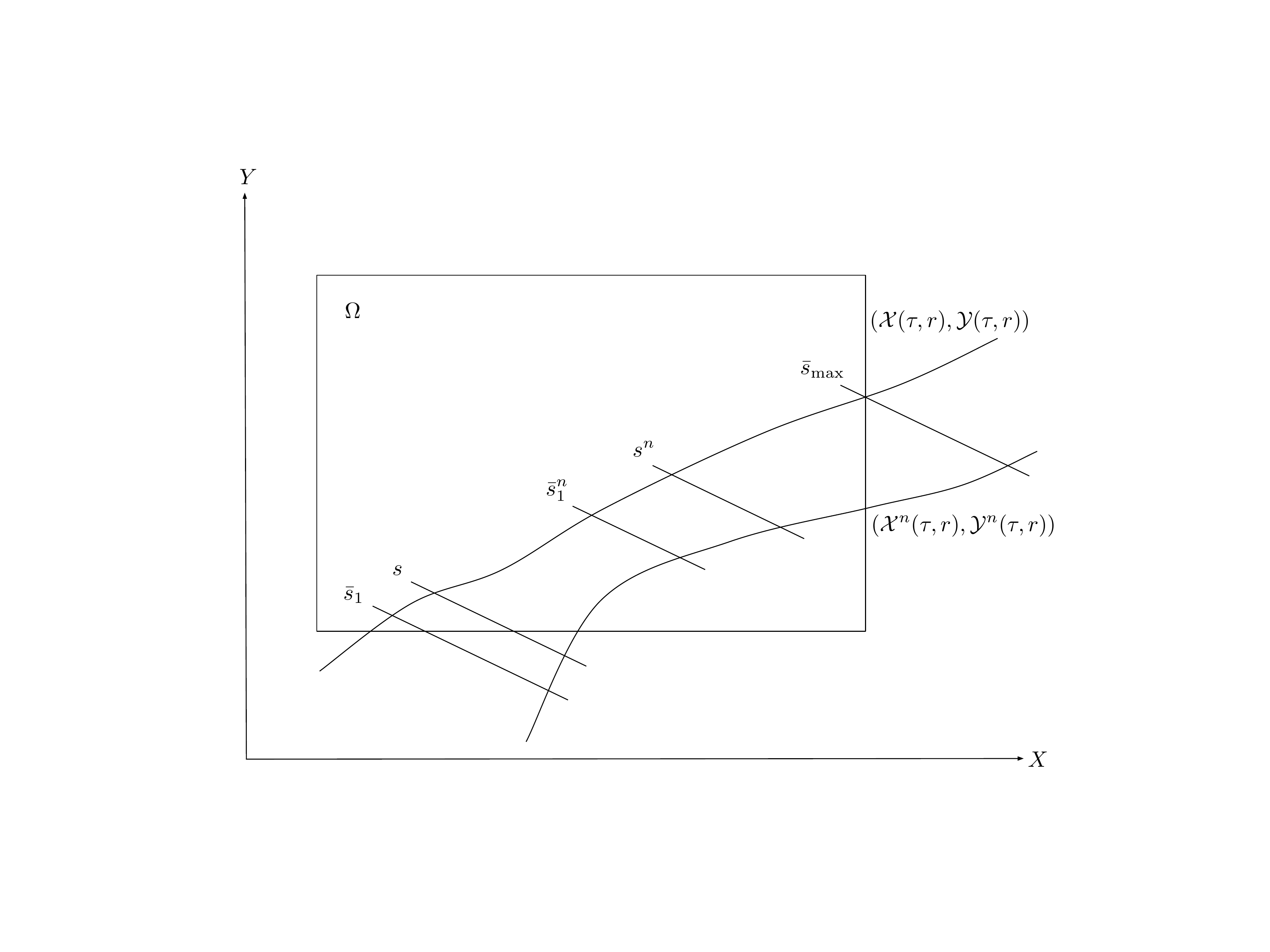}}}
		\caption{An example of the situation described in Step 4. Note that the point $(\X^{n}(\tau,s),\Y^{n}(\tau,s))$ lies outside $\Omega$.}
		\label{fig:FigThm73}
	\end{figure}
	
	Write\footnote{In the case when $\bar{s}_{1}^{n}\geq \bar{s}_{\text{max}}$ the point $(\X(\tau,\bar{s}_{1}^{n}),\Y(\tau,\bar{s}_{1}^{n}))$ may be outside $\Omega$. In that case the proof can be done in a similar way by replacing $\bar{s}_{1}^{n}$ with $\bar{s}_{\text{max}}$.}
	\begin{align*}
		&U(\X(\tau,s),\Y(\tau,s))-U^{n}(\X^{n}(\tau,s^{n}),\Y^{n}(\tau,s^{n}))\\
		&=U(\X(\tau,s),\Y(\tau,s))-U(\X(\tau,\bar{s}_{1}^{n}),\Y(\tau,\bar{s}_{1}^{n})) \quad (A_{1}^{n})\\
		&\quad + U(\X(\tau,\bar{s}_{1}^{n}),\Y(\tau,\bar{s}_{1}^{n}))-U(\X^{n}(\tau,\bar{s}_{1}^{n}),\Y^{n}(\tau,\bar{s}_{1}^{n})) \quad (A_{2}^{n})\\
		&\quad +U(\X^{n}(\tau,\bar{s}_{1}^{n}),\Y^{n}(\tau,\bar{s}_{1}^{n}))-U^{n}(\X^{n}(\tau,\bar{s}_{1}^{n}),\Y^{n}(\tau,\bar{s}_{1}^{n})) \quad (A_{3}^{n})\\
		&\quad +U^{n}(\X^{n}(\tau,\bar{s}_{1}^{n}),\Y^{n}(\tau,\bar{s}_{1}^{n}))-U^{n}(\X^{n}(\tau,s^{n}),\Y^{n}(\tau,s^{n})) \quad (A_{4}^{n}).
	\end{align*}
	
	By \eqref{eq:setGcomp} and the Cauchy--Schwarz inequality we have
	\begin{aalign}
	\label{eq:Z3FirstDiff}	
		|A_{1}^{n}|&=|\Z_{3}(\tau,\bar{s}_{1}^{n})-\Z_{3}(\tau,s)|\\
		&=\left| \int_{s}^{\bar{s}_{1}^{n}}\dot{\Z}_{3}(\tau,r)\,dr\,\right|\\
		&=\left|\int_{s}^{\bar{s}_{1}^{n}}[\V_{3}(\tau,\X(\tau,r))\dot{\X}(\tau,r)+\W_{3}(\tau,\Y(\tau,r))\dot{\Y}(\tau,r)]\,dr\,\right|\\
		&\leq \bigg(\int_{s}^{\bar{s}_{1}^{n}}\dot{\X}(\tau,r)\,dr\bigg)^{\frac{1}{2}}\bigg(\int_{s}^{\bar{s}_{1}^{n}}\V_{3}^{2}(\tau,\X(\tau,r))\dot{\X}(\tau,r)\,dr\bigg)^{\frac{1}{2}}\\
		&\quad +\bigg(\int_{s}^{\bar{s}_{1}^{n}}\dot{\Y}(\tau,r)\,dr\bigg)^{\frac{1}{2}}\bigg(\int_{s}^{\bar{s}_{1}^{n}}\W_{3}^{2}(\tau,\Y(\tau,r))\dot{\Y}(\tau,r)\,dr\bigg)^{\frac{1}{2}}.
	\end{aalign}
	From \eqref{eq:setGrel3} and \eqref{eq:setG0rel2} we get
	\begin{aalign}
	\label{eq:V3XdotInt}
		0\leq &\int_{s}^{\bar{s}_{1}^{n}}\V_{3}^{2}(\tau,\X(\tau,r))\dot{\X}(\tau,r)\,dr\\
		&=\int_{s}^{\bar{s}_{1}^{n}}\bigg(\frac{2\V_{2}(\tau,\X(\tau,r))\V_{4}(\tau,\X(\tau,r))}{c^{2}(\Z_{3}(\tau,r))}-\frac{\p^{2}(\tau,\X(\tau,r))}{c(\Z_{3}(\tau,r))}\bigg)\dot{\X}(\tau,r)\,dr\\
		&\leq \int_{s}^{\bar{s}_{1}^{n}}\frac{2\V_{2}(\tau,\X(\tau,r))\V_{4}(\tau,\X(\tau,r))}{c^{2}(\Z_{3}(\tau,r))}\dot{\X}(\tau,r)\,dr\\
		&\leq \kappa^{2}B_{6}\int_{s}^{\bar{s}_{1}^{n}}2\V_{2}(\tau,\X(\tau,r))\dot{\X}(\tau,r)\,dr\\
		&=\kappa^{2}B_{6}\int_{s}^{\bar{s}_{1}^{n}}\dot{\Z}_{2}(\tau,r)\,dr\\
		&=\kappa^{2}B_{6}\big[\Z_{2}(\tau,\bar{s}_{1}^{n})-\Z_{2}(\tau,s)\big]\\
		&=\kappa^{2}B_{6}\big[\Z_{2}(\tau,\bar{s}_{1}^{n})-\Z_{2}^{n}(\tau,\bar{s}_{1}^{n})+\Z_{2}^{n}(\tau,\bar{s}_{1}^{n})-\Z_{2}^{n}(\tau,s^{n})\big]\\
		&\leq\kappa^{2}B_{6}\big[\Z_{2}(\tau,\bar{s}_{1}^{n})-\Z_{2}^{n}(\tau,\bar{s}_{1}^{n})\big],
	\end{aalign}
	where we used that $\Z_{2}(\tau,s)=\Z_{2}^{n}(\tau,s^{n})$ and $\Z_{2}^{n}(\tau,\bar{s}_{1}^{n})\leq\Z_{2}^{n}(\tau,s^{n})$ since $\bar{s}_{1}^{n}\leq s^{n}$ and $\Z_{2}^{n}(\tau,\cdot)$ is nondecreasing. We also used that since $(\X(\tau,r),\Y(\tau,r))\in\Omega$ for $r\in[s,\bar{s}_{1}^{n}]$ we can use the estimate in \eqref{eq:ZXUnifBound} to obtain $|\V_{4}(\tau,\X(\tau,r))|=|J_{X}(\X(\tau,r),\Y(\tau,r))|\leq B_{6}$. We have
	\begin{aalign}
	\label{eq:Z2tauDiff}
		&\Z_{2}(\tau,\bar{s}_{1}^{n})-\Z_{2}^{n}(\tau,\bar{s}_{1}^{n})\\
		&=x(\X(\tau,\bar{s}_{1}^{n}),\Y(\tau,\bar{s}_{1}^{n}))-x^{n}(\X^{n}(\tau,\bar{s}_{1}^{n}),\Y^{n}(\tau,\bar{s}_{1}^{n}))\\
		&=x(\X(\tau,\bar{s}_{1}^{n}),\Y(\tau,\bar{s}_{1}^{n}))-x(\X^{n}(\tau,\bar{s}_{1}^{n}),\Y^{n}(\tau,\bar{s}_{1}^{n}))\\
		&\quad +x(\X^{n}(\tau,\bar{s}_{1}^{n}),\Y^{n}(\tau,\bar{s}_{1}^{n})) -x^{n}(\X^{n}(\tau,\bar{s}_{1}^{n}),\Y^{n}(\tau,\bar{s}_{1}^{n})).
	\end{aalign}
	By \eqref{eq:setH1} and since $t_{X}\geq 0$ and $t_{Y}\leq 0$ we get 
	\begin{align*}
		&|x(\X(\tau,\bar{s}_{1}^{n}),\Y(\tau,\bar{s}_{1}^{n}))-x(\X^{n}(\tau,\bar{s}_{1}^{n}),\Y^{n}(\tau,\bar{s}_{1}^{n}))|\\
		&=\left|\int_{\X^{n}(\tau,\bar{s}_{1}^{n})}^{\X(\tau,\bar{s}_{1}^{n})}(x_{X}-x_{Y})(X,2\bar{s}_{1}^{n}-X)\,dX\,\right|\\
		&=\left|\int_{\X^{n}(\tau,\bar{s}_{1}^{n})}^{\X(\tau,\bar{s}_{1}^{n})}[c(U)(t_{X}+t_{Y})](X,2\bar{s}_{1}^{n}-X)\,dX\,\right|
		\\
		&\leq\kappa\left|\int_{\X^{n}(\tau,\bar{s}_{1}^{n})}^{\X(\tau,\bar{s}_{1}^{n})}(t_{X}-t_{Y})(X,2\bar{s}_{1}^{n}-X)\,dX\,\right|\\
		&=\kappa |t(\X(\tau,\bar{s}_{1}^{n}),\Y(\tau,\bar{s}_{1}^{n}))-t(\X^{n}(\tau,\bar{s}_{1}^{n}),\Y^{n}(\tau,\bar{s}_{1}^{n}))|.
	\end{align*}
	Since $t(\X(\tau,\bar{s}_{1}^{n}),\Y(\tau,\bar{s}_{1}^{n}))=\tau=t^{n}(\X^{n}(\tau,\bar{s}_{1}^{n}),\Y^{n}(\tau,\bar{s}_{1}^{n}))$ we have
	\begin{aalign}
		\label{eq:tIdentity}
		&t(\X(\tau,\bar{s}_{1}^{n}),\Y(\tau,\bar{s}_{1}^{n}))-t(\X^{n}(\tau,\bar{s}_{1}^{n}),\Y^{n}(\tau,\bar{s}_{1}^{n}))\\
		&=t^{n}(\X^{n}(\tau,\bar{s}_{1}^{n}),\Y^{n}(\tau,\bar{s}_{1}^{n}))-t(\X^{n}(\tau,\bar{s}_{1}^{n}),\Y^{n}(\tau,\bar{s}_{1}^{n})).
	\end{aalign}
	Therefore,
	\begin{align*}
		&|x(\X(\tau,\bar{s}_{1}^{n}),\Y(\tau,\bar{s}_{1}^{n}))-x(\X^{n}(\tau,\bar{s}_{1}^{n}),\Y^{n}(\tau,\bar{s}_{1}^{n}))|\\
		&\leq \kappa |t-t^{n}|(\X^{n}(\tau,\bar{s}_{1}^{n}),\Y^{n}(\tau,\bar{s}_{1}^{n}))
	\end{align*}
	and \eqref{eq:Z2tauDiff} yields
	\begin{equation}
	\label{eq:Z2Z2nDiff}
		0\leq \Z_{2}(\tau,\bar{s}_{1}^{n})-\Z_{2}^{n}(\tau,\bar{s}_{1}^{n})
		\leq \big(|x-x^{n}|+\kappa|t-t^{n}|\big)(\X^{n}(\tau,\bar{s}_{1}^{n}),\Y^{n}(\tau,\bar{s}_{1}^{n})).
	\end{equation}
	We insert this in \eqref{eq:V3XdotInt} and get 
	\begin{equation*}
		0\leq \int_{s}^{\bar{s}_{1}^{n}}\V_{3}^{2}(\tau,\X(\tau,r))\dot{\X}(\tau,r)\,dr
		\leq\kappa^{2}B_{6}\big(|x-x^{n}|+\kappa|t-t^{n}|\big)(\X^{n}(\tau,\bar{s}_{1}^{n}),\Y^{n}(\tau,\bar{s}_{1}^{n})).
	\end{equation*}
	Similarly we get
	\begin{align*}
		0\leq \int_{s}^{\bar{s}_{1}^{n}}\W_{3}^{2}(\tau,\Y(\tau,r))\dot{\Y}(\tau,r)\,dr
		\leq\kappa^{2}B_{7}\big(|x-x^{n}|+\kappa|t-t^{n}|\big)(\X^{n}(\tau,\bar{s}_{1}^{n}),\Y^{n}(\tau,\bar{s}_{1}^{n})).
	\end{align*}
	We have
	\begin{equation*}
		\int_{s}^{\bar{s}_{1}^{n}}\dot{\X}(\tau,r)\,dr=\X(\tau,\bar{s}_{1}^{n})-\X(\tau,s)\leq X_{r}-X_{l}
	\end{equation*}
	and
	\begin{equation*}
		\int_{s}^{\bar{s}_{1}^{n}}\dot{\Y}(\tau,r)\,dr\leq Y_{r}-Y_{l}.
	\end{equation*}
	Using these estimates in \eqref{eq:Z3FirstDiff} gives
	\begin{equation}
	\label{eq:A1n}
		|A_{1}^{n}|\leq v_{1}
		\big(|x-x^{n}|+\kappa|t-t^{n}|\big)^{\frac{1}{2}}(\X^{n}(\tau,\bar{s}_{1}^{n}),\Y^{n}(\tau,\bar{s}_{1}^{n}))
	\end{equation}
	for a constant $v_{1}$ that is independent of $n$.
	
	By \eqref{eq:setH3} we have the estimates
	\begin{equation*}
		|U_{X}|\leq\frac{\sqrt{2J_{X}x_{X}}}{c(U)} \quad \text{and} \quad |U_{Y}|\leq\frac{\sqrt{2J_{Y}x_{Y}}}{c(U)}
	\end{equation*}
	which leads to
	\begin{align*}
		|A_{2}^{n}|&\leq|U(\X(\tau,\bar{s}_{1}^{n}),\Y(\tau,\bar{s}_{1}^{n}))-U(\X^{n}(\tau,\bar{s}_{1}^{n}),\Y^{n}(\tau,\bar{s}_{1}^{n}))|\\
		&=\left|\int_{\X^{n}(\tau,\bar{s}_{1}^{n})}^{\X(\tau,\bar{s}_{1}^{n})}(U_{X}-U_{Y})(X,2\bar{s}_{1}^{n}-X)\,dX\,\right|\\
		&\leq \sqrt{2}\kappa\left|\int_{\X^{n}(\tau,\bar{s}_{1}^{n})}^{\X(\tau,\bar{s}_{1}^{n})}\Big(\sqrt{J_{X}x_{X}}+\sqrt{J_{Y}x_{Y}}\Big)(X,2\bar{s}_{1}^{n}-X)\,dX\,\right|\\
		&\leq \sqrt{2}\kappa\left|\int_{\X^{n}(\tau,\bar{s}_{1}^{n})}^{\X(\tau,\bar{s}_{1}^{n})}J_{X}(X,2\bar{s}_{1}^{n}-X)\,dX\,\right|^{\frac{1}{2}}\left|\int_{\X^{n}(\tau,\bar{s}_{1}^{n})}^{\X(\tau,\bar{s}_{1}^{n})}x_{X}(X,2\bar{s}_{1}^{n}-X)\,dX\,\right|^{\frac{1}{2}}\\
		&\quad +\sqrt{2}\kappa\left|\int_{\X^{n}(\tau,\bar{s}_{1}^{n})}^{\X(\tau,\bar{s}_{1}^{n})}J_{Y}(X,2\bar{s}_{1}^{n}-X)\,dX\,\right|^{\frac{1}{2}}\left|\int_{\X^{n}(\tau,\bar{s}_{1}^{n})}^{\X(\tau,\bar{s}_{1}^{n})}x_{Y}(X,2\bar{s}_{1}^{n}-X)\,dX\,\right|^{\frac{1}{2}}\\
		&\leq 2\sqrt{2}\kappa\left|\int_{\X^{n}(\tau,\bar{s}_{1}^{n})}^{\X(\tau,\bar{s}_{1}^{n})}(J_{X}+J_{Y})(X,2\bar{s}_{1}^{n}-X)\,dX\,\right|^{\frac{1}{2}}\\
		&\quad\times\left|\int_{\X^{n}(\tau,\bar{s}_{1}^{n})}^{\X(\tau,\bar{s}_{1}^{n})}(x_{X}+x_{Y})(X,2\bar{s}_{1}^{n}-X)\,dX\,\right|^{\frac{1}{2}}
	\end{align*}
	since $x_{Y}\geq 0$ and $J_{Y}\geq 0$. From \eqref{eq:setH1} we get
	\begin{align*}
		&\left|\int_{\X^{n}(\tau,\bar{s}_{1}^{n})}^{\X(\tau,\bar{s}_{1}^{n})}(x_{X}+x_{Y})(X,2\bar{s}_{1}^{n}-X)\,dX\,\right|\\
		&=\left|\int_{\X^{n}(\tau,\bar{s}_{1}^{n})}^{\X(\tau,\bar{s}_{1}^{n})}\big[c(U)(t_{X}-t_{Y})\big](X,2\bar{s}_{1}^{n}-X)\,dX\,\right|\\
		&\leq\kappa\left|\int_{\X^{n}(\tau,\bar{s}_{1}^{n})}^{\X(\tau,\bar{s}_{1}^{n})}(t_{X}-t_{Y})(X,2\bar{s}_{1}^{n}-X)\,dX\,\right|\\
		&=\kappa|t(\X(\tau,\bar{s}_{1}^{n}),\Y(\tau,\bar{s}_{1}^{n}))-t(\X^{n}(\tau,\bar{s}_{1}^{n}),\Y^{n}(\tau,\bar{s}_{1}^{n}))|\\
		&=\kappa|t-t^{n}|(\X^{n}(\tau,\bar{s}_{1}^{n}),\Y^{n}(\tau,\bar{s}_{1}^{n})),
	\end{align*}	
	where the last equality follows from \eqref{eq:tIdentity}. Using \eqref{eq:ZXUnifBound} and \eqref{eq:ZYUnifBound} yields
	\begin{align*}
		&\left|\int_{\X^{n}(\tau,\bar{s}_{1}^{n})}^{\X(\tau,\bar{s}_{1}^{n})}(J_{X}+J_{Y})(X,2\bar{s}_{1}^{n}-X)\,dX\,\right|\\
		&\leq (B_{6}+B_{7})|\X(\tau,\bar{s}_{1}^{n})-\X^{n}(\tau,\bar{s}_{1}^{n})|\\
		&\leq (B_{6}+B_{7})(X_{r}-X_{l}).
	\end{align*}
	Now we get
	\begin{equation}
	\label{eq:A2n}
		|A_{2}^{n}|\leq v_{2}|t-t^{n}|^{\frac{1}{2}}(\X^{n}(\tau,\bar{s}_{1}^{n}),\Y^{n}(\tau,\bar{s}_{1}^{n}))
	\end{equation}
	for a constant $v_{2}$ which is independent of $n$. 
	
	By estimating as we did for $A_{1}^{n}$ we get
	\begin{align*}
		|A_{4}^{n}|&\leq \bigg(\int_{\bar{s}_{1}^{n}}^{s^{n}}\dot{\X}^{n}(\tau,r)\,dr\bigg)^{\frac{1}{2}}\bigg(\int_{\bar{s}_{1}^{n}}^{s^{n}}(\V_{3}^{n})^{2}(\tau,\X^{n}(\tau,r))\dot{\X}^{n}(\tau,r)\,dr\bigg)^{\frac{1}{2}}\\
		&\quad +\bigg(\int_{\bar{s}_{1}^{n}}^{s^{n}}\dot{\Y}^{n}(\tau,r)\,dr\bigg)^{\frac{1}{2}}\bigg(\int_{\bar{s}_{1}^{n}}^{s^{n}}(\W_{3}^{n})^{2}(\tau,\Y^{n}(\tau,r))\dot{\Y}^{n}(\tau,r)\,dr\bigg)^{\frac{1}{2}}
	\end{align*}
	and
	\begin{align*}
		0\leq &\int_{\bar{s}_{1}^{n}}^{s^{n}}(\V_{3}^{n})^{2}(\tau,\X^{n}(\tau,r))\dot{\X}^{n}(\tau,r)\,dr\\
		&\leq \kappa^{2}B_{4}\big[\Z_{2}^{n}(\tau,s^{n})-\Z_{2}^{n}(\tau,\bar{s}_{1}^{n})\big]\\
		&=\kappa^{2}B_{4}\big[\Z_{2}(\tau,s)-\Z_{2}(\tau,\bar{s}_{1}^{n})+\Z_{2}(\tau,\bar{s}_{1}^{n})-\Z_{2}^{n}(\tau,\bar{s}_{1}^{n})\big]\\
		&\leq \kappa^{2}B_{4}\big[\Z_{2}(\tau,\bar{s}_{1}^{n})-\Z_{2}^{n}(\tau,\bar{s}_{1}^{n})\big],
	\end{align*}
	where we used that $\Z_{2}^{n}(\tau,s^{n})=\Z_{2}(\tau,s)$ and $\Z_{2}(\tau,s)\leq\Z_{2}(\tau,\bar{s}_{1}^{n})$ since $s\leq\bar{s}_{1}^{n}$ and $\Z_{2}(\tau,\cdot)$ is nondecreasing. By using \eqref{eq:Z2Z2nDiff} we end up with
	\begin{equation}
	\label{eq:A4n}
		|A_{4}^{n}|\leq v_{3}
		\big(|x-x^{n}|+\kappa|t-t^{n}|\big)^{\frac{1}{2}}(\X^{n}(\tau,\bar{s}_{1}^{n}),\Y^{n}(\tau,\bar{s}_{1}^{n}))
	\end{equation}
	for a constant $v_{3}$ that is independent of $n$.
	
	Using \eqref{eq:A1n}, \eqref{eq:A2n} and \eqref{eq:A4n} in \eqref{eq:uDiff} yields
	\begin{align*}
		&|u(\tau,z)-u^{n}(\tau,z)|\\
		&\leq\Big[(v_{1}+v_{3})\big(|x-x^{n}|+\kappa|t-t^{n}|\big)^{\frac{1}{2}}+v_{2}|t-t^{n}|^{\frac{1}{2}}+|U-U^{n}|\Big](\X^{n}(\tau,\bar{s}_{1}^{n}),\Y^{n}(\tau,\bar{s}_{1}^{n}))
	\end{align*}
	and we conclude, after using \eqref{eq:ZIneq2}, that $u^{n}(\tau,\cdot)\rightarrow u(\tau,\cdot)$ in $\Linf([x_{l}+\kappa\tau,x_{r}-\kappa\tau])$.
	
	For any $z\in[x_{l}+\kappa\tau,x_{r}-\kappa\tau]$, there are $s\in[\bar{s}_{1},\bar{s}_{2}]$ and $s^{n}\in[\bar{s}_{1}^{n},\bar{s}_{2}^{n}]$ such that $z=\Z_{2}(\tau,s)=\Z_{2}^{n}(\tau,s^{n})$.
	From \eqref{eq:mapGtoD1} and \eqref{eq:mapGtoD3} we have
	\begin{align*}
		\int_{x_{l}+\kappa\tau}^{z}\frac{R(\tau,y)}{c(u(\tau,y))}\,dy&=\int_{\bar{s}_{1}}^{s}\frac{1}{c(u(\tau,\Z_{2}(\tau,r)))}2c(\Z_{3}(\tau,r))\V_{3}(\tau,\X(\tau,r))\dot{\X}(\tau,r)\,dr\\
		&=\int_{\bar{s}_{1}}^{s}2\V_{3}(\tau,\X(\tau,r))\dot{\X}(\tau,r)\,dr,
	\end{align*}
	and similarly we have
	\begin{equation*}
		\int_{x_{l}+\kappa\tau}^{z}\frac{S(\tau,y)}{c(u(\tau,y))}\,dy=\int_{\bar{s}_{1}}^{s}-2\W_{3}(\tau,\Y(\tau,r))\dot{\Y}(\tau,r)\,dr.
	\end{equation*}
	Using \eqref{eq:setDux} and \eqref{eq:setGcomp} yields
	\begin{align*}
		\int_{x_{l}+\kappa\tau}^{z}u_{x}(\tau,y)\,dy&=\int_{x_{l}+\kappa\tau}^{z}\bigg(\frac{R-S}{2c(u)}\bigg)(\tau,y)\,dy\\
		&=\int_{\bar{s}_{1}}^{s}\dot{\Z}_{3}(\tau,r)\,dr\\
		&=\Z_{3}(\tau,s)-\Z_{3}(\tau,\bar{s}_{1})\\
		&=u(\tau,z)-u(\tau,x_{l}+\kappa\tau),
	\end{align*}
	where the last equality follows from \eqref{eq:mapGtoD1}. According to Theorem 3.35 in \cite{Folland}, the function $u(\tau,\cdot)$ is absolutely continuous on $[x_{l}+\kappa\tau,x_{r}-\kappa\tau]$. We also have
	\begin{equation*}
		\int_{x_{l}+\kappa\tau}^{z}u_{x}^{n}(\tau,y)\,dy=
		u^{n}(\tau,z)-u^{n}(\tau,x_{l}+\kappa\tau).
	\end{equation*}
	Therefore
	\begin{equation*}
		\left|\int_{x_{l}+\kappa\tau}^{z}(u_{x}-u_{x}^{n})(\tau,y)\,dy\,\right|\leq 2||u(\tau,\cdot)-u^{n}(\tau,\cdot)||_{\Linf([x_{l}+\kappa\tau,x_{r}-\kappa\tau])},
	\end{equation*}
	so that
	\begin{equation*}
		\lim_{n\rightarrow\infty}\int_{x_{l}+\kappa\tau}^{z}u_{x}^{n}(\tau,y)\,dy=\int_{x_{l}+\kappa\tau}^{z}u_{x}(\tau,y)\,dy.
	\end{equation*}
		
	Since $p_{Y}=p_{Y}^{n}=0$ we have
	\begin{equation*}
	\p(\tau,X)=p(X,\Y(\tau,\X^{-1}(\tau,X)))=p(X,\Y(\X^{-1}(X)))=\p(X)
	\end{equation*}
	and
	\begin{equation*}
		\p^{n}(\tau,X)=p^{n}(X,\Y^{n}(\tau,(\X^{n})^{-1}(\tau,X)))=p^{n}(X,\Y^{n}((\X^{n})^{-1}(X)))=\p^{n}(X).
	\end{equation*}
	From Step 2 we get $\p(\tau,X)=0$ and $\p^{n}(\tau,X)\geq k_{n}>0$ for all $X\in[X_{l},X_{r}]$. Then by \eqref{eq:mapGtoD5} we obtain
	\begin{equation*}
		\int_{x_{l}+\kappa\tau}^{z}\rho(\tau,y)\,dy=\int_{\bar{s}_{1}}^{s}2\p(\tau,\X(\tau,r))\dot{\X}(\tau,r)\,dr=0,	
	\end{equation*}
	so that by a change of variables,
	\begin{equation*}
		\left|\int_{x_{l}+\kappa\tau}^{z}(\rho^{n}-\rho)(\tau,y)\,dy\,\right|=\int_{\X^{n}(\tau,\bar{s}_{1}^{n})}^{\X^{n}(\tau,s^{n})}2\p^{n}(X)\,dX\leq 2\sqrt{X_{r}-X_{l}}||\p^{n}||_{L^{2}([X_{l},X_{r}])}.
	\end{equation*}
	Since $\p^{n}\rightarrow 0$ in $L^{2}([X_{l},X_{r}])$ this implies that
	\begin{equation}
	\label{eq:rhonConv2}
		\lim_{n\rightarrow\infty}\int_{x_{l}+\kappa\tau}^{z}\rho^{n}(\tau,y)\,dy=\int_{x_{l}+\kappa\tau}^{z}\rho(\tau,y)\,dy=0.
	\end{equation}
	As mentioned in the comment before the proof, there is for any $n$ a constant $\bar{d}_{n}>0$ such that
	\begin{equation*}
	\rho^{n}(\tau,z)\geq \bar{d}_{n}
	\end{equation*}
	for all $z\in[x_{l}+\kappa\tau,x_{r}-\kappa\tau]$. Since $\rho^{n}(\tau,\cdot)$ is positive, \eqref{eq:rhonConv2} is the same as $\rho^{n}(\tau,\cdot)\rightarrow 0$ in $L^{1}([x_{l}+\kappa\tau,x_{r}-\kappa\tau])$. 
	
	Similarly we obtain $\sigma^{n}(\tau,\cdot)\rightarrow 0$ in $L^{1}([x_{l}+\kappa\tau,x_{r}-\kappa\tau])$. This concludes the proof of \ref{eq:approxResult2}.

\end{proof}

\subsection{Convergence Results}
\label{section:aux}
%%%%%%%%%%%%%%%%%%%%%%%%%%%%%%%%%%%%%%%%%%
%         Convergence Results
%%%%%%%%%%%%%%%%%%%%%%%%%%%%%%%%%%%%%%%%%%

\begin{lemma}
	\label{lemma:1}
	Let $(u,R,S,\rho,\sigma,\mu,\nu)$ and $(u^{n},R^{n},S^{n},\rho^{n},\sigma^{n},\mu^{n},\nu^{n})$ belong to $\D$, and assume that $\mu$, $\nu$, $\mu^{n}$ and $\nu^{n}$ are absolutely continuous. Consider 
	\begin{equation*}
		\psi_{1}=(x_{1},U_{1},J_{1},K_{1},V_{1},H_{1}), \quad \psi_{2}=(x_{2},U_{2},J_{2},K_{2},V_{2},H_{2}),
	\end{equation*}
	and 
	\begin{equation*}
		\psi_{1}^{n}=(x_{1}^{n},U_{1}^{n},J_{1}^{n},K_{1}^{n},V_{1}^{n},H_{1}^{n}), \quad \psi_{2}^{n}=(x_{2}^{n},U_{2}^{n},J_{2}^{n},K_{2}^{n},V_{2}^{n},H_{2}^{n})
	\end{equation*}
	defined by
	\begin{equation*}
		(\psi_{1},\psi_{2})=\mathbf{L}(u,R,S,\rho,\sigma,\mu,\nu)
	\end{equation*}
	and 
	\begin{equation*}
		(\psi_{1}^{n},\psi_{2}^{n})=\mathbf{L}(u^{n},R^{n},S^{n},\rho^{n},\sigma^{n},\mu^{n},\nu^{n}).
	\end{equation*}
	Assume that
	\begin{equation*}
	u^{n}\rightarrow u, \quad R^{n}\rightarrow R, \quad S^{n}\rightarrow S, \quad \rho^{n}\rightarrow\rho \quad \text{and} \quad \sigma^{n}\rightarrow\sigma \quad \text{in} \quad L^{2}(\mathbb{R}).
	\end{equation*}
	Then, $x_{i}$ and $x_{i}^{n}$ are strictly increasing, and
	\begin{equation*}
	x_{i}^{n}\rightarrow x_{i}, \quad (x_{i}^{n})^{-1}\rightarrow x_{i}^{-1}, \quad U_{i}^{n}\rightarrow U_{i}, \quad J_{i}^{n}\rightarrow J_{i}, \quad K_{i}^{n}\rightarrow K_{i} \quad \text{in} \quad L^{\infty}(\mathbb{R}),
	\end{equation*}
	\begin{equation*}
	U_{i}^{n}\rightarrow U_{i}, \quad V_{i}^{n}\rightarrow V_{i}, \quad H_{i}^{n}\rightarrow H_{i} \quad \text{in} \quad L^{2}(\mathbb{R}),
	\end{equation*}
	\begin{equation*}
	(x_{i}^{n})'\rightarrow x_{i}', \quad (J_{i}^{n})'\rightarrow J_{i}', \quad (K_{i}^{n})'\rightarrow K_{i}' \quad \text{in} \quad L^{1}(\mathbb{R})
	\end{equation*}
	for $i=1,2$.
\end{lemma}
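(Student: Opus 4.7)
The proof hinges on the fact that under the absolute-continuity hypothesis one has an explicit inversion formula for $x_1, x_2$. Setting $F(x) = x + \mu((-\infty,x)) = x + \tfrac{1}{4}\int_{-\infty}^x (R^2 + c(u)\rho^2)\,dy$ and $F^n$ analogously, the characterization \eqref{eq:mapfromDtoFproofclaim} from the proof of Definition~\ref{def:mapfromDtoF} gives $x_1 = F^{-1}$ and $x_1^n = (F^n)^{-1}$. Since $F', (F^n)' \geq 1$, the functions $F, F^n$ are strictly increasing and have 1-Lipschitz inverses; in particular $x_1, x_1^n$ are strictly increasing, and the same is true for $x_2, x_2^n$ using $\nu, \nu^n$.

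A preliminary step, however, must be to upgrade the $L^2$ convergence of $u^n$ to $L^\infty$ convergence. Because $u_x = (R-S)/(2c(u)) \in L^2$, we have $u, u^n \in H^1(\mathbb{R}) \hookrightarrow L^\infty(\mathbb{R})$, with $H^1$-norms bounded uniformly in $n$. Decomposing
\begin{equation*}
u_x^n - u_x = \frac{(R^n - R) - (S^n - S)}{2c(u^n)} + \frac{R-S}{2}\,\frac{c(u) - c(u^n)}{c(u)c(u^n)},
\end{equation*}
the first term converges to $0$ in $L^2$ since $c \geq 1/\kappa$, while for the second I would pass to a subsequence along which $u^n \to u$ a.e.\ and apply dominated convergence with dominating function $\kappa^4 k_1^2(R-S)^2(\|u\|_{L^\infty}+\sup_n\|u^n\|_{L^\infty})^2 \in L^1$. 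A standard subsequence-of-subsequence argument gives $u_x^n\to u_x$ in $L^2$ for the full sequence, hence $u^n \to u$ in $H^1 \hookrightarrow L^\infty$.

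Armed with this, the uniform convergence $x_i^n \to x_i$ follows from
\begin{equation*}
\|F^n-F\|_{L^\infty} \leq \tfrac{1}{4}\|R^n-R\|_{L^2}\|R^n+R\|_{L^2} + \tfrac{\kappa}{4}\|\rho^n-\rho\|_{L^2}\|\rho^n+\rho\|_{L^2} + \tfrac{k_1}{4}\|u^n-u\|_{L^\infty}\|\rho\|_{L^2}^2
\end{equation*}
combined with the 1-Lipschitz property of the inverses, which yields $|x_1^n(X)-x_1(X)|\leq\|F^n-F\|_{L^\infty}$. The same identity $F^n = (x_1^n)^{-1}$ gives $(x_1^n)^{-1}\to x_1^{-1}$ in $L^\infty$, and $J_1^n = \id - x_1^n \to J_1$ in $L^\infty$ is immediate. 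The $L^\infty$ convergence of $U_1^n$ follows by splitting $U_1^n(X) - U_1(X) = [u^n-u](x_1^n(X)) + [u(x_1^n(X)) - u(x_1(X))]$ and using the uniform continuity of $u$ on the second piece.

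The $L^2$ convergences then use the change of variables $x = x_1^n(X)$ with Jacobian $(F^n)'$: for instance, the first piece above yields $\int [u^n-u]^2(F^n)' dx = \|u^n-u\|_{L^2}^2 + \tfrac{1}{4}\int(u^n-u)^2[(R^n)^2+c(u^n)(\rho^n)^2]\,dx$, which vanishes thanks to the $L^\infty$ convergence of $u^n$ together with uniform $L^2$ bounds on $R^n, \rho^n$. The second piece requires the estimate $|u(\phi^n(x))-u(x)|^2 \leq \delta_n \int_{x-\delta_n}^{x+\delta_n} u_y^2$ with $\delta_n = \|x_1^n-x_1\|_{L^\infty}$, and Fubini then bounds the integrated quantity by $\delta_n^2\|u_x\|_{L^2}^2 + \delta_n\|u_x\|_{L^2}^2\|R^2+c(u)\rho^2\|_{L^1}$. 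The $L^2$ convergences of $V_1^n, H_1^n$ and the $L^1$ convergences of $(x_1^n)', (J_1^n)', (K_1^n)'$ follow by similar change-of-variables manipulations using the defining identities \eqref{eq:mapFtoD9}, \eqref{eq:mapFtoD11}, $x_1'+J_1'=1$, and $J_1' = c(U_1)K_1'$. Finally, the $L^\infty$ convergence of $K_1^n$ comes by writing $K_1^n - K_1 = \int \bigl(\tfrac{(J_1^n)'}{c(U_1^n)} - \tfrac{J_1'}{c(U_1)}\bigr)$ and exploiting the just-obtained $L^1$ convergence of $(J_1^n)'$ together with the $L^\infty$ convergence of $c(U_1^n)\to c(U_1)$. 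The proof for $\psi_2^n$ is entirely parallel. The \textbf{principal difficulty} is the Sobolev upgrade in the first paragraph: without $L^\infty$ convergence of $u^n$, one cannot control $\int |c(u^n)-c(u)|\rho^2 \,dx$, and hence neither the uniform convergence of $F^n$ nor the $L^2$ convergence estimates involving $(F^n)'$, which is only $L^1$.
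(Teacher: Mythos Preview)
Your strategy matches the paper's: exploit $x_1 = F^{-1}$ with $F(x)=x+\tfrac14\int(R^2+c(u)\rho^2)$, first upgrade $u^n\to u$ from $L^2$ to $L^\infty$, then push convergence through the defining formulas via change of variables. Two points of comparison are worth noting.

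\textbf{The $L^\infty$ upgrade.} The paper avoids your subsequence/dominated-convergence detour entirely with the one-line identity
\[
(u(x)-u^n(x))^2=2\int_{-\infty}^{x}(u-u^n)(u_x-u_x^n)\,dy
\le \kappa\int_{\mathbb R}|u-u^n|\bigl(|R|+|R^n|+|S|+|S^n|\bigr)\,dy,
\]
which after Cauchy--Schwarz gives $\|u-u^n\|_{L^\infty}^2\le C\|u-u^n\|_{L^2}$ directly. Your $H^1$-convergence route is correct but heavier than necessary.

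\textbf{The ``similar manipulations''.} The Sobolev-type estimate $|u(\phi^n(x))-u(x)|^2\le\delta_n\int_{x-\delta_n}^{x+\delta_n}u_y^2$ that you use for $U_1^n\to U_1$ in $L^2$ works precisely because $u\in H^1$. It does \emph{not} transfer to $\rho$, $R$, or $f=\tfrac14(R^2+c(u)\rho^2)$, which lie only in $L^2$ or $L^1$. For $(x_1^n)'\to x_1'$ in $L^1$ and $H_1^n\to H_1$, $V_1^n\to V_1$ in $L^2$, the term that must be controlled is of the form $g\circ x_1^n-g\circ x_1$ with $g\in L^p$ merely, and this requires a density argument: approximate $g$ by $\ell\in C_c(\mathbb R)$, handle $\ell\circ x_1^n-\ell\circ x_1$ by dominated convergence (using that $\ell\circ x_1^n$ has uniformly compact support since $\|x_1^n-x_1\|_{L^\infty}\to 0$), and absorb the $g-\ell$ remainders by change of variables. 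This is exactly what the paper does; your sketch should flag that the shift term needs continuity of translation in $L^p$ rather than the $H^1$ pointwise bound.
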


We mention that the functions which converge in $L^{1}(\mathbb{R})$ also converge in $L^{2}(\mathbb{R})$, because convergence in $L^{1}(\mathbb{R})$ and (uniform) boundedness in $L^{\infty}(\mathbb{R})$ imply convergence in $L^{2}(\mathbb{R})$. 

Let us show this in detail for $(x_{1}^{n})'\to x_{1}'$. Since the measures are absolutely continuous we get as in \eqref{eq:x1derexpr},
\begin{equation*}
x_{1}'(X)=\frac{4}{(R_{0}^{2}+c(u_{0})\rho_{0}^{2})\circ x_{1}(X)+4},
\end{equation*}
so that $0\leq x_{1}'(X)\leq 1$. We also have $0\leq (x_{1}^{n})'(X)\leq 1$. We get
\begin{align*}
\int_{\mathbb{R}}\big(x_{1}'(X)-(x_{1}^{n})'(X)\big)^{2}\,dX&\leq\int_{\mathbb{R}}\big|x_{1}'(X)-(x_{1}^{n})'(X)\big|\big(x_{1}'(X)+(x_{1}^{n})'(X)\big)\,dX\\
&\leq 2\int_{\mathbb{R}}\big|x_{1}'(X)-(x_{1}^{n})'(X)\big|\,dX.
\end{align*}
Therefore, $(x_{1}^{n})'\rightarrow x_{1}'$ in  $L^{1}(\mathbb{R})$ and $L^{2}(\mathbb{R})$. Notice that it was important that $(x_{1}^{n})'$ was \emph{uniformly} bounded in $L^{\infty}(\mathbb{R})$, not just that $(x_{1}^{n})'-1\in L^{\infty}(\mathbb{R})$ for all $n$, which is what we get from the definition of $\F$.

\begin{proof}	
	We start by proving that $u^{n}\rightarrow u$ in $\Linf(\mathbb{R})$. By \eqref{eq:setDux} and the Cauchy Schwarz inequality, we have
	\begin{align*}
	&(u(x)-u^{n}(x))^{2}\\
	&=2\int_{-\infty}^{x}(u-u^{n})(u_{x}-u^{n}_{x})(y)\,dy\\
	&=\int_{-\infty}^{x}(u-u^{n})\bigg(\frac{1}{c(u)}(R-S)-\frac{1}{c(u^{n})}(R^{n}-S^{n})\bigg)(y)\,dy\\
        &\leq \int_{\mathbb{R}}\kappa|u-u^{n}|(|R|+|R^{n}|+|S|+|S^{n}|)(y)\,dy\\
	&\leq \kappa||u-u^{n}||_{L^{2}(\mathbb{R})}\big(||R||_{L^{2}(\mathbb{R})}+||R^{n}||_{L^{2}(\mathbb{R})}+||S||_{L^{2}(\mathbb{R})}+||S^{n}||_{L^{2}(\mathbb{R})}\big)\\
	&\leq\kappa||u-u^{n}||_{L^{2}(\mathbb{R})}(2||R||_{L^{2}(\mathbb{R})}+||R-R^{n}||_{L^{2}(\mathbb{R})}+2||S||_{L^{2}(\mathbb{R})}+||S-S^{n}||_{L^{2}(\mathbb{R})}),
        \end{align*}
	which implies, since $R^{n}\rightarrow R$, $S^{n}\rightarrow S$ and $u^{n}\rightarrow u$ in $L^{2}(\mathbb{R})$, that $u^{n}\rightarrow u$ in $\Linf(\mathbb{R})$. 
	
	Let 
	\begin{equation*}
		\psi=(\psi_{1},\psi_{2})=\mathbf{L}(u,R,S,\rho,\sigma,\mu,\nu)
	\end{equation*}
	and 
	\begin{equation*}
		\psi^{n}=(\psi_{1}^{n},\psi_{2}^{n})=\mathbf{L}(u^{n},R^{n},S^{n},\rho^{n},\sigma^{n},\mu^{n},\nu^{n}).
	\end{equation*}
	We will only prove the results for the components of $\psi_{1}$, the proof for $\psi_{2}$ can be done in a similar way.
	
	 Let $f=\frac{1}{4}(R^{2}+c(u)\rho^{2})$ and $f^{n}=\frac{1}{4}((R^{n})^{2}+c(u^{n})(\rho^{n})^{2})$. From \eqref{eq:setDabscont} 
	we have $\mu((-\infty,x))=\int_{-\infty}^{x}f(z)\,dz$ and $\mu^{n}((-\infty,x))=\int_{-\infty}^{x}f^{n}(z)\,dz$, since $\mu$ and $\mu^{n}$ are absolutely continuous measures. Consider the functions
	\begin{equation*}
		F(x)=x+\int_{-\infty}^{x}f(z)\,dz \quad \text{and} \quad F^{n}(x)=x+\int_{-\infty}^{x}f^{n}(z)\,dz,
	\end{equation*}
	which are strictly increasing and continuous.
	 By \eqref{eq:mapfromDtoF1}, we have that $F(x_{1}(X))=X$ and $F^{n}(x_{1}^{n}(X))=X$ for all $X\in \mathbb{R}$, which implies that $x_{1}$ and $x_{1}^{n}$ are strictly increasing. Since $x_{1}^{-1}=F$ and $(x_{1}^{n})^{-1}=F^{n}$, the inverses $x_{1}^{-1}$ and $(x_{1}^{n})^{-1}$ exist and are strictly increasing.
	We prove that $(x_{1}^{n})^{-1}\rightarrow x_{1}^{-1}$ in $\Linf(\mathbb{R})$. Since
	\begin{equation*}
		|x_{1}^{-1}(x)-(x_{1}^{n})^{-1}(x)|\leq ||f-f^{n}||_{L^{1}(\mathbb{R})}
	\end{equation*}
	we have to show that $f^{n}\rightarrow f$ in $L^{1}(\mathbb{R})$. By using the estimate
	\begin{equation}
	\label{eq:oneOvercestALT}
	\bigg|\frac{1}{c(u)}-\frac{1}{c(u^{n})}\bigg|=\bigg|\int_{u^{n}}^{u}-\frac{c'(w)}{c^{2}(w)}\,dw\bigg|\leq \kappa^{2}k_{1}|u-u^{n}|
	\end{equation}
	and the Cauchy--Schwarz inequality, we find
	\begin{align*}
	||f-f^{n}||_{L^{1}(\mathbb{R})}&=\frac{1}{4}\int_{\mathbb{R}}|R^{2}-(R^{n})^{2}+\rho^{2}(c(u)-c(u^{n}))+c(u^n)(\rho^{2}-(\rho^{n})^{2})|(x)\,dx\\
	&\leq \frac{1}{4}\int_{\mathbb{R}}\big(|R^{2}-(R^{n})^{2}|+k_{1}\rho^{2}|u-u^{n}|+\kappa|\rho^{2}-(\rho^{n})^{2}|\big)(x)\,dx\\
	&\leq \frac{1}{4}||R+R^{n}||_{L^{2}(\mathbb{R})}||R-R^{n}||_{L^{2}(\mathbb{R})}+\frac{k_{1}}{4}||\rho||_{L^{2}(\mathbb{R})}^2||u-u^{n}||_{\Linf(\mathbb{R})}\\
	&\quad+\frac{\kappa}{4}||\rho+\rho^{n}||_{L^{2}(\mathbb{R})}||\rho-\rho^{n}||_{L^{2}(\mathbb{R})}\\
	&\leq \frac{1}{4}\big(2||R||_{L^{2}(\mathbb{R})}+||R-R^{n}||_{L^{2}(\mathbb{R})}\big)||R-R^{n}||_{L^{2}(\mathbb{R})}\\
	&\quad+\frac{k_{1}}{4}||\rho||_{L^{2}(\mathbb{R})}^2||u-u^{n}||_{\Linf(\mathbb{R})}\\
	&\quad+\frac{\kappa}{4} \big(2||\rho||_{L^{2}(\mathbb{R})}+||\rho-\rho^{n}||_{L^{2}(\mathbb{R})}\big)||\rho-\rho^{n}||_{L^{2}(\mathbb{R})}.
	\end{align*}
	Since $R^{n}\rightarrow R$ and $\rho^{n}\rightarrow \rho$ in $L^{2}(\mathbb{R})$, and $u^{n}\rightarrow u$ in $\Linf(\mathbb{R})$, it follows that $f^{n}\rightarrow f$ in $L^{1}(\mathbb{R})$. Therefore $(x_{1}^{n})^{-1}\rightarrow x_{1}^{-1}$ in $\Linf(\mathbb{R})$.
	
	We prove that $x_{1}^{n}\rightarrow x_{1}$ in $\Linf(\mathbb{R})$. We first consider the case $x_{1}^{n}(X)\leq x_{1}(X)$. By direct calculations we get
	\begin{aalign}
	\label{eq:x1UnifConv1estALT}
	x_{1}(X)-x_{1}^{n}(X)&=-\int_{-\infty}^{x_{1}(X)}f(x)\,dx+\int_{-\infty}^{x_{1}^{n}(X)}f^{n}(x)\,dx\\
	&=-\int_{-\infty}^{x_{1}(X)}(f(x)-f^{n}(x))\,dx-\int_{x_{1}^{n}(X)}^{x_{1}(X)}f^{n}(x)\,dx\\
	&\leq -\int_{-\infty}^{x_{1}(X)}(f(x)-f^{n}(x))\,dx,
	\end{aalign}
	since $f^{n}\geq 0$. If $ x_{1}(X)\leq x_{1}^{n}(X)$, we get in a similar way that
	\begin{equation}
	\label{eq:x1UnifConv2estALT}
	x_{1}^{n}(X)-x_{1}(X)\leq \int_{-\infty}^{x_{1}(X)}(f(x)-f^{n}(x))\,dx.
	\end{equation}
	Combining \eqref{eq:x1UnifConv1estALT} and \eqref{eq:x1UnifConv2estALT}, we end up with
	\begin{equation*}
	|x_{1}(X)-x_{1}^{n}(X)|\leq ||f-f^{n}||_{L^{1}(\mathbb{R})},
	\end{equation*}
	which implies, since $f^{n}\rightarrow f$ in $L^{1}(\mathbb{R})$, that $x_{1}^{n}\rightarrow x_{1}$ in $\Linf(\mathbb{R})$. Using \eqref{eq:mapfromDtoF3} we get $J_{1}^{n}\rightarrow J_{1}$ in $\Linf(\mathbb{R})$. 

	We prove that $(x_{1}^{n})'\rightarrow x_{1}'$ in $L^{1}(\mathbb{R})$. As in \eqref{eq:x1derexpr} we get 
	\begin{equation*}
	(x_{1}^{n})'=\frac{1}{f^{n}\circ x_{1}^{n}+1} \quad \text{and} \quad x_{1}'=\frac{1}{f\circ x_{1}+1},
	\end{equation*} 
	so that
	\begin{equation}
	\label{eq:x1L1DiffALT}
	(x_{1}^{n})'-x_{1}'=(f\circ x_{1}-f^{n}\circ x_{1}^{n})(x_{1}^{n})'x_{1}'.
	\end{equation}
	For every $\varepsilon>0$, there exists a function $l$ in $C_{c}(\mathbb{R})$ such that $||f-l||_{L^{1}(\mathbb{R})}\leq\varepsilon$. Here $C_{c}(\mathbb{R})$ denotes the space of continuous functions with compact support. Applying the triangle inequality in \eqref{eq:x1L1DiffALT} yields
	\begin{aalign}
	\label{eq:x1DerL1convALT}
	&\int_{\mathbb{R}}|x_{1}'(X)-(x_{1}^{n})'(X)|\,dX\\
	&\leq \int_{\mathbb{R}}|f\circ x_{1}(X)-l\circ x_{1}(X)|x_{1}'(X)\,dX+\int_{\mathbb{R}}|l\circ x_{1}(X)-l\circ x_{1}^{n}(X)|\,dX\\
	&\quad+\int_{\mathbb{R}}|l\circ x_{1}^{n}(X)-f\circ x_{1}^{n}(X)|(x_{1}^{n})'(X)\,dX\\
	&\quad+\int_{\mathbb{R}}|f\circ x_{1}^{n}(X)-f^{n}\circ x_{1}^{n}(X)|(x_{1}^{n})'(X)\,dX\\
	&=2||f-l||_{L^{1}(\mathbb{R})}+||l\circ x_{1}-l\circ x_{1}^{n}||_{L^{1}(\mathbb{R})}+||f-f^{n}||_{L^{1}(\mathbb{R})},
	\end{aalign}
	by a change of variables. Furthermore we used that $0\leq x_{1}'\leq 1$ and $0\leq (x_{1}^{n})'\leq 1$. It remains to show that $\displaystyle\lim_{n\rightarrow\infty}||l\circ x_{1}-l\circ x_{1}^{n}||_{L^{1}(\mathbb{R})}=0$. We have $l\circ x_{1}^{n}\rightarrow l\circ x_{1}$ pointwise almost everywhere. In order to use the dominated convergence theorem we have to show that $l\circ x_{1}^{n}$ can be uniformly bounded by a function which belongs to $L^{1}(\mathbb{R})$. We prove a slightly more general result which will be used many times throughout the text.

	\begin{lemma}\label{lem:UnifBdd}
	Assume that $g\in C_{c}(\mathbb{R})$, and that $h$ and $h_{n}$ satisfy $h-\id,h_{n}-\id\in L^{\infty}(\mathbb{R})$ and $h_{n}\rightarrow h$ in $L^{\infty}(\mathbb{R})$. Then there exists a constant $0<K<\infty$ that is independent of $n$ such that
	\begin{equation}
	\label{eq:UnifBdd}
		|g\circ h_{n}|\leq ||g||_{L^{\infty}(\mathbb{R})}\chi_{[-K,K]},
	\end{equation}  
	where $\chi_{[-K,K]}$ denotes the indicator function of the interval $[-K, K]$.
	\end{lemma}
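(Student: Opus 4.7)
The plan is to exploit the compact support of $g$ together with a uniform bound on $\|h_n - \id\|_{L^\infty(\mathbb{R})}$ to confine the support of $g \circ h_n$ to a common bounded interval.

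First, since $g \in C_c(\mathbb{R})$, there exists $M > 0$ such that $\supp(g) \subset [-M, M]$. Consequently, $g(h_n(x)) \neq 0$ forces $h_n(x) \in [-M, M]$, and in particular $|h_n(x)| \leq M$. The bound $|g \circ h_n| \leq \|g\|_{L^\infty(\mathbb{R})}$ is immediate; the content of the lemma is to bound the \emph{support} of $g \circ h_n$ uniformly in $n$.

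Next, I would produce the uniform bound on $\|h_n - \id\|_{L^\infty(\mathbb{R})}$. Since $h_n \to h$ in $L^\infty(\mathbb{R})$, there exists $N \in \mathbb{N}$ such that $\|h_n - h\|_{L^\infty(\mathbb{R})} \leq 1$ for all $n \geq N$, and therefore
\begin{equation*}
    \|h_n - \id\|_{L^\infty(\mathbb{R})} \leq \|h_n - h\|_{L^\infty(\mathbb{R})} + \|h - \id\|_{L^\infty(\mathbb{R})} \leq 1 + \|h - \id\|_{L^\infty(\mathbb{R})}
\end{equation*}
for $n \geq N$. Setting
\begin{equation*}
    C = \max\Big\{\|h_1 - \id\|_{L^\infty(\mathbb{R})}, \dots, \|h_{N-1} - \id\|_{L^\infty(\mathbb{R})}, \, 1 + \|h - \id\|_{L^\infty(\mathbb{R})}\Big\}
\end{equation*}
gives $\|h_n - \id\|_{L^\infty(\mathbb{R})} \leq C$ for every $n \in \mathbb{N}$, with $C$ independent of $n$.

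Finally, with $K := M + C$, I would observe that if $|x| > K$, then $|h_n(x)| \geq |x| - |h_n(x) - x| > M + C - C = M$, so $h_n(x) \notin \supp(g)$ and hence $g(h_n(x)) = 0$. Therefore $|g(h_n(x))| \leq \|g\|_{L^\infty(\mathbb{R})} \chi_{[-K,K]}(x)$ for all $x \in \mathbb{R}$ and all $n$. There is no genuine obstacle in this argument; the only subtle point is ensuring that $C$ is extracted in a manner independent of $n$, which is handled by absorbing the finitely many early indices into the maximum.
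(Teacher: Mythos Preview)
Your proof is correct and follows essentially the same approach as the paper's: both use the compact support of $g$ together with a uniform bound on $\|h_n - \id\|_{L^\infty(\mathbb{R})}$ (obtained via $\|h_n - h\|_{L^\infty} + \|h - \id\|_{L^\infty}$ and the boundedness of the convergent sequence $\|h_n - h\|_{L^\infty}$) to trap the support of $g\circ h_n$ in a fixed interval. The only cosmetic difference is that you spell out the handling of the finitely many early indices explicitly, whereas the paper simply invokes the boundedness of a convergent sequence.
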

	\begin{proof}
	Since $g$ has compact support there is a constant $0<k<\infty$ such that $\text{supp}(g)\subset[-k,k]$.
	Writing $h_{n}(x)=h_{n}(x)-h(x)+h(x)-x+x$, we get $\{x\mid h_{n}(x)\in[-k,k]\}\subset[-K_{n},K_{n}]$ where $K_{n}=k+||h-h_{n}||_{L^{\infty}(\mathbb{R})}+||h-\id||_{L^{\infty}(\mathbb{R})}$. Since $h_{n}\rightarrow h$ in $L^{\infty}(\mathbb{R})$ we can find a constant $M$ such that  $||h-h_{n}||_{L^{\infty}(\mathbb{R})}\leq M$ for all $n$. If we set $K=k+M+||h-\id||_{L^{\infty}(\mathbb{R})}$ we get
	\begin{equation*}
		|g\circ h_n(x)|\leq \vert g\circ h_n(x)\vert \chi_{[-K,K]}\leq ||g||_{L^\infty(\mathbb{R})}\chi_{[-K,K]} ,	\end{equation*}
	which proves \eqref{eq:UnifBdd}.
	\end{proof}
	
	From \eqref{eq:UnifBdd} we conclude that $l\circ x_{1}^{n}$ can be uniformly bounded by an $L^{1}(\mathbb{R})$ function. By the dominated convergence theorem we obtain $\displaystyle\lim_{n\rightarrow\infty}||l\circ x_{1}-l\circ x_{1}^{n}||_{L^{1}(\mathbb{R})}=0$. We conclude that the right-hand side of \eqref{eq:x1DerL1convALT} can be made arbitrarily small,
	so that $(x_{1}^{n})'\rightarrow x_{1}'$ in $L^{1}(\mathbb{R})$ and as an immediate consequence $(J_1^n)'\to J_1'$ in $L^1(\Real)$.	
	
	We show that $U_{1}^{n}\rightarrow U_{1}$ in $\Linf(\mathbb{R})$. By \eqref{eq:mapfromDtoF4} and the Cauchy--Schwarz inequality we get
	\begin{align*}
	\big|U_{1}(X)-U_{1}^{n}(X)\big|&\leq\big|u(x_{1}(X))-u(x_{1}^{n}(X))\big|+\big|u(x_{1}^{n}(X))-u^{n}(x_{1}^{n}(X))\big|\\
	&\leq\bigg|\int_{x_{1}^{n}(X)}^{x_{1}(X)}u_{x}(x)\,dx\bigg|+\big|u(x_{1}^{n}(X))-u^{n}(x_{1}^{n}(X))\big|\\
	&\leq ||u_{x}||_{L^{2}(\mathbb{R})}||x_{1}-x_{1}^{n}||_{\Linf(\mathbb{R})}^{\frac{1}{2}}+||u-u^{n}||_{\Linf(\mathbb{R})}.
	\end{align*}
	From \eqref{eq:setDux} we have $||u_{x}||_{L^{2}(\mathbb{R})}\leq \frac{\kappa}{2}(||R||_{L^{2}(\mathbb{R})}+||S||_{L^{2}(\mathbb{R})})$, and since $x_{1}^{n}\rightarrow x_{1}$ and $u^{n}\rightarrow u$ in $L^{\infty}(\mathbb{R})$ we conclude that $U_{1}^{n}\rightarrow U_{1}$ in $\Linf(\mathbb{R})$.
	
	Let us prove that $U_{1}^{n}\rightarrow U_{1}$ in $L^{2}(\mathbb{R})$. Since $u\in H^{1}(\mathbb{R})$ there is for every $\varepsilon>0$ a continuous function $\eta$ with compact support such that $||u-\eta||_{L^{2}(\mathbb{R})}\leq\varepsilon$ and $||u-\eta||_{L^{\infty}(\mathbb{R})}\leq\varepsilon$. We have
	\begin{aalign}
	\label{eq:U1nconvU1}
	||U_{1}-U_{1}^{n}||_{L^{2}(\mathbb{R})}&\leq ||u\circ x_{1}-\eta\circ x_{1}||_{L^{2}(\mathbb{R})}+||\eta\circ x_{1}-\eta\circ x_{1}^{n}||_{L^{2}(\mathbb{R})}\\
	&\quad +||\eta\circ x_{1}^{n}-u\circ x_{1}^{n}||_{L^{2}(\mathbb{R})}+||u\circ x_{1}^{n}-u^{n}\circ x_{1}^{n}||_{L^{2}(\mathbb{R})}.
	\end{aalign}
	Let
	\begin{equation*}
	D_{1}=\bigg\{X\in\mathbb{R} \ | \ x_{1}'(X)<\frac{1}{2}\bigg\} \quad \text{and} \quad D_{2}=\bigg\{Y\in\mathbb{R} \ | \ x_{2}'(Y)<\frac{1}{2}\bigg\}.
	\end{equation*}
	Observe that 
	\begin{equation*}
	\meas(D_{1})\leq \int_\Real 2J_1'(X)\,dX \quad \text{and} \quad \meas(D_{2})\leq \int_\Real 2J_2'(Y)\,dY,
	\end{equation*}
	since $J_i'=1-x_i'$, $i=1,2$. Since $\displaystyle\lim_{X\rightarrow-\infty}J_{1}(X)=\displaystyle\lim_{Y\rightarrow-\infty}J_{2}(Y)=0$ we get
	\begin{equation*}
	\text{meas}(D_{1})\leq 2||J_{1}||_{L^{\infty}(\mathbb{R})} \quad \text{and} \quad \text{meas}(D_{2})\leq 2||J_{2}||_{L^{\infty}(\mathbb{R})}.
	\end{equation*}
	In a similar way we find that the measures of the sets
	\begin{equation*}
	D_{1}^{n}=\bigg\{X\in\mathbb{R} \ | \ (x_{1}^{n})'(X)<\frac{1}{2}\bigg\} \quad \text{and} \quad D_{2}^{n}=\bigg\{Y\in\mathbb{R} \ | \ (x_{2}^{n})'(Y)<\frac{1}{2}\bigg\}.
	\end{equation*}
	have the bounds
	\begin{equation*}
	\text{meas}(D_{1}^{n})\leq 2||J_{1}^{n}||_{L^{\infty}(\mathbb{R})} \quad \text{and} \quad \text{meas}(D_{2}^{n})\leq 2||J_{2}^{n}||_{L^{\infty}(\mathbb{R})}.
	\end{equation*}
	Since $J_{i}^{n}\rightarrow J_{i}$ in $\Linf(\mathbb{R})$, we can find constants $E_{1}$ and $E_{2}$ that are independent of $n$ such that $\text{meas}(D_{1}^{n})\leq E_{1}$ and $\text{meas}(D_{2}^{n})\leq E_{2}$ for all $n$. We have
	\begin{aalign}
	\label{eq:U1Conv1}
	&||u\circ x_{1}-\eta\circ x_{1}||_{L^{2}(\mathbb{R})}^{2}\\
	&=\int_{D_{1}}\big(u\circ x_{1}(X)-\eta\circ x_{1}(X)\big)^{2}\,dX+\int_{D_{1}^{c}}\big(u\circ x_{1}(X)-\eta\circ x_{1}(X)\big)^{2}\,dX\\
	&\leq\text{meas}(D_{1})||u-\eta||_{L^{\infty}(\mathbb{R})}^{2}+2\int_{D_{1}^{c}}\big(u\circ x_{1}(X)-\eta\circ x_{1}(X)\big)^{2}x_{1}'(X)\,dX\\
	&\leq\text{meas}(D_{1})||u-\eta||_{L^{\infty}(\mathbb{R})}^{2}+2\int_{\mathbb{R}}\big(u\circ x_{1}(X)-\eta\circ x_{1}(X)\big)^{2}x_{1}'(X)\,dX\\
	&=\text{meas}(D_{1})||u-\eta||_{L^{\infty}(\mathbb{R})}^{2}+2||u-\eta||_{L^{2}(\mathbb{R})}^{2}
	\end{aalign}
	by a change of variables. Similarly,
	\begin{equation}
	\label{eq:U1Conv2}
	||\eta\circ x_{1}^{n}-u\circ x_{1}^{n}||_{L^{2}(\mathbb{R})}^{2}\leq E_{1}||u-\eta||_{L^{\infty}(\mathbb{R})}^{2}+2||u-\eta||_{L^{2}(\mathbb{R})}^{2}
	\end{equation}
	and
	\begin{equation*}
	||u\circ x_{1}^{n}-u^{n}\circ x_{1}^{n}||_{L^{2}(\mathbb{R})}^{2}\leq E_{2}||u-u^{n}||_{L^{\infty}(\mathbb{R})}^{2}+2||u-u^{n}||_{L^{2}(\mathbb{R})}^{2}.
	\end{equation*}
	Since $u_{n}\rightarrow u$ in $L^{2}(\mathbb{R})$ and $L^{\infty}(\mathbb{R})$ we get that
	\begin{equation}
	\label{eq:U1Conv3}
		\displaystyle\lim_{n\rightarrow\infty}||u\circ x_{1}^{n}-u^{n}\circ x_{1}^{n}||_{L^{2}(\mathbb{R})}=0.
	\end{equation}
	
	For the remaining term in \eqref{eq:U1nconvU1} we use the dominated convergence theorem. We have $\eta\circ x_{1}^{n}\rightarrow \eta\circ x_{1}$ pointwise almost everywhere, and by Lemma~\ref{lem:UnifBdd} we find that $\eta\circ x_{1}^{n}$ can be uniformly bounded by an $L^{2}(\mathbb{R})$ function. By the dominated convergence theorem we get 
	\begin{equation}
	\label{eq:U1Conv4}
		\displaystyle\lim_{n\rightarrow\infty}||\eta\circ x_{1}-\eta\circ x_{1}^{n}||_{L^{2}(\mathbb{R})}=0.
	\end{equation} 
	
	From \eqref{eq:U1Conv3} and \eqref{eq:U1Conv4}, and since the right-hand sides of \eqref{eq:U1Conv1} and \eqref{eq:U1Conv2} can be made arbitrarily small, we conclude that $U_{1}^{n}\rightarrow U_{1}$ in $L^{2}(\mathbb{R})$.
	
	We prove that $K_{1}^{n}\rightarrow K_{1}$ in $\Linf(\mathbb{R})$. By \eqref{eq:mapfromDtoF6} we get
	\begin{aalign}
	\label{eq:K1Diff}
	K_{1}(X)-K_{1}^{n}(X)&=\int_{-\infty}^{X}\bigg[\frac{J_{1}'(\bar{X})}{c(U_{1}(\bar{X}))}-\frac{(J_{1}^{n})'(\bar{X})}{c(U_{1}^{n}(\bar{X}))}\bigg]\,d\bar{X}\\
	&=\int_{-\infty}^{X}J_{1}'(\bar{X})\bigg[\frac{1}{c(U_{1}(\bar{X}))}-\frac{1}{c(U_{1}^{n}(\bar{X}))}\bigg]\,d\bar{X}\\
	&\quad +\int_{-\infty}^{X}\frac{1}{c(U_{1}^{n}(\bar{X}))}\big[J_{1}'(\bar{X})-(J_{1}^{n})'(\bar{X})\big]\,d\bar{X}.
	\end{aalign}
	For the first term on the right-hand side we use the Cauchy--Schwarz inequality and get
	\begin{align*}
	&\bigg|\int_{-\infty}^{X}J_{1}'(\bar{X})\bigg[\frac{1}{c(U_{1}(\bar{X}))}-\frac{1}{c(U_{1}^{n}(\bar{X}))}\bigg]\,d\bar{X}\bigg|\\
	&\leq ||J_{1}'||_{L^{2}(\mathbb{R})}\bigg|\int_{-\infty}^{X}\bigg(\int_{U_{1}^{n}(\bar{X})}^{U_{1}(\bar{X})}-\frac{c'(U)}{c^{2}(U)}\,dU\bigg)^{2}\,d\bar{X}\bigg|^{\frac{1}{2}}\\
	&\leq k_{1}\kappa^{2}||J_{1}'||_{L^{2}(\mathbb{R})}||U_{1}-U_{1}^{n}||_{L^{2}(\mathbb{R})},
	\end{align*}
	and for the second term we have
	\begin{equation*}
	\bigg|\int_{-\infty}^{X}\frac{1}{c(U_{1}^{n}(\bar{X}))}\big[J_{1}'(\bar{X})-(J_{1}^{n})'(\bar{X})\big]\,d\bar{X}\bigg|\leq \kappa||J_{1}'-(J_{1}^{n})'||_{L^{1}(\mathbb{R})},
	\end{equation*}
	which implies that $K_{1}^{n}\rightarrow K_{1}$ in $\Linf(\mathbb{R})$.
	
	The above proof in fact also shows that $(K_{1}^{n})'\rightarrow K_{1}'$ in $L^{1}(\mathbb{R})$. This is because
	\begin{equation*}
	\int_{\mathbb{R}}\big|K_{1}'(X)-(K_{1}^{n})'(X)\big|\,dX=\int_{\mathbb{R}}\bigg|\frac{J_{1}'(X)}{c(U_{1}(X))}-\frac{(J_{1}^{n})'(X)}{c(U_{1}^{n}(X))}\bigg|\,dX,
	\end{equation*}
	which is very similar to the term we estimated above.
	
	We prove that $H_{1}^{n}\rightarrow H_{1}$ in $L^{2}(\mathbb{R})$. Since $\rho$ and $\rho^{n}$ belong to $L^{2}(\mathbb{R})$ there exist for every $\varepsilon>0$ functions $\phi$ and $\phi^{n}$ in $C_{c}(\mathbb{R})$ such that $||\rho-\phi||_{L^{2}(\mathbb{R})}\leq\varepsilon$ and $||\rho^{n}-\phi^{n}||_{L^{2}(\mathbb{R})}\leq\varepsilon$. Since $\rho^{n}\rightarrow\rho$ in $L^{2}(\mathbb{R})$ we can for every $\varepsilon>0$ choose $n$ so large that $||\rho-\rho^{n}||_{L^{2}(\mathbb{R})}\leq\varepsilon$. This implies that for large $n$ we have $||\phi-\phi^{n}||_{L^{2}(\mathbb{R})}\leq 3\varepsilon$. From \eqref{eq:mapfromDtoF7} we have
	\begin{align*}
		H_{1}-H_{1}^{n}&=\frac{1}{2}x_{1}'(\rho\circ x_{1}-\phi\circ x_{1})
		+\frac{1}{2}((\phi\circ x_{1})x_{1}'-(\phi^{n}\circ x_{1}^{n})(x_{1}^{n})')\\
		&\quad+\frac{1}{2}(x_{1}^{n})'(\phi^{n}\circ x_{1}^{n}-\rho^{n}\circ x_{1}^{n}),
	\end{align*}
	so that
	\begin{aalign}
	\label{eq:H1Conv}
		||H_{1}-H_{1}^{n}||_{L^{2}(\mathbb{R})}&\leq\frac{1}{2}||x_{1}'(\rho\circ x_{1}-\phi\circ x_{1})||_{L^{2}(\mathbb{R})}\\
		&\quad+\frac{1}{2}||(\phi\circ x_{1})x_{1}'-(\phi^{n}\circ x_{1}^{n})(x_{1}^{n})'||_{L^{2}(\mathbb{R})}\\
		&\quad+\frac{1}{2}||(x_{1}^{n})'(\phi^{n}\circ x_{1}^{n}-\rho^{n}\circ x_{1}^{n})||_{L^{2}(\mathbb{R})}.
	\end{aalign}
	Since $0\leq x_{1}'\leq 1$ we get for the first term on the right-hand side by a change of variables,  
	\begin{aalign}
	\label{eq:H1Conv1}
		||x_{1}'(\rho\circ x_{1}-\phi\circ x_{1})||_{L^{2}(\mathbb{R})}^{2}&\leq\int_{\mathbb{R}}x_{1}'(X)(\rho(x_{1}(X))-\phi(x_{1}(X)))^{2}\,dX\\
		&=||\rho-\phi||_{L^{2}(\mathbb{R})}^{2}.
	\end{aalign}
	Similarly we get for the third term,
	\begin{equation}
	\label{eq:H1Conv2}
		||(x_{1}^{n})'(\phi^{n}\circ x_{1}^{n}-\rho^{n}\circ x_{1}^{n})||_{L^{2}(\mathbb{R})}\leq ||\rho^{n}-\phi^{n}||_{L^{2}(\mathbb{R})}.
	\end{equation}
	For the second term we have
	\begin{aalign}
		\label{eq:H1Conv3}
		&||(\phi\circ x_{1})x_{1}'-(\phi^{n}\circ x_{1}^{n})(x_{1}^{n})'||_{L^{2}(\mathbb{R})}\\
		&\leq ||(\phi\circ x_{1})(x_{1}'-(x_{1}^{n})')||_{L^{2}(\mathbb{R})}+||(x_{1}^{n})'(\phi\circ x_{1}-\phi\circ x_{1}^{n})||_{L^{2}(\mathbb{R})}\\
		&\quad+||(x_{1}^{n})'(\phi\circ x_{1}^{n}-\phi^{n}\circ x_{1}^{n})||_{L^{2}(\mathbb{R})}\\
		&\leq \sqrt{2}||\phi||_{L^{\infty}(\mathbb{R})}||x_{1}'-(x_{1}^{n})'||_{L^{1}(\mathbb{R})}^{\frac{1}{2}}+||\phi\circ x_{1}-\phi\circ x_{1}^{n}||_{L^{2}(\mathbb{R})}\\
		&\quad+||\phi-\phi^{n}||_{L^{2}(\mathbb{R})},
	\end{aalign}
	where we used that $0\leq x_{1}'\leq 1$, $0\leq (x_{1}^{n})'\leq 1$, and a change of variables for the last term on the right-hand side. We have $\phi\circ x_{1}^{n}\rightarrow\phi\circ x_{1}$ pointwise almost everywhere, and by Lemma~\ref{lem:UnifBdd} we get that $\phi\circ x_{1}^{n}$ can be uniformly bounded by an $L^{2}(\mathbb{R})$ function. By the dominated convergence theorem we have $\displaystyle\lim_{n\rightarrow\infty}||\phi\circ x_{1}-\phi\circ x_{1}^{n}||_{L^{2}(\mathbb{R})}=0$. 
	
	Using the estimates \eqref{eq:H1Conv1}-\eqref{eq:H1Conv3} in \eqref{eq:H1Conv} we observe that all terms on the right-hand side can be made arbitrarily small, which implies that $H_{1}^{n}\rightarrow H_{1}$ in $L^{2}(\mathbb{R})$.

	We can prove in more or less the same way that $V_{1}^{n}\rightarrow V_{1}$ in $L^{2}(\mathbb{R})$, where we also have to use that $U_{1}^{n}\rightarrow U_{1}$ in $L^{\infty}(\mathbb{R})$ and the boundedness of $c$ and $c'$. 
\end{proof}

\begin{lemma}
	\label{lemma:2}
	Let $(\psi_{1},\psi_{2})$ and $(\psi_{1}^{n},\psi_{2}^{n})$ belong to $\F$, where $\psi_{i}=(x_{i},U_{i},J_{i},K_{i},V_{i},H_{i})$ and $\psi_{i}^{n}=(x_{i}^{n},U_{i}^{n},J_{i}^{n},K_{i}^{n},V_{i}^{n},H_{i}^{n})$. Assume that $x_{i}$ and $x_{i}^{n}$ are strictly increasing, and that $x_{i}+J_{i}=\id$, and $x_i^n+J_i^n=\id$. Consider
	\begin{equation*}
		(\X,\Y,\Z,\V,\W,\p,\q)=\mathbf{C}(\psi_{1},\psi_{2})
	\end{equation*}
	and
	\begin{equation*}
		(\X^{n},\Y^{n},\Z^{n},\V^{n},\W^{n},\p^{n},\q^{n})=\mathbf{C}(\psi_{1}^{n},\psi_{2}^{n}).
	\end{equation*}
	Assume
	\begin{equation*}
	x_{i}^{n}\rightarrow x_{i}, \quad (x_{i}^{n})^{-1}\rightarrow x_{i}^{-1}, \quad U_{i}^{n}\rightarrow U_{i}, \quad J_{i}^{n}\rightarrow J_{i}, \quad K_{i}^{n}\rightarrow K_{i} \quad \text{in} \quad L^{\infty}(\mathbb{R}),
	\end{equation*}
	\begin{equation*}
	U_{i}^{n}\rightarrow U_{i}, \quad V_{i}^{n}\rightarrow V_{i}, \quad H_{i}^{n}\rightarrow H_{i} \quad \text{in} \quad L^{2}(\mathbb{R}),
	\end{equation*}
	\begin{equation*}
	(x_{i}^{n})'\rightarrow x_{i}', \quad (J_{i}^{n})'\rightarrow J_{i}', \quad (K_{i}^{n})'\rightarrow K_{i}' \quad \text{in} \quad L^{2}(\mathbb{R})
	\end{equation*}
	for $i=1,2$. Then $\X$, $\Y$, $\X^{n}$ and $\Y^{n}$ are strictly increasing, and
	\begin{align*}
	&\X^{n}\rightarrow\X, \quad \Y^{n}\rightarrow\Y, \quad \Z_{i}^{n}\rightarrow\Z_{i} \quad \text{in} \quad \Linf(\mathbb{R}),
	\end{align*}
	\begin{equation*}
	\Z_{3}^{n}\rightarrow\Z_{3}, \quad \V_{i}^{n}\rightarrow\V_{i}, \quad \W_{i}^{n}\rightarrow\W_{i}, \quad \p^{n}\rightarrow\p, \quad \q^{n}\rightarrow\q \quad \text{in} \quad L^{2}(\mathbb{R})
	\end{equation*}
	for $i=1,\dots,5$.
\end{lemma}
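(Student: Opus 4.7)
The plan is to establish the four conclusions in the order: strict monotonicity of $\X, \Y, \X^n, \Y^n$; uniform convergence of the curves; $L^\infty$ convergence of the $\Z^n$-components; and $L^2$ convergence of $\V^n, \W^n, \p^n, \q^n, \Z_3^n$.

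For strict monotonicity, the relation $x_1(\X(s)) = x_2(\Y(s))$ from \eqref{eq:mapFtoG2}, combined with strict monotonicity of $x_1$ and $x_2$, rules out any plateau: a constancy $\X(s_1) = \X(s_2)$ for $s_1 < s_2$ would force $x_2(\Y(s_1)) = x_2(\Y(s_2))$, hence $\Y(s_1) = \Y(s_2)$, contradicting $\X + \Y = 2\id$. The same argument applies to $\Y$ and to $\X^n, \Y^n$.

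For uniform convergence of the curves, I would exploit the normalization $x_i + J_i = \id$ to convert $x_1(\X) = x_2(\Y)$ into the explicit identity
\begin{equation*}
\X(s) = s + \tfrac{1}{2}\bigl[J_1(\X(s)) - J_2(\Y(s))\bigr],
\end{equation*}
and the analogous identity for $\X^n$. A direct subtraction does not close, because $J_i' \leq 1$ leaves no room for Gronwall absorption. Instead, pointwise convergence follows from a contradiction argument: if $\X^{n_k}(s) \to X_\infty$ along a subsequence, then the strictly monotone function $X \mapsto x_1(X) - x_2(2s-X)$, together with the $L^\infty$ convergence $x_i^n \to x_i$, forces $X_\infty = \X(s)$. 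To upgrade to uniform convergence on $\mathbb{R}$, I would treat bounded and unbounded regions of $s$ separately: each $\X^n$ is $2$-Lipschitz (cf.~\eqref{eq:Xlipschitz}), so Arzel\`a--Ascoli yields uniform convergence on compact sets; and for $|s|$ large, monotonicity and boundedness of $J_i, J_i^n$ provide limits $J_i^{(n)}(\pm\infty)$, and the explicit identity shows $\X^n(s) - \X(s)$ is asymptotically controlled by $\|J_i^n - J_i\|_{L^\infty}$, uniformly in large $|s|$.

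Once $\X^n \to \X$ in $L^\infty$, the $L^\infty$ convergence of the $\Z^n$-components reduces, via \eqref{eq:mapFtoGZ2}--\eqref{eq:mapFtoGZ5}, to the standard splitting $f^n(\X^n) - f(\X) = (f^n - f)(\X^n) + (f(\X^n) - f(\X))$. The first summand is $\leq \|f^n - f\|_{L^\infty}$. The second is handled by the Lipschitz bounds $\|x_1'\|_\infty \leq 1$, $\|J_1'\|_\infty \leq 1$, $\|K_1'\|_\infty \leq \kappa$ for $\Z_2, \Z_4, \Z_5$, and by writing $U_1(\X^n(s)) = \Z_3(\X^{-1}(\X^n(s)))$ together with Lipschitz continuity of $\Z_3$ (from \eqref{eq:setFrel7} and $V_i \in L^\infty$) for $\Z_3$. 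Convergence of $\V^n, \W^n, \p^n, \q^n$ in $L^2$ is immediate from Definition \ref{def:mapfromFtoG0}: each is, up to a factor $1/c(U_i^n)$, one of $(x_i^n)', (J_i^n)', (K_i^n)', V_i^n, H_i^n$, all converging in $L^2$ by hypothesis (the $c$-factor handled by $L^\infty$ convergence of $U_i^n$ and boundedness of $c, c'$). Finally, $\Z_3^n \to \Z_3$ in $L^2$ follows by approximating $U_1$ by a compactly supported continuous function, as in the step of Lemma \ref{lemma:1} establishing $U_1^n \to U_1$ in $L^2(\mathbb{R})$.

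The principal obstacle is the uniform extension of the curve convergence on all of $\mathbb{R}$: because $J_i$ has Lipschitz constant at most $1$, the naive bound degenerates, and the proof must genuinely combine strict monotonicity (for pointwise convergence) with asymptotic behavior of $J_i, J_i^n$ (for control at infinity) rather than a single closed-form inequality.
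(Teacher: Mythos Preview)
Your overall structure is sound, but the approach diverges from the paper's at the key step of curve convergence, and one later step is underspecified.

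For $\X^n \to \X$ in $L^\infty$, your compactness-plus-asymptotic route can be made to work, but it is considerably more delicate than the paper's argument. The paper first proves $J_i^n \circ (x_i^n)^{-1} \to J_i \circ x_i^{-1}$ in $L^\infty$, then writes the identity $\X + \Y = \X^n + \Y^n$ in the form
\[
2(a - a_n) + \sum_{i=1}^2\bigl[g_i(a) - g_i(a_n)\bigr] = \sum_{i=1}^2\bigl[g_i^n(a_n) - g_i(a_n)\bigr],
\]
where $a = x_1 \circ \X$, $a_n = x_1^n \circ \X^n$, and $g_i = J_i \circ x_i^{-1}$. Since each $g_i$ is nondecreasing, all three terms on the left share the sign of $a - a_n$; thus the absolute value of the left side equals $2|a - a_n| + \sum_i |g_i(a) - g_i(a_n)|$, while the right side is bounded by $\sum_i \|g_i^n - g_i\|_\infty$. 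This yields the uniform bound in one stroke, with no compactness and no tail analysis. Your approach is correct in principle, but the asymptotic step---uniform control of $J_i^n(\X^n(s)) - J_i^n(\pm\infty)$ over both $s$ and $n$---requires a careful argument you have not written out, and is precisely the place where the degenerate Lipschitz constant you flagged would otherwise bite.

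For $\Z_3^n \to \Z_3$ in $L^2$, the appeal to ``as in Lemma~\ref{lemma:1}'' does not transfer cleanly. That step in Lemma~\ref{lemma:1} relies on approximating $u \in H^1(\mathbb{R})$ simultaneously in $L^2$ and $L^\infty$, and on the finite-measure sets $D_1, D_1^n$. Here $U_1$ is only assumed in $L^2 \cap L^\infty$, and there is no analogous finite-measure bad set for $\dot\X$. The paper instead exploits $\dot\X + \dot\Y = 2$ to obtain the exact identity $\|\Z_3\|_{L^2}^2 = \tfrac12\|U_1\|_{L^2}^2 + \tfrac12\|U_2\|_{L^2}^2$ (and likewise for $\Z_3^n$), and then shows $\langle \Z_3, \Z_3^n\rangle \to \|\Z_3\|_{L^2}^2$ by approximating $\Z_3$ by $\phi \in C_c^\infty$ and integrating by parts against $\dot\X^n, \dot\Y^n$. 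An alternative compatible with your outline: the norm identity gives $\|\Z_3^n\|_{L^2} \to \|\Z_3\|_{L^2}$, and the already-established $L^\infty$ convergence (plus uniform $L^2$ bounds) gives weak $L^2$ convergence, so strong convergence follows. Either way, a genuine argument beyond the Lemma~\ref{lemma:1} template is needed.
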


\begin{proof} 
	Since $x_{i}$ and $x_{i}^{n}$ are continuous and strictly increasing for $i=1,2$, we have by \eqref{eq:mapFtoGX} that for every $s\in\mathbb{R}$ there exist \emph{unique} points $\X(s)$ and $\X^{n}(s)$ such that 
	\begin{equation*}
	x_{1}(\X(s))=x_{2}(2s-\X(s))\quad \text{and} \quad x_{1}^{n}(\X^{n}(s))=x_{2}^{n}(2s-\X^{n}(s)).
	\end{equation*}
	Moreover, $\X$ and $\X^{n}$ are strictly increasing and continuous. It follows that $\Y$ and $\Y^{n}$ are strictly increasing and continuous. Thus, the inverse functions $\X^{-1}$, $\Y^{-1}$, $(\X^{n})^{-1}$ and $(\Y^{n})^{-1}$ exist, and they are continuous and strictly increasing.
	
	We prove that $\X^{n}\rightarrow\X$ in $\Linf(\mathbb{R})$. To begin with we show that $J_{i}^{n}\circ (x_{i}^{n})^{-1}\rightarrow J_{i}\circ x_{i}^{-1}$ in $\Linf(\mathbb{R})$. Write
	\begin{equation*}
		J_{i}\circ x_{i}^{-1}(X)-J_{i}^{n}\circ (x_{i}^{n})^{-1}(X)=\int_{(x_{i}^{n})^{-1}(X)}^{x_{i}^{-1}(X)}J_{i}'(Z)\,dZ+
		J_{i}\circ (x_{i}^{n})^{-1}(X)-J_{i}^{n}\circ (x_{i}^{n})^{-1}(X)
	\end{equation*}
	to get
	\begin{equation*}
		|J_{i}\circ x_{i}^{-1}(X)-J_{i}^{n}\circ (x_{i}^{n})^{-1}(X)|\leq||x_{i}^{-1}-(x_{i}^{n})^{-1}||_{\Linf(\mathbb{R})}+||J_{i}-J_{i}^{n}||_{\Linf(\mathbb{R})}.
	\end{equation*}
	We used that $0\leq J_{i}'\leq 1$, where the upper bound comes from the identity $x_{i}'+J_{i}'=1$, and $x_{i}'\geq 0$. Using the convergence assumptions implies that $J_{i}^{n}\circ (x_{i}^{n})^{-1}\rightarrow J_{i}\circ x_{i}^{-1}$ in $\Linf(\mathbb{R})$.
	
	We insert $X=\X(s)$ in $x_{1}(X)+J_{1}(X)=X$ and get
	\begin{equation}
	\label{eq:ChiFtoG}
		\X(s)=x_1\circ \X(s)+J_1\circ \X(s)=x_{1}\circ\X(s)+J_{1}\circ x_1^{-1}\circ x_1\circ\X(s).
	\end{equation}
 Similarly we get
	\begin{equation}
	\label{eq:YFtoG}
		\Y(s)=x_2\circ \Y(s)+J_2\circ \Y(s)=x_{1}\circ\X(s)+J_{2}\circ x_{2}^{-1}\circ x_{1}\circ\X(s),
	\end{equation}
	where we used that $x_{1}(\X(s))=x_{2}(\Y(s))$.
	The expressions for $\X^{n}$ and $\Y^{n}$ are defined in a similar way. By \eqref{eq:initialcurvenormalization},
	\begin{equation*}
		\X(s)+\Y(s)-\X^{n}(s)-\Y^{n}(s)=0,
	\end{equation*} 
	which combined with the above expressions yields
	\begin{align*}
		&2\big(x_{1}\circ\X(s)-x_{1}^{n}\circ\X^{n}(s)\big)+J_{1}\circ x_{1}^{-1}\circ x_1\circ\X(s)-J_{1}^{n}\circ (x_1^{n})^{-1}\circ x_{1}^{n}\circ\X^{n}(s)\\
		&\quad +J_{2}\circ x_{2}^{-1}\circ x_{1}\circ\X(s)-
		J_{2}^{n}\circ (x_{2}^{n})^{-1}\circ x_{1}^{n}\circ\X^{n}(s)=0.
	\end{align*}
	Since $J_{i}^{n}\circ(x_{i}^{n})^{-1}$ is increasing we get
	\begin{align*}
		&2\big\vert x_{1}\circ\X(s)-x_{1}^{n}\circ\X^{n}(s)\big\vert\\ 
		&+\vert J_{1}\circ x_1^{-1}\circ x_1\circ \X(s)-J_{1}\circ x_1^{-1}\circ x_1^n\circ \X^n(s)\vert\\
		&+\vert J_2\circ x_2^{-1}\circ x_1\circ \X(s)-J_2\circ x_2^{-1}\circ x_1^n\circ \X^n(s)\vert \\
		&\quad \leq \vert J_1^n\circ (x_1^n)^{-1}\circ x_1^n\circ \X^n(s)-J_1\circ x_1^{-1}\circ x_1^n\circ \X^n(s)\vert \\
		&\quad \quad + \vert J_2^n\circ (x_2^n)^{-1}\circ x_1^n\circ \X^n(s)-J_2\circ x_2^{-1}\circ x_1^n\circ \X^n(s)\vert,
	\end{align*}
	Therefore
	\begin{align*}
		||x_{1}\circ\X-x_{1}^{n}\circ\X^{n}||_{\Linf(\mathbb{R})}&\leq\frac12||J_1^n\circ (x_1^n)^{-1}-J_1\circ x_1^{-1}||_{\Linf(\mathbb{R})}\\
		&\quad+ \frac{1}{2}||J_{2}^{n}\circ (x_{2}^{n})^{-1}-J_{2}\circ x_{2}^{-1}||_{\Linf(\mathbb{R})},
	\end{align*}
	\begin{align*}
		||J_{1}\circ x_{1}^{-1}\circ x_{1}\circ\X-
		J_{1}\circ x_{1}^{-1}\circ x_{1}^{n}\circ\X^{n}||_{\Linf(\mathbb{R})}&\leq||J_1^n\circ (x_1^n)^{-1}-J_1\circ x_1^{-1}||_{\Linf(\mathbb{R})}\\
		&\quad+||J_{2}^{n}\circ (x_{2}^{n})^{-1}-J_{2}\circ x_{2}^{-1}||_{\Linf(\mathbb{R})},
	\end{align*}
	and 	
	\begin{align*}
		|| J_1\circ \X-J_1^n\circ \X^n||_{\Linf(\mathbb{R})}&=||J_{1}\circ x_{1}^{-1}\circ x_{1}\circ\X-
		J_{1}^n\circ (x_{1}^n)^{-1}\circ x_{1}^{n}\circ\X^{n}||_{\Linf(\mathbb{R})}\\ &\leq 2||J_1^n\circ (x_1^n)^{-1}-J_1\circ x_1^{-1}||_{\Linf(\mathbb{R})}\\
		&\quad+||J_{2}^{n}\circ (x_{2}^{n})^{-1}-J_{2}\circ x_{2}^{-1}||_{\Linf(\mathbb{R})}.
	\end{align*}
	Thus, we showed that $x_{1}^{n}\circ\X^{n}\rightarrow x_{1}\circ\X$ and 
	$J_{1}^{n}\circ \X^{n}\rightarrow J_{1}\circ\X$ in $\Linf(\mathbb{R})$ since $J_{i}^{n}\circ (x_{i}^{n})^{-1}-J_{i}\circ x_{i}^{-1}$ in $\Linf(\mathbb{R})$.
	
	From \eqref{eq:ChiFtoG}, \eqref{eq:initialcurvenormalization}, \eqref{eq:YFtoG}, \eqref{eq:mapFtoGZ2} and \eqref{eq:mapFtoGZ4} it immediately follows that $\X^{n}\rightarrow\X$, $\Y^{n}\rightarrow\Y$, $\Z_{2}^{n}\rightarrow\Z_{2}$ and $\Z_{4}^{n}\rightarrow\Z_{4}$ in $L^{\infty}(\mathbb{R})$. 
	
	We show that $\Z_{5}^{n}\rightarrow\Z_{5}$ in $L^{\infty}(\mathbb{R})$. By \eqref{eq:mapFtoGZ5} we have
	\begin{align*}
	\Z_{5}(s)-\Z_{5}^{n}(s)&=K_{1}(\X(s))-K_{1}(\X^{n}(s))+K_{1}(\X^{n}(s))-K_{1}^{n}(\X^{n}(s))\\
	&\quad +K_{2}(\Y(s))-K_{2}(\Y^{n}(s))+K_{2}(\Y^{n}(s))-K_{2}^{n}(\Y^{n}(s))
	\end{align*}
	and for the first line we get
	\begin{align*}
		&\big|K_{1}(\X(s))-K_{1}(\X^{n}(s))+K_{1}(\X^{n}(s))-K_{1}^{n}(\X^{n}(s))\big|\\ 
		&=\bigg|\int_{\X^{n}(s)}^{\X(s)}K_{1}'(X)\,dX+K_{1}(\X^{n}(s))-K_{1}^{n}(\X^{n}(s))\bigg|\\
		&\leq ||K_{1}'||_{L^{\infty}(\mathbb{R})}||\X-\X^{n}||_{L^{\infty}(\mathbb{R})}+||K_{1}-K_{1}^{n}||_{L^{\infty}(\mathbb{R})}.
	\end{align*}
	A similar estimate for the second line yields $\Z_{5}^{n}\rightarrow\Z_{5}$ in $L^{\infty}(\mathbb{R})$. By \eqref{eq:mapFtoGZ3} and the Cauchy--Schwarz inequality we have
	\begin{align*}
	|\Z_{3}(s)-\Z_{3}^{n}(s)|&\leq \bigg|\int_{\X^{n}(s)}^{\X(s)}U_{1}'(X)\,dX\bigg|+|U_{1}(\X^{n}(s))-U_{1}^{n}(\X^{n}(s))|\\
	&\leq ||U_{1}'||_{L^{2}(\mathbb{R})}  ||\X-\X^{n}||_{L^{\infty}(\mathbb{R})}^{\frac{1}{2}}+||U_{1}-U_{1}^{n}||_{L^{\infty}(\mathbb{R})},
	\end{align*}
	which shows that $\Z_{3}^{n}\rightarrow\Z_{3}$ in $L^{\infty}(\mathbb{R})$.
	
	We prove that $\Z_{3}^{n}\rightarrow\Z_{3}$ in $L^{2}(\mathbb{R})$. We have
	\begin{equation}
	\label{eq:Z3L2Diff}
		||\Z_{3}-\Z_{3}^{n}||_{L^{2}(\mathbb{R})}^{2}=||\Z_{3}||_{L^{2}(\mathbb{R})}^{2}-2\langle \Z_{3},\Z_{3}^{n} \rangle+||\Z_{3}^{n}||_{L^{2}(\mathbb{R})}^{2},
	\end{equation}
	where $\langle \cdot,\cdot \rangle$ denotes the inner product on $L^{2}(\mathbb{R})$. Since $\dot{\X}+\dot{\Y}=2$ we get from \eqref{eq:mapFtoGZ3} and a change of variables,
	\begin{aalign}
	\label{eq:Z3L2Diff1}
		||\Z_{3}||_{L^{2}(\mathbb{R})}^{2}&=\frac{1}{2}\int_{\mathbb{R}}U_{1}^{2}(\X(s))\dot{\X}(s)\,ds+\frac{1}{2}\int_{\mathbb{R}}U_{2}^{2}(\Y(s))\dot{\Y}(s)\,ds\\
		&=\frac{1}{2}||U_{1}||_{L^{2}(\mathbb{R})}^{2}+\frac{1}{2}||U_{2}||_{L^{2}(\mathbb{R})}^{2},
	\end{aalign} 
	and similarly
	\begin{equation}
	\label{eq:Z3L2Diff2}
		||\Z_{3}^{n}||_{L^{2}(\mathbb{R})}^{2}=\frac{1}{2}||U_{1}^{n}||_{L^{2}(\mathbb{R})}^{2}+\frac{1}{2}||U_{2}^{n}||_{L^{2}(\mathbb{R})}^{2}.
	\end{equation}
	Using that $\dot{\X}^{n}+\dot{\Y}^{n}=2$ we have
	\begin{equation*}
		2\langle \Z_{3},\Z_{3}^{n} \rangle=\int_{\mathbb{R}}\Z_{3}(s)\Z_{3}^{n}(s)\dot{\X}^{n}(s)\,ds+\int_{\mathbb{R}}\Z_{3}(s)\Z_{3}^{n}(s)\dot{\Y}^{n}(s)\,ds.
	\end{equation*}
	Since $\Z_{3}\in L^{2}(\mathbb{R})$ there exists for every $\varepsilon>0$ a function $\phi\in C^{\infty}_{c}(\mathbb{R})$ such that $||\Z_{3}-\phi||_{L^{2}(\mathbb{R})}\leq\varepsilon$. Write
	\begin{align*}
		&\int_{\mathbb{R}}\Z_{3}(s)\Z_{3}^{n}(s)\dot{\X}^{n}(s)\,ds\\
		&=\int_{\mathbb{R}}\big[\Z_{3}(s)-\phi(s)\big]\Z_{3}^{n}(s)\dot{\X}^{n}(s)\,ds+\int_{\mathbb{R}}\phi(s)\Z_{3}^{n}(s)\dot{\X}^{n}(s)\,ds-\int_{\mathbb{R}}\phi(s)\Z_{3}(s)\dot{\X}(s)\,ds\\
		&\quad +\int_{\mathbb{R}}\Z_{3}(s)\dot{\X}(s)\big[\phi(s)-\Z_{3}(s)\big]\,ds +\int_{\mathbb{R}}\Z_{3}^{2}(s)\dot{\X}(s)\,ds\\
		&=T_{1}^{n}+\int_{\mathbb{R}}\Z_{3}^{2}(s)\dot{\X}(s)\,ds. 
	\end{align*}
	By a change of variables we get 
	\begin{equation}
	\label{eq:Z3L2Diff3}
		\int_{\mathbb{R}}\Z_{3}(s)\Z_{3}^{n}(s)\dot{\X}^{n}(s)\,ds=T_{1}^{n}+||U_{1}||_{L^{2}(\mathbb{R})}^{2}
	\end{equation}
	and in a similar way we obtain
	\begin{equation}
	\label{eq:Z3L2Diff4}
		\int_{\mathbb{R}}\Z_{3}(s)\Z_{3}^{n}(s)\dot{\Y}^{n}(s)\,ds=T_{2}^{n}+||U_{2}||_{L^{2}(\mathbb{R})}^{2},
	\end{equation}
	where $T_{2}^{n}$ is equal to $T_{1}^{n}$ with $\X(s)$ and $\X^{n}(s)$ replaced by $\Y(s)$ and $\Y^{n}(s)$, respectively.
	
	Using \eqref{eq:Z3L2Diff1}-\eqref{eq:Z3L2Diff4} in \eqref{eq:Z3L2Diff} we get
	\begin{aalign}
	\label{eq:Z3L2Diff5}
		||\Z_{3}-\Z_{3}^{n}||_{L^{2}(\mathbb{R})}^{2}&=\frac{1}{2}\Big(||U_{1}^{n}||_{L^{2}(\mathbb{R})}^{2}-||U_{1}||_{L^{2}(\mathbb{R})}^{2}+||U_{2}^{n}||_{L^{2}(\mathbb{R})}^{2}-||U_{2}||_{L^{2}(\mathbb{R})}^{2}\Big)\\
		&\quad-T_{1}^{n}-T_{2}^{n}.
	\end{aalign}	
	The strong convergence $U_{i}^{n}\rightarrow U_{i}$ in $L^{2}(\mathbb{R})$ implies that $||U_{i}^{n}||_{L^{2}(\mathbb{R})}\rightarrow||U_{i}||_{L^{2}(\mathbb{R})}$ for $i=1,2$. Thus, it remains to show that $T_{i}^{n}\rightarrow 0$ for $i=1,2$. 
	
	Using \eqref{eq:mapFtoGZ3} and $0\leq\dot{\X}^{n}\leq 2$ we get by the Cauchy--Schwarz inequality and a change of variables,
	\begin{equation}
	\label{eq:T1n1}
		\bigg|\int_{\mathbb{R}}\Z_{3}(s)\dot{\X}(s)\big[\phi(s)-\Z_{3}(s)\big]\,ds\bigg|\leq\sqrt{2}||U_{1}||_{L^{2}(\mathbb{R})}||\Z_{3}-\phi||_{L^{2}(\mathbb{R})},
	\end{equation}
	and similarly
	\begin{aalign}
	\label{eq:T1n2}
		&\bigg|\int_{\mathbb{R}}\big[\Z_{3}(s)-\phi(s)\big]\Z_{3}^{n}(s)\dot{\X}^{n}(s)\,ds\bigg|\\
		&\leq\sqrt{2}\Big(||U_{1}^{n}-U_{1}||_{L^{2}(\mathbb{R})}+||U_{1}||_{L^{2}(\mathbb{R})}\Big)||\Z_{3}-\phi||_{L^{2}(\mathbb{R})}.
	\end{aalign}
	
	Since $\phi$ has compact support, there exists $k>0$ such that $\text{supp}(\phi)\subset[-k,k]$. 
	Integration by parts yields
	\begin{align*}
		\int_{\mathbb{R}}\phi(s)\Z_{3}(s)\dot{\X}(s)\,ds=\int_{-k}^{k}\phi(s)\Z_{3}(s)\dot{\X}(s)\,ds=-\int_{-k}^{k}\phi'(s)\int_{-k}^{s}\Z_{3}(t)\dot{\X}(t)\,dt\,ds
	\end{align*}
	where the first term in the integration by parts equals zero because $\phi$ has compact support and the second integral is finite, since
	\begin{equation*}
		\bigg|\int_{-k}^{s}\Z_{3}(t)\dot{\X}(t)\,dt\bigg|=\bigg|\int_{\X(-k)}^{\X(s)}U_{1}(X)\,dX\bigg|\leq \big(2||\X-\id||_{L^{\infty}(\mathbb{R})}+2k\big)^{\frac{1}{2}} ||U_{1}||_{L^{2}(\mathbb{R})}
	\end{equation*}
	for $s\in[-k,k]$. Here we used a change of variables and the estimate
	\begin{aalign}
	\label{eq:ChiDiffk}
		\big|\X(k)-\X(-k)\big|&=\big|(\X(k)-k)-(\X(-k)-(-k))+2k\big|\\
		&\leq 2||\X-\id||_{L^{\infty}(\mathbb{R})}+2k.
	\end{aalign}
	We have
	\begin{equation*}
		\int_{\mathbb{R}}\phi(s)\Z_{3}(s)\dot{\X}(s)\,ds=-\int_{-k}^{k}\phi'(s)\int_{\X(-k)}^{\X(s)}U_{1}(X)\,dX\,ds
	\end{equation*}
	and similarly we obtain
	\begin{equation*}
		\int_{\mathbb{R}}\phi(s)\Z_{3}^{n}(s)\dot{\X}^{n}(s)\,ds=-\int_{-k}^{k}\phi'(s)\int_{\X^{n}(-k)}^{\X^{n}(s)}U_{1}^{n}(X)\,dX\,ds.
	\end{equation*}
	By the Cauchy--Schwarz inequality we get
	\begin{aalign}
	\label{eq:U1U1nDiff}
	&\bigg|\int_{\X(-k)}^{\X(s)}U_{1}(X)\,dX-\int_{\X^{n}(-k)}^{\X^{n}(s)}U_{1}^{n}(X)\,dX\bigg|\\
	&\leq\bigg|\int_{\X(-k)}^{\X(s)}\big[U_{1}(X)-U_{1}^{n}(X)\big]\,dX\bigg|\\
	&\quad+\bigg|\int_{\X^{n}(-k)}^{\X(-k)}U_{1}^{n}(X)\,dX\bigg|+\bigg|\int_{\X^{n}(s)}^{\X(s)}U_{1}^{n}(X)\,dX\bigg|\\
	&\leq ||U_{1}-U_{1}^{n}||_{L^{2}(\mathbb{R})}\big|\X(k)-\X(-k)\big|^{\frac{1}{2}}+2||U_{1}^{n}||_{L^{2}(\mathbb{R})}||\X-\X^{n}||_{L^{\infty}(\mathbb{R})}^{\frac{1}{2}},
	\end{aalign}
	which implies by \eqref{eq:ChiDiffk} that 
	\begin{align*}
		&\bigg|\int_{\mathbb{R}}\phi(s)\Z_{3}^{n}(s)\dot{\X}^{n}(s)\,ds-\int_{\mathbb{R}}\phi(s)\Z_{3}(s)\dot{\X}(s)\,ds\bigg|\\
		&\leq 2k||\phi'||_{L^{\infty}(\mathbb{R})}\Big[||U_{1}-U_{1}^{n}||_{L^{2}(\mathbb{R})}\big(2||\X-\id||_{L^{\infty}(\mathbb{R})}+2k\big)^{\frac{1}{2}}\\
		&\hspace{80pt}+2\big(||U_{1}^{n}-U_{1}||_{L^{2}(\mathbb{R})}+||U_{1}||_{L^{2}(\mathbb{R})}\big)||\X-\X^{n}||_{L^{\infty}(\mathbb{R})}^{\frac{1}{2}}\Big].
	\end{align*}
	Therefore
	\begin{equation}
	\label{eq:T1n3}
		\lim_{n\rightarrow\infty}\bigg|\int_{\mathbb{R}}\phi(s)\Z_{3}^{n}(s)\dot{\X}^{n}(s)\,ds-\int_{\mathbb{R}}\phi(s)\Z_{3}(s)\dot{\X}(s)\,ds\bigg|=0.
	\end{equation}
	
	From \eqref{eq:T1n1}, \eqref{eq:T1n2} and \eqref{eq:T1n3} we conclude that $T_{1}^{n}\rightarrow 0$ as $n\rightarrow\infty$. In a similar way we can prove that $T_{2}^{n}\rightarrow 0$. This implies by \eqref{eq:Z3L2Diff5} that $\Z_{3}^{n}\rightarrow\Z_{3}$ in $L^{2}(\mathbb{R})$.

	Using \eqref{eq:mapFtoGV1} we get
	\begin{equation*}
	\big|\big|\V_{1}-\V_{1}^{n}\big|\big|_{L^{2}(\mathbb{R})}\leq\bigg|\bigg|x_{1}'\bigg(\frac{1}{2c(U_{1})}-\frac{1}{2c(U_{1}^{n})}\bigg)\bigg|\bigg|_{L^{2}(\mathbb{R})}+\bigg|\bigg|\frac{1}{2c(U_{1}^{n})}\Big(x_{1}'-(x_{1}^{n})'\Big)\bigg|\bigg|_{L^{2}(\mathbb{R})},
	\end{equation*}
	and by inserting the estimates
	\begin{align*}
	&\bigg|\bigg|x_{1}'\bigg(\frac{1}{2c(U_{1})}-\frac{1}{2c(U_{1}^{n})}\bigg)\bigg|\bigg|_{L^{2}(\mathbb{R})}^{2}\\
	&\leq\frac{1}{4}\Big(||x_{1}'-1||_{L^{\infty}(\mathbb{R})}+1\Big)^{2}\int_{\mathbb{R}}\bigg(\int_{U_{1}^{n}(X)}^{U_{1}(X)}-\frac{c'(U)}{c^{2}(U)}\,dU\bigg)^{2}\,dX\\
	&\leq\frac{1}{4}k_{1}^{2}\kappa^{4}\Big(||x_{1}'-1||_{L^{\infty}(\mathbb{R})}+1\Big)^{2}\big|\big|U_{1}-U_{1}^{n}\big|\big|_{L^{2}(\mathbb{R})}^{2}
	\end{align*}
	and
	\begin{equation*}
	\bigg|\bigg|\frac{1}{2c(U_{1}^{n})}\Big(x_{1}'-(x_{1}^{n})'\Big)\bigg|\bigg|_{L^{2}(\mathbb{R})}
	\leq\frac{1}{2}\kappa\big|\big|x_{1}'-(x_{1}^{n})'\big|\big|_{L^{2}(\mathbb{R})}
	\end{equation*}
	we see that $\V_{1}^{n}\rightarrow\V_{1}$ in $L^{2}(\mathbb{R})$.

	From \eqref{eq:mapFtoGV2}-\eqref{eq:mapFtoGp} and the assumptions we immediately get that $\V_{i}^{n}\rightarrow\V_{i}$, $i=2,\dots,5$ and $\p^{n}\rightarrow\p$ in $L^{2}(\mathbb{R})$. 
	
	The corresponding results for $\W$ and $\q$ can be proved in a similar way.  
\end{proof}

Theorem \ref{thm:main} deals with convergence of the elements $u$, $\rho$ and $\sigma$ for $(u,R,S,\rho,\sigma,\mu,\nu)$ in $\D$, see \ref{eq:approxResult1} and \ref{eq:approxResult2}. The following result indicates the type of convergence we have to assume for the elements of the set $\G_{0}$ in order to get weak-star convergence of the remaining elements in $\D$.

\begin{lemma}
	\label{lemma:3}
	Let $\Theta=(\X,\Y,\Z,\V,\W,\p,\q)$ and $\Theta^{n}=(\X^{n},\Y^{n},\Z^{n},\V^{n},\W^{n},\p^{n},\q^{n})$ belong to $\G_{0}$. Consider 
	\begin{equation*}
		(u,R,S,\rho,\sigma,\mu,\nu)=\mathbf{M}\circ\mathbf{D}(\Theta)
	\end{equation*}
	and
	\begin{equation*}
		(u^{n},R^{n},S^{n},\rho^{n},\sigma^{n},\mu^{n},\nu^{n})=\mathbf{M}\circ\mathbf{D}(\Theta^{n}).
	\end{equation*}
	Assume that
	\begin{equation*}
	\X^{n}\rightarrow\X, \quad \Y^{n}\rightarrow\Y, \quad \Z_{i}^{n}\rightarrow\Z_{i} \quad \text{in} \quad \Linf(\mathbb{R}),
	\end{equation*}
	\begin{equation*}
	\Z_{3}^{n}\rightarrow\Z_{3}, \quad \V_{i}^{n}\rightarrow\V_{i}, \quad \W_{i}^{n}\rightarrow\W_{i}, \quad \p^{n}\rightarrow\p, \quad \q^{n}\rightarrow\q  \quad \text{in} \quad L^{2}(\mathbb{R}),
	\end{equation*}
	for $i=1,\dots,5$. Then
	\begin{equation*}
	u^{n}\rightarrow u \quad \text{in} \quad \Linf(\mathbb{R})
	\end{equation*}
	and
	\begin{equation*}
	R^{n}\overset{*}{\rightharpoonup}R, \quad S^{n}\overset{*}{\rightharpoonup}S, \quad
	\rho^{n}\overset{*}{\rightharpoonup}\rho, \quad \sigma^{n}\overset{*}{\rightharpoonup}\sigma, \quad
	\mu^{n}\overset{*}{\rightharpoonup}\mu \quad \text{and} \quad \nu^{n}\overset{*}{\rightharpoonup}\nu.
	\end{equation*}
\end{lemma}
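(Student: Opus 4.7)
The plan is to use the push-forward representations in Lemma \ref{lemma:mapG0toD} to reduce every assertion to a convergence statement on the $s$-parametrization side. I would first establish $u^n \to u$ in $L^\infty$ and then treat each of the six weak-star convergences by expressing $\int \phi\,dm^n$ as an integral in $s$ and passing to the limit via a telescoping argument.

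For $u^n \to u$ in $L^\infty(\mathbb{R})$: given $x \in \mathbb{R}$, pick $s$ with $\Z_2(s) = x$ (possible since $\Z_2 - \id \in L^\infty(\mathbb{R})$ is continuous and surjective). Then $u(x) = \Z_3(s)$ and $u^n(\Z_2^n(s)) = \Z_3^n(s)$ by \eqref{eq:mapGtoD1}, so
\begin{equation*}
u(x) - u^n(x) = [\Z_3(s) - \Z_3^n(s)] - [u^n(\Z_2(s)) - u^n(\Z_2^n(s))].
\end{equation*}
The uniform bound $\norm{R^n}_{L^2}^2 \leq 4\norm{\Z_4^n}_{L^\infty}$ (Step 4 of Lemma \ref{lemma:mapG0toD}) combined with $\Z_4^n \to \Z_4$ in $L^\infty$ and the analogous bound for $S^n$ yields uniform $L^2$-control of $u^n_x = (R^n - S^n)/(2c(u^n))$, and hence by Cauchy--Schwarz a uniform H\"older-$\tfrac{1}{2}$ estimate $|u^n(y) - u^n(y')| \leq C|y-y'|^{1/2}$. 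Combining these gives
\begin{equation*}
\norm{u - u^n}_{L^\infty} \leq \norm{\Z_3 - \Z_3^n}_{L^\infty} + C\norm{\Z_2 - \Z_2^n}_{L^\infty}^{1/2} \longrightarrow 0.
\end{equation*}

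For each weak-star convergence I would test against $\phi \in C_c(\mathbb{R})$. For $R^n$, \eqref{eq:mapGtoD3} gives $\int \phi R^n\,dx = \int 2\phi(\Z_2^n)c(\Z_3^n)\V_3^n(\X^n)\dot{\X}^n\,ds$, and similarly for $R$; since $\phi$ has compact support the $s$-integration is effectively over a bounded interval $[a,b]$, uniformly in $n$ (using $\Z_2^n - \id \in L^\infty$ uniformly). The difference telescopes into three pieces. The pieces carrying the factors $\phi(\Z_2^n) - \phi(\Z_2)$ and $c(\Z_3^n) - c(\Z_3)$ vanish by uniform continuity of $\phi$ and $c$ together with the $L^\infty$-convergence of $\Z_2^n, \Z_3^n$, provided one has the uniform bound
\begin{equation*}
\int_a^b |\V_3^n(\X^n)|\dot{\X}^n\,ds \leq C\bigg(\int_a^b \V_4^n(\X^n)\dot{\X}^n\,ds\bigg)^{1/2}\bigg(\int_a^b \V_2^n(\X^n)\dot{\X}^n\,ds\bigg)^{1/2},
\end{equation*}
which follows from \eqref{eq:setGrel3} via $|\V_3^n|^2 \leq 2\V_2^n\V_4^n/c^2$ and Cauchy--Schwarz; both factors are controlled uniformly by $\norm{\Z_4^n}_{L^\infty}$ and $\Z_2^n(b) - \Z_2^n(a)$. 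The same template handles $S^n$ (using the $\Y^n$-side), $\rho^n$ and $\sigma^n$ (replacing $\V_3^n$ by $\p^n$ and $\W_3^n$ by $\q^n$), and $\mu^n, \nu^n$ (replacing by $\V_4^n, \W_4^n$).

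The main obstacle is the remaining third piece, involving $\V_3^n(\X^n)\dot{\X}^n - \V_3(\X)\dot{\X}$ paired against $\psi(s) := 2\phi(\Z_2(s))c(\Z_3(s))$: here $\V_3^n \to \V_3$ only in $L^2$ and $\X^n \to \X$ only in $L^\infty$, so direct substitution is not available. I would introduce the primitives $F_n(X) := \int_0^X \V_3^n(Y)\,dY$ and $F(X) := \int_0^X \V_3(Y)\,dY$; Cauchy--Schwarz gives $F_n \to F$ uniformly on every compact interval, and combining with $\X^n \to \X$ in $L^\infty$ together with the equicontinuity of $\{F_n\}$ yields $F_n \circ \X^n \to F \circ \X$ uniformly on compact $s$-intervals. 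Since $\V_3^n(\X^n)\dot{\X}^n = (F_n \circ \X^n)'$ and $\psi \in W^{1,1}_{\mathrm{loc}}$ (because $\dot{\Z}_3 \in L^1_{\mathrm{loc}}$ by the change of variable $\int |\V_3(\X)|\dot{\X}\,ds = \int |\V_3(X)|\,dX$ and $\Z_2$ is Lipschitz), an integration by parts on $[a,b]$ reduces the claim to $\int_a^b [F_n(\X^n) - F(\X)]\,\dot{\psi}\,ds \to 0$, which follows by dominated convergence. The identical primitive trick applied to $J_1^n$, $J_2^n$, $K_1^n$, $K_2^n$ and to the $\p^n, \q^n$-integrals handles all six weak-star convergences in the same fashion.
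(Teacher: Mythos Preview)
Your proof is correct and follows the same overall strategy as the paper (push-forward formulas from Lemma~\ref{lemma:mapG0toD}, telescoping, integration by parts to trade derivative-type quantities for primitives), but differs in two places worth noting.

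For $u^n\to u$ in $L^\infty$, you work partly in Eulerian variables: you bound $|u^n(\Z_2(s))-u^n(\Z_2^n(s))|$ via a uniform H\"older-$\tfrac12$ estimate on $u^n$ coming from $\|u^n_x\|_{L^2}\leq C$. The paper instead stays entirely on the Lagrangian side: it chooses $s^n$ with $\Z_2^n(s^n)=x$ and bounds $|\Z_3(s)-\Z_3(s^n)|$ directly via \eqref{eq:setGrel3}--\eqref{eq:setGrel4} and Cauchy--Schwarz, obtaining a factor $\|\Z_2-\Z_2^n\|_{L^\infty}^{1/2}$. Both arguments yield the same rate; yours is slightly more conceptual, the paper's avoids appealing to the Eulerian regularity of $u^n$.

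For $\mu^n\overset{*}{\rightharpoonup}\mu$, the paper exploits the algebraic identity $2\V_4(\X)\dot\X=\dot\Z_4+c(\Z_3)\dot\Z_5$ (a consequence of \eqref{eq:setGcomp} and \eqref{eq:setGrel2}) so that $\Z_4$ and $\Z_5$ themselves serve as primitives; after one integration by parts the integrand involves only $\Z_2,\Z_3,\Z_4,\Z_5$ and their $s$-derivatives, and the $L^\infty$-convergence hypotheses apply directly. Your route---defining ad hoc primitives $F_n(X)=\int_0^X\V_4^n$ and using equicontinuity to pass from $F_n\circ\X^n$ to $F\circ\X$---is more generic and treats all six weak-star limits by the same template, whereas the paper's identity is specific to $\mu,\nu$ but makes that case slightly cleaner. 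For $R^n,S^n,\rho^n,\sigma^n$ the two proofs are essentially the same.
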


Observe that there are no assumptions on the monotonicity of $(\X,\Y)$ and $(\X^{n},\Y^{n})$. This means that as functions of $s$ they are nondecreasing, but not necessarily strictly increasing.

From these results it follows immediately that $u^{n}_{x}\overset{*}{\rightharpoonup}u_{x}$ due to \eqref{eq:setDux}. 

\begin{proof}
	We will use Lemma \ref{lemma:mapG0toD}. For any $x$, there exist $s$ and $s^{n}$, which are not necessarily unique, such that $x=\Z_{2}(s)$ and $x=\Z_{2}^{n}(s^{n})$. By \eqref{eq:mapGtoD1}, we have $u(x)=\Z_{3}(s)$ and $u^{n}(x)=\Z_{3}^{n}(s^{n})$. We have
	\begin{equation}
	\label{eq:ConvInGImpliesInDuDiffALT}
	u(x)-u^{n}(x)=\Z_{3}(s)-\Z_{3}^{n}(s^{n})=\Z_{3}(s)-\Z_{3}(s^{n})+\Z_{3}(s^{n})-\Z_{3}^{n}(s^{n}).
	\end{equation}
	We estimate the difference $\Z_{3}(s)-\Z_{3}(s^{n})$. We assume that $s^{n}\leq s$, the other case can be treated similar. We have
	\begin{aalign}
		\label{eq:ConvInGImpliesInDZ3DiffALT}
		|\Z_{3}(s)-\Z_{3}(s^{n})|&=\left| \int_{s^{n}}^{s}\dot{\Z}_{3}(\bar{s})\,d\bar{s} \,\right|\\
		&=\left|\int_{s^{n}}^{s}(\V_{3}(\X(\bar{s}))\dot{\X}(\bar{s})+\W_{3}(\Y(\bar{s}))\dot{\Y}(\bar{s}))\,d\bar{s}\,\right|\\
		&\leq \bigg(\int_{s^{n}}^{s}\dot{\X}(\bar{s})\,d\bar{s}\bigg)^{\frac{1}{2}}\bigg(\int_{s^{n}}^{s}\V_{3}^{2}(\X(\bar{s}))\dot{\X}(\bar{s})\,d\bar{s}\bigg)^{\frac{1}{2}}\\
		&\quad +\bigg(\int_{s^{n}}^{s}\dot{\Y}(\bar{s})\,d\bar{s}\bigg)^{\frac{1}{2}}\bigg(\int_{s^{n}}^{s}\W_{3}^{2}(\Y(\bar{s}))\dot{\Y}(\bar{s})\,d\bar{s}\bigg)^{\frac{1}{2}}
	\end{aalign}
	by the Cauchy--Schwarz inequality. From \eqref{eq:setGrel3}, we get
	\begin{aalign}
		\label{eq:ConvInGImpliesInDV3ALT}
		&\int_{s^{n}}^{s}\V_{3}^{2}(\X(\bar{s}))\dot{\X}(\bar{s})\,d\bar{s}\\
		&=\int_{s^{n}}^{s}\bigg(\frac{2\V_{2}(\X(\bar{s}))\V_{4}(\X(\bar{s}))}{c^{2}(\Z_{3}(\bar{s}))}-\frac{\p^{2}(\X(\bar{s}))}{c(\Z_{3}(\bar{s}))}\bigg)\dot{\X}(\bar{s})\,d\bar{s}\\
		&\leq \int_{s^{n}}^{s}\frac{2\V_{2}(\X(\bar{s}))\V_{4}(\X(\bar{s}))}{c^{2}(\Z_{3}(\bar{s}))}\dot{\X}(\bar{s})\,d\bar{s}\\
		&\leq \kappa^{2}||\V_{4}^{a}||_{\Linf(\mathbb{R})}\int_{s^{n}}^{s}2\V_{2}(\X(\bar{s}))\dot{\X}(\bar{s})\,d\bar{s}\\
		&=\kappa^{2}||\V_{4}^{a}||_{\Linf(\mathbb{R})}\int_{s^{n}}^{s}\dot{\Z}_{2}(\bar{s})\,d\bar{s} \quad \text{by } \eqref{eq:setG0rel2}\\
		&=\kappa^{2}||\V_{4}^{a}||_{\Linf(\mathbb{R})}(\Z_{2}(s)-\Z_{2}(s^{n}))\\
		&=\kappa^{2}||\V_{4}^{a}||_{\Linf(\mathbb{R})}(\Z_{2}^{n}(s^{n})-\Z_{2}(s^{n})) \quad \text{since } \Z_{2}(s)=\Z_{2}^{n}(s^{n})\\
		&\leq\kappa^{2}||\V_{4}^{a}||_{\Linf(\mathbb{R})}||\Z_{2}^{n}-\Z_{2}||_{\Linf(\mathbb{R})}.
	\end{aalign}
	In a similar way, we obtain
	\begin{equation}
	\label{eq:ConvInGImpliesInDW3ALT}
	\int_{s^{n}}^{s}\W_{3}^{2}(\Y(\bar{s}))\dot{\Y}(\bar{s})\,d\bar{s}\leq\kappa^{2}||\W_{4}^{a}||_{\Linf(\mathbb{R})}||\Z_{2}^{n}-\Z_{2}||_{\Linf(\mathbb{R})}.
	\end{equation}
	
	Using $\Z_{2}(s)=\Z_{2}^{n}(s^{n})$ once more we get 
	\begin{aalign}
		\label{eq:ConvInGImpliesInDXALT}
		\int_{s^{n}}^{s}\dot{\X}(\bar{s})\,d\bar{s}&=\X(s)-\X(s^{n})\\
		&=\big(\X(s)-s\big)-\big(\X(s^{n})-s^{n}\big)-\big(\Z_{2}(s)-s\big)+\big(\Z_{2}(s)-s^{n}\big)\\
		&=\big(\X(s)-s\big)-\big(\X(s^{n})-s^{n}\big)-\big(\Z_{2}(s)-s\big)+\big(\Z_{2}^{n}(s^{n})-s^{n}\big)\\
		&=\big(\X(s)-s\big)-\big(\X(s^{n})-s^{n}\big)-\big(\Z_{2}(s)-s\big)\\
		&\quad+\big(\Z_{2}^{n}(s^{n})-\Z_{2}(s^{n})\big)+\big(\Z_{2}(s^{n})-s^{n}\big)\\
		&\leq 2||\X-\id||_{L^{\infty}(\mathbb{R})}+2||\Z_{2}-\id||_{L^{\infty}(\mathbb{R})}+||\Z_{2}-\Z_{2}^{n}||_{L^{\infty}(\mathbb{R})}
	\end{aalign}
	Similarly, we get
	\begin{equation}
	\label{eq:ConvInGImpliesInDYALT}
	\int_{s^{n}}^{s}\dot{\Y}(\bar{s})\,d\bar{s}\leq 2||\Y-\id||_{L^{\infty}(\mathbb{R})}+2||\Z_{2}-\id||_{L^{\infty}(\mathbb{R})}+||\Z_{2}-\Z_{2}^{n}||_{L^{\infty}(\mathbb{R})}.
	\end{equation}
	
	Combining \eqref{eq:ConvInGImpliesInDV3ALT}-\eqref{eq:ConvInGImpliesInDYALT} in \eqref{eq:ConvInGImpliesInDZ3DiffALT} and using that $\Z_{2}^{n}\rightarrow\Z_{2}$ in $\Linf(\mathbb{R})$ we find that $\Z_{3}(s^{n})\rightarrow \Z_{3}(s)$.
	Using this and that $\Z_{3}^{n}\rightarrow\Z_{3}$ in $L^{\infty}(\mathbb{R})$ in \eqref{eq:ConvInGImpliesInDuDiffALT} we conclude that $u^{n}\rightarrow u$ in $\Linf(\mathbb{R})$.
	
	We prove that $\mu^{n}\overset{*}{\rightharpoonup}\mu$, that is, 
	\begin{equation*}
	\lim_{n\rightarrow\infty}\int_{\mathbb{R}}\phi(x)\,d\mu^{n}=\int_{\mathbb{R}}\phi(x)\,d\mu
	\end{equation*}
	for all $\phi\in C_{0}(\mathbb{R})$. Here $C_{0}(\mathbb{R})$ is the space of continuous functions that vanish at infinity. Since $C_{c}^{\infty}(\mathbb{R})$ is dense in $C_{0}(\mathbb{R})$ it suffices to consider test functions $\phi$ in $C_{c}^{\infty}(\mathbb{R})$. By \eqref{eq:mapGtoD7} we have
	\begin{equation}
	\label{eq:rewritemu}
	\int_{\mathbb{R}}\phi(x)\,d\mu=\int_{\mathbb{R}}\phi(\Z_{2}(s))\V_{4}(\X(s))\dot{\X}(s)\,ds.
	\end{equation}
	By \eqref{eq:setGcomp} and \eqref{eq:setGrel2} we have
	\begin{align*}
	2\V_{4}(\X(s))\dot{\X}(s)&=\V_{4}(\X(s))\dot{\X}(s)+\W_{4}(\Y(s))\dot{\Y}(s)\\
	&\quad +\V_{4}(\X(s))\dot{\X}(s)-\W_{4}(\Y(s))\dot{\Y}(s)\\
	&=\dot{\Z}_{4}(s)+c(\Z_{3}(s))\big[\V_{5}(\X(s))\dot{\X}(s)+\W_{5}(\Y(s))\dot{\Y}(s)\big]\\
	&=\dot{\Z}_{4}(s)+c(\Z_{3}(s))\dot{\Z}_{5}(s).
	\end{align*}
	Inserting this in \eqref{eq:rewritemu} and using integration by parts yields
	\begin{align*}
	\int_{\mathbb{R}}\phi(x)\,d\mu&=\frac{1}{2}\int_{\mathbb{R}}\big[\phi(\Z_{2}(s))\dot{\Z}_{4}(s)+\phi(\Z_{2}(s))c(\Z_{3}(s))\dot{\Z}_{5}(s)\big]\,ds\\
	&=-\frac{1}{2}\int_{\mathbb{R}}\phi'(\Z_{2}(s))\dot{\Z}_{2}(s)\Z_{4}(s)\,ds\\
	&\quad-\frac{1}{2}\int_{\mathbb{R}}\big[\phi'(\Z_{2}(s))\dot{\Z}_{2}(s)c(\Z_{3}(s))+\phi(\Z_{2}(s))c'(\Z_{3}(s))\dot{\Z}_{3}(s)\big]\Z_{5}(s)\,ds.
	\end{align*}
	Similarly we find
	\begin{align*}
	\int_{\mathbb{R}}\phi(x)\,d\mu^{n}&=-\frac{1}{2}\int_{\mathbb{R}}\phi'(\Z_{2}^{n}(s))\dot{\Z}_{2}^{n}(s)\Z_{4}^{n}(s)\,ds\\
	&\quad-\frac{1}{2}\int_{\mathbb{R}}\big[\phi'(\Z_{2}^{n}(s))\dot{\Z}_{2}^{n}(s)c(\Z_{3}^{n}(s))+\phi(\Z_{2}^{n}(s))c'(\Z_{3}^{n}(s))\dot{\Z}_{3}^{n}(s)\big]\Z_{5}^{n}(s)\,ds.
	\end{align*}
	We subtract and get
	\begin{aalign}
		\label{eq:muDiff0}
		&\int_{\mathbb{R}}\phi(x)\,d\mu-\int_{\mathbb{R}}\phi(x)\,d\mu^{n}\\
		&=-\frac{1}{2}\bigg(\int_{\mathbb{R}}\big[\phi'(\Z_{2}(s))\dot{\Z}_{2}(s)\Z_{4}(s)-\phi'(\Z_{2}^{n}(s))\dot{\Z}_{2}^{n}(s)\Z_{4}^{n}(s)\big]\,ds\\
		&\hspace{45pt}+\int_{\mathbb{R}}\big[\phi'(\Z_{2}(s))\dot{\Z}_{2}(s)c(\Z_{3}(s))\Z_{5}(s)\\
		&\hspace{80pt}-\phi'(\Z_{2}^{n}(s))\dot{\Z}_{2}^{n}(s)c(\Z_{3}^{n}(s))\Z_{5}^{n}(s)\big]\,ds\\
		&\hspace{45pt}+\int_{\mathbb{R}}\big[\phi(\Z_{2}(s))c'(\Z_{3}(s))\dot{\Z}_{3}(s)\Z_{5}(s)\\
		&\hspace{80pt}-\phi(\Z_{2}^{n}(s))c'(\Z_{3}^{n}(s))\dot{\Z}_{3}^{n}(s)\Z_{5}^{n}(s)\big]\,ds\bigg).
	\end{aalign}
	The three integrals on the right-hand side of \eqref{eq:muDiff0} can be treated in more or less the same way, and we only consider the second one. We have
	\begin{aalign}
		\label{eq:muDiff}
		&\int_{\mathbb{R}}\big[\phi'(\Z_{2}(s))\dot{\Z}_{2}(s)c(\Z_{3}(s))\Z_{5}(s)-\phi'(\Z_{2}^{n}(s))\dot{\Z}_{2}^{n}(s)c(\Z_{3}^{n}(s))\Z_{5}^{n}(s)
		\big]\,ds\\
		&=\int_{\mathbb{R}}\dot{\Z}_{2}(s)c(\Z_{3}(s))\Z_{5}(s)\big[\phi'(\Z_{2}(s))-\phi'(\Z_{2}^{n}(s))\big]\,ds \quad (I_{1}^{n})\\
		&\quad+\int_{\mathbb{R}}\phi'(\Z_{2}^{n}(s))c(\Z_{3}(s))\Z_{5}(s)\big[\dot{\Z}_{2}(s)-\dot{\Z}_{2}^{n}(s)\big]\,ds \quad (I_{2}^{n})\\
		&\quad+\int_{\mathbb{R}}\phi'(\Z_{2}^{n}(s))\dot{\Z}_{2}^{n}(s)c(\Z_{3}(s))\big[\Z_{5}(s)-\Z_{5}^{n}(s)\big]\,ds \quad (I_{3}^{n})\\
		&\quad+\int_{\mathbb{R}}\phi'(\Z_{2}^{n}(s))\dot{\Z}_{2}^{n}(s)\Z_{5}^{n}(s)\big[c(\Z_{3}(s))-c(\Z_{3}^{n}(s))\big]\,ds \quad (I_{4}^{n}).
	\end{aalign}
	
	We have
	\begin{equation}
	\label{eq:I1Bound1}
	\big|I_{1}^{n}\big|\leq 4\kappa\bigg(||\V_{2}^{a}||_{L^{\infty}(\mathbb{R})}+\frac{1}{2}\bigg)||\Z_{5}||_{L^{\infty}(\mathbb{R})}||\phi'\circ\Z_{2}-\phi'\circ\Z_{2}^{n}||_{L^{1}(\mathbb{R})}
	\end{equation}
	since 
	\begin{equation*}
	0\leq\dot{\Z}_{2}(s)=2\V_{2}(\X(s))\dot{\X}(s)\leq 4\V_{2}(\X(s))=4\bigg(\V_{2}^{a}(\X(s))+\frac{1}{2}\bigg).
	\end{equation*}
	We have $\phi'\circ\Z_{2}^{n}\rightarrow\phi'\circ\Z_{2}$ pointwise almost everywhere, and by Lemma~\ref{lem:UnifBdd} we find that $\phi'\circ\Z_{2}^{n}$ can be uniformly bounded by an $L^{1}(\mathbb{R})$ function. By the dominated convergence theorem we get $\displaystyle\lim_{n\rightarrow\infty}||\phi'\circ\Z_{2}-\phi'\circ\Z_{2}^{n}||_{L^{1}(\mathbb{R})}=0$, which from \eqref{eq:I1Bound1} implies $\displaystyle\lim_{n\rightarrow\infty}I_{1}^{n}=0$.
	
	Integration by parts yields
	\begin{align*}
	I_{2}^{n}&=\int_{\mathbb{R}}\big[\phi''(\Z_{2}^{n}(s))\dot{\Z}_{2}^{n}(s)c(\Z_{3}(s))\Z_{5}(s)+\phi'(\Z_{2}^{n}(s))c'(\Z_{3}(s))\dot{\Z}_{3}(s)\Z_{5}(s)\\
	&\hspace{28pt}+\phi'(\Z_{2}^{n}(s))c(\Z_{3}(s))\dot{\Z}_{5}(s)\big]\big[\Z_{2}(s)-\Z_{2}^{n}(s)\big]\,ds
	\end{align*}
	which implies
	\begin{align*}
	|I_{2}^{n}|&\leq\bigg(\kappa||\Z_{5}||_{L^{\infty}(\mathbb{R})}\int_{\mathbb{R}}\big|\phi''(\Z_{2}^{n}(s))\big|\dot{\Z}_{2}^{n}(s)\,ds\\
	&\hspace{25pt}+\Big[2k_{1}||\Z_{5}||_{L^{\infty}(\mathbb{R})}\big(||\V_{3}^{a}||_{L^{\infty}(\mathbb{R})}+||\W_{3}^{a}||_{L^{\infty}(\mathbb{R})}\big)\\
	&\hspace{45pt}+2\kappa\big(||\V_{5}^{a}||_{L^{\infty}(\mathbb{R})}+||\W_{5}^{a}||_{L^{\infty}(\mathbb{R})}\big)\Big]\\
	&\hspace{35pt}\times\int_{\mathbb{R}}\big|\phi'(\Z_{2}^{n}(s))\big|\,ds	
	\bigg)||\Z_{2}-\Z_{2}^{n}||_{L^{\infty}(\mathbb{R})}.
	\end{align*}
	By a change of variables and an estimate as in the proof of Lemma~\ref{lem:UnifBdd}
	we get
	\begin{align*}
	|I_{2}^{n}|&\leq\bigg(\kappa||\Z_{5}||_{L^{\infty}(\mathbb{R})}||\phi''||_{L^{\infty}(\mathbb{R})}\text{meas}\big(\text{supp}(\phi'')\big)\\
	&\hspace{25pt}+\Big[2k_{1}||\Z_{5}||_{L^{\infty}(\mathbb{R})}\big(||\V_{3}^{a}||_{L^{\infty}(\mathbb{R})}+||\W_{3}^{a}||_{L^{\infty}(\mathbb{R})}\big)\\
	&\hspace{45pt}+2\kappa\big(||\V_{5}^{a}||_{L^{\infty}(\mathbb{R})}+||\W_{5}^{a}||_{L^{\infty}(\mathbb{R})}\big)\Big]\\
	&\hspace{35pt}\times \tilde{C}||\phi'||_{L^{\infty}(\mathbb{R})}	
	\bigg)||\Z_{2}-\Z_{2}^{n}||_{L^{\infty}(\mathbb{R})}
	\end{align*}
	for a constant $\tilde{C}$ that is independent of $n$. Since $\Z_{2}^{n}\rightarrow\Z_{2}$ in $L^{\infty}(\mathbb{R})$ we get $\displaystyle\lim_{n\rightarrow\infty}I_{2}^{n}=0$.
	
	For the third integral we use a change of variables and get
	\begin{align*}
	|I_{3}^{n}|&\leq \kappa ||\Z_{5}-\Z_{5}^{n}||_{L^{\infty}(\mathbb{R})} \int_{\mathbb{R}}\big|\phi'(\Z_{2}^{n}(s))\big|\dot{\Z}_{2}^{n}(s)\,ds\\
	&\leq\kappa\,\text{meas}\big(\text{supp}(\phi')\big)||\phi'||_{L^{\infty}(\mathbb{R})}||\Z_{5}-\Z_{5}^{n}||_{L^{\infty}(\mathbb{R})},
	\end{align*}
	which implies since $\Z_{5}^{n}\rightarrow\Z_{5}$ in $L^{\infty}(\mathbb{R})$ that $\displaystyle\lim_{n\rightarrow\infty}I_{3}^{n}=0$.
	
	By the Cauchy--Schwarz inequality we get
	\begin{align*}
	|I_{4}^{n}|&\leq ||c(\Z_{3})-c(\Z_{3}^{n})||_{L^{\infty}(\mathbb{R})} ||\Z_{5}^{n}||_{L^{\infty}(\mathbb{R})}\int_{\mathbb{R}}\big|\phi'(\Z_2^n(s))\big|\dot{\Z}_{2}^{n}(s)\,ds\\
	&\leq ||c(\Z_{3})-c(\Z_{3}^{n})||_{L^{\infty}(\mathbb{R})}\big(||\Z_{5}^{n}-\Z_{5}||_{L^{\infty}(\mathbb{R})}+||\Z_{5}||_{L^{\infty}(\mathbb{R})}\big)||\phi'(\Z_2^n(s))\dot{\Z}_{2}^{n}||_{L^{1}(\mathbb{R})}
	\end{align*}
	and by inserting the estimates
	\begin{align*}
	||\phi'(\Z_2^n)\dot{\Z}_{2}^{n}||_{L^{1}(\mathbb{R})}&=||\phi'||_{L^{1}(\mathbb{R})}\\
	&\leq ||\phi'||_{L^{\infty}(\mathbb{R})}\text{meas}\big(\text{supp}(\phi')\big)	
	\end{align*}
	and
	\begin{equation*}
	||c(\Z_{3})-c(\Z_{3}^{n})||_{L^\infty(\mathbb{R})} \leq k_{1}||\Z_{3}-\Z_{3}^{n}||_{L^\infty(\mathbb{R})}
	\end{equation*}
	we obtain
	\begin{align*}	
	|I_{4}^{n}|\leq k_{1}||\phi'||_{L^{\infty}(\mathbb{R})}\text{meas}\big(\text{supp}(\phi')\big)\big(||\Z_{5}^{n}-\Z_{5}||_{L^{\infty}(\mathbb{R})}+||\Z_{5}||_{L^{\infty}(\mathbb{R})}\big)	||\Z_{3}-\Z_{3}^{n}||_{L^\infty(\mathbb{R})}
	\end{align*}
	Since $\Z_{5}^{n}\rightarrow\Z_{5}$ and $\Z_{3}^{n}\rightarrow\Z_{3}$ in $L^{\infty}(\mathbb{R})$ we obtain $\displaystyle\lim_{n\rightarrow\infty}I_{4}^{n}=0$.
	
	We return to \eqref{eq:muDiff} and obtain
	\begin{aalign}
	\label{eq:I1toI4}
	&\lim_{n\rightarrow\infty}\bigg|\int_{\mathbb{R}}\big[\phi'(\Z_{2}(s))\dot{\Z}_{2}(s)c(\Z_{3}(s))\Z_{5}(s)\\
	&\hspace{55pt}-\phi'(\Z_{2}^{n}(s))\dot{\Z}_{2}^{n}(s)c(\Z_{3}^{n}(s))\Z_{5}^{n}(s)\big]\,ds\bigg|=0.
	\end{aalign}
	
	By using \eqref{eq:I1toI4} in \eqref{eq:muDiff0} we get
	\begin{equation}
	\label{eq:measDiff1}
	\lim_{n\rightarrow\infty}\bigg|\int_{\mathbb{R}}\phi(x)\,d\mu-\int_{\mathbb{R}}\phi(x)\,d\mu^{n}\bigg|=0
	\end{equation}
	for all $\phi\in C_{c}^{\infty}(\mathbb{R})$, and we conclude that $\mu^{n}\overset{*}{\rightharpoonup}\mu$. Similarly we prove that $\nu^{n}\overset{*}{\rightharpoonup}\nu$.
	
	Next we show that $\rho^{n}\overset{*}{\rightharpoonup}\rho$, that is,
	\begin{equation*}
	\lim_{n\rightarrow\infty}\int_{\mathbb{R}}\rho^{n}(x)\phi(x)\,dx=\int_{\mathbb{R}}\rho(x)\phi(x)\,dx
	\end{equation*}
	for all $\phi\in L^{2}(\mathbb{R})$. Since $C_{c}^{\infty}(\mathbb{R})$ is dense in $L^{2}(\mathbb{R})$, it suffices to consider test functions $\phi$ in $C_{c}^{\infty}(\mathbb{R})$.
	
	Consider a test function $\phi$ in $C_{c}^{\infty}(\mathbb{R})$. By \eqref{eq:mapGtoD5}, we have
	\begin{equation*}
	\int_{\mathbb{R}}\phi(x)\rho(x)\,dx=2\int_{\mathbb{R}}\phi(\Z_{2}(s))\p(\X(s))\dot{\X}(s)\,ds
	\end{equation*}
	and
	\begin{equation*}
	\int_{\mathbb{R}}\phi(x)\rho^{n}(x)\,dx=2\int_{\mathbb{R}}\phi(\Z^{n}_{2}(s))\p^{n}(\X^{n}(s))\dot{\X}^{n}(s)\,ds.
	\end{equation*}
	By subtracting we get
	\begin{aalign}
		\label{eq:A1andA2}
		&\int_{\mathbb{R}}\phi(x)\big[\rho(x)-\rho^{n}(x)\big]\,dx\\
		&=2\int_{\mathbb{R}}\phi(\Z_{2}(s))\big[\p(\X(s))\dot{\X}(s)-\p^{n}(\X^{n}(s))\dot{\X}^{n}(s)\big]\,ds \quad (A_{1}^{n})\\
		&\quad+2\int_{\mathbb{R}}\p^{n}(\X^{n}(s))\dot{\X}^{n}(s)\big[\phi(\Z_{2}(s))-\phi(\Z^{n}_{2}(s))\big]\,ds \quad (A_{2}^{n}).
	\end{aalign}
	
	Since $\phi$ has compact support, there exists $k>0$ such that $\text{supp}(\phi)\subset[-k,k]$. Thus, we only integrate over $s$ such that $-k\leq\Z_{2}(s)\leq k$ in $A_1^n$. Since the quantity $\Z_{2}-\id$ belongs to $L^{\infty}(\mathbb{R})$, the region we integrate over is contained in 
	\begin{equation*}
		-k-||\Z_{2}-\id||_{L^{\infty}(\mathbb{R})}\leq s \leq k+||\Z_{2}-\id||_{L^{\infty}(\mathbb{R})}, 
	\end{equation*}
	or written more compactly, $-M\leq s\leq M$ with $M=k+||\Z_{2}-\id||_{L^{\infty}(\mathbb{R})}$. Integration by parts yields
	\begin{aalign}
		\label{eq:A1}
		A_{1}^{n}&=2\int_{-M}^{M}\phi(\Z_{2}(s))\big[\p(\X(s))\dot{\X}(s)-\p^{n}(\X^{n}(s))\dot{\X}^{n}(s)\big]\,ds\\
		&=2\bigg[\phi(\Z_{2}(s))\int_{-M}^{s}\big[\p(\X(\tau))\dot{\X}(\tau)-\p^{n}(\X^{n}(\tau))\dot{\X}^{n}(\tau)\big]\,d\tau\bigg]_{s=-M}^{s=M}\\
		&\quad-2\int_{-M}^{M}\phi'(\Z_{2}(s))\dot{\Z}_{2}(s)\\
		&\hspace{30pt}\times\bigg(\int_{-M}^{s}\big[\p(\X(\tau))\dot{\X}(\tau)-\p^{n}(\X^{n}(\tau))\dot{\X}^{n}(\tau)\big]\,d\tau\bigg)\,ds.
	\end{aalign}
	By a change of variables we have
	\begin{equation*}
	\int_{-M}^{s}\big[\p(\X(\tau))\dot{\X}(\tau)-\p^{n}(\X^{n}(\tau))\dot{\X}^{n}(\tau)\big]\,d\tau=\int_{\X(-M)}^{\X(s)}\p(X)\,dX-\int_{\X^{n}(-M)}^{\X^{n}(s)}\p^{n}(X)\,dX.
	\end{equation*}
	Using an estimate as in \eqref{eq:U1U1nDiff} yields
	\begin{align*}
	&\bigg|\int_{-M}^{s}\big[\p(\X(\tau))\dot{\X}(\tau)-\p^{n}(\X^{n}(\tau))\dot{\X}^{n}(\tau)\big]\,d\tau\bigg|\\
	&\leq ||\p-\p^{n}||_{L^{2}(\mathbb{R})}\big(2||\X-\id||_{L^{\infty}(\mathbb{R})}+2M\big)^{\frac{1}{2}}\\
	&\quad+2\big(||\p^{n}-\p||_{L^{2}(\mathbb{R})}+||\p||_{L^{2}(\mathbb{R})}\big)||\X-\X^{n}||_{L^{\infty}(\mathbb{R})}^{\frac{1}{2}}.
	\end{align*}
	This implies that the first term on the right-hand side of \eqref{eq:A1} equals zero. Moreover, we get
	\begin{align*}	
	|A_{1}^{n}|&\leq 2\Big[||\p-\p^{n}||_{L^{2}(\mathbb{R})}\big(2||\X-\id||_{L^{\infty}(\mathbb{R})}+2M\big)^{\frac{1}{2}}\\
	&\hspace{20pt}+2\big(||\p^{n}-\p||_{L^{2}(\mathbb{R})}+||\p||_{L^{2}(\mathbb{R})}\big)||\X-\X^{n}||_{L^{\infty}(\mathbb{R})}^{\frac{1}{2}}\Big]\int_{-M}^{M}|\phi'(\Z_{2}(s))|\dot{\Z}_{2}(s)\,ds\\
	&\leq 2\Big[||\p-\p^{n}||_{L^{2}(\mathbb{R})}\big(2||\X-\id||_{L^{\infty}(\mathbb{R})}+2M\big)^{\frac{1}{2}}\\
	&\hspace{20pt}+2\big(||\p^{n}-\p||_{L^{2}(\mathbb{R})}+||\p||_{L^{2}(\mathbb{R})}\big)||\X-\X^{n}||_{L^{\infty}(\mathbb{R})}^{\frac{1}{2}}\Big]||\phi'||_{L^{\infty}(\mathbb{R})}\text{meas}\big(\text{supp}(\phi')\big)
	\end{align*}
	where we used that
	\begin{align*}
	\int_{-M}^{M}|\phi'(\Z_{2}(s))|\dot{\Z}_{2}(s)\,ds&\leq \int_{\mathbb{R}}|\phi'(\Z_{2}(s))|\dot{\Z}_{2}(s)\,ds\\
	&=\int_{\mathbb{R}}|\phi'(x)|\,dx\leq ||\phi'||_{L^{\infty}(\mathbb{R})}\text{meas}\big(\text{supp}(\phi')\big).
	\end{align*}
	Since $\X^{n}\rightarrow\X$ in $L^{\infty}(\mathbb{R})$ and $\p^{n}\rightarrow\p$ in $L^{2}(\mathbb{R})$ we find that $\displaystyle\lim_{n\rightarrow\infty}A_{1}^{n}=0$.
	
	The second integral in \eqref{eq:A1andA2} is estimated as follows. Using the Cauchy--Schwarz inequality and that $0\leq\dot{\X}^{n}\leq 2$ we obtain
	\begin{align*}
	|A_{2}^{n}|&\leq 2\bigg(\int_{\mathbb{R}}\big(\p^{n}(\X^{n}(s))\dot{\X}^{n}(s)\big)^{2}\,ds\bigg)^{\frac{1}{2}}||\phi\circ\Z_{2}-\phi\circ\Z^{n}_{2}||_{L^{2}(\mathbb{R})}\\
	&\leq 2\sqrt{2}||\p^{n}||_{L^{2}(\mathbb{R})}||\phi\circ\Z_{2}-\phi\circ\Z^{n}_{2}||_{L^{2}(\mathbb{R})}\\
	&\leq 2\sqrt{2}\big(||\p^{n}-\p||_{L^{2}(\mathbb{R})}+||\p||_{L^{2}(\mathbb{R})}\big)||\phi\circ\Z_{2}-\phi\circ\Z^{n}_{2}||_{L^{2}(\mathbb{R})}
	\end{align*}
	by a change of variables. We have $\phi\circ\Z_{2}^{n}\rightarrow \phi\circ\Z_{2}$ pointwise almost everywhere, and by Lemma~\ref{lem:UnifBdd} we get that $\phi\circ\Z^{n}_{2}$ can be uniformly bounded by an $L^{2}(\mathbb{R})$ function, so the dominated convergence theorem implies that $\displaystyle\lim_{n\rightarrow\infty} ||\phi\circ\Z_{2}-\phi\circ\Z^{n}_{2}||_{L^{2}(\mathbb{R})}=0$. Since also $\p^{n}\rightarrow\p$ in $L^{2}(\mathbb{R})$ we find that $\displaystyle\lim_{n\rightarrow\infty}A_{2}^{n}=0$.  
	
	We return to \eqref{eq:A1andA2}, and conclude that
	\begin{equation*}
	\lim_{n\rightarrow\infty}\bigg|\int_{\mathbb{R}}\phi(x)\big[\rho(x)-\rho^{n}(x)\big]\,dx\bigg|=0.
	\end{equation*}
	Therefore, $\rho^{n}\overset{*}{\rightharpoonup}\rho$. In a similar way one shows that $\sigma^{n}\overset{*}{\rightharpoonup}\sigma$.
	
	We prove that $R^{n}\overset{*}{\rightharpoonup}R$. Consider a test function $\phi$ in $C_{c}^{\infty}(\mathbb{R})$. By \eqref{eq:mapGtoD3} we can write
	\begin{align*}
	&\int_{\mathbb{R}}\phi(x)\big[R(x)-R^{n}(x)\big]\,dx\\
	&=2\int_{\mathbb{R}}\phi(\Z_{2}(s))c(\Z_{3}(s))\big[\V_{3}(\X(s))\dot{\X}(s)-\V_{3}^{n}(\X^{n}(s))\dot{\X}^{n}(s)\big]\,ds\\
	&\quad +2\int_{\mathbb{R}}c(\Z_{3}(s))\V_{3}^{n}(\X^{n}(s))\dot{\X}^{n}(s)\big[\phi(\Z_{2}(s))-\phi(\Z_{2}^{n}(s))\big]\,ds\\
	&\quad +2\int_{\mathbb{R}}\phi(\Z_{2}^{n}(s))\V_{3}^{n}(\X^{n}(s))\dot{\X}^{n}(s)\big[c(\Z_{3}(s))-c(\Z_{3}^{n}(s))\big]\,ds.
ig	\end{align*}
	The first and second integral can be treated more or less like $A_{1}^{n}$ and $A_{2}^{n}$ in \eqref{eq:A1andA2}, respectively. The last integral is estimated as follows
	\begin{align*}
	&\bigg|\int_{\mathbb{R}}\phi(\Z_{2}^{n}(s))\V_{3}^{n}(\X^{n}(s))\dot{\X}^{n}(s)\big[c(\Z_{3}(s))-c(\Z_{3}^{n}(s))\big]\,ds\bigg|\\
	&\leq ||\phi||_{L^{\infty}(\mathbb{R})}||(\V_{3}^{n}\circ\X^{n})\dot{\X}^{n}||_{L^{2}(\mathbb{R})} ||c\circ\Z_{3}-c\circ\Z_{3}^{n}||_{L^{2}(\mathbb{R})}\\
	&\leq \sqrt{2}k_{1} ||\phi||_{L^{\infty}(\mathbb{R})}\big( ||\V_{3}^{n}-\V_{3}||_{L^{2}(\mathbb{R})}+||\V_{3}||_{L^{2}(\mathbb{R})}\big) ||\Z_{3}-\Z_{3}^{n}||_{L^{2}(\mathbb{R})}.
	\end{align*} 
	This implies that $R^{n}\overset{*}{\rightharpoonup}R$. In a similar way one shows that $S^{n}\overset{*}{\rightharpoonup}S$.
\end{proof}

\textbf{Acknowledgments.} The authors are very grateful to Xavier Raynaud for proposing the regularized system, and to Helge Holden for many discussions.

\end{document}